\documentclass[a4paper]{amsart}
\usepackage[T1]{fontenc}
\usepackage{amsmath, amsthm, amsfonts, mathtools, tikz-cd, amssymb}
\usepackage{enumitem}
\usepackage{caption}
\usepackage[all,cmtip]{xy}
\usepackage{multirow}
\usepackage{pifont}
\usepackage{ifthen}
\usepackage{soul}

\makeatletter
\def\part{\@startsection{part}{0}%
  \z@{\linespacing\@plus\linespacing}{.5\linespacing}%
  {\clearpage\centering\normalfont\bfseries}}
\def\@addtocvspace{}
\newboolean{withpagenum}
\setboolean{withpagenum}{true}
\def\@tocline#1#2#3#4#5#6#7{\relax
  \ifnum #1>\c@tocdepth %
  \else
    \par\addpenalty\@secpenalty\addvspace{#2}%
    \begingroup \hyphenpenalty\@M
    \@ifempty{#4}{%
      \@tempdima\csname r@tocindent\number#1\endcsname\relax
    }{%
      \@tempdima#4\relax
    }%
    \parindent\z@ \leftskip#3\relax \advance\leftskip\@tempdima\relax
    \rightskip\@pnumwidth plus4em \parfillskip-\@pnumwidth
    #5\leavevmode\hskip-\@tempdima #6\nobreak\relax
    \ifthenelse{\boolean{withpagenum}}{%
      \hfil\hbox to\@pnumwidth{\@tocpagenum{#7}}}{}%
    \par
    \@addtocvspace%
    \nobreak
    \endgroup
    \def\@addtocvspace{}%
    \setboolean{withpagenum}{true}%
  \fi}
\def\l@part{\def\@addtocvspace{\vspace{1.5ex}}%
  \setboolean{withpagenum}{false}%
  \@tocline{-1}{12pt plus2pt}{0pt}{}{\bfseries\centering}}
  
\def\l@section{\@tocline{1}{.5ex}{1pc}{}{}}
\def\l@subsection{\@tocline{2}{0pt}{2.5pc}{5pc}{}}
\makeatother

\usepackage[pagebackref,pdfborder={0 0 0},urlcolor=black]{hyperref}
\hypersetup{pdfauthor={Kevin Li, Clara L\"oh, Marco Moraschini},%
  pdftitle={The cheap embedding property}}

\title[The cheap embedding principle]
      {The cheap embedding principle:\\ Dynamical upper bounds for homology growth}
\author[K.~Li]{Kevin Li}
\address{Fakult\"at f\"ur Mathematik, Universit\"at Regensburg, 93040 Regensburg, Germany}
\email{kevin.li@ur.de}

\author[C.~L\"oh]{Clara L\"oh}
\address{Fakult\"at f\"ur Mathematik, Universit\"at Regensburg, 93040 Regensburg, Germany}
\email{clara.loeh@ur.de}

\author[M.~Moraschini]{Marco Moraschini}
\address{Dipartimento di Matematica, Universit{\`a} di Bologna, 40126 Bologna, Italy}
\email{marco.moraschini2@unibo.it}

\author[R.~Sauer]{Roman Sauer}
\address{Fakult\"at f\"ur Mathematik, Karlsruher Institut f\"ur Technologie, 76131 Karlsruhe, Germany}
\email{roman.sauer@kit.edu}

\author[M.~Uschold]{Matthias Uschold}
\address{Fakult\"at f\"ur Mathematik, Universit\"at Regensburg, 93040 Regensburg, Germany}
\email{matthias.uschold@ur.de}

\date{\today.\ \copyright{\ K.~Li, C.~L\"oh, M.~Moraschini, R.~Sauer, M.~Uschold 2025}.}

\keywords{Logarithmic torsion homology growth, Betti number growth, $L^2$-Betti numbers, weak containment, weak bounded orbit equivalence}
\makeatletter
\@namedef{subjclassname@2020}{%
  \textup{2020} Mathematics Subject Classification}
\makeatother
\subjclass[2020]{37A20, 20J05, 16S35, 20E26, 20E18}


\newcounter{commentcounter}

\theoremstyle{definition}
\newtheorem{defn}{Definition}[section]
\newtheorem{ex}[defn]{Example}
\newtheorem{question}[defn]{Question}
\newtheorem{setup}[defn]{Setup}

\newtheorem{rem}[defn]{Remark}

\theoremstyle{plain}
\newtheorem{thm}[defn]{Theorem}
\newtheorem{lem}[defn]{Lemma}
\newtheorem{prop}[defn]{Proposition}
\newtheorem{conjecture}[defn]{Conjecture}
\newtheorem{cor}[defn]{Corollary}

\numberwithin{equation}{section}

\newcommand{\IN}{\ensuremath\mathbb{N}}
\newcommand{\IZ}{\ensuremath\mathbb{Z}}
\newcommand{\IQ}{\ensuremath\mathbb{Q}}
\newcommand{\IR}{\ensuremath\mathbb{R}}
\newcommand{\IF}{\ensuremath\mathbb{F}}

\let\N\IN
\let\Z\IZ
\let\Q\IQ
\let\R\IR
\let\C\IC
\def\hyp{\mathbb{H}}

\newcommand{\sfFP}{\ensuremath\mathrm{FP}}

\newcommand{\sfCERP}{\ensuremath\mathsf{CERP}}

\newcommand{\enum}{\rm{(\roman*)}}
\newcommand{\spann}[1]{{\ensuremath \langle{#1}\rangle}}
\newcommand{\into}{\ensuremath\hookrightarrow}
\newcommand{\onto}{\ensuremath\twoheadrightarrow}
\DeclareMathOperator{\Isom}{Isom}
\DeclareMathOperator{\Hom}{Hom}
\DeclareMathOperator{\im}{im}
\DeclareMathOperator{\ind}{Ind}
\DeclareMathOperator{\id}{id}
\DeclareMathOperator{\res}{Res}
\DeclareMathOperator{\map}{map}
\DeclareMathOperator{\vol}{vol}
\newcommand{\linf}{\ensuremath\ell^\infty}

\newcommand{\aonlyk}[3]{\ensuremath{%
a_{#1*,#2}^{#3}%
}}
\def\mrk{\rk}

\DeclareMathOperator{\medim}{medim}
\DeclareMathOperator{\mevol}{mevol}
\DeclareMathOperator{\Aug}{A}
\def\epi{\twoheadrightarrow}
\DeclareMathOperator{\EMD}{EMD}
\DeclareMathOperator{\Indop}{Ind}
\def\Ind#1#2{\Indop_{#1}^{#2}}

\DeclareMathOperator{\cost}{cost}
\DeclareMathOperator{\Prob}{Prob}

\DeclareMathOperator{\GH}{GH}
\def\dgh#1#2{d_{\GH}^{#1}(#2)}

\usepackage{mathabx}
\makeatletter
\newcommand\incircbin
{\mathpalette\@incircbin}
\newcommand\@incircbin[2]
{\mathbin%
  {\ooalign{\hidewidth$#1#2$\hidewidth\crcr$#1\ovoid$}}
}
\newcommand{\msymmdiff}{\incircbin{\scalebox{0.76}{\raisebox{0.06em}{$\triangle$}}}}
\newcommand{\mcompl}{\circleddash}
\makeatother

\def\ltb#1#2{%
  b^{(2)}_{#1}(#2)}

\def\args{\,\cdot\,}

\def\qand{\quad\text{and}\quad}
\def\qor{\quad\text{or}\quad}
\def\fa#1{%
  \forall_{#1}\;\;\;}
\def\exi#1{%
  \exists_{#1}\;\;\;}

\def\actson{%
  \curvearrowright}

\def\symmdiff{\mathbin{\triangle}}
\def\linf#1{L^\infty(#1)}
\def\Rrel{\mathcal{R}}
\DeclareMathOperator{\supp}{supp}
\def\gen#1{\langle #1 \rangle}
\DeclareMathOperator{\tors}{tors}
\DeclareMathOperator{\rk}{rk}
\DeclareMathOperator{\FP}{\ensuremath\mathrm{FP}}
\DeclareMathOperator{\size}{size}
\DeclareMathOperator{\proj}{proj}
\DeclareMathOperator{\constop}{const}
\def\const#1{\constop_{#1}\!}

\makeatletter
\newcommand\norm{\bBigg@{0.8}}
\makeatother

\newcommand{\ifsv}[2][norm]{\csname #1l\endcsname\bracevert\!#2\!%
                            \csname #1r\endcsname\bracevert}
\newcommand{\ifsvp}[3][norm]{\csname #1l\endcsname\bracevert\!#2\!%
                            \csname #1r\endcsname\bracevert\!^{#3}}

\newcommand{\Nbasic}{\ensuremath{N_1}}
\newcommand{\Nmax}{\ensuremath{\underline{N}_1}}
\newcommand{\Nsum}{\ensuremath{N_1}}
\newcommand{\Ntbasic}{\ensuremath{N_2}}
\newcommand{\Ntmax}{\ensuremath{\underline{N}_2}}
\newcommand{\Ntsum}{\ensuremath{N_2}}

\newcommand{\numax}{\ensuremath{\underline{\nu}}}
\newcommand{\nusum}{\ensuremath{\nu}}

\DeclareMathOperator{\pt}{pt}
\DeclareMathOperator{\Cone}{Cone}
\newcommand{\calR}{\ensuremath\mathcal{R}}

\DeclareMathOperator{\logp}{\log_+}
\DeclareMathOperator{\lognorm}{lognorm}

\newcommand{\pmp}{probability measure preserving}
\newcommand{\wkcont}{\ensuremath{\prec}}
\newcommand{\Linf}[1]{\ensuremath{\linf #1}}

\newcommand{\Linfalpha}{\ensuremath{R_\alpha}}
\newcommand{\Linfbeta}{\ensuremath{R_\beta}}
\newcommand{\Linfbetap}{\ensuremath{R_{\beta'}}}

\newcommand{\indexw}[1]{}
\newcommand{\indexnot}[3]{}

\newcommand{\highlight}[1]{#1}

\renewcommand{\epsilon}{\ensuremath{\varepsilon}}

\newcommand{\LinftyX}{\ensuremath{L^\infty(\alpha)}}
\newcommand{\LinftyXLambda}{\ensuremath{L^\infty(\alpha|_\Lambda)}}
\newcommand{\LinftyXZ}{\ensuremath{L^\infty(\alpha,Z)}}
\renewcommand{\subseteq}{\ensuremath{\subset}}
\newcommand{\almostcc}[2]{$#1$-almost $#2$-chain complex}
\newcommand{\almostccs}[2]{$#1$-almost $#2$-chain complexes}
\newcommand{\almostcm}[2]{$#1$-almost $#2$-chain map}
\newcommand{\almostcms}[2]{$#1$-almost $#2$-chain maps}
\newcommand{\LinftyXwc}{\ensuremath{L^\infty(\alpha)}}
\newcommand{\LinftyYwc}{\ensuremath{L^\infty(\beta)}}

\begin{document}

\begin{abstract}
  We provide upper bounds for logarithmic torsion homology
  growth and Betti number growth of groups, phrased in the
  language of measured group theory.
\end{abstract}

\maketitle

\section{Introduction}

\def\pone{\ding{192}}
\def\ptwo{\ding{193}}

We provide new upper bounds for logarithmic torsion homology
growth and Betti number gradients along systems of finite index
normal subgroups via the following two basic principles:
\begin{enumerate}
\item[\pone]
  If $A$ ``embeds'' into~$B$ and $B$ is ``small'',
  then also $A$ is ``small''.
\item[\ptwo]
  The asymptotic behaviour along finite index normal subgroups
  is encoded in the dynamical system given
  by the profinite completion.
\end{enumerate}
We apply these two principles in the setting of chain complexes over
crossed product rings and orbit equivalence relation rings associated
with dynamical systems. Typically, ``embeddings'' refer to chain maps
that admit homotopy retractions and ``smallness'' will be measured in
terms of ``dimensions'' or ``determinants'' over various rings.

The principle~\pone\ was used to prove vanishing results for
$L^2$-Betti numbers and homology gradients in the presence of amenable
covers with small multiplicity~\cite{Sauer09,
  sauer:volume:homology:growth}.  The principle~\ptwo\ was previously
established for $L^2$-Betti numbers~\cite{gaboriaul2}, rank gradients
of groups~\cite{abertnikolov}, and stable integral simplicial
volume~\cite{loeh_pagliantini,loeh2020ergodic}. In these cases, the dynamical
point of view was the key to proving novel types of inheritance
results, leading to a deeper understanding of the invariants and
concrete calculations and
estimates~\cite{gaboriaul2,gaboriaucost,FLPS,FLMQ}.

Similarly, our approach provides new perspectives on calculations and
estimates for logarithmic torsion homology growth and Betti number
gradients over finite fields. 
We will now describe the setting and
method in more detail:

\subsection{Setup and dynamical sizes}\label{subsec:introsetup}

Let $\Gamma$ be a countable group. Let $\alpha\colon \Gamma \actson
(X,\mu)$ be a \emph{standard $\Gamma$-action}, i.e., an essentially
free probability measure preserving action on a standard Borel
probability space.  As coefficients, we consider $Z$ to be $\Z$ (with
the usual norm) or a finite field (with the trivial norm).  We write
$\linf{\alpha,Z}$ for the $Z\Gamma$-module of essentially bounded
measurable functions~$X \to Z$ up to equality $\mu$-almost everywhere;
i.e., elements of~$\linf {\alpha,Z}$ are represented by finite
$Z$-linear combinations of characteristic functions on measurable
subsets of~$X$. This leads to the crossed product ring~$R \coloneqq \linf
{\alpha,Z} * \Gamma$ of the action~$\alpha$.

We assume that $\Gamma$ is \emph{of type $\FP_{n+1}$}, i.e., the trivial $\IZ\Gamma$-module~$\IZ$ admits a projective resolution that is finitely generated in degrees~$\le n+1$.
 
We fix a free $Z\Gamma$-resolution~$C_* \epi Z$ of 
the trivial $Z\Gamma$-module~$Z$. By elementary
homological algebra, the subsequent definitions will be independent
of the choice of the resolution of the group. 
On the dynamical side, we consider marked projective augmented
chain complexes over~$R$
(see Section~\ref{sec:basics} for precise definitions).

\begin{defn}[$\alpha$-embedding]
	\label{def:alpha-emb}
  In this situation, an \emph{$\alpha$-embedding (up to degree~$n$)} is
  a pair that consists of a marked projective augmented $R$-chain
  complex~$D_* \epi \linf {\alpha,Z}$ and
  a $Z \Gamma$-chain map~$C_* \to D_*$ up to degree~$n+1$ extending
  the inclusion~$Z \to \linf{\alpha,Z}$ as constant functions.
  We write~$\Aug_n(\alpha)$ for the class of all augmented
  complexes arising in $\alpha$-embeddings up to degree~$n$.
\end{defn}

For every $\R_{\geq 0}$-valued isomorphism invariant~$\Delta$
of marked projective augmented $R$-chain
complexes, we may define
\[ \Delta(\alpha)
\coloneqq \inf_{(D_* \epi \linf {\alpha,Z}) \in \Aug_n(\alpha)}
\Delta\bigl(D_* \epi \linf {\alpha,Z}\bigr).
\]
For example, for~$n \in \N$, we obtain the following invariants
of~$\Gamma \actson (X,\mu)$, still under the assumption that $\Gamma$ is \emph{of type~$\FP_{n+1}$}. 
\begin{itemize}
\item The \emph{measured embedding dimension~$\medim_n^Z (\alpha)$
  over~$Z$ in degree~$n$}:
  Here, we take
  \[ \Delta\bigl(D_* \epi \linf {\alpha,Z}\bigr)
  \coloneqq \dim_{R} (D_n).
  \]
\item The \emph{measured embedding volume~$\mevol_n(\alpha)$
  in degree~$n$}: 
  Here, we take $Z = \Z$ and 
  \[ \Delta\bigl(D_* \epi \linf {\alpha,\Z}\bigr)
  \coloneqq \lognorm (\partial^{D}_{n+1}).
  \]
  The quantity~$\lognorm$ is a crude approximation of the logarithmic
  determinant, introduced in Section~\ref{sec:lognorm}.
\end{itemize}

In fact, this setting also extends to the equivalence relation ring~$Z \Rrel$
over the orbit relation~$\Rrel \coloneqq \Rrel_{\alpha} \coloneqq \{
(x,\gamma \cdot x) \mid x \in X,\ \gamma \in \Gamma\}$ of~$\alpha$.
This leads to the same values of measured embedding dimension and
of measured embedding volume (Corollary~\ref{cor:overZR}). However,
it is not clear whether one may obtain suitable orbit equivalence invariants
in this way because $Z\Rrel$ in general might not be flat over~$Z \Gamma$ and
because $Z \Rrel \otimes_{Z \Gamma} Z$ in general is not isomorphic
to~$\linf {\alpha,Z}$. 

\subsection{Upper bounds for gradient invariants}

Let $\Gamma$ be a countable residually finite group (satisfying suitable
finiteness properties).
If $\Gamma_* = (\Gamma_i)_{i \in I}$ is a directed system of finite index normal
subgroups of~$\Gamma$,
we may study $\Gamma$ through~$\Gamma_*$ via
the residually finite point of view and the asymptotic behaviour
of invariants of the subgroups in~$\Gamma_*$: Let $F$ be an
$\R_{\geq0}$-valued isomorphism invariant of residually finite groups
(satisfying suitable finiteness properties).
Then we obtain an associated (upper) gradient invariant via
\[ \widehat F (\Gamma, \Gamma_*)
\coloneqq \limsup_{i \in I}
\frac{F (\Gamma_i)}{[\Gamma:\Gamma_i]}.
\]

For example, for~$n \in \N$ and $\Gamma$ of type~$\FP_{n+1}$, we have
the following invariants of~$\Gamma_*$:
\begin{itemize}
\item The \emph{(upper) Betti number gradient over~$Z$ in degree~$n$}:
  \[ \widehat b_n (\Gamma,\Gamma_*; Z)
  \coloneqq \bigl( \rk_Z H_n(\args;Z) \bigr)\widehat{\phantom{l}}
  = \limsup_{i \in I}
  \frac{\rk_Z H_n(\Gamma_i;Z)}
       {[\Gamma:\Gamma_i]}.
  \]
\item The \emph{(upper) logarithmic torsion homology gradient in degree~$n$}:
  \[ \widehat t_n (\Gamma,\Gamma_*)
  \coloneqq \bigl( \log \# \tors H_n(\args;\Z) \bigr)\widehat{\phantom{l}}
  = \limsup_{i \in I}
  \frac{\log \# \tors H_n(\Gamma_i;\Z)}
       {[\Gamma:\Gamma_i]}.
  \]
\end{itemize}

On the dynamical side, we can consider the corresponding profinite
completion~$\widehat \Gamma_* \coloneqq \varprojlim_{i \in I} \Gamma/\Gamma_i$.
Then the left translation action~$\Gamma \actson \widehat\Gamma_*$
is measure preserving with respect to the Haar measure. If this
action is essentially free, we are in the dynamical setting
of Section~\ref{subsec:introsetup} and we can compare the gradient
invariants with dynamical invariants.

\begin{thm}[dynamical upper bounds; Theorem~\ref{thm:dynupperproofsec}]\label{thm:dynupper}
  Let $n \in \N$ and let $\Gamma$ be a residually finite group
  of type~$\FP_{n+1}$. 
  Let $(\Gamma_i)_{i \in I}$ be a directed system of finite index
  normal subgroups of~$\Gamma$ with~$\bigcap_{i \in I} \Gamma_i = 1$
  (e.g., a residual chain in~$\Gamma$
  or the system of all finite index normal subgroups).
  Then:
  \begin{align*}
    \widehat b_n(\Gamma,\Gamma_*; Z)
    & \leq \medim_n^Z (\Gamma \actson \widehat\Gamma_*)
     \quad \text{if $Z$ is~$\Z$ or a finite field}
    \\
    \widehat t_n(\Gamma,\Gamma_*)
    & \leq \mevol_n (\Gamma \actson \widehat \Gamma_*).
  \end{align*}
\end{thm}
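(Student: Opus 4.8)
The plan is to follow principle \ptwo: realize the homology of each finite-index subgroup $\Gamma_i$ inside the dynamical system $\Gamma \actson \widehat\Gamma_*$, and then feed an arbitrary $\alpha$-embedding into this picture to extract the bound. Concretely, fix an $\alpha$-embedding $(D_* \epi \linf{\alpha,Z})$ for $\alpha = (\Gamma \actson \widehat\Gamma_*)$, together with the $Z\Gamma$-chain map $C_* \to D_*$ (up to degree $n+1$) extending $Z \to \linf{\alpha,Z}$. The first step is a combinatorial/geometric identification: the action $\Gamma \actson \widehat\Gamma_*$ restricted to the clopen partition coming from the projection $\widehat\Gamma_* \to \Gamma/\Gamma_i$ decomposes into a disjoint union of copies of the coset spaces, so that $\linf{\alpha,Z} \otimes_{Z\Gamma} Z\Gamma_i$-type base changes produce, at the level of $Z$-ranks and torsion, exactly $[\Gamma:\Gamma_i]$-fold copies of the corresponding invariant for $\Gamma_i$. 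Equivalently, one uses that $C_* \otimes_{Z\Gamma} Z[\Gamma/\Gamma_i]$ computes $H_*(\Gamma_i;Z)$ by Shapiro's lemma, and that the finite quotient $\widehat\Gamma_* / \Gamma_i$-structure makes $\linf{\alpha,Z}$ into something that, after tensoring, sees all these quotients simultaneously.

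The second step is to turn the $\R_{\ge 0}$-invariant $\Delta(D_*)$ (either $\dim_R D_n$ or $\lognorm \partial^D_{n+1}$) into an upper bound for the averaged homology invariant of $\Gamma_i$. For Betti numbers: the chain map $C_* \to D_*$ together with the retraction data lets one bound $\rk_Z H_n(\Gamma_i;Z)/[\Gamma:\Gamma_i]$ by the $R$-dimension of $D_n$, using that von Neumann--type dimension over the crossed product ring $R$ is computed as a limit of normalized $Z$-ranks of finite approximations associated with the quotients $\widehat\Gamma_*/\Gamma_i \cong \Gamma/\Gamma_i$ — this is where the hypothesis $\bigcap_i \Gamma_i = 1$ enters, guaranteeing the profinite action approximates $R$ faithfully. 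For torsion: one compares $\log\#\tors H_n(\Gamma_i;\Z)$ with the logarithmic norm of the boundary map $\partial^{D}_{n+1}$ evaluated on the finite-level quotients, invoking the crude estimates for $\lognorm$ from Section~\ref{sec:lognorm} (submultiplicativity/subadditivity under the relevant operations and its behavior under passing to finite quotients). Taking $\limsup_{i \in I}$ and then the infimum over all $\alpha$-embeddings yields the two inequalities.

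The main obstacle I expect is the second step in the torsion case: controlling $\log\#\tors H_n(\Gamma_i;\Z)$ by $\lognorm(\partial^D_{n+1})$ requires that the homotopy retraction $C_* \to D_* \to C_*$ (up to degree $n+1$) interact well with the passage to finite quotients and with the multiplicativity properties of $\lognorm$, and the homotopies themselves contribute terms that must be shown to be negligible after normalization by $[\Gamma:\Gamma_i]$. In the Betti case, the analogous but easier point is that ranks are additive, so the chain map plus retraction gives a direct inequality $\rk_Z H_n \le \rk_R D_n$ on finite levels; the subtlety is only the identification of $\dim_R D_n$ with the limit of these normalized ranks, which should follow from the approximation theorem for $\dim_R$ over the crossed product (or, via Corollary~\ref{cor:overZR}, over $Z\Rrel$) applied to the specific profinite action. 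A secondary technical point throughout is the "up to degree $n+1$" bookkeeping: one must check that only the part of the resolution in degrees $\le n+1$ is used to compute $H_n$ and its torsion, so that the partial chain map and partial retraction suffice.
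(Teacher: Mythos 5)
Your overall architecture matches the paper's (realise $H_n(\Gamma_i;Z)$ as a retract at finite level, then bound by the dimension/lognorm of the target complex), but the proposal skips the step that carries essentially all of the technical weight, and as written the argument does not go through. The problem is in your first step: an arbitrary $\alpha$-embedding $C_*\to D_*$ has chain modules marked by arbitrary measurable subsets of $\widehat\Gamma_*$ and boundary maps given by arbitrary elements of the crossed product ring, so there is in general \emph{no} index $i$ at which this data descends to the finite quotient $\Gamma/\Gamma_i$. One cannot base-change or restrict $D_*$ to a single level $i$ and read off a complex of $Z[\Gamma/\Gamma_i]*\Gamma$-modules; that only works when all marking sets and all coefficients are cylinder sets pulled back along $\pi_i\colon\widehat\Gamma_*\to\Gamma/\Gamma_i$. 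The paper therefore first replaces the given embedding by a $\Gamma_*$-adapted one: using $\mu$-density of the algebra of cylinder sets, it produces for every $\delta>0$ an adapted $\alpha$-embedding $C_*\to\widehat D_*$ with $\dgh K{\widehat D_*,D_*,n}<\delta$, so that $\dim(\widehat D_n)$ and $\lognorm(\partial^{\widehat D}_{n+1})$ exceed the original quantities by at most a controlled multiple of $\delta$. This is not a routine density argument: naively replacing marking sets and coefficients by cylinder approximations destroys the identities $\partial\circ\partial=0$ and the chain-map equations, so the deformation step must be combined with the strictification theorems that turn almost chain complexes and almost chain maps back into honest ones with quantitative control. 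This machinery (Sections 4--6 of the paper) is entirely absent from your proposal; the remark that ``$\bigcap_i\Gamma_i=1$ guarantees the profinite action approximates $R$ faithfully'' gestures at it but supplies no argument.

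A second, smaller issue: you invoke a von Neumann / L\"uck-approximation statement, namely that $\dim_R D_n$ is ``computed as a limit of normalized $Z$-ranks of finite approximations.'' No such limit theorem is used or needed. Once the embedding is adapted to level $i$, the identity $\rk_Z(M(i)_\Gamma)=[\Gamma:\Gamma_i]\cdot\dim(M)$ holds \emph{exactly} for each adapted marked projective module, and the retraction at level $i$ comes from the fundamental lemma of homological algebra over $R_i=L_i*\Gamma$ (using flatness of $R_i$ over $Z\Gamma$), exhibiting $H_n(\Gamma_i;Z)$ as a $Z$-retract of $H_n(D_*(i)_\Gamma)$. For the same reason your worry about homotopy terms in the torsion case is unfounded: the retract property passes directly to homology, so $\tors H_n(\Gamma_i;\Z)$ injects into $\tors\bigl(D_n(i)_\Gamma/\im\partial^{D}_{n+1}(i)_\Gamma\bigr)$, and the remaining work is Gabber's estimate together with the fact that $\lognorm$ of an adapted homomorphism can be computed through adapted marked decompositions.
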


The proof of Theorem~\ref{thm:dynupper} relies on the fundamental 
observation that the subalgebra of cylinder sets is dense in~$\widehat
\Gamma_*$ with respect to the Haar measure. Therefore, we can
approximate augmented complexes over the crossed product
ring~$R \coloneqq \linf{\widehat \Gamma_*,Z} * \Gamma$ with arbitrary precision by
augmented complexes that only involve single, deep enough,
subgroups~$\Gamma_i$. We can then apply classical homological algebra
to interpret the embeddings as chain homotopy retracts and use
standard estimates for Betti numbers and logarithmic torsion.

On a technical level, the underlying approximation result is
challenging and is obtained through a balanced sequence of
deformations and strictifications of (almost) chain complexes and
(almost) chain maps (Section~\ref{sec:strictification} and 
Section~\ref{sec:deformation}). We thus develop a quantitative
homological algebra over~$R$. In particular, this includes various
norms on marked projective modules over~$R$, norms for homomorphisms
between such modules, and the introduction of a Gromov--Hausdorff
distance between marked projective chain complexes over~$R$
(Section~\ref{sec:GH}). The proof of Theorem~\ref{thm:dynupper} is
completed in Section~\ref{sec:proofthmCEP}.

The measured embedding dimension is also an upper bound
for the $L^2$-Betti numbers:

\begin{thm}[Theorem~\ref{thm:l2upper}]\label{thm:l2upperintro}
  Let $n \in \N$, 
  let $\Gamma$ be a group of type~$\FP_{n+1}$,
  and let~$\alpha$  be a standard $\Gamma$-action. 
  Then:
  \begin{align*}
    \ltb n \Gamma
    \leq 
    \medim_n^\Z (\alpha).
  \end{align*}
\end{thm}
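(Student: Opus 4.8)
The plan is to compare the $L^2$-homology of~$\Gamma$ with the dimension data recorded by an arbitrary $\alpha$-embedding and then pass to the infimum. Fix a free $\Z\Gamma$-resolution~$C_* \epi \Z$ that is finitely generated in degrees~$\le n+1$ (possible since $\Gamma$ is of type~$\FP_{n+1}$), and let $(D_* \epi \linf{\alpha,\Z}, f\colon C_* \to D_*)$ be an $\alpha$-embedding up to degree~$n$; here $D_*$ is a marked projective augmented $R$-chain complex with $R = \linf{\alpha,\Z} * \Gamma$. I would first tensor everything up to the group von Neumann algebra~$\mathcal N(\Gamma)$. The key observation is that $\linf{\alpha,\Z}$, being generated by characteristic functions, maps into $L^\infty(X,\mu)$ and hence $R$ admits a trace-preserving embedding into the crossed product von Neumann algebra $L^\infty(X,\mu) \rtimes \Gamma$, which in turn is a finite von Neumann algebra carrying a faithful normal trace whose restriction to $\mathcal N(\Gamma) \subset L^\infty(X,\mu)\rtimes\Gamma$ is the canonical one. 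Because the action is essentially free and pmp, $\dim_{\mathcal N(\Gamma)}$ applied after inducing up agrees with $\dim_R$ scaled appropriately; more precisely, $\dim_{L^\infty(X,\mu)\rtimes\Gamma}\bigl(M \otimes_R (L^\infty(X,\mu)\rtimes\Gamma)\bigr) = \dim_R(M)$ for finitely generated projective~$R$-modules~$M$, by continuity and additivity of the von Neumann dimension and the fact that $R \otimes_{\Z\Gamma} \mathcal N(\Gamma)$ is dense in the relevant sense.

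The second step is to exploit the chain map~$f$. Since $f\colon C_* \to D_*$ extends the inclusion $\Z \to \linf{\alpha,\Z}$ of constants and both augmentations are resolutions of modules that become the trivial module after applying $\args \otimes_{\Z\Gamma}\Z$ — indeed $\linf{\alpha,\Z}\otimes_{\Z\Gamma}\Z$ retracts onto~$\Z$ via the constants since $\alpha$ is ergodic-free and the constants split off by averaging — the induced map on $L^2$-homology is well behaved. Concretely, after applying $\args\otimes_{\Z\Gamma}\mathcal N(\Gamma)$ to~$C_*$ and $\args\otimes_R (L^\infty(X,\mu)\rtimes\Gamma)$ to~$D_*$ and comparing via~$f$, I would show that $b_n^{(2)}(\Gamma) = \dim_{\mathcal N(\Gamma)} H_n(C_* \otimes_{\Z\Gamma}\mathcal N(\Gamma))$ is bounded above by $\dim_{L^\infty(X,\mu)\rtimes\Gamma} H_n(D_* \otimes_R (L^\infty(X,\mu)\rtimes\Gamma))$, because $f$ (together with the module retraction) induces a split injection on homology at the level of the chain complexes after the flat base change to the von Neumann algebra — flatness of $\mathcal N(\Gamma)$ over itself and the dimension-flatness of $L^\infty(X,\mu)\rtimes\Gamma$ give that homology is computed by the same formula and that the von Neumann dimensions only decrease under the relevant quotients. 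The latter homology group is a subquotient of $D_n \otimes_R (L^\infty(X,\mu)\rtimes\Gamma)$, so its dimension is at most $\dim_R(D_n)$ by monotonicity of the von Neumann dimension.

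Chaining the two steps gives $b_n^{(2)}(\Gamma) \le \dim_R(D_n)$ for every $\alpha$-embedding, and taking the infimum over $\Aug_n(\alpha)$ yields $b_n^{(2)}(\Gamma) \le \medim_n^\Z(\alpha)$, as desired. The main obstacle I anticipate is the first step: verifying that the crossed product ring~$R = \linf{\alpha,\Z}*\Gamma$ (built from the small ring~$\linf{\alpha,\Z}$ of $\Z$-valued step functions, \emph{not} from $L^\infty(X,\mu;\R)$) really does embed trace-preservingly into the finite von Neumann algebra $L^\infty(X,\mu)\rtimes\Gamma$ in a way compatible with the marked projective structure, and that the von Neumann dimension of the base change of a marked projective $R$-module equals~$\dim_R$ of the module — this requires care because $R$ is neither semihereditary nor a von Neumann algebra, so one must track the marking and use that each marked projective summand of a free $R$-module of finite rank becomes, after base change, a projective module over the von Neumann algebra whose dimension is read off from the idempotent's trace. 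A secondary subtlety is justifying that the constants split off $\linf{\alpha,\Z}\otimes_{\Z\Gamma}\Z$ so that $f$ genuinely sees the trivial module; this uses essential freeness and the probability normalization, and should follow from the same averaging argument that underlies the standard computation of $H_0$.
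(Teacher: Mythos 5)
Your outline follows the same route as the paper: base change an $\alpha$-embedding to a finite von Neumann algebra, exhibit the $L^2$-chain complex of~$\Gamma$ as a retract, and use monotonicity of the von Neumann dimension to bound $\ltb n\Gamma$ by $\dim_R(D_n)$. (The paper works with the von Neumann algebra $N\Rrel$ of the orbit relation and the identity $\ltb n\Gamma=\dim_{N\Rrel}H_n(\Gamma;N\Rrel)$, rather than with $\mathcal N(\Gamma)\subset L^\infty(X)\rtimes\Gamma$; for essentially free actions these are interchangeable, and your dimension-compatibility concerns in step one are genuine but resolvable exactly as you suspect.)

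The genuine gap is in your second step: you never construct the splitting that makes $f$ a ``split injection on homology,'' and the justification you offer in its place does not produce one. The mechanism is \emph{not} that the constants split off $\linf{\alpha,\Z}\otimes_{\Z\Gamma}\Z$ by averaging (ergodicity is not assumed, and that statement concerns coinvariants, which play no role here), nor is it ``dimension-flatness'' of the crossed product von Neumann algebra. What is actually needed is a chain homotopy left inverse to $f$ \emph{before} any base change, and it comes from classical homological algebra: first, $R=\linf{\alpha,\Z}*\Gamma$ is flat over $\Z\Gamma$ (because $\linf{\alpha,\Z}$ is flat over $\Z$ --- free abelian in the integral case), so $\widehat C_*\coloneqq R\otimes_{\Z\Gamma}C_*$ is a free $R$-resolution of $\linf{\alpha,\Z}$; second, $D_*$ is a complex of projective $R$-modules augmented onto $\linf{\alpha,\Z}$, and $f$ induces an $R$-chain map $\widehat f_*\colon\widehat C_*\to D_*$ extending $\id_{\linf{\alpha,\Z}}$; third, the fundamental lemma of homological algebra applied to the resolution $\widehat C_*$ yields $\widehat g_*\colon D_*\to\widehat C_*$ with $\widehat g_*\circ\widehat f_*\simeq_R\id$. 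Only after this retraction exists does base change to the von Neumann algebra give that $H_n(\widetilde C_*)$ is a dimension-preserving retract of $H_n(\widetilde D_*)$, whence $\ltb n\Gamma\le\dim H_n(\widetilde D_*)\le\dim \widetilde D_n=\dim_R(D_n)$. Without the flatness input and the fundamental-lemma step, the inequality between the two homology dimensions is unsupported.
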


\subsection{Examples}

As in the case of $L^2$-Betti numbers,
we can use a Rokhlin lemma to show that amenable
groups are homologically dynamically small (Section~\ref{sec:ame:have:CEP}).  In
particular, we obtain: 

\begin{thm}[amenable groups; Theorem~\ref{thm:amenable}]
\label{thm:amenable_intro}
  Let $n \in \N$, let $\Gamma$ be a countable infinite amenable group
  of type~$\FP_{n+1}$, and let $\alpha$ be a standard
  $\Gamma$-action. Then:
  \begin{align*}
     \medim_n^Z (\alpha)
     & = 0
     \quad \text{if $Z$ is $\Z$ or a finite field}
     \\
     \mevol_n(\alpha)
     & = 0.
  \end{align*}    
\end{thm}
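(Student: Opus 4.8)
The strategy is to follow the blueprint that works for $L^2$-Betti numbers of amenable groups: construct, for every $\varepsilon>0$, an $\alpha$-embedding up to degree~$n$ whose associated augmented complex~$D_*\epi\linf{\alpha,Z}$ has $\dim_R(D_n)\le\varepsilon$ (for the dimension statement) and $\lognorm(\partial^D_{n+1})\le\varepsilon$ (for the volume statement). Since $\medim_n^Z(\alpha)$ and $\mevol_n(\alpha)$ are defined as infima over~$\Aug_n(\alpha)$, producing such a family for all~$\varepsilon$ forces both invariants to vanish. The mechanism for making things ``cheap'' is the Rokhlin lemma: because $\Gamma$ is countably infinite amenable and $\alpha$ is essentially free, for any finite $S\subset\Gamma$ and any $\delta>0$ there is a measurable set $T\subset X$ such that the translates $\{s\cdot T\}_{s\in S}$ are pairwise disjoint and their union has measure~$>1-\delta$; equivalently, one gets an almost-invariant Følner-type tiling of~$X$ up to a small remainder.

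Concretely, I would start from the fixed free $Z\Gamma$-resolution $C_*\epi Z$, finitely generated in degrees $\le n+1$, and let $S$ be a finite subset of~$\Gamma$ supporting the differentials $\partial_1,\dots,\partial_{n+1}$ of the truncated complex $C_{\le n+1}$. Using the Rokhlin set~$T$ for a sufficiently large finite set (large enough to absorb products of the generators appearing in~$S$ up to length~$n+1$), one builds a marked projective $R$-complex $D_*$ by ``spreading out'' finitely many copies of $C_{\le n+1}$ over the Rokhlin tower and capping off on the bad remainder set, together with a $Z\Gamma$-chain map $C_*\to D_*$ up to degree~$n+1$ extending the constant-function inclusion $Z\to\linf{\alpha,Z}$. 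The point is that the free $R$-summands of~$D_j$ can be taken to be generated by characteristic functions $\chi_{T}$ (and shifts thereof) rather than~$\chi_X$, so that $\dim_R(D_j)$ is proportional to $\mu(T)$, which the Rokhlin lemma makes as small as we like; this yields $\medim_n^Z(\alpha)=0$. For the volume statement with $Z=\Z$, the differential $\partial^D_{n+1}$ is, after the spreading construction, block-diagonal with blocks that are integer matrices coming from~$\partial^C_{n+1}$ localized to tower pieces, so its $\lognorm$ is controlled by (number of tower pieces)~$\times$~(a fixed constant depending only on $\partial^C_{n+1}$)~$\times$~$\mu(T)$, again $\to 0$.

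The main obstacle I expect is technical bookkeeping rather than a conceptual gap: one must honestly produce a \emph{chain map} $C_*\to D_*$ up to degree~$n+1$ — not merely a map in each degree — while the Rokhlin tower only gives approximate invariance, so the naive spread-out complex is only an ``almost'' chain complex and the naive map only an ``almost'' chain map. Here I would invoke the quantitative homological algebra developed in the paper (the strictification and deformation machinery of Sections~\ref{sec:strictification} and~\ref{sec:deformation}, and the Gromov--Hausdorff distance of Section~\ref{sec:GH}) to convert the approximate data into a genuine $\alpha$-embedding whose sizes are controlled by the sizes of the approximate data plus an error that vanishes with the Rokhlin error~$\delta$. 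A secondary point requiring care is the contribution of the remainder set of measure~$<\delta$: on it one cannot use the resolution structure, so one caps $D_*$ off there with a free module of full rank, contributing at most $O(\delta)$ to $\dim_R$ and $O(\delta)$ to $\lognorm$, which is harmless. Finally, for finite-field coefficients $\lognorm$ is not needed and the dimension argument is identical with $\Z$ replaced by~$Z$; and since the statement is about an infinite amenable group, essential freeness of~$\alpha$ together with infiniteness of~$\Gamma$ is exactly what guarantees the Rokhlin lemma applies in the required strong (tower) form.
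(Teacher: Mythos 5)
Your overall strategy is the right one and matches the paper's: use the Rokhlin/quasi-tiling lemma for amenable groups to produce, for every $\delta$, an $\alpha$-embedding whose target has chain modules of dimension $O(\delta)$ and differentials of uniformly bounded norm, and conclude by taking the infimum. However, your route through the final technical step is genuinely different from the paper's, and the difference matters. You propose to spread $C_{\le n+1}$ over the tower, accept that the result is only an \emph{almost} chain complex with an \emph{almost} chain map, and then invoke the strictification machinery of Sections~\ref{sec:strictification}--\ref{sec:deformation}. The paper avoids strictification entirely: it uses the Rokhlin lemma only in degrees $0$ and $1$ (to produce a surjection $\eta\colon\spann{B_0}\to\LinftyX$ with $\mu(B_0)<\delta$ and, via the $(F,\delta)$-invariance of the tiles, a degree-one module $\bigoplus_{s\in S}\spann{B_{1,s}}$ with $\mu(B_{1,s})\le 2\delta$), and then extends to higher degrees \emph{exactly} by the support trick of Lemma~\ref{lem:supp1 extension}: the $k$-th chain module is defined as $\bigoplus_i\spann{\supp_1(y_i)}$ where $y_i$ is the image of the $i$-th generator under the matrix of $\partial^C_k$, so the maps $f_k$ are surjective onto $D_k$ by construction and the identities $\partial^D_{k-1}\circ\partial^D_k=0$ and $\partial^D_k\circ f_k=f_{k-1}\circ\partial^C_k$ hold on the nose, with $\dim(D_k)$ controlled by $\dim(D_1)$ times constants depending only on the matrices of $C_*$. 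This buys exactness for free and makes the norm bound $\|\partial^D_{n+1}\|\le K$ (hence $\lognorm(\partial^D_{n+1})\le\dim(D_{n+1})\cdot\log_+K\to 0$) immediate.

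Your route can be made to work, but it carries a burden the paper's does not: Theorem~\ref{thm:strictifycomplex} and Theorem~\ref{thm:strictifychmap} only give a useful conclusion if the complexity bound $\overline\kappa_n$ of the almost data (ranks, $\|\cdot\|_\infty$, $\Nmax$, \dots) is bounded \emph{independently of $\delta$}, since the strictification constant $K$ depends on $\kappa$. If ``spreading out copies of $C_{\le n+1}$ over the tower'' means one copy per tower level, the ranks of your $D_j$ grow without bound as the Følner shapes grow, and the strictification error $K(\kappa)\cdot\delta$ is no longer under control. You would need to organise the spread-out complex so that the number of marked summands stays fixed (one per $Z\Gamma$-generator of $C_j$, each generated over a small subset of $X$) — which is essentially what the paper's support construction does automatically. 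Also note that for a general countable amenable group the tiling requires finitely many tower shapes $(T_j)_{j\in J}$ (the version of~\cite[Theorem~3.6]{CJKMSTD} quoted as Theorem~\ref{thm:Rokhlin}), not a single set $T$; this does not affect the argument but should be stated correctly.
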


Theorem~\ref{thm:dynupper} and Theorem~\ref{thm:amenable_intro}
refine the well-known results that amenable residually finite groups
of type~$\FP_\infty$ have vanishing Betti gradients over every
field~\cite{Cheeger-Gromov86,LLS11} and vanishing logarithmic torsion
growth~\cite{Kar-Kropholler-Nikolov} in all degrees.

For free groups, we obtain
(Section~\ref{subsec:freegroups}):

\begin{prop}[free groups; Proposition~\ref{prop:freegroups}]
  Let $d\in \N_{>0}$, let $F_d$ be a free group of rank~$d$,
  and let~$\alpha$ be a standard
  action of~$F_d$. 
  Let~$Z$ be~$\IZ$ or a finite field.
  Then:
  \begin{enumerate}[label=\enum]
  \item For all~$n \in \N$, we have~$\mevol_n (\alpha) = 0$;
  \item For all~$n \in \N \setminus \{1\}$, we have~$\medim_n^Z (\alpha) = 0$;
  \item We have~$\medim^\Z_1 (\alpha) \geq d-1$;
  \item If $\alpha$ is the profinite completion with respect to a directed system~$(\Gamma_i)_{i\in I}$ of finite index normal subgroups of~$F_d$ with $\bigcap_{i\in I}\Gamma_i=1$,
    then
    \[ \medim^Z_1 (\alpha) = d - 1.
    \]
  \end{enumerate}
\end{prop}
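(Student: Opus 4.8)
The plan is to exploit that $F_d$ has cohomological dimension~$1$. Fix the free $Z F_d$-resolution $C_* = \bigl(0 \to (Z F_d)^d \xrightarrow{\partial_1} Z F_d \to Z \to 0\bigr)$ coming from the Cayley tree, with $\partial_1(e_i) = x_i - 1$ for the free generators $x_1,\dots,x_d$. Over $R = \linf{\alpha,Z} * F_d$, consider the augmented $R$-complex $D_*$ with $D_0 = R$, $D_1 = R^d$, $\partial^D_1(e_i) = u_{x_i}-1$, with $\varepsilon^D\colon R \to \linf{\alpha,Z}$ the canonical augmentation, and $D_k = 0$ for $k \ge 2$, together with the chain map $C_* \to D_*$ given by the inclusions $Z F_d \into R$ and $(Z F_d)^d \into R^d$. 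This is an $\alpha$-embedding up to every degree; moreover $\dim_R(D_n) = 0$ for $n \ge 2$, and $\partial^D_{n+1} = 0$ for $n \ge 1$, so $\lognorm(\partial^D_{n+1}) = 0$. This settles (i) in degrees $n \ge 1$ and (ii) in degrees $n \ge 2$. For the lower bound (iii) I would invoke Theorem~\ref{thm:l2upperintro}: it gives $\medim_1^\IZ(\alpha) \ge \ltb 1 {F_d} = d-1$, using the standard value of the first $L^2$-Betti number of a free group.

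The genuinely non-formal parts are the two degree-$0$ statements, where $D_0$ must surject onto $\linf{\alpha,Z}$ and hence cannot be truncated away. Here I would use that the cyclic subgroup $\Lambda = \gen{x_1} \cong \IZ$ is amenable: restricting $\alpha$ to $\Lambda$, Theorem~\ref{thm:amenable_intro} provides $\Lambda$-embeddings over $S = \linf{\alpha,Z} * \Lambda$ whose degree-$0$ module has arbitrarily small $\dim_S$ and whose first boundary has arbitrarily small $\lognorm$. Base-changing along the inclusion $S \into R$, for which $R \otimes_S \linf{\alpha,Z}$ still surjects onto $\linf{\alpha,Z}$, produces augmented $R$-complexes with a small degree-$0$ module and a small first boundary; one completes these to $\alpha$-embeddings for $F_d$ by freely adjoining, in degree~$1$, summands $R\widetilde e_i$ with $\partial^D_1(\widetilde e_i) = (u_{x_i}-1)v$ that absorb the contributions of $x_2,\dots,x_d$ (where $v \in D_0$ maps to the constant function $1$). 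This gives $\medim_0^Z(\alpha) = 0$ directly. For $\mevol_0(\alpha) = 0$ the subtle point — which I expect to be the crux of the proposition — is that these adjoined degree-$1$ summands must be attached without spoiling the smallness of $\lognorm(\partial^D_1)$: rather than the naive columns $u_{x_i}-1$, one must spread each $x_i$-contribution along the Rokhlin tower underlying the amenable construction, invoking the quantitative estimates of Section~\ref{sec:deformation} uniformly over all $d$ generators.

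Finally, the exact value in (iv) follows by descent to finite levels. For each $i$, the $\Gamma$-equivariant factor map $\widehat\Gamma_* \to \Gamma/\Gamma_i$ yields an inclusion $R_i := \linf{\Gamma/\Gamma_i,Z} * \Gamma \into R$ along which pulling back $\alpha$-embeddings preserves the $R$-dimension in each degree (the traces are compatible and $R \otimes_{R_i} \linf{\Gamma/\Gamma_i,Z} = \linf{\widehat\Gamma_*,Z}$), so $\medim_1^Z(\alpha) \le \medim_1^Z(\Gamma \actson \Gamma/\Gamma_i)$. Now $R_i \cong M_{[\Gamma:\Gamma_i]}(Z\Gamma_i)$, and since $\Gamma_i \le F_d$ is free of rank $r_i = 1 + [\Gamma:\Gamma_i](d-1)$ by Nielsen--Schreier, transporting a finite free $Z\Gamma_i$-resolution of $Z$ through this Morita equivalence produces a $(\Gamma \actson \Gamma/\Gamma_i)$-embedding whose degree-$1$ module has $R_i$-dimension $r_i/[\Gamma:\Gamma_i] = (d-1) + 1/[\Gamma:\Gamma_i]$. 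Since $\bigcap_i \Gamma_i = 1$ forces $[\Gamma:\Gamma_i] \to \infty$, taking the infimum over $i$ yields $\medim_1^Z(\alpha) \le d-1$; the reverse inequality is Theorem~\ref{thm:dynupper} together with $\widehat b_1(F_d,\Gamma_*;Z) = \limsup_i r_i/[\Gamma:\Gamma_i] = d-1$ (for $Z = \IZ$ one may instead quote part~(iii)).
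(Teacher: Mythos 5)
Your treatment of (i) for $n\ge 1$, of (ii) for $n\ge 2$, of (iii), and of (iv) matches the paper's route: the length-one resolution induced up to $R$ kills everything above degree~$1$ (this is the paper's Lemma~\ref{lem:cd}), the lower bound in (iii) is Theorem~\ref{thm:l2upper}, and your descent to finite levels in (iv) — building a complex over $R_i\cong M_{[\Gamma:\Gamma_i]}(Z\Gamma_i)$ from a free $Z\Gamma_i$-resolution of rank $1+[\Gamma:\Gamma_i](d-1)$ and combining with the lower bound $\widehat b_1(F_d,\Gamma_*;Z)=d-1$ via Theorem~\ref{thm:dynupperproofsec} — is exactly the content of the paper's Lemma~\ref{lem:finite_index}. (The paper avoids your intermediate inequality $\medim_1^Z(\alpha)\le\medim_1^Z(\Gamma\actson\Gamma/\Gamma_i)$, since $\Gamma\actson\Gamma/\Gamma_i$ is not essentially free and hence not a standard action in its setup; it instead constructs the marked complex over $R$ directly from the cylinder set $p_i^{-1}(1\Gamma_i)$, which is the honest version of your Morita argument.)

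The degree-$0$ part is where you go astray. First, the detour through the amenable subgroup $\gen{x_1}$ and Theorem~\ref{thm:amenable_intro} is unnecessary: the paper's Proposition~\ref{prop:deg0infinite} handles degree~$0$ for \emph{any} finitely generated infinite group using only the existence of a measurable $A\subset X$ of measure $<\varepsilon/2$ with $F\cdot A$ conull for some finite $F\subset\Gamma$ (Levitt); one takes $D_0=\gen A\oplus\gen B$ with $B=X\setminus F\cdot A$, builds $x\in D_0$ with $\eta(x)=1$, and sets $D_1=\bigoplus_{s\in S}\gen X$ with $\partial_1^D(e_s)=(1-s)x$. Second, and more importantly, the step you flag as ``the crux'' — keeping $\lognorm(\partial_1^D)$ small while adjoining degree-$1$ summands for all $d$ generators, which you propose to handle by spreading contributions along Rokhlin towers and invoking the deformation estimates of Section~\ref{sec:deformation} — is a misdiagnosis. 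By Definition~\ref{def:lognorm}, $\lognorm'(\partial_1^D)$ is the minimum of $\dim(D_1)\cdot\logp\|\partial_1^D\|$ and $\rk(\partial_1^D)\cdot\logp\|\partial_1^D\|$, and the marked rank is bounded by $\dim(D_0)<\varepsilon$ because the image of $\partial_1^D$ sits inside $D_0$. Since $\|\partial_1^D\|\le 2$, one gets $\lognorm(\partial_1^D)\le\varepsilon\cdot\log 2$ for free, regardless of how large $D_1$ is. So the difficulty you anticipate does not exist, and the machinery you would invoke to resolve it is not needed; as written, though, this step of your argument is only a promissory note, not a proof.
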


We prove estimates for amalgamated products over~$\Z$ (Section~\ref{subsec:amalg:prod}).
As a consequence of the calculation for free groups, we may then also treat surface groups (Section~\ref{sec:surface}). 
Moreover, we obtain inheritance results for direct products with an amenable factor (Section~\ref{sec:amenable_factor}) and for finite index subgroups (Section~\ref{sec:finindex}).

\begin{rem}[relation with the cheap rebuilding property]
	The definition of the measured embedding dimension and the
measured embedding volume as well as the corresponding
dynamical upper bounds is inspired
by the work of Ab\'ert, Bergeron, Fr\k{a}czyk, and Gaboriau~\cite{ABFG21} on the cheap rebuilding property.
	This property of groups implies the vanishing of both Betti number growth and logarithmic torsion homology growth.
	The present authors introduced an algebraic version 	of this property~\cite{LLMSU}.
	We show that an equivariant algebraic version implies the vanishing of~$\medim$ and~$\mevol$ (Section~\ref{sec:CRP}).
	
	Via the measured embedding volume, the logarithmic torsion
growth estimates are decoupled from the homology gradient
estimates. In particular, one may use the measured
embedding volume to prove vanishing of the logarithmic torsion
homology growth where previous methods are not applicable
because of non-vanishing $L^2$-Betti numbers.

In the future, we plan to combine the algebraic techniques developed in our previous paper~\cite{LLMSU} with the dynamical approach in order to obtain bootstrapping theorems for measured embedding dimensions and volumes.
\end{rem}

\subsection{Dynamical inheritance properties}

By design, the measured embedding dimension and the measured
embedding volume are of dynamical nature. More precisely, we
establish the following concrete instances of dynamical behaviour:
\begin{itemize}
\item monotonicity under weak containment;
\item reduction to ergodic actions; 
\item invariance under weak bounded orbit equivalence;
\item comparison with cost;
\item comparison with integral foliated simplicial volume.
\end{itemize}

As in the fixed price problem for cost and integral foliated
simplicial volume, in general, it is not clear how the measured
embedding dimension/volume depend on the underlying dynamical
system. Similarly to the case of cost and integral foliated simplicial
volume, we show that one can always restrict to ergodic dynamical
systems (Corollary~\ref{cor:ergsuffices}) and 
monotonicity under weak containment.

\begin{thm}[weak containment; Theorem~\ref{thm:wc}]\label{thm:wcintro}
  Let $n \in \N$, 
  let $\Gamma$ be a countable group of type~$\FP_{n+1}$, 
  and let $\alpha,\beta$ be standard $\Gamma$-actions 
  with~$\alpha \prec \beta$.
  Then, we have 
  \begin{align*}
     \medim_n^Z (\beta)
     & \leq \medim_n^Z(\alpha)
     \quad \text{if $Z$ is~$\Z$ or a finite field}
     \\
     \mevol_n (\beta)
     & \leq \mevol_n(\alpha).
  \end{align*}      
\end{thm}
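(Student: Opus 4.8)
The plan is to reduce both inequalities to a single \emph{transplantation-and-rectification} argument. Write $\Delta$ for either $\medim_n^Z$ (with $Z$ equal to $\Z$ or a finite field) or $\mevol_n$ (with $Z=\Z$); it suffices to show $\Delta(\beta)\le\Delta(\alpha)+\epsilon$ for every $\epsilon>0$. Fix $\epsilon>0$ and choose a near-optimal $\alpha$-embedding up to degree~$n$: a marked projective augmented $R$-chain complex $D_*\epi\linf{\alpha,Z}$ over $R=\linf{\alpha,Z}*\Gamma$, together with a $Z\Gamma$-chain map $C_*\to D_*$ up to degree~$n+1$ extending the unit $Z\to\linf{\alpha,Z}$, with $\Delta(D_*\epi\linf{\alpha,Z})\le\Delta(\alpha)+\epsilon/2$. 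By the type~$\FP_{n+1}$ assumption we may take $C_*$ finitely generated free in degrees~$\le n+1$. Then the whole $\alpha$-embedding is a finite combinatorial datum: finitely many marked projective modules $D_0,\dots,D_{n+1}$, each presented as a direct summand of a finitely generated free $R$-module by an idempotent matrix; the boundary matrices; the augmentation; and the chain-map matrices --- all built from finitely many group elements and finitely many measurable sets $\mathcal A$ in the underlying space $X_\alpha$, with fixed $Z$-coefficients.

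Next I would \emph{transplant} this datum from $\alpha$ to $\beta$. Let $\mathcal B$ be the finite Boolean algebra generated by all $\Gamma_0$-translates of the sets of $\mathcal A$, where $\Gamma_0$ collects the (finitely many) group elements appearing in the chain-complex and chain-map identities in degrees~$\le n+1$. Since $\alpha\prec\beta$, for every $\delta>0$ we can choose measurable subsets of $X_\beta$, indexed by the atoms of $\mathcal B$, whose measures and whose measures under the relevant finitely many translations agree with those of the atoms of $\mathcal B$ up to error~$\delta$; this is the standard consequence of weak containment for Boolean combinations of finitely many translates of finitely many sets. Substituting these subsets for the atoms of $\mathcal B$ turns every matrix over $R$ above into a matrix over $R'=\linf{\beta,Z}*\Gamma$. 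The identities $\partial^2=0$, the augmentation identities, and the chain-map identities hold over $R$ precisely because certain $Z$-coefficients vanish on the positive-measure atoms of $\mathcal B$; after transplantation the same coefficients reappear, now weighted by atoms of measure at most~$\delta$, so the transplanted matrices satisfy these identities up to a trace-norm error of order~$C\delta$, with $C$ depending only on the fixed finite data. Thus we obtain a $C\delta$-almost augmented $R'$-chain complex equipped with a $C\delta$-almost chain map from $C_*$, in the quantitative sense of Section~\ref{sec:strictification}, and the ``extends the unit'' condition holds exactly, since the unit does not involve $\mathcal A$. Moreover $\dim_{R'}$ of the transplanted degree-$n$ module (the trace of the transplanted idempotent), resp.\ $\lognorm$ of the transplanted degree-$(n+1)$ boundary, differs from $\dim_R(D_n)$, resp.\ $\lognorm(\partial^D_{n+1})$, by at most~$C\delta$, by the explicit form of the canonical trace on $R$ and the perturbation estimates for $\lognorm$ (Section~\ref{sec:lognorm}).

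It then remains to \emph{rectify} this $C\delta$-almost $\beta$-embedding into a genuine one without increasing $\Delta$ by more than $O(\delta)$. This is exactly what the quantitative homological algebra over crossed product rings developed for Theorem~\ref{thm:dynupper} provides: organised through the Gromov--Hausdorff distance between marked projective chain complexes (Section~\ref{sec:GH}), the deformation and strictification procedures of Section~\ref{sec:deformation} and Section~\ref{sec:strictification} take a $C\delta$-almost augmented $R'$-chain complex with a $C\delta$-almost chain map from $C_*$ extending the unit and --- provided $\delta$ is small relative to the fixed finite data --- produce an honest marked projective augmented $R'$-chain complex $\widetilde D_*\epi\linf{\beta,Z}$ with an honest $Z\Gamma$-chain map $C_*\to\widetilde D_*$ up to degree~$n+1$ extending the unit, i.e.\ a genuine $\beta$-embedding up to degree~$n$, with $\Delta(\widetilde D_*\epi\linf{\beta,Z})$ within $O(\delta)$ of $\dim_{R'}$ of the transplanted degree-$n$ module, resp.\ of $\lognorm$ of the transplanted degree-$(n+1)$ boundary. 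Combining the three error estimates and choosing $\delta$ small enough yields $\Delta(\beta)\le\Delta(\widetilde D_*\epi\linf{\beta,Z})\le\Delta(\alpha)+\epsilon$; letting $\epsilon\to0$ proves both inequalities.

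I expect the main obstacle to lie not in the use of weak containment --- which enters only through the elementary transplantation of finitely many sets --- but in the last step: keeping the relevant ``size'' under control throughout the rectification. Concretely, one has to verify that strictification enlarges the modules only by $R'$-summands of dimension $O(\delta)$ (or cancels them against the perturbation), and that $\lognorm$, being merely a crude surrogate for the Fuglede--Kadison determinant, obeys the sub-additivity and perturbation inequalities needed to carry the bound through the deformations and the idempotent correction. Both are precisely the kind of estimates established in Section~\ref{sec:strictification}, Section~\ref{sec:deformation}, and Section~\ref{sec:lognorm}, so this proof is essentially a (substantially easier) re-run of the analysis behind Theorem~\ref{thm:dynupper}, with the profinite-cylinder approximation replaced by weak-containment approximation.
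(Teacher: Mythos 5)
Your proposal is correct and follows essentially the same route as the paper's proof of Theorem~\ref{thm:wc}: the paper uses Proposition~\ref{prop:wc-char} to realise $\beta$ as an action $\beta'$ on the same space $X$ lying in a small weak neighbourhood of $\alpha$ and then ``translates'' the fixed finite embedding datum to $\beta'$ (your transplantation, phrased dually), obtaining almost chain complexes and almost chain maps that are then strictified via Sections~\ref{sec:GH}--\ref{sec:deformation}, with the delicate point being exactly the one you flag --- keeping $\dim$, the operator norm, and $\lognorm$ under control through translation and rectification (Lemmas~\ref{lem:transl-norm} and~\ref{lem:transl-lognorm} and the translation-invariant constants of Section~\ref{subsec:strict-transl}).
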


In particular, for groups with property~$\EMD^*$ (Definition~\ref{def:EMD}), we obtain in combination with 
Theorem~\ref{thm:dynupper}:

\begin{cor}[Corollary~\ref{cor:EMDreduction}]
  Let $n \in \N$, let $\Gamma$ be a residually finite group of type~$\FP_{n+1}$
  that satisfies~$\EMD^*$, 
  and let
  $\alpha$ be a standard $\Gamma$-action.
  Then
  \begin{alignat*}{4}
    \widehat b_n(\Gamma,\Gamma_*; Z) &\leq 
    \medim^Z_n (\Gamma \actson \widehat \Gamma)
    &\leq& \medim^Z_n (\alpha)
     &&\quad \text{if $Z$ is~$\Z$ or a finite field}
    \\
    \widehat t_n(\Gamma,\Gamma_*) &\leq
    \mevol_n (\Gamma \actson \widehat \Gamma)
    &\leq& \mevol_n (\alpha).
  \end{alignat*}
\end{cor}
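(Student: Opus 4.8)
The plan is to read the corollary as two chains of inequalities glued at the middle term and to supply each link from a result already established. For the leftmost inequalities $\widehat b_n(\Gamma,\Gamma_*;Z)\le\medim^Z_n(\Gamma\actson\widehat\Gamma)$ and $\widehat t_n(\Gamma,\Gamma_*)\le\mevol_n(\Gamma\actson\widehat\Gamma)$, I would invoke Theorem~\ref{thm:dynupper} verbatim: $\Gamma$ is residually finite, so the relevant directed system~$\Gamma_*$ of finite index normal subgroups has $\bigcap_{i\in I}\Gamma_i=1$, and then $\widehat\Gamma=\widehat\Gamma_*=\varprojlim_{i}\Gamma/\Gamma_i$ together with the left translation action is exactly the input of that theorem. (The profinite action is even genuinely free: $\Gamma$ embeds into the topological group~$\widehat\Gamma_*$ because $\bigcap_i\Gamma_i=1$, and left translation by a nontrivial element of a group has no fixed point.) The type~$\FP_{n+1}$ hypothesis is precisely what makes all four quantities defined, and it is carried along unchanged.

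For the rightmost inequalities $\medim^Z_n(\Gamma\actson\widehat\Gamma)\le\medim^Z_n(\alpha)$ and $\mevol_n(\Gamma\actson\widehat\Gamma)\le\mevol_n(\alpha)$, the strategy is to produce a weak containment $\alpha\prec\Gamma\actson\widehat\Gamma$ and feed it into Theorem~\ref{thm:wc}, which states that passing to a weakly larger action does not increase $\medim^Z_n$ or $\mevol_n$. The weak containment is exactly what property~$\EMD^*$ buys: by Definition~\ref{def:EMD}, $\EMD^*$ asserts that every ergodic standard $\Gamma$-action is weakly contained in the profinite completion action $\Gamma\actson\widehat\Gamma$. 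To handle a general (not necessarily ergodic) standard action~$\alpha$, I would use the reduction to ergodic actions (Corollary~\ref{cor:ergsuffices}) to replace~$\alpha$ by an ergodic standard $\Gamma$-action~$\beta$ with $\medim^Z_n(\beta)\le\medim^Z_n(\alpha)$ and $\mevol_n(\beta)\le\mevol_n(\alpha)$; then $\beta\prec\Gamma\actson\widehat\Gamma$ and Theorem~\ref{thm:wc} give $\medim^Z_n(\Gamma\actson\widehat\Gamma)\le\medim^Z_n(\beta)\le\medim^Z_n(\alpha)$, and likewise for $\mevol_n$. Concatenating the two chains produces the displayed estimates, for $Z$ equal to~$\Z$ or a finite field in the homological case and for $Z=\Z$ in the torsion case.

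Since every ingredient is already in hand, the proof is mostly bookkeeping; the only spots that need genuine care are the compatibility checks at the seams. First, one must make sure that the system of finite index normal subgroups implicit in the definition of~$\EMD^*$ (and hence in the action $\Gamma\actson\widehat\Gamma$ of the corollary) matches the profinite completion used in Theorem~\ref{thm:dynupper}. Second, and this is the real point, one must verify that Corollary~\ref{cor:ergsuffices} supplies an ergodic witness that underestimates the values $\medim^Z_n(\alpha)$ and $\mevol_n(\alpha)$ rather than the reverse, i.e.\ that the ergodic reduction is an inequality in the direction we want (equivalently, that $\alpha\prec\Gamma\actson\widehat\Gamma$ already holds for arbitrary standard~$\alpha$, e.g.\ because weak containment is stable under integrating over the ergodic decomposition). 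Once these two points are confirmed, nothing further is required.
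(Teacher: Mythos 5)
Your proposal is correct and follows essentially the same route as the paper: Theorem~\ref{thm:dynupper} for the left-hand inequalities, and Corollary~\ref{cor:ergsuffices} combined with $\EMD^*$ and Theorem~\ref{thm:wc} for the right-hand ones. The only imprecision is that Corollary~\ref{cor:ergsuffices} produces an ergodic $\beta$ with $\mevol_n(\beta)\le\mevol_n(\alpha)+\varepsilon$ (not $\le\mevol_n(\alpha)$ on the nose, and with separate witnesses for $\medim$ and $\mevol$), so you must carry the $\varepsilon$ through the weak-containment estimate and let $\varepsilon\to 0$ at the end, exactly as the paper does.
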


It is an open problem to determine how (vanishing of)
homology gradients over finite fields or torsion homology
growth behaves under orbit equivalence. As a step towards
this problem, we show that measured embedding dimension
and measured embedding volume are compatible with weak
bounded orbit equivalence (Definition~\ref{def:wbOE}). In particular, these invariants
provide upper bounds for homology growth over finite finite fields
and for torsion homology growth that are multiplicative under 
weak bounded orbit equivalences.

\begin{thm}[weak bounded orbit equivalence; Theorem~\ref{thm:wbOE}]
\label{thm:wbOE:intro}
  Let $n \in \N$, 
  let $\Gamma$ and $\Lambda$ be groups of type~$\FP_{n+1}$, and let
  $\alpha$ and $\beta$ be standard actions of $\Gamma$ and $\Lambda$,
  respectively, that are weakly bounded orbit equivalent of index~$c$.
  Then, we have
  \begin{align*}
    \medim^Z_n (\alpha)
    & = c \cdot \medim^Z_n (\beta)
    \quad\text{if $Z$ is $\Z$ or a finite field}
    \\
    \mevol_n (\alpha)
    & = c \cdot \mevol_n (\beta).
  \end{align*}
\end{thm}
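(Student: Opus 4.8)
The plan is to transport $\alpha$-embeddings to $\beta$-embeddings (and back) along the weak bounded orbit equivalence, tracking the effect on $R$-dimension and on $\lognorm$. First I would recall the structure of a weak bounded orbit equivalence of index $c$: after passing to conull sets, there is a measure space isomorphism between suitable "compressions" of $(X,\mu)$ and $(Y,\nu)$ identifying (a piece of) $\mathcal{R}_\alpha$ with (a piece of) $\mathcal{R}_\beta$, in such a way that the orbit relation rings are related by a corner/amplification construction with a dilation factor $c$ on normalized traces. Concretely, a bounded orbit equivalence gives an identification of $Z\mathcal{R}_\alpha$ with a matrix-amplified or corner ring of $Z\mathcal{R}_\beta$ whose associated (von Neumann or rank-completion) dimension functions differ exactly by the factor $c$; this is the same mechanism that makes cost and integral foliated simplicial volume multiplicative under (weak) bounded OE. I would isolate this as a ring-theoretic lemma: there is an exact functor between the relevant module categories sending marked projective $R_\alpha$-modules to marked projective $R_\beta$-modules, compatible with the augmentations to $\linf{\alpha,Z}$ and $\linf{\beta,Z}$, and multiplying $\dim_{R}$ by $c$ (resp. scaling $\lognorm$ of boundary maps by $c$).

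The key steps, in order, are: (1) by Corollary~\ref{cor:overZR}, replace the crossed product ring $R$ by the orbit relation ring $Z\mathcal{R}$ throughout, so that everything is phrased intrinsically in terms of $\mathcal{R}_\alpha$ and $\mathcal{R}_\beta$ — this is essential since a weak bounded OE only sees the orbit relations, not the $\Gamma$- resp. $\Lambda$-actions. (2) Establish the comparison functor $\Phi$ from $Z\mathcal{R}_\alpha$-modules to $Z\mathcal{R}_\beta$-modules induced by the weak bounded OE, prove it is additive, exact, carries free/marked-projective modules to the same, intertwines the augmentations $Z\mathcal{R}_\alpha \epi \linf{\alpha,Z}$ and $Z\mathcal{R}_\beta \epi \linf{\beta,Z}$, and sends a $Z\Gamma$-chain map $C_* \to D_*$ to a $Z\Lambda$-chain map between the corresponding resolutions (using that $\Gamma$ and $\Lambda$ are both of type $\FP_{n+1}$ and that any two free resolutions are chain homotopy equivalent, so the choice of $C_*$ is immaterial). (3) Compute the numerical effect: $\dim_{Z\mathcal{R}_\beta}(\Phi(D_n)) = c\cdot \dim_{Z\mathcal{R}_\alpha}(D_n)$ and $\lognorm(\Phi(\partial_{n+1})) = c\cdot\lognorm(\partial_{n+1})$. (4) Conclude: applying $\Phi$ to a near-optimal $\alpha$-embedding yields a $\beta$-embedding with $\Delta$-value $c$ times smaller after renormalization, giving $\medim^Z_n(\beta) \le (1/c)\,\medim^Z_n(\alpha)$ and likewise for $\mevol$; running the construction in the other direction (the inverse of a weak bounded OE of index $c$ is one of index $1/c$) gives the reverse inequalities, hence equality.

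The main obstacle I expect is step (3) together with the bookkeeping around the word "weak" in "weak bounded orbit equivalence": a \emph{weak} bounded OE is between restrictions of the relations to positive-measure (not necessarily conull) subsets, so one must check that restricting $\mathcal{R}_\alpha$ to a Borel subset $A\subset X$ of measure $<1$ does not change $\medim^Z_n$ resp. $\mevol_n$ except by the expected factor $\mu(A)$ — i.e. a "restriction/induction" invariance for these dynamical sizes, presumably already needed for the ergodic-decomposition and weak-containment results (Corollary~\ref{cor:ergsuffices}, Theorem~\ref{thm:wcintro}) and which I would cite or prove in parallel. The subtlety flagged in Section~\ref{subsec:introsetup} — that $Z\mathcal{R}\otimes_{Z\Gamma}Z$ need not be $\linf{\alpha,Z}$ and that $Z\mathcal{R}$ may fail to be flat over $Z\Gamma$ — does not bite here because the whole argument is conducted \emph{after} the passage to $Z\mathcal{R}$, where the OE acts naturally; the only place flatness/exactness is genuinely used is in verifying that $\Phi$ preserves the property of being a resolution, which follows from $\Phi$ being induced by an isomorphism of (corners of) the relation rings and hence exact. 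For $\lognorm$, I would reduce to its definition in Section~\ref{sec:lognorm} and check that the crude log-determinant approximation transforms correctly under the amplification, which is the one genuinely computational point and the place where the precise normalization constant $c$ must be pinned down.
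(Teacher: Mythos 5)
Your high-level strategy (transport embeddings along the orbit equivalence, track the scaling by $c$, run the argument in both directions) matches the paper's, but the ring in which you propose to carry it out is the wrong one, and this creates a genuine gap. You route everything through the equivalence relation rings $Z\Rrel_\alpha$, $Z\Rrel_\beta$ via Corollary~\ref{cor:overZR} and then claim the flatness issues "do not bite". They do bite, at exactly the step where you must produce a $\beta$-embedding on the other side. A $\beta$-embedding requires a $Z\Lambda$-chain map from a free $Z\Lambda$-resolution $C'_*$ of $Z$ into the transported target $\Phi(D_*)$. Your functor $\Phi$ only acts on modules over (compressions of) $Z\Rrel_\alpha$, so to transport the chain map you must first induce $C_*$ up to $Z\Rrel_\alpha\otimes_{Z\Gamma}C_*$; since $Z\Rrel$ need not be flat over the group ring and $Z\Rrel\otimes_{Z\Gamma}Z$ need not be $\linf{\alpha,Z}$, neither $\Phi(Z\Rrel_\alpha\otimes_{Z\Gamma}C_*)$ nor $Z\Rrel_\beta\otimes_{Z\Lambda}C'_*$ is known to be acyclic or to augment correctly, so the fundamental lemma of homological algebra gives you no comparison map and hence no chain map $C'_*\to\Phi(D_*)$. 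If your route worked, it would work for arbitrary weak orbit equivalences (the compressed relation rings are isomorphic under any OE of the restricted relations), which the paper explicitly states is out of reach.

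The paper's proof never leaves the crossed product world. It truncates: for $p=(\chi_A,1)$ a full idempotent of $R=\linf{\alpha,Z}*\Gamma$ it sets up measured embeddings over $pRp$ and proves $\medim^Z_n(\alpha)=\mu(A)\cdot\medim^Z_n(\alpha,A)$ (Proposition~\ref{prop:truncemb}) via Morita theory of full idempotents; this is the "restriction/induction invariance" you flag as an expected obstacle. The decisive point, which your proposal misses, is that the \emph{boundedness} of the orbit equivalence (the finite sets $E(\lambda)$, $F(\gamma)$) is what makes $\varphi^*\bigl(q\cdot(g,\lambda)\cdot q\bigr)=\sum_{\gamma\in E(\lambda)}p\cdot(g\circ\varphi|_{A_{\gamma,\lambda}},\gamma)\cdot p$ a \emph{finite} sum landing in $pRp$ rather than merely in the compressed equivalence relation ring; one thus gets a genuine ring isomorphism $qSq\cong pRp$ of truncated \emph{crossed product} rings (Proposition~\ref{prop:wbOEemb}). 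These rings retain flatness over the group rings, so both $pR\otimes_{Z\Gamma}C_*$ and $qS\otimes_{Z\Lambda}C'_*$ are projective resolutions of the truncated $L^\infty$-modules and the fundamental lemma supplies the needed comparison maps. Your step (3) on dimensions and $\lognorm$ is then handled by the measure compatibility $\varphi_*\mu|_A/\mu(A)=\nu|_B/\nu(B)$, which makes $\varphi^*$ dimension- and $\lognorm$-preserving on the truncated rings. To repair your argument you would need to replace your step (1) by this truncation of crossed product rings and make explicit where boundedness enters; as written, the passage to $Z\Rrel$ is not a harmless reformulation but the point at which the proof fails.
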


We deduce a proportionality result for hyperbolic 3-manifolds:

\begin{thm}[Theorem~\ref{thm:mevolhyp3}]
  Let $M$ and $N$ be oriented closed connected hyperbolic $3$-manifolds
  and let $\Gamma \coloneqq \pi_1(M)$, $\Lambda \coloneqq \pi_1(N)$. Then
  \begin{align*}
      \frac{\medim^Z_1 (\Gamma \actson \widehat \Gamma)}{\vol (M)}
      & = \frac{\medim^Z_1 (\Lambda \actson \widehat \Lambda)}{\vol (N)}
      \quad\text{if $Z$ is $\Z$ or a finite field}
      \\
            \frac{\mevol_1 (\Gamma \actson \widehat \Gamma)}{\vol (M)}
      & = \frac{\mevol_1 (\Lambda \actson \widehat \Lambda)}{\vol (N)}.
  \end{align*}
\end{thm}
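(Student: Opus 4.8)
The plan is to deduce the statement from the weak bounded orbit equivalence invariance of $\medim$ and $\mevol$ (Theorem~\ref{thm:wbOE:intro}) by producing a weak bounded orbit equivalence between the profinite completion actions $\Gamma \actson \widehat\Gamma$ and $\Lambda \actson \widehat\Lambda$ of index $\vol(M)/\vol(N)$. First I would set the stage via Mostow rigidity: as $M$ and $N$ are closed hyperbolic $3$-manifolds, $\Gamma = \pi_1(M)$ and $\Lambda = \pi_1(N)$ are torsion-free cocompact lattices in $G \coloneqq \Isom^+(\hyp^3) \cong \mathrm{PSL}(2,\IC)$ and, normalising the Haar measure on $G$ so as to correspond to hyperbolic volume, $\vol(M) = \vol(\Gamma\backslash G)$ and $\vol(N) = \vol(\Lambda\backslash G)$. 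Both groups are of type $\FP_\infty$ (being fundamental groups of closed aspherical manifolds), so Theorem~\ref{thm:wbOE:intro} applies in degree~$n = 1$; and since $\Gamma$ is torsion-free and embeds into $\widehat\Gamma$ (because $\bigcap_i \Gamma_i = 1$), the left translation action $\Gamma \actson \widehat\Gamma$ is essentially free, hence a standard $\Gamma$-action, and similarly for $\Lambda$.

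The geometric heart of the argument is the Haar-measure coupling: the commuting left-$\Gamma$- and right-$\Lambda$-translation actions on $(G,\mathrm{Haar})$ form a measure equivalence coupling of $\Gamma$ and $\Lambda$, which is \emph{bounded} because the lattices are cocompact (the fundamental domains are relatively compact, hence of finite measure, and the associated cocycles are bounded), and whose index is $\vol(\Gamma\backslash G)/\vol(\Lambda\backslash G) = \vol(M)/\vol(N)$. From this bounded coupling I would extract a weak bounded orbit equivalence of index $c = \vol(M)/\vol(N)$ between $\Gamma \actson \widehat\Gamma$ and $\Lambda \actson \widehat\Lambda$: restricting the coupling to fundamental domains yields weakly bounded orbit equivalent geometric actions of $\Gamma$ and $\Lambda$ of index $c$, and one must then see that the profinite completion actions lie in the same weak bounded orbit equivalence class --- for instance by comparing both sides with a common refinement of the coupling that records the finite-index subgroup structure, and invoking that $\medim$ and $\mevol$ depend only on the weak equivalence class of an action (Theorem~\ref{thm:wcintro}). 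If the compatibility of profinite completion actions of cocompact lattices in a common locally compact group with the measure equivalence coupling has been recorded earlier (in the spirit of the proportionality arguments for integral foliated simplicial volume), this step is immediate.

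Finally, applying Theorem~\ref{thm:wbOE:intro} with $n = 1$ to the weak bounded orbit equivalence of index $c = \vol(M)/\vol(N)$ gives
\[
  \medim^Z_1(\Gamma \actson \widehat\Gamma) = c \cdot \medim^Z_1(\Lambda \actson \widehat\Lambda)
  \qand
  \mevol_1(\Gamma \actson \widehat\Gamma) = c \cdot \mevol_1(\Lambda \actson \widehat\Lambda)
\]
for $Z$ equal to $\IZ$ or a finite field; dividing by $\vol(M)$ and using $c/\vol(M) = 1/\vol(N)$ yields the two asserted equalities. I expect the main obstacle to be the middle step --- transporting the bounded Haar-measure coupling of the two lattices (equivalently, the geometric lattice actions) to an honest weak bounded orbit equivalence of the actual profinite completion actions $\Gamma \actson \widehat\Gamma$ and $\Lambda \actson \widehat\Lambda$, with the index pinned down exactly to $\vol(M)/\vol(N)$ --- whereas the reductions in the first and last steps and the bookkeeping of the constant~$c$ are routine.
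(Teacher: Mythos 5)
Your reduction to Theorem~\ref{thm:wbOE:intro} hinges on producing a weak bounded orbit equivalence of index $\vol(M)/\vol(N)$ \emph{between the profinite completion actions themselves}, and this is exactly the step that does not go through. The Haar-measure coupling $(G,\mathrm{Haar})$ gives a weak bounded orbit equivalence between the \emph{geometric} actions $\beta_\Gamma\colon\Gamma\actson G/\Lambda$ and $\beta_\Lambda\colon\Lambda\actson G/\Gamma$ (Example~\ref{ex:uniform:lattices:wbOE}), but there is no known way to transport this to the actions $\Gamma\actson\widehat\Gamma$ and $\Lambda\actson\widehat\Lambda$: these are entirely different dynamical systems, and nothing forces their orbit relations to be (stably, boundedly) isomorphic. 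Your suggested fix --- ``comparing both sides with a common refinement of the coupling'' and appealing to weak equivalence invariance --- is not available either, since $\medim$ and $\mevol$ are only known to be \emph{monotone} under weak containment (Theorem~\ref{thm:wcintro}), not constant on weak equivalence classes, and the profinite action is not weakly equivalent to the geometric one.

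The paper's proof circumvents this by running two separate one-sided estimates. For one direction: since $\Gamma$ satisfies $\EMD^*$, Corollary~\ref{cor:EMDreduction} gives $\mevol_1(\Gamma\actson\widehat\Gamma)\leq\mevol_1(\overline\alpha_\Lambda\times\beta_\Gamma)$, where $\overline\alpha_\Lambda\times\beta_\Gamma$ is the (essentially free) diagonal $\Gamma$-action on $\widehat\Lambda\times G/\Lambda$ with $\Gamma$ acting trivially on the first factor. The coupling $\widehat\Lambda\times G$ then exhibits $\overline\alpha_\Lambda\times\beta_\Gamma$ and $\alpha_\Lambda\times\beta_\Lambda$ as weakly bounded orbit equivalent of index $\vol(M)/\vol(N)$, so Theorem~\ref{thm:wbOE} applies to these \emph{product} actions; finally Lemma~\ref{lem:productaction} (weak containment $\alpha_\Lambda\prec\alpha_\Lambda\times\beta_\Lambda$) drops the geometric factor and yields $\mevol_1(\Gamma\actson\widehat\Gamma)\leq\frac{\vol(M)}{\vol(N)}\cdot\mevol_1(\Lambda\actson\widehat\Lambda)$. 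The reverse inequality follows by symmetry. So the ingredients you list (wbOE invariance, $\EMD^*$, weak containment) are the right ones, but they must be combined asymmetrically via auxiliary product actions rather than through a single orbit equivalence of the profinite completions.
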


``Small'' resolutions over the equivalence relation ring lead to upper
bounds for the measured embedding dimension/volume, but at the moment,
the case of general orbit equivalence is out of reach, because the
equivalence relation rings do not exhibit the same level of exactness
and finiteness properties as the crossed product rings.

\begin{prop}[small resolutions over the equivalence relation ring;
    Proposition~\ref{prop:smallres}]
  Let $Z$ denote~$\Z$ (with the standard norm) or a finite field
  (with the trivial norm). 
  Let $\Rrel$ be a measured standard equivalence relation on
  a standard Borel probability space~$(X,\mu)$, let $n \in \N$,
  and let $D_*$ be a marked projective $Z\Rrel$-resolution
  of~$\linf {\alpha,Z}$ (up to degree~$n+1$). Then:
  If $\Gamma$ is a countable group of type~$\FP_{n+1}$ and if $\alpha$
  is standard probability action of~$\Gamma$ on~$(X,\mu)$ that
  induces~$\Rrel$, then
  \begin{align*}
    \medim^Z_n(\alpha)
    & \leq \dim_{Z\Rrel} (D_n)
    \\
    \mevol_n(\alpha)
    & \leq \lognorm (\partial^{D}_{n+1})
    \quad\text{if $Z = \Z$}.
  \end{align*}
\end{prop}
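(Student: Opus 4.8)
The plan is to convert the given $Z\Rrel$-resolution~$D_*$ directly into an embedding over the equivalence relation ring and then to appeal to Corollary~\ref{cor:overZR}, which identifies $\medim^Z_n(\alpha)$ and $\mevol_n(\alpha)$ with the analogous quantities defined via marked projective augmented $Z\Rrel$-chain complexes in place of $R$-chain complexes. Since $\Gamma$ is of type~$\FP_{n+1}$, I first fix a free $Z\Gamma$-resolution $C_* \epi Z$ that is finitely generated in degrees~$\le n+1$; this is the resolution of the trivial module underlying the definition of $\alpha$-embeddings.

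Next I would set up restriction of scalars. Because $\alpha$ induces $\Rrel = \Rrel_\alpha$, the action provides a canonical ring homomorphism $R = \linf{\alpha,Z} * \Gamma \to Z\Rrel$, and in particular a ring homomorphism $Z\Gamma \to Z\Rrel$; restricting scalars along the latter turns $D_*$ into a chain complex of $Z\Gamma$-modules. As restriction of scalars is exact, $D_*$ is still exact in degrees~$1, \dots, n$ as a complex of $Z\Gamma$-modules, with $H_0 \cong \linf{\alpha,Z}$ via its augmentation; moreover the augmentation is $Z\Gamma$-equivariant and the inclusion $\iota\colon Z \to \linf{\alpha,Z}$ as constant functions is a $Z\Gamma$-map. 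The fundamental lemma of homological algebra, applied one degree at a time, then yields a $Z\Gamma$-chain map $f_*\colon C_* \to D_*$ up to degree~$n+1$ lifting $\iota$: the term $f_0$ is a lift of the composite $C_0 \to Z \xrightarrow{\iota} \linf{\alpha,Z}$ along $D_0 \epi \linf{\alpha,Z}$, and each $f_{i+1}$ with $i+1 \le n+1$ is built from $f_i$ using projectivity of $C_{i+1}$ together with exactness of $D_*$ at degree~$i$ (for $i = 0$ one uses instead that $\iota$ annihilates the image of $\partial^C_1$). Hence $\bigl(D_* \epi \linf{\alpha,Z},\ f_*\bigr)$ is an embedding of the $Z\Rrel$-variant from Corollary~\ref{cor:overZR}: its augmented complex is the marked projective $Z\Rrel$-resolution $D_*$, and $f_*$ is the required $Z\Gamma$-chain map up to degree~$n+1$ extending the inclusion.

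Finally I would evaluate the two invariants on this embedding: plugging $D_*$ into the definitions gives $\Delta\bigl(D_* \epi \linf{\alpha,Z}\bigr) = \dim_{Z\Rrel}(D_n)$ for the measured embedding dimension, and $\Delta\bigl(D_* \epi \linf{\alpha,\Z}\bigr) = \lognorm(\partial^{D}_{n+1})$ for the measured embedding volume when $Z = \Z$. As $\medim^Z_n(\alpha)$ and $\mevol_n(\alpha)$ are infima over all embeddings, and by Corollary~\ref{cor:overZR} these infima coincide with their $Z\Rrel$-versions, the two displayed inequalities follow at once.

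The only point requiring care is the ``up to degree~$n+1$'' bookkeeping in the construction of $f_*$: the chain map can be extended exactly through degree~$n+1$ because the exactness of $D_*$ is available precisely in degrees~$1, \dots, n$ and the augmentation disposes of degrees~$0$ and~$1$, and this is also the precise reason the hypothesis asks for a resolution only ``up to degree~$n+1$''. Beyond this, the statement is a purely formal dictionary between $Z\Rrel$-resolutions of~$\linf{\alpha,Z}$ and embeddings over $Z\Rrel$, so---unlike Theorem~\ref{thm:dynupper}---no deformation, strictification, or Gromov--Hausdorff approximation enters; the remaining work is the routine check that the marking data and the normalizations of $\dim_{Z\Rrel}$ and $\lognorm$ are exactly those packaged by Corollary~\ref{cor:overZR}.
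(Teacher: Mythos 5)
Your proof is correct and follows essentially the same route as the paper: reduce via Corollary~\ref{cor:overZR} (together with the Gromov--Hausdorff compatibility of $\dim$ and $\lognorm$ from Propositions~\ref{prop:dGH} and~\ref{prop:lognorm}) to producing a $Z\Gamma$-chain map $C_*\to D_*$ extending $Z\hookrightarrow\linf{\alpha,Z}$, and then obtain that map from the fundamental lemma of homological algebra. The only cosmetic difference is that the paper builds the chain map by inducing $C_*$ up to $Z\Rrel$ and comparing over $Z\Rrel$, whereas you restrict $D_*$ down to $Z\Gamma$ and compare there; these are adjoint formulations of the same argument.
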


Similarly to the estimates for $L^2$-Betti numbers via resolutions
over dynamical rings, we obtain upper bounds for the measured
embedding dimension/volume through cost and the integral foliated
simplicial volume:

\begin{thm}[cost estimate; Theorem~\ref{thm:cost}]
  Let $\Gamma$ be an infinite group of type~$\FP_2$ and let $\alpha$ be
  a standard action of~$\Gamma$. Then
  \[ \medim^\Z_1 (\alpha) \leq \cost (\alpha) - 1.
  \]
\end{thm}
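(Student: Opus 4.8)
The plan is to realize the bound through an explicit marked projective resolution of $\linf{\alpha,\Z}$ over the orbit relation ring~$\Z\Rrel_\alpha$ and then to invoke Proposition~\ref{prop:smallres}. Concretely, I would show: for every $\epsilon>0$ there is a marked projective $\Z\Rrel_\alpha$-resolution $D_*$ of $\linf{\alpha,\Z}$ up to degree~$2$ with $\dim_{\Z\Rrel_\alpha}(D_1)<\cost(\alpha)-1+\epsilon$. Since $\alpha$ is essentially free, $\Z\Rrel_\alpha$ coincides with the crossed product ring and $\alpha$ induces $\Rrel_\alpha$, so Proposition~\ref{prop:smallres} (with $n=1$, $Z=\Z$) gives $\medim^\Z_1(\alpha)\le\dim_{\Z\Rrel_\alpha}(D_1)<\cost(\alpha)-1+\epsilon$, and letting $\epsilon\to 0$ finishes the proof. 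If convenient we may first pass to an ergodic action by Corollary~\ref{cor:ergsuffices}; in any case $\Gamma$ is of type~$\FP_2$, hence finitely generated, so $\Rrel_\alpha$ is generated by a finite graphing, and since $\Gamma$ is infinite and $\alpha$ is essentially free, $\alpha$ is aperiodic, so that $\dim_{\Z\Rrel_\alpha}\linf{\alpha,\Z}=0$.

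To construct $D_*$, fix a graphing $\Phi$ of $\Rrel_\alpha$ of cost $\cost_\mu(\Phi)<\cost(\alpha)+\epsilon$. The associated graphing chain complex has the free rank-one module $\Z\Rrel_\alpha$ in degree~$0$ with its canonical augmentation onto $\linf{\alpha,\Z}$, and in degree~$1$ the marked projective module $\bigoplus_{\phi\in\Phi}\Z\Rrel_\alpha\cdot 1_{\operatorname{dom}\phi}$, of dimension $\cost_\mu(\Phi)$, with boundary sending the generator of the $\phi$-summand to the localization of $u_\phi-1$. Because $\Phi$ generates $\Rrel_\alpha$, this complex is exact in degree~$0$; capping it off by a degree-$2$ term that is a projective cover of the module of $1$-cycles --- finitely generated because $\Gamma$ is of type~$\FP_2$, as one sees by comparing with the fixed finitely generated $\Z\Gamma$-resolution $C_*$ and its base change to $\Z\Rrel_\alpha$ --- yields a marked projective $\Z\Rrel_\alpha$-resolution of $\linf{\alpha,\Z}$ up to degree~$2$ whose degree-$1$ term has dimension $\cost_\mu(\Phi)<\cost(\alpha)+\epsilon$. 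This already proves the weaker bound $\medim^\Z_1(\alpha)\le\cost(\alpha)$.

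The point of the theorem is the improvement by~$1$, which I would obtain in the spirit of Gaboriau's estimate $\ltb 1 \Gamma\le\cost(\alpha)-1$, but now carried out at the level of the degree-$1$ module rather than of homology. Since $\alpha$ is aperiodic, $\Rrel_\alpha$ contains an aperiodic hyperfinite subrelation $\Rrel_0$ carrying a spanning treeing $\mathcal{T}$ given by a single full transformation, of cost~$1$. One arranges the graphing in the form $\Phi=\mathcal{T}\sqcup\Psi$ with $\Psi$ generating $\Rrel_\alpha$ on top of $\Rrel_0$; the cost-theoretic input is that one may take $\cost_\mu(\Psi)<\cost(\alpha)-1+\epsilon$. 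The treeing contributes a free rank-one summand to the degree-$1$ module which maps injectively into the kernel of the augmentation (a treeing has no cycles); since $\Rrel_0$ is hyperfinite, this summand together with the augmentation assembles into a length-one free resolution over $\Z\Rrel_\alpha$ of the associated module, and can therefore be absorbed into degree~$0$. What is left is a marked projective $\Z\Rrel_\alpha$-resolution of $\linf{\alpha,\Z}$ (up to degree~$2$, using $\FP_2$ again to cap off) whose degree-$1$ term is only $\bigoplus_{\psi\in\Psi}\Z\Rrel_\alpha\cdot 1_{\operatorname{dom}\psi}$, of dimension $\cost_\mu(\Psi)<\cost(\alpha)-1+\epsilon$.

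The step I expect to be the main obstacle is precisely this last one: promoting the numerical ``$-1$'' --- which at the level of dimensions over $\Z\Rrel_\alpha$ is an immediate consequence of $\dim_{\Z\Rrel_\alpha}\linf{\alpha,\Z}=0$ --- into a genuine marked projective resolution whose degree-$1$ term has the correspondingly smaller dimension. This requires a Gaboriau-type surgery on graphings, producing a near-optimal graphing whose treeing part splits off as a direct summand that can be moved into degree~$0$, together with careful bookkeeping of exactness and finiteness over the relation ring (which is where the type~$\FP_2$ hypothesis is really used, namely to keep the degree-$2$ term finitely generated). The remaining ingredients --- the graphing complex, exactness in degree~$0$ from the fact that $\Phi$ generates $\Rrel_\alpha$, and the $\Z\Rrel_\alpha$-dimension computations --- should be routine given the homological machinery over crossed product and relation rings already set up in the paper.
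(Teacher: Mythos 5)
Your first half --- realising the bound $\medim^\Z_1(\alpha)\le\cost(\alpha)$ via a graphing chain complex and the fundamental lemma of homological algebra --- is essentially the same construction as the paper's Proposition~\ref{prop:costtrunc} in the special case $A=X$, modulo two small inaccuracies: the equivalence relation ring $\Z\Rrel_\alpha$ does \emph{not} coincide with the crossed product ring even for essentially free actions (the crossed product is a proper subring, cf.\ Section~\ref{subsec:rings}), and exactness in degree~$0$ of the graphing complex over the crossed product ring is not automatic --- the paper has to add a correction summand $E_1$ of arbitrarily small dimension because the word length needed to express a given $(1,1)-(1,g_k)$ via the graphing varies over $X$. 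These are repairable.

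The genuine gap is the ``$-1$''. Your proposed mechanism --- split the graphing as $\mathcal{T}\sqcup\Psi$ with $\mathcal{T}$ a cost-one treeing of a hyperfinite subrelation, and ``absorb'' the rank-one treeing summand of $D_1$ into degree~$0$ --- does not produce what is needed. For $\medim$ you must exhibit an honest augmented marked projective chain complex $D_*$ receiving a chain map from $C_*$, with $\dim(D_1)$ small; you cannot delete a summand of $D_1$ on the strength of a dimension count. The cancellation $\dim(\linf{\alpha,\Z})=0$ that drives Gaboriau's $L^2$-Betti number argument happens at the level of von Neumann dimensions of homology, and the length-one resolution $0\to T\to D_0\to\linf{\alpha,\Z}\to 0$ coming from the treeing does not split $T$ off as a complemented summand of $\ker\eta$ inside your complex, so there is no evident smaller resolution. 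You correctly flag this as the main obstacle, but it is the entire content of the theorem beyond the trivial bound by $\cost(\alpha)$.

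The paper obtains the ``$-1$'' by a different and cleaner route: compression rather than surgery. Since $\Gamma$ is infinite, there is an $\alpha$-cofinite set $A$ with $\mu(A)<\varepsilon$ (Remark~\ref{rem:cofinitesmall}). One proves the graphing estimate for the \emph{truncated} invariant, $\medim^\Z_1(\alpha,A)\le\cost\bigl(\Rrel_\alpha|_A,\tfrac{1}{\mu(A)}\mu|_A\bigr)$ (Proposition~\ref{prop:costtrunc}), and then combines the scaling formula $\medim^\Z_1(\alpha)=\mu(A)\cdot\medim^\Z_1(\alpha,A)$ (Proposition~\ref{prop:truncemb}) with the standard restriction formula for cost, $\cost(\Rrel_\alpha|_A,\mu|_A)=\cost(\alpha)-\mu(X\setminus A)=\cost(\alpha)-1+\mu(A)$. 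Letting $\varepsilon\to 0$ yields the ``$-1$'' without ever modifying a resolution. If you want to salvage your approach, you would need to carry out the Gaboriau-type surgery rigorously at the level of marked projective complexes; the compression argument avoids this entirely.
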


\begin{thm}[integral foliated simplicial volume estimate; Theorem~\ref{thm:simvolestimate}]
\label{thm:simvol_intro}
  Let~$M$ be an oriented closed connected aspherical
  $n$-manifold with fundamental group~$\Gamma$, let $\alpha$
  be a standard $\Gamma$-action, and let $k \in \{0,\dots, n\}$.
  Then
  \begin{align*}
    \medim^\Z_k (\alpha)
    & \leq {{n+1} \choose {k+1}} \cdot \ifsv M^\alpha
    \\
    \mevol_k (\alpha)
    & \leq \log(k+2) \cdot {{n+1} \choose {k+1}} \cdot \ifsv M^\alpha.
  \end{align*}
\end{thm}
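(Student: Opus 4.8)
The plan is to realise both inequalities by constructing, for every $\epsilon>0$, an $\alpha$-embedding up to degree~$k$ whose degree-$k$ term is small and whose $(k+1)$-st boundary has small $\lognorm$, and then to pass to the infima defining $\medim^\Z_k$ and $\mevol_k$. Since $M$ is aspherical, the singular chain complex $C_*\coloneqq C_*(\widetilde M;\Z)$ is a free $\Z\Gamma$-resolution of~$\Z$; by the independence of the definitions on the chosen resolution of the group, we may use it as the fixed resolution in Definition~\ref{def:alpha-emb}. Recall that $\ifsv M^\alpha$ is the infimum of $\|z\|_1=\sum_j\|f_j\|_1$ over all \emph{parametrised fundamental cycles}, i.e.\ over $z=\sum_j f_j\otimes\sigma_j\in\linf{\alpha,\Z}\otimes_{\Z\Gamma}C_n(\widetilde M;\Z)$ with $\partial z=0$ that restrict, for $\mu$-almost every $x\in X$, to an integral fundamental cycle of~$M$. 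Fix such a~$z$ with $\|z\|_1\le\ifsv M^\alpha+\epsilon$; splitting the integer-valued coefficient functions into their level sets, we may assume $z=\sum_{j=1}^m\epsilon_j\,\mathbf 1_{A_j}\otimes\sigma_j$ with $\epsilon_j\in\{\pm1\}$, measurable $A_j\subseteq X$, singular $n$-simplices~$\sigma_j$ in~$\widetilde M$, and $\sum_{j=1}^m\mu(A_j)\le\ifsv M^\alpha+\epsilon$.

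From~$z$ I would build a marked projective augmented $R$-chain complex $D_*\epi\linf{\alpha,\Z}$, acyclic in positive degrees up to~$n+1$, by a weighted chain-level version of Poincar\'e duality --- concretely, by capping~$z$ against the cochain complex spanned by the faces of the~$\sigma_j$. For $\ell\le n$, the module~$D_\ell$ is marked projective, assembled from the simplicial chains $\bigoplus_j\mathbf 1_{A_j}\cdot C_*(\Delta^n)$ of the standard simplices underlying the~$\sigma_j$ (the $j$-th block marked by~$\mathbf 1_{A_j}$), with faces shared by several~$\sigma_j$ partially identified along the overlaps $A_j\cap A_{j'}$; these identifications are exactly the cycle relation $\partial z=0$, and they are what glue the block-diagonal simplicial complexes into a resolution of $\linf{\alpha,\Z}$ rather than of $\bigoplus_j\mathbf 1_{A_j}\linf{\alpha,\Z}$. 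Contractibility of~$\widetilde M$ together with the almost-everywhere fundamental-cycle property of~$z$ give acyclicity in positive degrees up to~$n+1$, the class of~$z$ itself being killed in degree~$n+1$ by one further marked free generator with boundary~$z$. The bookkeeping is insensitive to the identifications (which only decrease $R$-dimensions) and to the degree-$n+1$ correction: $\dim_R(D_\ell)\le\binom{n+1}{\ell+1}\sum_j\mu(A_j)$ for $\ell\le n$, and every column of $\partial^D_{\ell+1}$ --- indexed by an $(\ell+1)$-face --- has at most $\ell+2$ nonzero entries, each of norm~$1$ (a signed group element times an idempotent~$\mathbf 1_{A_j}$). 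Since $C_*$ is free and $D_*$ is acyclic in the relevant range, the fundamental lemma of homological algebra yields a $\Z\Gamma$-chain map $C_*\to D_*$ up to degree~$n+1$ extending $\Z\into\linf{\alpha,\Z}$, so $(D_*\epi\linf{\alpha,\Z},\,C_*\to D_*)$ is an $\alpha$-embedding up to degree~$k$ for every $k\le n$. The first bound is then immediate, $\medim^\Z_k(\alpha)\le\dim_R(D_k)\le\binom{n+1}{k+1}(\ifsv M^\alpha+\epsilon)$. For the second, $\partial^D_{k+1}$ is a matrix over~$R$ all of whose columns have at most $k+2$ unit-norm entries, with target~$D_k$ of $R$-dimension at most $\binom{n+1}{k+1}(\ifsv M^\alpha+\epsilon)$, so the Hadamard-type estimate for~$\lognorm$ from Section~\ref{sec:lognorm} gives $\mevol_k(\alpha)\le\lognorm(\partial^D_{k+1})\le\log(k+2)\cdot\dim_R(D_k)\le\log(k+2)\binom{n+1}{k+1}(\ifsv M^\alpha+\epsilon)$. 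Letting $\epsilon\to0$ proves both inequalities.

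The main obstacle is the construction of~$D_*$ itself. One must verify that the ``measurable simplicial complex'' obtained by gluing the weighted faces of the~$\sigma_j$ --- where faces shared by different~$\sigma_j$ carry the incompatible markings~$\mathbf 1_{A_j}$ --- organises into a complex of genuinely \emph{marked projective} $R$-modules (not merely $R$-modules), that it is acyclic up to degree~$n+1$ (this is where the cap product with~$z$ and chain-level Poincar\'e duality over the crossed product ring~$R$ really enter), and that these steps do not spoil the dimension bound $\dim_R(D_\ell)\le\binom{n+1}{\ell+1}\sum_j\mu(A_j)$ or the $(\ell+2)$-sparsity and unit-norm of the differentials. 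The top degree~$n+1$, where the fundamental class of~$z$ is killed, needs separate bookkeeping so as not to disturb the estimates in degrees~$\le n$, and the precise constant $\log(k+2)$ must be read off from the properties of~$\lognorm$ established in Section~\ref{sec:lognorm}.
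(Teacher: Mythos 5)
There is a genuine gap at the single most important step: the production of the chain map $C_*\to D_*$. Your route is to claim that the complex $D_*$ assembled from the (weighted, partially identified) faces of the simplices $\sigma_j$ is acyclic in positive degrees up to $n+1$, and then to invoke the fundamental lemma of homological algebra. That acyclicity claim is unjustified and is false in general: contractibility of $\widetilde M$ says nothing about the subcomplex spanned by the finitely many singular simplices occurring in a fundamental cycle, and even in the model case where the $\sigma_j$ lift a triangulation of $M$, the resulting complex computes (roughly) $H_*(M;\linf{\alpha,\Z})$, which does not vanish in positive degrees for, say, a surface of genus $2$. You flag this as the ``main obstacle'' to be verified, but it is not an obstacle one can overcome --- the statement you need is wrong, so the fundamental lemma cannot be applied to map $C_*$ into $D_*$.

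The fix is that no acyclicity of $D_*$ is needed at all: Definition~\ref{def:alpha-emb} only asks for a marked projective \emph{augmented} complex $D_*\epi\linf{\alpha,\Z}$ together with \emph{some} $\Z\Gamma$-chain map $C_*\to D_*$ extending $\Z\into\linf{\alpha,\Z}$; $D_*$ need not be a resolution. The paper constructs this map explicitly as the composition $C_*\xrightarrow{f_*}C_*(\widetilde M;\Z)\xrightarrow{g_*}E_*\xrightarrow{\args\cap c}D_*$, where $E_*=\Hom_{\Z\Gamma}(C_{n-*}(\widetilde M;\Z),\Z\Gamma)$, the map $g_*$ is a $\Z\Gamma$-chain homotopy inverse of cap product with an \emph{integral} fundamental cycle (which is a chain homotopy equivalence by equivariant Poincar\'e duality --- this is where the fundamental lemma legitimately enters), and the last map is cap product with the parametrised cycle $c$, which lands in $D_*$ by construction. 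One then checks on $H_0$ that the composite extends the constant inclusion and that the augmentation is surjective. Your dimension bookkeeping, the bound $\|\partial^D_{k+1}\|\le k+2$, and the estimate $\lognorm(\partial^D_{k+1})\le\log_+\|\partial^D_{k+1}\|\cdot\dim(D_k)$ all match the paper and are fine once the chain map exists; the identifications along overlaps you describe are also unnecessary (the paper simply takes a direct sum over the faces, which only makes the dimension estimate easier).
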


The integral foliated simplicial volumes is zero for aspherical manifolds admitting an amenable open cover of small multiplicity~\cite{loehmoraschinisauer}.
By Theorem~\ref{thm:simvol_intro}, in this case also the measured embedding dimensions and volumes are zero.
This applies to many geometrically interesting situations including, e.g., aspherical manifolds with amenable fundamental group,
aspherical graph $3$-manifolds and smooth aspherical manifolds admitting a smooth circle action without fixed points~(Example~\ref{ex:amenable_cover}).
Moreover, Theorem~\ref{thm:simvol_intro} also provides upper bounds for the measured embedding volumes
of aspherical $3$-manifolds and, more generally, for Riemannian manifolds in terms of Riemannian volumes (Example~\ref{exa:ifsvhyp3} and Example~\ref{ex:riem:ifsv}).

\subsection{Open problems and further motivation}

The following beautiful result is proved by Ab\'ert--Bergeron--Fr\k{a}czyk--Gaboriau. In our previous paper~\cite{LLMSU} we revisit this result and put it into a larger homological context. 

\begin{thm}[\cite{ABFG21}]
Let $\Gamma=\mathrm{SL}_d(\IZ)$ with $d\ge 3$. Let $p$ be a prime. Then $\widehat t_n (\Gamma,\Gamma_*)=0$ and $\widehat b_n(\Gamma, \Gamma_\ast; \IF_p)=0$ for every~$n \in \{0,\dots,d-2\}$ and every residual chain~$\Gamma_\ast$ of $\Gamma$. 
\end{thm}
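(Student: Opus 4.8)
The plan is to deduce the theorem from the structural results of the paper together with the known property~$\EMD^*$ (or at least the weaker relevant statement) for~$\Gamma = \mathrm{SL}_d(\IZ)$, and the cheap rebuilding / small resolution input from~\cite{ABFG21, LLMSU}. The overall strategy has three stages: first reduce the gradient invariants to dynamical invariants via Theorem~\ref{thm:dynupper}; second, pass from the profinite completion action to an arbitrary (conveniently chosen) standard $\Gamma$-action via weak containment (Theorem~\ref{thm:wcintro}), using that $\mathrm{SL}_d(\IZ)$ satisfies~$\EMD^*$; third, exhibit a concrete standard action on which $\medim_n^{\IF_p}$ and $\mevol_n$ vanish for $n \le d-2$, by producing a small (co)resolution over the crossed product or equivalence relation ring.

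\medskip

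First I would apply Theorem~\ref{thm:dynupper} to the residual chain~$\Gamma_*$ (note $\bigcap_i \Gamma_i = 1$ since $\mathrm{SL}_d(\IZ)$ is residually finite), which gives
\[
  \widehat b_n(\Gamma,\Gamma_*;\IF_p) \leq \medim_n^{\IF_p}(\Gamma \actson \widehat\Gamma_*),
  \qquad
  \widehat t_n(\Gamma,\Gamma_*) \leq \mevol_n(\Gamma \actson \widehat\Gamma_*),
\]
provided the profinite action is essentially free, which holds because $\Gamma$ is residually finite with trivial intersection of the~$\Gamma_i$ and is ICC/has no nontrivial finite normal subgroups. Then, invoking that $\mathrm{SL}_d(\IZ)$ has property~$\EMD^*$ (this is a known consequence of property~(T)-type rigidity results; it is exactly the hypothesis feeding the Corollary after Theorem~\ref{thm:wcintro}), every standard $\Gamma$-action~$\alpha$ is weakly contained in the profinite completion action, so by Theorem~\ref{thm:wcintro} we get $\medim_n^{\IF_p}(\Gamma\actson\widehat\Gamma_*) \le \medim_n^{\IF_p}(\alpha)$ and likewise for~$\mevol_n$. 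It therefore suffices to find \emph{one} standard action~$\alpha$ of~$\Gamma$ with $\medim_n^{\IF_p}(\alpha) = 0$ and $\mevol_n(\alpha) = 0$ for all $n \in \{0,\dots,d-2\}$.

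\medskip

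For the last step I would use the cheap rebuilding property of~$\mathrm{SL}_d(\IZ)$ in the range $n \le d-2$ established in~\cite{ABFG21}, reinterpreted algebraically via~\cite{LLMSU}: an equivariant algebraic version of the cheap rebuilding property produces, over the crossed product ring $R = \linf{\alpha,Z}*\Gamma$ (or the equivalence relation ring~$Z\Rrel$), marked projective augmented chain complexes~$D_*\epi\linf{\alpha,Z}$ together with chain maps from the fixed group resolution~$C_*$, whose modules in degree~$n$ have arbitrarily small $R$-dimension and whose differentials in degree~$n+1$ have arbitrarily small~$\lognorm$, uniformly in the range $n \le d-2$. By the paper's own reduction (the remark on the cheap rebuilding property and Section~\ref{sec:CRP}), this is exactly what forces $\medim_n^Z(\alpha) = 0$ and $\mevol_n(\alpha) = 0$. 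Combining the three stages yields $\widehat b_n(\Gamma,\Gamma_*;\IF_p) = 0$ and $\widehat t_n(\Gamma,\Gamma_*) = 0$ for all residual chains and all $n \in \{0,\dots,d-2\}$ (nonnegativity of the gradient invariants gives the reverse inequalities).

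\medskip

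\textbf{Main obstacle.} The crux is verifying that $\mathrm{SL}_d(\IZ)$ actually carries the equivariant algebraic cheap rebuilding property in the stated degree range — equivalently, importing the delicate congruence-subgroup / bounded generation arguments of~\cite{ABFG21} into the dynamical-algebraic framework of this paper. This is precisely where the deep input lies; the rest is a formal assembly of Theorem~\ref{thm:dynupper}, Theorem~\ref{thm:wcintro}, property~$\EMD^*$, and the $\medim/\mevol$-vanishing criterion of Section~\ref{sec:CRP}. A secondary technical point is confirming essential freeness of $\Gamma\actson\widehat\Gamma_*$, but for $\mathrm{SL}_d(\IZ)$ with $d\ge 3$ this is standard.
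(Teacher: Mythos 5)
The theorem you were asked to prove is not proved anywhere in this paper: it is quoted from~\cite{ABFG21} purely as motivation, so there is no internal argument to compare against. Judged on its own terms, your proposal has two genuine gaps. The first is your second stage: you assert that $\mathrm{SL}_d(\IZ)$ ($d\ge 3$) has property~$\EMD^*$ ``as a known consequence of property~(T)-type rigidity results.'' This is not known --- whether higher-rank lattices such as $\mathrm{SL}_3(\IZ)$ have MD (equivalently $\EMD^*$) is a well-known open problem, and the paper's own list of groups with $\EMD^*$ (Example~\ref{ex:EMD}) conspicuously omits them. Fortunately that stage is also unnecessary: the theorem at the end of Section~\ref{sec:CRP} applies the equivariant cheap rebuilding property directly to the profinite completion action, so no detour through an arbitrary standard action (and hence no weak-containment step) is needed. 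But if you do follow that route, watch the degree bookkeeping: $\mathrm{SL}_d(\IZ)$ is asserted to satisfy $\sfCERP_{d-2}$, and the theorem in Section~\ref{sec:CRP} (applied with $n+1=d-2$) yields $\mevol_r=0$ only for $r\le d-3$, hence $\widehat t_n(\Gamma,\Gamma_*)=0$ only for $n\le d-3$ --- one degree short of the claimed range for the torsion gradient, even though the Betti gradient over~$\IF_p$ does come out in the full range $n\le d-2$.

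The second, more structural, problem is circularity. Your third stage imports ``the cheap rebuilding property of $\mathrm{SL}_d(\IZ)$ in the range $n\le d-2$ established in~\cite{ABFG21}.'' That rebuilding statement is the entire technical content of the theorem you are trying to prove: \cite{ABFG21} deduce the vanishing of $\widehat t_n$ and $\widehat b_n(\,\cdot\,;\IF_p)$ from it by a comparatively short formal argument. So what you have written is not an independent proof but a re-derivation of the cited result from its own main ingredient, routed through the present paper's dynamical framework (and, as it stands, through an unproved $\EMD^*$ claim and with a slightly wrong degree range for torsion). Presented honestly as a consistency check of the $\medim$/$\mevol$ machinery against~\cite{ABFG21}, the assembly of Theorem~\ref{thm:dynupper} with the $\sfCERP$-vanishing criterion is fine; presented as a proof of the theorem, it is not.
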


We also have $\widehat b_n(\Gamma, \Gamma_\ast; \IZ)=0$ in the same range but this follows easily from L\"uck's approximation theorem and the computation of $L^2$-Betti numbers. 

Their result is more general than stated above and provides a vanishing result for arithmetic lattices in all degress less than the $\IQ$-rank. 
Conjecturally, one should be able to replace the $\IQ$-rank by the $\IR$-rank and allow for more general, Benjamini--Schramm convergent, sequences of lattices. 

\begin{conjecture}\label{conj:lattices}
Let $G$ be a semisimple Lie group with finite center and without compact factors. Let $r$ be the $\IR$-rank of~$G$, and assume that $r\ge 2$. 
Let $(\Gamma_i)_{i\in I}$ be a sequence of irreducible lattices in~$G$ whose covolumes tend to infinity. Then 
\[\lim_{i \in I}
  \frac{\rk_{\IF_p} H_n(\Gamma_i;\IF_p)}
       {\vol(G/\Gamma_i)}=0
       \qand
       \lim_{i \in I}
  \frac{\log \# \tors H_n(\Gamma_i;\Z)}
       {\vol(G/\Gamma_i)}=0
\]
for every~$n \in \{0,\dots, r-1\}$.

In particular, $\widehat t_n (\Gamma,\Gamma_*)=0$ and $\widehat b_n(\Gamma, \Gamma_\ast; \IF_p)=0$ for every lattice $\Gamma$ in $G$, every~$n \in \{0,\dots,r-1\}$ and every residual chain~$\Gamma_\ast$ of $\Gamma$.  
\end{conjecture}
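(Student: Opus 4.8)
The statement is a conjecture, so what follows is a plan of attack rather than a proof; the step that currently obstructs a proof is flagged at the end.

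\emph{Reduction to a single Bernoulli action.}
Fix a residual chain $\Gamma_*$ of a lattice $\Gamma \leq G$. Since $\Gamma$ is residually finite and of type $\FP_\infty$ (e.g.\ by arithmeticity) and $\bigcap_i \Gamma_i = 1$, the translation action $\Gamma \actson \widehat\Gamma_*$ is essentially free, so Theorem~\ref{thm:dynupper} gives
\[ \widehat b_n(\Gamma,\Gamma_*;\IF_p) \leq \medim_n^{\IF_p}(\Gamma \actson \widehat\Gamma_*) \qquad\text{and}\qquad \widehat t_n(\Gamma,\Gamma_*) \leq \mevol_n(\Gamma \actson \widehat\Gamma_*). \]
By the Ab\'ert--Weiss theorem the Bernoulli $\Gamma$-action $\beta$ is weakly contained in every standard $\Gamma$-action; in particular $\beta \prec (\Gamma \actson \widehat\Gamma_*)$, so monotonicity under weak containment (Theorem~\ref{thm:wcintro}) yields $\medim_n^{\IF_p}(\Gamma\actson\widehat\Gamma_*) \leq \medim_n^{\IF_p}(\beta)$ and $\mevol_n(\Gamma\actson\widehat\Gamma_*) \leq \mevol_n(\beta)$. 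Hence --- uniformly over all residual chains of $\Gamma$ --- the ``in particular'' part reduces to
\[ \medim_n^{\IF_p}(\beta) = 0 \qquad\text{and}\qquad \mevol_n(\beta) = 0 \qquad\text{for } n \in \{0,\dots,r-1\}. \]
For the Benjamini--Schramm convergent version one additionally needs a L\"uck-type approximation theorem for $\medim$ and $\mevol$ along BS-convergent sequences of lattices; granting this, the fact (Ab\'ert--Bergeron--Biringer--Gelander--Nikolov--Raimbault--Samet) that a covolume-diverging sequence of irreducible higher-rank lattices has trivial limit invariant random subgroup identifies the limiting object with an essentially free one, and one is reduced to the displayed vanishing for the individual $\Gamma_i$, uniformly over the family.

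\emph{Vanishing of $\medim$ and $\mevol$ for the Bernoulli action.}
Degree $0$ is trivial. In degree $1$, for higher-rank lattices known to have cost one --- such as $\mathrm{SL}_d(\IZ)$ with $d\geq 3$~\cite{gaboriaucost} --- the cost estimate (Theorem~\ref{thm:cost}) gives $\medim_1^\IZ(\beta) \leq \cost(\beta) - 1 = 0$, and approximating the orbit relation by treeings should likewise force $\mevol_1(\beta) = 0$; this settles $r = 2$ and is the base case of an induction on $r$. For $n \geq 2$ the plan is to adapt the cheap rebuilding property of Ab\'ert--Bergeron--Fr\k{a}czyk--Gaboriau~\cite{ABFG21} to an \emph{equivariant} construction over the crossed product ring $R \coloneqq \linf{\beta,Z}*\Gamma$: cover the Borel--Serre bordification of the associated symmetric space (equivalently, the rational Tits building) by pieces supported near the rational parabolic subgroups. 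The stabiliser of such a piece is a lattice in a parabolic $P = L \ltimes U$ with amenable unipotent radical $U$ and Levi factor $L$ of strictly smaller semisimple rank; restricting $\beta$, the amenable-group vanishing (Theorem~\ref{thm:amenable_intro}) together with an extension refinement of the direct-product inheritance of Section~\ref{sec:amenable_factor} and the inductive hypothesis on $L$ should make these pieces ``cheap'' in the required degrees. A nerve/Mayer--Vietoris assembly over $R$, tracking both $\dim_R$ and $\lognorm$ of the differentials, would then produce (for every $\varepsilon>0$) a marked projective $R$-resolution of $\linf{\beta,Z}$ that is $\varepsilon$-small in degrees $< r$; since such a resolution is automatically part of an $\alpha$-embedding in the sense of Definition~\ref{def:alpha-emb} (the crossed product ring being exact over $Z\Gamma$), this gives the displayed vanishing, in analogy with Proposition~\ref{prop:smallres}.

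\emph{The main obstacle.}
The genuine difficulty --- and the reason the statement is still conjectural --- is executing this rebuilding \emph{equivariantly} and \emph{quantitatively} throughout the full range $\{0,\dots,r-1\}$. Ab\'ert--Bergeron--Fr\k{a}czyk--Gaboriau reach only the $\IQ$-rank and work non-equivariantly along the congruence tower, so two upgrades are needed at once: from $\IQ$-rank to $\IR$-rank, where the available parabolics and the combinatorics of the building change and purely arithmetic rebuildings no longer suffice; and from the tower of finite-index subgroups to a genuinely dynamical statement over $R$, which demands $\Gamma$-equivariant control of the geometry and uniform bounds on the norms of chain homotopies and on $\lognorm$ of the differentials --- precisely the quantitative homological algebra over $R$ developed in this paper, but now over the intricate geometry of higher-rank locally symmetric spaces. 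Passing to the equivalence relation ring $Z\Rrel$ does not help, since it lacks the exactness and finiteness enjoyed by $R$ (as noted in the excerpt), and the property-$\EMD^*$ shortcut of Corollary~\ref{cor:EMDreduction} is unavailable, since property $\mathrm{MD}$ is open for higher-rank lattices. A realistic intermediate target is $\Gamma = \mathrm{SL}_d(\IZ)$ with the congruence tower, where one would upgrade the ABFG rebuilding to an equivariant construction over $\linf{\widehat\Gamma_*,\IF_p}*\Gamma$ and feed it into the bootstrapping program --- combining the algebraic machinery of~\cite{LLMSU} with the present dynamical framework --- announced in the excerpt.
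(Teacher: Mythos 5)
This statement is an open conjecture in the paper: the authors give no proof, only the partial evidence of Ab\'ert--Bergeron--Fr\k{a}czyk--Gaboriau (degrees up to the $\IQ$-rank) and Fr\k{a}czyk--Mellick--Wilkens (degree~$1$), and they explicitly describe tackling it via the dynamical--homological framework as the motivation for the paper. You correctly recognise this and offer a plan of attack rather than a proof, and your reduction (Theorem~\ref{thm:dynupper} for the gradient bounds, then Ab\'ert--Weiss plus Theorem~\ref{thm:wcintro} to pass to the Bernoulli action) is sound and consistent with the programme the authors announce; there is nothing in the paper to compare it against.

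One concrete caution about your degree-$1$ step: the paper warns (end of Section~\ref{sec:cost}) that the cost estimate gives no control on $\partial_2^D$ and hence \emph{no} upper bound for $\mevol_1$ in terms of cost --- indeed such a bound is expected to be false in general, given the torsion growth conjectures for hyperbolic $3$-manifolds versus cost one of their fundamental groups. So ``approximating the orbit relation by treeings should likewise force $\mevol_1(\beta)=0$'' does not follow from fixed price one; the vanishing of $\mevol_1$ for higher-rank lattices would have to come from the rebuilding-type construction you sketch for higher degrees, not from cost. With that caveat, your assessment of where the genuine difficulty lies (equivariant, quantitative rebuilding up to the $\IR$-rank over the crossed product ring) matches the authors' own framing of the open problem.
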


Some evidence comes from the following breakthrough result of Fr\k{a}czyk--Mellick--Wilkens~\cite[Theorem~B]{FMW} in degree~$1$. 

\begin{thm}[\cite{FMW}]
Let $(\Gamma_i)_{i\in I}$ and $G$ and $r\ge 2$ be as in Conjecture~\ref{conj:lattices}. Then 
\[\lim_{i \in I}
  \frac{\rk_{\IF_p} H_1(\Gamma_i;\IF_p)}
       {\vol(G/\Gamma_i)}=0.\]
\end{thm}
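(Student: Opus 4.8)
The plan is to deduce the homological statement from the stronger geometric fact that the \emph{rank gradient vanishes in the Benjamini--Schramm regime}, i.e.\ $\lim_{i\in I} d(\Gamma_i)/\vol(G/\Gamma_i)=0$, where $d(\args)$ denotes the minimal number of generators; indeed $\rk_{\IF_p}H_1(\Gamma_i;\IF_p)=\dim_{\IF_p}(\Gamma_i^{\mathrm{ab}}\otimes\IF_p)\le d(\Gamma_i)$, so the claimed limit follows at once. The first step is a structural input: since $r\ge 2$ and $\vol(G/\Gamma_i)\to\infty$, the locally symmetric spaces $\Gamma_i\backslash X$ (with $X=G/K$) Benjamini--Schramm converge to the symmetric space~$X$ --- equivalently, the invariant random subgroups of~$G$ determined by the~$\Gamma_i$ converge weakly to the trivial one. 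This is exactly where higher rank enters: by property~(T) together with Stuck--Zimmer rigidity no nontrivial invariant random subgroup can survive in the limit (Ab\'ert--Bergeron--Biringer--Gelander--Nikolov--Raimbault--Samet). After this reduction it suffices to control $d(\Gamma_i)$ through the thick part of $\Gamma_i\backslash X$, which on every fixed scale eventually looks like~$X$.

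Next I would estimate $d(\Gamma_i)$ dynamically. For a single lattice and a residual chain this is the Ab\'ert--Nikolov identity, rank gradient $=\cost(\Gamma\actson\widehat\Gamma_*)-1$, whose substantive content --- the inequality ``$\le$'' --- is proved by covering the profinite action with a cheap graphing and lifting it to small generating sets of the~$\Gamma_i$. For a BS-convergent sequence of (possibly non-commensurable) lattices one needs the analogue of this inequality relative to the cross-section groupoid of the $G$-action on a Poisson point process over~$X$; writing $c(G)$ for the resulting ``cost density'' of~$G$, the goal is $\limsup_i d(\Gamma_i)/\vol(G/\Gamma_i)\le c(G)-1$. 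Finally one shows $c(G)=1$ for higher-rank~$G$: using bounded generation of~$G$, and of its lattices, by one-parameter unipotent subgroups --- each of which is amenable --- overlapping chains of amenable pieces collapse the cost, realising a fixed-price-$1$ phenomenon for the limiting system and giving $c(G)-1=0$. (For the special case of a single higher-rank lattice and a residual chain one could instead invoke Theorem~\ref{thm:dynupper} and reduce to showing $\medim_1^{\IF_p}(\Gamma\actson\widehat\Gamma_*)=0$, but this requires the same cost-theoretic input.)

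The main obstacle is the passage from the residual-chain case to a genuinely BS-convergent sequence. One must discretise a cheap graphing of the continuous $G$-action on~$X$ and pull it back to uniformly small generating sets of all the~$\Gamma_i$ simultaneously, and the error incurred is concentrated precisely on the thin parts of $\Gamma_i\backslash X$ --- the regions where BS-convergence has not yet taken effect. Bounding the volume and the topological complexity of these thin parts uniformly, using quantitative Benjamini--Schramm estimates together with the geometry of the rank-$\ge 2$ symmetric space, is the technical heart of the argument; this is where the point-process techniques of Mellick and the thin-lattice estimates of Fr\k{a}czyk combine. A secondary difficulty is establishing $c(G)=1$: property~(T) blocks the usual routes to cost~$1$, so one is forced to argue via bounded generation by amenable subgroups rather than, say, via an infinite amenable normal subgroup.
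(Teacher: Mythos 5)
First, a point of order: the paper does not prove this statement at all --- it is quoted verbatim from Fr\k{a}czyk--Mellick--Wilkens \cite{FMW} as external input, so there is no internal proof to compare against. Judged against the actual argument in the literature, your skeleton is broadly the right one: the reduction $\rk_{\IF_p}H_1(\Gamma_i;\IF_p)=\dim_{\IF_p}(\Gamma_i^{\mathrm{ab}}\otimes\IF_p)\le d(\Gamma_i)$ is correct, Benjamini--Schramm convergence to $X$ for covolume $\to\infty$ in higher rank is indeed the Ab\'ert--Bergeron--Biringer--Gelander--Nikolov--Raimbault--Samet theorem (via Stuck--Zimmer/Nevo--Zimmer rigidity of invariant random subgroups), and the passage from $d(\Gamma_i)/\vol(G/\Gamma_i)$ to the cost of a cross-section of a point process over $G$ is exactly how \cite{FMW} organise the deduction of their Theorem~B from their fixed-price theorem.

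The genuine gap is in the step you yourself identify as the heart of the matter: why the limiting cost equals $1$. Your proposed mechanism --- bounded generation of $G$ and of its lattices by one-parameter unipotent subgroups, with overlapping amenable chains collapsing the cost --- fails already at the level of hypotheses: cocompact higher-rank lattices contain no unipotent elements whatsoever, and bounded generation by unipotents is a special arithmetic phenomenon (known for $\mathrm{SL}_d(\IZ)$, $d\ge 3$, but not for general irreducible lattices, which is the generality of the statement). Moreover, even where amenable ``chains'' exist, property~(T) obstructs the standard routes from amenable subrelations to cost~$1$, as you note, and overlapping amenable pieces do not by themselves yield a cheap graphing. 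The actual proof in \cite{FMW} rests on an entirely different device: the \emph{ideal Poisson--Voronoi tessellation} of the symmetric space, together with the higher-rank geometric fact that any two of its cells share an unbounded common wall; this is what permits connecting the point process by a graphing of arbitrarily small cost and hence gives fixed price~$1$. Without that input (or some substitute for it), your argument does not close, and the secondary difficulty you flag --- uniform control of the thin parts --- is real but subordinate to this missing main ingredient.
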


Part of the motivation for writing this foundational paper is the authors' programme to tackle the above conjecture by a dynamical-homological approach that hopefully allows 
to extend the result by Ab\'ert--Bergeron--Fr\k{a}czyk--Gaboriau to other lattices (of lower $\IQ$-rank) via orbit equivalence techniques.

\clearpage
\tableofcontents

\part{Dynamical upper bounds for homology growth}

The main goal is to prove the dynamical upper bounds for Betti number
gradients and logarithmic torsion homology gradients in terms of measured
embedding dimension and measured embedding volume, respectively. We first
develop a framework of quantitative homological algebra over rings
associated with standard actions and prove strictification and
deformation results (Sections~\ref{sec:basics}--\ref{sec:lognorm}).
We then explain the passage to finite index subgroups (Section~\ref{sec:passing})
and prove the dynamical upper bounds (Section~\ref{sec:proofthmCEP}).

\section{Basic notions}\label{sec:basics}

We recall basic notions on rings and modules associated
with dynamical systems: crossed product rings and equivalence
relation rings. As we are interested in quantitative
aspects, we will also introduce corresponding norms, sizes,
and dimensions.

\begin{setup}\label{setup:rings}
  Let $\Gamma$ be a countable group. We consider a standard
  $\Gamma$-action~$\alpha \colon \Gamma \actson (X,\mu)$, i.e., an
  essentially free measure preserving action of~$\Gamma$ on a standard
  Borel probability space~$(X,\mu)$. 

  Moreover, let $S$ be an algebra of measurable sets of~$X$
  with~$\Gamma \cdot S \subset S$.
  
  Let~$Z$ be the ring of integers~$\Z$ (with the usual absolute value) or a 
  finite field with the trivial norm
  \[ x \mapsto
\begin{cases}
  0 & \text{if $x = 0$};
  \\
  1 & \text{if $x \neq 0$}.
\end{cases}
\]
\end{setup}

\begin{rem}
	Most of our results can be generalised to the setting of principal ideal domains~$Z$ with 
	a \emph{discrete norm}, i.e.,
  a function~$|\cdot|\colon Z \to \R_{\geq 0}$ satisfying:
  \begin{itemize}
  \item $|0| = 0$ and $|x| \geq 1$ for all~$x \in Z \setminus \{0\}$;
  \item $|x + y| \leq |x| + |y|$ for all~$x, y \in Z$;
  \item $|x \cdot y| \leq |x| \cdot |y|$ for all~$x, y \in Z$.
  \end{itemize}
  However, for simplicity, we state all results for the case~$Z=\Z$ or $Z$ being
  a finite field only.
\end{rem}

\begin{rem}\label{rem:ae}
  When dealing with $L^\infty$-function spaces, we take the
  liberty of using pointwise notation. All corresponding
  notions such as equalities, defining equalities, suprema/infima,
  estimates, etc.\ are to be interpreted in the ``almost every'' sense. 
\end{rem}

\subsection{Rings}\label{subsec:rings}

We consider the following rings: 
\begin{itemize}
\item The ring~$\LinftyX \coloneqq L^\infty(X,\mu,Z)$ of essentially bounded measurable functions $X\to Z$ up to equality almost everywhere. 
The $\Gamma$-action on~$X$
  induces a left $\Gamma$-action on~$\LinftyX$:
  \[ \fa{\lambda \in \LinftyX} \fa{\gamma \in \Gamma}
  \gamma \cdot \lambda \coloneqq \bigl(x \mapsto \lambda(\gamma^{-1} \cdot x)\bigr).
  \]
\item The subring~$L$ of~$\LinftyX$ generated by~$S$.
  (Every element of~$L$ is a finite $Z$-linear combination
  of characteristic functions over members of~$S$).
\item The \emph{crossed product ring}~$L * \Gamma \subset \LinftyX * \Gamma$,
  i.e., the free $L$-module with basis~$\Gamma$,
  endowed with the multiplication given by 
  \[ (\lambda,\gamma) \cdot (\lambda' ,\gamma')
  \coloneqq \bigl(\lambda \cdot (\gamma \cdot \lambda'), \gamma \cdot \gamma'\bigr).
  \]
  Sometimes we also write~$\lambda\cdot\gamma$ instead of~$(\lambda, \gamma)$ for 
  an element in the crossed product ring.
\item Let $\Rrel \coloneqq \{(\gamma\cdot x,x)\mid x\in X, \gamma\in \Gamma\} \subset X
  \times X$ be the orbit relation of~$\alpha$.
  Let~$\nu$ be the non-negative measure on the Borel $\sigma$-algebra of~$\Rrel$ defined by
  \[
  \nu(A) \coloneqq \int_X \# \bigl(A \cap (\{x\} \times X)\bigr) \; d\mu(x).
  \]
  The \emph{equivalence relation ring}~$Z \Rrel$ is defined as
  \begin{align*}
    Z \Rrel \coloneqq \bigl\{\lambda \in L^\infty (\Rrel, \nu, Z)
    \bigm|
    &\sup_{x \in X}  \# \{y \mid \lambda(x, y) \neq 0 \} < \infty, \\
    &\sup_{y \in X} \# \{x \mid \lambda(x, y) \neq 0\} < \infty
    \bigr\}
  \end{align*}
  equipped with the convolution product
  \[
  (\lambda \cdot \lambda') (x, y) \coloneqq \sum_{w \in [x]_{\Rrel}} \lambda(x, w) \cdot \lambda'(w, y).
  \]
\end{itemize}
We have a commutative diagram of canonical inclusions
of rings (because the action of~$\Gamma$ on~$X$ is essentially free):
\[
\begin{tikzcd}
  \LinftyX
  \ar{r}
  & \LinftyX * \Gamma
  \ar{r}
  & Z\Rrel
  \\
  L
  \ar{r}
  \ar{u}
  & L * \Gamma
  \ar{u}
  & Z \Gamma
  \ar{l}
  \ar{u}
\end{tikzcd}
\]
Under the ring inclusion $\LinftyX * \Gamma \to Z \Rrel$, the element~$(\lambda,\gamma)\in \LinftyX\ast \Gamma$ corresponds to the function
\[
(\gamma'\cdot x,x)\mapsto \begin{cases}\lambda(\gamma \cdot x) &\mbox{ if } \gamma' = \gamma; \\ 0 &\mbox{ otherwise}.\end{cases} 
\]

A recurring theme will be that we need to control
$\ell^1$-norms or supports.

\begin{defn}
  Let $\lambda \colon \Rrel \to Z$ be a function. Let 
  \begin{align*}
    \Nbasic(\lambda,\args) \colon X & \to \N \cup \{\infty\}
    \\
    y & \mapsto
    \# \bigl\{ x \in X \bigm| \lambda(x,y) \neq 0 \bigr\}
  \end{align*}
  and $\Nbasic(\lambda) \coloneqq \sup_{y \in X} \Nbasic(\lambda,y)$. Symmetrically, we define $\Ntbasic(\lambda,\args)$
  and~$\Ntbasic(\lambda)$.
\end{defn}

If $\lambda \in Z\Rrel$, then $\Nbasic(\lambda) < \infty$ and $\Ntbasic(\lambda) < \infty$; per
our convention in Remark~\ref{rem:ae}, this is interpreted in
the ``almost everywhere'' sense.
  
\begin{lem}\label{lem:l1prod}
  Let $\lambda, \lambda'\in Z\Rrel$. Then 
  \[ |\lambda \cdot \lambda'|_1 \leq \Ntbasic(\lambda') \cdot |\lambda'|_\infty \cdot |\lambda|_1. 
  \]
  \begin{proof}
We have
  \begin{align*}
  	|\lambda \cdot \lambda'|_1 &=\int_\calR |\lambda\cdot \lambda'| \;d\nu
  	\\
	&= \int_X\sum_{y\in [x]_\calR}|\lambda\cdot \lambda'(x,y)| \;d\mu(x)
	\\
	&= \int_X\sum_{y\in [x]_\calR}\Bigl|\sum_{w\in [x]_\calR}\lambda(x,w)\cdot\lambda'(w,y)\Bigr| \;d\mu(x)
	\\
	&\le \int_X\sum_{y\in [x]_\calR}\sum_{w\in [x]_\calR}|\lambda(x,w)|\cdot |\lambda'(w,y)| \;d\mu(x)
	\\
	&= \int_X\sum_{w\in [x]_\calR}|\lambda(x,w)|\cdot \sum_{y\in [x]_\calR}|\lambda'(w,y)| \;d\mu(x)
	\\
	&\le \int_X\sum_{w\in [x]_\calR}|\lambda(x,w)|\cdot \Ntbasic(\lambda',w)\cdot |\lambda'|_\infty \;d\mu(x)
	\\
	&\le \Ntbasic(\lambda')\cdot |\lambda'|_\infty\cdot \int_X\sum_{w\in [x]_\calR}|\lambda(x,w)| \;d\mu(x)
	\\
	&= \Ntbasic(\lambda')\cdot |\lambda'|_\infty\cdot \int_\calR |\lambda| \;d\nu 
	\\
	&=\Ntbasic(\lambda') \cdot |\lambda'|_\infty\cdot |\lambda|_1. \qedhere
  \end{align*}
  \end{proof}
\end{lem}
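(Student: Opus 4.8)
The plan is to unwind the definition of the $\ell^1$-norm on $Z\Rrel$: by definition of the fiberwise counting measure $\nu$, we have $|\lambda \cdot \lambda'|_1 = \int_X \sum_{y \in [x]_\calR} |(\lambda \cdot \lambda')(x,y)|\, d\mu(x)$. First I would substitute the convolution formula $(\lambda \cdot \lambda')(x,y) = \sum_{w \in [x]_\calR} \lambda(x,w)\lambda'(w,y)$ and apply the triangle inequality inside the integral. Since $\lambda, \lambda' \in Z\Rrel$, every fiberwise sum occurring here is finite (the support of each function meets each fiber in only finitely many points, by definition of $Z\Rrel$) and all summands are non-negative, so there is no subtlety in the rearrangements that follow.

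Next I would interchange the two sums $\sum_{y}\sum_{w}$ to $\sum_{w}\sum_{y}$ -- legitimate as everything is non-negative and finite -- and factor out the $w$-summand $|\lambda(x,w)|$ so that the inner sum becomes $\sum_{y \in [x]_\calR} |\lambda'(w,y)|$. The one key estimate is then that, for each fixed $w$, this inner sum is bounded by $\Ntbasic(\lambda',w) \cdot |\lambda'|_\infty \leq \Ntbasic(\lambda') \cdot |\lambda'|_\infty$: there are at most $\Ntbasic(\lambda',w)$ values of $y$ with $\lambda'(w,y) \neq 0$, and each nonzero value has absolute value at most $|\lambda'|_\infty$.

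Finally I would pull the constant $\Ntbasic(\lambda') \cdot |\lambda'|_\infty$ out of the integral, leaving $\int_X \sum_{w \in [x]_\calR} |\lambda(x,w)|\, d\mu(x)$, which is exactly $\int_\calR |\lambda|\, d\nu = |\lambda|_1$ by the definition of $\nu$ once more. Chaining these steps gives the claimed inequality.

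I do not expect a genuine obstacle here: the only point that strictly deserves a remark is the finiteness and non-negativity justification for the Fubini-type interchange, and this is immediate from the membership $\lambda, \lambda' \in Z\Rrel$; everything else is bookkeeping. I would also note that the asymmetry of the bound (it features $\Ntbasic(\lambda')$ and $|\lambda'|_\infty$ paired with $|\lambda|_1$, rather than the mirror-image quantities) is dictated by the order of the convolution together with the choice to expand in the ``$y$-then-$w$'' order; the symmetric estimate $|\lambda \cdot \lambda'|_1 \leq \Nbasic(\lambda) \cdot |\lambda|_\infty \cdot |\lambda'|_1$ would follow by the analogous argument with the roles of the two variables and the two factors swapped.
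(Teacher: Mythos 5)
Your proposal is correct and follows essentially the same route as the paper's proof: expand $|\lambda\cdot\lambda'|_1$ via the fiberwise counting measure $\nu$, insert the convolution formula, apply the triangle inequality, interchange the (finite, non-negative) sums, bound the inner sum $\sum_{y}|\lambda'(w,y)|$ by $\Ntbasic(\lambda',w)\cdot|\lambda'|_\infty \le \Ntbasic(\lambda')\cdot|\lambda'|_\infty$, and pull the constant out to recover $|\lambda|_1$. Nothing to add.
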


\begin{defn}[support]
  Let $\lambda \in Z\Rrel$. We define (up to measure~$0$)
  \begin{itemize}
  \item the \emph{support of~$\lambda$} by 
    \[ \supp (\lambda) \coloneqq \bigl\{(x,y) \in \Rrel \bigm| \lambda(x,y) \neq 0\bigr\}
    \subset X \times X
    \]
  \item and the \emph{$1$-support of~$\lambda$} by
    \[ \supp_1 (\lambda)
    \coloneqq \proj_1 (\supp (\lambda))
    \subset X,
    \]
    where $\proj_1 \colon X \times X \to X$ denotes the projection onto the first factor.
  \end{itemize}
\end{defn}

\begin{rem}\label{rem:support:est}
  Let $\lambda \in Z \Rrel$.
  We have~$\lambda(x, y) = \chi_{\supp (\lambda)}(x, y) \cdot \lambda(x, y)$ for all $(x,
  y) \in \Rrel$.
  Moreover, $\lambda = \chi_{\supp_1 (\lambda)} \cdot \lambda$ (with respect to the convolution 
  product); 
  indeed, under the inclusion $\LinftyX
  \to Z \Rrel$, the element~$\chi_{\supp_1 (\lambda)}\in \LinftyX$ corresponds to the function
  \[
  (\gamma\cdot x, x) \mapsto 
  \chi_{\supp_1 (\lambda)} (\gamma\cdot x, x) =
  \begin{cases}
    \chi_{\supp_1 (\lambda)}(x) & \text{if~$\gamma = e$};
    \\
    0 &\text{otherwise}.
  \end{cases}
  \]  
  Furthermore, we have that $\mu(\supp_1 (\lambda)) \leq \nu(\supp (\lambda)) \leq |\lambda|_1$. 
\end{rem}

\begin{rem}
  For~$(\lambda,\gamma)\in\LinftyX * \Gamma\subset Z \Rrel$, we have:
  \begin{align*}
    \Nbasic( (\lambda, \gamma) )
    & \leq 1
    \\
    \Ntbasic( (\lambda, \gamma))
    & \leq 1
    \\
    \supp_1 ((\lambda, \gamma))
    & = \gamma^{-1} \cdot \supp (\lambda).
  \end{align*}
  Hence, in general we have $\Nbasic(\sum_{j=1}^k (\lambda_j, \gamma_j)) \leq k$, $\Ntbasic(\sum_{j=1}^k (\lambda_j, \gamma_j)) \leq k$, and $\supp_1(\sum_{j=1}^k (\lambda_j, \gamma_j)) \subset \bigcup_{j = 1}^k \gamma_j^{-1} \cdot \supp (\lambda)$. 
\end{rem}

\subsection{Base changes}

Using the inclusion relations between our basic rings, we can
also consider the associated base change and induction functors.

\begin{rem}
	\label{rem:linfX-ZR}
	We view $\LinftyX$ as a $Z \Rrel$-module via the following 
	scalar multiplication: Let $\iota\colon \LinftyX \hookrightarrow 
	\LinftyX * \Gamma \hookrightarrow Z\Rrel$ be the canonical inclusion. We define
	$\varepsilon\colon Z\Rrel \to \LinftyX$ by 
	\[
		\varepsilon (\lambda) (x) \coloneqq \sum_{y\in  [x]_\calR} \lambda (x,y)
	\]
	for all~$\lambda\in Z\Rrel$ and~$x\in X$.
	For all $\lambda\in Z\Rrel$ and $\lambda'\in \LinftyX$, we set
	\[
		\lambda\cdot \lambda'\coloneqq \varepsilon \bigl(\lambda\cdot \iota(\lambda')\bigr) \in \LinftyX,
	\]
	where the multiplication on the right hand side is the multiplication in~$Z\Rrel$.

        More explicitly, if $A \subset X$ is a measurable subset,
        $\lambda \in \LinftyX$, and~$\gamma\in \Gamma$, then this action amounts to $(\lambda,
        \gamma) \cdot \chi_A = \lambda \cdot \chi_{\gamma \cdot A}$,
        where multiplication on the right hand side is the usual
        pointwise multiplication of $L^\infty$-functions. 
\end{rem}

\begin{prop}\label{prop:flat}
  The modules~$\LinftyX * \Gamma$ and $L* \Gamma$ are flat over~$Z \Gamma$.
\end{prop}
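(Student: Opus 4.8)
The plan is to show that $\linf{\alpha,Z} * \Gamma$ is flat over $Z\Gamma$ by exhibiting it as a free $Z\Gamma$-module, after which flatness of the subring $L * \Gamma$ (and of $L$) follows by the same argument or by a filtered-colimit remark. The key observation is that essential freeness of the action gives us a way to write $\linf{\alpha,Z}$ — and hence $\linf{\alpha,Z} * \Gamma$ — as a $Z\Gamma$-module that, up to a choice of fundamental-domain-type data, looks like an induced module from the trivial subgroup. Concretely, the crossed product $\linf{\alpha,Z} * \Gamma$ is, as a left $\linf{\alpha,Z}$-module, free on $\Gamma$; so it suffices to understand $\linf{\alpha,Z}$ itself as a $Z\Gamma$-module and then combine.

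\textbf{Step 1.} First I would reduce to showing that $\linf{\alpha,Z}$ is flat over $Z\Gamma$: indeed $\linf{\alpha,Z} * \Gamma \cong \linf{\alpha,Z} \otimes_{Z} Z\Gamma$ as left $Z\Gamma$-modules (with the diagonal-type action on the tensor factor coming from the crossed product structure), and more usefully $\linf{\alpha,Z} * \Gamma \cong \linf{\alpha,Z} \otimes_{Z\Gamma} (Z\Gamma \otimes_Z Z\Gamma)$ — but the cleanest route is: as a left $Z\Gamma$-module, $\linf{\alpha,Z}*\Gamma$ is isomorphic to $\bigoplus_{\gamma\in\Gamma}\linf{\alpha,Z}$ with a permuted action, and this is a direct sum of copies of $\linf{\alpha,Z}$ twisted by the $\Gamma$-action on the space, hence it is flat over $Z\Gamma$ as soon as $\linf{\alpha,Z}$ is. Alternatively, and perhaps more transparently, use that $Z\Gamma$ is free as a module over itself and that $\linf{\alpha,Z}*\Gamma = \linf{\alpha,Z}\otimes_Z Z\Gamma$ is flat over $Z\Gamma$ because it is flat over $Z$ (it is a module over a commutative ring, and $L^\infty$-spaces of this form are flat over $\Z$ since they are torsion-free, and flat/free over a field) and flatness is inherited under $-\otimes_Z Z\Gamma$ with $Z\Gamma$ free over $Z$.

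\textbf{Step 2.} The substantive input is that $\linf{\alpha,Z}$ is flat over $Z$ — over a field this is automatic, and over $\Z$ it follows because $\linf{\alpha,\Z}$ is torsion-free (a nonzero integer times a nonzero bounded measurable function is nonzero a.e.), and torsion-free modules over a PID are flat. Then $\linf{\alpha,Z}*\Gamma \cong \linf{\alpha,Z}\otimes_Z Z\Gamma$ as right $Z\Gamma$-modules via $\lambda\cdot\gamma \mapsto \lambda\otimes\gamma$; since $Z\Gamma$ is free (hence flat) over $Z$, base change along $Z \to Z\Gamma$ preserves flatness, so $\linf{\alpha,Z}*\Gamma$ is flat over $Z\Gamma$. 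The same computation with $L$ in place of $\linf{\alpha,Z}$ gives that $L*\Gamma \cong L\otimes_Z Z\Gamma$ is flat over $Z\Gamma$, using that $L\subset\linf{\alpha,Z}$ is again torsion-free over $\Z$ (and a $Z$-module in the field case), hence flat over $Z$.

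\textbf{Main obstacle.} The only delicate point is making precise the $Z\Gamma$-module identification $\linf{\alpha,Z}*\Gamma \cong \linf{\alpha,Z}\otimes_Z Z\Gamma$ with the correct side of the module structure — one must be careful that the crossed-product multiplication twists the left $\linf{\alpha,Z}$-action by $\gamma$, so the isomorphism that turns the crossed product into an honest tensor product of $Z$-modules should be set up on the right module structure (or one uses the standard "untwisting" isomorphism $L^\infty * \Gamma \xrightarrow{\sim} L^\infty \otimes_Z Z\Gamma$, $\lambda\gamma\mapsto \lambda\otimes\gamma$, which is $Z\Gamma$-linear for the evident actions and is a bijection because both sides are free $L^\infty$-modules on $\Gamma$). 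Once this bookkeeping is done, flatness is purely formal: it is base change of a $Z$-flat module along the flat (indeed free) extension $Z\hookrightarrow Z\Gamma$. I expect essential freeness of the action is only needed insofar as it guarantees the canonical map $\linf{\alpha,Z}*\Gamma \to Z\Rrel$ is injective and the diagram of ring inclusions is valid; for the flatness statement itself, the torsion-freeness over $\Z$ (respectively vector-space structure over a field) is what does the work.
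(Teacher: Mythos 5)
Your proposal is correct and follows the same reduction as the paper: untwist the crossed product to identify $(\LinftyX * \Gamma)\otimes_{Z\Gamma} M \cong_Z \LinftyX \otimes_Z M$ (equivalently, $\LinftyX * \Gamma \cong \LinftyX \otimes_Z Z\Gamma$ as right $Z\Gamma$-modules), so that flatness over $Z\Gamma$ reduces to flatness of $\LinftyX$ (resp.\ $L$) over $Z$. The only substantive difference is the final input for $Z=\Z$: the paper invokes the nontrivial fact that $\LinftyX$ is free abelian (citing Steprans), whereas you use the elementary observation that $\linf{\alpha,\Z}$ is torsion-free and that torsion-free modules over a PID are flat — which is perfectly valid and arguably the lighter route, since flatness (not freeness) is all that is needed. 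One cosmetic remark: in your Step~1 the phrase about $\linf{\alpha,Z}*\Gamma$ being ``a direct sum of copies of $\linf{\alpha,Z}$ twisted by the $\Gamma$-action, hence flat over $Z\Gamma$ as soon as $\linf{\alpha,Z}$ is'' is misleading as stated (a direct sum of copies of $\linf{\alpha,Z}$ would need $\linf{\alpha,Z}$ itself to be flat over $Z\Gamma$); but you abandon that phrasing in favour of the correct base-change argument in Step~2, so the proof as a whole stands. You are also right that essential freeness plays no role in this statement.
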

\begin{proof}
  For a $Z\Gamma$-module~$M$, there is a canonical isomorphism
  \[
  	(\LinftyX*\Gamma)\otimes_{Z\Gamma} M \cong_Z \LinftyX \otimes_Z M.
  \]
  The claim follows because~$\LinftyX$ is flat over~$Z$: If $Z$ is a finite
  field, then this is clear. If $Z$ is $\Z$, then it is also known that
  $\LinftyX$ is free abelian~\cite{Steprans-free-abelian}. 
  For~$L$, the
  same argument applies.
\end{proof}

\begin{rem}
	In general, the module $Z\Rrel$ might not be flat over the group ring~$Z\Gamma$.
\end{rem}

\subsection{Modules}

We will mainly be interested in a simple type of projective
modules and the base ring~$\LinftyX\ast \Gamma$. 

\begin{setup}\label{setup:subsecmodules}
  Let $L$ be the subring of~$\LinftyX$ generated by~$S$ and
  let $R$ be an $L$-subalgebra of~$Z \Rrel$, e.g., 
  one of the rings~$\LinftyX$, $\LinftyX * \Gamma$, or $Z \Rrel$.
\end{setup}

\begin{defn}[marked projective module]
	\label{def:marked-proj-mod}
  A \emph{marked projective $R$-module} is a triple~$(M, (A_i)_{i \in
    I}, \varphi)$, consisting of
  \begin{itemize}
  \item an $R$-module~$M$,
  \item a finite family~$(A_i)_{i \in I}$ of measurable subsets of~$X$, and
  \item an $R$-isomorphism~$\varphi \colon M \to \bigoplus_{i \in I} R \cdot \chi_{A_i}$.
  \end{itemize}
  In the following, we will abbreviate
  \[ \gen {A_i} \coloneqq R \cdot \chi_{A_i}.
  \]
  The \emph{dimension} of the marked projective $R$-module~$(M, (A_i)_{i \in I}, \varphi)$ is given by
  \[ \dim (M) \coloneqq \sum_{i \in I} \mu(A_i) \in \IR_{\ge 0}.
  \]
\end{defn}

To simplify notation, we will leave $\varphi$ implicit
and call a description as above a \emph{marked presentation}.
Then, ~$\rk (M) \coloneqq \#I$ is the \emph{rank} of this marked presentation.
Finiteness is built into this definition of marked projective
modules as this is the only case that we will consider.

\begin{defn}[marked homomorphism]
\label{defn:marked_hom}
	Let $f\colon M\to N$ be an $R$-homomorphism between marked projective $R$-modules and let $M=\bigoplus_{i\in I}\spann{A_i}$, $N=\bigoplus_{j\in J}\spann{B_j}$ be the marked presentations.
	We say that~$f$ is
	\begin{itemize}
		\item a \emph{marked inclusion} if there exists an injective function $\sigma\colon I\to J$ such that $A_i\subset B_{\sigma(i)}$ and $f(\chi_{A_i}\cdot e_i)=\chi_{A_i}\cdot \chi_{B_{\sigma(i)}}\cdot e_{\sigma(i)}$;
		\item a \emph{marked projection} if there exists an injective function $\tau\colon J\to I$ such that $B_j\subset A_{\tau(j)}$ and
	 	\[
			f(\chi_{A_i}\cdot e_i)=\begin{cases}
				\chi_{A_i}\cdot \chi_{B_{\tau^{-1}(i)}}\cdot e_{\tau^{-1}(i)}
				& \text{if }i\in \im(\tau);
				\\
				0
				& \text{otherwise};
			\end{cases}
		\]
		\item a \emph{marked} $R$-homomorphism if~$f$ is a composition of marked inclusions and marked projections.
	\end{itemize}
\end{defn}

\begin{rem}[canonical hull/complement, canonical inclusion, canonical projection]
  Marked projective modules are projective $R$-modules, as witnessed
  by the projections~$R \to R$ of the form~$\lambda \mapsto \lambda \cdot \chi_{A_i}$.

  More precisely: Let $M = \bigoplus_{i \in I} \gen{A_i}$ be a marked
  projective $R$-module. Then the \emph{canonical hull} of~$M$ is~$F
  \coloneqq \bigoplus_I R$.  The marked inclusion~$M \to F$ is the
  \emph{canonical inclusion}. The $R$-linear marked projection~$F \to M$ given
  on the standard basis~$(e_i)_{i \in I}$ by
  \[ e_i \mapsto \chi_{A_i} \cdot e_i 
  \]
  is the \emph{canonical projection to~$M$}. We call
  \[ M' \coloneqq \bigoplus_{i \in I} \gen{X \setminus A_i}
  \]
  the \emph{canonical complement of~$M$}. By construction, the
  canonical inclusions and projections of $M$ and~$M'$ combine into
  an isomorphism~$M \oplus M' \cong_R F$.
\end{rem}

\begin{defn}[support]
  Let $M = \bigoplus_{i \in I} \gen{A_i}$ be a marked projective $R$-module. 
  For~$z = \sum_{i \in I} \lambda_i \cdot \chi_{A_i} \cdot e_i \in M$, we define
  the supports of~$z$ by 
  \begin{align*}
    \supp (z)
    & \coloneqq \bigcup_{i \in I} \supp (\lambda_i \cdot \chi_{A_i})
    \subset X \times X
    \\
    \supp_1 (z)
    & \coloneqq \bigcup_{i \in I} \supp_1 (\lambda_i \cdot \chi_{A_i})
    \subset X.
  \end{align*}
\end{defn}

\begin{rem}\label{rem:restricttosupport}
  Let $M$ be a marked projective $R$-module, let $z \in M$,
  and let $B \coloneqq \supp_1(z) \subset X$. Then
  \[ \chi_{X \setminus B} \cdot z = 0.
  \]
  Indeed, $\chi_B \cdot z = z$ (which follows from Remark~\ref{rem:support:est})
  and so~$\chi_{X \setminus B} \cdot z = \chi_X \cdot z - \chi_B \cdot z = z - z = 0$.
\end{rem}

\begin{rem}[defining homomorphisms out of marked projective modules]\label{rem:def:homo:marked:proj}
  Let $M = \bigoplus_{i \in I} \gen{A_i}$ be a marked projective $R$-module.
  Let $A \subset X$ be a measurable subset and let $z \in M$
  with~$\supp_1 (z) \subset A$. By Remark~\ref{rem:support:est} we have $z = \chi_{\supp_1 (z)} \cdot z$.
  Hence, 
  \begin{align*}
    f \colon 
    \gen A & \to M
    \\
    \lambda \cdot \chi_A 
    & \mapsto \lambda \cdot z
  \end{align*}
  is a well-defined $R$-homomorphism: Indeed, $1 \mapsto z$
  describes a well-defined $R$-homo\-morphism~$\widetilde f \colon R \to M$
  and $\supp_1 (z) \subset A$ shows that 
  \[ \widetilde f (\lambda \cdot \chi_A)
  = \lambda \cdot \chi_A \cdot \widetilde f (1)
  = \lambda \cdot \chi_A \cdot z
  = \lambda \cdot z
  \]
  holds for all~$\lambda \in R$. Therefore, $f$ is obtained
  from~$\widetilde f$ by composition with the canonical
  inclusion~$\gen A \to R$.

  If $A, B \subset X$ and $f \colon \gen A \to \gen B$ is an
  $R$-homomorphism, then $f$ is given by right multiplication with~$z
  \in R$ and evaluation at~$\chi_A$ shows that we may choose~$z$
  always in such a way that~$\supp (z) \subset A \times B$.
\end{rem}

\subsection{Chain complexes}

The main objects will be chain complexes consisting of marked
projective modules. 
We continue to work in Setup~\ref{setup:subsecmodules}.  

\begin{defn}[marked projective chain complex]
  A \emph{marked projective $R$-chain complex} is a pair~$(D_*, \eta)$,
  consisting of
  \begin{itemize}
  \item an $R$-chain complex~$D_*$ of marked projective $R$-modules and 
  \item a surjective $R$-homomorphism~$\eta \colon D_0 \to \LinftyX$,
    called \emph{augmentation}.
  \end{itemize}
  We also write~$\eta \colon D_* \epi \LinftyX$ for such a marked
  projective $R$-chain complex. 

  An \emph{$R$-chain map} between marked projective $R$-chain
  complexes is an $R$-chain map between the underlying chain complexes
  that is compatible with the augmentations. A chain map
  \emph{extends~$\id_{\LinftyX}$} if the map in degree~$-1$
  is~$\id_{\LinftyX}$.
\end{defn}

\begin{rem}
  Marked projective $R$-chain complexes admit canonical
  inclusions and projections into/from free $R$-chain complexes.
\end{rem}

\begin{rem}[induction of resolutions]\label{rem:fromZGtoZR}
  Let $R$ contain~$L*\Gamma$ and let $(C_*,\zeta)$ be a free
  $Z\Gamma$-resolution of the trivial $Z\Gamma$-module~$Z$. Let $r
  \in \N$. Choosing a $Z\Gamma$-basis of~$C_r$, we can view~$\Ind{Z\Gamma}{R}C_r\coloneqq R
  \otimes_{L * \Gamma} L \otimes_Z C_r$ as a marked projective
  $R$-module (possibly of infinite type). Hence, applying the functor~$\Ind{Z\Gamma}{R}\coloneqq R
  \otimes_{L * \Gamma} L\otimes_Z \args$ to~$(C_*,\zeta)$ leads to a
  marked projective $R$-chain complex (possibly of infinite type in
  each degree). In such situations, we will always consider this
  marked structure. In particular, also norms of elements and
  homomorphisms are interpreted in this way.
\end{rem}

\subsection{Norms}

We use the $\ell^1$-norm to measure the size of elements in
marked projective modules and consider the associated operator
norm for homomorphisms between marked projective modules.
We continue to work in Setup~\ref{setup:subsecmodules}.

\begin{defn}
  Let $M = \bigoplus_{i \in I} \gen{A_i}$ be a marked projective
  $R$-module. Then $M$ carries the norm~$\|\cdot\|_1$, inherited
  from the corresponding ``norm''
  \begin{align*}
    \|\cdot\|_1 \colon
    \bigoplus_{i \in I} R & \to \R_{\geq 0}
    \\
    \sum_{i \in I} \lambda_i \cdot e_i
    & \mapsto
    \sum_{i \in I} |\lambda_i|_1
  \end{align*}
  on the canonical hull of~$M$.
\end{defn}

\begin{defn}
  Let $f \colon M \to N$ be an $R$-homomorphism between
  marked projective $R$-modules. Then the \emph{norm~$\|f\|$ of~$f$}
  is the least real number~$c$ with
  \[ \fa{z \in M} \bigl\|f(z)\bigr\|_1 \leq c \cdot \|z\|_1.
  \]
\end{defn}

\begin{rem}\label{rem:finnorm}
  All $R$-homomorphisms~$f \colon M \to N$ between marked projective
  $R$-modules have finite norm:
  Because the marked presentations are compatible with the $\ell^1$-norms,
  it suffices to show that maps of the form~$f \colon R \to R,\
  z \mapsto z \cdot \lambda$
  for some~$\lambda \in R$ have finite norm: Let $z \in R$. Then, by Lemma~\ref{lem:l1prod},
  \[ \bigl\| f(z)\bigr\|_1
  = \bigl\| z \cdot f(1) \bigr\|_1
  = \bigl\| z \cdot \lambda \bigr\|_1
  \leq \Ntbasic(\lambda) \cdot |\lambda|_\infty \cdot \|z\|_1
  \]
  and hence
  \[
  	\|f\|\le \Ntbasic(\lambda)\cdot |\lambda|_\infty.
  \]
  By definition of~$Z\Rrel$ (which contains~$R$), we know that
  $|\lambda|_\infty$ and $\Ntbasic(\lambda)$ are indeed finite.  However, one should
  note that in general this norm of~$f$ cannot be controlled directly
  in terms of~$|\lambda|_1$.  In particular, to compute norms of $R$-maps,
  it will in general not be sufficient to just compute the
  $\ell^1$-norms of the values on the canonical basis. This will
  cause some unpleasant detours later on. An alternative
  description of the operator norm is provided in
  Section~\ref{subsec:opnormalt}.  
\end{rem}

In order to generalise the estimate from Remark~\ref{rem:finnorm}
to homomorphisms between general marked projective modules,
we introduce the following additional norm on homomorphisms.
The said generalisation will be given in Lemma~\ref{lem:normN1est}. 

\begin{defn}
\label{defn:infty-norm}
  Let $f \colon M \to N$ be an $R$-homomorphism
  between marked projective $R$-modules $M = \bigoplus_{i \in I} \gen{A_i}$
  and $N=\bigoplus_{j \in J} \gen{B_j}$. We set
  \[ \|f\|_\infty
   \coloneqq \max_{(i,j) \in I \times J} |\lambda_{ij} \cdot \chi_{B_j}|_\infty,
  \]
  where $(\lambda_{ij})_{(i,j) \in I \times J} \in M_{I \times J}(R)$
  is the matrix that describes~$f$ (through right multiplication by
  this matrix).
  
  Similarly, if $\eta\colon M\to \LinftyX$ is an $R$-homomorphism, we set
   \begin{align*}
   	\|\eta\|_\infty &\coloneqq \max_{i\in I} |\eta(\chi_{A_i} \cdot e_i)|_\infty.
   \end{align*}
\end{defn}

\subsection{Support and size estimates}

In order to handle the deformation and strictification of elements
and homomorphisms, we use $1$-supports and $1$-sizes.
We continue to work in Setup~\ref{setup:subsecmodules}.

\begin{rem}\label{rem:supportl1}
  If $M$ is a marked projective
  $R$-module and $z \in M$, then (Remark~\ref{rem:support:est})
  \[ \mu(\supp_1 (z)) \leq \nu (\supp (z)) \leq \|z\|_1.
  \]
\end{rem}

However, in general, such norm estimates will be too coarse.
Therefore, we introduce the following size notions: 

\begin{defn}[size]
	\label{def:size}
  Let $M = \bigoplus_{i \in I} \gen{A_i}$ be a marked projective $R$-module. 
  \begin{itemize}
  \item If $z \in M$, we abbreviate
    \[ \size_1 (z) \coloneqq \mu\bigl(\supp_1 (z)\bigr)
    \in [0,1].
    \]
    If $z = \sum_{i \in I} \lambda_i \cdot \chi_{A_i} \cdot e_i$, 
    we write
    \begin{align*}
    \Nbasic(z) &\coloneqq \sum_{i \in I} \Nbasic(\lambda_i \cdot \chi_{A_i}) \in \IN
    \\
    |z|_\infty &\coloneqq \sum_{i\in I} |\lambda_i\cdot \chi_{A_i}|_\infty \in \IR_{\ge 0}.
    \end{align*}
  \item If $N$ is a marked projective $R$-module and $f \colon M \to N$
    is an $R$-homomorphism, we set
    \begin{align*}
    \size_1 f
    &\coloneqq \sum_{i \in I} \size_1 (f (\chi_{A_i} \cdot e_i) ) \in \R_{\geq 0}
    \\  
    \Nsum(f) &\coloneqq \sum_{i\in I} \Nbasic(f(\chi_{A_i}\cdot e_i)) \in \IN
    \\
    \Nmax(f) &\coloneqq \max_{i \in I} \Nbasic(f (\chi_{A_i} \cdot e_i)) \in \N.
    \end{align*}
    Clearly, $\Nmax(f)\le \Nsum(f)\le \rk(M)\cdot \Nmax(f)$.
    Similarly, we define~$\Ntsum(f)$ and~$\Ntmax(f)$.
  \end{itemize}
\end{defn}

\begin{rem}
	\label{rem:mhom-N1}
	Let~$f\colon M \to N$ be a marked $R$-homomorphism (Definition~\ref{defn:marked_hom}) between marked projective $R$-modules and let~$z\in M$. Then,
	\[
		\Nbasic(f(z)) \le \Nbasic(z), \quad 
		\Ntbasic(f(z)) \le \Ntbasic(z), \qand
		|f(z)|_\infty \le |z|_\infty.
	\]
\end{rem}

\begin{lem}[support estimates]\label{lem:supp1:est}
  Let $f \colon M \to N$ be an $R$-homomorphism between marked
  projective $R$-modules $M$ and~$N$ and let $z \in M$.
  \begin{enumerate}[label=\enum]
  \item For all~$\lambda \in R$, we have~$\size_1(\lambda \cdot z) 
    \leq \Nbasic(\lambda) \cdot \size_1 (z)$;
  \item\label{i:easy} 
    $ \supp_1 (f(z))
    \subset \supp_1 (z)  
    $;
  \item\label{i:N1}
    $ \size_1 (f(z))
    \leq \Nbasic(z) \cdot \size_1(f);
    $
  \item\label{i:size comp}
  Let~$g\colon L\to M$ be an $R$-homomorphism between marked projective $R$-modules.
  Then $\size_1(f\circ g)\le \Nsum(g)\cdot \size_1(f)$;
  \item\label{i:size comp embedding}
  Let~$g\colon L\to M$ be a marked $R$-homomorphism between marked projective $R$-modules.
  Then $\size_1(f\circ g)\le \size_1(f)$.
  \end{enumerate}
\end{lem}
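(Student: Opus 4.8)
The plan is to prove (i) first, since (ii)--(v) then follow from it by formal bookkeeping via Definition~\ref{def:size}.

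\emph{Proof of (i).} I would write $z=\sum_{i\in I}\lambda_i\chi_{A_i}e_i$, so that $\lambda\cdot z=\sum_{i\in I}(\lambda\cdot\lambda_i\chi_{A_i})e_i$, and read off from the convolution formula in $Z\Rrel$ that a first coordinate $x$ occurring in $\supp(\lambda\cdot z)$ must admit some $w\in[x]_\Rrel\cap\supp_1(z)$ with $\lambda(x,w)\neq 0$. Hence
\[
\supp_1(\lambda\cdot z)\subseteq\proj_1(T),\qquad T\coloneqq\supp(\lambda)\cap\bigl(X\times\supp_1(z)\bigr)\subseteq\Rrel,
\]
where $T$ is Borel, $\proj_2(T)\subseteq\supp_1(z)$, and every fibre $T\cap(X\times\{w\})$ has at most $\Nbasic(\lambda,w)\leq\Nbasic(\lambda)$ elements. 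The crucial observation is that, $\alpha$ being essentially free and measure preserving, the counting measure $\nu$ is invariant under the coordinate flip, so
\[
\size_1(\lambda\cdot z)\leq\mu(\proj_1 T)\leq\nu(T)=\int_X\#\bigl(T\cap(X\times\{w\})\bigr)\,d\mu(w)\leq\Nbasic(\lambda)\cdot\mu\bigl(\supp_1(z)\bigr)=\Nbasic(\lambda)\cdot\size_1(z).
\]
The flip-invariance of $\nu$ is elementary here: decomposing $T$ as the disjoint union of its intersections with the graphs of the group elements and using that $\alpha$ preserves $\mu$ yields $\int_X\#(T\cap(\{x\}\times X))\,d\mu(x)=\int_X\#(T\cap(X\times\{w\}))\,d\mu(w)$; alternatively one decomposes $T$ into finitely many graphs of Borel partial isomorphisms of $\Rrel$, all of which are $\mu$-preserving. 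This measure-theoretic input is the only place where the measure-preserving property of $\alpha$ is genuinely used, and it is the main obstacle.

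\emph{Proof of (ii) and (iii).} For (ii), I would use that $f$ is right multiplication by a matrix $(\lambda_{ij})$ over $R$ which may be taken with $\supp(\lambda_{ij})\subseteq A_i\times B_j$ (Remark~\ref{rem:def:homo:marked:proj}): for $z=\sum_i z_ie_i$ with coordinates $z_i\in\gen{A_i}$, which in particular satisfy $\supp(z_i)\subseteq X\times A_i$, the convolution defining $f(z)$ only pairs values $z_i(x,w)$ with $\lambda_{ij}(w,y)$ for $w\in A_i$, so $\supp_1(z_i\lambda_{ij})\subseteq\supp_1(z_i)$ and therefore $\supp_1(f(z))\subseteq\bigcup_i\supp_1(z_i)=\supp_1(z)$. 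For (iii), writing $z$ in its coordinates and using $R$-linearity of $f$ reduces the claim to applying (i) to each coordinate of $z$ acting on $f(\chi_{A_i}e_i)$; subadditivity of $\mu$ together with the inequality $\sum_i a_ib_i\leq(\sum_i a_i)(\sum_i b_i)$ for non-negative reals then gives
\[
\size_1(f(z))\leq\Bigl(\sum_{i\in I}\Nbasic(\lambda_i\chi_{A_i})\Bigr)\Bigl(\sum_{i\in I}\size_1\bigl(f(\chi_{A_i}e_i)\bigr)\Bigr)=\Nbasic(z)\cdot\size_1(f).
\]

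\emph{Proof of (iv) and (v).} Part (iv) follows by applying (iii) to each $z=g(\chi_{C_k}e_k)\in M$, where $L=\bigoplus_{k\in K}\gen{C_k}$, and summing:
\[
\size_1(f\circ g)=\sum_{k\in K}\size_1\bigl(f(g(\chi_{C_k}e_k))\bigr)\leq\sum_{k\in K}\Nbasic\bigl(g(\chi_{C_k}e_k)\bigr)\cdot\size_1(f)=\Nsum(g)\cdot\size_1(f).
\]
For (v), since a marked homomorphism is by definition a composition of marked inclusions and marked projections (Definition~\ref{defn:marked_hom}), an induction on the number of factors reduces to a single such map. If $g$ is a marked projection with associated injection $\tau$, then $g(\chi_{C_k}e_k)$ equals $\chi_{A_{\tau^{-1}(k)}}e_{\tau^{-1}(k)}$ for $k\in\im(\tau)$ and $0$ otherwise, so reindexing along $\tau$ gives $\size_1(f\circ g)=\sum_{i\in I}\size_1\bigl(f(\chi_{A_i}e_i)\bigr)=\size_1(f)$. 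If $g$ is a marked inclusion with associated injection $\sigma$, then $g(\chi_{C_k}e_k)=\chi_{C_k}e_{\sigma(k)}$, and a direct computation with the matrix of $f$ (normalized as in Remark~\ref{rem:def:homo:marked:proj}) shows that passing from $\chi_{A_{\sigma(k)}}e_{\sigma(k)}$ to $\chi_{C_k}e_{\sigma(k)}$ amounts to left multiplying the matrix entries by the diagonal idempotent $\chi_{C_k}$, which only erases parts of the support, so $\supp_1\bigl(f(\chi_{C_k}e_{\sigma(k)})\bigr)\subseteq\supp_1\bigl(f(\chi_{A_{\sigma(k)}}e_{\sigma(k)})\bigr)$; summing over $k$ and using injectivity of $\sigma$ gives $\size_1(f\circ g)\leq\size_1(f)$. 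Note that (iv) alone is too weak to prove (v): the gain there comes precisely from the injectivity of $\sigma$ (respectively $\tau$), not from bounding $\Nsum(g)$, so the reduction to the two basic types of marked map is genuinely needed.
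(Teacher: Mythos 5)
Your proof is correct and follows essentially the same route as the paper: part (i) via the flip-invariance of $\nu$ applied to $\supp(\lambda)\cap\proj_2^{-1}(\supp_1(z))$, and (iii)--(iv) by the same coordinatewise reduction combined with $\sum_i a_ib_i\le(\sum_i a_i)(\sum_i b_i)$. The only immaterial variations are in (ii), where the paper uses the idempotent identity $z=\chi_{\supp_1(z)}\cdot z$ and $R$-linearity instead of the matrix representation, and in (v), where the paper first normalises the marked map $g$ to the diagonal form $\chi_{B_i}\cdot\chi_{A_i}\cdot e_i$ by adding trivial summands rather than inducting over the factors of the composition.
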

\begin{proof}
  Let $M = \bigoplus_{i \in I} \gen{A_i}$ be the marked presentation
  of~$M$ and $z = \sum_{i \in I} \lambda_i \cdot \chi_{A_i} \cdot e_i$
  with~$\lambda_i \in R$. 
  
  (i)
  Let $A \coloneqq \supp (\lambda) \subset X \times X$
  and $B \coloneqq \supp_1 (z) \subset X$.
  If $x \in \supp_1(\lambda \cdot z)$, then by definition of the
  convolution product there exists a~$w \in [x]_{\Rrel}$ with
  $(x,w) \in A$ and~$w \in B$. 
  Therefore,
  \begin{align*}
    \size_1 (\lambda \cdot z) = \mu\bigl(\supp_1(\lambda \cdot z)\bigr)
    & \leq
    \mu \bigl(
    \bigl\{ x\in X
    \bigm| A \cap \proj_2^{-1}(B) \cap \{x\} \times X \neq \emptyset
    \bigr\}
    \bigr)
    \\
    & \leq
    \int_X \# \bigl( A \cap \proj_2^{-1}(B) \cap \{x\} \times X\bigr) \; d\mu(x)
    \\
    & =
    \nu \bigl(A \cap \proj_2^{-1}(B)\bigr).
  \end{align*}
  As $\nu$ can also be computed through~$\proj_2$,
  we obtain
  \begin{align*}
    \size_1(\lambda \cdot z) & \leq  \nu \bigl(A \cap \proj_2^{-1}(B)\bigr)
    \\
    & =
    \int_X \# \bigl( A \cap \proj_2^{-1}(B) \cap X \times \{y\}\bigr) \; d\mu(y)
    \\
    & = \int_B \# \bigl(A \cap X \times \{y\}\bigr) \;d\mu(y)
    \\
    & \leq \Nbasic(\lambda) \cdot \mu(B)
    & \text{(because~$A = \supp (\lambda)$)}
    \\
    & = \Nbasic(\lambda) \cdot \size_1 (z).
  \end{align*}
  
  (ii)
  We compute 
  \[ \supp_1 \bigl(f(z)\bigr)
  = \supp_1 \bigl(f( \chi_{\supp_1 z} \cdot z)\bigr)
  = \supp_1 \bigl( \chi_{\supp_1 z} \cdot f(z)\bigr)
  \subset \supp_1 (z).
  \]

  (iii) We use part~(i):
  \begin{align*}
    \size_1 \bigl(f(z)\bigr)
    & =
    \size_1 \biggl(
    \sum_{i \in I} \lambda_i \cdot f(\chi_{A_i} \cdot e_i)\biggr)
    \\
    & =\mu \biggl(\supp_1 \bigl(
    \sum_{i \in I} \lambda_i \cdot f(\chi_{A_i} \cdot e_i)\bigr) \biggr)
    \\
    &\leq  \sum_{i \in I} \mu \biggl(\supp_1 \bigl(
    \lambda_i \cdot f(\chi_{A_i} \cdot e_i)\bigr) \biggr)
    \\
    & =
    \sum_{i \in I} \mu \biggl(\supp_1 \bigl(
    \lambda_i \cdot \chi_{A_i} \cdot f(\chi_{A_i} \cdot e_i)\bigr) \biggr)
      & \text{(by Remark~\ref{rem:def:homo:marked:proj})}
    \\
    & =
    \sum_{i \in I} \size_1 \bigl(\lambda_i \cdot \chi_{A_i} \cdot f(\chi_{Ai} \cdot e_i) \bigr)
    \\
    & \leq
    \sum_{i \in I} \Nbasic(\lambda_i \cdot \chi_{A_i} ) \cdot \size_1\bigl( f(\chi_{A_i} \cdot e_i)\bigr)
    & \text{(by part~(i))}
    \\
    & \leq \Nbasic(z) \cdot \size_1(f).
  \end{align*}
  
  (iv)
  Let $L=\bigoplus_{j\in J}\spann{B_j}$ be the marked presentation of~$L$.
  We have
  \begin{align*}
  	\size_1(f\circ g)
	&= \sum_{j\in J} \size_1\bigl(f(g(\chi_{B_j}\cdot e_j))\bigr)
	\\
	&\le \sum_{j\in J} \Nbasic\bigl(g(\chi_{B_j}\cdot e_j)\bigr)\cdot \size_1(f)
	&\text{(by part~(iii))}
	\\
	&= \Nsum(g)\cdot \size_1(f).
  \end{align*}
  
  (v)
  By adding trivial summands to the marked presentations, we may assume that the marked $R$-homomorphism~$g$ is of the form \[g\colon L=\bigoplus_{i\in I}\spann{B_i}\to \bigoplus_{i\in I}\spann{A_i}=M,\] \[g(\chi_{B_i}\cdot e_i)=\chi_{B_i}\cdot \chi_{A_i}\cdot e_i.\]
  We have
  \begin{align*}
  	\size_1(f\circ g)
	&= \sum_{i\in I}\size_1\bigl(f(g(\chi_{B_i}\cdot e_i))\bigr)
	\\
	&= \sum_{i\in I}\size_1\bigl(f(\chi_{B_i}\cdot \chi_{A_i}\cdot e_i)\bigr)
	\\
	&= \sum_{i\in I}\size_1\bigl(\chi_{B_i}\cdot f(\chi_{A_i}\cdot e_i)\bigr)
	\\
	&\le \sum_{i\in I}\Nbasic(\chi_{B_i})\cdot \size_1\bigl(f(\chi_{A_i}\cdot e_i)\bigr)
	& \text{(by part~(i))}
	\\
	&\le \sum_{i\in I}\size_1\bigl(f(\chi_{A_i}\cdot e_i)\bigr)
	\\
	&= \size_1(f).
  \end{align*}
  This completes the proof.
\end{proof}

\begin{lem}[]
	\label{lem:etaz}
	Let~$M = \bigoplus_{i\in I} \gen{A_i}$ be a marked projective~$R$-module and let $\eta\colon M\to \LinftyX$
	be an $R$-homomorphism. Let~$z\in M$. Then,
	\[
		|\eta(z)|_\infty \le \Ntbasic(z) \cdot |z|_\infty \cdot \|\eta\|_\infty.
	\]
\end{lem}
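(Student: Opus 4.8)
\emph{Proof plan.}
The plan is to reduce the statement to a single pointwise $\ell^\infty$-estimate for the $Z\calR$-module action on $\LinftyX$ of Remark~\ref{rem:linfX-ZR}, and then to sum this estimate over the marked presentation $M = \bigoplus_{i\in I}\gen{A_i}$.

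First I would establish the following auxiliary estimate: for every $\kappa\in Z\calR$ and every $v\in\LinftyX$,
\[
  |\kappa\cdot v|_\infty \le \Ntbasic(\kappa)\cdot|\kappa|_\infty\cdot|v|_\infty .
\]
By Remark~\ref{rem:linfX-ZR} we have $\kappa\cdot v = \varepsilon\bigl(\kappa\cdot\iota(v)\bigr)$, where $\iota(v)\in Z\calR$ is the image of $v$ under the canonical inclusion $\LinftyX\hookrightarrow\LinftyX*\Gamma\hookrightarrow Z\calR$; by the explicit description of this inclusion (taking $\gamma=e$), $\iota(v)$ is concentrated on the diagonal of $\calR$ with $\iota(v)(x,x)=v(x)$. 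Hence, for every $(x,y)\in\calR$,
\[
  \bigl(\kappa\cdot\iota(v)\bigr)(x,y) = \sum_{w\in[x]_\calR}\kappa(x,w)\cdot\iota(v)(w,y) = \kappa(x,y)\cdot v(y),
\]
so that $(\kappa\cdot v)(x) = \varepsilon\bigl(\kappa\cdot\iota(v)\bigr)(x) = \sum_{y\in[x]_\calR}\kappa(x,y)\cdot v(y)$. As $\kappa\in Z\calR$, this sum has only finitely many non-zero terms, namely at most $\Ntbasic(\kappa,x)$ of them, each of absolute value at most $|\kappa|_\infty\cdot|v|_\infty$; thus $|(\kappa\cdot v)(x)|\le\Ntbasic(\kappa,x)\cdot|\kappa|_\infty\cdot|v|_\infty\le\Ntbasic(\kappa)\cdot|\kappa|_\infty\cdot|v|_\infty$ for almost every $x$, which is the auxiliary estimate. (All pointwise identities and suprema are read in the almost-everywhere sense of Remark~\ref{rem:ae}.)

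To finish, write $z = \sum_{i\in I}\lambda_i\cdot\chi_{A_i}\cdot e_i$ with $\lambda_i\in R$. By $R$-linearity of $\eta$ together with $\chi_{A_i}\cdot\chi_{A_i}=\chi_{A_i}$, one gets $\eta(z) = \sum_{i\in I}(\lambda_i\cdot\chi_{A_i})\cdot\eta(\chi_{A_i}\cdot e_i)$ in $\LinftyX$. Applying the triangle inequality for $|\args|_\infty$ on $\LinftyX$, then the auxiliary estimate with $\kappa=\lambda_i\cdot\chi_{A_i}\in Z\calR$ and $v=\eta(\chi_{A_i}\cdot e_i)$, and finally the bound $|\eta(\chi_{A_i}\cdot e_i)|_\infty\le\|\eta\|_\infty$ (Definition~\ref{defn:infty-norm}), we obtain
\[
  |\eta(z)|_\infty \le \sum_{i\in I}\Ntbasic(\lambda_i\cdot\chi_{A_i})\cdot|\lambda_i\cdot\chi_{A_i}|_\infty\cdot\|\eta\|_\infty .
\]
Since all summands are non-negative and $\sum_i a_ib_i\le\bigl(\sum_i a_i\bigr)\bigl(\sum_j b_j\bigr)$ for non-negative reals, while $\Ntbasic(z)=\sum_{i\in I}\Ntbasic(\lambda_i\cdot\chi_{A_i})$ and $|z|_\infty=\sum_{i\in I}|\lambda_i\cdot\chi_{A_i}|_\infty$ by Definition~\ref{def:size}, this yields $|\eta(z)|_\infty\le\Ntbasic(z)\cdot|z|_\infty\cdot\|\eta\|_\infty$, as desired. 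The one delicate point is the auxiliary estimate: it rests on unravelling the definitions of $\iota$ and $\varepsilon$ and recognising $\iota(v)$ as a diagonal element of $Z\calR$, all interpreted almost everywhere; once the pointwise formula for $\kappa\cdot v$ is in hand, everything else is a routine triangle-inequality and summation argument.
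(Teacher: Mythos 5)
Your proof is correct and follows essentially the same route as the paper's: both rest on the pointwise formula $(\kappa\cdot v)(x)=\sum_{y\in[x]_\Rrel}\kappa(x,y)\cdot v(y)$ for the $Z\Rrel$-action on $\LinftyX$ from Remark~\ref{rem:linfX-ZR}, bounding the number of nonzero summands by $\Ntbasic(\kappa,x)$. The only difference is packaging — the paper reduces without loss of generality to $M=\gen{A}$, whereas you sum over the marked presentation explicitly via $\sum_i a_ib_i\le(\sum_i a_i)(\sum_j b_j)$ — and your derivation of the pointwise formula from $\iota$ and $\varepsilon$ is, if anything, slightly more careful than the paper's.
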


\begin{proof}
	Without loss of generality, we may assume that $M =
        \gen{A}$. Since~$\eta$ is~$R$-linear and by
        Remark~\ref{rem:linfX-ZR}, we have
	\[
	\eta(z)(x) = \bigl(z \cdot \eta(\chi_A \cdot e) \bigr)(x)
        = \sum_{y\in [x]_\Rrel} z(x,y) \cdot \eta(\chi_A \cdot e)(y). 
	\]
	Thus,
	\begin{align*}
		|\eta(z)|_\infty &= \sup_{x\in X} \Bigl| \sum_{y\in [x]_\Rrel} z(x,y) \cdot \eta(\chi_A \cdot e)(y)\Bigr|\\
		&\le \sup_{x\in X} \Bigl| \sum_{y\in [x]_\Rrel} z(x,y)\Bigr| \cdot \|\eta\|_\infty\\
		&\le \sup_{x\in X}  \Ntbasic(z, x) \cdot |z|_\infty \cdot \|\eta\|_\infty\\
		&= \Ntbasic(z)\cdot |z|_\infty \cdot \|\eta\|_\infty. \qedhere
	\end{align*}
\end{proof}

\begin{lem}[norm estimate]\label{lem:normN1est}
  Let $f \colon M \to N$ be an $R$-homomorphism
  between marked projective $R$-modules. Then
  \[ \|f\| \leq \Ntmax(f) \cdot \|f\|_\infty.
  \]
  We abbreviate~$K_f \coloneqq \Ntmax(f) \cdot \|f\|_\infty$.
\end{lem}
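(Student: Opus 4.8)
The plan is to reduce the claim to the pointwise product estimate of Lemma~\ref{lem:l1prod} through a coordinatewise computation with the matrix of~$f$. Write $M=\bigoplus_{i\in I}\gen{A_i}$ and $N=\bigoplus_{j\in J}\gen{B_j}$ for the marked presentations and, using Remark~\ref{rem:def:homo:marked:proj}, choose the matrix $(\lambda_{ij})_{(i,j)\in I\times J}$ describing~$f$ so that $\supp(\lambda_{ij})\subset A_i\times B_j$; then $\chi_{A_i}\cdot\lambda_{ij}=\lambda_{ij}=\lambda_{ij}\cdot\chi_{B_j}$ by Remark~\ref{rem:support:est}, and $f(\chi_{A_i}\cdot e_i)=\sum_{j\in J}\lambda_{ij}\cdot e_j$, so that $\Ntbasic\bigl(f(\chi_{A_i}\cdot e_i)\bigr)=\sum_{j\in J}\Ntbasic(\lambda_{ij})$ and $\|f\|_\infty=\max_{(i,j)\in I\times J}|\lambda_{ij}|_\infty$. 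First I would take an arbitrary element $z=\sum_{i\in I}\mu_i\cdot\chi_{A_i}\cdot e_i\in M$, for which $\|z\|_1=\sum_{i\in I}|\mu_i\cdot\chi_{A_i}|_1$, expand by $R$-linearity, and collect coordinates in~$N$:
\[
f(z)=\sum_{j\in J}\Bigl(\sum_{i\in I}(\mu_i\cdot\chi_{A_i})\cdot\lambda_{ij}\Bigr)\cdot e_j,
\]
whence the triangle inequality for $|\cdot|_1$ gives $\|f(z)\|_1\le\sum_{j\in J}\sum_{i\in I}\bigl|(\mu_i\cdot\chi_{A_i})\cdot\lambda_{ij}\bigr|_1$.

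The key step is then to estimate each summand with Lemma~\ref{lem:l1prod}, applied with left factor $\mu_i\cdot\chi_{A_i}$ (the part coming from~$z$) and right factor $\lambda_{ij}$ (the part coming from~$f$):
\[
\bigl|(\mu_i\cdot\chi_{A_i})\cdot\lambda_{ij}\bigr|_1\le\Ntbasic(\lambda_{ij})\cdot|\lambda_{ij}|_\infty\cdot|\mu_i\cdot\chi_{A_i}|_1\le\Ntbasic(\lambda_{ij})\cdot\|f\|_\infty\cdot|\mu_i\cdot\chi_{A_i}|_1.
\]
Summing first over~$j$ turns $\sum_{j\in J}\Ntbasic(\lambda_{ij})$ into $\Ntbasic\bigl(f(\chi_{A_i}\cdot e_i)\bigr)\le\Ntmax(f)$, and then summing over~$i$ reassembles $\sum_{i\in I}|\mu_i\cdot\chi_{A_i}|_1=\|z\|_1$; altogether $\|f(z)\|_1\le\Ntmax(f)\cdot\|f\|_\infty\cdot\|z\|_1$. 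Since~$z$ was arbitrary, this is exactly $\|f\|\le\Ntmax(f)\cdot\|f\|_\infty$.

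I do not anticipate a genuine obstacle here: the statement is essentially bookkeeping layered on top of Lemma~\ref{lem:l1prod}. The two points that require some care are (i) invoking Lemma~\ref{lem:l1prod} with the factors in the correct roles, so that the combinatorial quantity~$\Ntbasic$ and the $\ell^\infty$-norm are attached to the $f$-part~$\lambda_{ij}$ and the $\ell^1$-norm to the $z$-part $\mu_i\cdot\chi_{A_i}$; and (ii) the compatibility with the markings, namely that one may take $\supp(\lambda_{ij})\subset A_i\times B_j$ (so that $\chi_{A_i}\cdot\lambda_{ij}=\lambda_{ij}$) and that the $\ell^1$-norm on~$M$ really is $\sum_{i\in I}|\mu_i\cdot\chi_{A_i}|_1$, so that the coordinate norms add up to~$\|z\|_1$ and not to the larger quantity $\sum_{i\in I}|\mu_i|_1$.
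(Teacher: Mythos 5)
Your proof is correct and follows essentially the same route as the paper's: expand $f(z)$ coordinatewise via the matrix of~$f$, apply Lemma~\ref{lem:l1prod} with the $\ell^\infty$-norm and $\Ntbasic$ attached to the matrix entries and the $\ell^1$-norm to the coordinates of~$z$, and sum first over~$j$ to recover $\Ntbasic\bigl(f(\chi_{A_i}\cdot e_i)\bigr)\le\Ntmax(f)$, then over~$i$ to reassemble~$\|z\|_1$. The only difference is cosmetic: you normalise $\lambda_{ij}$ so that $\lambda_{ij}\cdot\chi_{B_j}=\lambda_{ij}$, whereas the paper carries the factor $\chi_{B_j}$ along explicitly.
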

\begin{proof}
  This is a straightforward calculation: Let $(\lambda_{ij})_{(i,j)\in
    I\times J}$ be the matrix describing~$f$ (through right
  multiplication by this matrix).
  Let $z \in M$, written as~$z = \sum_{i \in I} \lambda_i \cdot \chi_{A_i} \cdot e_i$.
  Then we obtain
  \begin{align*}
    \bigl\| f(z) \bigr\|_1
    & =
    \sum_{j \in J} \ \biggl|
    \sum_{i \in I}
    \lambda_i \cdot \chi_{A_i} \cdot \lambda_{ij} \cdot \chi_{B_j}\biggr|_1
    \\
    & \leq
    \sum_{j \in J}
    \sum_{i \in I}
    |\lambda_i \cdot \chi_{A_i} \cdot \lambda_{ij} \cdot \chi_{B_j}|_1
    \\
    & \leq
    \sum_{i \in I}
    \sum_{j \in J}
    \Ntbasic(\lambda_{ij} \cdot \chi_{B_j}) \cdot |\lambda_{ij} \cdot \chi_{B_j}|_\infty
    \cdot |\lambda_i \cdot \chi_{A_i} |_1
    & \text{(Lemma~\ref{lem:l1prod})}
    \\
    & \leq
    \|f\|_\infty \cdot \biggl(\sum_{i \in I} \Ntbasic\bigl(\sum_{j \in J} \lambda_{ij} \cdot \chi_{B_j} \cdot e_j\bigr) \cdot |\lambda_i \cdot \chi_{A_i} |_1 \biggr)
    \\
     & =
    \|f\|_\infty \cdot \biggl(\sum_{i \in I} \Ntbasic\bigl(f(\chi_{A_i} \cdot e_i)\bigr) \cdot |\lambda_i \cdot \chi_{A_i} |_1\biggr)
    \\
    & \leq
    \Ntmax(f) \cdot \|f\|_\infty \cdot \|z\|_1,
  \end{align*}
  which shows that $\|f\| \leq \Ntmax(f) \cdot \|f\|_\infty$. 
\end{proof}

\begin{rem}\label{rem:rk1infN1}
  Let $A \subset X$ be measurable, let $\lambda \in R$
  with~$\supp_1(\lambda) \subset A$, and let $f \colon \gen A \to R$
  be the $R$-homomorphism given by right multiplication with~$\lambda$.
  Then 
  \[ \|f\|_\infty = |\lambda|_\infty
  \qand
  \Ntsum(f) = \Ntmax(f) = \Ntbasic(\lambda).
  \]
\end{rem}

\begin{rem}\label{rem:norms:marked:homo:bounded:by:1}
	Let~$f\colon M\to N$ be a marked $R$-homomorphism between marked projective $R$-modules, i.e.,
	a composition of marked inclusions and marked projections.
	Then 
	\[
		\Nmax(f)\le 1, \quad \Ntmax(f)\le 1, \quad \|f\|_\infty\le 1, \quad \|f\|\le 1.
	\]
\end{rem}

\subsection{An explicit description of the operator norm}\label{subsec:opnormalt}

We provide an explicit description of the operator norm for
homomorphisms between marked projective modules. 

\begin{setup}[]
	\label{setup:opnormalt}
	Let~$\alpha\colon \Gamma \actson (X,\mu)$ be a standard action
        of a countable group and 
	let~$R\coloneqq \LinftyX *\Gamma$ be the crossed product ring.
	Let $f\colon \bigoplus_{i\in I} \gen{A_i} \to \bigoplus_{j\in J} \gen{B_j}$
	be an $R$-linear map. Then,~$f$ is given by right multiplication with a matrix~
	$z \coloneqq \bigl( z_{i,j}\bigr)_{i,j}$ over the crossed product ring. There is 
	a finite family~$(U_k)_{k\in K}$ of pairwise disjoint measurable subsets of~$X$
	and a finite subset~$F\subset \Gamma$ such that for all $i\in I, j\in J$, we have
	\[
		z_{i,j} = \sum_{(k,\gamma)\in K\times F} a_{i,j,k,\gamma} \cdot (\chi_{\gamma U_k}, \gamma),
	\]
	where~$a_{i,j,k,\gamma} \in Z$.
	
	Moreover, we call such a presentation \emph{reduced} if for 
	all~$i\in I, j\in J, k\in K,\gamma\in F$ with~$a_{i,j,k,\gamma} \neq 0$, the following hold:
	\begin{enumerate}
	\item $U_k \subseteq B_j$;
	\item $\gamma U_k \subseteq A_i$.
	\end{enumerate}
	Note that in particular, this implies that $\gamma^{-1} A_i\cap U_k = U_k \subseteq B_j$.
	It is straightforward to verify that we can always find a reduced presentation.
\end{setup}

\begin{prop}[]
  \label{prop:opnormalt}
  In the situation of Setup~\ref{setup:opnormalt}, 
	let~$f\colon M\to N$ be an $R$-homomorphism between marked projective $R$-modules.
	Let~$z$ be the matrix representing~$f$ as in Setup~\ref{setup:opnormalt}. Then,
	\[
		\|f\| = \max_{i\in I} \max\Bigl\{
			\sum_{j\in J, (k,\gamma)\in L} |a_{i,j,k,\gamma}| \Bigm| L\subseteq K\times F \text{ with }
			\mu\Bigl( \bigcap_{(k,\gamma)\in L} \gamma U_k \Bigr) > 0
		\Bigr\}.
	\]
	In particular, we have $\|f\|\in \IN$.
\end{prop}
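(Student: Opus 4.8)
The plan is to prove the inequalities ``$\le$'' and ``$\ge$'' separately, after reducing to a single source generator. Since the $\ell^1$-norm on $M = \bigoplus_{i\in I}\gen{A_i}$ is the sum of the norms of the components, the operator norm of $f$ equals $\max_{i\in I}\|f_i\|$, where $f_i\colon \gen{A_i}\to N=\bigoplus_{j\in J}\gen{B_j}$ is the restriction of $f$ to the $i$-th summand, given by right multiplication with the $i$-th row $(z_{i,j})_{j\in J}$. Fixing $i$, every element of $\gen{A_i}$ is of the form $\zeta = \sum_{\delta\in\Gamma}m_\delta\cdot\delta$ with $m_\delta\in\LinftyX$ supported in $\delta A_i$ (all but finitely many $m_\delta$ vanishing), and then $\|\zeta\|_1 = \sum_\delta\int_X|m_\delta|\,d\mu$. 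Expanding $\zeta\cdot z_{i,j}$ with the reduced presentation and collecting monomials by their group element $\epsilon$, the coefficient of $\epsilon$ in $\zeta\cdot z_{i,j}$ is $\sum_{k\in K,\ \gamma\in F}a_{i,j,k,\gamma}\,m_{\epsilon\gamma^{-1}}\,\chi_{\epsilon U_k}$. Since the sets $U_k$, hence the sets $\epsilon U_k$, are pairwise disjoint, taking $\ell^1$-norms and summing over $j$ and $\epsilon$ gives an exact formula for $\|f_i(\zeta)\|_1$ in terms of the $m_\delta$.

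For ``$\le$'', I would estimate each inner integral $\int_{\epsilon U_k}\bigl|\sum_\gamma a_{i,j,k,\gamma}m_{\epsilon\gamma^{-1}}\bigr|$ by the triangle inequality, reindex via $\delta\coloneqq\epsilon\gamma^{-1}$, and regroup to obtain $\|f_i(\zeta)\|_1 \le \sum_\delta\int_X|m_\delta|\cdot\Phi_\delta\,d\mu$ with $\Phi_\delta\coloneqq\sum_{k,\gamma}\bigl(\sum_j|a_{i,j,k,\gamma}|\bigr)\chi_{\delta\gamma U_k}$. As $\alpha$ is measure preserving, $|\Phi_\delta|_\infty$ is independent of $\delta$ and equals $|\Psi_i|_\infty$ for $\Psi_i(x)\coloneqq\sum_{(k,\gamma)\,:\,x\in\gamma U_k}\sum_j|a_{i,j,k,\gamma}|$; hence $\|f_i(\zeta)\|_1\le|\Psi_i|_\infty\cdot\|\zeta\|_1$. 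Finally, $\Psi_i$ is a finite sum of non-negative multiples of the characteristic functions $\chi_{\gamma U_k}$, so a short measurability argument identifies $|\Psi_i|_\infty$ with the largest value of $\sum_{(k,\gamma)\in L}\sum_j|a_{i,j,k,\gamma}|$ over the subsets $L\subseteq K\times F$ with $\mu\bigl(\bigcap_{(k,\gamma)\in L}\gamma U_k\bigr)>0$ (each positive-measure level set of the active map $x\mapsto\{(k,\gamma):x\in\gamma U_k\}$ lies in the corresponding intersection, and conversely any positive-measure intersection contains such a level set, which can only increase the value). Taking the maximum over $i\in I$ yields ``$\le$'', and the resulting number is a non-negative integer because all $a_{i,j,k,\gamma}$ lie in $Z$.

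For ``$\ge$'', fix $i$ and $L\subseteq K\times F$ with $\mu\bigl(\bigcap_{(k,\gamma)\in L}\gamma U_k\bigr)>0$ attaining the right-hand side; deleting from $L$ the pairs with $\sum_j|a_{i,j,k,\gamma}|=0$ does not change the value and only enlarges the intersection, so by reducedness we may assume $W\coloneqq\bigcap_{(k,\gamma)\in L}\gamma U_k\subseteq A_i$. Then $\chi_W\in\gen{A_i}$ and, using disjointness of the $U_k$ once more, $\|f_i(\chi_W)\|_1 = \int_W\Psi_i\,d\mu \ge \mu(W)\cdot\sum_{(k,\gamma)\in L}\sum_j|a_{i,j,k,\gamma}|$, since $\Psi_i\ge\sum_{(k,\gamma)\in L}\sum_j|a_{i,j,k,\gamma}|$ on $W$; dividing by $\|\chi_W\|_1=\mu(W)$ gives ``$\ge$''. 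The step I expect to be the main obstacle is the expansion and regrouping of $\zeta\cdot z_{i,j}$ in the first paragraph: one has to keep track of which cut-off $\chi_{\delta\gamma U_k}$ multiplies which $m_\delta$, use pairwise disjointness of $\{U_k\}_{k\in K}$ to exclude cancellation among different $k$, and use measure-invariance of $\alpha$ so that all the profiles $\Phi_\delta$ have the same supremum; the remaining bookkeeping (the direct-sum reduction, the identification of the two forms of the maximum, the test element $\chi_W$) is routine.
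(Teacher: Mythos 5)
Your proposal is correct and rests on the same ingredients as the paper's proof: reduction to a single source summand, the pairwise disjointness of the $U_k$ to rule out cancellation, measure-invariance of $\alpha$ to handle the group translation, and testing with the characteristic function of an intersection $\bigcap_{(k,\gamma)\in L}\gamma U_k$ of positive measure. The only difference is organizational: the paper splits the work into Lemma~\ref{lem:norm-expl-base} (reduction to $L^\infty(A)$ via equivariance) and Lemma~\ref{lem:UL-compu} (the decomposition into the atoms $U(L)$, which are exactly the level sets of your ``active set'' map), whereas you merge both into one explicit expansion controlled by the profile function $\Psi_i$ and its essential supremum.
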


As a first step, we show that we can restrict to modules of the form~$\Linf A$.

\begin{lem}[]
	\label{lem:norm-expl-base}
	Let~$A\subseteq X$ and 
	$f\colon \gen{A} \to N$ be an $R$-homomorphism between marked projective $R$-modules.
	Then, 
	\[
		\|f\| = \|f|_{\Linf A}\|.
	\]
	Here, $f|_{\Linf A}$ denotes the precomposition of~$f$ with the canonical inclusion~$\Linf A \hookrightarrow
	R$.
\end{lem}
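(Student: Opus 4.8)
The inequality $\|f|_{\Linf A}\| \le \|f\|$ is immediate: the canonical inclusion $\Linf A \hookrightarrow R$ has image inside $\gen{A} = R\cdot\chi_A$ and is isometric for $\|\cdot\|_1$, so every constant bounding $f$ on $\gen A$ also bounds $f|_{\Linf A}$. The plan is therefore to establish the reverse inequality $\|f\| \le \|f|_{\Linf A}\|$, and the key structural input is that $R = \LinftyX * \Gamma$ is a crossed product, so that for every $\gamma \in \Gamma$ the element $(1,\gamma) \in R$ is a unit.

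First I would decompose an arbitrary element $z \in \gen A = R\cdot\chi_A$ along the crossed-product basis $\Gamma$: since $z = z\cdot\chi_A$, we may write $z = \sum_{\gamma \in F}(\lambda_\gamma,\gamma)$ for some finite $F \subset \Gamma$ with $\supp_1(\lambda_\gamma) \subset \gamma A$, and setting $\nu_\gamma \coloneqq \gamma^{-1}\cdot\lambda_\gamma \in \Linf A$ one checks $(1,\gamma)\cdot\nu_\gamma = (\lambda_\gamma,\gamma)$, hence $z = \sum_{\gamma\in F}(1,\gamma)\cdot\nu_\gamma$. Because $\mu$ is $\Gamma$-invariant and $\|\cdot\|_1$ on $R$ is the $\ell^1$-sum over the basis $\Gamma$, this decomposition is $\ell^1$-additive: $\|z\|_1 = \sum_{\gamma\in F}|\lambda_\gamma|_1 = \sum_{\gamma\in F}|\nu_\gamma|_1$. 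The second ingredient is the elementary observation that for any marked projective $R$-module $N$ and any $\gamma\in\Gamma$, left multiplication by $(1,\gamma)$ is a $\|\cdot\|_1$-isometry of $N$; this again uses only the $\Gamma$-invariance of $\mu$ and the definition of $\|\cdot\|_1$ on $N$ via a marked presentation.

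Putting these together, $R$-linearity of $f$ gives $f(z) = \sum_{\gamma\in F}(1,\gamma)\cdot f(\nu_\gamma) = \sum_{\gamma\in F}(1,\gamma)\cdot f|_{\Linf A}(\nu_\gamma)$, so by the triangle inequality for $\|\cdot\|_1$ together with the isometry property and the $\ell^1$-additivity above,
\[ \bigl\|f(z)\bigr\|_1 \le \sum_{\gamma\in F}\bigl\|f|_{\Linf A}(\nu_\gamma)\bigr\|_1 \le \bigl\|f|_{\Linf A}\bigr\| \cdot \sum_{\gamma\in F}|\nu_\gamma|_1 = \bigl\|f|_{\Linf A}\bigr\| \cdot \|z\|_1. \]
Since $z \in \gen A$ was arbitrary, this yields $\|f\| \le \|f|_{\Linf A}\|$, and hence equality. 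The argument is short and essentially formal; the only points needing genuine care are the two isometry/additivity claims of the middle paragraph, and the one indispensable hypothesis — the place where one really uses Setup~\ref{setup:opnormalt} rather than a general $R \subset Z\Rrel$ — is the invertibility of group elements in the crossed product ring.
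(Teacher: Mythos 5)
Your proof is correct and follows essentially the same route as the paper's: the paper likewise decomposes an element of $\gen A$ along the group direction into pieces of the form $(\chi_X,\gamma)\cdot(g,1)$ with $g\in\Linf A$, notes that the $\ell^1$-norm is additive over this decomposition, and uses that left multiplication by $(\chi_X,\gamma)$ is an isometry of $N$ because $\gamma$ preserves $\mu$. The only cosmetic difference is organisational (the paper packages the argument via maps $f_\gamma$ with $\|f\|=\sup_\gamma\|f_\gamma\|$ and $\|f_\gamma\|\le\|f_1\|$), and your identification of measure-preservation and invertibility of group elements as the essential inputs matches the paper exactly.
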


\begin{proof}
	For~$\gamma\in \Gamma$, we define~$f_\gamma\colon \Linf A \to N$ by
	\[ 
		f_\gamma(g) \coloneqq f \bigl(
			(\chi_X, \gamma)\cdot (g, 1)
		\bigr).
	\]
	By definition of the norm as an~$\ell^1$-norm over the~$\Linf A$-summands, it is clear that
	\[
		\|f\| = \sup_{\gamma\in \Gamma} \|f_\gamma\|.
	\]
	It thus suffices to show that for~$\gamma\in \Gamma$, we have~$\|f_{\gamma}\| \le \|f_1\|$.
	Indeed, because~$f$ is $R$-linear, for all~$g\in \Linf A$, we have
	\begin{align*}
		\|f_{\gamma}(g)\|_1 &= \bigl\| f \bigl(
			(\chi_X, \gamma)\cdot (g, 1)
		\bigr)\bigr\|_1\\
		&= \bigl\| (\chi_X, \gamma)\cdot f ((g, 1))\bigr\|_1\\
		&= \| f ((g, 1))\|_1\\
		&\le \|f_1\|\cdot |g|_1,
	\end{align*}
	where in the penultimate step, we use that multiplication with~$(\chi_X, \gamma)$ defines an isometry
	because the action of~$\gamma$ preserves the probability measure~$\mu$.
\end{proof}

As a second step, we compute the value of the homomorphism~$f$
on specific small building blocks:

\begin{lem}[]
	\label{lem:UL-compu}
	Let~$f\colon\gen A \to \gen B$ be an $R$-homomorphism,
	given as in Setup~\ref{setup:opnormalt} by 
	right multiplication with
	\[
		z \coloneqq \sum_{(k,\gamma)\in K\times F} a_{k,\gamma}\cdot (\chi_{\gamma U_k}, \gamma),
	\]
	where~$a_{k,\gamma}\in Z$, $F\subseteq \Gamma$ is a finite set, and~$(U_k)_{k\in K}$ are pairwise
	disjoint measurable subsets of~$B$ with~$\gamma U_k \subseteq A$ whenever~$a_{k,\gamma}\neq 0$ 
	(see Remark~\ref{rem:def:homo:marked:proj}).
	For~$L\subseteq K\times F$, we define 
	\[
		U(L) \coloneqq \bigcap_{(k,\gamma)\in L} \gamma U_k \cap \bigcap_{(k,\gamma)\in (K\times F)\backslash L}
		 A\backslash \gamma U_k. 
	\]
	Then, the~$(U(L))_{L\subseteq K\times F}$ are pairwise disjoint subsets of~$A$ and 
	\[
		f\bigl((\chi_{U(L)}, 1)\bigr) = \sum_{\gamma\in \Gamma} \aonlyk{}{\gamma}{L} \cdot (\chi_{U(L)}, \gamma),
	\] 
	where 
	\[
		\aonlyk{}{\gamma}{L} \coloneqq \begin{cases}
			0 & \text{if }U(L) = \emptyset;\\
			a_{k,\gamma} & \text{if there exists }k\in K \text{ with }(k,\gamma)\in L;\\
			0 & \text{otherwise}.
		\end{cases}
	\]
	Because the~$U_k$ are pairwise disjoint, there is at most one~$k\in K$ with~$(k,\gamma)\in L$
	unless~$U(L) = \emptyset$.
\end{lem}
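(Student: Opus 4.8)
The plan is to unwind the definition of the convolution product in the crossed product ring $R = \LinftyX * \Gamma$ applied to the basis element $(\chi_{U(L)}, 1) \in \gen A$. First I would record the easy part: the sets $U(L)$ for $L \subseteq K \times F$ are pairwise disjoint because for $L \neq L'$ there is some pair $(k,\gamma)$ lying in exactly one of them (say $L$), so $U(L) \subseteq \gamma U_k$ while $U(L') \subseteq A \setminus \gamma U_k$; and each $U(L)$ is a subset of $A$ directly from its definition (all the intersected sets are subsets of $A$, using $\gamma U_k \subseteq A$ when $a_{k,\gamma} \neq 0$ and, for the complementary factors, intersecting with $A$ explicitly). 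This is the routine bookkeeping step and should be dispatched in one or two sentences.

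The computational heart is the evaluation $f\bigl((\chi_{U(L)},1)\bigr) = (\chi_{U(L)},1) \cdot z$. By $R$-bilinearity of the convolution product this is $\sum_{(k,\gamma) \in K \times F} a_{k,\gamma} \cdot \bigl( (\chi_{U(L)},1) \cdot (\chi_{\gamma U_k}, \gamma) \bigr)$, and using the crossed product multiplication rule $(\lambda,e)\cdot(\lambda',\gamma) = (\lambda \cdot \lambda', \gamma)$ (here with pointwise multiplication of $L^\infty$-functions, since the first group element is trivial) each term becomes $a_{k,\gamma} \cdot (\chi_{U(L)} \cdot \chi_{\gamma U_k}, \gamma) = a_{k,\gamma} \cdot (\chi_{U(L) \cap \gamma U_k}, \gamma)$. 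Now the key observation is a dichotomy coming from the very definition of $U(L)$: for a fixed pair $(k,\gamma)$, either $(k,\gamma) \in L$, in which case $U(L) \subseteq \gamma U_k$ and hence $\chi_{U(L) \cap \gamma U_k} = \chi_{U(L)}$; or $(k,\gamma) \notin L$, in which case $U(L) \subseteq A \setminus \gamma U_k$ and hence $\chi_{U(L) \cap \gamma U_k} = 0$. Substituting this dichotomy collapses the sum to $\sum_{(k,\gamma) \in L} a_{k,\gamma} \cdot (\chi_{U(L)}, \gamma)$, and then regrouping by the group element $\gamma$ and invoking the pairwise disjointness of the $U_k$ (so that for each $\gamma$ there is at most one $k$ with $(k,\gamma) \in L$, unless $U(L) = \emptyset$, in which case $\chi_{U(L)} = 0$ and every coefficient is moot) yields exactly $\sum_{\gamma \in \Gamma} \aonlyk{}{\gamma}{L} \cdot (\chi_{U(L)}, \gamma)$ with $\aonlyk{}{\gamma}{L}$ as defined.

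I do not expect a genuine obstacle here; the lemma is essentially a careful expansion. The only point that requires a moment of care is the interaction between the ``almost everywhere'' conventions (Remark~\ref{rem:ae}) and the set-theoretic identities such as $U(L) \subseteq \gamma U_k$: all the containments and disjointness statements are to be read up to $\mu$-null sets, and the characteristic-function identities $\chi_{U(L) \cap \gamma U_k} = \chi_{U(L)}$ resp.\ $= 0$ then hold as elements of $\LinftyX$. A secondary point worth stating explicitly is the reduction built into the hypotheses: we assumed $\gamma U_k \subseteq A$ whenever $a_{k,\gamma} \neq 0$, which is exactly what lets us treat $(\chi_{\gamma U_k}, \gamma)$ as an element of $\gen A$ (via Remark~\ref{rem:def:homo:marked:proj}) and makes the restriction of $f$ to the $(\chi_{U(L)},1)$ well-defined; terms with $a_{k,\gamma} = 0$ contribute nothing and can be ignored throughout. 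With these conventions fixed, the displayed formula follows by the substitution and regrouping described above.
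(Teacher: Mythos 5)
Your proposal is correct and follows essentially the same route as the paper: the key step in both is the dichotomy $U(L)\cap\gamma U_k=U(L)$ if $(k,\gamma)\in L$ and $=\emptyset$ otherwise, followed by expanding the convolution product and regrouping by the group element using the disjointness of the $U_k$. The additional remarks on the almost-everywhere conventions and on the role of the hypothesis $\gamma U_k\subseteq A$ are accurate but not needed beyond what the paper already records.
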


\begin{proof}
        By construction, the~$U(L)$ are pairwise disjoint and
	for~$(k,\gamma)\in K\times F$, we have
	\begin{align}
		U(L) \cap \gamma U_k = \begin{cases}
		U(L) & \text{if }(k,\gamma)\in L; \\
		\emptyset & \text{if }(k,\gamma)\not\in L.
		\end{cases}
                \label{eq:ULUk}
	\end{align}
	Thus, with Equation~\eqref{eq:ULUk}, we obtain
	\begin{align*}
		f\bigl((\chi_{U(L)}, 1)\bigr)
		&= (\chi_{U(L)}, 1) \cdot \sum_{(k,\gamma)\in K\times F} a_{k,\gamma}\cdot (\chi_{\gamma U_k}, \gamma)\\
		&= \sum_{(k,\gamma)\in K\times F} a_{k,\gamma}\cdot (\chi_{U(L)\cap \gamma U_k}, \gamma)\\
		&= \sum_{(k,\gamma)\in L} a_{k,\gamma}\cdot (\chi_{U(L)}, \gamma) 
                \\
		&= \sum_{\gamma\in \Gamma} \aonlyk{}{\gamma}{L} \cdot (\chi_{U(L)}, \gamma).
                \qedhere
	\end{align*}
\end{proof}

\begin{proof}[Proof of Proposition~\ref{prop:opnormalt}]
	Because the operator norm is defined with respect to the~$\ell^1$-norm on~$M$, we may assume without loss
	of generality that~$M = \gen{A}$. We will therefore drop the index~$i$ from the notation.
	Let~$m$ be the maximum on the right hand side of the claim.
	In order to prove that~$\|f\|\ge m$, let~$L_m\subseteq K\times F$ be a maximal subset realising the maximum, i.e., 
	\[m = \sum_{j\in J, (k,\gamma)\in L_m} |a_{j,k,\gamma}| \qand \mu \Bigl(
		\bigcap_{(k,\gamma)\in L_m} \gamma U_k
	\Bigr) > 0.
	\]
	Then, also~$\mu(U(L_m)) > 0$, by maximality of~$L_m$. 
	We consider the characteristic function~$x \coloneqq (\chi_{U(L_m)},1)$ as a witness and compute
	\begin{align*}
		|f(x)|_1
		&= \sum_{j\in J} |f_j(\chi_{U(L_m)})|_1\\
		&= \sum_{j\in J} \;\biggl|
			\sum_{\gamma\in \Gamma} \aonlyk{j,}{\gamma}{L_m} \cdot (\chi_{U(L_m)}, \gamma)
		\biggr|_1 
		& (\text{Lemma~\ref{lem:UL-compu}})\\
		&= \sum_{j\in J}\sum_{\gamma \in \Gamma} |\aonlyk{j,}{\gamma}{L_m}| \cdot \mu(U(L_m))\\
		&= m\cdot |x|_1.
	\end{align*}
	Since~$|x|_1 = \mu(U(L_m)) > 0$, this proves that~$\|f\| \ge m$.
	
	To show the converse inequality, we use the canonical $R$-isomorphism
	\[
		\gen A \cong \bigoplus_{L\subseteq K\times F} \gen{U(L)},
	\]
	where~$U(L)$ is defined as in Lemma~\ref{lem:UL-compu}. Equipping the right hand side with the~$\ell^1$-norms
	of the summands, the canonical isomorphism of $R$-modules is an isometry. Thus, it suffices 
	to prove that~$|f(x)|_1 \le m \cdot |x|_1$ for~$x\in \gen{U(L)}$. By Lemma~\ref{lem:norm-expl-base},
	it suffices to consider~$g\in L^\infty(U(L))$. 
	We can write
	\[
		g = \sum_{s\in S} g_s\cdot \chi_{V_s}
	\]
	for a finite set~$S$, $g_S\in Z$, and pairwise disjoint subsets~$V_s$ of~$U(L)$. The calculation in Lemma~\ref{lem:UL-compu}
	shows that 
	\begin{align*}
		|f(g)|_1 &= \sum_{j\in J} |f_j(g)|_1\\
		&= \sum_{j\in J} \;\biggl|
			\sum_{\gamma\in \Gamma}\aonlyk{j,}{\gamma}{L}\cdot  \sum_{s\in S}  g_s \cdot (\chi_{V_s},\gamma)
			\biggr|_1\\
		&\le \biggl(\sum_{j\in J} \sum_{\gamma\in \Gamma} |\aonlyk{j,}{\gamma}{L}|\biggr) \cdot |g|_1\\
		&\le m\cdot |g|_1.
                \qedhere        
	\end{align*} 
\end{proof}

The notion of $S$-adapted modules and morphisms used below will be introduced later (Definition~\ref{defn:adapted}).

\begin{cor}[adaptation with the same norm]
	\label{cor:adaptsamenorm}
	Let~$f \colon M\to N$ be an $S$-adapted morphism between marked 
	projective $R$-modules. Let $M'$ be a marked projective summand of~$M$.
	Then, there exists a marked projective summand~$M''$ of~$M$ that is $S$-adapted such that 
	\[
		M' \subseteq M'' \qand ||f|_{M''}|| = ||f|_{M'}||.
	\]
\end{cor}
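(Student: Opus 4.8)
The plan is to read the operator norm off explicitly via Proposition~\ref{prop:opnormalt} and then enlarge the subsets defining $M'$ to $S$-adapted ones by deleting precisely the pieces that would make the norm increase.

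First I would reduce to $M = \bigoplus_{i \in I} \gen{A_i}$ with each $A_i$ an $S$-adapted subset and $M' = \bigoplus_{i \in I} \gen{A_i'}$ with $A_i' \subseteq A_i$ measurable; this is legitimate because a marked inclusion $M' \hookrightarrow M$ merely selects a subfamily of the summands of $M$ and shrinks each marking set. Write $N = \bigoplus_{j \in J} \gen{B_j}$. Since $f$ is $S$-adapted, I would fix a reduced $S$-adapted presentation of $f$ as in Setup~\ref{setup:opnormalt}: a finite set $F \subseteq \Gamma$, pairwise disjoint $S$-adapted subsets $(U_k)_{k \in K}$ of $X$, and coefficients $a_{i,j,k,\gamma} \in Z$ with $\gamma U_k \subseteq A_i$ and $U_k \subseteq B_j$ whenever $a_{i,j,k,\gamma} \neq 0$, so that $f$ is given by right multiplication by the matrix with entries $z_{i,j} = \sum_{(k,\gamma)} a_{i,j,k,\gamma}(\chi_{\gamma U_k}, \gamma)$. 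For $L \subseteq K \times F$ put $W_L \coloneqq \bigcap_{(k,\gamma) \in L} \gamma U_k$ (an $S$-adapted set, with $W_\emptyset \coloneqq X$) and $c^i_L \coloneqq \sum_{j \in J} \sum_{(k,\gamma) \in L} |a_{i,j,k,\gamma}|$, and set $n_i(B) \coloneqq \max\{ c^i_L \mid L \subseteq K \times F,\ \mu(B \cap W_L) > 0\}$ for measurable $B \subseteq A_i$, with the convention $n_i(\emptyset) \coloneqq 0$. The first key point is that for \emph{every} family of measurable subsets $B_i \subseteq A_i$ one has $\bigl\| f|_{\bigoplus_i \gen{B_i}} \bigr\| = \max_{i \in I} n_i(B_i)$; for $B_i = A_i$ this is exactly Proposition~\ref{prop:opnormalt}, and each $n_i$ is manifestly monotone in $B$.

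Next I would define, for each $i \in I$,
\[
\mathcal{N}_i \coloneqq \bigl\{ L \subseteq K \times F \bigm| \mu(A_i' \cap W_L) = 0 \bigr\},
\qquad
A_i'' \coloneqq A_i \setminus \bigcup_{L \in \mathcal{N}_i} W_L,
\]
and put $M'' \coloneqq \bigoplus_{i \in I} \gen{A_i''}$. Each $A_i''$ is a finite Boolean combination of the $S$-adapted sets $A_i$ and $W_L$, hence $S$-adapted; therefore $M''$ is $S$-adapted, and $A_i'' \subseteq A_i$ makes the inclusion $M'' \hookrightarrow M$ a marked inclusion. Since $\mathcal{N}_i$ is finite and $A_i' \cap W_L$ is $\mu$-null for $L \in \mathcal{N}_i$, the set $A_i' \setminus A_i'' = A_i' \cap \bigcup_{L \in \mathcal{N}_i} W_L$ is null, so $M' \subseteq M''$ as marked summands of $M$ in the sense of Remark~\ref{rem:ae}. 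For the norm equality, the inequality $\|f|_{M'}\| \le \|f|_{M''}\|$ is immediate from $M' \subseteq M''$ and monotonicity of the $n_i$; conversely, if $L \in \mathcal{N}_i$ then $A_i'' \cap W_L = \emptyset$ by construction, so $\{L \mid \mu(A_i'' \cap W_L) > 0\} \subseteq \{L \mid \mu(A_i' \cap W_L) > 0\}$, whence $n_i(A_i'') \le n_i(A_i')$ for every $i$ and therefore $\|f|_{M''}\| = \max_i n_i(A_i'') \le \max_i n_i(A_i') = \|f|_{M'}\|$.

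The only genuine obstacle is the first key point: the explicit norm formula of Proposition~\ref{prop:opnormalt} must survive restriction of the domain to a marked summand \emph{with the same atoms $W_L$}. I would prove the single-summand version $\|f|_{\gen{B}}\| = n_i(B)$ for $B \subseteq A_i$ by rerunning the proof of Proposition~\ref{prop:opnormalt}: decomposing $\gen{B}$ along the atoms $B \cap U(L)$, with $U(L)$ as in Lemma~\ref{lem:UL-compu}, exhibits $f|_{\gen{B}}$ as block diagonal with the $L$-block of norm exactly $c^i_L$ whenever $\mu(B \cap U(L)) > 0$; then $W_L \cap A_i = \bigcup_{L' \supseteq L} U(L')$ together with the monotonicity of $L \mapsto c^i_L$ gives $\max\{c^i_L \mid \mu(B \cap U(L)) > 0\} = \max\{c^i_L \mid \mu(B \cap W_L) > 0\} = n_i(B)$. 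The multi-summand case then follows because the $\ell^1$-norm on the domain splits over the summands. Everything else is bookkeeping with finitely many Boolean operations under the ``almost everywhere'' convention; in particular, if some $A_i'$ is null then the $i$-th summand of $M'$ is trivial and $A_i''$ is null as well (because $W_\emptyset = X$ is among the deleted sets), so that index contributes $0$ on both sides.
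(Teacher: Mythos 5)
Your proof is correct and follows essentially the same route as the paper's: both rest on the explicit operator-norm formula of Proposition~\ref{prop:opnormalt} (restricted along the atoms $U(L)$ of Lemma~\ref{lem:UL-compu}) and both build $A''$ as an $S$-adapted Boolean combination of these atoms selected by positive-measure intersection with $A'$. The only difference is cosmetic — you remove the ``bad'' sets $W_L$ with $\mu(A'\cap W_L)=0$, whereas the paper takes the union of the ``good'' atoms $U(L)$ with $\mu(A'\cap U(L))>0$ — and your verification that the norm formula survives restriction to an arbitrary marked summand correctly fills in a step the paper leaves implicit.
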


\begin{proof}
	As it suffices to prove the claim componentwise in the 
	domain, suppose that~$M=\gen A$ and $M' = \gen{A'}$ with~$A' \subset A$. Set
	\[ A''\coloneqq \bigcup_{\substack{L\subseteq K\times F \text{ s.t.}\\\mu (A'\cap U(L) ) >0}} U(L),\]
	where~$U(L)$ is defined as in Lemma~\ref{lem:UL-compu}.
	
	Then, since~$f$ is $S$-adapted, so is~$M''\coloneqq \gen{A''} $ and~$M''$ contains $\gen{A'}$.
	Moreover, by the explicit description of the norm (Proposition~\ref{prop:opnormalt}), we have 
	$\|f|_{M''}\| = \|f|_{M'}\|$ as the maximum ranges
	over the same sums of coefficients.
\end{proof}

\section{Almost equality}\label{sec:GH}

We introduce quantitative notions of ``almost equality'' for
homomorphisms between marked projective modules and marked projective
chain complexes. Almost equality for homomorphisms requires that the
homomorphisms are equal except on a marked summand of small dimension
and that the norm on this exceptional summand is uniformly controlled.
For the comparison of homomorphisms with different domains/targets, we
introduce a controlled Gromov--Hausdorff distance. This admits a 
straightforward generalisation to chain complexes. In particular, we
will be able to speak of marked projective chain complexes that are
``almost equal''.

\begin{setup}\label{setup:GH}
  Let $\Gamma$ be a countable group and let $Z$ denote $\Z$ (with
  the usual norm) or a finite field (with the trivial norm).
  We consider a standard $\Gamma$-action~$\alpha \colon \Gamma \actson (X,\mu)$. 
  Moreover, let $\Rrel$ be the associated orbit relation, and
  let $R \subset Z \Rrel$ be a subring that
  contains~$\linf {\alpha,Z} * \Gamma$.
\end{setup}

\subsection{Almost equality}
\label{subsec:eqdelta}

We begin with a notion of almost equality that only requires
the homomorphisms to be equal except on a marked summand
of small dimension.

\begin{defn}[marked decomposition, almost equality]
  \label{def:marked-decomp}
  In Setup~\ref{setup:GH}, 
  let $M = \bigoplus_{i \in I} \gen{A_i}$ be a marked projective $R$-module.
  \begin{itemize}
  \item A \emph{marked decomposition} of~$M$
    is the canonical $R$-isomorphism
    \[ M \cong_R \bigoplus_{i \in I} \gen{A_i \setminus B_i}
    \oplus \bigoplus_{i \in I} \gen{B_i}
    \]
    induced by a family~$(B_i)_{i \in I}$ of (possibly empty)
    measurable sets~$B_i \subset A_i$.   
  \item Let $\delta \in \R_{>0}$ and let $f, f' \colon M \to N$
    be $R$-homomorphisms between marked projective $R$-modules.
    We write~$f =_\delta f'$ if there exists a
    marked decomposition~$M \cong_R M_0 \oplus M_1$
    with
    \[ f|_{M_0} = f'|_{M_0}
    \qand \dim (M_1) < \delta.
    \]
  \item
    If $z,z' \in R$, then we write $z =_\delta z'$
    if~$\mu(\supp_1(z-z')) < \delta$.
  \item 
    If $z,z' \in \LinftyX$, then we write $z =_\delta z'$
    if~$\mu(\supp(z-z')) < \delta$.
  \end{itemize}
\end{defn}

\begin{ex}[]
	\label{ex:alm-eq}
	Let~$\gamma\in \Gamma$ and $U,V\subset X$ be measurable subsets.
	Let~$f_U,f_V\colon R\to R$ be the $R$-linear maps
	 given by right multiplication with~$(\chi_{\gamma U}, \gamma)$ and~$(\chi_{\gamma V}, \gamma)$, respectively.
	Set~$\delta\coloneqq \mu(U\symmdiff V)$, where~$\symmdiff$ denotes the symmetric difference.
	Then, $f_U =_\delta f_V$. 
\end{ex}

\begin{rem}[]
	Let~$M$ be a marked projective~$R$-module.
	The notion of almost equality can also be defined in the same way for $R$-linear maps
	$M\to \LinftyX$. The following lemmas hold for such maps to~$\LinftyX$ in an analogous way.
\end{rem}

\begin{lem}\label{lem:almosteqchar}
  In the situation of Setup~\ref{setup:GH}, 
  let $f,f' \colon M \to N$ be $R$-homomorphisms
  between marked projective $R$-modules and let $\delta \in \R_{>0}$.
  Then,
  \[ f=_\delta f'
  \iff
  \size_1(f-f') < \delta.
  \]
\end{lem}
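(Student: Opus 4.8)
The plan is to prove the biconditional by unwinding the definitions of $\size_1$ and of $=_\delta$, exploiting the additivity of $\size_1(\cdot)$ over marked summands and the monotonicity supplied by Lemma~\ref{lem:supp1:est}. Throughout, fix the marked presentation $M = \bigoplus_{i\in I}\gen{A_i}$ and set $g \coloneqq f - f'$, so that $\size_1(g) = \sum_{i\in I}\size_1\bigl(g(\chi_{A_i}\cdot e_i)\bigr)$ by Definition~\ref{def:size}.

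For the implication ``$\Leftarrow$'', suppose $\size_1(g) < \delta$. For each $i\in I$, put $B_i \coloneqq \supp_1\bigl(g(\chi_{A_i}\cdot e_i)\bigr) \subset A_i$; this is legitimate because $\supp_1\bigl(g(\chi_{A_i}\cdot e_i)\bigr) \subset \supp_1(\chi_{A_i}\cdot e_i) = A_i$ by Lemma~\ref{lem:supp1:est}\ref{i:easy}. These sets induce a marked decomposition $M \cong_R M_0 \oplus M_1$ with $M_0 = \bigoplus_i \gen{A_i\setminus B_i}$ and $M_1 = \bigoplus_i \gen{B_i}$. On the summand $\gen{A_i\setminus B_i}$, the generator is $\chi_{A_i\setminus B_i}\cdot e_i = \chi_{X\setminus B_i}\cdot\chi_{A_i}\cdot e_i$, and by Remark~\ref{rem:restricttosupport} applied to the element $z = g(\chi_{A_i}\cdot e_i)$ (whose $1$-support is $B_i$), we get $\chi_{X\setminus B_i}\cdot g(\chi_{A_i}\cdot e_i) = 0$; by $R$-linearity this forces $g|_{M_0} = 0$, i.e.\ $f|_{M_0} = f'|_{M_0}$. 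Finally $\dim(M_1) = \sum_i \mu(B_i) = \sum_i \size_1\bigl(g(\chi_{A_i}\cdot e_i)\bigr) = \size_1(g) < \delta$, so $f =_\delta f'$.

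For the implication ``$\Rightarrow$'', suppose $f =_\delta f'$, witnessed by a marked decomposition $M\cong_R M_0\oplus M_1$ coming from sets $B_i\subset A_i$, with $f|_{M_0}=f'|_{M_0}$ and $\sum_i\mu(B_i) = \dim(M_1) < \delta$. The point is that the decomposition may not be the ``optimal'' one, but it still dominates: for each $i$, the generator $\chi_{A_i}\cdot e_i = \chi_{A_i\setminus B_i}\cdot e_i + \chi_{B_i}\cdot e_i$ splits into its $M_0$-part and its $M_1$-part, so
\[
\size_1\bigl(g(\chi_{A_i}\cdot e_i)\bigr)
= \size_1\bigl(g(\chi_{A_i\setminus B_i}\cdot e_i) + g(\chi_{B_i}\cdot e_i)\bigr)
= \size_1\bigl(g(\chi_{B_i}\cdot e_i)\bigr),
\]
using $g(\chi_{A_i\setminus B_i}\cdot e_i)=0$. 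Now $\supp_1\bigl(g(\chi_{B_i}\cdot e_i)\bigr)\subset\supp_1(\chi_{B_i}\cdot e_i) = B_i$ by Lemma~\ref{lem:supp1:est}\ref{i:easy}, hence $\size_1\bigl(g(\chi_{B_i}\cdot e_i)\bigr)\le\mu(B_i)$. Summing over $i$ gives $\size_1(g)\le\sum_i\mu(B_i) = \dim(M_1) < \delta$, as desired.

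The only subtlety — and the step I would be most careful about — is the manipulation of $1$-supports under the decomposition: specifically, that $\chi_{A_i\setminus B_i}\cdot e_i$ really is the image of a generator of $M_0$ under the canonical isomorphism, that $g$ vanishing on $M_0$ therefore kills $g(\chi_{A_i\setminus B_i}\cdot e_i)$, and that $\supp_1$ is both monotone under post-composition (Lemma~\ref{lem:supp1:est}\ref{i:easy}) and controls $\size_1$ via $\size_1(z)=\mu(\supp_1(z))\le\mu(B)$ whenever $\supp_1(z)\subset B$. None of this is deep, but it relies on Remark~\ref{rem:support:est} and Remark~\ref{rem:restricttosupport} to justify passing freely between $\chi_{A_i}\cdot z$ and $z$, and between $\chi_{X\setminus B_i}\cdot z = 0$ and $\supp_1(z)\subset B_i$; one should double-check that the ``almost everywhere'' conventions of Remark~\ref{rem:ae} make the strict inequalities behave correctly (they do, since $\dim$ and $\size_1$ are genuine real numbers).
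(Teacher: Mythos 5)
Your proof is correct and follows essentially the same route as the paper's: both directions use the decomposition by the sets $B_i=\supp_1\bigl((f-f')(\chi_{A_i}\cdot e_i)\bigr)$ (resp.\ the given witnessing sets), together with Lemma~\ref{lem:supp1:est}~\ref{i:easy} and Remark~\ref{rem:restricttosupport}, exactly as in the paper. The points you flag as subtleties are handled the same way there, so no changes are needed.
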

\begin{proof}
  Let $M = \bigoplus_{i \in I} \gen{A_i}$ be the marked presentation
  of~$M$. 

  We first assume that $f =_\delta f'$. Let $M \cong_R \bigoplus_{i \in
    I} \gen{A_i \setminus B_i} \oplus \bigoplus_{i \in I} \gen{B_i}$ be a
  corresponding marked decomposition. In particular, we have
  $(f-f')(\chi_{A_i} \cdot e_i) = (f-f')(\chi_{B_i} \cdot e_i)$
  for all~$i \in I$.
  Then, we obtain
  \begin{align*}
    \size_1 (f-f')
    &
    = \sum_{i \in I} \size_1 \bigl( (f-f') (\chi_{B_i} \cdot e_i)\bigr)
    \\
    &
    \leq \sum_{i \in I} \size_1 (\chi_{B_i} \cdot e_i)
    & \text{(Lemma~\ref{lem:supp1:est}~\ref{i:easy})}
    \\
    & = \sum_{i \in I} \mu(B_i) = \dim \Bigl( \bigoplus_{i \in I} \gen{B_i} \Bigr)
    < \delta.
  \end{align*}

  Conversely, let $\size_1(f-f') < \delta$.
  For~$i \in I$, we set $B_i \coloneqq \supp_1( (f-f') (\chi_{A_i} \cdot e_i) )$.
  We consider
  \[ M_0 \coloneqq \bigoplus_{i \in I} \gen{A_i \setminus B_i}
  \qand
  M_1 \coloneqq \bigoplus_{i \in I} \gen{B_i}.
  \]
  Then $M_0 \oplus M_1$ is a marked decomposition of~$M$. 
  Because of~$\size_1(f-f') < \delta$ and the definition of~$B_i$, we have
  \[ \dim M_1
  = \sum_{i \in I} \mu(B_i)
  = \sum_{i \in I} \size_1 \bigl( (f-f')(\chi_{A_i} \cdot e_i)\bigr)
  = \size_1 (f-f') < \delta.
  \]
  Moreover, by construction,
  \[ f(\chi_{A_i \setminus B_i} \cdot e_i) = f'(\chi_{A_i \setminus B_i} \cdot e_i)
  \]
  for all~$i \in I$ (Remark~\ref{rem:restricttosupport}).
  Hence, $f|_{M_0} = f'|_{M_0}$.
\end{proof}

\begin{lem}\label{lem:almosteqinherit}
  In the situation of Setup~\ref{setup:GH}, 
  let $L,M,N$
  be marked projective $R$-modules, let $f,f' \colon M \to N$
  be $R$-homomorphisms, and let $\delta,\delta' \in \R_{>0}$.
  We assume
  that $f =_\delta f'$.
  \begin{enumerate}[label=\enum]
  \item
  \label{itm:composition:additivity}
   If $f''\colon M\to N$ is an $R$-homomorphism with $f'=_{\delta'} f''$, then $f=_{\delta+\delta'}f''$.
  \item 
	\label{itm:postcomp}  
  If $h \colon N \to L$ is an $R$-homomorphism, then
    \[ h \circ f =_\delta h \circ f'.
    \]
  \item 
	\label{itm:precomp}    
  If $g \colon L \to M$ is an $R$-homomorphism,
    then 
    \[ f\circ g =_{\Nsum(g) \cdot \delta} f' \circ g.
    \] 
    \item If $g\colon L\to M$ is a marked $R$-homomorphism, then
    \[
    	f\circ g=_\delta f'\circ g.
    \]
    \item If $g,g'\colon L\to M$ are $R$-homomorphisms with $g=_{\delta'}g'$, then
    \[
    	f\circ g =_{\Nsum(g)\cdot \delta+\delta'} f'\circ g'.
    \]
    \item \label{itm:almeq-sum}
    If $g,g'\colon M\to N$ are $R$-homomorphisms with $g =_{\delta'}g'$, then
    \[
    	f+g =_{\delta+\delta'} f'+g'
    \]
  \end{enumerate}
\end{lem}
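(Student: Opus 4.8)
The plan is to reduce all six statements to the numerical criterion of Lemma~\ref{lem:almosteqchar}, namely that $f =_\delta f'$ is equivalent to $\size_1(f - f') < \delta$, and then to feed in the support and size estimates of Lemma~\ref{lem:supp1:est}. Two elementary observations make this essentially mechanical. First, the difference of two $R$-homomorphisms between the same pair of marked projective $R$-modules is again such a homomorphism, and forming differences commutes with pre- and post-composition; e.g.\ $h \circ f - h \circ f' = h \circ (f - f')$, $(f - f')\circ g = f\circ g - f'\circ g$, and $(f+g)-(f'+g') = (f-f')+(g-g')$. Second, $\size_1$ is subadditive on homomorphisms: for $R$-homomorphisms $a, b \colon M \to N$ with marked presentation $M = \bigoplus_{i\in I}\gen{A_i}$ one has $\supp_1\bigl((a+b)(\chi_{A_i}\cdot e_i)\bigr) \subset \supp_1\bigl(a(\chi_{A_i}\cdot e_i)\bigr) \cup \supp_1\bigl(b(\chi_{A_i}\cdot e_i)\bigr)$ for every~$i$, whence $\size_1(a+b) \le \size_1(a) + \size_1(b)$.

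Given these, parts~\ref{itm:composition:additivity} and~\ref{itm:almeq-sum} are immediate: write $f - f'' = (f-f') + (f'-f'')$, respectively $(f+g)-(f'+g') = (f-f')+(g-g')$, apply subadditivity of $\size_1$, and combine the hypotheses $\size_1(f-f') < \delta$ with $\size_1(f'-f'') < \delta'$, respectively with $\size_1(g-g') < \delta'$. For part~\ref{itm:postcomp} I would use $h\circ f - h\circ f' = h\circ(f-f')$ and apply Lemma~\ref{lem:supp1:est}~\ref{i:easy} to each marked generator of~$M$, giving $\size_1\bigl(h\circ(f-f')\bigr) \le \size_1(f-f') < \delta$.

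The precomposition statements go the same way, now using the remaining parts of Lemma~\ref{lem:supp1:est}. For part~\ref{itm:precomp}, $f\circ g - f'\circ g = (f-f')\circ g$, and Lemma~\ref{lem:supp1:est}~\ref{i:size comp} gives $\size_1\bigl((f-f')\circ g\bigr) \le \Nsum(g)\cdot\size_1(f-f') < \Nsum(g)\cdot\delta$. Part~(iv) is the same computation, except that here $g$ is marked, so Lemma~\ref{lem:supp1:est}~\ref{i:size comp embedding} applies instead and removes the factor~$\Nsum(g)$. For part~(v), split $f\circ g - f'\circ g' = (f\circ g - f'\circ g) + (f'\circ g - f'\circ g')$; part~\ref{itm:precomp} bounds the $\size_1$ of the first summand by $\Nsum(g)\cdot\delta$, and the second summand equals $f'\circ(g-g')$, so part~\ref{itm:postcomp} applied with $h := f'$ to $g =_{\delta'} g'$ bounds its $\size_1$ by~$\delta'$; part~\ref{itm:composition:additivity} then adds these to give the bound $\Nsum(g)\cdot\delta + \delta'$.

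I do not expect a genuine obstacle: the statement is pure bookkeeping on top of Lemmas~\ref{lem:almosteqchar} and~\ref{lem:supp1:est}. The one point deserving care is the asymmetry between pre- and post-composition — post-composition never enlarges $\size_1$ (Lemma~\ref{lem:supp1:est}~\ref{i:easy}), whereas precomposition with a general homomorphism~$g$ can inflate it by the factor~$\Nsum(g)$ (Lemma~\ref{lem:supp1:est}~\ref{i:size comp}) — which is exactly why this factor enters parts~\ref{itm:precomp} and~(v) but not part~(iv) or part~\ref{itm:postcomp}. As elsewhere, $=_\delta$ is meant only for $\delta > 0$, so the degenerate case $\Nsum(g) = 0$ needs no separate treatment.
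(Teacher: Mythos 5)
Your proof is correct and follows essentially the same route as the paper: the composition statements \ref{itm:precomp} and (iv) are obtained from the characterisation in Lemma~\ref{lem:almosteqchar} together with Lemma~\ref{lem:supp1:est}~\ref{i:size comp} and \ref{i:size comp embedding}, and (v) is assembled from \ref{itm:composition:additivity}, \ref{itm:postcomp}, and \ref{itm:precomp} exactly as in the paper. The only (harmless) cosmetic difference is that you also route parts \ref{itm:composition:additivity}, \ref{itm:postcomp}, and \ref{itm:almeq-sum} through the $\size_1$ criterion, whereas the paper treats these as immediate from the definition via marked decompositions.
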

\begin{proof}
  Parts~(i), ~(ii) and~(vi) are immediate from the definition. Parts~(iii) and~(iv)
  follow from the characterisation in Lemma~\ref{lem:almosteqchar}
  and the estimate in Lemma~\ref{lem:supp1:est}~\ref{i:size comp} and~\ref{i:size comp embedding}, respectively.
  Part~(v) follows from parts~(i), (ii), and~(iii).
\end{proof}

We obtain the following consequences.

\begin{lem}[]
	\label{lem:alm-eq-split}
	Let~$M = \bigoplus_{i\in I} M_i$ and $N = \bigoplus_{j\in J} N_j$ be 
	marked projective $R$-modules. 
	Let~$f,g\colon M\to N$ be $R$-homomorphisms.	
	For~$i\in I$ and~$j\in J$, let $f_{i,j}, g_{i,j}\colon  M_i \to N_j$ denote the
	restrictions to the specified summands in the domain and codomain. 
	Let~$\delta\in \R_{>0}$ such that for all~$i\in I, j\in J$, we have~$f_{i,j} =_\delta g_{i,j}$.
	Then, we have $f =_{\# I\cdot \# J \cdot \delta} g$.
\end{lem}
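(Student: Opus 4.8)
The plan is to reduce the statement to the $\size_1$-characterisation of almost equality from Lemma~\ref{lem:almosteqchar}. By that lemma, the hypothesis $f_{i,j} =_\delta g_{i,j}$ is equivalent to $\size_1(f_{i,j}-g_{i,j}) < \delta$ for all $i \in I$, $j \in J$, and the desired conclusion $f =_{\# I \cdot \# J \cdot \delta} g$ is equivalent to $\size_1(f-g) < \# I \cdot \# J \cdot \delta$. So it suffices to establish the subadditivity estimate
\[
\size_1(f-g) \le \sum_{i \in I} \sum_{j \in J} \size_1(f_{i,j}-g_{i,j}),
\]
since the right-hand side is a sum of $\# I \cdot \# J$ terms each strictly below $\delta$. (If $I$ or $J$ is empty, then $M = 0$ or $N = 0$ and the claim is trivial, so we may assume both are nonempty, which is what makes the strict inequality legitimate.)

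To prove the estimate, I would fix marked presentations $M_i = \bigoplus_{l \in L_i} \gen{A_{i,l}}$; then the basis of $M$ underlying the definition of $\size_1$ is the disjoint union of the bases of the $M_i$, so that $\size_1(f-g) = \sum_{i \in I} \sum_{l \in L_i} \size_1\bigl((f-g)(\chi_{A_{i,l}} \cdot e_{i,l})\bigr)$. For a fixed basis vector $e$ of $M_i$, write $p_j \colon N \to N_j$ for the canonical projection. By the very definition of the restrictions, $p_j \circ (f-g)|_{M_i} = f_{i,j} - g_{i,j}$, hence $(f-g)(e) = \sum_{j \in J} (f_{i,j} - g_{i,j})(e)$ in $N = \bigoplus_{j \in J} N_j$. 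Since $\supp_1$ of an element of a marked projective module is, by definition, the union of the $1$-supports of its components, we get $\supp_1\bigl((f-g)(e)\bigr) \subset \bigcup_{j \in J} \supp_1\bigl((f_{i,j}-g_{i,j})(e)\bigr)$ and therefore $\size_1\bigl((f-g)(e)\bigr) \le \sum_{j \in J} \size_1\bigl((f_{i,j}-g_{i,j})(e)\bigr)$. Summing over $l \in L_i$ recognises $\sum_{l \in L_i} \size_1\bigl((f_{i,j}-g_{i,j})(\chi_{A_{i,l}} \cdot e_{i,l})\bigr)$ as $\size_1(f_{i,j}-g_{i,j})$ (by Definition~\ref{def:size}), and a final summation over $i \in I$ yields the displayed inequality; Lemma~\ref{lem:almosteqchar} then converts it back into $f =_{\# I \cdot \# J \cdot \delta} g$.

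There is no real obstacle here; this is a bookkeeping argument. The only points requiring a moment of care are that $\size_1$ is additive over direct summands in the domain and subadditive over direct summands in the codomain — both immediate from Definition~\ref{def:size} and the definition of $\supp_1$ — together with the compatibility $p_j \circ (f-g)|_{M_i} = f_{i,j} - g_{i,j}$ of restrictions with the structure maps, which holds by the way $f_{i,j}$ and $g_{i,j}$ are defined in the statement.
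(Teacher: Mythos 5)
Your proof is correct, and it takes essentially the same route as the paper: the paper simply cites Lemma~\ref{lem:almosteqinherit} (whose relevant parts — compatibility of $=_\delta$ with marked inclusions/projections and additivity under sums — are themselves proved via the $\size_1$-characterisation of Lemma~\ref{lem:almosteqchar}), while you unwind that characterisation directly and verify the subadditivity of $\size_1$ over the two decompositions by hand. The bookkeeping, including the remark on nonempty $I$ and $J$ needed for the strict inequality, is sound.
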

\begin{proof}
This follows directly from Lemma~\ref{lem:almosteqinherit}.
\end{proof}

\begin{lem}
\label{lem:almosteq rectangle}
	Let~$\delta,\delta'\in \IR_{>0}$ and let the following be a diagram of projective $R$-modules:
	\[\begin{tikzcd}
		M\ar{r}{g_1}\ar{d}{\partial}
		& M'\ar{r}{f_1}\ar{d}{\partial'}
		& M''\ar{d}{\partial''}
		\\
		N\ar{r}{g_0}
		& N'\ar{r}{f_0}
		& N''
	\end{tikzcd}\]
	If $\partial'\circ g_1=_\delta g_0\circ \partial$ and $\partial''\circ f_1=_{\delta'} f_0\circ \partial'$, then
	\[
		\partial''\circ f_1\circ g_1=_{\delta+\Nsum(g_1)\cdot \delta'} f_0\circ g_0\circ \partial.
	\]
	\begin{proof}
          In view of Lemma~\ref{lem:almosteqinherit}, we have
		\begin{align*}
			\partial''\circ f_1\circ g_1
			&=_{\Nsum(g_1)\cdot \delta'} f_0\circ \partial'\circ g_1,
			\\
			f_0\circ \partial'\circ g_1
			&=_{\delta} f_0\circ g_0\circ \partial,
		\end{align*}
          which combines to the claimed almost equality.
	\end{proof}
\end{lem}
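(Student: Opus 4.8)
The plan is to interpolate through the composite $f_0 \circ \partial' \circ g_1$ and then assemble the bound from the composition rules for almost-equality collected in Lemma~\ref{lem:almosteqinherit}; no new idea is needed beyond routing the two hypotheses in the correct order.

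First I would write the left-hand term as $\partial'' \circ f_1 \circ g_1 = (\partial'' \circ f_1) \circ g_1$ and apply the right-square hypothesis $\partial'' \circ f_1 =_{\delta'} f_0 \circ \partial'$. Precomposing an almost-equality with a fixed homomorphism $g_1$ inflates the defect by the factor $\Nsum(g_1)$ (the precomposition part of Lemma~\ref{lem:almosteqinherit}), so
\[ \partial'' \circ f_1 \circ g_1 =_{\Nsum(g_1) \cdot \delta'} f_0 \circ \partial' \circ g_1. \]

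Next I would group the right-hand side as $f_0 \circ (\partial' \circ g_1)$ and apply the left-square hypothesis $\partial' \circ g_1 =_{\delta} g_0 \circ \partial$. Postcomposing an almost-equality with $f_0$ leaves the defect unchanged (the postcomposition part of Lemma~\ref{lem:almosteqinherit}), so
\[ f_0 \circ \partial' \circ g_1 =_{\delta} f_0 \circ g_0 \circ \partial. \]

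Finally I would concatenate the two steps using additivity of the defect under transitivity (part~(i) of Lemma~\ref{lem:almosteqinherit}), obtaining
\[ \partial'' \circ f_1 \circ g_1 =_{\delta + \Nsum(g_1) \cdot \delta'} f_0 \circ g_0 \circ \partial, \]
which is exactly the claim. \textbf{Expected obstacle:} essentially none — the lemma merely packages the composition rules, and the only point requiring attention is the asymmetry of the constant: the factor $\Nsum(g_1)$ must be charged to the right square, whose relation is precomposed with $g_1$, whereas the left square's relation is only postcomposed with $f_0$ and therefore contributes the bare summand $\delta$; interchanging these roles would yield a strictly weaker estimate.
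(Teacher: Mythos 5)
Your proposal is correct and coincides with the paper's proof: both interpolate through $f_0\circ\partial'\circ g_1$, charge the factor $\Nsum(g_1)$ to the precomposed right-square hypothesis via Lemma~\ref{lem:almosteqinherit}~\ref{itm:precomp}, use the defect-preserving postcomposition rule \ref{itm:postcomp} for the left square, and add the defects by transitivity \ref{itm:composition:additivity}. Nothing to change.
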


\begin{rem}\label{rem:rk1almosteq}
  Let $A\subset X$ be measurable, let $\lambda, \lambda' \in R$
  with~$\supp_1(\lambda) \subset A$, $\supp_1(\lambda') \subset A$,
  and let $f, f' \colon \gen A \to R$ be the $R$-homomorphisms
  given by right multiplication by~$\lambda$ and~$\lambda'$, respectively.
  Then the following are equivalent:
  \begin{enumerate}[label=\enum]
   	\item $\lambda =_\delta \lambda'$;
	\item $\mu(\supp_1(\lambda-\lambda'))<\delta$;
	\item $f =_\delta f'$.
   \end{enumerate}
  Moreover, we recall that (Remark~\ref{rem:supportl1}) 
  \[ \mu \bigl(\supp_1(\lambda-\lambda')\bigr)
  \leq \nu \bigl(\supp(\lambda-\lambda')\bigr)
  \leq |\lambda - \lambda'|_1.
  \]
\end{rem}

\subsection{Controlled almost equality}

Almost equality of maps is not robust with respect to norm estimates.
We thus proceed to a controlled version of the notion of almost
equality from Section~\ref{subsec:eqdelta}:

\begin{defn}
  In the situation of Setup~\ref{setup:GH}, let $M$ and $N$ be marked
  projective $R$-modules, let $f,f' \colon M \to N$ be
  $R$-homomorphisms, and let $\delta, K \in \R_{>0}$.
  We then say that $f$ is \emph{$(\delta,K)$-almost equal to~$f'$}
  if there exists a marked decomposition~$M \cong_R M_0 \oplus M_1$
  with
  \[ f|_{M_0} = f'|_{M_0}
  \qand
  \dim (M_1) < \delta
  \qand
  \| f|_{M_1} - f'|_{M_1} \| \leq K.
  \]
  In this case, we write~$f =_{\delta, K} f'$.
\end{defn}

\begin{prop}\label{prop:eqdeltaK}
  In the situation of Setup~\ref{setup:GH}, let $M,N$ be marked
  projective $R$-modules, let $f,f',f'' \colon M \to N$ be $R$-homomorphisms,
  and let $\delta, \delta', K, K' \in \R_{>0}$.
  Then the following hold:
  \begin{enumerate}[label=\enum]
  \item
    We have $f =_{\delta, K} f$.
  \item
    We have $f = _{\delta,K} f'$ if and only if $f - f' =_{\delta,K} 0$.
  \item
    We have $f =_{\delta,K} f'$ if and only if $f =_\delta f'$ and $\|f-f'\| \leq K$.
  \item
    If $f =_{\delta,K} f'$ and $\delta \leq \delta'$, $K \leq K'$,
    then~$f =_{\delta',K'} f'$.
  \item
    If $f = _{\delta,K} f'$ and $f' = _{\delta',K'} f''$, then $f =_{\delta + \delta', K + K'} f''$.
  \item\label{i:eqdeltaK triangle}
    Let $g, g' \colon M \to N$ be $R$-homomorphisms with $g=_{\delta',K'} g'$. If $f =_{\delta, K} f'$,
    then $ f+ g =_{\delta +\delta', K + K'} f'+g'$.
  \item\label{i:eqdeltaK comp}
    Let $L$ and $P$ be marked projective $R$-modules and let $h \colon N \to P$,
    $g \colon L \to M$ be $R$-homomorphisms. 
    If $f=_{\delta,K} f'$, then
    \[ h \circ f =_{\delta, \|h\| \cdot K} h \circ f'
    \qand
    f\circ g =_{\Nsum(g) \cdot \delta, \|g\| \cdot K} f' \circ g.
    \]
    Moreover, if~$g\colon L\to M$ is a marked $R$-homomorphism, then $f\circ g=_{\delta,K} f'\circ g$.
    \item
    Let~$L$ and~$P$ be marked projective $R$-modules and let $g,g'\colon L\to M$ be $R$-homomorphisms with $g=_{\delta',K'}g'$.
    If $f=_{\delta,K} f'$, then 
    \[
    	f\circ g=_{\Nsum(g)\cdot \delta+\delta',\|g\|\cdot K+\|f'\|\cdot K'} f'\circ g'.
    \]
  \end{enumerate}
\end{prop}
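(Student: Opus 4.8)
The plan is to factor the comparison through the intermediate homomorphism~$f' \circ g \colon L \to N$ and to apply part~\ref{i:eqdeltaK comp} twice, followed by the concatenation rule in part~(v). No new estimates are needed; the whole argument is bookkeeping on top of the functoriality of~$=_{\delta,K}$ under composition.

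First, from the hypothesis $f =_{\delta,K} f'$, the precomposition half of part~\ref{i:eqdeltaK comp} applied to the $R$-homomorphism~$g \colon L \to M$ yields
\[ f \circ g =_{\Nsum(g)\cdot\delta,\ \|g\|\cdot K} f' \circ g. \]
Here $\Nsum(g)$ is finite by Definition~\ref{def:size} and $\|g\|$ is finite by Remark~\ref{rem:finnorm}, so both constants are legitimate. Next, from the hypothesis $g =_{\delta',K'} g'$ (a pair of $R$-homomorphisms $L \to M$), the postcomposition half of part~\ref{i:eqdeltaK comp} applied with $h \coloneqq f' \colon M \to N$ yields
\[ f' \circ g =_{\delta',\ \|f'\|\cdot K'} f' \circ g'. \]
Finally, combining these two almost-equalities via part~(v) gives
\[ f \circ g =_{\Nsum(g)\cdot\delta + \delta',\ \|g\|\cdot K + \|f'\|\cdot K'} f' \circ g', \]
which is exactly the assertion.

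The only point demanding care is matching the constants in the statement: one must use the \emph{precomposition} half of part~\ref{i:eqdeltaK comp} for the first step (so that the dimension bound picks up the factor $\Nsum(g)$ and the norm bound the factor $\|g\|$), and the \emph{postcomposition} half with $h = f'$ — rather than $h = f$ — for the second step (so that the norm factor is $\|f'\|$, as written). An alternative, equivalent route is to rewrite $f\circ g - f'\circ g' = (f-f')\circ g + f'\circ(g-g')$, note $(f-f') =_{\delta,K} 0$ and $(g-g') =_{\delta',K'} 0$ by part~(ii), push each term through part~\ref{i:eqdeltaK comp}, and add them via part~\ref{i:eqdeltaK triangle}; this produces the same constants. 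There is no genuine obstacle here beyond correctly attributing each constant to the map that contributes it.
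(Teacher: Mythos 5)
Your argument for part~(viii) is correct and is exactly the paper's route: the paper's proof of~(viii) is literally ``this follows from parts~(vii) and~(v)'', i.e.\ factor through $f'\circ g$, use the precomposition half of~(vii) for the first step and the postcomposition half with $h=f'$ for the second, then concatenate with~(v); your constants $\Nsum(g)\cdot\delta+\delta'$ and $\|g\|\cdot K+\|f'\|\cdot K'$ match.

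The only caveat is completeness: the statement comprises all eight parts, and your proposal proves only~(viii) while \emph{using} (v) and~(vii) as black boxes. Since (vii) is itself one of the claims, you should at least indicate why it holds — the $=_\delta$ half is Lemma~\ref{lem:almosteqinherit}, the norm half is submultiplicativity of the operator norm applied to $h\circ(f-f')$ and $(f-f')\circ g$ on the exceptional summand, and the marked case uses Remark~\ref{rem:norms:marked:homo:bounded:by:1}. Parts (i)--(vi) are immediate from the definition (as the paper also notes), so this is an omission of routine material rather than a genuine gap in the idea.
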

\begin{proof}
	(i)--(vi)
	These properties are straightforward.
	
	(vii)
	This follows from Lemma~\ref{lem:almosteqinherit} and Remark~\ref{rem:norms:marked:homo:bounded:by:1}.
	
	(viii)
	This follows from parts~(vii) and~(v).
\end{proof}

\subsection{A Gromov--Hausdorff distance for homomorphisms}

We introduce a notion of Gromov--Hausdorff distance for
homomorphisms between marked projective modules. As in
the case of metric spaces, the Gromov--Hausdorff distance
is defined by inclusions into joint ambient objects. 

\begin{defn}[marked symmetric difference]
  Let $M = \bigoplus_{i \in I} \gen {A_i}$ be a marked projective $R$-module
  and let $N = \bigoplus_{i \in I} \gen{B_i}$, $N' = \bigoplus_{i \in I} \gen{B'_i}$
  be marked projective summands of~$M$.
  Then we define the \emph{marked symmetric  difference
    of~$N$ and~$N'$} by 
  \[ N \msymmdiff N' \coloneqq \bigoplus_{i \in I} \gen{B_i \symmdiff B'_i}.
  \]
\end{defn}

\begin{defn}[Gromov--Hausdorff distance for homomorphisms]
  In the situation of Setup~\ref{setup:GH}, let $M,N,M',N'$
  be marked projective $R$-modules, let $f \colon M \to N$ and $f' \colon M' \to N'$
  be $R$-homomorphisms, and let $\delta, K \in \R_{>0}$. We then say
  that $\dgh K {f,f'} < \delta$ if
  there exist marked projective $R$-modules~$L$,~$P$ and
  marked inclusions
  $\varphi \colon M \to L$, $\varphi' \colon M' \to L$,
  $\psi \colon N \to P$, $\psi' \colon N' \to P$ with the following
  properties: 
  \begin{itemize}
  \item $\dim \bigl(\varphi(M) \msymmdiff \varphi'(M') \bigr) < \delta$
  \item $\dim \bigl(\psi(N) \msymmdiff \psi'(N') \bigr) < \delta$
  \item $F =_{\delta,K} F'$, where $F \coloneqq \psi \circ f \circ
    \pi_\varphi$, $F' \coloneqq \psi' \circ f' \circ \pi_{\varphi'}$ and
    $\pi_\varphi$, $\pi_{\varphi'}$ are the marked projections
    associated with the marked inclusions~$\varphi$ and $\varphi'$,
    respectively.
  \end{itemize}
  \[ \begin{tikzcd}
    M
    \ar{r}{f}
    \ar[shift right,hookrightarrow]{d}[swap]{\varphi}
    & N
    \ar[shift right,hookrightarrow]{d}[swap]{\psi}
    \\
    L
    \ar[shift right]{u}[swap]{\pi_{\varphi}}
    \ar[shift left]{d}{\pi_{\varphi'}}
    \ar[shift left, dashed]{r}{F}
    \ar[shift right, dashed]{r}[swap]{F'}
    & P
    \ar[shift right]{u}[swap]{\pi_{\psi}}
    \ar[shift left]{d}{\pi_{\psi'}}
    \\
    M'
    \ar{r}[swap]{f'}
    \ar[shift left,hookrightarrow]{u}{\varphi'}
    & N'
    \ar[shift left,hookrightarrow]{u}{\psi'}
    \end{tikzcd}
  \]
\end{defn}

\begin{prop}\label{prop:dGH}
  In the situation of Setup~\ref{setup:GH}, let $M, M', M''$, $N, N', N''$ be marked
  projective $R$-modules, let $f \colon M \to N$, $f' \colon M' \to N'$, $f'' \colon M'' \to N''$
  be $R$-homomorphisms, and let $\delta, \delta', K, K' \in \R_{>0}$.
  Then the following hold:
  \begin{enumerate}[label=\enum]
  \item 
	\label{itm:dGH-dim}  
  If $\dgh K {f,f'} < \delta$, then
    \[ | \dim M - \dim M'| < \delta
    \qand
    | \dim N - \dim N'| < \delta.
    \]
  \item If $M=M'$ and $N = N'$ and $f=_{\delta,K} f'$, then $\dgh K {f,f'} < \delta$.
  \item If $\dgh K {f,f'} < \delta$ and $\delta \leq \delta'$, $K\leq K'$,
    then $\dgh {K'} {f,f'} < \delta'$.
  \item
    If $\dgh K {f,f'} < \delta$, then there exist marked
    $R$-homomorphisms $\Phi \colon M \to M'$ and $\Psi \colon N \to N'$
    with
    $\Psi \circ f =_{\delta,K} f' \circ \Phi$.
    \[
    \begin{tikzcd}
      M
      \ar{r}{f}
      \ar[dashed]{d}[swap]{\Phi}
      & N
      \ar[dashed]{d}{\Psi}
      \\
      M'
      \ar{r}[swap]{f'}
      & N'
    \end{tikzcd}
    \]
  \item 
	\label{itm:dGH5}  
  If $\dgh K {f,f'} < \delta$ and $\dgh {K'} {f',f''} < \delta'$,
    then
    \[ \dgh {K+K'} {f,f''} < \delta + \delta'.
    \]
  \end{enumerate}
\end{prop}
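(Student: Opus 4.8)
The plan is to verify each of the five items of Proposition~\ref{prop:dGH} directly from the definition of $\dgh K {\cdot,\cdot}$ and the properties established in Proposition~\ref{prop:eqdeltaK} (and Lemma~\ref{lem:almosteqinherit}). In all items we start from witnessing data: marked projective modules $L$, $P$ with marked inclusions $\varphi,\varphi',\psi,\psi'$ and the associated marked projections $\pi_\varphi,\pi_{\varphi'},\pi_\psi,\pi_{\psi'}$, satisfying the three bullet conditions in the definition.

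For~\ref{itm:dGH-dim}, I would observe that a marked inclusion $\varphi\colon M\to L$ is an isometry for $\dim$ (it identifies $M$ with a marked summand of $L$), so $\dim M=\dim\varphi(M)$ and $\dim M'=\dim\varphi'(M')$; then $|\dim M-\dim M'|=|\dim\varphi(M)-\dim\varphi'(M')|\le\dim(\varphi(M)\msymmdiff\varphi'(M'))<\delta$ using that for marked summands of a common module the dimension difference is bounded by the dimension of the marked symmetric difference. The argument for $N,N'$ is identical. For item~(ii), when $M=M'$ and $N=N'$ we simply take $L\coloneqq M$, $P\coloneqq N$ and the identity marked inclusions: then the marked symmetric differences are trivial (dimension $0<\delta$), $F=f$, $F'=f'$, and $F=_{\delta,K}F'$ holds by hypothesis. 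Item~(iii) is monotonicity: enlarging $\delta$ to $\delta'$ and $K$ to $K'$ preserves the strict dimension inequalities (since $\delta\le\delta'$) and preserves $F=_{\delta,K}F'\Rightarrow F=_{\delta',K'}F'$ by Proposition~\ref{prop:eqdeltaK}(iv), keeping the same witnessing data.

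For item~(iv), I would set $\Phi\coloneqq\pi_{\varphi'}\circ\varphi\colon M\to M'$ and $\Psi\coloneqq\pi_{\psi'}\circ\psi\colon N\to N'$, which are compositions of marked inclusions and marked projections, hence marked $R$-homomorphisms. The key computation is that $\Psi\circ f$ and $f'\circ\Phi$ both agree (up to a controlled marked summand) with the restrictions of $F$ and $F'$ along $\varphi$: using $\psi'\circ\pi_{\psi'}$ and $\varphi'\circ\pi_{\varphi'}$ being "almost the identity" on $\psi(N)$ and $\varphi(M)$ respectively — more precisely, they differ from the identity only on the marked symmetric difference summands, which have dimension $<\delta$ — together with $F=_{\delta,K}F'$ and the composition rules of Proposition~\ref{prop:eqdeltaK}(vii). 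This bookkeeping, tracking how the two symmetric-difference error terms and the $F,F'$ error term combine through pre- and post-composition with marked maps (which by Remark~\ref{rem:norms:marked:homo:bounded:by:1} have norm and $N_1$-type quantities bounded by $1$, so do not inflate the constants), is where I expect the only real friction; one has to be slightly careful that precomposition with a marked map does not multiply $\delta$ (Proposition~\ref{prop:eqdeltaK}(vii), last sentence), so the final constants stay $(\delta,K)$ up to absorbing the two dimension-$\delta$ symmetric-difference corrections — which forces using a slightly larger $\delta$ unless one is careful, so I would phrase the conclusion with the marked summands chosen optimally, or simply enlarge $\delta$ at the outset as item~(iii) permits.

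For item~(v), the triangle inequality, I would take witnessing data $(L_1,P_1,\ldots)$ for $\dgh K{f,f'}<\delta$ and $(L_2,P_2,\ldots)$ for $\dgh{K'}{f',f''}<\delta'$, and amalgamate them over the common object $M'$ (resp.\ $N'$): form $L$ containing both $L_1$ and $L_2$ as marked summands along their respective copies of $M'$, and similarly $P$. One then estimates $\dim(\varphi(M)\msymmdiff\varphi''(M''))\le\dim(\varphi(M)\msymmdiff\varphi'(M'))+\dim(\varphi'(M')\msymmdiff\varphi''(M''))<\delta+\delta'$ by the triangle inequality for symmetric differences (marked-wise, using $\mu(A\symmdiff C)\le\mu(A\symmdiff B)+\mu(B\symmdiff C)$ in each coordinate), and likewise for $P$; and $F=_{\delta+\delta',K+K'}F''$ follows from $F=_{\delta,K}F'$, $F'=_{\delta',K'}F''$ and transitivity (Proposition~\ref{prop:eqdeltaK}(v)), after checking that the amalgamated inclusions still relate $F,F',F''$ correctly — again a matter of routine diagram chasing with the marked projections. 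I expect item~(iv)'s constant bookkeeping and item~(v)'s amalgamation construction to be the two places needing the most care, but neither presents a conceptual obstacle; everything reduces to the already-established calculus of $=_{\delta,K}$ and the fact that marked $R$-homomorphisms have all relevant norms bounded by~$1$.
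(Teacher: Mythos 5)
Items (i)--(iii) and (v) of your proposal match the paper's proof essentially verbatim: (i) is the dimension comparison via the marked symmetric difference, (ii) and (iii) are immediate, and for (v) the paper likewise amalgamates the two witnessing diagrams over the common copies of $M'$ and $N'$ by forming $L''=\bigoplus_{i}\gen{A_i\cup A_i'}$ (and similarly $P''$), transports $F,F',G,G'$ there, and applies transitivity of $=_{\delta,K}$ together with the coordinate-wise triangle inequality for symmetric differences.

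The one genuine issue is in (iv). Your choice $\Phi=\pi_{\varphi'}\circ\varphi$ and $\Psi=\pi_{\psi'}\circ\psi$ is correct, but your route through the approximate identities $\psi'\circ\pi_{\psi'}\approx\id$ and $\varphi'\circ\pi_{\varphi'}\approx\id$ accumulates the two dimension-$<\delta$ symmetric-difference corrections and therefore only yields $\Psi\circ f=_{c\delta,K}f'\circ\Phi$ for some $c>1$, not the stated $=_{\delta,K}$. You flag this yourself, but neither proposed remedy works: ``choosing the marked summands optimally'' is not substantiated, and enlarging $\delta$ at the outset via (iii) only weakens the hypothesis --- it cannot recover the sharper conclusion. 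The fix is that no approximation is needed at those two places, because the relevant composites are projection-after-inclusion, which are \emph{exact} identities: $\pi_\varphi\circ\varphi=\id_M$ and $\pi_{\psi'}\circ\psi'=\id_{N'}$. Hence $\Psi\circ f=\pi_{\psi'}\circ\psi\circ f\circ\pi_\varphi\circ\varphi=\pi_{\psi'}\circ F\circ\varphi$ and $f'\circ\Phi=\pi_{\psi'}\circ\psi'\circ f'\circ\pi_{\varphi'}\circ\varphi=\pi_{\psi'}\circ F'\circ\varphi$ hold on the nose, and a single application of Proposition~\ref{prop:eqdeltaK}~(vii) to $F=_{\delta,K}F'$ --- post-composition with the marked projection $\pi_{\psi'}$ (norm at most $1$) and pre-composition with the marked inclusion $\varphi$ --- gives exactly $\Psi\circ f=_{\delta,K}f'\circ\Phi$.
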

\begin{proof}    
    (i)
    This follows from
    \[
    	|\dim M-\dim M'|=|\dim \varphi(M)-\dim \varphi'(M')|\le \dim\bigl(\varphi(M)\msymmdiff \varphi'(M')\bigr) <\delta
    \]
    and similarly for~$N$ and~$N'$.
    
    (ii) and (iii)
    These are clear.
    
    (iv)
    Take $\Phi\coloneqq \pi_{\varphi'}\circ \varphi$ and $\Psi\coloneqq \pi_{\psi'}\circ \psi$.
    Then
    \begin{align*}
    	\Psi\circ f
	&= \pi_{\psi'}\circ \psi\circ f\circ \pi_{\varphi}\circ \varphi
	= \pi_{\psi'}\circ F\circ \varphi
	\\
	&=_{\delta,K} \pi_{\psi'}\circ F'\circ \varphi
	&\text{(Proposition~\ref{prop:eqdeltaK}~\ref{i:eqdeltaK comp})}
	\\
	&= \pi_{\psi'}\circ \psi'\circ f'\circ \pi_{\varphi'}\circ \varphi
	= f'\circ \Phi.
    \end{align*}
    
    (v)
    Suppose $\dgh K {f,f'} < \delta$ and $\dgh {K'} {f',f''} < \delta'$ witnessed by
    \[ \begin{tikzcd}
    M
    \ar{r}{f}
    \ar[shift right,hookrightarrow]{d}[swap]{\varphi}
    & N
    \ar[shift right,hookrightarrow]{d}[swap]{\psi}
    \\
    L
    \ar[shift right]{u}[swap]{\pi_{\varphi}}
    \ar[shift left]{d}{\pi_{\varphi'}}
    \ar[shift left, dashed]{r}{F}
    \ar[shift right, dashed]{r}[swap]{F'}
    & P
    \ar[shift right]{u}[swap]{\pi_{\psi}}
    \ar[shift left]{d}{\pi_{\psi'}}
    \\
    M'
    \ar{r}[swap]{f'}
    \ar[shift left,hookrightarrow]{u}{\varphi'}
    \ar[shift right,hookrightarrow]{d}[swap]{\rho}
    & N'
    \ar[shift left,hookrightarrow]{u}{\psi'}
    \ar[shift right,hookrightarrow]{d}[swap]{\theta}
    \\
    L'
    \ar[shift right]{u}[swap]{\pi_{\rho}}
    \ar[shift left]{d}{\pi_{\rho'}}
    \ar[shift left, dashed]{r}{G}
    \ar[shift right, dashed]{r}[swap]{G'}
    & P'
    \ar[shift right]{u}[swap]{\pi_{\theta}}
    \ar[shift left]{d}{\pi_{\theta'}}
    \\
    M''
    \ar{r}[swap]{f''}
    \ar[shift left,hookrightarrow]{u}{\rho'}
    & N''
    \ar[shift left,hookrightarrow]{u}{\theta'}
    \end{tikzcd}
  \]
  with $F=_{\delta,K}F'$ and $G=_{\delta',K'}G'$.
  We may assume that $L=\bigoplus_{i\in I}\spann{A_i}$, $L'=\bigoplus_{i\in I}\spann{A_i'}$, and $M'=\bigoplus_{i\in I}\spann{B_i'}$ with $B_i'\subset A_i$ and $B_i'\subset A_i'$.
  We define
  \[
  	L''\coloneqq \bigoplus_{i\in I}\spann{A_i\cup A_i'}
  \]    
  and similarly, we define~$P''$.
  We have
  \[\begin{tikzcd}
  	L
	\ar[shift left, dashed]{r}{F}
    	\ar[shift right, dashed]{r}[swap]{F'}
	& P
	\ar{d}
	\\
	L''
	\ar{u}
	\ar{d}
	& P''
	\\
	L'
	\ar[shift left, dashed]{r}{G}
    	\ar[shift right, dashed]{r}[swap]{G'}
	& P'
	\ar{u}
  \end{tikzcd}\]
  We denote the compositions $L''\to P''$ by $\widetilde{F},\widetilde{F'},\widetilde{G},\widetilde{G'}$, respectively.
  Then we have $\widetilde{F}=_{\delta,K}\widetilde{F'}$ and $\widetilde{G}=_{\delta',K'}\widetilde{G'}$ by Proposition~\ref{prop:eqdeltaK}~\ref{i:eqdeltaK comp},
  and $\widetilde{F'}=\widetilde{G}$.
  By Proposition~\ref{prop:eqdeltaK}~\ref{i:eqdeltaK triangle}, we conclude $\widetilde{F}=_{\delta+\delta',K+K'} \widetilde{G'}$, witnessing that $\dgh {K+K'} {f,f''} < \delta + \delta'$.
\end{proof}

\subsection{A Gromov--Hausdorff distance for chain complexes}

We extend the notion of Gromov--Hausdorff distance to chain complexes and, more generally, to sequences of homomorphisms.

\begin{defn}[marked projective sequence]
\label{defn:mp sequence}
  In the situation of Setup~\ref{setup:GH}, let $n \in \N$.
    A \emph{marked projective $n$-sequence (over~$R$)}
    is a sequence~$(D_*,\eta)$ of the form 
    \[ \xymatrix{%
      D_{n+1} \ar[r]^-{\partial_{n+1}}
      & D_n \ar[r]
      & \cdots \ar[r]
      & D_0 \ar[r]^-{\partial_0 = \eta}
      & \LinftyX,
    }
    \]
    consisting of marked projective $R$-modules~$D_0,\dots, D_{n+1}$
    and $R$-ho\-mo\-mor\-phisms $\partial_0 \coloneqq \eta, \partial_1, \dots, \partial_{n+1}$.
\end{defn}

Clearly, marked projective $R$-chain complexes (up to degree~$n+1$) are marked projective $n$-sequences.

For the Gromov--Hausdorff distance between sequences,
we require the inclusions into a common ambient module to
exist simultaneously for all degrees in the given range: 

\begin{defn}[Gromov--Hausdorff distance for sequences]
\label{defn:dgh chain}
  In the situation of Setup~\ref{setup:GH}, let $n \in \N$, 
  let $(D_*,\eta)$, $(D'_*, \eta')$ be marked projective $n$-sequences over~$R$, and let $\delta, K \in \R_{>0}$.
  We then say that $\dgh K {D_*,D'_*,n} < \delta$ if
  there exist marked projective $R$-modules $P_0,\dots, P_{n+1}$
  and marked inclusions~$\varphi_r \colon D_r \to P_r$,
  $\varphi'_r \colon D'_r \to P_r$ for all~$r \in \{0,\dots, n+1\}$
  with the following properties: 
  \begin{itemize}
  \item For all~$r \in \{0,\dots, n+1\}$, we have
    \[ \dim\bigl( \varphi_r(D_r) \msymmdiff \varphi'_r(D'_r) \bigr) < \delta.
    \]
  \item For all~$r \in \{0,\dots, n+1\}$, we have
    \[ F_r = _{\delta,K} F'_r,
    \]
    where $F_r \coloneqq \varphi_{r-1} \circ \partial_r \circ \pi_{\varphi_r}$
    and $F'_r \coloneqq \varphi'_{r-1} \circ \partial'_r \circ \pi_{\varphi'_r}$.
    Here, $P_{-1} \coloneqq \LinftyX$ and $\varphi_{-1} \coloneqq \id_{\LinftyX} \eqqcolon \varphi'_{-1}$. 
  \end{itemize}
  \[ \begin{tikzcd}
    D_{n+1}
    \ar{r}{\partial_{n+1}}
    \ar[shift right,hookrightarrow]{d}[swap]{\varphi_{n+1}}
    & D_n
    \ar[shift right,hookrightarrow]{d}[swap]{\varphi_{n}}
    & \dots
    & D_0
    \ar{r}{\partial_0}
    \ar[shift right,hookrightarrow]{d}[swap]{\varphi_0}
    & \LinftyX
    \ar[equal]{d}
    \\
    P_{n+1}
    \ar[shift right]{u}[swap]{\pi_{\varphi_{n+1}}}
    \ar[shift left]{d}{\pi_{\varphi'_{n+1}}}
    \ar[shift left, dashed]{r}{F_{n+1}}
    \ar[shift right, dashed]{r}[swap]{F'_{n+1}}
    & P_n
    \ar[shift right]{u}[swap]{\pi_{\varphi_n}}
    \ar[shift left]{d}{\pi_{\varphi'_n}}
    & \dots
    & P_0
    \ar[shift right]{u}[swap]{\pi_{\varphi_0}}
    \ar[shift left]{d}{\pi_{\varphi'_0}}
    \ar[shift left, dashed]{r}{F_0}
    \ar[shift right, dashed]{r}[swap]{F'_0}
    & \LinftyX
    \\
    D'_{n+1}
    \ar{r}[swap]{\partial'_{n+1}}
    \ar[shift left,hookrightarrow]{u}{\varphi'_{n+1}}
    & D'_{n}
    \ar[shift left,hookrightarrow]{u}{\varphi'_n}
    & \dots
    & D'_0
    \ar{r}[swap]{\partial'_{0}}
    \ar[shift left,hookrightarrow]{u}{\varphi'_{0}}
    & \LinftyX
    \ar[equal]{u}
    \end{tikzcd}
  \]
\end{defn}

\begin{prop}\label{prop:dghchaincomplex}
  In the situation of Setup~\ref{setup:GH}, let~$n\in \IN$.
  Let~$(D_*,\eta)$, $(D'_*,\eta')$, $(D''_*,\eta'')$ be marked projective $n$-sequences and let~$\delta,\delta',K,K'\in \IR_{>0}$.
  If we have $\dgh{K}{D_*,D'_*,n}<\delta$ and $\dgh{K'}{D'_*,D''_*,n}<\delta'$, then
	\[
		\dgh{K+K'}{D_*,D''_*,n}<\delta+\delta'.
	\]
\end{prop}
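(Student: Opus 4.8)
The plan is to prove transitivity of the Gromov--Hausdorff distance for $n$-sequences by essentially the same ``glue two ambient objects along a common sub-object'' trick used in the proof of Proposition~\ref{prop:dGH}~\ref{itm:dGH5}, carried out simultaneously in every degree $r \in \{0,\dots,n+1\}$. First I would unpack the two hypotheses: from $\dgh{K}{D_*,D'_*,n}<\delta$ we get marked projective modules $P_0,\dots,P_{n+1}$ together with marked inclusions $\varphi_r\colon D_r\to P_r$ and $\varphi'_r\colon D'_r\to P_r$ satisfying $\dim(\varphi_r(D_r)\msymmdiff\varphi'_r(D'_r))<\delta$ and $F_r=_{\delta,K}F'_r$; and from $\dgh{K'}{D'_*,D''_*,n}<\delta'$ we get marked projective modules $P'_0,\dots,P'_{n+1}$ with marked inclusions $\rho_r\colon D'_r\to P'_r$, $\rho'_r\colon D''_r\to P'_r$ satisfying the analogous estimates. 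As in the single-homomorphism case, we may assume (after enlarging the marked presentations by adding trivial summands, which does not affect any of the quantities involved) that in each degree $r$ we have $P_r=\bigoplus_{i\in I_r}\gen{A_{r,i}}$, $P'_r=\bigoplus_{i\in I_r}\gen{A'_{r,i}}$, and $D'_r=\bigoplus_{i\in I_r}\gen{B'_{r,i}}$ with $B'_{r,i}\subset A_{r,i}$ and $B'_{r,i}\subset A'_{r,i}$; i.e.\ $\varphi'_r$ and $\rho_r$ are the marked inclusions of the same copy of $D'_r$ into $P_r$ and into $P'_r$, respectively.

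**Construction of the common ambient sequence.** For each $r$ set $P''_r\coloneqq\bigoplus_{i\in I_r}\gen{A_{r,i}\cup A'_{r,i}}$, with $P''_{-1}\coloneqq\LinftyX$. Then $P_r$ and $P'_r$ are both marked projective summands of $P''_r$, and hence $D_r$ (via $\varphi_r$ followed by the marked inclusion $P_r\hookrightarrow P''_r$) and $D''_r$ (via $\rho'_r$ followed by $P'_r\hookrightarrow P''_r$) become marked projective summands of $P''_r$; call these marked inclusions $\Phi_r\colon D_r\to P''_r$ and $\Phi''_r\colon D''_r\to P''_r$. The copy of $D'_r$ sits inside $P''_r$ as $\bigoplus_{i\in I_r}\gen{B'_{r,i}}$ regardless of whether we route it through $P_r$ or through $P'_r$, because $B'_{r,i}\subset A_{r,i}\cap A'_{r,i}$. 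I would then transport all four families of boundary maps up to $P''_*$: define $\widetilde F_r$, $\widetilde{F'}_r$, $\widetilde G_r$, $\widetilde{G'}_r\colon P''_r\to P''_{r-1}$ by precomposing $F_r,F'_r$ (resp.\ $G_r,G'_r$) with the marked projection $P''_r\twoheadrightarrow P_r$ (resp.\ $P''_r\twoheadrightarrow P'_r$) and postcomposing with the marked inclusion $P_{r-1}\hookrightarrow P''_{r-1}$ (resp.\ $P'_{r-1}\hookrightarrow P''_{r-1}$); here for $r=0$ one uses $P''_{-1}=P_{-1}=P'_{-1}=\LinftyX$ and the identity, so the augmentation row is respected.

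**Assembling the estimates.** By Proposition~\ref{prop:eqdeltaK}~\ref{i:eqdeltaK comp} (pre- and post-composition with marked homomorphisms preserves $=_{\delta,K}$), from $F_r=_{\delta,K}F'_r$ we get $\widetilde F_r=_{\delta,K}\widetilde{F'}_r$, and from $G_r=_{\delta',K'}G'_r$ we get $\widetilde G_r=_{\delta',K'}\widetilde{G'}_r$; moreover by the normalisation above $\widetilde{F'}_r=\widetilde G_r$, since both equal the up-transported boundary map $D'_r\to D'_{r-1}$ of $D'_*$ read inside $P''_*$ (and the homomorphism ``$\varphi'_{r-1}\circ\partial'_r\circ\pi_{\varphi'_r}$'' is literally the same whether computed in $P_*$ or $P'_*$, because $D'_*$ is included identically). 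Applying Proposition~\ref{prop:eqdeltaK}~(v) (transitivity of $=_{\cdot,\cdot}$) degreewise yields $\widetilde F_r=_{\delta+\delta',K+K'}\widetilde{G'}_r$. For the dimension condition, $\varphi_r(D_r)$ becomes $\Phi_r(D_r)$ and $\rho'_r(D''_r)$ becomes $\Phi''_r(D''_r)$ inside $P''_r$, and by the triangle inequality for $\dim(\cdot\msymmdiff\cdot)$ (which follows summandwise from $\mu(A\symmdiff C)\le\mu(A\symmdiff B)+\mu(B\symmdiff C)$, with $B$ the copy of $D'_r$) we get $\dim(\Phi_r(D_r)\msymmdiff\Phi''_r(D''_r))<\delta+\delta'$. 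Thus $P''_*$ with the inclusions $\Phi_*,\Phi''_*$ witnesses $\dgh{K+K'}{D_*,D''_*,n}<\delta+\delta'$. **The only real point of care** is the simultaneous normalisation of the three presentations across all degrees so that $D'_*$ embeds identically into both $P_*$ and $P'_*$ (making $\widetilde{F'}_r=\widetilde G_r$ on the nose rather than merely up to almost equality) and so that $P''_*$ is genuinely a chain-complex-compatible ambient sequence; once that bookkeeping is in place, every individual estimate is an instance of a result already proved in Section~\ref{sec:GH}, and the proof is a one-line invocation of Proposition~\ref{prop:dGH}'s proof strategy applied degreewise.
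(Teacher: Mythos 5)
Your proposal is correct and follows essentially the same route as the paper: the paper's proof of Proposition~\ref{prop:dghchaincomplex} simply refers to the proof of Proposition~\ref{prop:dGH}~\ref{itm:dGH5}, which is exactly the argument you carry out degreewise (normalising so that $D'_*$ sits identically inside both ambient sequences, forming $P''_r=\bigoplus_i\gen{A_{r,i}\cup A'_{r,i}}$, transporting the maps, and concatenating the $=_{\delta,K}$ estimates). Your explicit verification of the symmetric-difference dimension bound and of the degree~$-1$ convention is a welcome completion of the bookkeeping the paper leaves implicit.
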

\begin{proof}
	The proof is similar to that of Proposition~\ref{prop:dGH}~\ref{itm:dGH5}.
\end{proof}

\section{Strictification}\label{sec:strictification}

We prove two strictification results:
\begin{itemize}
\item Theorem~\ref{thm:strictifycomplex}:
  Every marked projective
  ``almost'' chain complex is ``close'' (in the Gromov--Hausdorff sense)
  to an actual chain complex.
\item
  Theorem~\ref{thm:strictifychmap}:
  Every ``almost'' chain map is ``close''
  to an actual chain map to a target complex that is ``close'' to the
  original target complex.  
\end{itemize}
The control on the constants is delicate, in particular, in degree~$0$.

\begin{setup}\label{setup:strict}
  Let $\Gamma$ be a countable group and let $Z$ denote $\Z$ (with
  the usual norm) or a finite field (with the trivial norm).
  We consider a standard $\Gamma$-action~$\alpha \colon \Gamma \actson (X,\mu)$. 
  Moreover, let $\Rrel$ be the associated orbit relation and
  let $R \subset Z \Rrel$ be a subring that
  contains~$\LinftyXZ * \Gamma$.
\end{setup}
  
\subsection{Almost chain complexes and almost chain maps}

Almost chain complexes are sequences that ``almost'' satisfy the
chain complex equations. Almost chain maps are sequences of homomorphisms
between almost chain complexes that ``almost'' satisfy the chain
map equations.

\begin{defn}[almost chain complex]
\label{defn:almost_chain_complex}
  In the situation of Setup~\ref{setup:strict}, let $n \in \N$ and
  $\delta \in \R_{>0}$.
     A \emph{marked projective \almostcc{\delta}{n} (over~$R$)} is a marked projective $n$-sequence~$(D_*,\eta)$ over~$R$ (Definition~\ref{defn:mp sequence})
    such that
    \[ \fa{r \in \{0,\dots,n\}} \partial_r \circ \partial_{r+1} =_\delta 0
    \]
    and such that $\eta$ is $\delta$-surjective, i.e., there exists~$z
    \in D_0$ with~$\eta(z) =_\delta 1$.    
\end{defn}

\begin{defn}[almost chain map]
  Let $\delta,\varepsilon \in \R_{>0}$, let $n \in \N$, and let $(C_*,\zeta)$
  and $(D_*,\eta)$ be marked projective \almostccs{\delta}{n}. An \emph{\almostcm{\varepsilon}{n}~$C_* \to D_*$ extending~$\id_{\LinftyX}$} is a
  sequence~$(f_r \colon C_r \to D_r)_{r \in \{0,\dots,n+1\}}$ of
  $R$-homomorphisms with
  \[ \eta \circ f_0 =_\varepsilon \zeta
  \qand 
  \fa{r \in \{1,\dots,n+1\}}
  \partial^D_r \circ f_r =_\varepsilon f_{r-1} \circ \partial^C_r.
  \]
  We say that~$f_*$ is \emph{marked} if every~$f_r$ is a marked $R$-homomorphism.
\end{defn}

\begin{lem}[compositions of almost chain maps]
	\label{lem:comp-almost-chain-maps}
	Let $\delta, \varepsilon, \varepsilon' \in \R_{>0}$ 
	and let $f_*\colon C_* \to D_*$ be an \almostcm{\varepsilon}{n} 
	and $g_*\colon D_* \to E_*$ be an \almostcm{\varepsilon'}{n} 
	between marked projective \almostccs{\delta}{n}
	extending the
	identity. Then, $g_*\circ f_*$ is an \almostcm{(\varepsilon+N\varepsilon')}{ n}
	extending the identity,
	where $N\coloneqq \max_{r \in \{0,\dots,n+1\}} \Nsum(f_r)$.
	\begin{proof}
		This follows from Lemma~\ref{lem:almosteq rectangle}.
	\end{proof}
\end{lem}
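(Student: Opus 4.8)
The plan is to reduce everything to the rectangle estimate of Lemma~\ref{lem:almosteq rectangle}, applied degreewise. Recall that $f_*\colon C_*\to D_*$ being an \almostcm{\varepsilon}{n} means $\eta\circ f_0=_\varepsilon\zeta$ and $\partial^D_r\circ f_r=_\varepsilon f_{r-1}\circ\partial^C_r$ for $r\in\{1,\dots,n+1\}$, and similarly $g_*$ satisfies $\eta^E\circ g_0=_{\varepsilon'}\eta^D$ and $\partial^E_r\circ g_r=_{\varepsilon'}g_{r-1}\circ\partial^D_r$. We must verify that $h_*\coloneqq g_*\circ f_*$ satisfies the analogous relations with constant $\varepsilon+N\varepsilon'$, where $N=\max_r\Nsum(f_r)$.

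For the chain-map squares in degrees $r\in\{1,\dots,n+1\}$, I would set up the diagram
\[\begin{tikzcd}
C_r\ar{r}{\partial^C_r}\ar{d}{f_r}
& C_{r-1}\ar{r}{f_{r-1}}\ar{d}{\partial^D_r}
& D_{r-1}\ar{d}{\partial^E_r}
\\
D_r\ar{r}{f_r}
& D_{r-1}\ar{r}{g_{r-1}}
& E_{r-1}
\end{tikzcd}\]
Wait — the two squares to chain together are $\partial^D_r\circ f_r=_\varepsilon f_{r-1}\circ\partial^C_r$ (left) and $\partial^E_r\circ g_r=_{\varepsilon'}g_{r-1}\circ\partial^D_r$ (right). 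In the notation of Lemma~\ref{lem:almosteq rectangle}, taking $g_1=f_r$, $g_0=f_{r-1}$, $f_1=\partial^D_r$ wait that clashes; let me instead just invoke the lemma with the roles: the outer vertical maps are $f_r$ and $g_{r-1}$ composed appropriately. Concretely, Lemma~\ref{lem:almosteq rectangle} applied to the composable squares yields
\[
\partial^E_r\circ g_r\circ f_r
=_{\varepsilon+\Nsum(f_r)\cdot\varepsilon'}
g_{r-1}\circ f_{r-1}\circ\partial^C_r,
\]
using $\partial^D_r\circ f_r=_\varepsilon f_{r-1}\circ\partial^C_r$ for the first square and $\partial^E_r\circ g_r=_{\varepsilon'}g_{r-1}\circ\partial^D_r$ for the second; here the prefactor $\Nsum(f_r)$ arises because the $\varepsilon'$-error on the right square gets precomposed with $f_r$ (cf.\ Lemma~\ref{lem:almosteqinherit}~\ref{itm:precomp}), while the $\varepsilon$-error on the left square is only postcomposed and so is unaffected. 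Since $\Nsum(f_r)\le N$ and almost-equality is monotone in the constant (Lemma~\ref{lem:almosteqinherit}~\ref{itm:composition:additivity} gives the needed enlargement, or one simply notes $=_\delta$ implies $=_{\delta'}$ for $\delta\le\delta'$), this gives the desired $=_{\varepsilon+N\varepsilon'}$.

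For the augmentation, I would argue the same way with the degenerate square in which the bottom-left corner is $\LinftyX$: from $\eta^D\circ f_0=_\varepsilon\zeta$ postcompose with $g_{-1}=\id_{\LinftyX}$ — actually one wants $\eta^E\circ g_0\circ f_0=_{?}\zeta$. Chain $\eta^E\circ g_0=_{\varepsilon'}\eta^D$ (precomposed with $f_0$, picking up factor $\Nsum(f_0)\le N$, so $\eta^E\circ g_0\circ f_0=_{N\varepsilon'}\eta^D\circ f_0$) with $\eta^D\circ f_0=_\varepsilon\zeta$ (unchanged), then add the errors via Lemma~\ref{lem:almosteqinherit}~\ref{itm:composition:additivity} to get $\eta^E\circ g_0\circ f_0=_{\varepsilon+N\varepsilon'}\zeta$. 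Finally, $g_*\circ f_*$ visibly extends $\id_{\LinftyX}$ since both $f_*$ and $g_*$ do. I do not expect any real obstacle here: the only point requiring mild care is the bookkeeping of \emph{which} error term gets the $\Nsum$-factor (the one coming from the second, outer square, because it is the one precomposed with $f_r$), and then uniformizing $\Nsum(f_r)\le N$ across all degrees $r\in\{0,\dots,n+1\}$.
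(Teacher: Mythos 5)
Your proof is correct and is exactly the paper's argument: the paper's proof consists solely of the citation "This follows from Lemma~\ref{lem:almosteq rectangle}", and your degreewise application of that rectangle lemma (with $g_1=f_r$, $f_1=g_r$ horizontally and the boundary maps vertically, so that the $\varepsilon'$-error picks up the factor $\Nsum(f_r)\le N$ while the $\varepsilon$-error does not), together with the degenerate square for the augmentation, is precisely what that citation unpacks to. No gaps.
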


\begin{prop}\label{prop:dGHalmostchmap}
  In the situation of Setup~\ref{setup:strict}, let $n \in \N$, and
  let $(D_*,\eta)$ and $(D'_*,\eta')$ be marked projective $R$-chain
  complexes (up to degree~$n+1$) satisfying $\dgh K{D_*,D'_*,n} < \delta$.
  Then there exist marked \almostcms{\delta}{n} $\Phi_* \colon D_* \to D'_*$
  and $\Phi'_* \colon D'_* \to D_*$ 
  extending~$\id_{\LinftyX}$ with
  \begin{align*} 
    \Phi'_r \circ \Phi_r
    =_\delta \id_{D_r}
    , \quad
    &
    \Phi_r \circ \Phi'_r
    =_\delta \id_{D'_r}
  \end{align*}
  for all~$r \in \{0,\dots, n+1\}$.
\end{prop}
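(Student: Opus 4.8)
The plan is to extract, from the data witnessing $\dgh K{D_*,D'_*,n} < \delta$, the marked chain maps directly via Proposition~\ref{prop:dGH}~(iv) applied degreewise, and then to check that the resulting families are almost chain maps and almost inverse to each other. Concretely: the hypothesis gives marked projective modules $P_0,\dots,P_{n+1}$ and marked inclusions $\varphi_r\colon D_r\to P_r$, $\varphi'_r\colon D'_r\to P_r$ with $\dim(\varphi_r(D_r)\msymmdiff\varphi'_r(D'_r)) < \delta$ and $F_r =_{\delta,K} F'_r$ for all $r$, where $F_r = \varphi_{r-1}\circ\partial_r\circ\pi_{\varphi_r}$ and $F'_r = \varphi'_{r-1}\circ\partial'_r\circ\pi_{\varphi'_r}$. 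Following the recipe in the proof of Proposition~\ref{prop:dGH}~(iv), I would set
\[
  \Phi_r \coloneqq \pi_{\varphi'_r}\circ\varphi_r\colon D_r\to D'_r,
  \qquad
  \Phi'_r \coloneqq \pi_{\varphi_r}\circ\varphi'_r\colon D'_r\to D_r,
\]
which are marked $R$-homomorphisms as composites of a marked inclusion with a marked projection.

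First I would verify the chain map property of $\Phi_*$. The computation is exactly the one in Proposition~\ref{prop:dGH}~(iv): since $\varphi_r$ is a marked inclusion, $\pi_{\varphi_r}\circ\varphi_r = \id_{D_r}$, so
\[
  \partial'_r\circ\Phi_r
  = \pi_{\varphi'_{r-1}}\circ\varphi'_{r-1}\circ\partial'_r\circ\pi_{\varphi'_r}\circ\varphi_r
  = \pi_{\varphi'_{r-1}}\circ F'_r\circ\varphi_r,
\]
and similarly $\Phi_{r-1}\circ\partial_r = \pi_{\varphi'_{r-1}}\circ F_r\circ\varphi_r$. From $F_r =_{\delta,K} F'_r$, hence $F_r =_\delta F'_r$, and Lemma~\ref{lem:almosteqinherit}~(ii),(iv) (post-composition by $\pi_{\varphi'_{r-1}}$ leaves the constant unchanged; pre-composition by the marked inclusion $\varphi_r$ likewise leaves it unchanged) one gets $\partial'_r\circ\Phi_r =_\delta \Phi_{r-1}\circ\partial_r$; the degree-$0$ case $\eta'\circ\Phi_0 =_\delta \eta$ is the same argument with $\varphi_{-1} = \varphi'_{-1} = \id_{\LinftyX}$. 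This shows $\Phi_*$ is a marked $\delta$-almost chain map extending $\id_{\LinftyX}$; the argument for $\Phi'_*$ is symmetric.

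Next I would check the two near-identity relations. Here I would unwind $\Phi'_r\circ\Phi_r = \pi_{\varphi_r}\circ\varphi'_r\circ\pi_{\varphi'_r}\circ\varphi_r$. Writing $D_r = \bigoplus_i\gen{A_i}$, $D'_r = \bigoplus_i\gen{B_i}$ with both viewed inside $P_r = \bigoplus_i\gen{C_i}$ via the marked inclusions (so $A_i\subset C_i$ and $B_i\subset C_i$ after a common re-indexing), the composite $\varphi'_r\circ\pi_{\varphi'_r}$ is right multiplication by $\chi_{B_i}$ on the $i$-th summand, so $\Phi'_r\circ\Phi_r$ acts on $\chi_{A_i}\cdot e_i$ by $\chi_{A_i\cap B_i}\cdot e_i$. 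Thus $\Phi'_r\circ\Phi_r$ and $\id_{D_r}$ agree on the marked summand $\bigoplus_i\gen{A_i\cap B_i}$, and differ only on $\bigoplus_i\gen{A_i\setminus B_i}$, whose dimension is at most $\sum_i\mu(A_i\symmdiff B_i) = \dim(\varphi_r(D_r)\msymmdiff\varphi'_r(D'_r)) < \delta$. By the characterisation in Lemma~\ref{lem:almosteqchar} (equivalently, directly from Definition~\ref{def:marked-decomp}) this gives $\Phi'_r\circ\Phi_r =_\delta \id_{D_r}$, and symmetrically $\Phi_r\circ\Phi'_r =_\delta \id_{D'_r}$.

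The main obstacle is bookkeeping rather than mathematics: one must be careful that the various marked inclusions $\varphi_r,\varphi'_r$ into a common $P_r$ can be arranged on a shared index set so that the identifications $A_i, B_i\subset C_i$ make sense simultaneously, and that the marked decompositions used in the three relations (chain map, and the two near-identities) are all legitimate marked decompositions in the sense of Definition~\ref{def:marked-decomp}. Once the indexing conventions from the definition of $\dgh K{\args}$ are in place, each verification is a short formal computation as sketched, and no control on norms is needed since only the uncontrolled relation $=_\delta$ is asserted. I would also remark that all the $\Phi_r,\Phi'_r$ are marked, so Remark~\ref{rem:norms:marked:homo:bounded:by:1} applies should one later want norm bounds, but that is not part of the present statement.
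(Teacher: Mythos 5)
Your proposal is correct and follows essentially the same route as the paper's proof: the same maps $\Phi_r = \pi_{\varphi'_r}\circ\varphi_r$ and $\Phi'_r = \pi_{\varphi_r}\circ\varphi'_r$, the same verification of the almost chain map property via $F_r =_\delta F'_r$ together with Lemma~\ref{lem:almosteqinherit}, and the same identification of $\Phi'_r\circ\Phi_r$ as the coordinatewise projection onto the intersections, bounded by the marked symmetric difference.
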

\begin{proof}
    Let~$P_*, \varphi_*, \varphi'_*$ be witnesses for $\dgh K{D_*,D'_*,n}<\delta$ as in Definition~\ref{defn:dgh chain}.
    Set $\Phi_r\coloneqq \pi_{\varphi'_r}\circ \varphi_r$ and $\Phi'_r\coloneqq \pi_{\varphi_r}\circ \varphi'_r$.
    Then~$\Phi_*$ and~$\Phi'_*$ are marked \almostcms{\delta}{n}, since we have
    \begin{align*}
    	\Phi_r\circ \partial_{r+1}
	&= \pi_{\varphi'_r}\circ \varphi_r\circ \partial_{r+1}
	\\
	&= \pi_{\varphi'_r}\circ \varphi_r\circ \partial_{r+1}\circ \pi_{\varphi_{r+1}}\circ \varphi_{r+1}
	\\
	&= \pi_{\varphi'_r}\circ F_{r+1}\circ \varphi_{r+1}
	\\
	&=_\delta \pi_{\varphi'_r}\circ F'_{r+1}\circ \varphi_{r+1}
	& \text{(Lemma~\ref{lem:almosteqinherit})}
	\\
	&= \pi_{\varphi'_r}\circ \varphi'_r\circ \partial'_{r+1}\circ \pi_{\varphi'_{r+1}}\circ \varphi_{r+1}
	\\
	&= \partial'_{r+1}\circ \Phi_{r+1}
    \end{align*}
    and similarly for~$\Phi'_*$.
    We may assume that $P_r=\bigoplus_{i\in I}\spann{A_i}$, $D_r=\bigoplus_{i\in I}\spann{B_i}$, $D'_r=\bigoplus_{i\in I}\spann{B'_i}$ with $B_i,B'_i\subset A_i$ and that $\varphi_r,\varphi'_r$ are the obvious marked inclusions. 
    Then $\Phi'_r\circ \Phi_r\colon D_r\to D_r$ is the marked $R$-homomorphism given coordinate-wise by the projection $\spann{B_i}\to \spann{B_i\cap B'_i}$.
    Hence
    \begin{align*}
    	\size_1(\Phi'_r\circ \Phi_r-\id_{D_r})
	&\le \sum_{i\in I}\mu\bigl(B_i\setminus (B_i\cap B'_i)\bigr)
	\\
	& \le \sum_{i\in I}\mu(B_i\symmdiff B'_i)
	= \dim\bigl(\varphi_r(D_r) \msymmdiff \varphi'_r(D_r')\bigr)
	\le \delta
    \end{align*}
    and similarly for $\Phi_r\circ \Phi'_r$.
\end{proof}

\begin{lem}\label{lem:GHclosealmostcomplex}
  In the situation of Setup~\ref{setup:strict}, let $n \in \N$ and
  let $\delta, K, \varepsilon \in \R_{>0}$.
  Let $(D_*,\eta)$
  be a marked projective \almostcc{\delta}{n},
  let $z \in D_0$ with~$\eta(z) =_\delta 1$, 
  and let $(\widehat D_*,\widehat \eta)$ be a marked projective
  $n$-sequence.
  If $\dgh K {\widehat D_*,D_*,n} < \varepsilon$,
  then~$(\widehat D_*, \widehat \eta)$ is a marked
  projective \almostcc{\widehat{\delta}}{n}, where
  \begin{align*}
    \widehat{\delta}
    & \coloneqq \max\bigl\{\delta+(1+\nusum_n(D_*))\cdot \varepsilon, \delta+\Nbasic(z)\cdot \varepsilon\bigr\}
    \\
    \nusum_n(D_*)
    & \coloneqq \max \bigl\{\|\eta\|_\infty, \Nsum(\partial_1^D), \dots, \Nsum(\partial_{n+1}^D) \bigr\}.
  \end{align*}
  Moreover, there exists a~$\widehat z \in \widehat D_0$
  with~$\widehat \eta(\widehat z)=_{\widehat \delta} 1$,
   $\Nbasic(\widehat z) \leq \Nbasic(z)$, $\Ntbasic(\widehat z) \leq \Ntbasic(z)$, and $|\widehat z|_\infty \leq |z|_\infty$.
\end{lem}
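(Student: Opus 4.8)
The plan is to transport both defining properties of an almost chain complex along a Gromov--Hausdorff witness. Fix marked projective $R$-modules $P_0,\dots,P_{n+1}$ and marked inclusions $\varphi_r\colon\widehat D_r\to P_r$, $\varphi'_r\colon D_r\to P_r$ witnessing $\dgh K{\widehat D_*,D_*,n}<\varepsilon$; after re-indexing we may assume $P_r=\bigoplus_{i\in I_r}\gen{A^r_i}$ and that $\varphi_r,\varphi'_r$ are the obvious marked inclusions onto summands indexed by the same set $I_r$. Then $\dim(\varphi_r(\widehat D_r)\msymmdiff\varphi'_r(D_r))<\varepsilon$ and $F_r=_{\varepsilon,K}F'_r$ for all $r$, where $F_r\coloneqq\varphi_{r-1}\circ\widehat\partial_r\circ\pi_{\varphi_r}$ and $F'_r\coloneqq\varphi'_{r-1}\circ\partial^D_r\circ\pi_{\varphi'_r}$, with the conventions $P_{-1}\coloneqq\LinftyX$, $\varphi_{-1}=\varphi'_{-1}=\id_{\LinftyX}$, $\partial^D_0=\eta$, $\widehat\partial_0=\widehat\eta$. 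From $\pi_{\varphi_r}\circ\varphi_r=\id$ and $\pi_{\varphi'_r}\circ\varphi'_r=\id$ one records the ``lifting'' identities
\[
\widehat\partial_r\circ\widehat\partial_{r+1}=\pi_{\varphi_{r-1}}\circ(F_r\circ F_{r+1})\circ\varphi_{r+1},
\qquad
F'_r\circ F'_{r+1}=\varphi'_{r-1}\circ(\partial^D_r\circ\partial^D_{r+1})\circ\pi_{\varphi'_{r+1}}
\]
for all $r\in\{0,\dots,n\}$.

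For the almost-complex relations I would argue, for each $r\in\{0,\dots,n\}$: from $\partial^D_r\circ\partial^D_{r+1}=_\delta0$ and Lemma~\ref{lem:almosteqinherit} (post- and precomposition with the marked maps $\varphi'_{r-1}$ and $\pi_{\varphi'_{r+1}}$) one gets $F'_r\circ F'_{r+1}=_\delta0$; from $F_r=_\varepsilon F'_r$ and $F_{r+1}=_\varepsilon F'_{r+1}$ (the almost-equality content of $=_{\varepsilon,K}$, Proposition~\ref{prop:eqdeltaK}) together with Lemma~\ref{lem:almosteqinherit}(ii),(iii) --- applied in the order: first postcompose $F_{r+1}=_\varepsilon F'_{r+1}$ with $F_r$, then precompose $F_r=_\varepsilon F'_r$ with $F'_{r+1}$, so that only $\Nsum(F'_{r+1})$ and not the uncontrolled $\Nsum(\widehat\partial_{r+1})$ enters --- one gets $F_r\circ F_{r+1}=_{(1+\Nsum(F'_{r+1}))\cdot\varepsilon}F'_r\circ F'_{r+1}$, hence $F_r\circ F_{r+1}=_{\delta+(1+\Nsum(F'_{r+1}))\cdot\varepsilon}0$. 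A short computation from the explicit formulas for the marked inclusion $\varphi'_r$ and the marked projection $\pi_{\varphi'_{r+1}}$, together with Remark~\ref{rem:mhom-N1} and the fact that multiplication by a characteristic function does not increase $\Nbasic$, shows $\Nsum(F'_{r+1})\le\Nsum(\partial^D_{r+1})\le\nusum_n(D_*)$. Finally, since arbitrary postcompositions and precomposition with marked maps do not increase $\size_1$ (Lemma~\ref{lem:supp1:est}(ii),(v)), the first lifting identity gives $\size_1(\widehat\partial_r\circ\widehat\partial_{r+1})\le\size_1(F_r\circ F_{r+1})<\delta+(1+\nusum_n(D_*))\cdot\varepsilon\le\widehat\delta$, i.e.\ $\widehat\partial_r\circ\widehat\partial_{r+1}=_{\widehat\delta}0$ by Lemma~\ref{lem:almosteqchar}. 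For $r=0$ the targets are $\LinftyX$, $\pi_{\varphi_{-1}}=\id$, and one uses the $\LinftyX$-versions of these lemmas promised after Example~\ref{ex:alm-eq}.

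For $\delta$-surjectivity of $\widehat\eta$, set $\widehat z\coloneqq(\pi_{\varphi_0}\circ\varphi'_0)(z)\in\widehat D_0$. Since $\pi_{\varphi_0}\circ\varphi'_0$ is a marked $R$-homomorphism, Remark~\ref{rem:mhom-N1} gives at once $\Nbasic(\widehat z)\le\Nbasic(z)$, $\Ntbasic(\widehat z)\le\Ntbasic(z)$ and $|\widehat z|_\infty\le|z|_\infty$. Moreover $\widehat\eta\circ\pi_{\varphi_0}=F_0$, so precomposing $F_0=_\varepsilon F'_0$ with the marked inclusion $\varphi'_0$ and using $\pi_{\varphi'_0}\circ\varphi'_0=\id$ yields $\widehat\eta\circ\pi_{\varphi_0}\circ\varphi'_0=F_0\circ\varphi'_0=_\varepsilon F'_0\circ\varphi'_0=\eta$. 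Evaluating at $z=\sum_{i}\lambda_i\cdot\chi_{A_i}\cdot e_i$ and using the support estimate for the $R$-action on $\LinftyX$ (the $\LinftyX$-analogue of Lemma~\ref{lem:supp1:est}(i),(iii)) then gives $\widehat\eta(\widehat z)=_{\Nbasic(z)\cdot\varepsilon}\eta(z)=_\delta1$, hence $\widehat\eta(\widehat z)=_{\delta+\Nbasic(z)\cdot\varepsilon}1$, and $\delta+\Nbasic(z)\cdot\varepsilon\le\widehat\delta$. This establishes that $(\widehat D_*,\widehat\eta)$ is a marked projective \almostcc{\widehat\delta}{n} with witness $\widehat z$.

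The main obstacle is bookkeeping rather than any single deep step: one must (a) order the two comparison steps so that the uncontrolled quantity $\Nsum(\widehat\partial_{r+1})$ never appears (only $\Nsum(F'_{r+1})$, bounded by $\nusum_n(D_*)$, is allowed), (b) verify $\Nsum(F'_{r+1})\le\Nsum(\partial^D_{r+1})$ directly from the definitions of marked inclusion and marked projection, and (c) handle the degree-$0$ boundary consistently, where all targets are $\LinftyX$ and one relies throughout on the $\LinftyX$-versions of the almost-equality machinery.
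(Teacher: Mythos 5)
Your proposal is correct and follows essentially the same route as the paper's proof: transport the relation $\partial^D_r\circ\partial^D_{r+1}=_\delta 0$ through the Gromov--Hausdorff witness via $F_r\circ F_{r+1}=_{(1+\Nsum)\cdot\varepsilon}F'_r\circ F'_{r+1}$ (with the composition estimates ordered so that only $\Nsum(F'_{r+1})=\Nsum(\partial^D_{r+1})$ appears), and take $\widehat z$ to be the image of $z$ under the marked map $\pi_{\varphi_0}\circ\varphi'_0$. The only cosmetic difference is in degree~$0$, where the paper encodes evaluation at $z$ as composition with the homomorphism $f_z\colon R\to D_0$ and invokes Lemma~\ref{lem:almosteq rectangle}, while you evaluate directly and use the $\LinftyX$-analogue of Lemma~\ref{lem:supp1:est}; these are the same estimate.
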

\begin{proof}
  Suppose that $\dgh{K}{\widehat{D}_*,D_*,n}<\varepsilon$ is witnessed by
  \[\begin{tikzcd}
  	D_{r+1}
	\ar{r}{\partial_{r+1}}
	\ar[shift right,hook,swap]{d}{\varphi_{r+1}}
	& D_r
	\ar{r}{\partial_r}
	\ar[shift right,hook,swap]{d}{\varphi_n}
	& D_{r-1}
	\ar[shift right,hook,swap]{d}{\varphi_{r-1}}
	\\
	P_{r+1}
	\ar[shift right,two heads,swap]{u}{\pi_{\varphi_{r+1}}}
	\ar[shift left,dashed]{r}{F_{r+1}}
	\ar[shift right,dashed,swap]{r}{\widehat{F}_{r+1}}
	\ar[shift left,two heads]{d}{\pi_{\widehat{\varphi}_{r+1}}}
	& P_r
	\ar[shift right,two heads,swap]{u}{\pi_{\varphi_{r}}}
	\ar[shift left,dashed]{r}{F_{r}}
	\ar[shift right,dashed,swap]{r}{\widehat{F}_{r}}
	\ar[shift left,two heads]{d}{\pi_{\widehat{\varphi}_{r}}}
	& P_{r-1}
	\ar[shift right,two heads,swap]{u}{\pi_{\varphi_{r-1}}}
	\ar[shift left,two heads]{d}{\pi_{\widehat{\varphi}_{r-1}}}
	\\
	\widehat{D}_{r+1}
	\ar[swap]{r}{\widehat{\partial}_{r+1}}
	\ar[shift left,hook]{u}{\widehat{\varphi}_{r+1}}
	& \widehat{D}_r
	\ar[swap]{r}{\widehat{\partial}_{r}}
	\ar[shift left,hook]{u}{\widehat{\varphi}_{r}}
	& \widehat{D}_{r-1}.
	\ar[shift left,hook]{u}{\widehat{\varphi}_{r-1}}
  \end{tikzcd}\]
  By Lemma~\ref{lem:almosteqinherit}, we have
  \[
  	\widehat{F}_r\circ \widehat{F}_{r+1}
	 =_{\varepsilon + \Nsum(F_{r+1})\cdot \varepsilon} F_r\circ F_{r+1}
	\]
	and
	\[
	F_r\circ F_{r+1}
	 = \varphi_{r-1}\circ \partial_r\circ \partial_{r+1}\circ \pi_{\varphi_{r+1}}
	=_\delta 0,\]
  since $\pi_{\varphi_{r+1}}$ is a marked $R$-homomorphism. Moreover, since $\Nsum(F_{r+1})=\Nsum(\partial_{r+1})$,
  the previous equalities together imply $\widehat{F}_r\circ \widehat{F}_{r+1}=_{\varepsilon+\Nsum(\partial_{r+1})\cdot \varepsilon + \delta} 0$ (Lemma~\ref{lem:almosteqinherit}~\ref{itm:composition:additivity}).
  We conclude that
  \[
  	\widehat{\partial}_r\circ \widehat{\partial}_{r+1} 
	= \pi_{\widehat{\varphi}_{r-1}}\circ \widehat{F}_r\circ \widehat{F}_{r+1}\circ \widehat{\varphi}_{r+1}
	=_{\varepsilon+\Nsum(\partial_{r+1})\cdot \varepsilon + \delta} 0.
  \]
  In degree~0, we consider the diagram
  \[\begin{tikzcd}
  	R\ar{d}[swap]{f_z}\ar{r}{f_1}
	& \LinftyX\ar[equal]{d} 
	\\
	D_0\ar{r}{\eta}\ar{d}[swap]{\pi_{\widehat{\varphi}_0}\circ \varphi_0}
	& \LinftyX\ar[equal]{d}
	\\
	\widehat{D}_0\ar{r}{\widehat{\eta}}
	& \LinftyX
  \end{tikzcd}\]
  where the $R$-homomorphisms~$f_z,f_1$ are given by~$z\in D_0$ and~$1\in \LinftyX$, respectively.
  Since~$\eta(z)=_\delta 1$ in~$\LinftyX$, we have $\eta\circ f_z=_\delta f_1$.
  By Proposition~\ref{prop:dGHalmostchmap} and its proof, we have $\eta=_\varepsilon \widehat{\eta}\circ \pi_{\widehat{\varphi}_0}\circ \varphi_0$.
  Then Lemma~\ref{lem:almosteq rectangle} yields
  \[
  	f_1=_{\delta+ \Nbasic(z)\cdot \varepsilon} \widehat{\eta}\circ \pi_{\widehat{\varphi}_0}\circ \varphi_0 \circ f_z.
  \]
    Thus, the element $\widehat{z}\coloneqq \pi_{\widehat{\varphi}_0}\circ \varphi_0(z)\in \widehat{D}_0$ is as desired
    by Remark~\ref{rem:mhom-N1}.
\end{proof}

In particular, every sequence ``close'' to a chain complex is an
almost chain complex. Conversely, also every almost complex is
``close'' to a strict chain complex; this is the content of the
strictification theorem
(Theorem~\ref{thm:strictifycomplex}). Similarly, we establish
strictification for almost chain maps
(Theorem~\ref{thm:strictifychmap}).

\subsection{Strictification of almost chain complexes}

To formulate and prove the strictification theorems in the
appropriate uniformity, we bound the complexity of the input
data as follows:

\begin{defn}\label{def:crazyconsts}
  In the situation of Setup~\ref{setup:strict}, let $\delta \in \R_{>0}$,  
  let $n \in \N$, and let $(D_*,\eta)$ be a marked
  projective \almostcc{\delta}{n}. We set
  \begin{align*}
    \kappa_n(D_*)
    & \coloneqq \max \bigl\{\|\eta\|, \|\partial_1^D\|, \dots, \|\partial_{n+1}^D\|\bigr\}
    \\
    \numax_n(D_*)
    & \coloneqq \max \bigl\{\|\eta\|_\infty, \Nmax(\partial_1^D), \dots, \Nmax(\partial_{n+1}^D) \bigr\}
    \\
    \nusum_n(D_*)
    & \coloneqq \max \bigl\{\|\eta\|_\infty, \Nsum(\partial_1^D), \dots, \Nsum(\partial_{n+1}^D) \bigr\}
    .
  \end{align*}
  Moreover, if $\kappa \in \R_{>0}$, then we say that $\overline \kappa_n(D_*) < \kappa$
  if 
  \[
  \max \bigl\{
  \rk (D_1), \dots, \rk (D_{n+1}),
    \kappa_n(D_*),
    \numax_n(D_*)
    \bigr\}
    < \kappa
  \]
  and there exists a~$z \in D_0$ with
  \[ \eta(z)=_\delta 1
  , \quad
  \Nbasic(z) < \kappa
  , \quad
  \Ntbasic(z) < \kappa
  , \quad
  |z|_\infty < \kappa.
  \]
\end{defn}

\begin{thm}\label{thm:strictifycomplex}
  In the situation of Setup~\ref{setup:strict}, let $n\in \N$
  and let $\kappa \in \R_{>0}$.
  Then, there exists a~$K \in \R_{>0}$ such that:
  For every~$\delta \in \R_{>0}$ and every marked projective
  \almostcc{\delta}{n}~$(D_*,\eta)$ 
  with~$\overline \kappa_n(D_*) < \kappa$, there exists a marked projective
  $R$-chain complex~$(\widehat D_*,\widehat \eta)$ (up to degree~$n+1$)
  with
  \[ \dgh K {\widehat D_*, D_*, n} \leq K \cdot \delta.
  \]
  Moreover, $\widehat D_*$ can be chosen such that~$D_*$ is a subcomplex of~$\widehat{D}_*$
  and such that the inclusion map~$D_* \hookrightarrow \widehat D_*$ is a \almostcm{(K\cdot \delta)}{n}.
\end{thm}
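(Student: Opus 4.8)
The plan is to repair the defects $\partial_r\circ\partial_{r+1}=_\delta 0$ one relation at a time, sweeping \emph{upward} from the augmentation, so that each differential is modified at most once and no relation is ever re-broken; the only new cell will be attached in degree~$0$. I would first treat degree~$0$: using the witness $z\in D_0$ with $\eta(z)=_\delta 1$ provided by $\overline\kappa_n(D_*)<\kappa$ (Definition~\ref{def:crazyconsts}), set $B:=\supp(\eta(z)-1)$, so $\mu(B)<\delta$, enlarge $D_0$ to $\widehat D_0:=D_0\oplus\gen B$, and extend $\eta$ to an $R$-homomorphism $\widehat\eta\colon\widehat D_0\to\LinftyX$ sending the marked generator of $\gen B$ to~$\chi_B$. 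Then $\widehat\eta\bigl(z\cdot\chi_{X\setminus B}+\chi_B\cdot e\bigr)=\chi_{X\setminus B}+\chi_B=1$, so $1$ lies in the image of $\widehat\eta$; since $1$ generates $\LinftyX$ over $R$, $\widehat\eta$ is a genuine augmentation, and $\widehat\eta\circ\iota_0=\eta$, whence $\widehat\eta\circ(\iota_0\circ\partial_1)=\eta\circ\partial_1=_\delta 0$.

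For the upward sweep, keep $\widehat D_r:=D_r$ for $r\ge1$ and put $\widehat\partial_0:=\widehat\eta$. Inductively for $r=0,\dots,n$, suppose $\widehat\partial_r$ has been defined with $\size_1(\widehat\partial_r\circ\partial_{r+1})<\delta_r$ (for $r=0$ this is the hypothesis on $\eta$). By Lemma~\ref{lem:almosteqchar} choose a marked decomposition $D_{r+1}=D_{r+1}^{(0)}\oplus D_{r+1}^{(1)}$ with $\dim D_{r+1}^{(1)}<\delta_r$ and $(\widehat\partial_r\circ\partial_{r+1})|_{D_{r+1}^{(0)}}=0$, let $\pi_{r+1}$ be the marked projection of $D_{r+1}$ onto $D_{r+1}^{(0)}$, and define $\widehat\partial_{r+1}:=\iota\circ\partial_{r+1}\circ\pi_{r+1}$ (with $\iota\colon D_r\hookrightarrow\widehat D_r$ the relevant marked inclusion, the identity unless $r=0$). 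Then $\widehat\partial_r\circ\widehat\partial_{r+1}=(\widehat\partial_r\circ\partial_{r+1})\circ\pi_{r+1}=0$, since $\widehat\partial_r\circ\partial_{r+1}$ vanishes on $\im\pi_{r+1}=D_{r+1}^{(0)}$. The reason for the upward order is that modifying $\partial_{r+1}$ enters only the relations in positions $r$ (now exact) and $r+1$ (handled in the next step), so there is no feedback and the resulting $\widehat D_*$ is an honest marked projective $R$-chain complex up to degree $n+1$.

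The constants are controlled as follows. For $r<n$, the estimate needed to continue is $\size_1(\widehat\partial_{r+1}\circ\partial_{r+2})<\delta_{r+1}$; since $\widehat\partial_{r+1}-\iota\circ\partial_{r+1}=-\iota\circ\partial_{r+1}\circ(\id-\pi_{r+1})$, Remark~\ref{rem:support:est} and Lemma~\ref{lem:supp1:est} bound the $\size_1$ of a map precomposed with the marked projection onto a summand of dimension $<\delta_r$ by $\rk(D_{r+2})\cdot\delta_r<\kappa\,\delta_r$; combined with $\partial_{r+1}\partial_{r+2}=_\delta 0$ this gives the recursion $\delta_0:=\delta$, $\delta_{r+1}:=\delta+\kappa\,\delta_r$, hence $\delta_r<(1+\kappa+\dots+\kappa^r)\,\delta\le(n+1)\max\{1,\kappa\}^{r}\,\delta$ for $r\le n$. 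One then takes $K$ to be a sufficiently large multiple of $(n+1)\max\{1,\kappa\}^{n+1}$, also dominating the operator-norm bounds $\|\partial_r\|<\kappa$ and $\|\pi\|\le1$ (Remark~\ref{rem:norms:marked:homo:bounded:by:1}).

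Finally I would assemble the conclusion. Take $P_*:=\widehat D_*$ with $\varphi_r:=\id_{\widehat D_r}$ and $\varphi_r':=(D_r\hookrightarrow\widehat D_r)$: the marked symmetric differences vanish in degrees $\ge1$ and equal $\gen B$ (of dimension $<\delta$) in degree~$0$, while $F_r-F_r'$ equals $-\iota\circ\partial_r\circ(\id-\pi_r)$ in degrees $\ge1$ and $\widehat\eta|_{\gen B}$ in degree~$0$, which is $=_{K\delta,K}0$ in each case by the bounds above; this gives $\dgh{K}{\widehat D_*,D_*,n}\le K\delta$. Since $\im(\widehat\partial_r|_{D_r})\subseteq D_{r-1}$ and $\widehat\eta|_{D_0}=\eta$, the submodules $D_r$ form a subcomplex of $\widehat D_*$, and $\widehat\partial_r\circ\iota_r-\iota_{r-1}\circ\partial_r=-\iota\circ\partial_r\circ(\id-\pi_r)=_{K\delta}0$ shows the inclusion $D_*\hookrightarrow\widehat D_*$ is a $(K\delta)$-almost chain map. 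The hard part is degree~$0$: the target $\LinftyX$ is not a marked projective module and $\widehat\eta$ must be honestly surjective, which forces the cell attachment and a separate handling of almost-equality for maps into $\LinftyX$; beyond that, the whole argument is driven by Lemma~\ref{lem:almosteqchar}, the support estimates of Lemma~\ref{lem:supp1:est}, and the norm bound for marked homomorphisms.
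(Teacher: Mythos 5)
Your proposal is correct, but it strictifies the complex by a genuinely different mechanism than the paper. The paper's proof (after the same degree-$0$ surjectivity fix via Lemma~\ref{lem:almostsurjective}) proceeds by adding a corrector summand $E_r=\bigoplus_i\gen{B_i}$ with $B_i=\supp_1(\widetilde\partial_r\circ\partial_{r+1}(\chi_{A_i}\cdot e_i))$ in \emph{every} degree, setting $\widehat D_r=D_r\oplus E_r$ and redirecting the defect of $\partial_{r+1}$ into $E_r$ (so the new differential still carries all of $\partial_{r+1}$ plus a small correction). You instead keep $\widehat D_r=D_r$ for $r\ge 1$ and \emph{truncate} each differential to the marked summand $D_{r+1}^{(0)}$ on which $\widehat\partial_r\circ\partial_{r+1}$ already vanishes; the upward sweep guarantees each relation is repaired exactly once. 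Both yield a strict complex that is $(K\delta,K)$-close to $D_*$ with the inclusion an almost chain map, and your version is leaner (no dimension increase in positive degrees, $\|\widehat\eta|_{\gen B}\|\le 1$ rather than $|1-\eta(z)|_\infty$). The trade-off is that the paper's cell-attachment pattern is the one that extends verbatim to the strictification of chain \emph{maps} (Theorem~\ref{thm:strictifychmap}), where one cannot modify the source and must enlarge the target; your truncation trick is specific to the complex case.

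Two small points. First, your recursion constant is slightly off: by Lemma~\ref{lem:supp1:est}~\ref{i:size comp}, the correct bound is
\[
\size_1\bigl(\partial_{r+1}\circ(\id-\pi_{r+1})\circ\partial_{r+2}\bigr)\le \Nsum(\partial_{r+2})\cdot\size_1\bigl(\partial_{r+1}\circ(\id-\pi_{r+1})\bigr)<\Nsum(\partial_{r+2})\cdot\delta_r,
\]
and $\Nsum(\partial_{r+2})\le\rk(D_{r+2})\cdot\Nmax(\partial_{r+2})<\kappa^2$ rather than the $\rk(D_{r+2})<\kappa$ you wrote; this only changes the final $K$ and not the argument. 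Second, the expression $z\cdot\chi_{X\setminus B}$ should be the left module action $\chi_{X\setminus B}\cdot z$, and the evaluation $\widehat\eta(\chi_{X\setminus B}\cdot z)=\chi_{X\setminus B}\cdot\eta(z)=\chi_{X\setminus B}$ uses that $\eta(z)=1$ off $B=\supp(\eta(z)-1)$ — worth stating explicitly since it is the reason your choice $\chi_B\cdot e\mapsto\chi_B$ suffices for surjectivity.
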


\begin{rem}
  Moreover, if $(D_*,\eta)$ is $S$-adapted (Definition~\ref{defn:adapted}) and if there is an
  $S$-adapted~$z \in D_0$ with~$\eta(z) =_\delta 1$, $\Nbasic(z) < \kappa$, $\Ntbasic(z) < \kappa$, $|z|_\infty < \kappa$,
  we can choose~$(\widehat D_*,\widehat \eta)$ to be $S$-adapted.
\end{rem}

Before giving the proof of Theorem~\ref{thm:strictifycomplex},
we discuss the case of degree~$0$ separately:

\begin{lem}\label{lem:almostsurjective}
  In the situation of Setup~\ref{setup:strict}, let $n \in \N$,
  let $\delta \in \R_{>0}$, and let $(D_*,\eta)$ be a marked
  projective \almostcc{\delta}{n}. Moreover,
  let $z \in D_0$ with~$\eta(z) =_\delta 1$ and 
  let $K \coloneqq |1-\eta(z)|_\infty$.
  Then, there exists a marked projective \almostcc{\delta}{n}~$(\widehat D_*, \widehat \eta)$
  with 
  \[ \dgh K {\widehat D_*, D_*, n} < \delta
  \]
  and the following additional control:
  \begin{itemize}
  \item
    The $R$-homomorphism~$\widehat \eta \colon \widehat D_0 \to \LinftyX$ is surjective. 
    More precisely, there exists a~$\widehat z \in \widehat D_0$
    with~$\widehat \eta (\widehat z) = 1$ and
    \[ \Nbasic(\widehat z) \leq \Nbasic(z) + 1, \quad
    \Ntbasic(\widehat z) \leq \Ntbasic(z) + 1,
    \qand
    |\widehat z|_\infty \leq |z|_\infty + 1.
    \]
  \item
    We have 
    $\overline \kappa_n(\widehat D_*) \leq \overline \kappa_n(D_*) + K+1$.
  \end{itemize}
\end{lem}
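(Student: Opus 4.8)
The plan is to repair surjectivity of $\eta$ by attaching a single new marked summand to $D_0$, leaving all positive degrees untouched.

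\textbf{Construction.} Set $B \coloneqq \supp(1-\eta(z)) \subset X$; since $\eta(z) =_\delta 1$ we have $\mu(B) < \delta$. Let $\widehat D_r \coloneqq D_r$ with $\widehat\partial_r \coloneqq \partial^D_r$ for $r \in \{2,\dots,n+1\}$; let $\widehat D_0 \coloneqq D_0 \oplus \gen B$ with the canonical marked inclusion $\iota \colon D_0 \hookrightarrow \widehat D_0$ and new basis element $e$ of $\gen B$; let $\widehat D_1 \coloneqq D_1$ with $\widehat\partial_1 \coloneqq \iota \circ \partial^D_1$; and define $\widehat\eta \colon \widehat D_0 \to \LinftyX$ by $\widehat\eta \circ \iota \coloneqq \eta$ and $\widehat\eta(\chi_B \cdot e) \coloneqq 1 - \eta(z)$, which is a well-defined $R$-homomorphism by Remark~\ref{rem:def:homo:marked:proj} since $\supp(1-\eta(z)) = B$. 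Then $\widehat\partial_r \circ \widehat\partial_{r+1} = \partial^D_r \circ \partial^D_{r+1} =_\delta 0$ for $r \geq 2$, $\widehat\partial_1 \circ \widehat\partial_2 = \iota \circ (\partial^D_1 \circ \partial^D_2) =_\delta 0$ by Lemma~\ref{lem:almosteqinherit}~\ref{itm:postcomp} ($\iota$ marked), and $\widehat\eta \circ \widehat\partial_1 = \eta \circ \partial^D_1 = \partial^D_0 \circ \partial^D_1 =_\delta 0$, so $(\widehat D_*,\widehat\eta)$ is a marked projective \almostcc{\delta}{n}. Setting $\widehat z \coloneqq z + \chi_B \cdot e \in \widehat D_0$ we get $\widehat\eta(\widehat z) = \eta(z) + (1-\eta(z)) = 1$, so $\widehat\eta$ is surjective; and since $\chi_B$, viewed in $Z\Rrel$, is supported on the diagonal, $\Nbasic(\widehat z) \leq \Nbasic(z) + 1$, $\Ntbasic(\widehat z) \leq \Ntbasic(z) + 1$, $|\widehat z|_\infty \leq |z|_\infty + 1$.

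\textbf{Gromov--Hausdorff estimate.} As witnesses take $P_r \coloneqq \widehat D_r$ with $\widehat\varphi_r \coloneqq \id_{\widehat D_r}$, and $\varphi_r \coloneqq \id_{D_r}$ for $r \geq 1$, $\varphi_0 \coloneqq \iota$. For $r \geq 1$ we have $\varphi_r(D_r) = \widehat\varphi_r(\widehat D_r)$, while in degree $0$ the marked symmetric difference $\varphi_0(D_0) \msymmdiff \widehat\varphi_0(\widehat D_0)$ equals $\gen B$, of dimension $\mu(B) < \delta$. The induced maps satisfy $F_r = \widehat F_r$ in all positive degrees (both equal $\partial^D_r$ for $r \geq 2$ and $\iota \circ \partial^D_1$ for $r = 1$), so $F_r =_{\delta,K} \widehat F_r$ trivially there. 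In degree $0$, $\widehat F_0 = \widehat\eta$ and $F_0 = \eta \circ \pi_{\varphi_0}$; with respect to the marked decomposition $\widehat D_0 = D_0 \oplus \gen B$ both restrict to $\eta$ on $D_0$, while $F_0|_{\gen B} = 0$ and $\widehat F_0|_{\gen B} = \widehat\eta|_{\gen B}$. So it suffices to bound $\|\widehat\eta|_{\gen B}\|$. By Remark~\ref{rem:linfX-ZR}, this map sends $w \in \gen B$ to $\varepsilon\bigl(w \cdot (1-\eta(z))\bigr)$, with the product taken in $Z\Rrel$ and $1-\eta(z)$ embedded diagonally; since the diagonal embedding has $\Ntbasic \leq 1$, Lemma~\ref{lem:l1prod} together with $|\varepsilon(v)|_1 \leq |v|_1$ for $v \in Z\Rrel$ yields $\|\widehat\eta|_{\gen B}\| \leq |1-\eta(z)|_\infty = K$. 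Hence $F_0 =_{\delta,K} \widehat F_0$, and $\dgh K {\widehat D_*,D_*,n} < \delta$.

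\textbf{Complexity bound.} For $\overline\kappa_n(\widehat D_*) \leq \overline\kappa_n(D_*) + K + 1$: the ranks in degrees $\geq 1$ are unchanged; $\|\widehat\partial_1\| \leq \|\partial^D_1\|$ and $\Nmax(\widehat\partial_1) \leq \Nmax(\partial^D_1)$ because $\iota$ is marked (Remarks~\ref{rem:norms:marked:homo:bounded:by:1} and~\ref{rem:mhom-N1}); from the two-summand description $\|\widehat\eta\| \leq \max(\|\eta\|,\|\widehat\eta|_{\gen B}\|) \leq \max(\|\eta\|,K)$ and $\|\widehat\eta\|_\infty = \max(\|\eta\|_\infty,|1-\eta(z)|_\infty) = \max(\|\eta\|_\infty,K)$; and $\widehat z$ serves as the required element of $\widehat D_0$, with $\Nbasic$, $\Ntbasic$, $|\args|_\infty$ increased by at most $1$. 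The genuinely delicate point — the step I would write out most carefully — is the degree-$0$ operator-norm bound $\|\widehat\eta|_{\gen B}\| \leq K$: this is exactly what dictates the choice $\widehat\eta(\chi_B\cdot e) = 1-\eta(z)$ rather than, say, $\chi_B$, and the alternative choice would wreck the required bound $|\widehat z|_\infty \leq |z|_\infty + 1$.
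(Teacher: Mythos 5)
Your construction is exactly the paper's: the same error set $B=\supp(1-\eta(z))$, the same single added summand $\gen B$ in degree~$0$ with $\widehat\eta(\chi_B\cdot e)=1-\eta(z)$, the same witness $\widehat z = z+\chi_B\cdot e$, and the same norm bounds (the paper likewise justifies $\|\widehat\eta|_{\gen B}\|\le K$ directly from the definition of $K$, and bounds $\|\widehat\eta\|$ by $\|\eta\|+K$ where you get the slightly sharper $\max(\|\eta\|,K)$). The proof is correct; your extra care with the explicit Gromov--Hausdorff witnesses and the $\Ntbasic\le 1$ argument for the diagonal embedding only makes explicit what the paper leaves implicit.
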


\begin{rem}\label{rem:almostsurjectivecomments}
  By Lemma~\ref{lem:etaz}, we have $K = |1 - \eta(z)|_\infty \le \bigl(\overline\kappa_n(D_*)\bigr)^3+1$.
  We may assume~$K = 0$ if $\eta$ is already surjective.

  Moreover, if $(D_*,\eta)$ and $z$ are $S$-adapted (Definition~\ref{defn:adapted}), we may
  choose~$(\widehat D_*, \widehat \eta)$ and $\widehat z$
  also to be $S$-adapted. 
\end{rem}
  
\begin{proof}[Proof of Lemma~\ref{lem:almostsurjective}]
  We consider the error term~$B\coloneqq \supp(\eta(z) -1) \subset X$ and set
  \[ \widehat D_0 \coloneqq D_0 \oplus \gen B
  \qand \fa{r \in \{1,\dots, n+1\}} \widehat D_r \coloneqq D_r.
  \]
  Furthermore, we define
  $\widehat \partial_r \coloneqq \partial_r$ for all~$r \in \{1,\dots, n+1\}$
  and
  \begin{align*}
    \widehat \eta \colon \widehat D_0 = D_0 \oplus \gen B
    & \to \LinftyX
    \\
    D_0 \ni x
    & \mapsto \eta(x)
    \\
    \gen B \ni \chi_B \cdot e
    & \mapsto
    1 - \eta(z).
  \end{align*}
  Then, $\widehat \eta$ is a well-defined $R$-homomorphism.
  By construction, $\widehat \eta$ is surjective; indeed,
  for~$\widehat z \coloneqq z + \chi_B \cdot e$, we have~$\widehat\eta(\widehat z)
  = 1$,  $\Nbasic(\widehat z) \leq \Nbasic(z) + 1$, $\Ntbasic(\widehat z) \leq \Ntbasic(z) + 1$, and $|\widehat z|_\infty \leq |z|_\infty +1$. 
  Moreover, by hypotheses $\dim \gen B = \mu(B) < \delta$ and $\|\widehat \eta|_{\gen B}\| \leq K$
  (by definition of~$K$).
  Hence, $\widehat \eta =_{\delta,K} \eta$.

  In addition, we have $\widehat \eta \circ (D_0 \hookrightarrow \widehat D_0) = \eta$
  and $\widehat \eta \circ \widehat \partial_1 = \eta \circ \partial_1$.
  Thus, $(\widehat D_*, \widehat \eta)$ also is a \almostcc{\delta}{n} and~$\dgh K {\widehat D_*, D_*, n} < \delta$.

  We are left to show that $\overline \kappa_n(\widehat D_*)
  \leq \overline \kappa_n(D_*) + K + 1$. To this end, it is sufficient to estimate the quantities associated to $\widehat \eta$ and to the module $\widehat D_0$.
  We have \[\|\widehat \eta\|_\infty = \max \big\{\|\eta\|_\infty, |1-\eta(z)|_\infty\bigr\}
  \leq \numax_n(D_*) + K\]
  as
  well as \[\|\widehat \eta\| \le \|\eta\| + |1-\eta(z)|_1
  \leq \|\eta\| + |1-\eta(z)|_\infty \leq \kappa_n(D_*) + K.\]
  Moreover, by definition, $\rk (\widehat D_0) \leq \rk (D_0) + 1$. 
  Therefore, we conclude that $\overline \kappa_n(\widehat D_*)
  \leq \overline \kappa_n(D_*) + K + 1$.
\end{proof}

\begin{rem}
  In the upcoming proof and in the proof of
  Theorem~\ref{thm:strictifychmap} below, we will take the liberty of
  writing~``$\const{\kappa}$'' for constants that depend only
  on~$\kappa$ (and~$n$). For instance, $\kappa^2 + 1$ could be subsumed
  in~``$\const{\kappa}$", but $\dim D_1$ cannot.
\end{rem}
  
\begin{proof}[Proof of Theorem~\ref{thm:strictifycomplex}]
  In view of the preparation in Lemma~\ref{lem:almostsurjective} (and
  Remark~\ref{rem:almostsurjectivecomments},
  Proposition~\ref{prop:dghchaincomplex}), we may assume without loss of
  generality that $\eta \colon D_0 \to \LinftyX$ is surjective, which
  will simplify the notation in the proof below.

  Let $\widehat D_{-1} \coloneqq \LinftyX$, $\widehat \partial_{-1} \coloneqq 0$. 
  It suffices to show the following: For all~$r \in \{0,\dots,
  n+1\}$, there exist marked projective $R$-modules~$\widehat D_r =
  D_r \oplus E_r$ with~$\dim E_r < \const{\kappa} \cdot \delta$ and
  an $R$-homomorphism~$\widehat \partial_r \colon \widehat D_r \to \widehat D_{r-1}$
  with the following properties:
  \[ \widehat \partial_{r-1} \circ \widehat \partial_r = 0
  , \quad
  \widehat \partial_0 | _{D_0} = \eta
  , \quad
  \widehat \partial_r|_{D_r} =_{\const{\kappa}\cdot \delta, 1} \partial_r
  , \quad
  \| \widehat \partial_r|_{E_r} \| \leq \const{\kappa}.
  \]
  We proceed by induction over the degree, modifying the chain modules
  and the boundary operator (twice) in each degree:  

  For convenience, we set~$\widetilde \partial_0 \coloneqq \eta$
  and $E_{-1} \coloneqq 0$, $\partial_{-1} \coloneqq 0$.
  
  For the induction step, let $r \in \{0,\dots, n\}$ and suppose that
  we already constructed~$\widehat D_{0}, \dots, \widehat D_{r-1}$ and
  $R$-homomorphisms~$\widetilde \partial_r \colon D_r \to
  \widehat D_{r-1}$ as well as $\widehat \partial_j \colon \widehat D_j \to \widehat
  D_{j-1}$ for all~$j \in \{0,\dots, r-1\}$ subject to the following
  conditions:
  \[ 
     \widehat \partial_{r-1} \circ \widetilde \partial_r = 0
     \qand
     \fa{j \in \{0,\dots,r-1\}}
     \widehat \partial_{j-1} \circ \widehat \partial_j = 0.
  \]
  Moreover, we assume that $D_{r-1}$ is a marked projective summand
  in~$\widehat D_{r-1}$ of codimension~$< \const{\kappa} \cdot \delta$, that
  $\widetilde \partial_r =_{\const{\kappa} \cdot \delta, 1} \partial_r$,
  that $\widetilde \partial_r$
  and $\widehat \partial_{r-1}$ satisfy the claimed norm bounds,
  and that $\widehat \partial_{r-1}|_{D_{r-1}} =_{\const{\kappa} \cdot \delta, 1} \partial_{r-1}$. 
  We have:
  \[\begin{tikzcd}
  	D_{r+1}\ar{r}{\partial_{r+1}}
	& D_r\ar{r}{\partial_r}\ar{dr}[swap]{\widetilde{\partial}_r}
	& D_{r-1}\ar{r}{\partial_{r-1}}\ar[hook]{d}
	& \dotsm\ar{r}
	& D_0\ar[hook]{d}
	\\
	&& \widehat{D}_{r-1}\ar{r}{\widehat{\partial}_{r-1}}
	& \dotsm\ar{r}
	& \widehat{D}_0
  \end{tikzcd}\]

  Let $D_{r+1} = \bigoplus_{i \in I} \gen{A_i}$ be the marked presentation 
  of~$D_{r+1}$. For~$i \in I$, we consider the error term
  \[ B_i
  \coloneqq \supp_1 \bigl( \widetilde \partial_r \circ\partial_{r+1} (\chi_{A_i} \cdot e_i)\bigr)
  \subset X
  .
  \]
  Then $B_i \subset A_i$ and from~$\partial_r \circ \partial_{r+1} =_{\delta} 0$ and
  $\widetilde \partial_r =_{\const{\kappa} \cdot \delta} \partial_r$,
  we obtain 
  \begin{align*}
    \sum_{i \in I} \mu(B_i)
    & \leq
    \sum_{i \in I} 
    \size_1\bigl(\partial_r \circ \partial_{r+1} (\chi_{A_i} \cdot e_i)
    \bigr)
    +
    \sum_{i \in I}
    \size_1\bigl((\widetilde \partial_r - \partial_r) \circ \partial_{r+1} (\chi_{A_i} \cdot e_i)
    \bigr)
    \hspace{-5cm}
    \\
    & < \delta
    +
    \# I 
    \cdot \size_1(\widetilde \partial_r - \partial_r)
    \cdot \Nmax\bigl( \partial_{r+1} \bigr)
    & \text{(Lemmas~\ref{lem:almosteqchar} and~\ref{lem:supp1:est})}
    \\
    & \leq \delta
    +
    \rk (D_{r+1}) 
    \cdot \size_1(\widetilde \partial_r - \partial_r)
    \cdot \Nmax\bigl( \partial_{r+1} \bigr)
    \\
    & \leq
    \delta + 
    \const{\kappa} \cdot \delta
    \leq \const{\kappa} \cdot \delta. 
    & \text{(Lemma~\ref{lem:almosteqchar})}
  \end{align*}

  We set
  \[ E_r \coloneqq \bigoplus_{i \in I} \gen{B_i}
  \qand
  \widehat D_r \coloneqq D_r \oplus E_r.
  \]
  In particular, $\dim (E_r) < \const{\kappa} \cdot \delta$. Moreover, we define
  \begin{align*}
    \widetilde \partial_{r+1}
    \colon D_{r+1} & \to \widehat D_r
    \\
    \chi_{A_i} \cdot e_i
    & \mapsto
    \bigl(\partial_{r+1} (\chi_{A_i} \cdot e_i), 
    - \chi_{B_i} \cdot e_i\bigr)
  \end{align*}
  and
  \begin{align*}
    \widehat \partial_r
    \colon \widehat D_r = D_r \oplus E_r & \to \widehat D_{r-1}
    \\
    D_r \ni x
    & \mapsto \widetilde \partial_r(x) 
    \\
    E_r \ni 
    \chi_{B_i} \cdot e_i
    & \mapsto \widetilde \partial_r \circ \partial_{r+1}(\chi_{A_i} \cdot e_i);
  \end{align*}
  both $\widetilde \partial_{r+1}$ and $\widehat \partial_r$ are
  well-defined $R$-homomorphisms. The marked decomposition~$D_{r+1} \cong_R
  \bigoplus_{i \in I} \gen{A_i \setminus B_i} \oplus \bigoplus_{i \in I} \gen{B_i}$
  shows that $\widetilde\partial_{r+1} =_{\const{\kappa} \cdot \delta} \partial_{r+1}$. 
  By construction, we have
  \[ \widehat \partial_r \circ \widetilde \partial_{r+1} = 0
  \qand
  \widehat \partial_{r-1} \circ \widehat \partial_r = 0,
  \]
  To see the latter, we observe that
  $\widehat \partial_{r-1} \circ \widehat \partial_r|_{D_r}
  = \widehat \partial_{r-1} \circ \widetilde \partial_r = 0$
  and for all~$i \in I$
  \[ \widehat \partial_{r-1} \circ \widehat \partial_r(\chi_{B_i} \cdot e_i)
  = \widehat \partial_{r-1} \circ \widetilde \partial_r \circ \partial_{r+1}(\chi_{A_i} \cdot e_i)
  = 0,
  \]
  where we used the inductive
  property~$\widehat \partial_{r-1} \circ \widetilde \partial_r = 0$.
  Furthermore,
  by construction, we have $\|\widetilde \partial_{r+1} \| \leq \|\partial_{r+1}\| + 1$
  and
  $\widehat \partial_r | _{D_r} = \widetilde \partial_r =_{\const{\kappa} \cdot \delta} \partial_r$.
  Also, by induction,  we have the estimate
  $\|\widehat \partial_r\| \leq \|\widetilde \partial_r\| \cdot (\|\partial_{r+1}\| + 1)
  \leq (\|\partial_r\| + 1) \cdot (\|\partial_{r+1}\| + 1) \leq \const{\kappa}$.
  
  Finally, we set~$\widehat D_{n+1} \coloneqq D_{n+1}$ and $\widehat
  \partial_{n+1} \coloneqq \widetilde \partial_{n+1}$.  This concludes the
  construction. In particular, we obtain the claimed norm
  estimate for~$\widehat \partial_{n+1} = \widetilde \partial_{n+1}$.
\end{proof}

\begin{rem}
Clearly, the inductive construction in the above proof preserves
$S$-adaptedness (Definition~\ref{defn:adapted}) throughout if we start with an $S$-adapted~$z \in
D_0$ with~$\eta(z) =_\delta 1$.
\end{rem}

\subsection{Strictification of almost chain maps}

To formulate and prove strictification of chain maps, we
bound the complexity of the original chain map.

\begin{defn}
  In the situation of Setup~\ref{setup:strict}, let $\delta \in \R_{>0}$,
  let $n \in \N$, and 
  let $f_* \colon C_* \to D_*$ be a \almostcm{\delta}{n}
  between marked projective (almost) $R$-chain complexes. Then, we
  set 
  \[ \kappa_n(f_*) \coloneqq \max \bigl\{\|f_0\|, \dots, \|f_{n+1}\|\bigr\}.
  \]
\end{defn}

\begin{thm}\label{thm:strictifychmap}
  In the situation of Setup~\ref{setup:strict}, let $n \in \N$ and $\kappa \in \R_{>0}$.
  Then, there exists a~$K \in \R_{>0}$ such that: For all~$\delta \in \R_{>0}$, 
  if $(C_*,\zeta)$ and $(D_*,\eta)$ are marked projective $R$-chain
  complexes (up to degree~$n+1$) with~$\max\{\kappa_n(C_*), \nusum_n(C_*)\} < \kappa$,
  $\kappa_n(D_*) < \kappa$ and
  if~$f_* \colon C_* \to D_*$ is a \almostcm{\delta}{n}
  extending~$\id_{\LinftyX}$ with~$\kappa_n(f_*) \leq \kappa$, then
  there exists a marked projective $R$-chain complex~$(\widehat D_*,\widehat \eta)$
  (up to degree~$n+1$) and an $R$-chain map~$\widehat f_* \colon C_* \to \widehat D_*$
  with the following properties:
  \begin{itemize}
  \item We have
    $\dgh K {\widehat D_*,D_*,n} < K \cdot \delta$.
  \item For all~$r\in \{0,\dots,n+1\}$,
    we have~$\dgh K {\widehat f_r, f_r} < K \cdot \delta$.
  \end{itemize}
\end{thm}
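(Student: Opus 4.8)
The plan is to build $(\widehat D_*,\widehat\eta)$ and $\widehat f_*$ by an induction over the degree $r=0,1,\dots,n+1$ that mirrors the proof of Theorem~\ref{thm:strictifycomplex}, but carries the chain map $f_*$ along. Conceptually one is strictifying the mapping cone of $f_*$; what makes this feasible even though the ranks of $D_*$ are \emph{not} bounded by the hypotheses (only $\kappa_n(D_*)<\kappa$ is assumed, not $\numax_n(D_*)$ nor $\rk D_r<\kappa$) is that every error-correcting summand we need to adjoin is governed by the chain-map defects $\partial^D_r f_r-f_{r-1}\partial^C_r$ and by the controlled data $\partial^C_*,f_*,\zeta$; the genuine boundary operator $\partial^D_*$ is never perturbed on the bulk of $D_*$, so no error is ever amplified by $\rk D_r$.

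Degree $0$ is the delicate case. From $\eta\circ f_0=_\delta\zeta$ we let, for each marked generator $\chi_{A_i}e_i$ of $C_0$, $B_i:=\supp\bigl((\zeta-\eta f_0)(\chi_{A_i}e_i)\bigr)\subset A_i$, so $\sum_i\mu(B_i)<\rk(C_0)\cdot\delta\le\const\kappa\cdot\delta$. Set $\widehat D_0:=D_0\oplus\bigoplus_i\gen{B_i}$, let $\widehat\eta$ restrict to $\eta$ on $D_0$ and send the new generator $e'_i$ of $\gen{B_i}$ to $\chi_{B_i}\cdot\zeta(\chi_{A_i}e_i)$, and define $\widehat f_0(\chi_{A_i}e_i):=\chi_{X\setminus B_i}\cdot f_0(\chi_{A_i}e_i)+\chi_{B_i}\cdot e'_i$. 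Then $\widehat\eta\circ\widehat f_0=\zeta$ holds exactly, $\widehat f_0=_{\const\kappa\delta,\,\const\kappa}f_0$ (they differ only on $\bigoplus_i\gen{B_i}$, with norm controlled by $\|f_0\|\le\kappa$), and $\widehat\partial_1:=(\text{inclusion})\circ\partial^D_1$ still satisfies $\widehat\eta\widehat\partial_1=\eta\partial^D_1=0$. The crucial point is that $\|\widehat\eta|_{\bigoplus_i\gen{B_i}}\|\le\max_i|\zeta(\chi_{A_i}e_i)|_\infty\le\|\zeta\|_\infty\le\nusum_n(C_*)<\kappa$ (using Lemma~\ref{lem:etaz} and that elements of $\LinftyX\subset Z\Rrel$ have $\Ntbasic\le 1$): we must build the new augmentation pieces out of $\zeta$, whose $\ell^\infty$-size we control, rather than out of $\eta\circ f_0$, whose $\ell^\infty$-size we do not.

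For the inductive step $r\to r+1$ one assumes $\widehat D_{\le r}$ and strict maps $\widehat\partial_{\le r}$, $\widehat f_{\le r}$ forming a chain complex and a chain map up to degree $r$, with $D_j\subset\widehat D_j$ a marked summand of codimension $<\const\kappa\delta$, $\widehat\partial_j|_{D_j}=_{\const\kappa\delta,\const\kappa}\partial^D_j$, $\widehat f_j=_{\const\kappa\delta,\const\kappa}f_j$, norm bounds $\le\const\kappa$, and a provisional extension $\widetilde\partial_{r+1}\colon D_{r+1}\to\widehat D_r$ of $\partial^D_{r+1}$ with $\widehat\partial_r\circ\widetilde\partial_{r+1}=0$ and $\widetilde\partial_{r+1}=_{\const\kappa\delta,\const\kappa}\partial^D_{r+1}$. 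One enlarges $D_{r+1}$ by two families of small summands: an ``anti-$\partial\partial$'' family exactly as in the proof of Theorem~\ref{thm:strictifycomplex} (closing the boundary square in degree $r+1$), and a ``chain-map'' family $\bigoplus_j\gen{B^{\mathrm{ch}}_j}$ with $B^{\mathrm{ch}}_j:=\supp_1\bigl((\widetilde\partial_{r+1}f_{r+1}-\widehat f_r\partial^C_{r+1})(\chi_{A_j}e_j)\bigr)$ over the generators of $C_{r+1}$; by $\partial^D_{r+1}f_{r+1}=_\delta f_r\partial^C_{r+1}$ together with Lemmas~\ref{lem:supp1:est} and~\ref{lem:almosteqchar} applied to the small perturbations $\widetilde\partial_{r+1}-\partial^D_{r+1}$ and $\widehat f_r-f_r$ (whose $\size_1$-errors get multiplied only by $\Nsum(\partial^C_{r+1})$ and $\Nsum(\widehat f_r)$, both $\le\const\kappa$), this family has total dimension $<\const\kappa\delta$, \emph{independently of $\rk D_{r+1}$}. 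Define $\widehat f_{r+1}$ by routing the ``bad part'' of $f_{r+1}$ through $\bigoplus_j\gen{B^{\mathrm{ch}}_j}$, exactly as in degree $0$, so that $\widehat\partial_{r+1}\widehat f_{r+1}=\widehat f_r\partial^C_{r+1}$ holds on the nose; define $\widehat\partial_{r+1}$ on the chain-map summands through $\widetilde\partial_{r+1}\circ\partial^C_{r+1}$ so that $\widehat\partial_r\widehat\partial_{r+1}=0$ persists (using $\widehat\partial_r\widetilde\partial_{r+1}=0$ and $\widehat\partial_r\widehat f_r=\widehat f_{r-1}\partial^C_r$ from the induction); and produce the next provisional map $\widetilde\partial_{r+2}$ by the $-\chi_{B_i}e_i$ trick of the Theorem~\ref{thm:strictifycomplex} proof. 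At $r+1=n+1$ only the chain-map family is used and the construction stops.

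Finally one reads off the conclusion. The marked inclusions $D_r\hookrightarrow\widehat D_r$ (whose cokernels are precisely the adjoined small summands, of dimension $<\const\kappa\delta$ in each degree) exhibit $\dgh{K}{\widehat D_*,D_*,n}<K\delta$: for each $r$ one has $\dim\bigl(\widehat D_r\msymmdiff D_r\bigr)<\const\kappa\delta$, and the comparison maps are $(\const\kappa\delta,\const\kappa)$-almost equal because $\widehat\partial_r|_{D_r}=_{\const\kappa\delta,\const\kappa}\partial^D_r$ by construction and the new summands carry norm $\le\const\kappa$. Likewise $\widehat f_r=_{\const\kappa\delta,\const\kappa}f_r$ yields $\dgh{K}{\widehat f_r,f_r}<K\delta$ via Proposition~\ref{prop:dGH}(ii). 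One takes $K$ to be the maximum of the finitely many accumulated constants $\const\kappa$ over the $n+2$ degrees, which depends only on $n$ and $\kappa$; the degree-$0$ reduction combines with the general step via Proposition~\ref{prop:dghchaincomplex}. \textbf{Main obstacle.} The heart of the proof is the bookkeeping of the accumulating $(\delta,K)$-errors through the inductive step, and in particular arranging that \emph{every} perturbation lives either on a small marked summand or on the ``$C$-side'' of the cone, so that the uncontrolled ranks (and $\ell^\infty$-data) of $D_*$ never enter any estimate; the secondary obstacle is precisely the degree-$0$ subtlety above, where the new augmentation must be manufactured from $\zeta$ rather than from $\eta\circ f_0$.
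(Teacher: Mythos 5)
Your core mechanism --- adjoining to $D_{r+1}$ a family of small summands indexed by the marked generators of $C_{r+1}$, defined as the $1$-supports of the chain-map defect, and routing the bad part of $f_{r+1}$ through them so that the chain-map identity holds on the nose --- is exactly the paper's, and your degree-$0$ treatment is a valid variant. But your inductive step has a genuine gap. You perturb the boundary operator on the bulk of $D_*$: you replace $\partial^D_{r+1}$ by a provisional $\widetilde\partial_{r+1}$ that is only almost equal to it, and then adjoin an ``anti-$\partial\partial$'' family ``exactly as in the proof of Theorem~\ref{thm:strictifycomplex}''. That family's dimension estimate there is
\[
\sum_i\mu(B_i)\;\le\;\delta+\rk(D_{r+1})\cdot\size_1(\widetilde\partial_r-\partial^D_r)\cdot\Nmax(\partial^D_{r+1}),
\]
which needs $\rk(D_{r+1})<\kappa$ and $\Nmax(\partial^D_{r+1})<\kappa$, i.e.\ essentially $\overline\kappa_n(D_*)<\kappa$ --- and these are \emph{not} among the hypotheses of Theorem~\ref{thm:strictifychmap} (only the operator-norm bound $\kappa_n(D_*)<\kappa$ is assumed). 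The same perturbation contaminates your chain-map-family estimate: the extra term $(\widetilde\partial_{r+1}-\partial^D_{r+1})\circ f_{r+1}$ forces a factor $\Nsum(f_{r+1})$, which is likewise uncontrolled, since $\kappa_n(f_*)$ bounds only operator norms. So the inductive step as written contradicts your own opening promise that ``the genuine boundary operator $\partial^D_*$ is never perturbed on the bulk of $D_*$''.

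The repair is to drop $\widetilde\partial$ and the anti-$\partial\partial$ family entirely, which is what the paper does: since $(D_*,\eta)$ is a \emph{strict} chain complex, set $\widehat\partial_{r+1}|_{D_{r+1}}:=\partial^D_{r+1}$ exactly, and define $\widehat\partial_{r+1}$ on the single new family $E_{r+1}=\bigoplus_i\gen{B_i}$ (with $B_i=\supp_1(\Delta(\chi_{A_i}e_i))$ for $\Delta=\partial^D_{r+1}\circ f_{r+1}-\widehat f_r\circ\partial^C_{r+1}$) by $\chi_{B_i}e_i\mapsto\Delta(\chi_{A_i}e_i)$, with $\widehat f_{r+1}(\chi_{A_i}e_i)=(f_{r+1}(\chi_{A_i}e_i),-\chi_{B_i}e_i)$. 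Then $\widehat\partial_r\circ\widehat\partial_{r+1}=0$ holds on $D_{r+1}$ by strictness of $D_*$ and on $E_{r+1}$ by the inductive identity $\widehat\partial_r\circ\widehat f_r=\widehat f_{r-1}\circ\partial^C_r$, and the only quantities entering $\dim(E_{r+1})=\size_1(\Delta)$ are $\delta$ and $\Nsum(\partial^C_{r+1})\le\nusum_n(C_*)<\kappa$. Two smaller points: your degree-$0$ ``crucial point'' is a red herring, since only the operator norm of the new augmentation piece is needed and $\|\eta\circ f_0-\zeta\|\le\|\eta\|\cdot\|f_0\|+\|\zeta\|\le\kappa^2+\kappa$ is already controlled by the hypotheses; and your factor $\rk(C_0)$ in the degree-$0$ estimate is both unnecessary (Lemma~\ref{lem:almosteqchar} gives $\size_1(\eta\circ f_0-\zeta)<\delta$ directly, as $\size_1$ of a map already sums over the generators) and not in fact bounded by the hypotheses.
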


\begin{rem}\label{rem:strictifychmap:adapted}
  Moreover, if $(C_*,\zeta)$ and $(D_*,\eta)$ as well as $f_*$ are
  $S$-adapted (Definition~\ref{defn:adapted}), we can choose~$(\widehat D_*,\widehat \eta)$ and $\widehat f_*$
  to be $S$-adapted.  
\end{rem}

\begin{proof}
  Let $\widehat D_{-1} \coloneqq \LinftyX$, $\widehat \partial_{-1} \coloneqq 0$,
  and $\widehat f_{-1} \coloneqq \id_{\LinftyX}$. 
  It suffices to show the following: For all~$r \in \{0,\dots,n+1\}$,
  there exist marked projective $R$-modules~$\widehat D_r = D_r \oplus E_r$
  with~$\dim (E_r) < \const{\kappa} \cdot \delta$ and $R$-homomorphisms 
  $\widehat \partial_r \colon \widehat D_r \to \widehat D_{r-1}$, 
  $\widehat f_r \colon C_r \to \widehat D_r$
  with the following properties:
  \[
  \widehat \partial_r \circ \widehat \partial_{r-1} = 0
  , \quad
  \widehat \partial_r |_{D_r} = \partial^D_r
  , \quad
  \| \widehat \partial_r |_{E_r} \| \leq \const{\kappa}
  \]
  and
  \[ \widehat f_r =_{ \const{\kappa} \cdot \delta, 1} f_r
  , \quad
  \|\widehat f_r\| \leq \|f_r\| + 1
  .
  \]
  We proceed by induction over the degree. Let $r \in \{-1,\dots,n\}$ and let us
  suppose that $\widehat f_*$ and $\widehat D_*$ are already constructed up to
  degree~$r$ with the claimed properties.  We extend the construction to
  degree~$r+1$: To this end, we consider the error function
  \[ \Delta
  \coloneqq \partial^D_{r+1} \circ f_{r+1} - \widehat f_r \circ \partial_{r+1}^C
  \colon C_{r+1} \to \widehat D_r. 
  \]
  Let $C_{r+1} = \bigoplus_{i \in I} \gen{A_i}$ be the marked
  presentation of~$C_{r+1}$; for~$i \in I$, we set
  \[ B_i \coloneqq \supp_1 \bigl(\Delta(\chi_{A_i} \cdot e_i )\bigr)
  \]
  and 
  \[ E_{r+1} \coloneqq \bigoplus_{i \in I} \gen{B_i}
  \qand \widehat D_{r+1} \coloneqq D_{r+1} \oplus E_{r+1}.
  \]
  We then consider the well-defined $R$-homomorphisms 
  \begin{align*}
    \widehat f_{r+1} \colon C_{r+1}
    & \to \widehat D_{r+1}
    \\
    \chi_{A_i} \cdot e_i
    & \to
    \bigl( f_{r+1} (\chi_{A_i} \cdot e_i), - \chi_{B_i} \cdot e_i\bigr)
  \end{align*}
  and
  \begin{align*}
    \widehat \partial_{r+1} \colon \widehat D_{r+1}
    & \to \widehat D_r
    \\
    D_{r+1} \ni x & \mapsto \partial^D_{r+1} (x)
    \\
    \chi_{B_i} \cdot e_i
    & \mapsto \Delta(\chi_{A_i} \cdot e_i).
  \end{align*}
  In particular, $\widehat \partial_{r+1}|_{D_{r+1}} = \partial^D_{r+1}$
  and $\widehat f_{r+1} =_{\dim (E_{r+1}), 1} f_{r+1}$. 
  It remains to show that this construction has also all the other 
  claimed, inductive, properties:

  \emph{Dimensions.}
  By construction, we have
  \begin{align*}
    \Delta
    & = \bigl(\partial^D_{r+1} \circ f_{r+1} - f_r \circ \partial^C_{r+1} \bigr)
    - (\widehat f_r - f_r) \circ \partial^C_{r+1}.
  \end{align*}
  The first difference is $\delta$-almost equal to~$0$,
  because $f_*$ is a \almostcm{\delta}{n}.
  Moreover, we know 
  $(\widehat f_r - f_r) \circ \partial^C_{r+1}
  =_{\const{\kappa} \cdot \Nsum(\partial^C_{r+1}) \cdot \delta} 0
  $ 
  from the inductive property~$\widehat f_r =_{\const{\kappa} \cdot
    \delta} f_r$ and Lemma~\ref{lem:almosteqinherit}~\ref{itm:precomp}.
  Unifying all constants, we conclude that~$\Delta =_{\const{\kappa}\cdot \delta} 0$ (Lemma~\ref{lem:almosteqinherit}~\ref{itm:composition:additivity}).
  In particular, with Lemma~\ref{lem:almosteqchar} we obtain
  \[ \dim (E_{r+1}) = \sum_{i \in I} \mu(B_i) = \size_1 (\Delta) < \const{\kappa} \cdot \delta.
  \]
  
  \emph{Chain complex property.}
  On the one hand, for all~$x \in D_{r+1}$, we have~$\widehat \partial_{r+1}(x)
  = \partial^D_{r+1}(x) \in D_r$ and so with the strict chain complex
  property of~$(D_*,\eta)$ we calculate
  \[ \widehat \partial_r \circ \widehat \partial_{r+1}(x)
  = \widehat \partial_r \circ \partial_{r+1}^D(x)
  = \partial_r^D(\partial_{r+1}^D(x))
  = 0.
  \]
  On the other hand, for all~$i \in I$, by construction, we have 
  \begin{align*}
    \widehat \partial_r \circ \widehat \partial_{r+1}(\chi_{B_i} \cdot e_i)
    &= \widehat\partial_r \circ \partial_{r+1}^D \circ f_{r+1}(\chi_{A_i}\cdot e_i) 
    - \widehat\partial_r \circ \widehat f_r \circ \partial^C_{r+1}(\chi_{A_i} \cdot e_i).
  \end{align*}
  Because $\partial^D_{r+1} \circ f_{r+1}(\chi_{A_i} \cdot e_i)$ lies
  in~$D_r$, the first term equals~$\partial_r^D \circ \partial_{r+1}^D
  \circ f_{r+1}(\chi_{A_i}\cdot e_i) $, which is zero.
  For the second term, by induction, we have
  \[\widehat\partial_r \circ \widehat f_r \circ \partial^C_{r+1}(\chi_{A_i} \cdot e_i)
  = \widehat f_{r-1} \circ \partial^C_r \circ \partial^C_{r+1}(\chi_{A_i} \cdot e_i), 
  \]
  which is also zero.
  Therefore, $\widehat \partial_r \circ \widehat \partial_{r+1} = 0$.

  \emph{Chain map property.}
  The fact that $\widehat \partial_{r+1} \circ \widehat f_{r+1}
  - \widehat f_r \circ \partial^C_{r+1} = 0$ is immediate from
  the construction.

  \emph{Norm estimates.}
  By construction,
  $\|\widehat f_{r+1}\| \leq \|f_{r+1}\| +1$
  and $\|\widehat \partial_{r+1}|_{E_{r+1}}\| \leq \|\Delta\|$.
  Moreover, $\|\Delta\|$ can be subsumed in~$\const{\kappa}$.
\end{proof}
  
\section{Deformation}\label{sec:deformation}

We explain how to adapt modules, maps, chain complexes, and
chain maps to a dense subalgebra of the measurable sets.
Our key example is the algebra of cylinder sets in profinite
completions along directed systems of finite index normal
subgroups.

We prove two deformation results:
\begin{itemize}
	\item Theorem~\ref{thm:adaptedGH}: Every marked projective chain complex is ``close'' (in the Gromov--Hausdorff sense) to an ``adapted'' chain complex.
	\item Theorem~\ref{thm:adaptedembgen}: Every chain map is ``close'' to an ``adapted'' chain map to a target complex that is ``close'' to the original target complex.
\end{itemize}

\begin{setup}\label{setup:adapt}
  Let $\Gamma$ be a countable group and let $Z$ denote $\Z$ (with
  the usual norm) or a finite field (with the trivial norm).
  We consider a standard $\Gamma$-action~$\alpha \colon \Gamma \actson (X,\mu)$. 
  Moreover, let $\Rrel$ be the associated orbit relation and
  let $R \subset Z \Rrel$ be a subring that
  contains~$\LinftyXZ * \Gamma$.

  Let $S$ be a subalgebra of all measurable sets of~$X$ 
  that is $\mu$-dense and that satisfies~$\Gamma \cdot S \subset S$.
  We write~$L$ for the subring of~$\LinftyX$ generated by~$S$.
\end{setup}

\subsection{Adapted objects/morphisms}

Elements, homomorphisms, or modules are adapted to the
algebra~$S$ if they involve only measurable subsets
in~$S$. 

\begin{defn}[adapted]
\label{defn:adapted}
	In the situation of Setup~\ref{setup:adapt}, we say that:
  \begin{itemize}
  \item
    A marked projective $R$-module~$M = \bigoplus_{i \in I} \gen{A_i}$
    is \emph{adapted to~$S$} if $A_i \in S$ holds for all~$i \in I$. 
  \item
    An element in~$Z\Rrel$ is \emph{adapted to~$S$}
    if it lies in~$L * \Gamma$.
  \item
    An $R$-homomorphism between marked projective $R$-modules
    is \emph{adapted} if it is defined over~$L * \Gamma$.
    These notions admit obvious extensions to the case where
    the target $R$-module is~$\LinftyX$.
  \item
    Marked projective chain complexes are \emph{adapted to~$S$} if the
    chain modules and the boundary operators are adapted to~$S$.  Chain
    maps between marked projective chain complexes are \emph{adapted
      to~$S$} if they consist of adapted homomorphisms.
  \end{itemize}
\end{defn}

\subsection{Adapting module homomorphisms}

Density of the subalgebra leads to a basic deformation observation,
which will be the foundation for all other deformation results:

\begin{lem}\label{lem:adaptbasicR}
  In the situation of Setup~\ref{setup:adapt}, let $A, B \in S$
  and let $\lambda \in R$ with~$\supp (\lambda) \subset A \times B$. 
  Then, for each~$\delta \in \R_{>0}$, 
  there exists an $S$-adapted element~$\widehat \lambda \in L * \Gamma \subset R$
  with~$\supp (\widehat{\lambda})\subset A \times B$ and:
  \begin{enumerate}[label=\enum]
  \item $\widehat \lambda =_\delta \lambda$;
  \item $|\widehat \lambda|_\infty \leq |\lambda|_\infty$;
  \item $\Nbasic(\widehat \lambda) \leq \Nbasic(\lambda)$ and $\Ntbasic(\widehat \lambda) \leq \Ntbasic(\lambda)$;
  \item $|\widehat \lambda - \lambda|_1 < \delta$.
  \end{enumerate}
\end{lem}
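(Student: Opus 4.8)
The plan is to exploit the density of~$S$ together with the finiteness data attached to~$\lambda$, and to approximate~$\lambda$ in an essentially ``atom by atom'' fashion while keeping the support inside~$A\times B$. First I would reduce to the case~$R=Z\Rrel$ and observe that, since~$\lambda\in Z\Rrel$, we have~$\Nbasic(\lambda)<\infty$ and~$\Ntbasic(\lambda)<\infty$ (interpreted almost everywhere) and~$|\lambda|_\infty<\infty$. Because the action~$\alpha$ is essentially free, the orbit relation~$\Rrel$ is generated by the graphs of the elements of~$\Gamma$; using this, $\lambda$ can be written as a (countable, a priori) sum~$\lambda=\sum_{\gamma\in\Gamma}(\lambda_\gamma,\gamma)$ with~$\lambda_\gamma\in\LinftyX$, where the bounds~$\Nbasic(\lambda)$, $\Ntbasic(\lambda)$ control the overlaps of the supports~$\supp(\lambda_\gamma)$, and~$\supp(\lambda)\subset A\times B$ forces~$\gamma^{-1}\cdot\supp(\lambda_\gamma)\subset B$ and~$\supp(\lambda_\gamma)\subset A$.

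Next I would truncate: choose a finite subset~$F\subset\Gamma$ such that the tail~$\sum_{\gamma\notin F}(\lambda_\gamma,\gamma)$ has~$\ell^1$-norm less than~$\delta/2$ (possible since~$|\lambda|_1<\infty$), and set~$\lambda_F\coloneqq\sum_{\gamma\in F}(\lambda_\gamma,\gamma)$. Discarding summands only decreases~$\Nbasic$, $\Ntbasic$, $|\args|_\infty$ and keeps the support inside~$A\times B$, and~$|\lambda_F-\lambda|_1<\delta/2$, which by Remark~\ref{rem:supportl1} also gives~$\lambda_F=_{\delta/2}\lambda$ (via~$\size_1$). Now each~$\lambda_\gamma$ is a bounded measurable function, hence a pointwise limit (in~$L^1$, and up to a set of arbitrarily small measure even uniformly, using that~$\lambda_\gamma$ takes values in a discrete set when~$Z$ is a finite field, or more carefully approximating its level sets when~$Z=\Z$) of $S$-simple functions supported in~$A$; since~$S$ is a $\mu$-dense subalgebra with~$\Gamma\cdot S\subset S$, I can pick $S$-measurable sets approximating each level set~$\{\lambda_\gamma=z\}$ (for~$z$ in the finite value set of~$\lambda_\gamma$), intersect them with~$A$ and with~$\gamma\cdot B$ to keep the support constraints, and thereby produce~$\widehat\lambda_\gamma\in L$ with~$\supp(\widehat\lambda_\gamma)\subset A$, $\gamma^{-1}\cdot\supp(\widehat\lambda_\gamma)\subset B$, $|\widehat\lambda_\gamma|_\infty\le|\lambda_\gamma|_\infty$, $|\widehat\lambda_\gamma-\lambda_\gamma|_1$ as small as we like, and — crucially — $\supp(\widehat\lambda_\gamma)\subset\supp(\lambda_\gamma)$ (arrange the approximating sets to be \emph{contained} in the original level sets, which is possible by inner regularity of $\mu$ combined with density of $S$). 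Setting~$\widehat\lambda\coloneqq\sum_{\gamma\in F}(\widehat\lambda_\gamma,\gamma)\in L*\Gamma$, the inclusion~$\supp(\widehat\lambda_\gamma)\subset\supp(\lambda_\gamma)$ immediately yields~$\Nbasic(\widehat\lambda)\le\Nbasic(\lambda_F)\le\Nbasic(\lambda)$ and likewise for~$\Ntbasic$, while~$|\widehat\lambda|_\infty\le|\lambda|_\infty$ and~$\supp(\widehat\lambda)\subset A\times B$. Choosing the~$L^1$-errors so that~$\sum_{\gamma\in F}|\widehat\lambda_\gamma-\lambda_\gamma|_1<\delta/2$ gives~$|\widehat\lambda-\lambda_F|_1<\delta/2$, hence~$|\widehat\lambda-\lambda|_1<\delta$ (proving~(iv)) and~$\widehat\lambda=_\delta\lambda$ (proving~(i), again via Remark~\ref{rem:supportl1}).

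The main obstacle is the interplay between three constraints that must hold \emph{simultaneously}: the support must stay inside~$A\times B$, the counting bounds~$\Nbasic,\Ntbasic$ must not increase, and the~$\infty$-norm must not increase — all while only the~$\ell^1$-distance is being made small. The clean way to handle this is the observation highlighted above: if one always approximates a measurable set from \emph{inside} (replacing it by an $S$-measurable subset of nearly full measure), then supports can only shrink, so all the ``$\le$'' estimates for~$\Nbasic$, $\Ntbasic$, and~$|\args|_\infty$ come for free, and only the~$\ell^1$-error needs quantitative control. A secondary technical point is justifying the decomposition~$\lambda=\sum_\gamma(\lambda_\gamma,\gamma)$ and the summability/truncation; here essential freeness of~$\alpha$ gives a measurable choice of the relevant~$\gamma$ for almost every point of~$\supp(\lambda)$, and~$|\lambda|_1<\infty$ (from~$\lambda\in Z\Rrel\cap L^1$, which holds since the support has finite~$\nu$-measure, being contained in~$A\times B$ with bounded fibers) makes the tail truncation legitimate.
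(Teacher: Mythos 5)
Your overall architecture (decompose $\lambda$ along group elements, truncate to a finite sum, approximate the remaining pieces using density of~$S$) is the same as the paper's. However, the step you flag as the ``clean way'' to get (ii) and (iii) --- approximating each level set \emph{from inside} by an $S$-measurable subset of nearly full measure --- is a genuine gap. The hypothesis on~$S$ is only that it is $\mu$-dense, i.e., every measurable set can be approximated in \emph{symmetric difference} by elements of~$S$; it does not follow that a measurable set contains $S$-sets of nearly full measure. For a concrete obstruction, take $X=[0,1]$ with Lebesgue measure and $S$ the algebra generated by dyadic intervals: a fat Cantor set $U$ has positive measure but contains no nonempty dyadic interval (even modulo null sets), so the only $S$-subset of $U$ up to measure zero is empty. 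The same phenomenon occurs for the cylinder-set algebra in profinite completions, which is the main case of interest. Inner regularity of $\mu$ gives approximation from inside by \emph{compact} sets, not by elements of the subalgebra~$S$, so it cannot rescue the argument. Without inner approximation, the supports of the approximants can stick out of the original supports, and then nothing prevents $\Nbasic$, $\Ntbasic$ from increasing at some points.

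The paper's proof confronts exactly this difficulty with an extra step you are missing. After truncating and approximating (obtaining $\widetilde\lambda\in L*\Gamma$ with $|\widetilde\lambda-\lambda|_1$ small, supports intersected with $A\times B$, and the summands kept pairwise disjoint by inductive refinement --- which is also what guarantees $|\widetilde\lambda|_\infty\le|\lambda|_\infty$, a point your sketch leaves implicit), it considers the \emph{violating set}
\[
E \coloneqq \bigl\{ (x,y)\in\supp(\widetilde\lambda) \bigm| \Nbasic(\widetilde\lambda,y)>\Nbasic(\lambda) \text{ or } \Ntbasic(\widetilde\lambda,x)>\Ntbasic(\lambda)\bigr\},
\]
observes that $E\subset\supp(\widetilde\lambda)\symmdiff\supp(\lambda)$ and hence $\nu(E)$ is controlled by the $\ell^1$-error already achieved, notes that $E$ lies in $S\otimes S$ so that adaptedness is preserved, and finally sets $\widehat\lambda\coloneqq\chi_{\Rrel\setminus E}\cdot\widetilde\lambda$. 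Cutting off $E$ enforces (ii) and (iii) by fiat while increasing the $\ell^1$- and $\supp_1$-errors only by a controlled amount, after which a rescaling of $\delta$ finishes the proof. If you replace your inner-approximation step by this cut-off argument, your proof goes through.
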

\begin{proof}
  We use that $\LinftyX * \Gamma$ is $L^1$-dense in~$R \subset Z\Rrel$
  and that $L*\Gamma$ is $L^1$-dense in~$\LinftyX *\Gamma$
  (because $S$ is $\mu$-dense).
  We can write~$\lambda$ in the form
  \[ \lambda
  = \sum_{k\in \N} \lambda_k \cdot \chi_{\Delta(A_k,\gamma_k)},
  \]
  with~$\lambda_k \in Z$,
  $A_k \subset X$ measurable,
  $\gamma_k \in \Gamma$, and
  $\Delta(A_k,\gamma_k) \coloneqq \{ (\gamma_k \cdot x, x) \mid x \in A_k\}.
  $
  Moreover, we may assume without loss of generality that
  this decomposition is reduced in the sense that
  the sets~$\Delta(A_k,\gamma_k)$ are pairwise disjoint. 

  We now make the following deformations: We truncate~$\lambda$ to
  a finite sum, approximate the~$A_k$ by elements in~$S$, and finally
  adjust the resulting functions to satisfy the upper bounds for $\Nbasic$
  and~$\Ntbasic$.

  \emph{Truncation.} 
  Because~$|\lambda|_1 < \infty$, we can find a finite (non-empty)
  subset~$K \subset \N$ such that
  \[ \lambda_K \coloneqq \sum_{k \in K} \lambda_{k} \cdot \chi_{\Delta(A_k,\gamma_k)}
  \in \LinftyX * \Gamma 
  \]
  satisfies~$| \lambda_K - \lambda|_1 < \delta$. 

  \emph{Approximation.} 
  Because $S$ is $\mu$-dense and $A, B \in S$,
  for each~$k \in K$, we find~$\widetilde A_k \in S$
  with $\gamma_k \cdot \widetilde A_k \times \widetilde A_k \subset A \times B$ and  
  \[ \mu (\widetilde A_k \symmdiff A_k) < \frac{\delta}{\# K \cdot |\lambda|_\infty +1}.
  \]
  In addition, by inductively refining the choice of the~$\widetilde
  A_k$, we may assume that the
  sets~$\Delta(\widetilde A_k,\gamma_k)$ are pairwise disjoint.
  Then
  $\widetilde \lambda \coloneqq \sum_{k \in K} \lambda_k \cdot
  \chi_{\Delta(\widetilde A_k,\gamma_k)}$ lies in~$L*\Gamma$ and since each
  $\lambda_k \in Z$ it
  satisfies~$|\widetilde \lambda|_\infty \leq |\lambda|_\infty$ as
  well as
  \begin{align*}
    \nu \bigl( \supp (\widetilde \lambda - \lambda) \bigr)
    & \leq |\widetilde \lambda - \lambda |_1
    \\
    & \leq |\widetilde \lambda - \lambda_K|_1 + |\lambda_K - \lambda|_1
    \\
    & <
    \Bigl| \sum_{k \in K} \lambda_k \cdot \chi_{\Delta(A_k,\gamma_k) \symmdiff \Delta(\widetilde A_k,\gamma_k)}
    \Bigr|_1
    + \delta
    \\
    &
    \leq
    |\lambda|_\infty \cdot \sum_{k \in K} \mu (A_k \symmdiff \widetilde A_k)
    + \delta
    \\
    & < 2 \cdot \delta.
  \end{align*}
  
  \emph{Controlling~$\Nbasic$ and~$\Ntbasic$.} 
  We consider the violating subset 
  $$E \coloneqq \bigl\{ (x,y) \in \supp(\widetilde \lambda)
  \bigm| \Nbasic(\widetilde \lambda,y) > \Nbasic(\lambda)
  \text{ or } \Ntbasic(\widetilde \lambda, x) > \Ntbasic(\lambda)
  \bigr\}.
  $$ 
  By definition, $E$ lies in the subalgebra~$S \otimes S$
  and $E \subset \supp(\widetilde
  \lambda) \symmdiff \supp(\lambda)$. In particular,
  \[ \nu(E)
  \leq \nu \bigl(\supp(\widetilde \lambda - \lambda)\bigr)
  < 2\cdot \delta
  .
  \]
  We finally consider the modified function
  \[ \widehat \lambda
  \coloneqq \chi_{\Rrel \setminus E} \cdot \widetilde \lambda.
  \]
  By construction, $\widehat \lambda \in L * \Gamma$ and $\widehat
  \lambda$ satisfies the following estimates:
  \begin{itemize}
  \item $\Nbasic(\widehat \lambda) \leq \Nbasic(\lambda)$
    and $\Ntbasic(\widehat \lambda) \leq \Ntbasic(\lambda)$
    (by construction of~$E$);
  \item $|\widehat \lambda|_\infty \leq |\widetilde \lambda|_\infty \leq |\lambda|_\infty$;
  \item $\mu(\supp_1(\widehat \lambda - \lambda)) \leq\nu (\supp(\widehat \lambda - \lambda))
    \leq \nu (E)
    + \nu (\supp(\widetilde \lambda - \lambda))
    \leq 2\cdot \delta + 2 \cdot \delta = 4 \cdot \delta$; 
  \item $|\widehat\lambda - \lambda|_1 \leq |\widehat \lambda - \lambda|_\infty
    \cdot \nu (\supp(\widehat \lambda- \lambda))
    \leq (|\widehat \lambda|_\infty + |\lambda|_\infty) \cdot 4 \cdot \delta
    = |\lambda|_\infty \cdot 8 \cdot \delta$.
  \end{itemize}
  In particular,
  $\widehat \lambda =_{4 \cdot \delta} \lambda$ (Remark~\ref{rem:rk1almosteq}). 
  Rescaling $\delta$ by the factor~$1/(4+|\lambda|_\infty \cdot 8)$,
  which depends only on~$\lambda$ but not on~$\delta$, finishes the proof.
\end{proof}

\begin{lem}\label{lem:adaptbasic}
  In the situation of Setup~\ref{setup:adapt},  
  let $f \colon M \to N$ be an $R$-homomorphism between $S$-adapted
  marked projective $R$-modules. 
  Then, for each~$\delta \in \R_{>0}$, 
  there exists an $S$-adapted $R$-homo\-morphism~$\widehat f \colon M
  \to N$ such that:
  \begin{enumerate}[label=\enum]
  \item $\widehat f =_\delta f$;
  \item $\|\widehat f\|_\infty \leq \|f\|_\infty$;
  \item $\Nmax(\widehat f) \leq \Nmax(f)$ and $\Ntmax(\widehat f) \leq \Ntmax(f)$.
  \end{enumerate}
  In particular, with $K_f \coloneqq \Ntmax(f) \cdot \|f\|_\infty$, we obtain~$\widehat f =_{\delta, 2\cdot K_f} f$.
\end{lem}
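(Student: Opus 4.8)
The plan is to reduce the statement to the single-element deformation result Lemma~\ref{lem:adaptbasicR}, apply it to the matrix entries representing~$f$, and reassemble. First I would fix the marked presentations $M = \bigoplus_{i\in I}\gen{A_i}$ and $N = \bigoplus_{j\in J}\gen{B_j}$; since $M$ and~$N$ are $S$-adapted, all $A_i, B_j$ lie in~$S$, and by Remark~\ref{rem:def:homo:marked:proj} the map $f$ is right multiplication by a matrix $(\lambda_{ij})_{(i,j)\in I\times J}$ over~$R$ which we may normalise so that $\supp(\lambda_{ij})\subset A_i\times B_j$ for all $(i,j)$. With this normalisation $\chi_{A_i}\cdot\lambda_{ij}\cdot\chi_{B_j} = \lambda_{ij}$, so $f(\chi_{A_i}\cdot e_i) = \sum_{j\in J}\lambda_{ij}\cdot e_j$ and the module-level quantities are read off from the entries: $\|f\|_\infty = \max_{i,j}|\lambda_{ij}|_\infty$, $\Nmax(f) = \max_i\sum_j\Nbasic(\lambda_{ij})$, and $\Ntmax(f) = \max_i\sum_j\Ntbasic(\lambda_{ij})$.

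Next, since the index sets are finite, I would set $\delta' \coloneqq \delta/(\#I\cdot\#J+1)\in\R_{>0}$ and apply Lemma~\ref{lem:adaptbasicR} to each triple $(A_i, B_j, \lambda_{ij})$ with precision~$\delta'$, obtaining $\widehat\lambda_{ij}\in L*\Gamma$ with $\supp(\widehat\lambda_{ij})\subset A_i\times B_j$, $\widehat\lambda_{ij} =_{\delta'}\lambda_{ij}$, $|\widehat\lambda_{ij}|_\infty\le|\lambda_{ij}|_\infty$, $\Nbasic(\widehat\lambda_{ij})\le\Nbasic(\lambda_{ij})$ and $\Ntbasic(\widehat\lambda_{ij})\le\Ntbasic(\lambda_{ij})$. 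Let $\widehat f$ be right multiplication by $(\widehat\lambda_{ij})$. Each $\widehat\lambda_{ij}$ defines an $R$-homomorphism $\gen{A_i}\to\gen{B_j}$ by Remark~\ref{rem:def:homo:marked:proj}, so $\widehat f$ is a well-defined $R$-homomorphism $M\to N$, and it is $S$-adapted because every entry lies in $L*\Gamma$.

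I would then verify (i)--(iii). Since $(\widehat f - f)(\chi_{A_i}\cdot e_i) = \sum_j(\widehat\lambda_{ij}-\lambda_{ij})\cdot e_j$ and $\widehat\lambda_{ij} =_{\delta'}\lambda_{ij}$ means $\mu(\supp_1(\widehat\lambda_{ij}-\lambda_{ij}))<\delta'$, summing these bounds over all $i$ and $j$ gives $\size_1(\widehat f - f) < \delta$, hence $\widehat f =_\delta f$ by Lemma~\ref{lem:almosteqchar}; this is (i). For (ii) and (iii) one takes the maximum, resp.\ the sum, of the entrywise inequalities of Lemma~\ref{lem:adaptbasicR} and uses the formulas of the first paragraph, obtaining $\|\widehat f\|_\infty = \max_{i,j}|\widehat\lambda_{ij}|_\infty\le\|f\|_\infty$, $\Nmax(\widehat f) = \max_i\sum_j\Nbasic(\widehat\lambda_{ij})\le\Nmax(f)$, and likewise $\Ntmax(\widehat f)\le\Ntmax(f)$.

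Finally, for the ``in particular'' clause I would invoke Lemma~\ref{lem:normN1est}: it gives $\|f\|\le\Ntmax(f)\cdot\|f\|_\infty = K_f$, and by (ii) and (iii) also $\|\widehat f\|\le\Ntmax(\widehat f)\cdot\|\widehat f\|_\infty\le K_f$, so $\|\widehat f - f\|\le 2K_f$; combined with $\widehat f =_\delta f$, Proposition~\ref{prop:eqdeltaK}(iii) yields $\widehat f =_{\delta,2K_f} f$ (when $K_f = 0$ the matrix of~$f$ vanishes and one simply takes $\widehat f = f$). The only step requiring genuine care is the bookkeeping in the first three paragraphs: one must fix the matrix with the support normalisation so that the per-entry bounds of Lemma~\ref{lem:adaptbasicR} assemble --- without loss of constants --- into the module-level estimates (i)--(iii); everything else is a routine composition of results already established.
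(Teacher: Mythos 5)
Your proposal is correct and follows essentially the same route as the paper: reduce to the matrix entries (with supports normalised via Remark~\ref{rem:def:homo:marked:proj}), apply Lemma~\ref{lem:adaptbasicR} to each entry, reassemble, and deduce the controlled almost-equality from Lemma~\ref{lem:normN1est}. The paper simply asserts the reduction to the rank-one case where you carry out the entrywise bookkeeping and the $\delta/(\#I\cdot\#J)$ rescaling explicitly; the content is identical.
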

\begin{proof}
 Because of Lemma~\ref{lem:normN1est}, it suffices to prove the first three claims. Indeed,
 \[
 \|\widehat f - f\| \leq \|\widehat f\| + \|f\| \leq \Ntmax(\widehat f) \cdot \|\widehat f \|_\infty + \Ntmax(f) \cdot \|f \|_\infty \leq 2 \cdot K_f. 
 \]
  By definition of the involved norm and size invariants, it suffices
  to consider the case that $M$ and $N$ have rank~$1$; we
  thus consider the case that $M = \gen{A}$, $N=\gen{B}$ with~$A,B \in
  S$ and that $f \colon M \to N$ is given by~$\chi_A \mapsto \lambda
  \cdot \chi_B$ with~$\lambda \in Z\Rrel$
  and~$\supp (\lambda) \subset A \times B$ (Remark~\ref{rem:def:homo:marked:proj}).
  Applying the previous approximation result (Lemma~\ref{lem:adaptbasicR}),
  we find~$\lambda \in L * \Gamma$ that satisfies
  \[ \widehat \lambda =_\delta \lambda
  , \quad
  |\widehat \lambda|_\infty \leq |\lambda|_\infty
  , \quad
  \Nbasic(\widehat \lambda) \leq \Nbasic(\lambda)
  , \quad
  \Ntbasic(\widehat \lambda) \leq \Ntbasic(\lambda).
  \]
  Therefore, the $R$-linear map~$\widehat f \colon \gen A \to \gen B$
  given by~$\chi_A \mapsto \widehat\lambda \cdot \chi_B$ is well-defined
  and has the following properties (Remark~\ref{rem:rk1almosteq},
  Remark~\ref{rem:rk1infN1}):
  \begin{enumerate}[label=\enum]
  \item
    $\widehat f =_\delta f$;
  \item 
    $\|\widehat f\|_\infty = \bigl| \widehat f(\chi_A)\bigr|_\infty =
    |\widehat \lambda|_\infty\leq |\lambda|_\infty = \|f\|_\infty$;
  \item $\Nmax(\widehat f) = \Nbasic(\widehat \lambda) \leq \Nbasic(\lambda) = \Nmax(f)$ and \\
    $\Ntmax(\widehat f) = \Ntbasic(\widehat \lambda) \leq \Ntbasic(\lambda) = \Ntmax(f)$.
    \qedhere
  \end{enumerate}
\end{proof}

\subsection{Adapting almost chain maps}

As above, given an $R$-homomorphism~$f$ between marked projective $R$-modules, 
we set $K_f \coloneqq \Ntmax(f) \cdot \|f\|_\infty$. 

\begin{prop}\label{prop:adaptchmapalmost}
  In the situation of Setup~\ref{setup:adapt}, let $n \in \N$,
  let $\delta \in \R_{>0}$, and let $f_* \colon (C_*,\zeta) \to (D_*,\eta)$
  be a \almostcm{\delta}{n} between marked projective
  $S$-adapted $R$-chain complexes.
  Then, there exists an $S$-adapted \almostcm{(2\cdot \delta)}{n} $\widehat f_* \colon (C_*,\zeta) \to (D_*,\eta)$
  extending~$\id_{\LinftyX}$ with the following properties:
  For all~$r \in \{0,\dots, n+1\}$, we have
  \begin{enumerate}[label=\enum]
  \item  $\widehat f_r =_{\delta} f_r$;
  \item  $\|\widehat f_r \|_\infty \leq \|f_r\|_\infty$;
  \item  $\Nmax(\widehat f_r) \leq \Nmax(f_r)$ and $\Ntmax(\widehat{f}_r)\le \Ntmax(f_r)$.
  \end{enumerate}
  In particular, $\widehat f_r =_{\delta, 2\cdot K_{f_r}} f_r$. 
\end{prop}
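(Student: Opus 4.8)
The plan is to adapt each $f_r$ individually via the basic deformation result (Lemma~\ref{lem:adaptbasic}) and then read off the almost chain map equations of $\widehat f_*$ from those of $f_*$ using the inheritance properties of almost equality. Since $C_*$ and $D_*$ are $S$-adapted, each $f_r\colon C_r\to D_r$ is an $R$-homomorphism between $S$-adapted marked projective $R$-modules, so Lemma~\ref{lem:adaptbasic} applies. The only subtlety is that precomposition with $\partial^C_r$ inflates an almost-equality constant by the factor $\Nsum(\partial^C_r)$ (Lemma~\ref{lem:almosteqinherit}~\ref{itm:precomp}); to absorb this, I would first fix the auxiliary parameter
\[
  \varepsilon \coloneqq \frac{\delta}{1+\max\{\Nsum(\partial^C_1),\dots,\Nsum(\partial^C_{n+1})\}}\le\delta,
\]
which depends only on $C_*$ and $\delta$, and apply Lemma~\ref{lem:adaptbasic} to each $f_r$ with $\varepsilon$ in place of $\delta$. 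This yields $S$-adapted $R$-homomorphisms $\widehat f_r\colon C_r\to D_r$ with $\widehat f_r=_{\varepsilon}f_r$, $\|\widehat f_r\|_\infty\le\|f_r\|_\infty$, $\Nmax(\widehat f_r)\le\Nmax(f_r)$, and $\Ntmax(\widehat f_r)\le\Ntmax(f_r)$. As $\varepsilon\le\delta$, properties~(i)--(iii) in the statement hold verbatim, and $\widehat f_*\coloneqq(\widehat f_r)_{r\in\{0,\dots,n+1\}}$ consists of $S$-adapted homomorphisms.

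Next I would check that $\widehat f_*$ is a \almostcm{(2\cdot\delta)}{n} extending~$\id_{\LinftyX}$. For the augmentation, postcomposition with $\eta$ (Lemma~\ref{lem:almosteqinherit}~\ref{itm:postcomp}, in its version for maps into~$\LinftyX$) gives $\eta\circ\widehat f_0=_{\varepsilon}\eta\circ f_0$, while $\eta\circ f_0=_{\delta}\zeta$ because $f_*$ is a \almostcm{\delta}{n}; combining these (Lemma~\ref{lem:almosteqinherit}~\ref{itm:composition:additivity}) yields $\eta\circ\widehat f_0=_{\varepsilon+\delta}\zeta$, and $\varepsilon+\delta\le2\delta$. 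For $r\in\{1,\dots,n+1\}$, I would chain
\[
  \partial^D_r\circ\widehat f_r=_{\varepsilon}\partial^D_r\circ f_r=_{\delta}f_{r-1}\circ\partial^C_r=_{\Nsum(\partial^C_r)\cdot\varepsilon}\widehat f_{r-1}\circ\partial^C_r,
\]
where the first step is postcomposition with $\partial^D_r$, the second is the almost chain map property of $f_*$, and the third is precomposition with $\partial^C_r$ applied to $\widehat f_{r-1}=_{\varepsilon}f_{r-1}$ (Lemma~\ref{lem:almosteqinherit}~\ref{itm:precomp}). By the choice of $\varepsilon$ the total error is $\varepsilon+\delta+\Nsum(\partial^C_r)\cdot\varepsilon\le\delta+(1+\Nsum(\partial^C_r))\cdot\varepsilon\le2\delta$. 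Hence $\partial^D_r\circ\widehat f_r=_{2\delta}\widehat f_{r-1}\circ\partial^C_r$, so $\widehat f_*$ is a \almostcm{(2\cdot\delta)}{n}; it is $S$-adapted since each $\widehat f_r$ is, and the augmentation compatibility verified above is exactly the condition of extending~$\id_{\LinftyX}$.

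Finally, for the ``in particular'' clause I would invoke Proposition~\ref{prop:eqdeltaK}~(iii): the relation $\widehat f_r=_{\delta,\,2\cdot K_{f_r}}f_r$ is equivalent to $\widehat f_r=_\delta f_r$ together with $\|\widehat f_r-f_r\|\le2\cdot K_{f_r}$, and the second bound follows from Lemma~\ref{lem:normN1est} just as in the proof of Lemma~\ref{lem:adaptbasic}, namely $\|\widehat f_r-f_r\|\le\|\widehat f_r\|+\|f_r\|\le\Ntmax(\widehat f_r)\cdot\|\widehat f_r\|_\infty+\Ntmax(f_r)\cdot\|f_r\|_\infty\le2\cdot\Ntmax(f_r)\cdot\|f_r\|_\infty=2\cdot K_{f_r}$. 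I expect the only real obstacle to be the constant bookkeeping in the middle step --- in particular remembering to shrink $\delta$ by the factor $1+\max_r\Nsum(\partial^C_r)$ before applying Lemma~\ref{lem:adaptbasic}, so that the precomposition-induced inflation still fits inside the target bound $2\delta$; the rest is a routine application of the inheritance lemmas for almost equality.
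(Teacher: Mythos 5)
Your proposal is correct and follows exactly the paper's argument: apply Lemma~\ref{lem:adaptbasic} componentwise with a rescaled parameter $\varepsilon$, use the inheritance properties of almost equality (Lemma~\ref{lem:almosteqinherit}) to bound the total error by $\delta+(1+\Nsum(\partial^C_r))\cdot\varepsilon\le 2\delta$, and deduce the $(\delta,2K_{f_r})$-almost equality from the norm estimate of Lemma~\ref{lem:normN1est}. The only difference is cosmetic — you fix $\varepsilon$ explicitly up front where the paper chooses it "small enough" at the end — and your constant bookkeeping is correct.
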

\begin{proof}
  For~$\varepsilon \in \R_{>0}$, we apply Lemma~\ref{lem:adaptbasic}
  to~$f_0, \dots, f_{n+1}$ and to the parameter~$\varepsilon$ to obtain
  corresponding $S$-adapted~$\widehat f_0, \dots, \widehat f_{n+1}$
  with norm and multiplicity control.
  A straightforward computation (using the basic estimates from
  Lemma~\ref{lem:almosteqinherit}) shows that $\widehat f_*$ is a \almostcm{(\delta
  + (1 + \nusum_n(C_*)) \cdot \varepsilon)}{n}
  extending~$\id_{\LinftyX}$. 
  We then choose our initial~$\varepsilon$ small enough.
\end{proof}

\subsection{Adapting chain complexes (almost)}

We can approximate chain complexes by adapted almost chain complexes
by approximating the chain modules and boundary operators
by adapted modules/homomorphisms: 

\begin{prop}\label{prop:adaptGHalmost}
  In the situation of Setup~\ref{setup:adapt}, let $n \in \N$, 
  let $(D_*,\eta)$ be a marked projective $R$-chain complex
  (up to degree~$n+1$), and let $z \in D_0$ with~$\eta(z) = 1$.
  Then, there exists a~$K \in \R_{>0}$ such that:
  For every~$\delta \in \R_{>0}$, there exists a marked projective
  $S$-adapted \almostcc{\delta}{n}~$(\widehat D_*,\widehat \eta)$
  with
  \[ \dgh K {\widehat D_*, D_*, n} < \delta
  \]
  and the following additional control: 
  \begin{itemize}
  \item For all~$r \in \{0,\dots, n+1\}$,
    we have~$\rk (\widehat D_r) \leq \rk (D_r)$.
  \item For all~$r \in \{0,\dots, n+1\}$,
    we have~$\Nmax(\partial^{\widehat D}_r) \leq \Nmax(\partial^D_r)$.
  \item
    There exists an $S$-adapted~$\widehat z \in \widehat D_0$
    with~$\widehat \eta (\widehat z) =_\delta 1$, $\Nbasic(\widehat z)
    \leq \Nbasic(z)$, $\Ntbasic(\widehat z) \le \Ntbasic(z)$, and $|\widehat z|_\infty \leq |z|_\infty$.
  \end{itemize}
  In particular, $\overline \kappa_n(\widehat D_*) \leq \overline \kappa_n(D_*) + K$.
\end{prop}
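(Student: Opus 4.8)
Proof proposal for Proposition~\ref{prop:adaptGHalmost}.

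The plan is to adapt $(D_*,\eta)$ together with $z$ to $S$ one module and one homomorphism at a time, using $\mu$-density of~$S$ to replace the measurable sets occurring in the marked presentations and Lemma~\ref{lem:adaptbasicR} (in the packaged form of Lemma~\ref{lem:adaptbasic}) to replace the boundary operators, the augmentation, and the element~$z$, all while keeping the relevant multiplicity and $\ell^\infty$-controls. The resulting $S$-adapted sequence will be close to~$D_*$ in the Gromov--Hausdorff sense by construction, and therefore it will be an almost chain complex by the perturbation estimate Lemma~\ref{lem:GHclosealmostcomplex}. First I fix the constant~$K$ (depending only on~$n$, $D_*$, $z$) coming from the $\delta$-free norm bounds below; then, given~$\delta$, I choose the approximation parameters fine enough.

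In detail: write $D_r=\bigoplus_{i\in I_r}\gen{A_i^{(r)}}$ and, for parameters $\varepsilon_r>0$ to be fixed, pick $\widehat A_i^{(r)}\in S$ with $\mu(A_i^{(r)}\symmdiff\widehat A_i^{(r)})$ small; set $\widehat D_r\coloneqq\bigoplus_{i\in I_r}\gen{\widehat A_i^{(r)}}$, which is $S$-adapted of the same rank as~$D_r$. By Remark~\ref{rem:def:homo:marked:proj}, $\partial^D_r$ is right multiplication by a matrix $(z^{(r)}_{ij})$ with $\supp(z^{(r)}_{ij})\subset A_i^{(r)}\times A_j^{(r-1)}$. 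I replace $z^{(r)}_{ij}$ by its restriction $\chi_{\widehat A_i^{(r)}}\cdot z^{(r)}_{ij}\cdot\chi_{\widehat A_j^{(r-1)}}\in R$ (which only decreases $|\cdot|_\infty$, $\Nbasic$, $\Ntbasic$), apply Lemma~\ref{lem:adaptbasicR} with $A=\widehat A_i^{(r)}$, $B=\widehat A_j^{(r-1)}$ and a parameter $\delta_1>0$, and obtain $\widehat z^{(r)}_{ij}\in L*\Gamma$ with $\supp(\widehat z^{(r)}_{ij})\subset\widehat A_i^{(r)}\times\widehat A_j^{(r-1)}$, $|\widehat z^{(r)}_{ij}|_\infty\le|z^{(r)}_{ij}|_\infty$, $\Nbasic(\widehat z^{(r)}_{ij})\le\Nbasic(z^{(r)}_{ij})$, $\Ntbasic(\widehat z^{(r)}_{ij})\le\Ntbasic(z^{(r)}_{ij})$, and $|\widehat z^{(r)}_{ij}-\chi_{\widehat A_i^{(r)}}z^{(r)}_{ij}\chi_{\widehat A_j^{(r-1)}}|_1<\delta_1$. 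The matrix $(\widehat z^{(r)}_{ij})$ defines an $S$-adapted $\partial^{\widehat D}_r\colon\widehat D_r\to\widehat D_{r-1}$, and the support containments give $\Nmax(\partial^{\widehat D}_r)\le\Nmax(\partial^D_r)$ and $\|\partial^{\widehat D}_r\|_\infty\le\|\partial^D_r\|_\infty$, $\Ntmax(\partial^{\widehat D}_r)\le\Ntmax(\partial^D_r)$. The augmentation~$\eta$ is adapted in the same way (its component values lie in $\LinftyX$, so all $\Ntbasic\le 1$ automatically), yielding $S$-adapted $\widehat\eta\colon\widehat D_0\to\LinftyX$ with $\|\widehat\eta\|_\infty\le\|\eta\|_\infty$ (hence $\|\widehat\eta\|\le\|\eta\|_\infty$); and $z=\sum_i\lambda_i\chi_{A_i^{(0)}}e_i$ is adapted coordinatewise, using Lemma~\ref{lem:adaptbasicR} on each $\chi_{\widehat A_i^{(0)}}\cdot\lambda_i\chi_{A_i^{(0)}}$, to an $S$-adapted $\widehat z\coloneqq\sum_i\widehat\lambda_i\chi_{\widehat A_i^{(0)}}e_i\in\widehat D_0$ with $\Nbasic(\widehat z)\le\Nbasic(z)$, $\Ntbasic(\widehat z)\le\Ntbasic(z)$, $|\widehat z|_\infty\le|z|_\infty$.

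For the Gromov--Hausdorff estimate I use the common modules $P_r\coloneqq\bigoplus_{i\in I_r}\gen{A_i^{(r)}\cup\widehat A_i^{(r)}}$ with the obvious componentwise marked inclusions $\varphi_r\colon D_r\hookrightarrow P_r$, $\widehat\varphi_r\colon\widehat D_r\hookrightarrow P_r$ (and $P_{-1}\coloneqq\LinftyX$, $\varphi_{-1}=\widehat\varphi_{-1}=\id_{\LinftyX}$). Then $\dim(\varphi_r(D_r)\msymmdiff\widehat\varphi_r(\widehat D_r))=\sum_i\mu(A_i^{(r)}\symmdiff\widehat A_i^{(r)})$ is small, and tracing through the marked projections shows that $F_r\coloneqq\varphi_{r-1}\circ\partial^D_r\circ\pi_{\varphi_r}$ and $\widehat F_r\coloneqq\widehat\varphi_{r-1}\circ\partial^{\widehat D}_r\circ\pi_{\widehat\varphi_r}$ send $\chi_{(A_i^{(r)}\cup\widehat A_i^{(r)})}\cdot e_i$ to $\sum_j z^{(r)}_{ij}e_j$ and $\sum_j\widehat z^{(r)}_{ij}e_j$ respectively, so their difference has $\size_1$ bounded, via $\nu(\supp(z^{(r)}_{ij}-\widehat z^{(r)}_{ij}))\le\Ntmax(\partial^D_r)\varepsilon_r+\Nmax(\partial^D_r)\varepsilon_{r-1}+\delta_1$ (Remark~\ref{rem:support:est} and the two ways of computing~$\nu$), by a fixed multiple of $\max_r\varepsilon_r+\delta_1$; the operator norm of $F_r-\widehat F_r$ is at most $\|\partial^D_r\|+\Ntmax(\partial^D_r)\|\partial^D_r\|_\infty$ by Remark~\ref{rem:norms:marked:homo:bounded:by:1} and Lemma~\ref{lem:normN1est}, which does not depend on~$\delta$. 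Set $K$ to be the maximum of these norm bounds over $r\in\{0,\dots,n+1\}$ together with $\|\eta\|+\|\eta\|_\infty$ for the degree-$0$ layer. Shrinking the $\varepsilon_r$ and $\delta_1$ makes $\dgh K{\widehat D_*,D_*,n}$ smaller than any prescribed $\varepsilon'>0$; a further application of Lemma~\ref{lem:supp1:est} to $\widehat\eta(\widehat z)-\eta(z)$, decomposed inside $P_0$ using the closeness of $\widehat\eta$ to $\eta$ and of $\widehat z$ to $z$, yields $\widehat\eta(\widehat z)=_{\delta}1$ (after shrinking, using $\eta(z)=1$). Since $(D_*,\eta)$ is a genuine chain complex it is an $\varepsilon'$-almost chain complex for every $\varepsilon'$, so Lemma~\ref{lem:GHclosealmostcomplex}, applied with the small $\varepsilon'$ controlling $\dgh K{\widehat D_*,D_*,n}$, shows $(\widehat D_*,\widehat\eta)$ is a $\delta$-almost chain complex; the $\delta$-surjectivity witness is our $S$-adapted~$\widehat z$, and the bound $\overline\kappa_n(\widehat D_*)\le\overline\kappa_n(D_*)+K$ follows by collecting the rank, multiplicity, and norm inequalities above. (If $(D_*,\eta)$ and $z$ are $S$-adapted we may leave them unchanged, and the $S$-adaptedness claimed in Remark~\ref{rem:strictifychmap:adapted}-style statements is clear from the construction.)

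The main obstacle is the uniformity of~$K$: the whole point of routing the approximation through Lemma~\ref{lem:adaptbasicR}/\ref{lem:adaptbasic} in the form that preserves $\|\cdot\|_\infty$ and $\Ntmax$ (rather than a naive $L^1$-approximation, which would let the operator norms blow up) is that Lemma~\ref{lem:normN1est} then produces bounds on $\|\partial^{\widehat D}_r\|$ and $\|\widehat\eta\|$ that are independent of~$\delta$, so that a single~$K$ controls all the exceptional summands. The secondary delicate point is, as usual, the degree-$0$ layer: coordinating the adapted augmentation~$\widehat\eta$ with the adapted witness~$\widehat z$ so that the $\delta$-surjectivity $\widehat\eta(\widehat z)=_\delta 1$ survives, which forces the approximation parameters to be chosen after $K$ but in a way that also feeds correctly into Lemma~\ref{lem:GHclosealmostcomplex}.
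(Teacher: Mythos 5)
Your proposal is correct and follows essentially the same route as the paper: the paper merely splits the argument into two stages (first adapting the chain modules via the joint hulls $\bigoplus_i\gen{A_i\cup\widehat A_i}$ in a separate lemma, then adapting the boundary operators, augmentation, and~$z$ via Lemma~\ref{lem:adaptbasic}/\ref{lem:adaptbasicR}), whereas you perform both replacements in one pass, with the same norm-control mechanism ($\|\cdot\|_\infty$ and $\Ntmax$ preservation fed into Lemma~\ref{lem:normN1est}) and the same final appeal to Lemma~\ref{lem:GHclosealmostcomplex}. The identification of the $\delta$-independence of~$K$ and the degree-$0$ coordination of $\widehat\eta$ with $\widehat z$ as the delicate points matches the paper's own emphasis.
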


As a preparation, we first adapt the chain modules:

\begin{lem}\label{lem:adaptGHalmostchainmodules}
  In the situation of Setup~\ref{setup:adapt}, let $n \in \N$, 
  let $(D_*,\eta)$ be a marked projective $R$-chain complex
  (up to degree~$n+1$), and let $z \in D_0$ with~$\eta(z) = 1$.
  Then, there exists a~$K \in \R_{>0}$ such that:
  For every~$\delta \in \R_{>0}$, there exists a marked projective
  \almostcc{\delta}{n}~$(\widehat D_*,\widehat \eta)$
  consisting of $S$-adapted chain modules (but not necessarily
  $S$-adapted boundary operators) 
  with
  \[ \dgh K {\widehat D_*, D_*, n} < \delta
  \]
  and the following additional control: 
  \begin{itemize}
  \item For all~$r \in \{0,\dots, n+1\}$,
    we have~$\rk (\widehat D_r) \leq \rk (D_r)$.
  \item For all~$r \in \{0,\dots, n+1\}$,
    we have~$\Nmax(\partial^{\widehat D}_r) \leq \Nmax(\partial^D_r)$.
  \item There exists a~$\widehat z \in \widehat D_0$ with $\widehat \eta(\widehat z) =_\delta 1$,
    $\Nbasic(\widehat z) \leq \Nbasic(z)$, $\Ntbasic(\widehat z) \le \Ntbasic(z)$, and $|\widehat z|_\infty \leq |z|_\infty$.
  \end{itemize}
\end{lem}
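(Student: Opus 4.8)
The plan is to leave the combinatorial skeleton of $(D_*,\eta)$ untouched and only ``round'' the generating sets of the chain modules to the algebra~$S$: since $S$ is $\mu$-dense, each generating set can be approximated arbitrarily well in measure, and if one transports the boundary operators through a common ambient module, the resulting $n$-sequence is automatically Gromov--Hausdorff close to~$D_*$. That it is then a $\delta$-almost chain complex carrying an element~$\widehat z$ with the required properties will be a formal consequence of Lemma~\ref{lem:GHclosealmostcomplex}, so no chain complex identities have to be checked by hand.

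Concretely, for $r\in\{0,\dots,n+1\}$ write $D_r=\bigoplus_{i\in I_r}\gen{A^{(r)}_i}$. Fix a parameter $\varepsilon>0$ (to be determined at the end) and, using $\mu$-density of~$S$, choose $\widehat A^{(r)}_i\in S$ with $\mu\bigl(A^{(r)}_i\symmdiff\widehat A^{(r)}_i\bigr)<\varepsilon$. Set $\widehat D_r\coloneqq\bigoplus_{i\in I_r}\gen{\widehat A^{(r)}_i}$, so that $\rk(\widehat D_r)=\rk(D_r)$, and $P_r\coloneqq\bigoplus_{i\in I_r}\gen{A^{(r)}_i\cup\widehat A^{(r)}_i}$ (with $P_{-1}\coloneqq\LinftyX$). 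Let $\varphi_r\colon D_r\to P_r$ and $\varphi'_r\colon\widehat D_r\to P_r$ be the canonical marked inclusions, let $\pi_{\varphi_r},\pi_{\varphi'_r}$ be the associated marked projections, and put $q_r\coloneqq\varphi'_r\circ\pi_{\varphi'_r}$, the marked projection of~$P_r$ onto the image of~$\widehat D_r$. I then define $\widehat\partial_r\coloneqq\pi_{\varphi'_{r-1}}\circ\varphi_{r-1}\circ\partial^D_r\circ\pi_{\varphi_r}\circ\varphi'_r\colon\widehat D_r\to\widehat D_{r-1}$ and $\widehat\eta\coloneqq\eta\circ\pi_{\varphi_0}\circ\varphi'_0$; the chain modules $\widehat D_r$ are $S$-adapted by construction, while the $\widehat\partial_r$ need not be, consistently with the statement. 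On a marked generator, $\widehat\partial_r$ evaluates to $\partial^D_r(\chi_{A^{(r)}_i}\cdot e_i)$ multiplied on the left by the characteristic function of~$\widehat A^{(r)}_i$ and then pushed through marked homomorphisms; since left multiplication by $\chi_B$ with $B\subset X$ only restricts the first coordinate of the support (and hence does not increase $\Nbasic$, $\Ntbasic$, or $|\cdot|_\infty$) and marked homomorphisms do not increase these quantities (Remark~\ref{rem:mhom-N1}), we get $\Nmax(\partial^{\widehat D}_r)\le\Nmax(\partial^D_r)$, and likewise for $\Ntmax$ and $\|\cdot\|_\infty$. Moreover $\|\widehat\partial_r\|\le\|\partial^D_r\|\le\kappa_n(D_*)$ because marked inclusions and projections have operator norm $\le1$ (Remark~\ref{rem:norms:marked:homo:bounded:by:1}).

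Next I would verify that $\dgh{K}{\widehat D_*,D_*,n}<c\cdot\varepsilon$ with $K\coloneqq 2\kappa_n(D_*)$ and a constant $c$ depending only on~$D_*$ and~$n$, witnessed by the $P_r$ and the inclusions $\varphi_r,\varphi'_r$. The dimension condition is immediate: $\dim\bigl(\varphi_r(D_r)\msymmdiff\varphi'_r(\widehat D_r)\bigr)=\sum_{i\in I_r}\mu\bigl(A^{(r)}_i\symmdiff\widehat A^{(r)}_i\bigr)<\#I_r\cdot\varepsilon$. For the map condition, writing $F_r\coloneqq\varphi_{r-1}\circ\partial^D_r\circ\pi_{\varphi_r}$ and $F'_r\coloneqq\varphi'_{r-1}\circ\widehat\partial_r\circ\pi_{\varphi'_r}$, a direct computation gives $F'_r=q_{r-1}\circ F_r\circ q_r$. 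Since $\id_{P_r}-q_r$ restricts to a marked summand of dimension $<\#I_r\cdot\varepsilon$, combining the size estimates of Lemma~\ref{lem:supp1:est}~\ref{i:easy} and~\ref{i:size comp} gives $\size_1(F_r-F'_r)\le\Nsum(F_r)\cdot\#I_{r-1}\cdot\varepsilon+\#I_r\cdot\varepsilon$, with $\Nsum(F_r)\le\#I_r\cdot\numax_n(D_*)$, so that $F_r=_{c\varepsilon}F'_r$ by Lemma~\ref{lem:almosteqchar}; and $\|F_r-F'_r\|\le\|F_r\|+\|F'_r\|\le 2\kappa_n(D_*)=K$, again because the marked maps involved have norm $\le1$. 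By Proposition~\ref{prop:eqdeltaK} this means $F_r=_{c\varepsilon,K}F'_r$. The decisive point is that $K$ here is a fixed constant, independent of~$\varepsilon$ (hence of the target~$\delta$): this is precisely why the boundary operators are routed through the ambient modules $P_r$ rather than compared directly, and it is the only genuinely delicate bookkeeping in the argument.

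Finally, since $(D_*,\eta)$ is a genuine $R$-chain complex with $\eta(z)=1$, it is in particular a $\delta_0$-almost chain complex with $\eta(z)=_{\delta_0}1$ for every $\delta_0>0$; applying Lemma~\ref{lem:GHclosealmostcomplex} to the $n$-sequence $(\widehat D_*,\widehat\eta)$, with its almost-chain-complex parameter $\delta_0$ and its Gromov--Hausdorff parameter $c\varepsilon$, shows that $(\widehat D_*,\widehat\eta)$ is a $\widehat\delta$-almost chain complex with $\widehat\delta=\max\bigl\{\delta_0+(1+\nusum_n(D_*))\cdot c\varepsilon,\ \delta_0+\Nbasic(z)\cdot c\varepsilon\bigr\}$ and provides $\widehat z\in\widehat D_0$ with $\widehat\eta(\widehat z)=_{\widehat\delta}1$, $\Nbasic(\widehat z)\le\Nbasic(z)$, $\Ntbasic(\widehat z)\le\Ntbasic(z)$, and $|\widehat z|_\infty\le|z|_\infty$. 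Given the target~$\delta$, I then choose $\delta_0<\delta/2$ and $\varepsilon$ small enough that $\#I_r\cdot\varepsilon<\delta$ for all~$r$, $c\varepsilon<\delta$, and both $(1+\nusum_n(D_*))\cdot c\varepsilon<\delta/2$ and $\Nbasic(z)\cdot c\varepsilon<\delta/2$; all assertions of the lemma then hold with the fixed constant $K=2\kappa_n(D_*)$ (replaced by~$1$ in the degenerate case where all modules are trivial). The adaptation of the chain modules and the closing invocation of Lemma~\ref{lem:GHclosealmostcomplex} are routine; the main obstacle, as noted, is keeping the norm control~$K$ uniform in~$\delta$ while driving the Gromov--Hausdorff defect to zero.
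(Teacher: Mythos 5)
Your proposal is correct and follows essentially the same route as the paper: you adapt the generating sets via $\mu$-density of~$S$, define $\widehat\partial_r$ by conjugating $\partial^D_r$ through the joint canonical hulls $P_r$ (your $\widehat\partial_r=\pi_{\varphi'_{r-1}}\circ\varphi_{r-1}\circ\partial^D_r\circ\pi_{\varphi_r}\circ\varphi'_r$ is exactly the paper's $\Phi_{r-1}\circ\partial_r\circ\Psi_r$), verify the Gromov--Hausdorff bound with the $\delta$-independent norm constant $2\kappa_n(D_*)$, and close with Lemma~\ref{lem:GHclosealmostcomplex} plus a rescaling of the approximation parameter. The bookkeeping (the identity $F'_r=q_{r-1}\circ F_r\circ q_r$, the size and norm estimates, and the control of $\Nmax$ via Remark~\ref{rem:mhom-N1}) matches the paper's argument.
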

\begin{proof}
  Let $\delta \in \R_{>0}$. 
  Because $S$ is $\mu$-dense, we can efficiently adapt the chain
  modules: For~$r \in \N$ and the marked presentation~$D_r =
  \bigoplus_{i \in I} \gen{A_i}$, we choose~$\widehat A_i \in S$ in
  such a way that~$\sum_{i \in I} \mu (\widehat A_i \symmdiff A_i) <
  \delta$. We then consider the $S$-adapted ``sibling''
  \[ \widehat D_r \coloneqq \bigoplus_{i \in I} \gen{\widehat A_i}.
  \]
  In particular, $\rk (\widehat D_r) = \rk (D_r)$
  and $|\dim \widehat D_r - \dim D_r| < \delta$.

  We write~$\Phi_r \colon D_r \to \widehat D_r$ for
  the composition of the canonical inclusion/projection to/from
  the joint canonical hull~$P_r \coloneqq \bigoplus_{i \in I} \gen{A_i \cup \widehat A_i}$
  of~$D_r$ and~$\widehat D_r$; in
  particular, $\Nmax(\Phi_r) \leq 1$. Similarly,
  in the other direction, we write~$\Psi_r \colon \widehat D_r \to D_r$
  for the canonical $R$-homomorphism.
  Moreover, we set $\Phi_{-1} \coloneqq \id_{\LinftyX}$ and $\Psi_{-1} \coloneqq \id_{\LinftyX}$. 

  Regarding the boundary operators, we consider the compositions
  \[ \widehat \partial_r
  \coloneqq \Phi_{r-1} \circ \partial_r \circ \Psi_r
  \colon \widehat D_r \to \widehat D_{r-1}
  \]
  for~$r \in \{0,\dots, n+1\}$ and set $\widehat \eta \coloneqq \widehat\partial_0$. Hence we have the commutative diagram:
  \[
  \begin{tikzcd}
    D_r
    \ar{r}{\partial_r}
    & D_{r-1}
    \ar{d}{\Phi_{r-1}}
    \\
    \widehat D_r
    \ar[dashed]{r}[swap]{\widehat\partial_r}
    \ar{u}{\Psi_r}
    & \widehat D_{r-1}.
  \end{tikzcd}
  \]
  By construction, $\Nmax(\widehat \partial_r) \leq \Nmax(\partial_r)$
  and we claim that
  \[
  \dgh {2\cdot \kappa_n(D_*)} {\widehat D_*, D_*, n}
  < \bigl(1 + \nusum_n(D_*)\bigr) \cdot \delta.
  \]
  Indeed, this is witnessed by the following diagram:
  \[ \begin{tikzcd}
    D_{r}
    \ar{r}{\partial_{r}}
    \ar[shift right,hookrightarrow]{d}[swap]{\varphi_{r}}
    & D_{r-1}
    \ar[shift right,hookrightarrow]{d}[swap]{\varphi_{r-1}}
    \\
    P_{r}
    \ar[shift right]{u}[swap]{\pi_{\varphi_{r}}}
    \ar[shift left]{d}{\pi_{\widehat\varphi_{r}}}
    \ar[shift left, dashed]{r}{F_{r}}
    \ar[shift right, dashed]{r}[swap]{\widehat F_{r}}
    & P_{r-1}
    \ar[shift right]{u}[swap]{\pi_{\varphi_{r-1}}}
    \ar[shift left]{d}{\pi_{\widehat\varphi_{r-1}}}
    \\
    \widehat D_{r}
    \ar{r}[swap]{\widehat\partial_{r}}
    \ar[shift left,hookrightarrow]{u}{\widehat\varphi_{r}}
    & \widehat D_{r-1}
    \ar[shift left,hookrightarrow]{u}{\widehat\varphi_{r-1}}
    \end{tikzcd}
  \]
  By construction, we have
  \begin{align*}
  	\widehat{F}_r
	&= \widehat{\varphi}_{r-1}\circ \pi_{\widehat{\varphi}_{r-1}}\circ F_r\circ \widehat{\varphi}_r\circ \pi_{\widehat{\varphi}_r}
	\\
	\id_{P_r}
	&=_{\delta,1} \widehat{\varphi_r}\circ \pi_{\widehat{\varphi}_r}
	\\
	\id_{P_{r-1}}
	&=_{\delta,1} \widehat{\varphi}_{r-1}\circ \pi_{\widehat{\varphi}_{r-1}}.
  \end{align*}
  Using Proposition~\ref{prop:eqdeltaK}, we conclude
  \begin{align*}
  	\widehat{F}_r
	&=_{\delta,\|F_r\|} \widehat{\varphi}_{r-1}\circ \pi_{\widehat{\varphi}_{r-1}}\circ F_r
	\\
	\widehat{\varphi}_{r-1}\circ \pi_{\widehat{\varphi}_{r-1}}\circ F_r
	&=_{\Nsum(F_r)\cdot \delta,\|F_r\|} F_r
  \end{align*}
  and together, using $\Nsum(F_r)\le \Nsum(\partial_r)$ and $\|F_r\|\le \|\partial_r\|$,
  \[
  	\widehat{F}_r=_{(1+\Nsum(\partial_r))\cdot \delta,2\cdot \|\partial_r\|} F_r.
  \]
  This proves the claim. 
  Since all the estimates depend only on~$D_*$, we can now rescale
  our initial~$\delta$ appropriately and apply Lemma~\ref{lem:GHclosealmostcomplex}.
\end{proof}

\begin{proof}[Proof of Proposition~\ref{prop:adaptGHalmost}]
  We set $K \coloneqq 2 \cdot \max \{K_{\partial_0}, \dots, K_{\partial_{n+1}}\}$.  
  Let $\delta \in \R_{>0}$. In view of Lemma~\ref{lem:adaptGHalmostchainmodules}
  and the triangle inequality of the Gromov--Hausdorff distance between
  (almost) chain complexes (Proposition~\ref{prop:dghchaincomplex}),
  we may assume without loss of generality that~$(D_*,\eta)$ is a
  marked projective \almostcc{\delta}{n} with
  $S$-adapted chain modules of the same ranks; however, $z \in D_0$
  might not map to~$1$, but only satisfy  the almost
  equality~$\eta(z) =_\delta 1$
  (while keeping control on $\Nbasic$, $\Ntbasic$, and~$|\cdot|_\infty$).
  
  We then set~$\widehat D_r \coloneqq D_r$ for all~$r \in \{0,\dots,n+1\}$
  and apply the basic approximation lemma (Lemma~\ref{lem:adaptbasic})
  with accuracy~$\delta$ to~$\partial_0 = \eta,\partial_1 \dots,
  \partial_{n+1}$ to obtain $S$-adapted $R$-homomorphisms~$\widehat
  \partial_r \colon \widehat D_r \to \widehat D_{r-1}$ for all~$r \in
  \{0,\dots, n+1\}$ with
  \[ \widehat \partial_r =_{\delta, 2 \cdot K_{\partial_r}} \partial_r
    \qand
    \Nmax(\widehat \partial_r) \leq \Nmax(\partial_r)
    .
  \]
  Therefore, $\dgh K {\widehat D_*, D_*,n} < \delta$.
      
  Finally, approximating the carrier sets appearing in~$z$ well
  enough through elements of~$S$, we find an $S$-adapted~$\widehat z \in \widehat D_0$
  with~$\widehat \eta(\widehat z) =_{2\cdot \delta} 1$, $\Nbasic(\widehat z) \leq \Nbasic(z)$,
  $\Ntbasic(\widehat z) \le \Ntbasic(z)$, and 
  $|\widehat z|_\infty \leq |z|_\infty$ (using Lemma~\ref{lem:adaptbasicR} in each
  coordinate).
  Applying Lemma~\ref{lem:GHclosealmostcomplex} and 
  rescaling the initial parameter~$\delta$ beforehand completes the proof. 
\end{proof}

\subsection{Deformation of chain complexes}

We first approximate the chain complex by adapted almost
chain complexes (Proposition~\ref{prop:adaptGHalmost})
and then strictify these almost chain complexes
(Theorem~\ref{thm:strictifycomplex}): 

\begin{thm}\label{thm:adaptedGH}
  In the situation of Setup~\ref{setup:adapt}, let $n \in \N$ and let
  $(D_*,\eta)$ be a marked projective $R$-chain complex (up to
  degree~$n+1$).
  Then there exists a~$K\in \R_{>0}$ such that: 
  For every~$\delta \in \R_{>0}$, there exists an $S$-adapted marked
  projective chain complex~$\widehat D_*$ (up to degree~$n+1$) with
  \[ \dgh K {\widehat D_*, D_*, n} < \delta.
  \]
\end{thm}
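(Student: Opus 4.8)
The plan is to assemble the two deformation ingredients prepared in this section: first replace $(D_*,\eta)$ by an $S$-adapted \emph{almost} chain complex that is Gromov--Hausdorff close to $D_*$ (Proposition~\ref{prop:adaptGHalmost}), then strictify that almost chain complex to an honest $S$-adapted chain complex (Theorem~\ref{thm:strictifycomplex}), and finally splice the two estimates using the triangle inequality for the Gromov--Hausdorff distance between $n$-sequences (Proposition~\ref{prop:dghchaincomplex}).

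In more detail, since $\eta$ is surjective there is some $z \in D_0$ with $\eta(z) = 1$, and hence there is a fixed $\kappa \in \R_{>0}$ with $\overline \kappa_n(D_*) < \kappa$ (all of $\rk(D_r)$, $\kappa_n(D_*)$, $\numax_n(D_*)$, $\Nbasic(z)$, $\Ntbasic(z)$, $|z|_\infty$ being finite). Applying Proposition~\ref{prop:adaptGHalmost} to $(D_*,\eta)$ and this $z$ produces a constant $K_1 \in \R_{>0}$, depending only on $D_*$ and $n$, such that for every $\varepsilon \in \R_{>0}$ there is an $S$-adapted marked projective \almostcc{\varepsilon}{n}~$(D'_*,\eta')$ with $\dgh{K_1}{D'_*, D_*, n} < \varepsilon$, with $\rk(D'_r) \le \rk(D_r)$ and $\Nmax(\partial^{D'}_r) \le \Nmax(\partial^{D}_r)$, and — crucially — with an $S$-adapted $z' \in D'_0$ satisfying $\eta'(z') =_\varepsilon 1$, $\Nbasic(z') \le \Nbasic(z)$, $\Ntbasic(z') \le \Ntbasic(z)$, $|z'|_\infty \le |z|_\infty$. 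By the ``in particular'' clause of Proposition~\ref{prop:adaptGHalmost} we get $\overline \kappa_n(D'_*) < \kappa' \coloneqq \kappa + K_1$, and this bound is \emph{uniform in $\varepsilon$}.

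Next I would feed each $(D'_*,\eta')$ into Theorem~\ref{thm:strictifycomplex} with the fixed parameter $\kappa'$: this yields a constant $K_2 \in \R_{>0}$, depending only on $\kappa'$ and $n$ (hence only on $D_*$ and $n$), such that for every $\varepsilon$ there is a marked projective $R$-chain complex $\widehat D_*$ with $\dgh{K_2}{\widehat D_*, D'_*, n} \le K_2\cdot \varepsilon$; and since $(D'_*,\eta')$ is $S$-adapted and carries the $S$-adapted witness $z'$ with $\eta'(z') =_\varepsilon 1$ and $\Nbasic(z'),\Ntbasic(z'),|z'|_\infty < \kappa'$, the remark following Theorem~\ref{thm:strictifycomplex} lets us choose $\widehat D_*$ to be $S$-adapted as well. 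Combining via Proposition~\ref{prop:dghchaincomplex} (after weakening $\le K_2\varepsilon$ to $< (K_2+1)\varepsilon$) gives $\dgh{K_1+K_2}{\widehat D_*, D_*, n} < \varepsilon + (K_2+1)\varepsilon = (K_2+2)\varepsilon$. Setting $K \coloneqq K_1 + K_2$ and, given $\delta$, choosing $\varepsilon \coloneqq \delta/(K_2+3)$ produces an $S$-adapted marked projective chain complex $\widehat D_*$ with $\dgh{K}{\widehat D_*, D_*, n} < \delta$, as claimed.

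The genuinely delicate point — the one the preparatory lemmas were designed to neutralise — is that the complexity bound governing the strictification must be \emph{independent of $\delta$}: it is Proposition~\ref{prop:adaptGHalmost} that guarantees the $S$-adapted almost chain complexes it produces all obey one bound $\overline \kappa_n < \kappa'$, so Theorem~\ref{thm:strictifycomplex} contributes a single uniform constant $K_2$ rather than a $\delta$-dependent family. The second point that needs care, likewise already arranged by Proposition~\ref{prop:adaptGHalmost}, is that the augmentation witness $z'$ is itself $S$-adapted; this is precisely the hypothesis of the $S$-adapted version of the strictification theorem, and without it the degree-$0$ modification in the proof of Theorem~\ref{thm:strictifycomplex} could re-introduce non-adapted data.
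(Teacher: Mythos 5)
Your proposal is correct and follows essentially the same route as the paper: fix the uniform complexity bound $\overline\kappa_n(D_*)<\kappa$ via a witness $z$ with $\eta(z)=1$, apply Proposition~\ref{prop:adaptGHalmost} to get an $S$-adapted almost chain complex with constants independent of the accuracy parameter, strictify via (the $S$-adapted version of) Theorem~\ref{thm:strictifycomplex}, and combine with the triangle inequality of Proposition~\ref{prop:dghchaincomplex} before rescaling $\delta$. You also correctly single out the two delicate points the paper relies on, namely the $\varepsilon$-uniformity of $\overline\kappa_n(D'_*)$ and the $S$-adaptedness of the augmentation witness.
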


\begin{proof}
  First, we fix some constants: Let $z \in D_0$
  with~$\eta(z) = 1$ suitable for~$\overline \kappa_n(D_*)$. 
  We apply Proposition~\ref{prop:adaptGHalmost} to~$(D_*,\eta)$, $z$, 
  and $n \in \N$ 
  and thus obtain a constant~$K \in \R_{>0}$ with the properties
  in Proposition~\ref{prop:adaptGHalmost}, controlling $S$-adapted
  almost chain complexes approximating~$(D_*,\eta)$. We set
  \[ \kappa \coloneqq \overline \kappa_n(D_*) + K + 1.
  \]
  Let $K' \in \R_{>0}$ be a constant as provided by
  Theorem~\ref{thm:strictifycomplex} when applied to the parameters $n$
  and~$\kappa$. 

  Now, let $\delta \in \R_{>0}$. 
  Let $(D'_*,\eta'_*)$ be an $S$-adapted
  \almostcc{\delta}{n} 
  as obtained from Proposition~\ref{prop:adaptGHalmost} for~$\delta$. 
  In particular, 
  \[ \overline \kappa_n(D'_*) \leq \overline \kappa_n(D_*) + K < \kappa \quad \mbox{ and } \quad \dgh K {D'_*, D_*, n} < \delta. 
  \]
  Thus, by the strictification theorem
  (Theorem~\ref{thm:strictifycomplex}), there exists an $S$-adapted 
  marked projective $R$-chain complex~$(\widehat D_*,\widehat \eta)$
  (up to degree~$n+1$) 
  with
  \[ \dgh {K'} {\widehat D_*, D'_*, n} < K' \cdot \delta.
  \]
  In total, we obtain (Proposition~\ref{prop:dghchaincomplex})
  \begin{align*}
    \dgh {K+K'} {\widehat D_*, D_*, n}
    & < \delta + K' \cdot \delta = (1+K') \cdot \delta
    .
  \end{align*}
  Unifying the constants and
  rescaling~$\delta$ beforehand, gives the desired result. 
\end{proof}

\subsection{Deformation of chain maps}

We apply the previously established deformation theorem for chain
complexes (Theorem~\ref{thm:adaptedGH}), the approximation of almost
chain maps (Proposition~\ref{prop:adaptchmapalmost}), and the 
strictification of chain maps (Theorem~\ref{thm:strictifychmap}) to prove
the following: 

\begin{thm}\label{thm:adaptedchmap}
  In the situation of Setup~\ref{setup:adapt}, let $n\in \IN$.
  Let~$(\widehat{C}_*,\widehat{\zeta})$ 
  and~$(D_*,\eta)$ be marked projective $R$-chain complexes (up to degree~$n+1$) with~$(\widehat{C}_*,\widehat{\zeta})$ being $S$-adapted. 
  Let $F_*\colon \widehat{C}_*\to D_*$ be an $R$-chain map extending~$\id_{\LinftyX}$.
  Then there exists a~$K\in \R_{>0}$ such that: For every~$\delta \in \R_{>0}$,
  there exists an $S$-adapted $R$-chain map $\widehat F_* \colon \widehat{C}_* \to \widehat D_*$
  with
  \[ \dgh K {\widehat D_*, D_*,n} < \delta
  \qand
  \fa{r \in \{0,\dots, n+1\}}
  \dgh K {\widehat F_r, F_r} < \delta 
  .
  \]
\end{thm}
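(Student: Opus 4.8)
The plan is to assemble the three main results of Sections~\ref{sec:strictification} and~\ref{sec:deformation} in the order indicated above: first replace $D_*$ by an $S$-adapted genuine chain complex $D'_*$ close to it, then push $F_*$ forward to an almost chain map into $D'_*$, adapt that almost chain map, and finally strictify it.

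\emph{Step 1: adapt the target.} Fix once and for all a $z\in D_0$ with $\eta(z)=1$ together with all complexity data of $\widehat{C}_*$, $D_*$ and $F_*$; every object produced below will have $\overline{\kappa}_n$, ranks, norms and multiplicities bounded by a constant depending only on these and on $n$, and this uniformity (independent of the eventual precision $\delta$) is exactly what lets us invoke the strictification theorems. Applying Theorem~\ref{thm:adaptedGH} to $D_*$ yields a constant $K_1$ and, for every $\delta_1\in\R_{>0}$, an $S$-adapted marked projective chain complex $D'_*$ (up to degree $n+1$) with $\dgh{K_1}{D'_*,D_*,n}<\delta_1$; inspecting its proof (which runs through Theorem~\ref{thm:strictifycomplex}) shows that $\overline{\kappa}_n(D'_*)$ is bounded independently of $\delta_1$. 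The witnesses supply ambient modules $P_*$ and marked inclusions $\varphi_r\colon D_r\to P_r$, $\varphi'_r\colon D'_r\to P_r$ with $\dim(\varphi_r(D_r)\msymmdiff\varphi'_r(D'_r))<\delta_1$, and we set $\Phi_*\coloneqq(\pi_{\varphi'_r}\circ\varphi_r)_r\colon D_*\to D'_*$, a marked $\delta_1$-almost chain map extending $\id_{\LinftyX}$ (Proposition~\ref{prop:dGHalmostchmap}).

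\emph{Step 2: transport and adapt the chain map.} Put $G_*\coloneqq\Phi_*\circ F_*\colon\widehat{C}_*\to D'_*$. Viewing $F_*$ as a genuine chain map, Lemma~\ref{lem:comp-almost-chain-maps} makes $G_*$ an almost chain map extending $\id_{\LinftyX}$ with defect at most $\max_r\Nsum(F_r)\cdot\delta_1$, between the $S$-adapted complexes $\widehat{C}_*$ and $D'_*$. By Proposition~\ref{prop:adaptchmapalmost} there is then an $S$-adapted almost chain map $\widehat{G}_*\colon\widehat{C}_*\to D'_*$ extending $\id_{\LinftyX}$, with defect a bounded multiple $c\delta_1$ of $\delta_1$, with $\widehat{G}_r=_{c\delta_1,\,2K_{G_r}}G_r$ for all $r$, and with all norm and multiplicity invariants of $\widehat{G}_*$ controlled by those of $F_*$ and hence $\delta_1$-independent.

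\emph{Step 3: strictify and assemble.} Fix $\kappa$ exceeding $\kappa_n(\widehat{C}_*)$, $\nusum_n(\widehat{C}_*)$ and the ($\delta_1$-independent) bounds for $\kappa_n(D'_*)$ and $\kappa_n(\widehat{G}_*)$, and let $K_2$ be the constant from Theorem~\ref{thm:strictifychmap} for $n,\kappa$. Its $S$-adapted version (Remark~\ref{rem:strictifychmap:adapted}), applied to $\widehat{G}_*\colon\widehat{C}_*\to D'_*$, yields an $S$-adapted genuine chain complex $\widehat{D}_*$ and an $S$-adapted chain map $\widehat{F}_*\colon\widehat{C}_*\to\widehat{D}_*$ with $\dgh{K_2}{\widehat{D}_*,D'_*,n}<K_2c\delta_1$ and $\dgh{K_2}{\widehat{F}_r,\widehat{G}_r}<K_2c\delta_1$ for all $r$. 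For the complexes, Proposition~\ref{prop:dghchaincomplex} gives $\dgh{K_1+K_2}{\widehat{D}_*,D_*,n}<\delta_1+K_2c\delta_1$. For the chain maps, in each degree $r$ I concatenate, using Proposition~\ref{prop:dGH}~\ref{itm:dGH5}, the three estimates: $\dgh{}{\widehat{F}_r,\widehat{G}_r}$ from above; $\dgh{}{\widehat{G}_r,G_r}$ coming from $\widehat{G}_r=_{c\delta_1,\,2K_{G_r}}G_r$ and Proposition~\ref{prop:dGH}\,(ii); and $\dgh{}{G_r,F_r}<\Nsum(F_r)\cdot\delta_1$, which one checks directly from the definition of the Gromov--Hausdorff distance using $\widehat{C}_r$ as common ambient domain and $P_r$ as common ambient codomain: there $\varphi_r\circ F_r=e\circ(\varphi_r\circ F_r)$ while $\varphi'_r\circ G_r=e'\circ(\varphi_r\circ F_r)$, where $e,e'$ are the canonical marked idempotents of $P_r$ onto $\varphi_r(D_r)$ and $\varphi'_r(D'_r)$, and $e=_{\delta_1,1}e'$ since those summands have marked symmetric difference of dimension $<\delta_1$ (apply Proposition~\ref{prop:eqdeltaK}~\ref{i:eqdeltaK comp}). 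The outcome is $\dgh{c'}{\widehat{F}_r,F_r}<c''\delta_1$ with $c',c''$ depending only on the fixed data; rescaling $\delta_1$ to $\delta/\max\{1+K_2c,\,c''\}$ and taking $K\coloneqq\max\{K_1+K_2,\,c'\}$ delivers both asserted inequalities.

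\emph{Main obstacle.} The delicate part is not any single invocation but the uniformity bookkeeping forced by Theorem~\ref{thm:strictifychmap}, whose constant $K_2$ must be chosen from a $\kappa$ fixed before $\delta$: one must know that $D'_*$ (output of Theorem~\ref{thm:adaptedGH}) and $\widehat{G}_*$ (output of Proposition~\ref{prop:adaptchmapalmost}) carry complexity bounds independent of the approximation parameter, which means reading off the implicit norm/rank/multiplicity estimates from the constructions in Sections~\ref{sec:strictification} and~\ref{sec:deformation}. The only genuinely new computation is the comparison of $\Phi_r\circ F_r$ with $F_r$ through the common ambient module $P_r$, i.e.\ estimate $\dgh{}{G_r,F_r}<\Nsum(F_r)\cdot\delta_1$ above.
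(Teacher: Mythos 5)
Your proposal is correct and follows essentially the same route as the paper's proof: adapt the target via Theorem~\ref{thm:adaptedGH}, compose with the marked almost chain map from Proposition~\ref{prop:dGHalmostchmap} (your $G_*$ is the paper's $F'_*$), adapt via Proposition~\ref{prop:adaptchmapalmost} (your $\widehat{G}_*$ is the paper's $F''_*$), strictify via Theorem~\ref{thm:strictifychmap} with Remark~\ref{rem:strictifychmap:adapted}, and assemble the Gromov--Hausdorff estimates with constants fixed before $\delta$. Your direct verification of $\dgh{}{G_r,F_r}<\Nsum(F_r)\cdot\delta_1$ through the common ambient module $P_r$ is a correct justification of the estimate the paper asserts in its construction of $F'_*$.
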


\begin{proof}
  We first fix our set of constants: 
  Let $K \in \R_{>0}$ be a constant as provided by Theorem~\ref{thm:adaptedGH}
  when applied to~$n \in \N$ and the target complex~$(D_*,\eta)$. 
  
  We set
  $N
  \coloneqq \max \{ \Nsum(F_0), \dots, \Nsum(F_{n+1}), 1\}
  $
  and
  \[
  \kappa \coloneqq \max \bigl\{
  \kappa_n(\widehat{C}_*), \nusum_n(\widehat{C}_*),\kappa_n(D_*) + K,  
  \Ntsum(F_0) \cdot \|F_0\|_\infty, \dots, \Ntsum(F_{n+1}) \cdot \|F_{n+1}\|_\infty
  \bigr\}
  + 1
  .
  \]
  Let $K' \in \R_{>0}$ be a constant as provided by
  Theorem~\ref{thm:strictifychmap}
  when applied to~$n \in \N$
  and~$\kappa$. 

  Now, let $\delta \in \R_{>0}$.
  We proceed in the following steps:
  
  \begin{align*}
  	F_*\colon \widehat{C}_* 
	&\to D_*
	& (D_* \text{ chain complex},\ & F_* \text{ chain map})
	\\
	F'_*\colon \widehat{C}_*
	&\to D'_*
	& (D'_* \text{ adapted chain complex},\ & F' \text{ almost chain map})
	\\
	F''_*\colon \widehat{C}_*
	&\to D'_*
	& (D'_* \text{ adapted chain complex},\ & F''_* \text{ adapted almost chain map})
	\\
	\widehat{F}_*\colon \widehat{C}_*
	&\to \widehat{D}_*
	& (\widehat{D}_* \text{ adapted chain complex},\ & \widehat{F}_* \text{ adapted chain map})
\end{align*}

  \emph{Adapting the target complex.} 
  Theorem~\ref{thm:adaptedGH}
  yields an $S$-adapted marked projective $R$-chain complex~$(D'_*,\eta')$ 
  (up to degree~$n+1$) 
  with
  \[ \dgh K {D'_*,D_*,n} < \delta.
  \]
  In particular, $\kappa_n(D'_*) \leq \kappa_n(D_*) + K < \kappa$. 
  By Proposition~\ref{prop:dGHalmostchmap}, 
  there exists a marked \almostcm{\delta}{n} $\Phi_* \colon D_* \to D'_*$
  extending~$\id_{\LinftyX}$ with 
  \[ \fa{r \in \{0,\dots,n+1\}}
    \|\Phi_r\|_\infty \leq 1
    \qand
    \Nmax(\Phi_r) \leq 1 \quad \text{(Remark~\ref{rem:norms:marked:homo:bounded:by:1})}.
  \]
  We consider the composition 
  \[ F'_* \coloneqq \Phi_* \circ F_* \colon \widehat C_* \to D'_*.
  \]
  By construction, $F'_*$ is an \almostcm{(N \cdot \delta)}{n} extending~$\id_{\LinftyX}$ (Lemma~\ref{lem:comp-almost-chain-maps}) 
  and
  \[ \fa{r \in \{0,\dots, n+1\}}
  \Ntmax(F'_r) \leq \Ntmax(F_r),
  \quad
  \|F'_r\|_\infty \leq \|F_r\|_\infty,
  \quad
  \dgh {\|F_r\|} {F'_r, F_r} < N\cdot \delta.
  \]

  \emph{Adapting the chain map (almost).}
  We apply Proposition~\ref{prop:adaptchmapalmost} to obtain
  an $S$-adapted \almostcm{(2\cdot N \cdot \delta)}{n} $F''_* \colon \widehat C_* \to D'_*$ that satisfies
  \begin{align*}
    \kappa_n(F''_*)
    & \leq \max_{r \in \{0,\dots, n+1\}}
    \bigl( \Ntmax(F''_r) \cdot \|F''_r\|_\infty \bigr)
    & \text{(Lemma~\ref{lem:normN1est})}
    \\
    & \leq \max_{r \in \{0,\dots, n+1\}}
    \bigl( \Ntmax(F'_r) \cdot \|F'_r\|_\infty \bigr)
    & \text{(Proposition~\ref{prop:adaptchmapalmost})}
    \\
    & \leq \max_{r \in \{0,\dots, n+1\}}
    \bigl( \Ntmax(F_r) \cdot \|F_r\|_\infty \bigr)
    & \text{(construction of~$F'_*$)}
    \\
    & \leq \kappa
  \end{align*}
  and $\dgh {2\cdot K_{F_r}} {F''_r,F'_r} < N \cdot \delta$.
  
  \emph{Strictifying the chain map.} 
  Finally, we apply the strictification of chain maps
  (Theorem~\ref{thm:strictifychmap} and Remark~\ref{rem:strictifychmap:adapted}) to~$F''_* \colon \widehat C_* \to D'_*$
  and the accuracy~$2 \cdot N \cdot \delta$.
  This is possible because~$\max\{\kappa_n(\widehat{C}_*), \nusum_n(\widehat{C}_*)\} < \kappa$, $\kappa_n(D'_*) < \kappa$, 
  and $\kappa_n(F''_*)\leq \kappa$. 
  Hence, we obtain an $S$-adapted marked projective
  chain complex~$(\widehat D_*,\widehat \eta)$ (up to degree~$n+1$)
  and an $S$-adapted $R$-chain map~$\widehat F_* \colon \widehat C_*
  \to \widehat D_*$ (up to degree~$n+1$) extending~$\id_{\LinftyX}$
  with
  \begin{align*}
    \dgh{K'} {\widehat D_*, D'_*,n}
    & < K' \cdot 2 \cdot N \cdot \delta
    \\
    \fa{r \in \{0,\dots,n+1\}}
    \dgh{K'} {\widehat F_r, F''_r}
    & < K' \cdot 2 \cdot N \cdot \delta.
  \end{align*}
  Moreover, we have (Proposition~\ref{prop:dghchaincomplex} and Proposition~\ref{prop:dGH})
  \begin{align*} \dgh {K+K'} {\widehat D_*, D_*,n}
  & < \delta + K' \cdot 2\cdot N \cdot \delta
  = (1 + K' \cdot 2\cdot N) \cdot \delta
  \\
  \fa{r \in \{0,\dots, n+1\}}
  \dgh{\|F_r\| + 2\cdot K_{F_r} + K'} {\widehat F_r, F_r}
  & < (N + N + K' \cdot 2 \cdot N) \cdot \delta.
  \end{align*}
  Unifying the constants and rescaling~$\delta$ beforehand, gives the claimed result.
\end{proof}

\begin{thm}\label{thm:adaptedembgen}
  In the situation of Setup~\ref{setup:adapt},
  let $C_*$ be a free $Z\Gamma$-resolution of~$Z$ that has
  finite rank in degrees~$\leq n+1$ and let
  $f_* \colon C_* \to D_*$ be an $\alpha$-embedding (Definition~\ref{def:alpha-emb}).  
  Then there exists a~$K\in \R_{>0}$ such that: For every~$\delta \in \R_{>0}$,
  there exists an $S$-adapted $\alpha$-embedding~$\widehat f_* \colon C_* \to \widehat D_*$
  with
  \[ \dgh K {\widehat D_*, D_*,n} < \delta
  \qand
  \fa{r \in \{0,\dots, n+1\}}
  \dgh K {\Ind {Z\Gamma} R (\widehat f_r), \Ind {Z\Gamma} R (f_r)} < \delta 
  .
  \]
\end{thm}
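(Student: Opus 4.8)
The plan is to deduce Theorem~\ref{thm:adaptedembgen} from the deformation theorem for chain maps (Theorem~\ref{thm:adaptedchmap}) applied to the $R$-linearisation of the given $\alpha$-embedding. First I would fix a $Z\Gamma$-basis of $C_r$ in each degree $r\in\{0,\dots,n+1\}$ and regard $\widehat C_*\coloneqq\Ind{Z\Gamma}{R}(C_*)$, up to degree $n+1$, as a marked projective $R$-chain complex in the sense of Remark~\ref{rem:fromZGtoZR}. Since $C_*$ has finite rank in degrees $\le n+1$, this complex is of finite type there; in the chosen presentation all carrier sets equal $X\in S$, and the boundary operators are matrices over $Z\Gamma\subset L*\Gamma$ (using $\chi_X\in L$), so $\widehat C_*$ is $S$-adapted. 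I would then augment it by $\widehat\zeta\coloneqq\eta^D\circ\Ind{Z\Gamma}{R}(f_0)\colon\widehat C_0\to\linf{\alpha,Z}$, where $\eta^D$ denotes the augmentation of $D_*$. Because $f_*$ extends the constant inclusion $Z\to\linf{\alpha,Z}$, the map $\widehat\zeta$ sends each canonical basis element to a constant function, hence is defined over $L$ (so $S$-adapted) and surjective, and $\widehat\zeta\circ\partial^{\widehat C}_1=\eta^D\circ\partial^D_1\circ\Ind{Z\Gamma}{R}(f_1)=0$; thus $(\widehat C_*,\widehat\zeta)$ is an $S$-adapted marked projective $R$-chain complex up to degree $n+1$, and $F_*\coloneqq\Ind{Z\Gamma}{R}(f_*)\colon\widehat C_*\to D_*$ is an $R$-chain map extending $\id_{\linf{\alpha,Z}}$.

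Next I would invoke Theorem~\ref{thm:adaptedchmap} for $\widehat C_*$, $D_*$, and $F_*$ to obtain a constant $K$ such that, for each $\delta\in\R_{>0}$, there are an $S$-adapted marked projective $R$-chain complex $(\widehat D_*,\widehat\eta)$ up to degree $n+1$ and an $S$-adapted $R$-chain map $\widehat F_*\colon\widehat C_*\to\widehat D_*$ extending $\id_{\linf{\alpha,Z}}$ with $\dgh K{\widehat D_*,D_*,n}<\delta$ and $\dgh K{\widehat F_r,F_r}<\delta$ for all $r\in\{0,\dots,n+1\}$.

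Finally I would restrict $\widehat F_*$ along the canonical $Z\Gamma$-linear maps $\iota_r\colon C_r\to\widehat C_r$, $c\mapsto 1\otimes 1\otimes c$, setting $\widehat f_r\coloneqq\widehat F_r\circ\iota_r$. This $\widehat f_*$ is a $Z\Gamma$-chain map $C_*\to\widehat D_*$ up to degree $n+1$; it extends the constant inclusion since $\widehat\eta\circ\widehat f_0=\widehat\zeta\circ\iota_0=(Z\hookrightarrow\linf{\alpha,Z})\circ\zeta$; and because $\widehat D_*$ carries a surjective augmentation, $(\widehat D_*\epi\linf{\alpha,Z},\widehat f_*)$ is an $S$-adapted $\alpha$-embedding. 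Since $\widehat F_r$ is $R$-linear and agrees with $\Ind{Z\Gamma}{R}(\widehat f_r)$ on the generators $\iota_r(c)$ of the $R$-module $\widehat C_r$, uniqueness of $R$-linear extensions gives $\Ind{Z\Gamma}{R}(\widehat f_r)=\widehat F_r$ and, likewise, $\Ind{Z\Gamma}{R}(f_r)=F_r$; hence the Gromov--Hausdorff bounds transfer verbatim to $\dgh K{\Ind{Z\Gamma}{R}(\widehat f_r),\Ind{Z\Gamma}{R}(f_r)}<\delta$, which together with $\dgh K{\widehat D_*,D_*,n}<\delta$ is the claim.

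There is no hard estimate here: all of those are packaged inside Theorem~\ref{thm:adaptedchmap}. The only point that needs care is the translation between the two settings: verifying that $\widehat C_*$ is genuinely an $S$-adapted, finite-type marked projective $R$-chain complex (in particular that the induced boundary operators and augmentation are defined over $L*\Gamma$ and that the augmentation is surjective), and that inducing and restricting a chain map are mutually inverse on the canonical generators, so that $\Ind{Z\Gamma}{R}(\widehat f_r)=\widehat F_r$. I expect this bookkeeping to be the main, if modest, obstacle.
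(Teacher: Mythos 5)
Your proposal is correct and follows exactly the paper's route: induce $C_*$ and $f_*$ up to $R$ to get an $S$-adapted source complex $\widehat C_*$ and an $R$-chain map $F_*$, apply Theorem~\ref{thm:adaptedchmap}, and restrict the resulting $S$-adapted chain map back to the $Z\Gamma$-subcomplex $C_*$. The extra bookkeeping you supply (that $\widehat C_*$ is $S$-adapted of finite type with surjective augmentation, and that $\Ind{Z\Gamma}{R}(\widehat f_r)=\widehat F_r$) is exactly what the paper's terse proof leaves implicit; the only cosmetic difference is that you define the augmentation of $\widehat C_*$ as $\eta^D\circ\Ind{Z\Gamma}{R}(f_0)$ rather than as the induced augmentation $\Ind{Z\Gamma}{R}(\zeta)$, which agree since $f_*$ extends the constant inclusion.
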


\begin{proof}
	Let $(\widehat C_*,\widehat \zeta)$ be the result of applying
  the functor~$\Ind \Gamma R$ (Remark~\ref{rem:fromZGtoZR})
  to the given resolution~$(C_*,\zeta)$ and let $F_* \colon \widehat C_*
  \to D_*$ be the $R$-chain map (up to degree~$n+1$) induced by~$f_*$.
  	Theorem~\ref{thm:adaptedchmap} yields an $S$-adapted $R$-chain map $\widehat{F}_*\colon \widehat{C}_*\to \widehat{D}_*$.
    In particular, the restriction~$\widehat f_* \colon C_* \to \widehat D_*$
  of~$\widehat F_*$ to the ``$Z\Gamma$-subcomplex''~$C_*$ of~$\widehat C_*$
  is an $S$-adapted $\alpha$-embedding.
\end{proof}

Our main application will be in Section~\ref{sec:passing} to
approximate embeddings over profinite actions by chain complexes
related to individual finite index subgroups.

Moreover, for invariants with controlled behaviour with respect to the
Gromov--Hausdorff distance, there is no difference between considering
embeddings with target complexes over the equivalence relation ring or
the crossed product ring. In particular, this applies to the measured
embedding dimension and the measured embedding volume
(Section~\ref{sec:overZR}). 

\section{A logarithmic norm for morphisms}\label{sec:lognorm}

We introduce a refinement of the quantity~$\dim (N) \cdot
\logp \|f\|$ for homomorphisms $f \colon M \to N$ between marked
projective modules called~$\lognorm (f)$. 
The key properties of this invariant are that it
satisfies dimension estimates, subadditivity over marked decompositions
of the domain, compatibility with almost equality
(Section~\ref{subsec:lognormproperties}), compatibility with
adaptedness (Proposition~\ref{prop:lognormadapt}), and that it
provides an upper bound for the logarithmic torsion of cokernels
(Theorem~\ref{thm:logtorlognorm}). The ad-hoc construction is
given in Section~\ref{subsec:lognormdef}.

\subsection{Construction}\label{subsec:lognormdef}

We refine the expression ``$\dim \cdot \logp \|\cdot\|$'' by
allowing for marked decompositions of the domain and for
taking marked ranks of images. 

\begin{setup}\label{setup:lognorm}
  Let $\Gamma$ be a countable group and let $Z$ denote $\Z$ (with
  the usual norm) or a finite field (with the trivial norm).
  We consider a standard $\Gamma$-action~$\alpha \colon \Gamma \actson (X,\mu)$. 
  Moreover, let $\Rrel$ be the associated orbit relation and
  let $R \subset Z \Rrel$ be a subring that
  contains~$\LinftyXZ * \Gamma$.
\end{setup}

\begin{defn}[$\lognorm$]
  \label{def:lognorm}
  In the situation of Setup~\ref{setup:lognorm}, let $f \colon M \to N$
  be an $R$-homomorphism between marked projective $R$-modules.
  \begin{itemize}
  \item
    The \emph{marked rank of~$f$} is defined as
    \begin{align*}
      \rk (f)
      \coloneqq
      & \;
      \inf
      \bigl\{ \dim (N')
      \bigm|
      \text{$N' \subset N$ is a marked direct summand with~$f(M) \subset N'$}
      \bigr\}
      \\
      \in
      & \;
      [0,\dim (N)].
    \end{align*}
  \item
    We set
    \[ \lognorm' (f)
    \coloneqq \min \bigl\{
    \dim (M) \cdot \logp \|f\|
    , \rk(f) \cdot \logp \|f\|
    \bigr\} \in \IR_{\ge 0}.
    \]
  \item
    Let $D(M)$ denote the ``set'' of all finite marked decompositions
    of~$M$. For $(M_i)_{i \in I} \in D(M)$, we set
    \[
    \lognorm' (f,(M_i)_{i \in I})
    \coloneqq \sum_{i \in I} \lognorm' (f|_{M_i} \colon M_i \to N) \in \IR_{\ge 0}.
    \]
  \item
    Finally, we let
    \[ \lognorm (f)
    \coloneqq \inf_{M_* \in D(M)}
    \lognorm' (f, M_*)
    \in \R_{\geq 0}.
    \]
  \end{itemize}
\end{defn}

The value~$\lognorm (f)$ depends on the marked structure.
We can adapt the definition to subalgebras as follows.

\begin{rem}[adapted lognorm]
	\label{rem:lognormS}
	Let $S\subseteq R$ be a subalgebra and
	 $f \colon M \to N$
  	be an $S$-adapted homomorphism between $S$-adapted marked projective $R$-modules.
  	 We define $D_S(M)$ as the ``set'' of all finite marked $S$-adapted decompositions
    of~$M$ and 
    \[
  	\lognorm_S (f) \coloneqq \inf_{M_*\in D_S(M)} \lognorm'(f,M_*) \in \R_{\ge 0}.
  \]
\end{rem}

\subsection{Basic properties}\label{subsec:lognormproperties}

We collect some basic properties of~$\lognorm$.

\begin{prop}[$\lognorm$ properties]\label{prop:lognorm}
  In the situation of Setup~\ref{setup:lognorm}, let $f \colon M \to N$
  be an $R$-homomorphism between marked projective $R$-modules.
  Then the following hold:
  \begin{enumerate}[label=\enum]
  \item \emph{Dimension estimates.}
    We have
    \[ \lognorm (f) \leq \dim (M) \cdot \logp \|f\|
    \qand
    \lognorm (f) \leq \dim (N) \cdot \logp \|f\|.
    \]
  \item \emph{Subadditivity.}
    If $M \cong_R M_0 \oplus M_1$ is a marked decomposition, then
    \[ \lognorm (f)
    \leq \lognorm (f|_{M_0}) + \lognorm (f|_{M_1}). 
    \]
  \item \emph{Marked inclusion estimates.}
    If $i \colon M' \hookrightarrow M$ and $j \colon N \hookrightarrow N'$ are
    marked inclusions of marked projective $R$-modules, then
    \[ \lognorm (j\circ f) = \lognorm (f)
    \qand
    \lognorm (f \circ i) \leq \lognorm (f).
    \]
  \item
  \label{itm:lognorm-almeq}
  \emph{Almost equality.}
    If $\delta \in \R_{>0}$, $K \in \R_{\geq 0}$, and $g \colon M \to N$
    is an $R$-homomorphism with~$f =_{\delta,K} g$, then
    \[ \lognorm (f) \leq \lognorm (g) + \delta \cdot \logp K.
    \]
  \item 
	\label{itm:lognorm-GH}  
  \emph{Gromov--Hausdorff distance.}
    If $\delta, K \in \R_{>0}$, and $f' \colon M' \to N'$ is an $R$-homomorphism
    of marked projective $R$-modules with~$\dgh K {f,f'} < \delta$,
    then
    \[ \lognorm (f) \leq \lognorm (f') + \delta \cdot \log_+ K.
    \]
  \end{enumerate}
\end{prop}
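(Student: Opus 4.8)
The plan is to prove the five items roughly in the order given, since each later item uses the earlier ones. For (i), the two dimension estimates follow by choosing the trivial marked decomposition $M = M_0$ (so $M_* = (M)$) in the infimum defining $\lognorm(f)$: then $\lognorm(f) \le \lognorm'(f) = \min\{\dim(M)\cdot\logp\|f\|,\ \rk(f)\cdot\logp\|f\|\}$, and since $\rk(f)\le\dim(N)$ this gives both bounds at once. For (ii), subadditivity over a marked decomposition $M\cong_R M_0\oplus M_1$: given any marked decompositions $M_0 = \bigoplus_{i\in I_0}(M_0)_i$ and $M_1 = \bigoplus_{i\in I_1}(M_1)_i$ refining $M_0$ and $M_1$, their disjoint union is a marked decomposition of $M$, and the restriction of $f$ to each piece is the restriction of $f|_{M_0}$ or $f|_{M_1}$ to that piece. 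Hence $\lognorm'(f, \cdot)$ over this combined decomposition equals $\lognorm'(f|_{M_0},\cdot) + \lognorm'(f|_{M_1},\cdot)$, and taking infima over the decompositions of $M_0$ and $M_1$ separately yields the claim. (One should note $D(M_0)\times D(M_1)$ injects into $D(M)$, which is the only point needing a sentence of justification.)

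For (iii), the marked inclusion estimates: postcomposing with a marked inclusion $j\colon N\hookrightarrow N'$ does not change $\|f\|$ (Remark~\ref{rem:norms:marked:homo:bounded:by:1} bounds $\|j\|\le 1$ and $\|\pi_j\|\le 1$, so $\|j\circ f\| = \|f\|$), does not change $\dim(M)$, and does not change $\rk$ since a marked direct summand of $N$ containing $f(M)$ maps under $j$ to a marked direct summand of $N'$ of the same dimension containing $j(f(M))$, and conversely one intersects with $j(N)$; moreover $D(M)$ is untouched. Hence $\lognorm'(j\circ f|_{M_i}) = \lognorm'(f|_{M_i})$ summand-by-summand and $\lognorm(j\circ f)=\lognorm(f)$. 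For precomposition with a marked inclusion $i\colon M'\hookrightarrow M$, every marked decomposition of $M'$ extends to a marked decomposition of $M$ (add the canonical complement of $M'$ in $M$, decomposed trivially, on which $f\circ i$ restricts to $0$ contributing nothing), so $D(M')$ embeds into $D(M)$ compatibly with the sums, giving $\lognorm(f\circ i)\le\lognorm(f)$; it is not an equality because the extra summands in $D(M)$ allow for finer decompositions.

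The main work is in (iv), and then (v) is a formal consequence. For (iv), suppose $f =_{\delta,K} g$, witnessed by a marked decomposition $M\cong_R M_0\oplus M_1$ with $f|_{M_0} = g|_{M_0}$, $\dim(M_1)<\delta$, and $\|f|_{M_1}-g|_{M_1}\|\le K$. Given any $\varepsilon>0$, pick a marked decomposition $(M_0)_* \in D(M_0)$ with $\lognorm'(g|_{M_0},(M_0)_*) \le \lognorm(g|_{M_0})+\varepsilon$. Combine $(M_0)_*$ with the trivial decomposition $(M_1)$ of $M_1$ to get a decomposition of $M$; on the $M_0$-pieces $f$ and $g$ agree, and on $M_1$ we estimate $\lognorm'(f|_{M_1}) \le \dim(M_1)\cdot\logp\|f|_{M_1}\|$. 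The issue is that $\|f|_{M_1}\|$ need not be close to $\|g|_{M_1}\|$, only $\|f|_{M_1}-g|_{M_1}\|\le K$; but using subadditivity (ii) on $M_1$ we split $f|_{M_1} = g|_{M_1} + (f|_{M_1}-g|_{M_1})$ — wait, $\lognorm$ is not additive under sums of maps, so instead I further decompose: apply (ii) to write $\lognorm(f|_{M_1})$ and compare directly. Actually the clean route: on $M_1$, by the dimension estimate $\lognorm'(f|_{M_1}\colon M_1\to N) \le \dim(M_1)\cdot\logp\|f|_{M_1}\|$. Since $f|_{M_0}=g|_{M_0}$, we obtain
\[
\lognorm(f) \le \lognorm'(g|_{M_0},(M_0)_*) + \dim(M_1)\cdot\logp\|f|_{M_1}\| \le \lognorm(g|_{M_0}) + \varepsilon + \delta\cdot\logp\|f|_{M_1}\|.
\]
To replace $\logp\|f|_{M_1}\|$ by $\logp K$ I need $\lognorm(g|_{M_0}) \le \lognorm(g)$, which holds by (iii) (precomposition with the marked inclusion $M_0\hookrightarrow M$), and I need to absorb the $M_1$-contribution of $g$ too: decompose using a near-optimal decomposition of all of $M$ for $g$ and intersect it with $M_0$ and $M_1$ — the intersected decomposition on $M_0$ costs at most $\lognorm(g)$ and the part on $M_1$ is discarded, replaced by the $\dim(M_1)\cdot\logp\|f|_{M_1}\|$ term. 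The hypothesis $\|f|_{M_1}-g|_{M_1}\|\le K$ bounds $\|f|_{M_1}\|$ only via $\|g|_{M_1}\|$, so strictly one gets $\dim(M_1)\cdot(\logp\|g|_{M_1}\| + \log_+(K/\|g|_{M_1}\|))$-type terms; the honest statement being proved uses $=_{\delta,K}$ where in our intended applications (strictification) one has control of $\|f-g\|\le K$ and the constant $K$ is chosen so that $\logp\|f|_{M_1}\|$ is itself $\le \logp K + \logp\|g|_{M_1}\|$ after the decomposition is chosen to also make $\|g|_{M_1}\|$ small — here the subadditivity over further decomposing $M_1$ so that on each piece $g$ has small norm and the $=_\delta$ part makes $f=g$ there does the job. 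This bookkeeping — choosing one decomposition of $M$ that simultaneously witnesses near-optimality for $g$ and splits off the $\delta$-small bad part for $f$ — is the step I expect to be fiddly; the inequality $\logp(a) \le \logp(a-b) + \logp(b) + \log 2$ (or the exact variant for the discrete norm) is the elementary fact underlying it. Finally, (v) follows from (iv) and Proposition~\ref{prop:dGH}(iv): $\dgh K{f,f'}<\delta$ yields marked homomorphisms $\Phi\colon M\to M'$, $\Psi\colon N\to N'$ with $\Psi\circ f =_{\delta,K} f'\circ\Phi$; then by (iii) (marked inclusions/projections have $\lognorm$-non-increasing pre/postcomposition, and here $\Phi,\Psi$ are marked, i.e.\ compositions of marked inclusions and projections, so $\lognorm(f'\circ\Phi)\le\lognorm(f')$ and $\lognorm(\Psi\circ f)\ge$ ... ) combined with (iv) we get $\lognorm(f) \le \lognorm(\Psi\circ f) \le \lognorm(f'\circ\Phi) + \delta\logp K \le \lognorm(f') + \delta\logp K$, where the first inequality uses that $\Psi$ being marked does not decrease $\lognorm$ — this needs the dual of (iii), namely that postcomposition with a marked \emph{projection} also preserves $\lognorm$, which follows by the same argument as the marked-inclusion case since marked projections also have norm $\le 1$ and interact with marked direct summands of the target in a dimension-preserving way.
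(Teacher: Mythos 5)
Parts (i) and (ii) are correct and match the paper's proof. In (iii), your argument for $\lognorm(f\circ i)\le\lognorm(f)$ runs the quantifiers backwards: you extend decompositions of $M'$ to decompositions of $M$, which only compares $\lognorm(f\circ i)$ with the infimum of $\lognorm'(f,\args)$ over the \emph{image} of $D(M')$ inside $D(M)$, and that infimum may well exceed $\lognorm(f)$. What is needed (and what the paper does) is the opposite transfer: restrict an arbitrary $(M_k)_k\in D(M)$ to a decomposition $(M'_k)_k$ of $M'$ and use $\lognorm'(f\circ i|_{M'_k})\le\lognorm'(f|_{M_k})$ summand by summand. This is a one-line fix, but as written the inequality does not follow from your construction.

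The substantive problems are in (iv) and (v). For (iv) you correctly isolate the only nontrivial point --- the chain $\lognorm(f)\le\lognorm(f|_{M_0})+\lognorm(f|_{M_1})=\lognorm(g|_{M_0})+\lognorm(f|_{M_1})\le\lognorm(g)+\delta\cdot\logp\|f|_{M_1}\|$ via (ii), (iii), (i), after which $\logp\|f|_{M_1}\|$ must be replaced by $\logp K$ --- but you never close it: the proposal ends in speculation about ``intended applications'' and ``fiddly bookkeeping'', which is not a proof. (The paper's own proof of (iv) is exactly the three-line chain above; if you believe the literal hypothesis $\|f|_{M_1}-g|_{M_1}\|\le K$ is too weak to finish, you must either supply the missing estimate or say precisely what statement you are proving.) For (v), your route through Proposition~\ref{prop:dGH}~(iv) requires $\lognorm(f)\le\lognorm(\Psi\circ f)$ where $\Psi=\pi_{\psi'}\circ\psi$ involves a marked \emph{projection} on the target side, and the claimed ``dual of (iii)'' is false: postcomposition with a marked projection can only \emph{decrease} $\lognorm$ (it shrinks norms and marked ranks, and can annihilate part or all of the image of $f$ --- e.g.\ if $f(M)$ lies in the $\delta$-small summand killed by $\pi_{\psi'}$, then $\Psi\circ f=0$ while $\lognorm(f)$ may be positive). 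So the inequality you need is precisely the one that fails. The paper avoids this by working with $F=\psi\circ f\circ\pi_\varphi$ (marked inclusion on the target, marked projection on the \emph{source}), proving $\lognorm(F)=\lognorm(f)$ from $\pi_\varphi\circ\varphi=\id_M$ together with (ii) and (iii), and then applying (iv) to $F=_{\delta,K}F'$.
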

\begin{proof}
  (i)
  The trivial decomposition of~$M$ into the single summand~$M$
  gives both dimension estimates.
  
  (ii)
  Combining marked decompositions of $M_0$ and~$M_1$ results
  in marked decompositions of~$M$. This leads to subadditivity.  

  (iii)
  For every marked decomposition~$M_* \in D(M)$, we
  see easily that
  \[ \lognorm' (j \circ f,M_*)
  = \lognorm' (f, M_*),
  \]
  because $j$ is a marked inclusion.  Taking the infimum over all~$M_*$ in~$D(M)$
  thus shows that~$\lognorm (j \circ f) = \lognorm (f)$.

  Every marked decomposition~$(M_k)_{k \in I} \in D(M)$ induces a
  marked decomposition~$(M'_k)_{k \in I} \in D(M')$
  such that for every~$k \in I$, the restriction~$i|_{M'_k} \colon M'_k \to M_k$
  is a marked inclusion; we have
  \[ \lognorm' (f \circ i|_{M'_k})
  \leq \lognorm' (f|_{M_k}).
  \]
  Therefore, $\lognorm' (f \circ i, M'_*) \leq \lognorm' (f,M_*)$.
  Taking the infimum over~$D(M)$ shows that $\lognorm (f \circ i) \leq
  \lognorm (f)$.
  
  (iv)
  This is a direct consequence of parts~(i)--(iii).

  (v)
  By part~(iv) and the definition of Gromov--Hausdorff distance, it suffices to show that
  $\lognorm (\psi\circ f\circ \pi_\varphi) = \lognorm (f)$, if $\varphi\colon M\to L$, $\psi\colon N\to P$
  are marked inclusions and~$\pi_\varphi\colon L\to M$ is the marked projection associated to~$\varphi$.
  By part~(iii), we are left to show that $\lognorm (f\circ \pi_\varphi) = \lognorm (f)$. 
  Since~$\pi_\varphi\circ\varphi = \id_M$, part~(iii) yields~$\lognorm (f) \le \lognorm (f\circ\pi_\varphi)$. 
  Conversely, part~(ii) yields
  \begin{align*}
  	\lognorm (f\circ\pi_\varphi) &\le 
  	\lognorm (f\circ \pi_\varphi|_{\varphi(M)}) 
  	= \lognorm f.
  \end{align*}    
  This finishes the proof.
\end{proof}

For a subgroup~$\Lambda$ of~$\Gamma$, we denote the restricted action by $\alpha|_\Lambda\colon \Lambda\actson (X,\mu)$.
The inclusion of crossed product rings $\LinftyXLambda\ast \Lambda\into \LinftyX\ast \Gamma$ induces induction and restriction functors~$\ind_{\LinftyXLambda\ast \Lambda}^{\LinftyX\ast \Gamma}$ and~$\res^{\LinftyX\ast \Gamma}_{\LinftyXLambda\ast \Lambda}$ between module categories.

\begin{lem}
\label{lem:lognorm_ind_res}
	In the situation of Setup~\ref{setup:lognorm}, let~$\Lambda$ be a subgroup of~$\Gamma$.
	\begin{enumerate}[label=\enum]
		\item\label{item:lognorm_ind}
		Let~$g$ be a map between marked projective $\LinftyXLambda\ast \Lambda$-modules.
		Then
		\[
			\lognorm\bigl(\ind_{\LinftyXLambda\ast \Lambda}^{\LinftyX\ast \Gamma}g\bigr)\le \lognorm(g).
		\]
		\item\label{item:lognorm_res} 
		Suppose that~$\Lambda$ has finite index in~$\Gamma$. 
		Let $f$ be a map between marked projective $\LinftyX\ast \Gamma$-modules.
		Then
		\[
			\lognorm\bigl(\res^{\LinftyX\ast \Gamma}_{\LinftyXLambda\ast \Lambda} f\bigr)\le [\Gamma:\Lambda]\cdot \lognorm(f).
		\]
	\end{enumerate}
	\begin{proof}
		(ii) Let~$\varphi\colon M\to N$ be a map between marked projective $\LinftyX\ast \Gamma$-modules.
		Then
		\begin{align*}
			\dim_{\LinftyXLambda\ast \Lambda}\bigl(\res^{\LinftyX\ast \Gamma}_{\LinftyXLambda\ast \Lambda} M\bigr)
			&\le [\Gamma:\Lambda]\cdot \dim_{\LinftyX\ast \Gamma}(M)
			\\
			\rk_{\LinftyXLambda\ast \Lambda}\bigl(\res^{\LinftyX\ast \Gamma}_{\LinftyXLambda\ast \Lambda}\varphi\bigr)
			&\le [\Gamma:\Lambda]\cdot \rk_{\LinftyX\ast \Gamma}(\varphi)
			\\
			\bigl\|\res^{\LinftyX\ast \Gamma}_{\LinftyXLambda\ast \Lambda} \varphi\bigr\|
			&\le \|\varphi\|.
		\end{align*}
		The claim follows, since a marked decomposition of~$M$ induces a marked decomposition of $\res^{\LinftyX\ast \Gamma}_{\LinftyXLambda\ast \Lambda} M$.
		
		Part~(i) is proved similarly.
	\end{proof}
\end{lem}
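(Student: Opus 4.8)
The plan is to track the three ingredients of $\lognorm'$ --- the dimension, the marked rank, and the operator norm --- through the functors $\ind_{\LinftyXLambda\ast\Lambda}^{\LinftyX\ast\Gamma}$ and $\res^{\LinftyX\ast\Gamma}_{\LinftyXLambda\ast\Lambda}$, and then to note that a marked decomposition of the domain is carried to one of the induced resp.\ restricted domain. Since $\LinftyXLambda$ and $\LinftyX$ are the same $L^\infty$-space, base change along $\LinftyXLambda\ast\Lambda\hookrightarrow\LinftyX\ast\Gamma$ identifies the marked projective module $(\LinftyXLambda\ast\Lambda)\chi_A$ with the marked projective $\LinftyX\ast\Gamma$-module $\gen{A}$ on the \emph{same} set $A$. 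For the restriction, I would fix a transversal $g_1,\dots,g_c$ of $\Lambda\backslash\Gamma$ with $c=[\Gamma:\Lambda]$, so that $\LinftyX\ast\Gamma=\bigoplus_{j=1}^c(\LinftyXLambda\ast\Lambda)g_j$ as a left $\LinftyXLambda\ast\Lambda$-module, and use the commutation rule $\gamma\chi_A=\chi_{\gamma A}\gamma$ to identify $\res\gen{A}=\res\bigl((\LinftyX\ast\Gamma)\chi_A\bigr)$ with $\bigoplus_{j=1}^c\gen{g_jA}$. As $\alpha$ is measure preserving, $\mu(g_jA)=\mu(A)$; taking direct sums, induction therefore preserves the dimension of a marked projective module, while restriction multiplies it by $[\Gamma:\Lambda]$.

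Next I would check that these identifications are $\ell^1$-isometric for the associated marked structures. For $\ind$ this is immediate, since the underlying element of $Z\Rrel$ does not change. For $\res$ one writes $z\in(\LinftyX\ast\Gamma)\chi_A$ as $z=\sum_{j=1}^c v_j\,g_j$ with $v_j\in(\LinftyXLambda\ast\Lambda)\chi_{g_jA}$; the blocks $(\LinftyXLambda\ast\Lambda)g_j$ are indexed by disjoint cosets $\Lambda g_j$ and hence are $\ell^1$-orthogonal, while right multiplication by the group element $g_j$ merely permutes group coordinates, so $\|z\|_1=\sum_j\|v_j\|_1$. Consequently $\|\res f\|=\|f\|$ for every homomorphism $f$ between marked projective $\LinftyX\ast\Gamma$-modules, and $\|\ind g\|=\|g\|$ for every homomorphism $g$ between marked projective $\LinftyXLambda\ast\Lambda$-modules. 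Finally, a marked direct summand $N'$ of a codomain $N$ arises from a marked decomposition, so by additivity of the two functors it is sent to a marked direct summand of $\ind N$ resp.\ $\res N$ --- still containing the image of $\ind g$ resp.\ $\res f$ by functoriality, using for induction that $N'$ is a direct summand so that it stays embedded --- of dimension $\dim N'$ resp.\ $[\Gamma:\Lambda]\cdot\dim N'$. Taking the defining infima gives $\rk(\ind g)\le\rk(g)$ and $\rk(\res f)\le[\Gamma:\Lambda]\cdot\rk(f)$.

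Combining these three facts with the monotonicity of $\logp$ and the formula $\lognorm'(\,\cdot\,)=\min\{\dim\cdot\logp\|\cdot\|,\ \rk\cdot\logp\|\cdot\|\}$ yields $\lognorm'(\ind g)\le\lognorm'(g)$ and $\lognorm'(\res f)\le[\Gamma:\Lambda]\cdot\lognorm'(f)$ for single homomorphisms. A marked decomposition $M_*=(M_i)_i\in D(M)$ of the domain then induces, via the identifications above, a marked decomposition of $\ind M$ resp.\ $\res M$ (on the sets $A_i$ resp.\ $g_jA_i$), and restriction to a block commutes with $\ind$ and $\res$; hence $\lognorm'(\ind g,\ind M_*)\le\lognorm'(g,M_*)$ and $\lognorm'(\res f,\res M_*)\le[\Gamma:\Lambda]\cdot\lognorm'(f,M_*)$. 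Passing to the infimum over $M_*\in D(M)$ --- which only bounds from above the infimum over all marked decompositions of $\ind M$ resp.\ $\res M$ --- gives the two asserted inequalities. The step I expect to require the most care is the $\ell^1$-isometry of the restricted marked structure; everything else is bookkeeping with the definition of $\lognorm'$ and the fact that $\alpha$ is measure preserving, and I should record in particular that part~(i) does not use finiteness of $[\Gamma:\Lambda]$, since induction acts blockwise on the generators $\chi_{A_i}$.
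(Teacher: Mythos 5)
Your proof is correct and follows essentially the same route as the paper's: bound the dimension, the marked rank, and the operator norm separately under $\ind$ and $\res$, then observe that marked decompositions of the domain transfer and pass to the infimum. You merely supply the details the paper leaves implicit (the coset decomposition $\res\gen{A}\cong\bigoplus_j\gen{g_jA}$, the $\ell^1$-isometry, and the transfer of marked summands of the codomain for the rank estimate), all of which check out.
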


\subsection{Explicit description of the marked rank}
There is the following explicit description of the marked rank.

\begin{lem}[]
	\label{lem:mrk-expl}
	In the situation of Setup~\ref{setup:lognorm}, let $R=\LinftyX\ast \Gamma$.
	Let~$f\colon M\to N$ be an $(\LinftyX\ast \Gamma)$-homomorphism between marked
	 projective~$(\LinftyX\ast \Gamma)$-modules. 
	Fix a presentation of~$f$ as in Setup~\ref{setup:opnormalt}. Then, we have
	\[
		\mrk (f) = \sum_{j\in J} \mu \biggl(
			\bigcup_{\substack{(i, k,\gamma)\in I\times K\times F,\\ a_{i,j,k,\gamma} \neq 0}}
			\gamma^{-1} A_i \cap U_k
		\biggr).
	\]
\end{lem}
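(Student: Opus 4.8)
The plan is to unwind both sides of the claimed identity and show they describe the same marked direct summand of~$N$. First I would recall that, by Setup~\ref{setup:opnormalt}, the homomorphism~$f$ is given by right multiplication with the matrix $z = (z_{i,j})_{i,j}$ where $z_{i,j} = \sum_{(k,\gamma)\in K\times F} a_{i,j,k,\gamma}\cdot(\chi_{\gamma U_k},\gamma)$, and the presentation may be taken reduced, so that $a_{i,j,k,\gamma}\neq 0$ forces $U_k\subseteq B_j$ and $\gamma U_k\subseteq A_i$. A marked direct summand $N'$ of $N = \bigoplus_{j\in J}\gen{B_j}$ has the form $N' = \bigoplus_{j\in J}\gen{C_j}$ with $C_j\subseteq B_j$ measurable (for simplicity one can enlarge the index set so that the direct summands are indexed by the same $J$, possibly with empty sets), and $\dim(N') = \sum_{j\in J}\mu(C_j)$. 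So the marked rank is computed by minimising $\sum_j\mu(C_j)$ over all choices $(C_j)$ with $C_j\subseteq B_j$ subject to the constraint $f(M)\subseteq N'$.

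The heart of the argument is to identify exactly this constraint. I would compute the $j$-component of $f(\chi_{A_i}\cdot e_i)$: it is $(\chi_{A_i}\cdot z_{i,j})$, acting in $\gen{B_j}$, which by the convolution formula equals $\sum_{(k,\gamma)} a_{i,j,k,\gamma}\cdot\chi_{A_i\cap\gamma U_k}\cdot(\chi_{\gamma U_k},\gamma)$ reduced appropriately. The point (using that $\LinftyX\ast\Gamma$ elements shift supports as in Remark~\ref{rem:def:homo:marked:proj} and the explicit shape of the inclusion $\LinftyX\ast\Gamma\to Z\Rrel$) is that the set of points in $X$ that can occur in the ``$B_j$-coordinate'' of some element of $f(M)$ is precisely (up to measure~$0$) the union $\bigcup_{(i,k,\gamma):\,a_{i,j,k,\gamma}\neq 0}\gamma^{-1}A_i\cap U_k$; indeed an element $(\chi_{\gamma U_k},\gamma)$ contributes, at the $B_j$-spot, a function supported on $\gamma^{-1}(\text{stuff})\cap U_k$, and intersecting with $A_i$ in the domain coordinate gives the factor $\gamma^{-1}A_i$. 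Because the constraint $f(M)\subseteq N'$ means exactly that $C_j$ must contain this union for each $j$ (this is where one uses that $Z$ is an integral domain, so no cancellation of nonzero coefficients can shrink the support), the infimum defining $\mrk(f)$ is attained by taking $C_j$ equal to that union, which yields the stated formula.

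The main obstacle I anticipate is the bookkeeping needed to show rigorously that the support of the $B_j$-component of a general element $\sum_i\lambda_i\cdot\chi_{A_i}\cdot e_i$ of $M$, when mapped through $f$, can fill out exactly $\bigcup_{(i,k,\gamma)}\gamma^{-1}A_i\cap U_k$ and nothing more — i.e. both the inclusion ``$f(M)$ lands in the summand with these $C_j$'' and the optimality ``no smaller $C_j$ works''. For the first inclusion one combines $\supp_1$-estimates (Lemma~\ref{lem:supp1:est}~\ref{i:easy}) with the reducedness conditions $U_k\subseteq B_j$ and $\gamma U_k\subseteq A_i$. For optimality, I would exhibit, for a given $j$ and a point (positive-measure set) in $\gamma^{-1}A_i\cap U_k$ with $a_{i,j,k,\gamma}\neq 0$, the explicit witness $f(\chi_{A_i\cap\gamma U_k}\cdot e_i)$ — or more carefully a characteristic function of a small subset $V\subseteq\gamma^{-1}A_i\cap U_k$ on which the $(k,\gamma)$-term is not killed by other terms; here the pairwise-disjointness of the $(U_k)$ and the structure of $F\subset\Gamma$ ensure, as in Lemma~\ref{lem:UL-compu}, that distinct $(k,\gamma)$ contribute on comparably disjoint pieces, so there is genuinely no cancellation. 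Once these two directions are in place the equality of the two expressions is immediate, so I expect the write-up to be short modulo this support-chasing lemma, which I would either prove inline or factor out citing the machinery already developed around Proposition~\ref{prop:opnormalt}.
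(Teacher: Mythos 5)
Your proposal is correct and follows essentially the same route as the paper: reduce to a single target summand, show $f(M)$ lands in $\bigoplus_j\gen{\bigcup\gamma^{-1}A_i\cap U_k}$ via the reducedness conditions, and prove optimality by exhibiting an explicit witness element whose image has a component with full support on $\gamma^{-1}A_i\cap U_k$, using the pairwise disjointness of the $U_k$ to rule out cancellation (the paper packages this last recognition step as Lemma~\ref{lem:recog-summands}). The only cosmetic difference is that the paper first shifts the witness by $\gamma_0^{-1}$ and reads off the identity coordinate via $\pi_1$, whereas you read off the $\gamma_0$ coordinate directly.
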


\begin{proof}
	Since the marked rank is defined via marked decompositions, we can assume without loss of generality that~
	$N = \gen{B}$. We will thus drop~$j\in J$ from the notation. Let
	\[
		B' \coloneqq \bigcup_{\substack{(i,k,\gamma)\in I\times K\times F,\\ a_{i,k,\gamma} \neq 0}}
			\gamma^{-1} A_i \cap U_k.
	\]
	It suffices to show that~$\gen{B'}$ is the smallest marked projective summand of~$N$ containing~$f(M)$.
	
	We first show that the image is contained in this summand. We denote by $\pi_{B\setminus B'}$
	the canonical projection to the marked summand~$\gen{B\setminus B'}$. For all~$i\in I$, we have that
	~$\pi_{B\setminus B'}\circ f|_{\gen{A_i}}$ is given by right multiplication with the element
	\begin{align*}
		&(\chi_{A_i}, 1) \cdot \sum_{\substack{(k,\gamma)\in K\times F,\\ a_{i,k,\gamma} \neq 0}} a_{i,k,\gamma} \cdot (\chi_{\gamma U_k}, \gamma)
		\cdot (\chi_{B\setminus B'}, 1)\\
		&= 
		\sum_{\substack{(k,\gamma)\in K\times F,\\ a_{i,k,\gamma} \neq 0}} a_{i,k,\gamma} \cdot (\chi_{A_i\cap \gamma U_k\cap \gamma B\setminus 
		\gamma B'}, \gamma)\\
		&=  0,
	\end{align*}
	because 
	\begin{align*}
	A_i\cap\gamma U_k\cap \gamma B\setminus \gamma B' &\subseteq \gamma (\gamma^{-1}A_i\cap U_k\setminus B')\\
	&\subseteq \gamma\bigl(\gamma^{-1}A_i\cap U_k\setminus (\gamma^{-1} A_i\cap U_k )\bigr)
	= \emptyset.
	\end{align*}
	This shows that~$f(M) \subseteq \gen{B'}$.
	
	Conversely, let~$(i_0, k_0,\gamma_0)\in I\times K\times F$ such that~$a_{i_0,k_0,\gamma_0}\neq 0$. It suffices to 
	show that~$\gen{\gamma_0^{-1}A_{i_0}\cap U_{k_0}}$ is contained in every marked projective summand
	containing~$f(M)$.
	Indeed, let~$\pi_1\colon \LinftyX\ast \Gamma \to \LinftyX$ be the~$\LinftyX$-linear projection to the summand
	indexed by~$1\in \Gamma$. We have
	\begin{align*}
		&\pi_1\circ f\bigl((\chi_{U_{k_0}}, \gamma_0^{-1})\cdot (\chi_{A_{i_0}},1)\bigr)\\
		&= \pi_1\circ f(\chi_{U_{k_0}\cap \gamma_0^{-1} A_{i_0}}, \gamma_0^{-1})\\
		&= \pi_1 \biggl(
			(\chi_{U_{k_0}\cap \gamma_0^{-1} A_{i_0}}, \gamma_0^{-1}) \cdot \sum_{(k,\gamma)\in K\times F}
			a_{i_0,k,\gamma} \cdot (\chi_{\gamma U_k},\gamma)
		\biggr)\\
		&= \pi_1 \biggl(
			 \sum_{(k,\gamma)\in K\times F}
			a_{i_0,k,\gamma} \cdot (\chi_{U_{k_0}\cap \gamma_0^{-1} A_{i_0}\cap \gamma_0^{-1}\gamma U_k},\gamma_0^{-1}\gamma)
			\biggr)\\
		&= \sum_{k\in K}
			a_{i_0,k,\gamma_0} \cdot \chi_{U_{k_0}\cap  \gamma_0^{-1} A_{i_0} \cap U_k}
			& (\text{projection})\\
		&= a_{i_0,k_0,\gamma_0} \cdot \chi_{U_{k_0}\cap \gamma_0^{-1} A_{i_0}}
			& ((U_k)_k \text{ pairwise disjoint})
	\end{align*}
	Since~$a_{i_0,k_0,\gamma_0}\neq 0$, we obtain from Lemma~\ref{lem:recog-summands} below
	that~$\gen{U_{k_0}\cap \gamma_0^{-1} A_{i_0}}$ is contained 
	in the marked direct summand generated by~$f(M)$.
\end{proof}

\begin{lem}[recognising marked summands]
	\label{lem:recog-summands}
	Let~$M= \gen{B}$ be a marked summand of~$\LinftyX\ast \Gamma$ and let~$x\in M$. Let~$\pi_1\colon \LinftyX\ast \Gamma\to \LinftyX$
	be the~$\LinftyX$-linear projection to the summand corresponding to the identity element~$1\in \Gamma$.
	Assume that~$a\in Z, a\neq 0$ and~$A\subseteq X$ such that~$\pi_1(x) = a\cdot\chi_A$.
	Then,~$\gen A \subseteq M$.
\end{lem}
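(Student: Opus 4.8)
The plan is to unwind the definition of the marked summand $\gen{B} = (\LinftyX\ast\Gamma)\cdot\chi_B$ and turn the hypothesis on $\pi_1(x)$ into a containment $A\subseteq B$ of measurable sets (up to null sets, as per Remark~\ref{rem:ae}). First I would note that, since $\chi_B = (\chi_B,1)$ is an idempotent of $\LinftyX\ast\Gamma$ (because $(\chi_B,1)\cdot(\chi_B,1) = (\chi_B\cdot(1\cdot\chi_B),1) = (\chi_B,1)$), the left ideal $M = \gen B = (\LinftyX\ast\Gamma)\cdot\chi_B$ consists exactly of those $y\in\LinftyX\ast\Gamma$ with $y = y\cdot\chi_B$. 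In particular, our given element satisfies $x = x\cdot\chi_B$.

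Next I would expand $x$ in the free $\LinftyX$-basis $\Gamma$ of $\LinftyX\ast\Gamma$, writing $x = \sum_{\gamma\in\Gamma}(x_\gamma,\gamma)$ as a finite sum with $x_\gamma\in\LinftyX$, and compute, using the crossed product multiplication and $\gamma\cdot\chi_B = \chi_{\gamma B}$,
\[
  x\cdot\chi_B = \sum_{\gamma\in\Gamma}(x_\gamma,\gamma)\cdot(\chi_B,1) = \sum_{\gamma\in\Gamma}\bigl(x_\gamma\cdot\chi_{\gamma B},\gamma\bigr).
\]
Comparing coefficients, the identity $x = x\cdot\chi_B$ forces $x_\gamma = x_\gamma\cdot\chi_{\gamma B}$ for every $\gamma\in\Gamma$. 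Taking $\gamma = 1$ gives $\pi_1(x) = \pi_1(x)\cdot\chi_B$, i.e.\ $a\cdot\chi_A = a\cdot\chi_{A\cap B}$ in $\LinftyX$. Since $Z$ (being $\IZ$ or a finite field) has no zero divisors and $a\neq 0$, this yields $\chi_{A\setminus B} = 0$ almost everywhere, that is, $\mu(A\setminus B) = 0$.

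Finally, from $A\subseteq B$ (up to null sets) we get $\chi_A = \chi_A\cdot\chi_B$ in $\LinftyX$, hence $(\chi_A,1) = (\chi_A,1)\cdot(\chi_B,1)\in(\LinftyX\ast\Gamma)\cdot\chi_B = M$; as $M$ is a left $\LinftyX\ast\Gamma$-submodule containing $\chi_A$, it contains $(\LinftyX\ast\Gamma)\cdot\chi_A = \gen A$, which is the claim. The argument is essentially bookkeeping: the only points needing care are the consistent handling of the $\mu$-almost-everywhere convention, keeping the left/right conventions for the crossed product multiplication and for the bracket notation $\gen{\cdot}$ straight, and the (harmless) use that $Z$ has no zero divisors when cancelling the nonzero scalar $a$.
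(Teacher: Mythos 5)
Your proof is correct and follows essentially the same route as the paper: both arguments exploit that $x$ is a right multiple of $(\chi_B,1)$ to show that its identity coefficient $\pi_1(x)=a\cdot\chi_A$ is supported in $B$, whence $\mu(A\setminus B)=0$ and $\gen A\subseteq\gen B$. Your bookkeeping via the idempotent identity $x=x\cdot\chi_B$ and coefficient comparison in the free $\LinftyX$-module is a slightly more streamlined version of the paper's computation, which instead fixes a reduced presentation $x=\lambda\cdot(\chi_B,1)$ and left-multiplies by $\chi_{A\setminus B}$ before applying $\pi_1$.
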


\begin{proof} 
	It suffices to show that~$A\setminus B$ is a null-set. Because~$x\in M = (\LinftyX\ast \Gamma)\cdot (\chi_B,1)$,
	we can fix a reduced presentation (see Setup~\ref{setup:opnormalt}) for $x$, i.e., $x = \lambda\cdot (\chi_B,1)$ for some
	\[
		\lambda = \sum_{(k,\gamma)\in K\times F} a_{(k,\gamma)}\cdot (\chi_{\gamma U_k}, \gamma),
	\]
	with $K$ and~$F\subseteq \Gamma$ finite sets, $a_{(k,\gamma)}\in Z$, and~$U_k\subseteq X$. 
	Then, we have
	\begin{align*}
		a\cdot\chi_{A\setminus B} &= \chi_{A\setminus B} \cdot a\cdot\chi_A
		\\
		&= \chi_{A\setminus B}\cdot\pi_1( x)\\
		&= \pi_1(\chi_{A\setminus B}\cdot x)\\
		&= \pi_1\biggl(
			(\chi_{A\setminus B},1)\cdot
				\sum_{(k,\gamma)\in K\times F} a_{(k,\gamma)}\cdot (\chi_{\gamma U_k}, \gamma)
			\cdot (\chi_B,1)
		\biggr)\\
		&= \sum_{k\in K} a_{(k,1)}\cdot \chi_{A\setminus B}\cdot \chi_{U_k}\cdot \chi_B
		\\
		&= 0.
	\end{align*}
	Thus, we have~$a\cdot \chi_{A\setminus B} = 0$ almost everywhere with~$a\neq 0$, which is only possible if~$\mu(A\setminus B) = 0$.
\end{proof}

\begin{lem}\label{lem:homrkN1}
  In the situation of Setup~\ref{setup:lognorm}, let $R = \LinftyX\ast \Gamma$. 
  Let $f \colon M \to N$
  be an $(\LinftyX\ast \Gamma)$-homomorphism between marked projective $(\LinftyX\ast \Gamma)$-modules. 
  Fix a presentation of $f$ as in Setup~\ref{setup:opnormalt}.
  Then, in the notation of Setup~\ref{setup:opnormalt}, we have
  \[ \rk (f) \leq \dim (M) \cdot  \# I \cdot \# J \cdot \# K \cdot \# F.
  \]
\end{lem}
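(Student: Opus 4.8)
The plan is to combine the explicit formula for the marked rank from Lemma~\ref{lem:mrk-expl} with the trivial bound $\mu(\gamma^{-1}A_i \cap U_k) \le \mu(A_i)$ for each of the sets appearing in the union. First I would recall that, by Lemma~\ref{lem:mrk-expl},
\[
  \rk (f) = \sum_{j\in J} \mu\biggl(
    \bigcup_{\substack{(i,k,\gamma)\in I\times K\times F,\\ a_{i,j,k,\gamma}\neq 0}}
    \gamma^{-1} A_i \cap U_k
  \biggr).
\]
Next, for each fixed $j\in J$, I would estimate the measure of the union by the sum of the measures of its members (subadditivity of $\mu$): the index set over which the union is taken is a subset of $I\times K\times F$, so it has at most $\#I\cdot\#K\cdot\#F$ elements, and each contributes at most $\mu(\gamma^{-1}A_i\cap U_k)\le \mu(A_i)\le \sum_{i'\in I}\mu(A_{i'}) = \dim(M)$. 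Hence each summand indexed by $j$ is bounded by $\#I\cdot\#K\cdot\#F\cdot\dim(M)$, and summing over the $\#J$ values of $j$ gives the claimed bound $\rk(f)\le \dim(M)\cdot\#I\cdot\#J\cdot\#K\cdot\#F$.

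A slightly sharper intermediate estimate — which one could use if one wanted — is $\mu(\gamma^{-1}A_i\cap U_k)\le \min\{\mu(A_i),\mu(U_k)\}$, but for the stated (crude) bound the estimate $\mu(\gamma^{-1}A_i\cap U_k)\le \mu(A_i)$ is all that is needed, using that $\gamma$ preserves $\mu$. No subtlety arises from the presentation being reduced or not; only the cardinalities $\#I,\#J,\#K,\#F$ of the finite index sets enter, and these are finite by Setup~\ref{setup:opnormalt}.

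There is essentially no obstacle here: the only thing to be slightly careful about is that in Lemma~\ref{lem:mrk-expl} the union over $(i,k,\gamma)$ is restricted to triples with $a_{i,j,k,\gamma}\neq 0$, so a priori one is unioning over fewer than $\#I\cdot\#K\cdot\#F$ sets; but enlarging the index set of a union only enlarges the union, so bounding by $\#I\cdot\#K\cdot\#F$ terms is legitimate. The proof is therefore a two-line application of Lemma~\ref{lem:mrk-expl} and subadditivity of the measure, with the measure-preservation of the $\Gamma$-action used to replace $\mu(\gamma^{-1}A_i)$ by $\mu(A_i)$.
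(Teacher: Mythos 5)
Your proof is correct. The core estimate in both arguments is the same, namely $\mu(\gamma^{-1}A_i\cap U_k)\le\mu(A_i)\le\dim(M)$ using measure preservation, but the reduction to that estimate is organised differently. The paper does not invoke Lemma~\ref{lem:mrk-expl} at all: it first observes that the marked rank $\rk(f)$ is subadditive under decomposing $f$ into its single-matrix-entry pieces, reduces to a rank-one map given by right multiplication with $a\cdot(\chi_{\gamma U},\gamma)$, and notes that $(\chi_A,1)\cdot a\cdot(\chi_{\gamma U},\gamma)$ lies in $\gen{\gamma^{-1}A\cap U}$, whence $\rk(f)\le\mu(\gamma^{-1}A\cap U)\le\mu(A)$. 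You instead start from the exact formula of Lemma~\ref{lem:mrk-expl} (which is legitimately available, as it is proved just beforehand and does not depend on the present lemma) and apply subadditivity of $\mu$ to the union. Your route has the minor advantage of not relying on the subadditivity of the marked rank, which the paper asserts without proof (it is true, since the sum of two marked summands $\bigoplus\gen{B_j^1}$ and $\bigoplus\gen{B_j^2}$ of $N$ is contained in $\bigoplus\gen{B_j^1\cup B_j^2}$, whose dimension is at most the sum of the two dimensions, but this is not spelled out). As you note, both arguments in fact yield the sharper bound without the factor $\#I$, since $\sum_{i}\mu(A_i)=\dim(M)$ already absorbs the sum over $i$; the stated inequality is deliberately crude.
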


\begin{proof}
	Because $\rk (f)$ is subadditive, we can assume without loss of generality that 
	$f\colon \gen A \to \gen B$  is given by right multiplication with
	\[
		z\coloneqq a\cdot (\chi_{\gamma U}, \gamma),
	\]
	where $a\in Z$, $\gamma\in \Gamma$, and~$U\subseteq X$. We have 
	\[
		(\chi_A, 1)\cdot z = a \cdot (\chi_{\gamma (\gamma^{-1} A \cap U)}, \gamma) \in 
		\gen{\gamma^{-1} A \cap U}
	\]
	and thus,
	\[
		\rk (f) \le \mu \bigl(
			\gamma^{-1} A \cap U
		\bigr)
		\le \mu(A) = \dim \gen A. \qedhere
	\]
\end{proof}

\subsection{The logarithmic norm of adapted homomorphisms}

The logarithmic norm of adapted homomorphism can be computed through 
adapted decompositions:
	
\begin{prop}[$\lognorm$, adapted homomorphisms]\label{prop:lognormadapt}
	In the situation of Setup~\ref{setup:lognorm}, let $R=\LinftyX\ast \Gamma$.
  Let~$S$ be a dense subalgebra of the set of all measurable
  subsets of~$X$. 
  Let $f \colon M \to
  N$ be an $S$-adapted homomorphism between $S$-adapted marked
  projective $(\LinftyX\ast \Gamma)$-modules.  Then
  \[ \lognorm (f) = \lognorm_S (f).
  \]
\end{prop}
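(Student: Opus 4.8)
The plan is to prove the two inequalities separately. The inequality $\lognorm(f) \le \lognorm_S(f)$ is immediate: every $S$-adapted marked decomposition of $M$ is in particular a marked decomposition of $M$, so $D_S(M) \subseteq D(M)$ and the infimum defining $\lognorm(f)$ ranges over a larger family, hence is no larger. The content is therefore entirely in the reverse inequality $\lognorm_S(f) \le \lognorm(f)$, for which I would show that an arbitrary marked decomposition $(M_i)_{i\in I} \in D(M)$ can be approximated by an $S$-adapted one without increasing $\lognorm'(f, \args)$ by more than an arbitrarily small amount — in fact, I expect one can do it with \emph{no} increase at all, using the explicit formulas available for $R = \LinftyX * \Gamma$.

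First I would reduce to the rank-one case in the domain: since $\lognorm'(f, M_*) = \sum_i \lognorm'(f|_{M_i}, \text{trivial})$ and each summand $\lognorm'(f|_{M_i})$ is itself an infimum over decompositions of $M_i$, refining further, it suffices to show that for $M = \gen A$ with $A$ an arbitrary measurable set (and $N = \bigoplus_j \gen{B_j}$ already $S$-adapted, as we may assume after a harmless adaptation of $N$), there is an $S$-adapted $\gen{A''}$ with $\lognorm'(f|_{\gen{A''}}) $ close to $\lognorm'(f|_{\gen A})$, where $f$ is $S$-adapted, i.e.\ given over $L*\Gamma$ as in Setup~\ref{setup:opnormalt} with the pieces $U_k \in S$ and $F \subset \Gamma$ finite. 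The key point is that $\lognorm'(f|_{\gen A})$ is $\min\{\mu(A), \rk(f|_{\gen A})\} \cdot \logp\|f|_{\gen A}\|$, and by Proposition~\ref{prop:opnormalt} (explicit operator norm) and Lemma~\ref{lem:mrk-expl} (explicit marked rank), both $\|f|_{\gen A}\|$ and $\rk(f|_{\gen A})$ are governed by the combinatorial pattern of which products $\bigcap_{(k,\gamma)\in L}\gamma U_k$ meet $A$ in positive measure.

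The central tool will be Corollary~\ref{cor:adaptsamenorm}: given the $S$-adapted morphism $f\colon \gen A \to N$ defined over $L*\Gamma$ — which is $S$-adapted in the sense of Setup~\ref{setup:opnormalt} once we note the pairwise-disjoint pieces $U_k$ can be chosen in $S$ — and the (non-adapted) summand $\gen A \subseteq \gen{\bigcup_k \bigcup_{(k,\gamma)} \gamma^{-1}\gamma U_k \cup \dots}$, there is an $S$-adapted summand $\gen{A''}$ with $A \subseteq A''$ and $\|f|_{\gen{A''}}\| = \|f|_{\gen A}\|$, namely $A'' = \bigcup\{U(L) : \mu(A\cap U(L))>0\}$ with $U(L)$ the atoms from Lemma~\ref{lem:UL-compu}. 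Since $A \subseteq A''$ only enlarges the domain, monotonicity of $\rk$ under enlarging the domain is the wrong direction; instead I would observe that enlarging $A$ to $A''$ does not change $\rk(f|_{\args})$ either, because by Lemma~\ref{lem:mrk-expl} the marked rank is a sum over $j$ of measures of unions $\bigcup_{\gamma} \gamma^{-1}A'' \cap U_k = \bigcup_\gamma \bigcup\{U(L) : \mu(A\cap U(L))>0,\ (k,\gamma)\in L\}$, and each such $U(L)$ with $a_{\cdot,j,k,\gamma}\neq 0$ that meets $A$ positively already contributes the same way for $A$ — one checks $\mu(\gamma^{-1}A''\cap U_k) = \mu(\gamma^{-1}A\cap U_k)$ up to pieces where the coefficient vanishes. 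However $\dim$ does increase: $\mu(A'')\ge\mu(A)$. This is the main obstacle, and I would handle it by taking $S$-dense approximations $A_\epsilon \in S$ of $A$ from \emph{inside} the relevant atoms (each $U(L)$ is in $S$ since the $U_k \in S$ and $F$ is finite, so $U(L)\in S$), with $\mu(A \symmdiff A_\epsilon) < \epsilon$ and $A_\epsilon$ a union of sub-pieces of the $U(L)$'s that meet $A$ positively; then $\|f|_{\gen{A_\epsilon}}\| = \|f|_{\gen A}\|$ and $\rk(f|_{\gen{A_\epsilon}}) \le \rk(f|_{\gen A})$ while $\mu(A_\epsilon)\le\mu(A)+\epsilon$, giving $\lognorm'(f|_{\gen{A_\epsilon}}) \le \lognorm'(f|_{\gen A}) + \epsilon\cdot\logp\|f\|$. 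Assembling this over a finite decomposition $(M_i)_{i}$ of the original $M$ and letting $\epsilon \to 0$ yields $\lognorm_S(f) \le \lognorm'(f, M_*)$ for every $M_* \in D(M)$, hence $\lognorm_S(f) \le \lognorm(f)$, completing the proof. The delicate bookkeeping is ensuring that passing to $S$-adapted sub-pieces simultaneously preserves the operator norm, does not increase the marked rank, and controls the dimension — this interplay of the three quantities through the explicit descriptions in Proposition~\ref{prop:opnormalt}, Lemma~\ref{lem:mrk-expl}, and Corollary~\ref{cor:adaptsamenorm} is where the argument must be run carefully.
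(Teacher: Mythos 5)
Your overall architecture -- the easy inequality $\lognorm(f)\le\lognorm_S(f)$, reduction to rank one in the domain, approximation via $\mu$-density of $S$, and norm control via Corollary~\ref{cor:adaptsamenorm} -- is essentially the paper's. But there is a genuine gap in your treatment of the marked rank, in both of the places where you address it. First, the claim that enlarging a summand $\gen{A'}$ to the $S$-adapted hull $\gen{A''}$ of Corollary~\ref{cor:adaptsamenorm} leaves $\rk(f|_{\args})$ unchanged is false: already for $f\colon\gen{A}\to\gen{B}$ given by right multiplication with $a\cdot(\chi_{U},1)$, $a\neq 0$, and a summand $A'\subseteq A$ with $0<\mu(A'\cap U)<\mu(U)$, one has $A''\supseteq U$, so Lemma~\ref{lem:mrk-expl} gives $\rk(f|_{\gen{A''}})=\mu(U)>\mu(A'\cap U)=\rk(f|_{\gen{A'}})$; the enlargement contributes fully to the rank, not only on pieces where the coefficients vanish. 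Second, your fallback of choosing $A_\epsilon\in S$ ``from inside'' so that $\rk(f|_{\gen{A_\epsilon}})\le\rk(f|_{\gen{A'}})$ is not available: $\mu$-density only provides approximations in symmetric difference, and in general no non-null element of $S$ is contained in a given measurable set (think of cylinder sets versus a set meeting every cylinder in strictly intermediate proportion). So $A_\epsilon\setminus A'$ has positive measure and can again inflate the rank, and the chain of inequalities you assemble at the end does not go through as stated.

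The missing idea is to accept a small rank increase and bound it uniformly: fix a presentation of $f$ with $c$ summands as in Setup~\ref{setup:opnormalt}, note that $\rk(f|_{W})\le c\cdot\dim(W)$ for \emph{every} marked summand $W$ (Lemma~\ref{lem:homrkN1}), and use subadditivity of $\rk$ over the decomposition $W_i\cong(W_i\cap M_i)\oplus(W_i\mcompl M_i)$, applying the crude bound only to the exceptional piece of dimension $<\epsilon$. This is exactly how the paper argues: it sets $W_i\coloneqq U_i\cap V_i$ with $U_i\in S$ satisfying $\dim(M_i\msymmdiff U_i)<\epsilon$ and $V_i\supseteq M_i$ from Corollary~\ref{cor:adaptsamenorm}, obtaining $\|f|_{W_i}\|\le\|f|_{M_i}\|$, $\dim(W_i)\le\dim(M_i)+\epsilon$, and $\rk(f|_{W_i})\le\rk(f|_{M_i})+c\cdot\epsilon$, collects the leftover into a single summand $W_0$ of dimension at most $\#I\cdot\epsilon$, and lets $\epsilon\to 0$. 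With this correction your argument closes.
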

\begin{proof}
	First, we record the following observation:
	Fix a presentation of~$f$ as in Setup~\ref{setup:opnormalt} with $c\in \N$ summands. 
	 For every marked summand~$W$ of $M$, we obtain 
	a presentation of~$f|_W$ with the same number of summands. Thus, Lemma~\ref{lem:homrkN1} yields that
	\begin{equation}
		\label{eq:homrkN1}
		\rk (f|_W) \le c\cdot \dim (W).
	\end{equation}

  Clearly, $\lognorm (f) \leq \lognorm_S (f)$. For the converse
  estimate, let $M = \bigoplus_{i\in I} M_i$ be a marked 
  decomposition of~$M$ and let~$\varepsilon >0$. Without loss of generality,
  we may assume~$0\not\in I$.
  It suffices to show that there exists an $S$-adapted marked decomposition~$(W_i)_{i\in I\cup\{0\}}$
  such that 
  \[
  		\lognorm'\bigr(f, (W_i)_{i\in I\cup \{0\}}\bigr) \le 
  		\lognorm'\bigr(f, (M_i)_{i\in I}\bigr) + \varepsilon.
  \]
  Because $S$ is dense, for every $i\in I$, there exists an
  $S$-adapted marked summand~$U_i$ such that~$\mu(M_i \msymmdiff U_i) 
  <\epsilon$. Without loss of generality, we can assume that the $(U_i)_{i\in I}$ intersect pairwise trivially.
  
  By Corollary~\ref{cor:adaptsamenorm}, for every $i\in I$,
  there is an $S$-adapted marked summand~$V_i$ containing~$M_i$ such that
  \[
  \|f|_{M_i}\| = \|f|_{V_i}\|.
  \]
  For~$i \in I$, we set
  \begin{align*}
  		W_i &\coloneqq U_i \cap  V_i\\
  		W_0 & \coloneqq M \mcompl \bigoplus_{i\in I} W_i,
  \end{align*}
  where~$\mcompl$ denotes the ``marked complement''.
  By construction, the $(W_i)_{i\in I\cup\{0\}}$ 
  form an~$S$-adapted marked decomposition of~$M$.
  For all~$i\in I$, we have
  \begin{align*}
  		\dim (W_i) &\le \dim (M_i) + \varepsilon\\ 
  		\dim (W_0) &\le \# I \cdot \varepsilon\\ 
  		\|f|_{W_i}\| &\le \|f|_{M_i}\|\\ 
  		\|f|_{W_0}\| &\le \|f\|\\  
  		\rk (f|_{W_i}) &\le \rk (f|_{M_i}) + \rk (f|_{W_i\mcompl M_i})\\ 
  		&\le \rk (f|_{M_i}) +c \cdot \dim (W_i \mcompl M_i)
                & \text{(Estimate~\eqref{eq:homrkN1})}\\
  							&\le \rk (f|_{M_i}) +c\cdot\varepsilon. 
  \end{align*}
  Therefore, we obtain
  \begin{align*}
    &\lognorm'\bigl(f, (W_i)_{i\in I\cup\{0\}}) 
  		\\
		&= \sum_{i\in I} \logp \|f|_{W_i}\| \cdot \min \bigl\{\dim W_i, \rk (f|_{W_i})\bigr\}
  		+ \logp \|f|_{W_0}\|\cdot \min \bigl\{\dim W_0, \rk (f|_{W_0})\bigr\} \\
  		&\le 	\sum_{i\in I} \logp \|f|_{M_i}\| \cdot \min \bigl\{\dim M_i+\varepsilon, \rk (f|_{M_i})+c\cdot
  		\varepsilon \bigr\}
  		+ \logp \|f\|\cdot \# I \cdot \varepsilon \\
  		&\le \sum_{i\in I} \logp \|f|_{M_i}\| \cdot \min \bigl\{ \dim M_i, \rk (f|_{M_i})\bigr\}
  		 + \# I \cdot \logp \|f\| \cdot \varepsilon \cdot (c+1)\\
    &= \lognorm\bigl(f, (M_i)_{i\in I}\bigr) + \varepsilon
    \cdot \bigl(1 + \# I \cdot \logp \|f\| \cdot (c + 1)\bigr). 
  \end{align*}
  Rescaling $\varepsilon$ appropriately proves the claim.
\end{proof}

\section{Passing to finite index subgroups}\label{sec:passing}

We explain the passage from the dynamical view to finite index
subgroups. More precisely, for dynamical systems~$\Gamma \actson
\widehat \Gamma_*$ coming from systems~$\Gamma_*$ of finite index
normal subgroups, we reinterpret adapted modules and morphisms over
the crossed product ring as $Z[\Gamma/\Gamma_i] * \Gamma$-modules for large
enough~$i$ (where the multiplication on~$Z[\Gamma/\Gamma_i]$ is
pointwise multiplication of functions~$\Gamma/\Gamma_i \to Z$).

Adapting dynamical embeddings then leads to homotopy retracts
at the level of finite index subgroups and thus eventually to
the homological gradient bounds.

\begin{setup}\label{setup:pfc}
  Let $n \in \N$, 
  let $\Gamma$ be a residually finite group of type~FP$_{n+1}$,
  let $(\Gamma_i)_{i \in I}$ be a directed system of finite index
  normal subgroups of~$\Gamma$ with~$\bigcap_{i \in I} \Gamma_i = 1$,
  let $\alpha\colon \Gamma \actson \widehat \Gamma_*$ be the
  associated dynamical system,
  and let $Z$ be~$\Z$ (with the usual norm) or a finite field
  (with the discrete norm). We write~$(X,\mu)$
  for the probability space~$\widehat \Gamma_*$.
\end{setup}

\begin{setup}\label{setup:passage}
  In the situation of Setup~\ref{setup:pfc}, for each~$i \in I$, we denote by
  \[
S_i \coloneqq \bigl\{ \pi_i^{-1}(A) \bigm| A \subset \Gamma/\Gamma_i
  \bigr\}
  \]
  the \emph{cylindrical sets} in the $\sigma$-algebra of~$X = \widehat \Gamma_*$ corresponding to 
  the canonical projection map~$\pi_i \colon X \to \Gamma/\Gamma_i$. 
  Let $S$ be the union of all the~$S_i$, namely the subalgebra given by
  \[ S
  \coloneqq \bigcup_{i \in I} S_i
  = 
  \bigl\{ \pi_i^{-1}(A) \bigm| i \in I,\ A \subset \Gamma/\Gamma_i
  \bigr\}.
  \]
  We will use ``$\Gamma_*$-adapted'' as a synonym for ``$S$-adapted'' to
  emphasise the origin of the subalgebra~$S$.
  We write 
  $L_i$ for the subring of~$\linf {\alpha,Z}$ generated by~$S_i$.
  
  Furthermore, we abbreviate~$R \coloneqq \linf{\alpha,Z} * \Gamma$ and
  $R_i \coloneqq L_i * \Gamma$.
\end{setup}

Then $S$ is $\mu$-dense in the set of all measurable subsets of~$X$
and $\Gamma \cdot S \subset S$~\cite[Lemma~6.4.2]{loeh2020ergodic}. 
Therefore, the deformation arguments from the previous sections apply.

\subsection{Discretisation}

Let $i \in I$. We first establish a correspondence
between~$L_i$ and~$Z[\Gamma/\Gamma_i]$. The projection~$\pi_i \colon
X \to \Gamma/\Gamma_i$ induces mutually inverse $Z\Gamma$-iso\-morphisms
\begin{align*}
  \pi_i^* \colon Z[\Gamma/\Gamma_i]
  \to L_i,
  \quad
  & \text{given by
  $\gamma \cdot \Gamma_i
  \mapsto \chi_{\pi_i^{-1}(\gamma \cdot \Gamma_i)}$}
  \\
  \pi_{i,*} \colon L_i
  \to Z [\Gamma/\Gamma_i],
  \quad
  & \text{given by 
  $\chi_A
  \mapsto
  \chi_{\pi_i(A)}$ for~$A \in S_i$}.
\end{align*}
In this sense, we may view objects and morphisms over~$L_i$ (and
whence those over~$R_i$) as ``discrete''. Under the above $Z\Gamma$-isomorphism,
the multiplication on~$L_i$ translates into pointwise multiplication on~$Z[\Gamma/\Gamma_i]$
of functions~$\Gamma/\Gamma_i \to Z$. 
Moreover, we will use the following ``pre-induction'' construction:

\begin{defn}
  In the situation of Setup~\ref{setup:passage}, let $i \in I$.
  \begin{itemize}
  \item Let $M = \bigoplus_{j \in J} \gen{A_j}$ be a marked projective $R$-module
    that is adapted to~$S_i$. Then, we set
    \[ M(i) \coloneqq \bigoplus_{j \in J} \gen{A_j}_{R_i}
    \coloneqq \bigoplus_{j \in J} R_i \cdot (\chi_{A_j}, 1),
    \]
    which is a marked projective $R_i$-module (and $R \otimes_{R_i} M(i) \cong_R M$).
  \item If $f \colon M\to N$ is an $S_i$-adapted $R$-homomorphism
    between $S_i$-adapted marked projective $R$-modules, then we
    write~$f(i) \colon M(i) \to N(i)$ for the corresponding
    $R_i$-homomorphism (defined by the same matrix).
  \end{itemize}
\end{defn}

If $f$ and $g$ are $S_i$-adapted and composable, then $(g \circ f)(i)
= g(i) \circ f(i)$.  Consequently, if $(D_*,\eta)$ is a marked
projective $R$-chain complex that is adapted to~$S_i$, we naturally
obtain a corresponding $R_i$-chain complex~$(D_*(i), \eta(i))$, which
augments to~$L_i$. Adapted chain maps translate into corresponding
$R_i$-chain maps.

\subsection{Dimensions and norms}

This discretisation is compatible with taking
norms and dimensions:

\begin{rem}[compatibility of dimensions]\label{rem:dimcomp}
  In the situation of Setup~\ref{setup:passage}, let $i \in I$ 
  and let $M$ be a marked projective $R$-module
  that is $S_i$-adapted.
  Then the module~$M(i)_\Gamma$ of coinvariants is a free $Z$-module and 
  \[ \rk_Z \bigl(M(i)_\Gamma\bigr)
  = [\Gamma : \Gamma_i] \cdot \dim (M).
  \]
  Indeed, using compatibility with marked decompositions,
  it suffices to consider the case that $M= \gen A$ with~$A \in S_i$,
  say~$A = \pi_i^{-1} (A_i)$ with~$A_i \subset \Gamma /\Gamma_i$.
  Then, the map
  \[ e_{\gamma\Gamma_i}
  \mapsto  1 \otimes (\chi_{\pi_i^{-1}(\gamma\Gamma_i)},1) \cdot (\chi_A,1)
  \]
  induces a $Z$-isomorphism
  \[ \bigoplus_{A_i} Z
  \cong_Z L_i \cdot \chi_A
  \cong_Z Z \otimes_{Z \Gamma} (L_i * \Gamma) \cdot (\chi_A, 1)
  = M(i)_\Gamma.
  \]
  Therefore, 
  \[ \rk_Z \bigl(M(i)_\Gamma\bigr)
  = \# A_i
  = [\Gamma : \Gamma_i] \cdot \frac{1}{[\Gamma : \Gamma_i]} \cdot \# A_i
  = [\Gamma : \Gamma_i] \cdot \mu(A)
  = [\Gamma : \Gamma_i] \cdot \dim (M).
  \]
\end{rem}

\begin{rem}[compatibility of norms]\label{rem:normcomp}
  In the situation of Setup~\ref{setup:passage}, let $i \in I$
  and let $f \colon M \to N$ be an $S_i$-adapted $R$-homomorphism
  between $S_i$-adapted marked projective $R$-modules.
  Then
  \[ \bigl\| f(i)_\Gamma \bigr\| \leq \|f\|,
  \]
  where the operator norm on the left hand side is taken with respect
  to the $\ell^1$-norms induced by the canonical $Z$-bases on~$M(i)_\Gamma$
  and $N(i)_\Gamma$ (Remark~\ref{rem:dimcomp}).  Indeed, for
  notational simplicity let us consider the case of~$f \colon \gen A
  \to \gen B$ with~$A, B \in S_i$, given by right multiplication by $$z
  = \sum_{\lambda \in \Gamma} \sum_{\gamma\Gamma_i \in
    \Gamma/\Gamma_i} a_{\gamma\Gamma_i, \lambda} \cdot
  (\chi_{\pi_i^{-1} (\gamma\Gamma_i)}, \lambda).$$ 
  Without loss of generality, we may assume that
  $a_{\gamma\Gamma_i,\lambda} = 0$ whenever $\lambda^{-1}\gamma
  \Gamma_i \not \in \pi_i(B)$.
  Using the canonical $Z$-isomorphisms~$M(i)_\Gamma \cong_Z \bigoplus_{\pi_i(A)} Z$
  and $N(i)_\Gamma \cong_\Z \bigoplus_{\pi_i(B)} Z$ from Remark~\ref{rem:dimcomp},
  we obtain for all~$\gamma\Gamma_i \in \pi_i(A)$ that
  \begin{align*}
    \bigl\| f(i)_\Gamma (e_{\gamma\Gamma_i}) \bigr\|_1
    & = \biggl\| \sum_{\lambda \in \Gamma} a_{\gamma\Gamma_i,\lambda} \cdot e_{\lambda^{-1}\gamma\Gamma_i} \biggr\|_1
    = \sum_{\lambda \in \Gamma} |a_{\gamma\Gamma_i,\lambda}|
    \\
    & = [\Gamma:\Gamma_i] \cdot \bigl\| (\chi_{\pi_i^{-1}(\gamma\Gamma_i)},1) \cdot z \cdot (\chi_B, 1)\bigl\|_1
    \\
    & \leq [\Gamma:\Gamma_i] \cdot \|f\| \cdot \bigl\| (\chi_{\pi_i^{-1} (\gamma\Gamma_i)},1) \cdot (\chi_A, 1)\bigr\|_1
    \\
    & = [\Gamma:\Gamma_i] \cdot \|f\| \cdot \frac1{[\Gamma:\Gamma_i]}
     = \|f\|.
  \end{align*}
  Hence, $\|f(i)_\Gamma\| \leq \|f\|$.
\end{rem}

\subsection{From adapted embeddings to homology retracts}

The following theorem shows that adapted embeddings allow to construct suitable homology retracts
(and so estimates on the dimensions and the torsions).

\begin{thm}\label{thm:adaptedretract}
  In the situation of Setup~\ref{setup:pfc},
  let $C_*$ be a free $Z\Gamma$-resolution of~$Z$ that has
  finite rank in degrees~$\leq n+1$ and let
  $f_* \colon C_* \to D_*$ be a $\Gamma_*$-adapted 
  $\alpha$-embedding. Then, for all large enough~$i \in I$:
  \begin{enumerate}[label=\enum]
  \item
    The complex~$D_*(i)$ is defined up to degree~$n+1$ and
    the $Z$-module~$H_n(\Gamma_i;Z)$ is a $Z$-retract of~$H_n(D_*(i)_\Gamma)$.
  \item
    In particular, we have
    \begin{align*}
      \rk_Z H_n(\Gamma_i;Z)
      \leq [\Gamma : \Gamma_i] \cdot \dim (D_n)
    \end{align*}
    and, in the case of~$Z = \Z$, we additionally obtain
    \begin{align*}
      \log \# \tors H_n(\Gamma_i;\Z)
       & \leq \log \# \tors \bigl(D_n(i)_\Gamma / \im \partial^{D}_{n+1}(i)_\Gamma\bigr).
    \end{align*}
  \end{enumerate}
  \end{thm}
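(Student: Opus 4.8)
The plan is to descend the adapted $\alpha$-embedding to a single index, pass to the discrete ring~$R_i$, build a chain-homotopy retraction there, and identify its $\Gamma$-coinvariants with a complex computing $H_n(\Gamma_i;Z)$.

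\emph{Descent.} The complex $D_*$ up to degree~$n+1$ --- its marked modules, boundary operators and augmentation --- together with the induced $R$-chain map $F_*\coloneqq\Ind{Z\Gamma}{R} f_*\colon\Ind{Z\Gamma}{R}C_*\to D_*$ over~$\id_{\linf{\alpha,Z}}$ (well-defined because $C_*$ has finite $Z\Gamma$-rank in degrees~$\le n+1$, so $\Ind{Z\Gamma}{R}C_r\cong R^{k_r}$; see Remark~\ref{rem:fromZGtoZR}) are all $\Gamma_*$-adapted, hence involve only finitely many sets from $S=\bigcup_i S_i$ and finitely many elements of $L*\Gamma$. Since the $S_i$ are nested ($S_j\supseteq S_i$ whenever $\Gamma_j\subseteq\Gamma_i$) and $I$ is directed, I would fix an index~$i_0$ to which all this data is adapted; it is then adapted to every~$S_i$ with $\Gamma_i\subseteq\Gamma_{i_0}$, i.e.\ for all large enough~$i$. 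For such~$i$, the discretisation of the previous subsection provides a complex $D_*(i)$ of finitely generated free $R_i$-modules up to degree~$n+1$ augmenting to~$L_i$, and an $R_i$-chain map $F_*(i)\colon P_*\to D_*(i)$ over~$\id_{L_i}$, where $P_*\coloneqq\Ind{Z\Gamma}{R_i}C_* = R_i\otimes_{Z\Gamma}C_*$.

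\emph{Retraction and identification.} Since $R_i$ is flat over~$Z\Gamma$ (Proposition~\ref{prop:flat}, whose hypotheses are met by~$L_i$) and $C_*\epi Z$ is a resolution, $P_*\epi R_i\otimes_{Z\Gamma}Z\cong L_i$ is a free $R_i$-resolution of~$L_i$; the comparison theorem then yields an $R_i$-chain retraction $G_*(i)\colon D_*(i)\to P_*$ over~$\id_{L_i}$ with $G_*(i)\circ F_*(i)\simeq\id_{P_*}$ up to degree~$n$ (all lifts and homotopies involve only the modules $P_r$ with $r\le n+1$, which exist). Applying the additive functor $(\args)_\Gamma = Z\otimes_{Z\Gamma}\res^{R_i}_{Z\Gamma}(\args)$ then shows that $H_n((P_*)_\Gamma)$ is a $Z$-module retract of $H_n(D_*(i)_\Gamma)$. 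To finish part~(i) I would identify $(P_*)_\Gamma$: by associativity of $\otimes$ it equals $(Z\otimes_{Z\Gamma}R_i)\otimes_{Z\Gamma}C_*\cong L_i\otimes_{Z\Gamma}C_*$, where $L_i$ carries the right $Z\Gamma$-structure coming from $Z\Gamma\hookrightarrow R_i$; as such a right module, $L_i\cong Z[\Gamma/\Gamma_i] = \Ind{\Gamma_i}{\Gamma}Z$. As $C_*$ is a free $Z\Gamma$-resolution of~$Z$, Shapiro's lemma gives $H_*((P_*)_\Gamma) = \operatorname{Tor}^{Z\Gamma}_*(L_i,Z)\cong\operatorname{Tor}^{Z\Gamma_i}_*(Z,Z) = H_*(\Gamma_i;Z)$; concretely, $(P_*)_\Gamma\cong Z\otimes_{Z\Gamma_i}\res^\Gamma_{\Gamma_i}C_*$ and $\res^\Gamma_{\Gamma_i}C_*$ is a free $Z\Gamma_i$-resolution of~$Z$.

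\emph{Numerical consequences and the hard part.} For part~(ii): $H_n(D_*(i)_\Gamma)$ is a subquotient of the finitely generated free $Z$-module $D_n(i)_\Gamma$, so the retract property together with Remark~\ref{rem:dimcomp} gives $\rk_Z H_n(\Gamma_i;Z)\le\rk_Z D_n(i)_\Gamma = [\Gamma:\Gamma_i]\cdot\dim(D_n)$. For $Z=\Z$, a retract injects on torsion subgroups, so $\#\tors H_n(\Gamma_i;\Z)$ divides $\#\tors H_n(D_*(i)_\Gamma)$, which in turn divides $\#\tors(D_n(i)_\Gamma/\im\partial^{D}_{n+1}(i)_\Gamma)$ because $H_n(D_*(i)_\Gamma)$ embeds into $D_n(i)_\Gamma/\im\partial^{D}_{n+1}(i)_\Gamma$; taking logarithms completes~(ii). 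The retraction step is routine homological algebra; I expect the real care to go into (a)~the uniform descent --- making sure the modules, differentials, augmentation and the chain map can all be pinned to one~$S_{i_0}$ simultaneously --- and (b)~the bookkeeping identifying $(P_*)_\Gamma$ with $Z\otimes_{Z\Gamma_i}\res^\Gamma_{\Gamma_i}C_*$, i.e.\ getting the left/right $Z\Gamma$-structures and the coinvariants-versus-$\operatorname{Tor}$ translation exactly right so that Shapiro's lemma applies.
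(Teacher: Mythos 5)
Your proposal is correct and follows essentially the same route as the paper's proof: descend to a single $S_i$ (the paper takes the upwards-closed set $I'$ of indices to which all the finitely many cylinder sets in $D_*$, $\partial_*^D$, $f_*$ are adapted), use flatness of $R_i$ over $Z\Gamma$ plus the fundamental lemma to get the chain-homotopy retraction over $R_i$, identify $H_n(C_*(i)_\Gamma)$ with $H_n(\Gamma_i;Z)$ via the Shapiro lemma, and read off the rank and torsion bounds from the retract. The points you flag as needing care (uniform descent, the coinvariants/Shapiro bookkeeping) are exactly the ones the paper handles, in the same way.
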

\begin{proof}
  Let $\delta \in \R_{>0}$ and let $f_* \colon C_* \to
  D_*$ be such a $\Gamma_*$-adapted $\alpha$-embedding. 
  Let $I' \subset I$ be the set of all~$i \in I$ such that all of the
  (finitely many!) cylinder sets appearing up to degree~$n+1$
  in~$D_*$, $\partial_*^{D}$, and $f_*$ come
  from~$\Gamma/\Gamma_i$. 
  Then $I'$ is non-empty and upwards-closed
  in~$I$. 

  For~$i \in I'$, the complex~$D_*(i)$ is defined up to degree~$n+1$
  and augments to~$L_i$. Moreover, we write $C_*(i)$ for the
  $R_i$-chain complex~$(R \otimes_{Z \Gamma} C_*)(i) \cong_{R_i} R_i
  \otimes_{Z \Gamma} C_*$.  As $R_i$ is flat over~$Z\Gamma$, this
  gives an $R_i$-resolution of~$R_i \otimes_{Z\Gamma} Z \cong_{R_i}
  L_i$.  Because $f_* \colon C_* \to D_*$ is $S_i$-adapted, we obtain
  a corresponding $R_i$-chain map~$f_*(i) \colon C_*(i) \to D_*(i)$
  extending~$\id_{L_i}$:
  \[
  \begin{tikzcd}
    C_*(i)
    \ar{r}{f_*(i)}
    \ar[two heads]{d}
    & D_*(i)
    \ar[two heads]{d}
    \\
    L_i
    \ar[equal]{r}
    & L_i
  \end{tikzcd}
  \]

  We now make use of the fact that the left hand side is a resolution
  to obtain the desired retract: By the fundamental lemma of
  homological algebra, there is an $R_i$-chain map~$g_*(i) \colon
  D_*(i) \to C_*(i)$ extending~$\id_{L_i}$ and $g_*(i) \circ f_*(i)
  \simeq_{R_i} \id_{C_*(i)}$.
  Taking $\Gamma$-coinvariants shows that hence $H_n(C_*(i)_\Gamma)$
  is a $Z$-retract of~$H_n(D_*(i)_\Gamma)$. Moreover,
  \begin{align*}
    H_n(\Gamma_i;Z)
    &
    \cong_Z H_n\bigl( (Z[\Gamma/\Gamma_i] \otimes_Z C_*)_\Gamma\bigr)
    &
    \text{(Shapiro lemma)}
    \\
    & \cong_Z H_n\bigl( (L_i \otimes_Z C_*)_\Gamma \bigr)
    \cong_Z H_n\bigl( (R_i \otimes_{Z\Gamma} C_*)_\Gamma \bigr)
    \\
    &
    \cong_Z H_n\bigl( C_*(i)_\Gamma\bigr). 
  \end{align*}
  This proves the first part.

  For the second part, we obtain from the retract in the first part
  and elementary properties of~$\rk_Z$ that
  \begin{align*}
    \rk_Z H_n(\Gamma_i;Z)
    \leq \rk_Z H_n\bigl(D_*(i)_\Gamma\bigr)
    \leq \rk_Z D_n(i)_\Gamma.
   \end{align*}
  Moreover, in the case~$Z = \Z$, we obtain
  \begin{align*}
    \log \# \tors H_n(\Gamma_i;\Z) 
    \leq \log \# \tors H_n\bigl(D_*(i)_\Gamma\bigr) 
    \leq \log \# \tors \bigl(D_n(i)_\Gamma / \im \partial^{D}_{n+1}(i)_\Gamma\bigr),
  \end{align*}
  as claimed.
\end{proof}

\subsection{Logarithmic torsion estimates}

The goal of this section is to establish a logarithmic torsion
growth estimate for cokernels in terms of the logarithmic norm:

\begin{thm}\label{thm:logtorlognorm}
  In the situation of Setup~\ref{setup:passage} (with~$Z = \Z$),
  let $f \colon M \to N$ be a $\Gamma_*$-adapted $R$-homomorphism
  between $\Gamma_*$-adapted marked projective $R$-modules. Then
  \begin{align*}
    \limsup_{i \in I}
    \frac{\log \# \tors \bigl( N(i)_\Gamma / \im f(i) _\Gamma\bigr)}
         {[\Gamma : \Gamma_i]}
         \leq \lognorm (f).     
  \end{align*}
\end{thm}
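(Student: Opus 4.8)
The plan is to transport the statement to a purely arithmetic estimate about cokernels of integer matrices via the discretisation of Setup~\ref{setup:passage}, and then feed in the comparison results between dynamical and discrete invariants. The first ingredient I would establish is an elementary lemma: if $\varphi\colon A\to B$ is a homomorphism of finitely generated free $\Z$-modules with fixed bases, $\|\varphi\|$ is the operator norm with respect to the induced $\ell^1$-norms, and $A=\bigoplus_{j\in J}A_j$ is a finite decomposition into based free submodules, then
\[
\log\#\tors(\coker\varphi)\ \le\ \sum_{j\in J}\min\bigl\{\rk_\Z A_j,\ \rk_\Z\im(\varphi|_{A_j})\bigr\}\cdot\logp\|\varphi|_{A_j}\|.
\]
This is Smith normal form together with Hadamard's inequality: with $r=\rk_\Z\varphi$ one has $\#\tors(\coker\varphi)=d_1\cdots d_r$ (elementary divisors) $=\gcd$ of all $r\times r$ minors of the matrix of $\varphi$, hence bounded by the absolute value of any single nonzero $r\times r$ minor; such a minor uses $r_j$ columns from block $A_j$ with $\sum_j r_j=r$ and forces $r_j\le\min\{\rk_\Z A_j,\rk_\Z\im(\varphi|_{A_j})\}$ (otherwise those columns are dependent and the minor vanishes), and since each such column is $\varphi|_{A_j}$ evaluated on a basis vector, Hadamard bounds the minor by $\prod_j\|\varphi|_{A_j}\|^{r_j}\le\prod_j(\max\{1,\|\varphi|_{A_j}\|\})^{\min\{\rk_\Z A_j,\rk_\Z\im(\varphi|_{A_j})\}}$; taking logarithms gives the claim, the $\logp$-truncation being harmless because a nonzero integer matrix has $\ell^1$-operator norm at least $1$.

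Next I would unwind $\lognorm$. By Proposition~\ref{prop:lognormadapt} we have $\lognorm(f)=\lognorm_S(f)$, so given $\varepsilon>0$ I fix a finite $S$-adapted marked decomposition $M=\bigoplus_{j\in J}M_j$ with $\lognorm'(f,(M_j)_{j})\le\lognorm(f)+\varepsilon$. Only finitely many cylinder sets occur in the $M_j$, in $N$, and in a reduced presentation of $f$, so there is $i_0\in I$ with all of them in $S_{i_0}$; for every $i\in I$ with $\Gamma_i\subseteq\Gamma_{i_0}$ the discretised objects $M(i)=\bigoplus_j M_j(i)$, $N(i)$ and $f(i)\colon M(i)\to N(i)$ are defined with $f(i)|_{M_j(i)}=(f|_{M_j})(i)$, and $M(i)_\Gamma=\bigoplus_j M_j(i)_\Gamma$ is a decomposition into based free $\Z$-modules (Remark~\ref{rem:dimcomp}). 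Applying the arithmetic lemma to $\varphi=f(i)_\Gamma$ with this decomposition, I would bound the three quantities on the right: $\rk_\Z M_j(i)_\Gamma=[\Gamma:\Gamma_i]\dim M_j$ by Remark~\ref{rem:dimcomp}; $\|f(i)_\Gamma|_{M_j(i)_\Gamma}\|\le\|f|_{M_j}\|$ by Remark~\ref{rem:normcomp}; and $\rk_\Z\im(f(i)_\Gamma|_{M_j(i)_\Gamma})\le[\Gamma:\Gamma_i]\,\mrk(f|_{M_j})$, where the last uses Lemma~\ref{lem:mrk-expl}: the smallest marked summand $N'\subseteq N$ with $(f|_{M_j})(M_j)\subseteq N'$ has $\dim N'=\mrk(f|_{M_j})$ and, since $f|_{M_j}$ is $S_i$-adapted (so $N'$ is a union of sets of the form $\gamma^{-1}A_\ell\cap U_k\in S_i$), is itself $S_i$-adapted, whence $(f|_{M_j})(i)(M_j(i))\subseteq N'(i)$ and $\rk_\Z N'(i)_\Gamma=[\Gamma:\Gamma_i]\dim N'$. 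Combining these and using $\logp\ge0$,
\[
\log\#\tors\bigl(N(i)_\Gamma/\im f(i)_\Gamma\bigr)\ \le\ [\Gamma:\Gamma_i]\sum_{j\in J}\min\{\dim M_j,\mrk(f|_{M_j})\}\logp\|f|_{M_j}\|=[\Gamma:\Gamma_i]\,\lognorm'(f,(M_j)_{j}).
\]
Dividing by $[\Gamma:\Gamma_i]$, taking $\limsup_{i\in I}$ (the relevant $i$ form a cofinal subset), and letting $\varepsilon\to0$ yields $\lognorm(f)$, as desired.

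The main obstacle is the arithmetic lemma, and specifically extracting the $\min\{\dim,\mrk\}$-refinement rather than a crude $\dim$-only bound — this is what requires the careful minor/rank bookkeeping and the attention to the $\logp$-truncation under integrality. A secondary delicate point is the comparison $\rk_\Z\im(f(i)_\Gamma|_{M_j(i)_\Gamma})\le[\Gamma:\Gamma_i]\,\mrk(f|_{M_j})$, which hinges on the marked rank of an $S_i$-adapted homomorphism being realised by an $S_i$-adapted summand — precisely the content of the explicit description in Lemma~\ref{lem:mrk-expl}.
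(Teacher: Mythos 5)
Your proposal is correct and follows the same overall strategy as the paper: pass to an $S$-adapted marked decomposition realising $\lognorm(f)$ up to $\varepsilon$ via Proposition~\ref{prop:lognormadapt}, discretise for large $i$ using the dimension and norm compatibilities (Remarks~\ref{rem:dimcomp} and~\ref{rem:normcomp}), and feed the result into a torsion bound for cokernels of integer matrices. The one real divergence is the arithmetic input: the paper quotes the Gabber--Soul\'e estimate $\log\#\tors(\coker\varphi)\le\sum_{b\in B}\logp\|\varphi(b)\|_2$ over a basis subset $B$ spanning $\C\otimes_\Z\im\varphi$ (Proposition~\ref{prop:torest}) and packages the two branches of $\lognorm'$ in Corollary~\ref{cor:logtorgensplit}, whereas you re-derive an equivalent bound from scratch via determinantal divisors, the choice of a single nonzero $r\times r$ minor, and Hadamard's inequality; your bookkeeping of the column counts $r_j\le\min\{\rk_\Z A_j,\rk_\Z\im(\varphi|_{A_j})\}$ and your observation that nonzero integer columns have $\ell^1$-norm at least $1$ (so the $\logp$-truncation costs nothing) are exactly the points that make this self-contained version go through. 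A second, minor difference: for the marked-rank branch the paper picks a marked summand $N_j\supset f(M_j)$ with $\dim N_j\le\mrk(f|_{M_j})+\varepsilon/\#J$ and uses $\rk_\Z N_j(i)_\Gamma$, while you bound $\rk_\Z\im(f(i)_\Gamma|_{M_j(i)_\Gamma})$ directly by the rank of the discretisation of the minimal summand from Lemma~\ref{lem:mrk-expl}, using that this summand is itself $S_i$-adapted; this avoids the extra $\varepsilon/\#J$ slack and is a legitimate (slightly cleaner) variant.
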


The proof relies on Gabber's estimate for the torsion in cokernels
and the fact that the logarithmic norm of adapted homomorphisms can
be computed via adapted decompositions. 
We will use the following version of Gabber's estimate
for the torsion part of cokernels.

\begin{prop}[{\cite[Lemma~1]{soule1999perfect}\cite[Lemma 3.1]{sauer:volume:homology:growth}}]\label{prop:torest}
  Let $M$ and $N$ be finitely generated marked free $\Z$-modules
  and let $f \colon M \to N$ be a $\Z$-homomorphism.
  Then
  \[ \log\#\tors (N/\im f)
  \leq \sum_{b \in B} \logp\; \bigl\| f(b)\bigr\|_1,
  \]
  whenever $B$ is a subset of the marked $\Z$-basis of~$M$ such that $\{ f(b)
  \mid b \in B\}$ generates~$\C \otimes_\Z \im f$ over~$\C$ and where
  $\| \cdot\|_1$ refers to the $\ell^1$-norm with respect to the
  chosen basis on~$N$.
\end{prop}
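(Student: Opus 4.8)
The plan is to pass to the saturation of the image and rewrite the order of the torsion subgroup as a quotient of covolumes, then bound each covolume separately. Write $P \coloneqq \im f \subseteq N$, let $r \coloneqq \rk_\Z P$, and view $V \coloneqq P \otimes_\Z \R$ as a subspace of $N \otimes_\Z \R$, equipped with the Euclidean norm $\|\cdot\|_2$ attached to the marked basis of $N$ (the same basis that defines $\|\cdot\|_1$). If $P = 0$ there is nothing to prove, so assume $P \neq 0$. Let $\overline P \coloneqq \{ x \in N \mid kx \in P \text{ for some } k \in \Z_{>0}\}$ be the saturation of $P$ in $N$; then $N/\overline P$ is torsion-free, hence $\overline P$ is a direct summand of $N$, and the torsion subgroup of $N/P$ is exactly $\overline P / P$. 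Since $P \subseteq \overline P$ are lattices of the same rank $r$ in $V$, the index equals the ratio of covolumes, so
\[
  \#\tors(N/\im f) = [\overline P : P] = \frac{\operatorname{covol}_V(P)}{\operatorname{covol}_V(\overline P)}.
\]

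First I would bound $\operatorname{covol}_V(\overline P)$ from below by $1$. Picking a $\Z$-basis of $\overline P$ and taking the $m \times r$ integer matrix $A$ whose columns are the coordinate vectors of this basis in the marked basis of $N$, the Cauchy--Binet formula gives $\operatorname{covol}_V(\overline P)^2 = \det(A^\top A) = \sum_S (\det A_S)^2$, where $S$ runs over the $r$-element sets of rows and $A_S$ is the corresponding $r \times r$ submatrix. Because $\overline P$ is a direct summand of $N$, the Smith normal form of $A$ is $\binom{I_r}{0}$, so the gcd of the minors $\det A_S$ (the $r$-th determinantal divisor of $A$) equals $1$; in particular at least one $\det A_S$ is a nonzero integer, and therefore $\sum_S (\det A_S)^2 \ge 1$, i.e.\ $\operatorname{covol}_V(\overline P) \ge 1$.

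Next I would bound $\operatorname{covol}_V(P)$ from above in terms of the $f(b)$. After discarding the finitely many $b \in B$ with $f(b) = 0$, the hypothesis that $\{f(b)\}_{b \in B}$ spans $\C \otimes_\Z \im f$ allows choosing $B_1 \subseteq B$ with $\#B_1 = r$ and $\{f(b)\}_{b \in B_1}$ linearly independent over $\C$. Then $P_1 \coloneqq \sum_{b \in B_1} \Z f(b)$ is a rank-$r$ sublattice of $P$, so $\operatorname{covol}_V(P) \le \operatorname{covol}_V(P_1)$, and Hadamard's inequality applied to the Gram matrix of $\{f(b)\}_{b \in B_1}$ gives $\operatorname{covol}_V(P_1) \le \prod_{b \in B_1} \|f(b)\|_2 \le \prod_{b \in B_1} \|f(b)\|_1$. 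Combining with the lower bound on $\operatorname{covol}_V(\overline P)$ would then yield
\[
  \log \#\tors(N/\im f) \le \log \operatorname{covol}_V(P) \le \sum_{b \in B_1} \log \|f(b)\|_1.
\]
For $b \in B_1$ the vector $f(b)$ is a nonzero integer vector, so $\|f(b)\|_1 \ge 1$ and $\log\|f(b)\|_1 = \logp\|f(b)\|_1 \ge 0$; hence enlarging the index set from $B_1$ back to $B$ only increases the right-hand side, which gives the assertion.

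The only step that is not routine bookkeeping is the lower bound $\operatorname{covol}_V(\overline P) \ge 1$ for the saturated sublattice $\overline P$; the remainder is the standard covolume/index dictionary for nested lattices together with Hadamard's inequality. One could alternatively derive the identity $\#\tors(N/P) = \prod_i d_i$ and the bound $\operatorname{covol}_V(\overline P) \ge 1$ simultaneously from the elementary-divisor normal form of the inclusion $P \subseteq N$, but the Cauchy--Binet route makes the crucial inequality most transparent.
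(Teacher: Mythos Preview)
Your proof is correct. The paper, however, does not prove this from scratch: it simply invokes the already-known $\ell^2$ version of the estimate (Gabber's/Soul\'e's lemma, as cited) and then observes that $\|\cdot\|_2 \le \|\cdot\|_1$ to pass to the $\ell^1$-norm. What you have written is essentially a self-contained proof of that cited $\ell^2$ estimate---the covolume identity $\#\tors(N/P) = \operatorname{covol}(P)/\operatorname{covol}(\overline P)$, the lower bound $\operatorname{covol}(\overline P)\ge 1$ for a primitive sublattice via Cauchy--Binet, and Hadamard's inequality for the upper bound---followed by the same norm comparison. So your route is not genuinely different in spirit, just unpacked: you reprove the lemma that the paper is content to cite. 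The advantage of your write-up is that it is self-contained and makes clear exactly where the integrality of the lattice enters (the determinantal-divisor step); the advantage of the paper's approach is brevity, since the $\ell^2$ statement is standard in the literature.
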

\begin{proof}
  The classical torsion estimate is 
  $ \log\#\tors (N/\im f)
  \leq \sum_{b \in B} \logp\; \bigl\| f(b)\bigr\|_2,
  $ 
  where $\|\cdot\|_2$ is the $\ell^2$-norm with respect to the
  chosen basis on~$N$~\cite[Lemma~3.1]{sauer:volume:homology:growth}. 
  Because of $\|\cdot\|_2 \leq \|\cdot\|_1$, the claim follows.
\end{proof}

\begin{cor}\label{cor:logtorgensplit}
  Let $M,M',N$ be marked finite rank free $\Z$-modules
  and let $f \colon M \oplus M' \to N$ be $\Z$-linear.
  Moreover, let $M \cong_\Z \bigoplus_{i \in I} M_i$
  and $M' \cong_\Z \bigoplus_{i \in I'} M'_i$ be
  marked decompositions; for each~$i \in I$ let~$N_i \subset N$
  be a marked direct summand with~$f(M_i) \subset N_i$.
  Then
  \[ \log\#\tors(N/\im f)
  \leq \sum_{i \in I} \rk_\Z (N_i) \cdot \logp \|f|_{M_i}\|
  + \sum_{i \in I'} \rk_\Z (M'_i) \cdot \logp \|f|_{M'_i}\|.
  \]
\end{cor}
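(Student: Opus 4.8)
The plan is to reduce Corollary~\ref{cor:logtorgensplit} to Proposition~\ref{prop:torest} by making a careful choice of a basis subset~$B$ that witnesses the generation of~$\C \otimes_\Z \im f$ and that is organised along the given marked decompositions. First I would fix marked $\Z$-bases: for each~$i \in I$ a marked basis~$\mathcal B_i$ of~$M_i$, for each~$i \in I'$ a marked basis~$\mathcal B'_i$ of~$M'_i$, and a marked basis of~$N$. The union of all these bases is a marked $\Z$-basis of~$M \oplus M'$, and the $\ell^1$-norm on~$N$ with respect to the fixed basis of~$N$ is the one appearing in Proposition~\ref{prop:torest}.

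Next I would build~$B$ greedily, degree by degree over the index sets. Process the summands~$M_i$ ($i \in I$) and then~$M'_i$ ($i \in I'$) in some order; when processing a summand, add to~$B$ only those basis vectors~$b$ of that summand whose images~$f(b)$ are $\C$-linearly independent from the span of~$\{f(b') : b' \in B \text{ so far}\}$. By construction, at the end~$\{f(b) : b \in B\}$ is a $\C$-basis of~$\C \otimes_\Z \im f$, so Proposition~\ref{prop:torest} applies and gives
\[
  \log\#\tors(N/\im f) \leq \sum_{b \in B} \logp \bigl\| f(b) \bigr\|_1.
\]
Now I split this sum according to which summand each~$b$ came from. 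For a fixed~$i \in I$, the number of basis vectors of~$M_i$ that land in~$B$ is at most~$\dim_\C(\C \otimes_\Z f(M_i)) = \rk_\Z(f(M_i))$; since~$f(M_i) \subset N_i$, this is at most~$\rk_\Z(N_i)$. For each such~$b \in \mathcal B_i \cap B$ we have~$b \in M_i$, hence~$\|f(b)\|_1 = \|f|_{M_i}(b)\|_1 \leq \|f|_{M_i}\| \cdot \|b\|_1 = \|f|_{M_i}\|$, because~$b$ is a marked basis vector and so~$\|b\|_1 = 1$. Therefore the contribution of the~$M_i$-part is at most~$\rk_\Z(N_i) \cdot \logp\|f|_{M_i}\|$. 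For the~$M'_i$-part ($i \in I'$) I do not have an ambient summand~$N'_i$, so I just bound the number of contributing basis vectors by~$\#(\mathcal B'_i) = \rk_\Z(M'_i)$ and each term by~$\logp\|f|_{M'_i}\|$ as before. Summing over all~$i \in I$ and~$i \in I'$ yields exactly the claimed inequality.

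The main obstacle — really a bookkeeping subtlety rather than a genuine difficulty — is making sure the counting argument for the number of~$B$-vectors coming from~$M_i$ is airtight: one must verify that the greedy/independence selection, applied within the summand~$M_i$, picks at most~$\dim_\C \C \otimes_\Z f(M_i)$ vectors regardless of processing order, and that~$\rk_\Z f(M_i) \leq \rk_\Z N_i$ uses only that~$N_i$ is a direct summand containing~$f(M_i)$ (so that~$\C \otimes_\Z f(M_i) \subseteq \C \otimes_\Z N_i$, whence a dimension comparison). One should also note that~$\logp$ is applied after these estimates, using monotonicity of~$\logp$ to pass from~$\|f(b)\|_1 \leq \|f|_{M_i}\|$ to~$\logp\|f(b)\|_1 \leq \logp\|f|_{M_i}\|$, and that empty summands (where~$\|f|_{M_i}\|$ might be~$0$ and~$\logp 0 = 0$) cause no trouble. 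No deformation or adaptedness machinery is needed here; this corollary is purely a statement about finite-rank free $\Z$-modules and feeds into the proof of Theorem~\ref{thm:logtorlognorm} afterwards.
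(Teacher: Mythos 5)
Your proof is correct and follows essentially the same route as the paper: both reduce to Proposition~\ref{prop:torest} by selecting a subset~$B$ of the marked basis of~$M\oplus M'$, organised block by block, whose images generate~$\C\otimes_\Z\im f$, taking at most~$\rk_\Z(N_i)$ vectors from each~$M_i$ and all~$\rk_\Z(M'_i)$ vectors from each~$M'_i$, and then bounding each~$\logp\|f(b)\|_1$ by the operator norm of the relevant restriction. The only cosmetic difference is that you build a globally $\C$-independent set greedily, whereas the paper simply takes, for each~$i\in I$, a generating subset for~$\C\otimes_\Z f(M_i)$ and unions these (Proposition~\ref{prop:torest} only requires generation, not independence); both yield the same cardinality bounds.
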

\begin{proof}
  For each~$i \in I$, let $B_i \subset M_i$ be a subset
  of the marked basis such that~$\{f(b) \mid b \in B_i\}$
  generates~$\C \otimes_\Z f(M_i)$ over~$\C$. 
  For each~$i \in I$, let $B'_i \subset M'_i$ be the marked basis.  
  Then $B \coloneqq \bigcup_{i \in I} B_i \cup \bigcup_{i \in I'} B'_i$
  is a subset of the marked basis of~$M \oplus M'$ such that
  $\{f(b) \mid b\in B\}$ generates~$\C \otimes_\Z \im f$
  over~$\C$. Therefore, Proposition~\ref{prop:torest}
  shows that
  \begin{align*}
    \log\#\tors (N/\im f)
    & \leq \sum_{b \in B} \logp \|f(b)\|_1
    \\
    & \leq
      \sum_{i \in I} \sum_{b \in B_i} \logp \|f(b)\|_1
    + \sum_{i \in I'} \sum_{b \in B'_i} \logp \|f(b)\|_1
    \\
    & \leq
      \sum_{i \in I} \#B_i \cdot \logp \|f|_{M_i}\|
    + \sum_{i \in I'} \#B'_i \cdot \logp \|f|_{M'_i}\|
    \\
    & \leq
      \sum_{i \in I} \rk_\Z (N_i) \cdot \logp \|f|_{M_i}\|
    + \sum_{i \in I'} \rk_\Z (M'_i) \cdot \logp\|f|_{M'_i}\|,
  \end{align*}
  as desired.
\end{proof}

To prove Theorem~\ref{thm:logtorlognorm}, we show the
following version in which the $\limsup$ is unfolded
into an explicit statement: 

\begin{thm}
  In the situation of Setup~\ref{setup:passage} (with $Z=\IZ$), let $f \colon M \to N$
  be a $\Gamma_*$-adapted $R$-homomorphism between $\Gamma_*$-adapted
  marked projective $R$-modules.
  Then, for all~$\varepsilon \in \R_{>0}$, 
  we have for all large enough~$i \in I$: 
  \[ \frac{\log \# \tors \bigl(N(i)_\Gamma / \im f(i)_\Gamma\bigr)}
          {[\Gamma : \Gamma_i]}
  \leq \lognorm (f) + \varepsilon.
  \]
\end{thm}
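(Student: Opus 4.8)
The plan is to replace $\lognorm(f)$ by its $S$-adapted variant $\lognorm_S(f)$, fix a nearly optimal $S$-adapted marked decomposition of $M$, discretise it, and finish with Gabber's torsion estimate. Concretely: since $R = \linf{\alpha,Z} * \Gamma$ and $f$ is $S$-adapted between $S$-adapted marked projective $R$-modules, Proposition~\ref{prop:lognormadapt} gives $\lognorm(f) = \lognorm_S(f)$. Hence, for the given $\varepsilon$, I would choose a finite $S$-adapted marked decomposition $M \cong_R \bigoplus_{j\in J} M_j$ with
\[
   \lognorm'\bigl(f,(M_j)_{j\in J}\bigr) = \sum_{j\in J}\lognorm'(f|_{M_j}) \le \lognorm(f) + \varepsilon .
\]
For each $j\in J$ I would fix the \emph{smallest} marked direct summand $N_j\subseteq N$ with $f(M_j)\subseteq N_j$: by Lemma~\ref{lem:mrk-expl} applied to $f|_{M_j}$ (using a reduced presentation as in Setup~\ref{setup:opnormalt}, which can be taken over $L * \Gamma$ since $f$ is $S$-adapted, so that all sets occurring lie in $S$) such a summand exists, is $S$-adapted, and satisfies $\dim N_j = \rk(f|_{M_j})$.

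Next I would pass to finite index subgroups. Because $J$ is finite and $M$, $N$, $f$, the $M_j$ and the $N_j$ together involve only finitely many sets of $S = \bigcup_{i\in I} S_i$, there is an $i_0\in I$ such that all of these data are $S_i$-adapted for every $i\ge i_0$ (as in the proof of Theorem~\ref{thm:adaptedretract}). Fix such an $i$ and discretise, passing to $\Gamma$-coinvariants. The marked decomposition descends: $M(i)_\Gamma \cong_\Z \bigoplus_{j\in J} M_j(i)_\Gamma$ is a decomposition of finitely generated marked free $\Z$-modules (Remark~\ref{rem:dimcomp}), $N_j(i)_\Gamma$ is a marked direct summand of $N(i)_\Gamma$ with $f(i)_\Gamma\bigl(M_j(i)_\Gamma\bigr)\subseteq N_j(i)_\Gamma$, and $(f|_{M_j})(i)_\Gamma$ is the restriction of $f(i)_\Gamma$ to $M_j(i)_\Gamma$. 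By Remark~\ref{rem:dimcomp}, $\rk_\Z M_j(i)_\Gamma = [\Gamma:\Gamma_i]\cdot\dim M_j$ and $\rk_\Z N_j(i)_\Gamma = [\Gamma:\Gamma_i]\cdot\rk(f|_{M_j})$; by Remark~\ref{rem:normcomp} and monotonicity of $\logp$, $\logp\bigl\|(f|_{M_j})(i)_\Gamma\bigr\| \le \logp\|f|_{M_j}\|$.

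Finally I would invoke Corollary~\ref{cor:logtorgensplit}. Split $J = J_1\sqcup J_2$ by putting $j$ into $J_1$ when $\rk(f|_{M_j})\le\dim M_j$ and into $J_2$ otherwise, so that $\lognorm'(f|_{M_j})$ equals $\rk(f|_{M_j})\cdot\logp\|f|_{M_j}\|$ for $j\in J_1$ and $\dim M_j\cdot\logp\|f|_{M_j}\|$ for $j\in J_2$. Viewing the domain of $f(i)_\Gamma$ as $\bigl(\bigoplus_{j\in J_1}M_j(i)_\Gamma\bigr)\oplus\bigl(\bigoplus_{j\in J_2}M_j(i)_\Gamma\bigr)$ and using the summands $N_j(i)_\Gamma$ for $j\in J_1$, Corollary~\ref{cor:logtorgensplit} yields
\[
   \log\#\tors\bigl(N(i)_\Gamma/\im f(i)_\Gamma\bigr)
   \le \sum_{j\in J_1}\rk_\Z N_j(i)_\Gamma\cdot\logp\bigl\|(f|_{M_j})(i)_\Gamma\bigr\|
   + \sum_{j\in J_2}\rk_\Z M_j(i)_\Gamma\cdot\logp\bigl\|(f|_{M_j})(i)_\Gamma\bigr\| .
\]
By the dimension and norm comparisons above, the right-hand side is at most $[\Gamma:\Gamma_i]\cdot\sum_{j\in J}\lognorm'(f|_{M_j}) = [\Gamma:\Gamma_i]\cdot\lognorm'\bigl(f,(M_j)_{j\in J}\bigr) \le [\Gamma:\Gamma_i]\cdot\bigl(\lognorm(f)+\varepsilon\bigr)$; dividing by $[\Gamma:\Gamma_i]$ gives the claim for all $i\ge i_0$.

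I expect the main obstacle to be organisational rather than conceptual: one must check that the minimal marked summand $N_j$ realising $\rk(f|_{M_j})$ is $S$-adapted — so that it, and the factorisation $f(M_j)\subseteq N_j$, survive discretisation — and that the identities of Remarks~\ref{rem:dimcomp} and~\ref{rem:normcomp} apply to the restricted maps $f|_{M_j}$ and to $N_j$, not merely to $f$ and $N$. The equality $\lognorm(f) = \lognorm_S(f)$ from Proposition~\ref{prop:lognormadapt} is the linchpin that makes working with $S$-adapted decompositions legitimate.
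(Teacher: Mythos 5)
Your proposal is correct and follows essentially the same route as the paper: pass to an $S$-adapted decomposition via Proposition~\ref{prop:lognormadapt}, discretise for large $i$ using Remarks~\ref{rem:dimcomp} and~\ref{rem:normcomp}, and conclude with Corollary~\ref{cor:logtorgensplit}. The only (harmless) variation is that you take the exact minimal $S$-adapted summand $N_j$ from Lemma~\ref{lem:mrk-expl}, whereas the paper settles for a summand with $\dim(N_j)\le\rk(f|_{M_j})+\varepsilon/\#J$ and absorbs the resulting extra $\varepsilon\cdot\logp\|f\|$ term by rescaling $\varepsilon$.
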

\begin{proof}
  The subalgebra~$S$ is dense. Thus, we use the description of~$\lognorm$ in terms of cylinder sets
  (Proposition~\ref{prop:lognormadapt}) and the generic torsion estimate for
  cokernels (Corollary~\ref{cor:logtorgensplit}).
  
  Let $\varepsilon \in \R_{>0}$. By Proposition~\ref{prop:lognormadapt},
  we find an $S$-adapted marked decomposition~$(M_j)_{j \in J} \in D_S(M)$
  with
  \[ \lognorm' (f,M_*) \leq \lognorm (f) + \varepsilon.
  \]
  Then, for all large enough~$i \in I$, the decomposition~$M_*$,
  as well as $M$, $N$, and~$f$ all are $S_i$-adapted.
  We split~$J = J' \sqcup J''$ according to the branches
  of~$\lognorm'$: Let~$J'$ be the set of all~$j \in J$ with
  \[ \lognorm' (f|_{M_j}) = \rk (f|_{M_j}) \cdot \logp \| f|_{M_j} \|; 
  \]
  moreover, we let~$N_j \subset N$ be a marked summand
  satisfying~$f(M_j) \subset N_j$ and $\dim (N_j) \leq \rk (f) + \varepsilon/\#J$.
  Let $J'' \coloneqq J \setminus J'$. 

  Using Corollary~\ref{cor:logtorgensplit} and the dimension/norm
  compatibility (Remark~\ref{rem:dimcomp}, Remark~\ref{rem:normcomp}),
  we thus obtain
  \begin{align*}
   & \log \# \tors \bigl(N(i)_\Gamma / \im f(i)_\Gamma\bigr)
   \\
    & \leq
    \sum_{j \in J'} \rk_\Z N_j(i)_\Gamma \cdot \logp \|f(i)_\Gamma|_{M_j(i)_\Gamma}\|
     + \sum_{j \in J''} \rk_\Z M_j(i)_\Gamma \cdot \logp \|f(i)_\Gamma|_{M_j(i)_\Gamma}\|
    \\
    & \leq
    \sum_{j \in J'} [\Gamma : \Gamma_i] \cdot \dim N_j \cdot \logp\| f|_{M_j}\|
     + \sum_{j \in J''} [\Gamma : \Gamma_i] \cdot \dim M_j \cdot \logp \|f|_{M_j}\|
    \\
    & \leq
    \sum_{j \in J'} [\Gamma : \Gamma_i] \cdot (\rk f|_{M_j} + \varepsilon/\#J) \cdot \logp\| f|_{M_j}\|
     + \sum_{j \in J''} [\Gamma : \Gamma_i] \cdot \dim M_j \cdot \logp \|f|_{M_j}\|
    \\
    & \leq
    [\Gamma : \Gamma_i] \cdot \bigl( \lognorm' (f,M_*) + \varepsilon \cdot \logp \|f\| \bigr)      
    \\
    & \leq
    [\Gamma : \Gamma_i] \cdot \bigl( \lognorm (f) + \varepsilon + \varepsilon \cdot \logp \|f\| \bigr).
  \end{align*}
  Rescaling~$\varepsilon$ appropriately gives the claim.
\end{proof}

This completes the proof of Theorem~\ref{thm:logtorlognorm}.

\section{Proof of the dynamical upper bounds}\label{sec:proofthmCEP}

We give proofs for the upper bounds of homological invariants
in terms of measured embedding dimension and measured embedding
volume (Theorem~\ref{thm:dynupper} and Theorem~\ref{thm:l2upperintro}).
The key intermediate step is to write the homological terms in
question as retracts of the corresponding homology of suitably
adapted dynamical embeddings. 

For convenience of the reader we recall the definitions of 
measured embedding dimension and measured embedding volume from the introduction.
Let~$n \in \N$, 
  let~$\Gamma$ be a residually finite group of type~$\FP_{n+1}$,
  and let~$\alpha$ be a standard $\Gamma$-action. 
  We denote by $\Aug_n(\alpha)$ the class of all augmented complexes arising in $\alpha$-embeddings (up to degree $n$).
  Let $Z$ be $\Z$ or a finite field. We have the following:
\begin{itemize}
\item The \emph{measured embedding dimension~$\medim_n^Z (\alpha)$
  over~$Z$ in degree~$n$} is defined as:
  \[
  \medim_n^Z (\alpha) \coloneqq  \inf_{(D_* \epi \linf {\alpha,Z}) \in \Aug_n(\alpha)} \dim_{R} (D_n).
  \]
\item The \emph{measured embedding volume~$\mevol_n(\alpha)$
  in degree~$n$} for~$Z=\IZ$ is defined as: 
  \[
  \mevol_n(\alpha) \coloneqq \inf_{(D_* \epi \linf {\alpha,\Z}) \in \Aug_n(\alpha)} \lognorm (\partial^{D}_{n+1}).
  \]
  \end{itemize}
  
\subsection{Homology gradients}

In this section we prove Theorem~\ref{thm:dynupper} that we restate here:

\begin{thm}[dynamical upper bounds]\label{thm:dynupperproofsec}
  Let $n \in \N$, 
  let $\Gamma$ be a residually finite group of type~$\FP_{n+1}$,
  let $(\Gamma_i)_{i \in I}$ be a directed system of finite index
  normal subgroups of~$\Gamma$ with~$\bigcap_{i \in I} \Gamma_i = 1$
  (e.g., a residual chain in~$\Gamma$
  or the system of all finite index normal subgroups), 
  and let $Z$ be $\Z$ or a finite field.
  Then:
  \begin{align*}
    \widehat b_n(\Gamma,\Gamma_*; Z)
    & \leq \medim_n^Z (\Gamma \actson \widehat\Gamma_*)
    \\
    \widehat t_n(\Gamma,\Gamma_*)
    & \leq \mevol_n (\Gamma \actson \widehat \Gamma_*)
    \quad \text{(if $Z=\IZ$)}.
  \end{align*}
\end{thm}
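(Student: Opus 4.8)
The plan is to combine the deformation machinery of Section~\ref{sec:deformation} with the retract result of Theorem~\ref{thm:adaptedretract} and, for the torsion statement, with the logarithmic torsion estimate of Theorem~\ref{thm:logtorlognorm}. Fix a free $Z\Gamma$-resolution~$C_*$ of~$Z$ that has finite rank in degrees~$\le n+1$, which exists because $\Gamma$ is of type~$\FP_{n+1}$. We must first check that the left translation action~$\alpha\colon\Gamma\actson\widehat\Gamma_*$ is essentially free, so that we are genuinely in the situation of Setup~\ref{setup:pfc}; this is the standard fact that $\bigcap_i\Gamma_i=1$ forces the profinite completion action to be essentially free with respect to the Haar measure (cf.\ the discussion in Section~\ref{subsec:introsetup}). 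Now let $\varepsilon>0$. By definition of~$\medim_n^Z(\alpha)$ (resp.\ $\mevol_n(\alpha)$) we may pick an $\alpha$-embedding~$f_*\colon C_*\to D_*$ up to degree~$n$ with $\dim_R(D_n)\le\medim_n^Z(\alpha)+\varepsilon$ (resp.\ $\lognorm(\partial^D_{n+1})\le\mevol_n(\alpha)+\varepsilon$).

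The next step is to make this embedding \emph{adapted} to the subalgebra~$S=\bigcup_i S_i$ of cylinder sets, which is $\mu$-dense and $\Gamma$-invariant by \cite[Lemma~6.4.2]{loeh2020ergodic}. Apply Theorem~\ref{thm:adaptedembgen}: it produces a constant~$K$ and, for every~$\delta>0$, an $S$-adapted $\alpha$-embedding~$\widehat f_*\colon C_*\to\widehat D_*$ with $\dgh{K}{\widehat D_*,D_*,n}<\delta$ and $\dgh{K}{\Ind{Z\Gamma}{R}\widehat f_r,\Ind{Z\Gamma}{R}f_r}<\delta$ for all~$r\le n+1$. For the Betti number bound, Proposition~\ref{prop:dGH}\ref{itm:dGH-dim} gives $\dim(\widehat D_n)\le\dim(D_n)+\delta$. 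For the torsion bound, Proposition~\ref{prop:lognorm}\ref{itm:lognorm-GH} applied to $\partial^{\widehat D}_{n+1}$ versus $\partial^D_{n+1}$ gives $\lognorm(\partial^{\widehat D}_{n+1})\le\lognorm(\partial^D_{n+1})+\delta\cdot\logp K$ — here one must be slightly careful that the Gromov--Hausdorff closeness of the sequences yields closeness of the individual boundary operators, which is exactly what Definition~\ref{defn:dgh chain} encodes. Choosing $\delta$ small relative to~$\varepsilon$ and~$K$, we arrange $\dim(\widehat D_n)\le\medim_n^Z(\alpha)+2\varepsilon$ and $\lognorm(\partial^{\widehat D}_{n+1})\le\mevol_n(\alpha)+2\varepsilon$.

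Now invoke Theorem~\ref{thm:adaptedretract} for the $\Gamma_*$-adapted $\alpha$-embedding $\widehat f_*\colon C_*\to\widehat D_*$: for all large enough~$i\in I$, the complex $\widehat D_*(i)$ is defined up to degree~$n+1$, and $H_n(\Gamma_i;Z)$ is a $Z$-retract of $H_n(\widehat D_*(i)_\Gamma)$. Part~(ii) of that theorem then gives
\[
\rk_Z H_n(\Gamma_i;Z)\le[\Gamma:\Gamma_i]\cdot\dim(\widehat D_n),
\]
so dividing by $[\Gamma:\Gamma_i]$ and taking $\limsup_{i\in I}$ yields $\widehat b_n(\Gamma,\Gamma_*;Z)\le\dim(\widehat D_n)\le\medim_n^Z(\alpha)+2\varepsilon$; letting $\varepsilon\to0$ proves the first inequality. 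For the torsion statement (with $Z=\Z$), Theorem~\ref{thm:adaptedretract}(ii) gives
\[
\log\#\tors H_n(\Gamma_i;\Z)\le\log\#\tors\bigl(\widehat D_n(i)_\Gamma/\im\partial^{\widehat D}_{n+1}(i)_\Gamma\bigr),
\]
and Theorem~\ref{thm:logtorlognorm} applied to $f\coloneqq\partial^{\widehat D}_{n+1}$ bounds $\limsup_{i\in I}\frac{1}{[\Gamma:\Gamma_i]}\log\#\tors(\widehat D_n(i)_\Gamma/\im\partial^{\widehat D}_{n+1}(i)_\Gamma)$ by $\lognorm(\partial^{\widehat D}_{n+1})\le\mevol_n(\alpha)+2\varepsilon$. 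Hence $\widehat t_n(\Gamma,\Gamma_*)\le\mevol_n(\alpha)+2\varepsilon$, and $\varepsilon\to0$ finishes the proof.

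I expect the genuinely delicate point to be the bookkeeping of the quantifiers "for all large enough~$i$" across the two applications (Theorem~\ref{thm:adaptedretract} and Theorem~\ref{thm:logtorlognorm}): both are statements about the tail of the directed system, and one needs that the intersection of two upward-closed cofinal subsets of~$I$ is again cofinal, which is automatic for a directed set. A secondary subtlety is ensuring that the $S$-adaptedness produced by Theorem~\ref{thm:adaptedembgen} is compatible with the discretisation functor $M\mapsto M(i)$ used in Theorem~\ref{thm:adaptedretract} — but this compatibility is precisely built into the construction in Section~\ref{sec:passing}, so no new work is required. All the quantitative heavy lifting (the strictification/deformation estimates, Gabber's torsion bound) has already been done in the preceding sections, so the proof here is essentially an orchestration of those results.
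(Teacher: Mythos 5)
Your proposal is correct and follows essentially the same route as the paper: reduce to a (near-optimal) $\alpha$-embedding, deform it to a $\Gamma_*$-adapted embedding via Theorem~\ref{thm:adaptedembgen} with Gromov--Hausdorff control, then combine the retract statement (Theorem~\ref{thm:adaptedretract}) with the dimension compatibility resp.\ the logarithmic norm estimate (Theorem~\ref{thm:logtorlognorm}) and the stability of $\dim$ and $\lognorm$ under Gromov--Hausdorff closeness, finally letting the error parameters tend to zero. The only cosmetic difference is that the paper fixes an arbitrary $\alpha$-embedding and shows the gradient is bounded by its $\dim$/$\lognorm$ before taking the infimum, whereas you pick an $\varepsilon$-optimal one first; these are equivalent.
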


The proof relies on the following input from previous sections:

\begin{setup}\label{setup:mainproof}
  Let $n \in \N$, 
  let $\Gamma$ be a residually finite group of type~FP$_{n+1}$,
  let $(\Gamma_i)_{i \in I}$ be a directed system of finite index
  normal subgroups of~$\Gamma$ with~$\bigcap_{i \in I} \Gamma_i = 1$,
  let $\alpha\colon \Gamma \actson \widehat \Gamma_*$ be the
  associated dynamical system,
  and let $Z$ be $\Z$ or a finite field.
\end{setup}

\begin{thm}\label{thm:adaptedemb}
  In the situation of Setup~\ref{setup:mainproof},
  let $C_*$ be a free $Z\Gamma$-resolution of~$Z$ that has
  finite rank in degrees~$\leq n+1$ and let
  $f_* \colon C_* \to D_*$ be an $\alpha$-embedding.  
  Then there exists a~$K\in \R_{>0}$ such that: For every~$\delta \in \R_{>0}$,
  there exists a $\Gamma_*$-adapted $\alpha$-embedding~$C_* \to \widehat D_*$
  with
  \[ \dgh K {\widehat D_*, D_*,n} < \delta.
  \]
\end{thm}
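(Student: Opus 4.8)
The plan is to reduce Theorem~\ref{thm:adaptedemb} to the already-established deformation theorem for chain maps, Theorem~\ref{thm:adaptedchmap} (equivalently, its $\alpha$-embedding packaging Theorem~\ref{thm:adaptedembgen}), by verifying that the setting of Setup~\ref{setup:mainproof} is a special case of Setup~\ref{setup:adapt} with the cylinder-set algebra $S$ from Setup~\ref{setup:passage} as the dense $\Gamma$-invariant subalgebra. Concretely: the profinite completion $X = \widehat\Gamma_*$ with the Haar measure is a standard Borel probability space, the left translation action $\alpha\colon\Gamma\actson X$ is measure preserving, and (under the standing assumption that $\alpha$ is essentially free) it is a standard $\Gamma$-action, so the crossed product ring $R = \linf{\alpha,Z}\ast\Gamma$ and the ambient relation ring $Z\Rrel_\alpha$ fit Setup~\ref{setup:adapt}. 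The subalgebra $S = \bigcup_{i\in I} S_i$ of cylinder sets is $\mu$-dense in the measurable sets of $X$ and satisfies $\Gamma\cdot S\subset S$; this is exactly the fact cited after Setup~\ref{setup:passage} (\cite[Lemma~6.4.2]{loeh2020ergodic}).

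With this identification in place, first I would recall that $\Gamma$ is of type $\FP_{n+1}$, so by definition the trivial $\Z\Gamma$-module $\Z$ (hence the trivial $Z\Gamma$-module $Z$, by base change) admits a projective resolution that is finitely generated in degrees $\le n+1$; passing to a free resolution $C_* \epi Z$ of finite rank in those degrees is standard homological algebra and is already part of the hypothesis of Theorem~\ref{thm:adaptedemb}. An $\alpha$-embedding $f_*\colon C_* \to D_*$ up to degree $n$, by Definition~\ref{def:alpha-emb}, consists of a marked projective augmented $R$-chain complex $D_* \epi \linf{\alpha,Z}$ together with the required $Z\Gamma$-chain map extending the inclusion of constants. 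Then I would simply invoke Theorem~\ref{thm:adaptedembgen}: it produces, for every $\delta\in\R_{>0}$, an $S$-adapted $\alpha$-embedding $\widehat f_*\colon C_* \to \widehat D_*$ with $\dgh K{\widehat D_*, D_*, n} < \delta$ and with $\dgh K{\Ind{Z\Gamma}R(\widehat f_r), \Ind{Z\Gamma}R(f_r)} < \delta$ in each degree; the constant $K$ depends only on the original data. Since ``$S$-adapted'' is synonymous with ``$\Gamma_*$-adapted'' in the notation of Setup~\ref{setup:passage}, this is precisely the asserted conclusion (the statement of Theorem~\ref{thm:adaptedemb} only records the Gromov--Hausdorff estimate on the target complexes, which is the weaker of the two bullets of Theorem~\ref{thm:adaptedembgen}).

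So the proof is essentially a bookkeeping argument: the real content has already been carried out in Sections~\ref{sec:strictification} and~\ref{sec:deformation} — the strictification theorems (Theorem~\ref{thm:strictifycomplex}, Theorem~\ref{thm:strictifychmap}) and the deformation theorems (Theorem~\ref{thm:adaptedGH}, Theorem~\ref{thm:adaptedchmap}, Theorem~\ref{thm:adaptedembgen}). The one point that genuinely needs to be checked, and which I regard as the only potential obstacle, is the density and $\Gamma$-invariance of the cylinder-set algebra $S$ in $\widehat\Gamma_*$ — but this is exactly the ``fundamental observation'' emphasised after Theorem~\ref{thm:dynupper} in the introduction, and it is a cited lemma. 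A secondary point worth a sentence is that one should make sure the relevant finitely many cylinder sets appearing in the adapted data all come from a single $\Gamma/\Gamma_i$, but that refinement belongs to the discretisation step of Section~\ref{sec:passing} (Theorem~\ref{thm:adaptedretract}) and is not needed here. I would therefore keep the proof to a single short paragraph: check that Setup~\ref{setup:mainproof} instantiates Setup~\ref{setup:adapt} via $S$, then quote Theorem~\ref{thm:adaptedembgen}.

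\begin{proof}
  We are in the situation of Setup~\ref{setup:adapt}: the profinite completion~$\widehat\Gamma_*$ with the Haar measure is a standard Borel probability space, the left translation action~$\alpha\colon\Gamma\actson\widehat\Gamma_*$ is measure preserving and (by the standing assumption on~$\alpha$) essentially free, and thus a standard $\Gamma$-action; let~$\Rrel$ be the associated orbit relation and~$R = \linf{\alpha,Z}*\Gamma$ the crossed product ring. As the dense $\Gamma$-invariant subalgebra we take the algebra~$S$ of cylinder sets of~$\widehat\Gamma_*$ as in Setup~\ref{setup:passage}; by \cite[Lemma~6.4.2]{loeh2020ergodic}, $S$ is $\mu$-dense in the measurable sets of~$\widehat\Gamma_*$ and satisfies~$\Gamma\cdot S\subset S$. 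Since~$\Gamma$ is of type~$\FP_{n+1}$, the hypothesis that~$C_*$ is a free $Z\Gamma$-resolution of~$Z$ of finite rank in degrees~$\le n+1$ can be met, and~$f_* \colon C_* \to D_*$ is an $\alpha$-embedding in the sense of Definition~\ref{def:alpha-emb}. Applying Theorem~\ref{thm:adaptedembgen} to this $\alpha$-embedding yields a constant~$K \in \R_{>0}$ such that for every~$\delta \in \R_{>0}$ there is an $S$-adapted $\alpha$-embedding~$\widehat f_* \colon C_* \to \widehat D_*$ with~$\dgh K {\widehat D_*, D_*, n} < \delta$. Since ``$S$-adapted'' is synonymous with ``$\Gamma_*$-adapted'' (Setup~\ref{setup:passage}), this is the assertion.
\end{proof}
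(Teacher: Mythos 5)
Your proposal is correct and matches the paper's own argument exactly: the paper's proof of this theorem is the single sentence that it is the special case of Theorem~\ref{thm:adaptedembgen} for the action~$\Gamma \actson \widehat\Gamma_*$ and the subalgebra of cylinder sets. Your additional verification that Setup~\ref{setup:mainproof} instantiates Setup~\ref{setup:adapt} (density and $\Gamma$-invariance of~$S$ via the cited lemma) is the right bookkeeping and is exactly what the paper leaves implicit.
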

\begin{proof}
  This is the special case of Theorem~\ref{thm:adaptedembgen} for
  the action~$\Gamma \actson \widehat \Gamma_*$ and the subalgebra
  of all cylinder sets. 
\end{proof}

\begin{thm}[Theorem~\ref{thm:adaptedretract}]\label{thm:adaptedretractlocal}
  In the situation of Setup~\ref{setup:mainproof},
  let $C_*$ be a free $Z\Gamma$-resolution of~$Z$ that has
  finite rank in degrees~$\leq n+1$ and let
  $f_* \colon C_* \to D_*$ be a $\Gamma_*$-adapted
  $\alpha$-embedding.  
  Then for all large enough~$i \in I$,
  we have
  \[ \rk_Z H_n(\Gamma_i;Z) \leq [\Gamma : \Gamma_i] \cdot \dim (D_n)
  \]
  and, in the case $Z = \Z$,
  \begin{align*}
    \log \# \tors H_n(\Gamma_i;\Z)
    & 
    \leq \log \# \tors \bigl(D_n(i)_\Gamma / \im \partial^{D}_{n+1}(i)_\Gamma\bigr).
  \end{align*}
\end{thm}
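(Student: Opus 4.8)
The plan is to deduce the statement directly from Theorem~\ref{thm:adaptedretract}: the setting of Setup~\ref{setup:mainproof} is a special case of Setup~\ref{setup:pfc}/Setup~\ref{setup:passage}, namely the dynamical system $\alpha\colon \Gamma \actson \widehat\Gamma_*$ together with the subalgebra $S$ of cylinder sets, which is $\mu$-dense in the measurable sets of $\widehat\Gamma_*$ and satisfies $\Gamma\cdot S\subset S$ by~\cite[Lemma~6.4.2]{loeh2020ergodic}. Since ``$\Gamma_*$-adapted'' is by definition a synonym for ``$S$-adapted'' in this situation, a $\Gamma_*$-adapted $\alpha$-embedding $f_*\colon C_*\to D_*$ is exactly the input required by Theorem~\ref{thm:adaptedretract}, and its conclusion is the assertion to be proved. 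So strictly speaking there is nothing new to do beyond unwinding definitions.

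For the reader's convenience I would recall the mechanism underlying Theorem~\ref{thm:adaptedretract}. First one chooses $i\in I$ large enough that every cylinder set appearing up to degree $n+1$ in $D_*$, $\partial_*^{D}$, and $f_*$ is pulled back along $\pi_i\colon \widehat\Gamma_*\to\Gamma/\Gamma_i$; the set of such $i$ is non-empty and upward closed. For such $i$ the discretisation $D_*(i)$ is defined up to degree $n+1$ and augments to $L_i$, and $f_*$ discretises to an $R_i$-chain map $f_*(i)\colon C_*(i)\to D_*(i)$ extending $\id_{L_i}$, where $C_*(i)\cong R_i\otimes_{Z\Gamma}C_*$. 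Since $R_i$ is flat over $Z\Gamma$ (Proposition~\ref{prop:flat}), $C_*(i)$ is an $R_i$-resolution of $L_i\cong R_i\otimes_{Z\Gamma}Z$, so the fundamental lemma of homological algebra produces an $R_i$-chain map $g_*(i)\colon D_*(i)\to C_*(i)$ with $g_*(i)\circ f_*(i)\simeq_{R_i}\id_{C_*(i)}$. Taking $\Gamma$-coinvariants and invoking the Shapiro lemma $H_n(\Gamma_i;Z)\cong_Z H_n(C_*(i)_\Gamma)$ exhibits $H_n(\Gamma_i;Z)$ as a $Z$-retract of $H_n(D_*(i)_\Gamma)$.

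The two numerical estimates then fall out of this retract: for ranks, $\rk_Z H_n(\Gamma_i;Z)\le \rk_Z H_n(D_*(i)_\Gamma)\le \rk_Z D_n(i)_\Gamma=[\Gamma:\Gamma_i]\cdot\dim(D_n)$ by the dimension compatibility of Remark~\ref{rem:dimcomp}; for torsion with $Z=\Z$, the torsion of a retract is a direct summand of the torsion of the ambient module, and $H_n(D_*(i)_\Gamma)$ is a quotient of $D_n(i)_\Gamma/\im\partial^{D}_{n+1}(i)_\Gamma$, whence $\log\#\tors H_n(\Gamma_i;\Z)\le\log\#\tors\bigl(D_n(i)_\Gamma/\im\partial^{D}_{n+1}(i)_\Gamma\bigr)$. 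There is no real obstacle here: the technically delicate work — the strictification and deformation machinery of Sections~\ref{sec:strictification}--\ref{sec:deformation} used to produce adapted embeddings in the first place — has already been carried out, and the only point to check is the density and $\Gamma$-invariance of the cylinder sets, which is quoted; the bookkeeping of passing to a ``large enough'' $i$ is routine.
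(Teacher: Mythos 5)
Your proposal is correct and follows the paper's own argument for Theorem~\ref{thm:adaptedretract} essentially verbatim: the choice of large enough $i$ via cylinder sets, discretisation to $R_i$, flatness of $R_i$ over $Z\Gamma$, the fundamental lemma producing the homotopy retraction, Shapiro's lemma, and the rank bound via Remark~\ref{rem:dimcomp}. One small slip in wording: $H_n(D_*(i)_\Gamma)$ is a \emph{submodule} (not a quotient) of $D_n(i)_\Gamma/\im\partial^{D}_{n+1}(i)_\Gamma$, which is what actually gives the torsion inequality, since torsion can grow under quotients but not under passage to submodules of finitely generated abelian groups.
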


\begin{thm}[Theorem~\ref{thm:logtorlognorm}]\label{thm:logtorlognormlocal}
  In the situation of Setup~\ref{setup:mainproof},
  let $f \colon M \to N$ be a $\Gamma_*$-adapted $R$-homomorphism
  between $\Gamma_*$-adapted marked projective $R$-modules. Then
  \begin{align*}
    \limsup_{i \in I}
    \frac{\log \# \tors \bigl( N(i)_\Gamma / \im f(i) _\Gamma\bigr)}
         {[\Gamma : \Gamma_i]}
         \leq \lognorm (f).     
  \end{align*}
\end{thm}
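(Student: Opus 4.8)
The plan is to unfold the $\limsup$ into a statement that holds for all large enough~$i$, for each fixed~$\varepsilon>0$, and then invoke the two main tools prepared in the previous subsections: the description of $\lognorm$ via adapted (cylinder) decompositions (Proposition~\ref{prop:lognormadapt}) and the generic torsion estimate for cokernels of $\Z$-linear maps (Corollary~\ref{cor:logtorgensplit}), together with the discretisation compatibilities for dimensions and operator norms (Remark~\ref{rem:dimcomp}, Remark~\ref{rem:normcomp}).

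Concretely, first I would fix $\varepsilon \in \R_{>0}$. Since $S$ is $\mu$-dense and $f$ is $\Gamma_*$-adapted, Proposition~\ref{prop:lognormadapt} gives $\lognorm(f) = \lognorm_S(f)$, so there is an $S$-adapted marked decomposition $(M_j)_{j\in J}$ of~$M$ with $\lognorm'(f,M_*) \le \lognorm(f) + \varepsilon$. Because $J$ is finite and all the finitely many cylinder sets involved in $M$, $N$, $f$, and the decomposition come from some $\Gamma/\Gamma_i$, for all large enough $i$ everything in sight is $S_i$-adapted, so that $M(i)$, $N(i)$, $f(i)$, and $(M_j(i))_{j\in J}$ are defined. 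Next I would split $J = J' \sqcup J''$ according to which branch of $\lognorm'(f|_{M_j}) = \min\{\dim(M_j), \rk(f|_{M_j})\} \cdot \logp\|f|_{M_j}\|$ realises the minimum: for $j\in J'$ (the $\rk$ branch) choose a marked summand $N_j \subset N$ with $f(M_j)\subset N_j$ and $\dim(N_j) \le \rk(f|_{M_j}) + \varepsilon/\#J$; for $j\in J''$ use the $\dim(M_j)$ branch directly.

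Then I would apply Corollary~\ref{cor:logtorgensplit} to the $\Z$-linear map $f(i)_\Gamma \colon M(i)_\Gamma = \bigoplus_{j\in J} M_j(i)_\Gamma \to N(i)_\Gamma$, using the $N_j(i)_\Gamma$ as the containing summands for indices in $J'$ and treating the $M_j(i)_\Gamma$ for $j\in J''$ via their own ranks. This bounds $\log\#\tors(N(i)_\Gamma/\im f(i)_\Gamma)$ by $\sum_{j\in J'} \rk_\Z N_j(i)_\Gamma \cdot \logp\|f(i)_\Gamma|_{M_j(i)_\Gamma}\| + \sum_{j\in J''} \rk_\Z M_j(i)_\Gamma \cdot \logp\|f(i)_\Gamma|_{M_j(i)_\Gamma}\|$. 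Now Remark~\ref{rem:dimcomp} converts ranks of coinvariant modules into $[\Gamma:\Gamma_i]\cdot\dim(\args)$ and Remark~\ref{rem:normcomp} bounds $\|f(i)_\Gamma|_{M_j(i)_\Gamma}\| \le \|f|_{M_j}\|$; substituting the estimate $\dim(N_j) \le \rk(f|_{M_j}) + \varepsilon/\#J$ for $j\in J'$ and reassembling the two sums yields $\log\#\tors(N(i)_\Gamma/\im f(i)_\Gamma) \le [\Gamma:\Gamma_i]\cdot(\lognorm'(f,M_*) + \varepsilon\cdot\logp\|f\|) \le [\Gamma:\Gamma_i]\cdot(\lognorm(f) + \varepsilon + \varepsilon\cdot\logp\|f\|)$. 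Dividing by $[\Gamma:\Gamma_i]$ and rescaling $\varepsilon$ (noting $\logp\|f\|$ is a fixed finite quantity, as $\|f\| < \infty$ by Remark~\ref{rem:finnorm}) gives the unfolded statement, and taking $\limsup$ over $i$ then $\varepsilon \to 0$ proves the theorem. The main obstacle is bookkeeping: one must be careful that the decomposition $(M_j)$, the summands $N_j$, and the norms behave well simultaneously under the pre-induction functor $(\args)(i)$ and under $\Gamma$-coinvariants, and that the "for all large enough $i$" clause is uniform over the (finite) index set $J$; but all the needed compatibilities are exactly what Remark~\ref{rem:dimcomp} and Remark~\ref{rem:normcomp} provide, so no genuinely new input is required.
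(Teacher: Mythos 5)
Your proposal is correct and follows essentially the same route as the paper's proof: unfold the $\limsup$, use Proposition~\ref{prop:lognormadapt} to get an $S$-adapted decomposition realising $\lognorm(f)$ up to $\varepsilon$, split the index set according to the two branches of $\lognorm'$, and feed the resulting data into Corollary~\ref{cor:logtorgensplit} via the compatibilities of Remark~\ref{rem:dimcomp} and Remark~\ref{rem:normcomp}. No substantive differences.
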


\begin{proof}[Proof of Theorem~\ref{thm:dynupperproofsec}]
  We spell out the proof for~$\widehat t_n$. The proof
  for the Betti gradients works basically in the same way. 
  Because $\Gamma$ is of type~FP$_{n+1}$, there exists a
  free $\Z\Gamma$-res\-o\-lu\-tion~$C_*$ of~$\Z$ that has
  finite rank in degrees~$\leq n+1$.
  By definition of the 
  measured embedding volume, it suffices to prove the following:
  If $f_* \colon C_* \to D_*$ is an $\alpha$-embedding, then
  \[ 
  \widehat t_n(\Gamma,\Gamma_*) \leq \lognorm (\partial^D_{n+1}).
  \]

  Thus, let $f_* \colon C_* \to D_*$ be an $\alpha$-embedding. 
  In view of Theorem~\ref{thm:adaptedemb},
  there exists a constant~$K\in \R_{>0}$ such that: For every~$\delta \in \R_{>0}$,
  there exists a $\Gamma_*$-adapted $\alpha$-embedding~$C_* \to \widehat D_*$
  with
  \begin{align*}
    \dgh K {\widehat D_*, D_*,n} < \delta.
  \end{align*}

  Let $\delta \in \R_{>0}$ and let $\widehat f_* \colon C_* \to
  \widehat D_*$ be such a $\Gamma_*$-adapted $\alpha$-embedding. 
  Combining the retracts for~$\widehat D_*$ from Theorem~\ref{thm:adaptedretractlocal}
  and the logarithmic norm estimates from Theorem~\ref{thm:logtorlognormlocal},
  we obtain
  \begin{align*}
    \widehat t_n(\Gamma,\Gamma_*)
    & = \limsup_{i \in I}
    \frac{\log \#\tors H_n(\Gamma_i;\Z)}
         {[\Gamma \colon \Gamma_i]}
    \\     
    & \leq
    \limsup_{i \in I}
    \frac{\log \# \tors \bigl( \widehat D_n(i)_\Gamma / \im \partial_{n+1}^{\widehat D}(i)_\Gamma \bigr)}
         {[\Gamma : \Gamma_i]}
    & \text{(Theorem~\ref{thm:adaptedretractlocal})}     
    \\     
    & \leq
    \lognorm (\partial_{n+1}^{\widehat D})
    & \text{(Theorem~\ref{thm:logtorlognormlocal})}
    \\
    & \leq
    \lognorm (\partial_{n+1}^{D})
    + \delta \cdot \log_+ K.
    & \text{(Proposition~\ref{prop:lognorm}~\ref{itm:lognorm-GH})}
  \end{align*}
  Taking~$\delta \to 0$, we get the desired estimate~$\widehat t_n(\Gamma,\Gamma_*)
  \leq \lognorm (\partial_{n+1}^D)$.
\end{proof}  

\subsection{\texorpdfstring{$L^2$-Betti numbers}{L2-Betti numbers}}

Analogously to the retraction argument for the gradient estimate,
we can apply the retraction argument also on the level of von~Neumann
algebras. This leads to the $L^2$-Betti number estimate (Theorem~\ref{thm:l2upperintro}).

\begin{thm}\label{thm:l2upper}
  Let $n \in \N$, 
  let $\Gamma$ be a group of type~$\FP_{n+1}$,
  and let $\alpha$  be a standard $\Gamma$-action. 
  Then:
  \begin{align*}
    \ltb n \Gamma
    \leq 
    \medim_n^\Z (\alpha).
  \end{align*}
\end{thm}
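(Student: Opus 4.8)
The plan is to mimic the retraction argument used for the homology gradients (Theorem~\ref{thm:dynupperproofsec}), but now carrying it out over the group von~Neumann algebra~$\mathcal{N}\Gamma$ instead of over the rings~$R_i$. Fix a free $\Z\Gamma$-resolution~$C_*$ of~$\Z$ that has finite rank in degrees~$\le n+1$; this exists because $\Gamma$ is of type~$\FP_{n+1}$. By the definition of~$\medim_n^\Z(\alpha)$ as an infimum over $\alpha$-embeddings, it suffices to show that for every $\alpha$-embedding~$f_*\colon C_*\to D_*$ with target the crossed product ring~$R=\linf{\alpha,\Z}*\Gamma$, one has $\ltb n\Gamma\le\dim_R(D_n)$.

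The key point is that the crossed product ring~$R$ embeds into the crossed product von~Neumann algebra~$M\coloneqq L^\infty(X,\mu)\rtimes\Gamma$, which carries a finite faithful trace, and that there is a trace-preserving inclusion $\mathcal{N}\Gamma\hookrightarrow M$; moreover $M$ is flat over~$\mathcal{N}\Gamma$ in the relevant (dimension-theoretic) sense, and the von~Neumann dimension is compatible with the trace so that for a marked projective $R$-module~$P=\bigoplus_{i\in I}\gen{A_i}$ one gets $\dim_M(M\otimes_R P)=\sum_{i\in I}\mu(A_i)=\dim_R(P)$. First I would apply the induction functor $M\otimes_R(\args)$ to the $\alpha$-embedding: since $C_*$ is a free $\Z\Gamma$-resolution of~$\Z$, $M\otimes_{\Z\Gamma}C_*$ is an $M$-resolution of $M\otimes_{\Z\Gamma}\Z\cong M\otimes_{L^\infty(X)}\big(L^\infty(X)\otimes_{\Z\Gamma}\Z\big)\cong M\otimes_R\linf{\alpha,\Z}$, and the induced chain map $M\otimes_R C_*\to M\otimes_R D_*$ extends the identity on this module. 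By the fundamental lemma of homological algebra there is an $M$-chain homotopy retraction $M\otimes_R D_*\to M\otimes_R C_*$, so $H_n(M\otimes_R C_*)$ (equivalently $H_n^{(2)}$-type homology, up to the usual reduced/unreduced and dimension-completion subtleties) is an $M$-module retract of $H_n(M\otimes_R D_*)$. Passing to von~Neumann dimensions over~$M$ and using that $\dim_M$ is monotone under submodules/retracts and additive, I would conclude
\[
  \ltb n\Gamma=\dim_{\mathcal{N}\Gamma}H_n^{(2)}(\Gamma)=\dim_M H_n(M\otimes_R C_*)\le\dim_M H_n(M\otimes_R D_*)\le\dim_M(M\otimes_R D_n)=\dim_R(D_n),
\]
where the first equality combines Lück's definition of $L^2$-Betti numbers with flatness of~$M$ over~$\mathcal{N}\Gamma$ and compatibility of the traces (so that dimension is unchanged under $\mathcal{N}\Gamma\to M$), and the last equality is the dimension computation above.

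The main obstacle I expect is bookkeeping around the dimension theory: one must ensure that $\dim_M$ behaves well on the (possibly non-finitely-generated, non-closed) homology modules $H_n(M\otimes_R D_*)$ — this requires invoking Lück's extended dimension function for arbitrary modules over a finite von~Neumann algebra, its additivity and cofinality properties, and the fact that retracts do not increase dimension. A secondary technical point is verifying that $M=L^\infty(X)\rtimes\Gamma$ is dimension-flat over~$\mathcal{N}\Gamma$ and that the canonical trace on~$M$ restricts to the canonical trace on~$\mathcal{N}\Gamma$, so that $\dim_M(M\otimes_{\mathcal{N}\Gamma}V)=\dim_{\mathcal{N}\Gamma}(V)$ for $\mathcal{N}\Gamma$-modules~$V$; this is standard (it is exactly the mechanism behind the Gaboriau-type comparison of $L^2$-Betti numbers with dynamical invariants) but should be cited carefully. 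Everything else — the existence of the resolution, the fundamental lemma, and the dimension count on marked projective modules — is routine given the setup already developed in the paper.
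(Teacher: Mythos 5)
Your overall architecture (retraction via the fundamental lemma, then a dimension count over a finite von~Neumann algebra using Sauer's dimension-flatness) is the same as the paper's, and your choice of $M = L^\infty(X,\mu)\rtimes\Gamma$ versus the von~Neumann algebra $N\Rrel$ of the orbit relation is immaterial for essentially free actions. However, there is a genuine gap in the step where you produce the retraction: you claim that $M\otimes_{\Z\Gamma}C_*$ is an $M$-resolution of $M\otimes_R\linf{\alpha,\Z}$ and then invoke the fundamental lemma over~$M$. This claim is false in general. Indeed, $H_n(M\otimes_{\Z\Gamma}C_*)=H_n(\Gamma;M)$, and by the very dimension-flatness result you cite, $\dim_M H_n(\Gamma;M)=\ltb n\Gamma$; so whenever $\ltb n\Gamma>0$ (e.g.\ $\Gamma=F_2$, $n=1$) the complex $M\otimes_{\Z\Gamma}C_*$ is \emph{not} exact in degree~$n$. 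Your proposal is thus internally inconsistent: the same homology module cannot simultaneously vanish (as required for the fundamental lemma) and have positive von~Neumann dimension (as required for your first two equalities). Without exactness of the target of the would-be retraction, the fundamental lemma does not give you the chain map $M\otimes_R D_*\to M\otimes_R C_*$.

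The fix is to change the order of operations, which is exactly what the paper does: first induce only up to the crossed product ring $R=\linf{\alpha,\Z}*\Gamma$. There $\widehat C_*\coloneqq R\otimes_{\Z\Gamma}C_*$ genuinely \emph{is} a free $R$-resolution of $\linf{\alpha,\Z}$, because $R$ is flat over $\Z\Gamma$ (Proposition~\ref{prop:flat}, which rests on $L^\infty(X,\Z)$ being free abelian). Apply the fundamental lemma over~$R$ to obtain $\widehat g_*\colon D_*\to\widehat C_*$ with $\widehat g_*\circ\widehat f_*\simeq_R\id$, and only then tensor the entire retraction diagram with the von~Neumann algebra; tensoring preserves chain homotopies regardless of flatness, so $H_n(N\Rrel\otimes_{\Z\Gamma}C_*)$ is a retract of $H_n(N\Rrel\otimes_R D_*)$, and the dimension count you wrote down goes through verbatim from that point. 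Your remaining technical concerns (extended dimension function, monotonicity under retracts, compatibility of traces) are correctly identified and handled by citing the groupoid-ring description $\ltb n\Gamma=\dim_{N\Rrel}H_n(\Gamma;N\Rrel)$.
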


\begin{proof}
  We write~$\Rrel \coloneqq \Rrel_\alpha$ for the orbit relation of~$\alpha$
  and use the following description of the $L^2$-Betti numbers~\cite{sauergroupoids}:
  \[ \ltb n \Gamma
  = \dim_{N\Rrel} H_n(\Gamma; N\Rrel).
  \]  
  Let $C_*$ be a free $\Z\Gamma$-resolution of~$\Z$
  that has finite rank in degrees~$\leq n+1$ and let $f_* \colon C_* \to D_*$
  be an $\alpha$-embedding.
  It suffices to show that $\ltb n \Gamma \leq \dim_{\Linf{\alpha} *\Gamma} (D_n)$.
  Let $\widehat C_* \coloneqq (\linf \alpha * \Gamma) \otimes_{\Z \Gamma} C_*$
  and let $\widehat f_* \colon \widehat C_* \to D_*$ be the chain map
  induced by~$f_*$.

  Because $\linf \alpha * \Gamma$ is flat over~$\Z\Gamma$
  (Proposition~\ref{prop:flat}), $\widehat C_*$ is a free
  $\linf \alpha*\Gamma$-resolution of~$\linf \alpha$. Therefore,
  the fundamental lemma of homological algebra provides us
  with an $\linf \alpha * \Gamma$-chain map~$\widehat g_* \colon \widehat D_* \to \widehat C_*$
  (up to degree~$n+1$) that extends~$\id_{\linf \alpha}$ and that
  satisfies
  \[ \widehat g_* \circ \widehat f_*
  \simeq_{\linf \alpha * \Gamma} \id_{\widehat C_*}.
  \]

  Finally, we pass to the level of the von~Neumann algebra~$N\Rrel$:
  Let $\widetilde C_* \coloneqq N\Rrel \otimes_{\linf \alpha * \Gamma} \widehat C_*
  \cong_{N \Rrel} N\Rrel \otimes_{\Z\Gamma} C_*$, let $\widetilde D_* \coloneqq N\Rrel
  \otimes_{\linf \alpha * \Gamma} D_*$, and let
  \begin{align*}
    \widetilde f_*
    & \coloneqq \id_{N \Rrel} \otimes \widehat f_*
    \colon \widetilde C_* \to \widetilde D_*,
    \\
    \widetilde g_*
    & \coloneqq \id_{N \Rrel} \otimes \widehat g_*
    \colon \widetilde D_* \to \widetilde C_*.
  \end{align*}
  In particular, we obtain~$\widetilde g_* \circ \widetilde f_*
  \simeq_{N\Rrel} \id_{\widetilde C_*}$ from the corresponding relation
  between~$\widehat g_*$ and~$\widehat f_*$. Therefore, $H_n(\widetilde C_*)$
  is an $N\Rrel$-retract of~$H_n(\widetilde D_*)$ and the properties
  of~$\dim_{N\Rrel}$~\cite{sauergroupoids} show that 
  \begin{align*}
    \ltb n \Gamma
    & = \dim_{N\Rrel} H_n(\Gamma;N\Rrel)
    \\
    & = \dim_{N\Rrel} H_n(N\Rrel \otimes_{\Z\Gamma} C_*)
    = \dim_{N\Rrel} H_n(\widetilde C_*)
    & \text{(by definition)}
    \\
    & \leq \dim_{N\Rrel} H_n(\widetilde D_*)
    & \text{(by the retract)}
    \\
    & \leq \dim_{N\Rrel} \widetilde D_n
    & \text{(properties of~$\dim_{N\Rrel}$)}
    \\
    & = \dim_{\linf \alpha *\Gamma} D_n. 
  \end{align*}
  Taking the infimum over all $\alpha$-embeddings proves the claim.
\end{proof}

\addtocontents{toc}{\clearpage}
\part{Examples}

We provide examples and computations for the measured embedding dimension and volume:: in degree~$0$ (Section~\ref{sec:deg0}), for amenable groups (Section~\ref{sec:ame:have:CEP}), for amalgamated products (Section~\ref{sec:amalg}), for products with an amenable factor (Section~\ref{sec:amenable_factor}), and for finite index subgroups (Section~\ref{sec:finindex}).

Later we will give further examples, especially on hyperbolic $3$-manifolds  (Section~\ref{sec:ex_hyperbolic} and Section~\ref{sec:ex_simvol}), using dynamical inheritance properties established in Part~\ref{part:dyn}.

\section{Basic properties}

We collect some basic properties of~$\medim$ and~$\mevol$.

\begin{setup}
	Throughout, let~$Z$ be the integers (with the usual norm) or a finite field (with the trivial norm).
\end{setup}

\begin{lem}
\label{lem:integers_field}
	Let~$n\in \IN$ and let~$\Gamma$ be a group of type~$\sfFP_{n+1}$.
	Let~$\alpha$ be a standard $\Gamma$-action.
	Let~$Z$ be a finite field (with the trivial norm).
	Then
	\[
		\fa{r\in\{0,\ldots,n+1\}} \medim^Z_r(\alpha)\le \medim^\IZ_r(\alpha).
	\]
	\begin{proof}
		Let $C_*\to D_*$ be an $\alpha$-embedding over~$\IZ$.
		Then~$Z\otimes_\IZ C_*$ is a $Z\Gamma$-resolution of~$Z$ because~$C_*$ is contractible as a $\IZ$-chain complex.
		Hence $Z\otimes_\IZ C_*\to Z\otimes_\IZ D_*$ is an $\alpha$-embedding over~$Z$ with 
		\[
			\dim_{L^\infty(\alpha,Z)\ast \Gamma}(Z\otimes_\IZ D_r)=\dim_{L^\infty(\alpha,\IZ)}(D_r).
		\]
		This proves the claim.
	\end{proof}
\end{lem}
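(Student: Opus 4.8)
The plan is to exhibit, starting from an $\alpha$-embedding over $\IZ$, an $\alpha$-embedding over the finite field $Z$ whose degree-$r$ module has the same $\dim$-value. The only nontrivial point is that base change along $\IZ \to Z$ turns a free $\IZ\Gamma$-resolution of $\IZ$ into a free $Z\Gamma$-resolution of $Z$, and that it sends the marked projective augmented $R_\IZ$-chain complex $D_*$ to a marked projective augmented $R_Z$-chain complex without changing dimensions.

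First I would fix a free $\IZ\Gamma$-resolution $C_*\to \IZ$ finitely generated in degrees $\le n+1$, and let $f_*\colon C_* \to D_*$ be an $\alpha$-embedding up to degree~$n$ over $\IZ$, so $D_*\epi \linf{\alpha,\IZ}$ is a marked projective augmented chain complex over $R_\IZ \coloneqq \linf{\alpha,\IZ}\ast\Gamma$ and $f_*$ is a $\IZ\Gamma$-chain map up to degree $n+1$ extending $\IZ \to \linf{\alpha,\IZ}$. Applying $Z\otimes_\IZ -$ to $C_*$: since $C_*$ is a resolution of $\IZ$ by free $\IZ\Gamma$-modules and $\IZ$ is contractible as a complex of $\IZ$-modules (the augmentation splits over $\IZ$), $C_*$ is contractible over $\IZ$, hence $Z\otimes_\IZ C_*$ remains exact and augments to $Z\otimes_\IZ \IZ \cong Z$; the terms $Z\otimes_\IZ C_r$ are free $Z\Gamma$-modules of the same (finite, in degrees $\le n+1$) rank. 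Next I would base change the target: observe $Z\otimes_\IZ \linf{\alpha,\IZ} \cong \linf{\alpha,Z}$ (a finite $Z$-linear combination of characteristic functions valued in $Z$ is exactly a reduction mod the relevant ideal of one valued in $\IZ$), and more generally $Z\otimes_\IZ R_\IZ \cong \linf{\alpha,Z}\ast\Gamma \eqqcolon R_Z$; applying $Z\otimes_\IZ -$ coordinatewise to the marked presentation $D_r = \bigoplus_{i} \linf{\alpha,\IZ}\ast\Gamma \cdot \chi_{A_i}$ gives $\bigoplus_i R_Z\cdot\chi_{A_i}$, a marked projective $R_Z$-module with the \emph{same} marked structure, hence $\dim_{R_Z}(Z\otimes_\IZ D_r) = \sum_i \mu(A_i) = \dim_{R_\IZ}(D_r)$. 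The augmentation and boundary maps base change to $R_Z$-maps, surjectivity of the augmentation is preserved by right exactness of $\otimes$, and $Z\otimes_\IZ f_*$ is a $Z\Gamma$-chain map up to degree $n+1$ extending $Z \to \linf{\alpha,Z}$.

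Having produced this $\alpha$-embedding over $Z$, I would conclude: for every $r\in\{0,\dots,n+1\}$, the augmented complex $Z\otimes_\IZ D_*$ lies in $\Aug_r(\alpha)$ over $Z$ and witnesses $\medim^Z_r(\alpha) \le \dim_{R_Z}(Z\otimes_\IZ D_r) = \dim_{R_\IZ}(D_r)$; taking the infimum over all $\alpha$-embeddings over $\IZ$ yields $\medim^Z_r(\alpha) \le \medim^\IZ_r(\alpha)$. One small subtlety, which is really the only place requiring care, is the range of degrees: an $\alpha$-embedding ``up to degree $n$'' only provides the chain map up to degree $n+1$, so the argument directly bounds $\medim^Z_r$ for $r\le n$ via an embedding up to degree $n$; to get the stated range $r\in\{0,\dots,n+1\}$ one uses that $\Gamma$ is of type $\sfFP_{n+1}$ together with the standard convention that $\medim^Z_{n+1}$ is formed from $\alpha$-embeddings up to degree $n+1$ (so we may simply run the whole argument with $n$ replaced by $n+1$, using a resolution finitely generated in degrees $\le n+2$ — which exists since type $\sfFP_{n+1}$ gives finite generation through degree $n+1$, and in the definition of $\medim^Z_{n+1}$ only $D_{n+1}$ matters). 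The main (and essentially only) obstacle is thus bookkeeping: checking that flatness of $\linf{\alpha,\IZ}$ over $\IZ$ (Proposition~\ref{prop:flat}, already invoked implicitly) is not even needed here — contractibility of $C_*$ over $\IZ$ does the job — and verifying the natural isomorphism $Z\otimes_\IZ (\linf{\alpha,\IZ}\ast\Gamma) \cong \linf{\alpha,Z}\ast\Gamma$ respects the marked structure, which is immediate from the explicit description of the crossed product ring.
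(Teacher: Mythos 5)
Your proposal is correct and follows exactly the same route as the paper's proof: base change the $\alpha$-embedding along $\IZ\to Z$, using contractibility of $C_*$ over $\IZ$ to see that $Z\otimes_\IZ C_*$ is still a resolution, and observing that the marked structure (hence the dimension) of $Z\otimes_\IZ D_r$ is unchanged. The paper states this in three lines; your additional checks (that $Z\otimes_\IZ\linf{\alpha,\IZ}\cong\linf{\alpha,Z}$ compatibly with the crossed product and the marked presentations, and the degree bookkeeping) are correct elaborations of details the paper leaves implicit.
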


\begin{lem}
\label{lem:cd}
	Let~$n\in \IN$ and let~$\Gamma$ be a group such that there exists a finite free $\IZ\Gamma$-resolution of~$\IZ$ of length~$n$.
	 Let~$\alpha$ be a standard $\Gamma$-action.
	Then
	\begin{align*}
		\fa{r>n} & \medim^Z_r(\alpha)=0
		\\
		\fa{r\ge n} & \mevol_r(\alpha)=0.
	\end{align*}
	\begin{proof}
		By Lemma~\ref{lem:integers_field}, we may assume that~$Z=\IZ$.
		Let~$C_*$ be a finite free $\IZ\Gamma$-resolution of~$\IZ$ of length~$n$.
		Then~$D_*\coloneqq \ind_{\IZ\Gamma}^{\LinftyX\ast \Gamma}C_*$ is a marked projective chain complex augmented over~$\LinftyX$.
		The canonical $\IZ\Gamma$-map $C_*\to D_*$ is an $\alpha$-embedding.
		By construction, we have $D_r=0$ for all~$r>n$.
		Hence for all~$r>n$, we have
		\[
			\medim^\IZ_r(\alpha)\le \dim (D_r)=0.
		\]
		For all~$r\ge n$, we have
		\[
			\mevol_r(\alpha)\le \lognorm(\partial^{D}_{r+1})\le \dim(D_{r+1})\cdot \log_+\|\partial^{D}_{r+1}\|=0.
		\]
		This finishes the proof.
	\end{proof}
\end{lem}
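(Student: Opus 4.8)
The plan is to produce, from a finite free resolution, a single $\alpha$-embedding whose target complex is concentrated in degrees $\leq n$, and then read off both vanishing statements from this one complex. First, by Lemma~\ref{lem:integers_field} it suffices to treat the case $Z = \IZ$ for the $\medim$-statement (the $\mevol$-statement is only formulated for $Z = \IZ$ anyway). So fix a finite free $\IZ\Gamma$-resolution $C_* \epi \IZ$ of $\IZ$ of length $n$; thus each $C_r$ is finitely generated free and $C_r = 0$ for $r > n$.

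Next, set $R \coloneqq \LinftyX * \Gamma = \linf{\alpha,\IZ} * \Gamma$ and apply the induction functor $\ind_{\IZ\Gamma}^{R}$ from Remark~\ref{rem:fromZGtoZR}. Since $R$ is flat over $\IZ\Gamma$ by Proposition~\ref{prop:flat}, the complex $D_* \coloneqq \ind_{\IZ\Gamma}^{R} C_*$ is an $R$-resolution of $R \otimes_{\IZ\Gamma} \IZ \cong_R \LinftyX$; choosing $\IZ\Gamma$-bases for the $C_r$ equips each $D_r$ with a marked structure of finite type (as $C_r$ is finitely generated), so $D_* \epi \LinftyX$ is a marked projective augmented $R$-chain complex in the sense of Section~\ref{sec:basics}. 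The canonical $\IZ\Gamma$-chain map $C_* \to D_*$, $x \mapsto 1 \otimes x$, extends the inclusion $\IZ \to \LinftyX$ as constant functions, so $(D_* \epi \LinftyX,\ C_* \to D_*)$ is an $\alpha$-embedding up to every degree; in particular $D_* \epi \LinftyX \in \Aug_r(\alpha)$ for all $r$. By construction $D_r = 0$ for all $r > n$.

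Finally we extract the two estimates. For $r > n$ we get $\medim^\IZ_r(\alpha) \leq \dim_R(D_r) = 0$ directly from the definition of $\medim$. For $r \geq n$ we have $D_{r+1} = 0$, hence $\partial^D_{r+1} = 0$, and the dimension estimate in Proposition~\ref{prop:lognorm}~(i) gives $\lognorm(\partial^D_{r+1}) \leq \dim_R(D_{r+1}) \cdot \logp \|\partial^D_{r+1}\| = 0$; thus $\mevol_r(\alpha) = 0$. The only genuinely non-routine point is checking that $\ind_{\IZ\Gamma}^{R} C_*$ really is an admissible target complex for an $\alpha$-embedding --- that it is a marked projective augmented $R$-chain complex of finite type and that the canonical map extends the constant-function inclusion --- but this is precisely the content of Remark~\ref{rem:fromZGtoZR} together with the flatness supplied by Proposition~\ref{prop:flat}, so no real obstacle arises.
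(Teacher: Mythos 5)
Your proposal is correct and follows essentially the same route as the paper: induce the finite free resolution up to $\LinftyX * \Gamma$, observe that the target vanishes above degree~$n$, and read off both estimates from the definitions together with the dimension bound for $\lognorm$. The extra remarks on flatness and the marked structure are fine (though for an $\alpha$-embedding the target need not even be a resolution), and they do not change the argument.
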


\section{\texorpdfstring{Degree~$0$}{Degree 0}}
\label{sec:deg0}

We show that every standard action of an infinite group has~$\medim$ and~$\mevol$ equal to zero in degree~$0$.

\begin{prop}\label{prop:deg0infinite}
	Let~$\Gamma$ be a finitely generated group with finite generating set~$S$.
	Let~$(C_*,\zeta)$ be a free $\IZ\Gamma$-resolution of~$\IZ$ with $\partial_1\colon C_1\to C_0$ given by
	\[
		\partial_1\colon \bigoplus_{s\in S}\IZ\Gamma\cdot e_s\to \IZ\Gamma, \quad \partial_1(e_s)=1_\Gamma-s.
	\]
	Let~$\alpha\colon \Gamma\actson (X,\mu)$ be a standard action.
	Then the following are equivalent:
	\begin{enumerate}[label=\enum]
		\item The group~$\Gamma$ is infinite;
		\item For every~$\varepsilon\in \IR_{>0}$, there exists an $\alpha$-embedding $C_*\to D_*$ with $\dim(D_0)<\varepsilon$ and $\|\partial^D_1\|\le 2$;
		\item For every~$\varepsilon\in \IR_{>0}$, there exists an $\alpha$-embedding $C_*\to D_*$ with ${\dim(D_0)<\varepsilon}$.
	\end{enumerate}
	\begin{proof}
		We show that (i) implies (ii).
		Let~$\varepsilon\in \IR_{>0}$.
		Since~$\Gamma$ is infinite, there exists a measurable subset~$A$ of~$X$ and a finite subset~$F$ of~$\Gamma$ with $\mu(A)<\varepsilon/2$ and $\mu(X\setminus F\cdot A)<\varepsilon/2$~\cite[Proposition~1]{Levitt95}.
		Set~$B\coloneqq X\setminus F\cdot A$.
		The $\LinftyX\ast \Gamma$-module $D_0\coloneqq \spann{A}\oplus \spann{B}$ satisfies $\dim (D_0)<\varepsilon$.
		Let $\eta\colon D_0\to \LinftyX$ be the $\LinftyX\ast\Gamma$-linear map that sends~$\chi_A$ to~$\chi_A$ and~$\chi_B$ to~$\chi_B$.
		We construct an element~$x\in D_0$ with $\eta(x)=1$ as follows.
		Denote the elements of the finite set~$F$ by~$\gamma_1,\ldots,\gamma_k$.
		Set $A_1\coloneqq \gamma_1\cdot A$ and $A_j\coloneqq \gamma_j\cdot A\setminus \bigcup_{m=1}^{j-1}A_{m}$ for~$j\in \{2,\ldots,k\}$.
		Then $\sqcup_{j=1}^k A_j=F\cdot A$.
		The element
		\[
			x\coloneqq \sum_{j=1}^k\gamma_j\cdot \chi_{\gamma_j^{-1}A_j}\cdot \chi_A + \chi_B\in D_0
		\]
	 	satisfies~$\eta(x)=1$ by construction.
		Then there is an $\alpha$-embedding (in low degrees) given by
		\[\begin{tikzcd}
			\cdots\ar{r}			
			& \bigoplus_{s\in S}\spann{X}\cdot e_s\ar{r}{\partial^D_1}
			& \spann{A}\oplus \spann{B}\ar{r}{\eta}
			& \LinftyX
			\\
			\cdots\ar{r}
			& \bigoplus_{s\in S}\IZ\Gamma\cdot e_s\ar{r}{\partial^C_1}\ar{u}{f_1}
			& \IZ\Gamma\ar{r}{\zeta}\ar{u}{f_0}
			& \IZ\ar[hook]{u}
		\end{tikzcd}\]
		where
		\begin{align*}
			\partial^D_1(e_s) &= (1_\Gamma-s)x; \\
			f_0(1_\Gamma) &= x; \\
			f_1(e_s) &= e_s.
		\end{align*}
		Clearly, (ii) implies (iii).
		We show that (iii) implies (i).
		Suppose that~$\Gamma$ is finite.
		We show that every $\alpha$-embedding $(C_*,\zeta)\to (D_*,\eta)$ satisfies $\dim D_0\ge 1/\#\Gamma$.
		Indeed, let $D_0=\bigoplus_{i\in I}\spann{A_i}$ and let~$x\in D_0$ with $\eta(x)=\chi_X$.
		We write $x=\sum_{i\in I}g_i\cdot \chi_{A_i}\cdot e_i$ for some~$g_i\in \LinftyX\ast \Gamma$.
		Then
		\[
			\chi_X=\eta(x)=\sum_{i\in I}g_i
\cdot \eta(\chi_{A_i}\cdot e_i).
		\]
		Since~$\supp(\eta(\chi_{A_i})\cdot e_i)\subset A_i$, we conclude
		\[
			1\le \sum_{i\in I}\#\Gamma\cdot \mu(A_i)
		\]
		and the claim follows.
	\end{proof}
\end{prop}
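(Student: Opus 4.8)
The plan is to establish the cycle of implications (i)$\Rightarrow$(ii)$\Rightarrow$(iii)$\Rightarrow$(i); since (ii)$\Rightarrow$(iii) is immediate (forget the norm bound), the work is concentrated in (i)$\Rightarrow$(ii) and in the contrapositive of (iii)$\Rightarrow$(i).

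For (i)$\Rightarrow$(ii), fix $\varepsilon \in \IR_{>0}$. Since $\Gamma$ is infinite and $\alpha$ is essentially free and measure preserving, a classical ergodic-theoretic fact (Levitt, \cite{Levitt95}) provides a measurable $A \subset X$ and a finite subset $F = \{\gamma_1,\dots,\gamma_k\} \subset \Gamma$ with $\mu(A) < \varepsilon/2$ and $\mu(X \setminus F\cdot A) < \varepsilon/2$. Put $B \coloneqq X \setminus F\cdot A$, set $D_0 \coloneqq \spann A \oplus \spann B$ (so $\dim D_0 < \varepsilon$), put $D_r \coloneqq 0$ for $r \ge 2$, and let $\eta \colon D_0 \to \LinftyX$ be the $R$-linear map with $\chi_A \mapsto \chi_A$ and $\chi_B \mapsto \chi_B$. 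Disjointifying $F\cdot A$ via $A_1 \coloneqq \gamma_1 A$ and $A_j \coloneqq \gamma_j A \setminus \bigcup_{m<j} A_m$, the element $x \coloneqq \sum_{j=1}^k \gamma_j \cdot \chi_{\gamma_j^{-1}A_j} \cdot \chi_A + \chi_B \in D_0$ satisfies $\eta(x) = \sum_j \chi_{A_j} + \chi_B = \chi_X$. Setting $D_1 \coloneqq \bigoplus_{s\in S} \spann X \cdot e_s$, $f_1(e_s) \coloneqq e_s$ (the canonical inclusion on each summand), $f_0(1_\Gamma) \coloneqq x$, and $\partial^D_1(e_s) \coloneqq (1_\Gamma - s)\cdot x$ then defines a marked projective augmented $R$-chain complex $D_* \epi \LinftyX$ — note $\eta \circ \partial^D_1(e_s) = (1_\Gamma - s)\cdot \eta(x) = (1_\Gamma-s)\cdot\chi_X = 0$ since $s$ fixes the constant function — together with a $\IZ\Gamma$-chain map $C_* \to D_*$ up to degree $1$ extending $\IZ \hookrightarrow \LinftyX$; that is, an $\alpha$-embedding up to degree $0$ with $\dim D_0 < \varepsilon$.

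The remaining point, which I expect to be the main obstacle, is the norm bound $\|\partial^D_1\| \le 2$. I would verify it via the explicit description of the operator norm (Proposition~\ref{prop:opnormalt}). The crucial structural fact is that $x$ is supported, coordinate-wise, on the measurable partition $\{A_1,\dots,A_k,B\}$ of $X$: in a reduced presentation of $\partial^D_1(e_s) = (1_\Gamma - s)\cdot x$ the $+1$-contributions occupy the $\Gamma$-slots $\{\gamma_1,\dots,\gamma_k,1\}$ over the pairwise disjoint sets $\{A_1,\dots,A_k,B\}$, while the $-1$-contributions occupy the slots $\{s\gamma_1,\dots,s\gamma_k,s\}$ over the pairwise disjoint sets $\{sA_1,\dots,sA_k,sB\}$. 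Since each of $\{A_j\}\cup\{B\}$ and $\{sA_j\}\cup\{sB\}$ partitions $X$, any subfamily of these sets with positive-measure common intersection contains at most one member of the first family and at most one of the second, so the maximum occurring in Proposition~\ref{prop:opnormalt} is at most $1+1=2$. This gives (ii), and (iii) follows trivially.

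For (iii)$\Rightarrow$(i) I argue contrapositively. Assume $\#\Gamma < \infty$ and let $f_* \colon C_* \to D_*$ be any $\alpha$-embedding with $D_0 = \bigoplus_{i\in I}\spann{A_i}$. Choose $x = \sum_{i\in I} g_i\cdot\chi_{A_i}\cdot e_i \in D_0$ with $\eta(x) = \chi_X$, $g_i \in R$. By Remark~\ref{rem:support:est}, each $\eta(\chi_{A_i}\cdot e_i)$ is supported in $A_i$, so $g_i\cdot\eta(\chi_{A_i}\cdot e_i)$ is supported in $\bigcup_{\gamma\in\Gamma}\gamma\cdot A_i$, which has measure at most $\#\Gamma\cdot\mu(A_i)$. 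Summing, $1 = \mu(X) = \mu\bigl(\supp \eta(x)\bigr) \le \sum_{i\in I}\#\Gamma\cdot\mu(A_i) = \#\Gamma\cdot\dim D_0$, hence $\dim D_0 \ge 1/\#\Gamma$. Thus (iii) fails for any $\varepsilon < 1/\#\Gamma$, closing the cycle.
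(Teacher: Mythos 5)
Your proof is correct and follows essentially the same route as the paper's: the same Levitt-based construction of $A$, $B$, $x$, and $\partial^D_1(e_s)=(1_\Gamma-s)x$ for (i)$\Rightarrow$(ii), and the same support/measure counting for (iii)$\Rightarrow$(i). Your explicit verification of $\|\partial^D_1\|\le 2$ via Proposition~\ref{prop:opnormalt} (using that the positive and negative supports each refine a partition of $X$) is a detail the paper leaves implicit, and it is sound.
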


\begin{cor}\label{cor:deg0infiniteme}
  Let $\Gamma$ be a finitely generated infinite group and
  let $\alpha$ be a standard action of~$\Gamma$.
  Then
  \[ 
  \medim^Z_0 (\alpha) = 0
  \qand
  \mevol_0 (\alpha) = 0.
  \]
\end{cor}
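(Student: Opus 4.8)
The plan is to deduce Corollary~\ref{cor:deg0infiniteme} directly from Proposition~\ref{prop:deg0infinite}, combined with Lemma~\ref{lem:integers_field} and the basic $\lognorm$ dimension estimate from Proposition~\ref{prop:lognorm}. First I would fix a finite generating set~$S$ of the finitely generated infinite group~$\Gamma$ and a free $\IZ\Gamma$-resolution~$(C_*,\zeta)$ of~$\IZ$ with the standard degree-one differential~$\partial_1(e_s)=1_\Gamma-s$, so that Proposition~\ref{prop:deg0infinite} applies. Then, for every~$\varepsilon\in\IR_{>0}$, part~(ii) of that proposition produces an $\alpha$-embedding~$C_*\to D_*$ (over~$\IZ$) with~$\dim(D_0)<\varepsilon$ and~$\|\partial_1^D\|\le 2$.

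For the measured embedding dimension, this immediately gives~$\medim_0^\IZ(\alpha)\le\dim(D_0)<\varepsilon$ for all~$\varepsilon>0$, hence~$\medim_0^\IZ(\alpha)=0$; the case of a finite field~$Z$ then follows from Lemma~\ref{lem:integers_field}, which gives~$\medim_0^Z(\alpha)\le\medim_0^\IZ(\alpha)=0$. For the measured embedding volume (where~$Z=\IZ$), I would use the same family of $\alpha$-embeddings and estimate
\[
\mevol_0(\alpha)\le\lognorm(\partial_1^D)\le\dim(D_0)\cdot\logp\|\partial_1^D\|\le\varepsilon\cdot\log 2,
\]
where the middle inequality is the dimension estimate~$\lognorm(f)\le\dim(M)\cdot\logp\|f\|$ from Proposition~\ref{prop:lognorm}~(i) applied to~$\partial_1^D\colon D_1\to D_0$ together with the domain-bound (actually it is cleaner to use~$\lognorm(f)\le\dim(N)\cdot\logp\|f\|$ with~$N=D_0$, since~$D_1$ has infinite dimension here). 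Letting~$\varepsilon\to 0$ yields~$\mevol_0(\alpha)=0$.

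There is essentially no obstacle: the entire content has already been established in Proposition~\ref{prop:deg0infinite}, and the only point requiring a moment's care is that~$D_1=\bigoplus_{s\in S}\gen X$ has~$\dim(D_1)=\#S$ finite as well, so either dimension bound in Proposition~\ref{prop:lognorm}~(i) works; I would phrase the argument using~$\dim(D_0)$ to match the vanishing of~$\medim_0$. The proof is a two-line deduction once the auxiliary results are cited in the right order.
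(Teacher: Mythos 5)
Your proof is correct and follows essentially the same route as the paper: reduce to $Z=\IZ$ via Lemma~\ref{lem:integers_field} and then apply Proposition~\ref{prop:deg0infinite}, with the $\mevol$ part handled by combining the bound $\|\partial_1^D\|\le 2$ from part~(ii) with the dimension estimate $\lognorm(\partial_1^D)\le\dim(D_0)\cdot\logp\|\partial_1^D\|$ from Proposition~\ref{prop:lognorm}~(i). (Your parenthetical worry that $D_1$ has infinite dimension is moot, as you yourself note later --- $\dim(D_1)=\#S<\infty$ --- but in any case the codomain bound via $\dim(D_0)<\varepsilon$ is the one that gives the vanishing.)
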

\begin{proof}
	By Lemma~\ref{lem:integers_field}, we may assume that $Z=\IZ$.
  Then this is a direct consequence of Proposition~\ref{prop:deg0infinite} and
   the definition of the measured embedding dimension/volume. 
\end{proof}

\section{Amenable groups have cheap embeddings}\label{sec:ame:have:CEP}

We show that every standard action of the integers and, more generally, of an infinite amenable group has~$\medim$ and~$\mevol$ equal to zero in all degrees.

\subsection{The integers}

\begin{setup}\label{setup:integers}
  We consider the infinite cyclic group~$\Gamma \coloneqq \Z = \spann{t}$
  and a standard action $\alpha\colon \Gamma \actson (X,\mu)$.
Given $\delta\in \R_{>0}$ and $N\in \N$, by the Rokhlin lemma~\cite[Theorem~7.5]{kechrismiller}, there exist measurable subsets~$A,B\subset X$ with $\mu(B)<\delta$ such that
\[
	X=A\sqcup tA\sqcup t^2A\sqcup \cdots\sqcup t^{N-1}A\sqcup B .
\]
Clearly, $\mu(A)\le 1/N$.
Since $X=tX$, we have
\begin{equation}
	A\sqcup B = t^NA\sqcup tB,
\end{equation}
a fact that will be used repeatedly in the sequel.

Consider the marked projective $\LinftyX\ast \Gamma$-module $\spann{A}\oplus \spann{B}$ and define the element
\[
	x\coloneqq \sum_{j=0}^{N-1} t^j\chi_A + \chi_B \in\spann{A}\oplus\spann{B}.
\]
\end{setup}

\begin{prop}\label{prop:integers resolution}
	In the situation of Setup~\ref{setup:integers}, there is an $\LinftyX\ast \Gamma$-resolution of~$\LinftyX$ of the form
	\[
		0\to D_1\xrightarrow{\partial_1} D_0\xrightarrow{\eta} \LinftyX\to 0,
	\] 
	where $D_0=D_1=\spann{A}\oplus \spann{B}$ and the $\LinftyX\ast \Gamma$-linear maps $\eta$ and $\partial_1$ are given on generators by
	\begin{align*}
		\eta(\chi_A) &= \chi_A; \\
		\eta(\chi_B) &= \chi_B; \\
		\partial_1(\chi_A) &= \chi_A(t^0-t^1)x; \\
		\partial_1(\chi_B) &= \chi_B(t^0-t^1)x.
	\end{align*}
	\begin{proof}
		First of all, $(D_*,\eta)$ is an augmented chain complex, since we have
		\begin{align*}
			\eta(\partial_1(\chi_A)) &= \eta\bigl(\chi_A(\chi_A+\chi_B-t^N\chi_A-t\chi_B)\bigr) \\
			&= \chi_A(\chi_{A\sqcup B}-\chi_{t^NA\sqcup tB}) =0
		\end{align*}
		and similarly $\eta(\partial_1(\chi_B))=0$.
		Moreover, $\eta$ is surjective since $\eta(x)=\chi_X$.
			
		We show that $(D_*,\eta)$ is a resolution by exhibiting an $\LinftyX$-linear chain contraction~$c_*\colon D_*\to D_{*+1}$.
		\[\begin{tikzcd}
			0\ar{r} & \spann{A}\oplus \spann{B}\ar{r}{\partial_1} & \spann{A}\oplus \spann{B}\ar{r}{\eta}\ar[bend left]{l}{c_0} & \LinftyX\ar{r}\ar[bend left]{l}{c_{-1}} & 0
		\end{tikzcd}\]
		Define the $\LinftyX$-linear maps~$c_0$ and~$c_1$ on generators by
		\begin{align*}
			c_{-1}(\chi_X) &= x; \\
			c_0(t^m\chi_A) &={\begin{cases} -\chi_{t^mA} \sum_{j=0}^{m-1}t^j\chi_A - \chi_{t^mA}\sum_{j=0}^{m-1}t^j\chi_B & \text{if }m\ge 0; \\
			\chi_{t^mA} \sum_{j=m}^{-1}t^j\chi_A + \chi_{t^mA}\sum_{j=m}^{-1}t^j\chi_B & \text{if }m< 0;
			\end{cases}} \\
			c_0(t^m\chi_B) &={\begin{cases} -\chi_{t^mB} \sum_{j=0}^{m-1}t^j\chi_A -  \chi_{t^mB}\sum_{j=0}^{m-1}t^j\chi_B &\text{if }m\ge 0; \\
			\chi_{t^mB} \sum_{j=m}^{-1}t^j\chi_A +  \chi_{t^mB}\sum_{j=m}^{-1}t^j\chi_B &\text{if }m< 0.
			\end{cases}}
		\end{align*}
		
		We verify that $c_*$ is a chain contraction:
		First, we clearly have $\eta\circ c_{-1}=\id_{\LinftyX}$.
		Second, we have to show that $\partial_1\circ c_0=\id_{\spann{A}\oplus \spann{B}}- c_{-1}\circ\eta$.
		Indeed, for~$m\ge 0$ we have
		\begin{align*}
			\partial_1(c_0(t^m\chi_A))
			&= -\chi_{t^mA} \biggl(\sum_{j=0}^{m-1}t^j\biggr) (\chi_A+\chi_B) (t^0-t^1)x \\
			&= -\chi_{t^mA} \biggl(\sum_{j=0}^{m-1}t^j\biggr) (t^0-t^1)x \\
			&= -\chi_{t^mA}(t^0-t^m)x \\
			&= -\chi_{t^mA}x + \chi_{t^mA}t^m x \\
			&= -\eta(t^m\chi_A)x + t^m\chi_{A}x \\
			&= -c_{-1}(\eta(t^m\chi_A)) + t^m\chi_{A}
		\end{align*}
		and similarly for~$m<0$.
		The calculation that $\partial_1(c_0(t^m\chi_B))=t^m\chi_B-c_{-1}(\eta(t^m\chi_B))$ for all~$m\in \IZ$ is analogous.
		
		Third, we have to show that $c_0\circ \partial_1=\id_{\spann{A}\oplus \spann{B}}$.
		Indeed, for~$m\ge 0$ we have
		\begin{align*}
			&c_0\bigl(\partial_1(t^m\chi_A)\bigr)
			= c_0\bigl(t^m\chi_A(\chi_A+\chi_B-t^N\chi_A-t\chi_B)\bigr) \\
			&= \chi_{t^mA}\bigl(c_0(t^m\chi_A)-c_0(t^{m+N}\chi_A)-c_0(t^{m+1}\chi_B)\bigr) \\
			&= \chi_{t^mA}\biggl(-\chi_{t^mA}\sum_{j=0}^{m-1}t^j(\chi_A+\chi_B) + \chi_{t^{m+N}A}\sum_{j=0}^{m+N-1}t^j(\chi_A+\chi_B)
			\\
			&\quad + \chi_{t^{m+1}B}\sum_{j=0}^{m}t^j(\chi_A+\chi_B)\biggr) \\
			&= \chi_{t^mA}\biggl(-\chi_{t^mA}\sum_{j=0}^{m-1}t^j(\chi_A+\chi_B) + \chi_{t^{m+N}A}\sum_{j=m+1}^{m+N-1}t^j(\chi_A+\chi_B) 
			\\
			&\quad + (\chi_{t^{m+N}A}+\chi_{t^{m+1}B})\sum_{j=0}^{m}t^j(\chi_A+\chi_B)\biggr) \\
			&= \chi_{t^mA}\biggl(-\chi_{t^mA}\sum_{j=0}^{m-1}t^j(\chi_A+\chi_B) + \chi_{t^{m+N}A}t^m\sum_{j=1}^{N-1}t^j(\chi_A+\chi_B) 
			\\
			&\quad + (\chi_{t^{m}A}+\chi_{t^{m}B})\sum_{j=0}^{m}t^j(\chi_A+\chi_B)\biggr) \\
			&= \chi_{t^mA}\biggl(\chi_{t^{m+N}A}t^m\sum_{j=1}^{N-1}t^j\chi_B + t^m\chi_A\biggr) \\
			&=t^m\chi_A,
		\end{align*}
		where for the last equality we use the following Lemma~\ref{lem:disj:sets:rokhlin}, and similarly for~$m<0$.
		The calculation that $c_0(\partial_1(t^m\chi_B))=t^m\chi_B$ for all~$m\in \IZ$ is analogous.
		This finishes the proof.
	\end{proof}
\end{prop}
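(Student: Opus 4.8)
The plan is to verify the two defining properties of a resolution separately: that $(D_*,\eta)$ is an augmented chain complex, and that it is exact. The augmented-chain-complex part amounts to $\eta\circ\partial_1=0$ together with surjectivity of~$\eta$, and both follow at once from the ``Rokhlin identity'' $A\sqcup B=t^NA\sqcup tB$ (which is just $X=tX$ applied to the tower decomposition of~$X$) and from $\eta(x)=\chi_X$. Concretely, $(t^0-t^1)x=(\chi_A+\chi_B)-(t^N\chi_A+t\chi_B)$ in~$D_0$, so the $\LinftyX\ast\Gamma$-linear map~$\eta$ sends it to $\chi_{A\sqcup B}-\chi_{t^NA\sqcup tB}=0$; multiplying by~$\chi_A$, resp.\ $\chi_B$, gives $\eta(\partial_1(\chi_A))=0$, resp.\ $\eta(\partial_1(\chi_B))=0$, while $\eta(x)=\sum_{j=0}^{N-1}\chi_{t^jA}+\chi_B=\chi_X$ gives surjectivity.

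The bulk of the work is exactness, which I would prove by exhibiting an explicit $\LinftyX$-linear chain contraction $(c_{-1}\colon\LinftyX\to D_0,\ c_0\colon D_0\to D_1)$; exactness at~$D_0$ and injectivity of~$\partial_1$ then follow formally. The choice $c_{-1}(\chi_X)\coloneqq x$ is forced by $\eta\circ c_{-1}=\id_{\LinftyX}$. The idea behind~$c_0$ is that~$\partial_1$ sends each marked generator of~$D_1$ to $\chi_{\bullet}\cdot(1-t)x$, so a contraction has to ``invert'' the difference operator $1-t$; since partial geometric series satisfy $(1-t)\sum_{j=0}^{m-1}t^j=1-t^m$, the natural candidate sends $t^m\chi_A$ and $t^m\chi_B$ to telescoping partial sums $\mp\chi_{t^mA}\sum_jt^j(\chi_A+\chi_B)$ and $\mp\chi_{t^mB}\sum_jt^j(\chi_A+\chi_B)$, with sign~$-$ and range $j\in\{0,\dots,m-1\}$ when $m\ge0$ and sign~$+$ and range $j\in\{m,\dots,-1\}$ when $m<0$. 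Because $D_0$ and $D_1$ are spanned over~$\LinftyX$ by the translates $t^m\chi_A$, $t^m\chi_B$ ($m\in\IZ$), it suffices to define and to verify everything on these generators, which is why both range/sign cases must be carried through in parallel.

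Three identities then have to be checked on all such generators. The first, $\eta\circ c_{-1}=\id_{\LinftyX}$, is the defining property of~$x$. For $\partial_1\circ c_0=\id_{D_0}-c_{-1}\circ\eta$, one computes $\partial_1(c_0(t^m\chi_A))=-\chi_{t^mA}\bigl(\sum_{j=0}^{m-1}t^j\bigr)(1-t)x=-\chi_{t^mA}(1-t^m)x$ (the factor $\chi_A+\chi_B$ acts trivially since $(1-t)x$ has $1$-support in $A\sqcup B$), and then $-\chi_{t^mA}x=-c_{-1}(\eta(t^m\chi_A))$ while $\chi_{t^mA}t^mx=t^m\chi_A$ by disjointness of the translates~$t^jA$; the case $m<0$ is symmetric, as is the computation for~$t^m\chi_B$. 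The third identity, $c_0\circ\partial_1=\id_{D_1}$, is the delicate one: expanding $c_0$ on $\partial_1(t^m\chi_A)=t^m\chi_A(\chi_A+\chi_B-t^N\chi_A-t\chi_B)$ yields three telescoping sums, and after reindexing and recombining them all the cross terms cancel, leaving exactly~$t^m\chi_A$ — but the cancellation hinges on the essential disjointness of the translates $t^mA$, $t^{m+N}A$, $t^{m+1}B$ occurring in the expansion, which is the content of the disjointness lemma recorded below as Lemma~\ref{lem:disj:sets:rokhlin}.

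The main obstacle is thus the identity $c_0\circ\partial_1=\id_{D_1}$: the combinatorial bookkeeping of which translates of the Rokhlin base~$A$ and of the remainder~$B$ overlap, so that the several telescoping sums produced by the four terms of~$\partial_1$ collapse to the single surviving summand. This is precisely where the auxiliary disjointness lemma for Rokhlin translates is indispensable, and the argument must be run uniformly for $m\ge0$ and for $m<0$.
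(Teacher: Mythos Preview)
Your proposal is correct and follows essentially the same approach as the paper: verify $\eta\circ\partial_1=0$ and surjectivity of~$\eta$ via the Rokhlin identity $A\sqcup B=t^NA\sqcup tB$, then construct the identical $\LinftyX$-linear chain contraction with $c_{-1}(\chi_X)=x$ and $c_0$ given by the signed partial geometric sums, checking the three contraction identities and invoking Lemma~\ref{lem:disj:sets:rokhlin} for the collapse in $c_0\circ\partial_1=\id$. Your added motivation for the shape of~$c_0$ (inverting $1-t$ via telescoping) is a nice expository touch, but the argument itself coincides with the paper's.
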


\begin{lem}\label{lem:disj:sets:rokhlin}
	For all~$j\in \{1,\ldots,N-1\}$, we have 
	\begin{align*}
		A\cap t^NA\cap t^jB &=\emptyset; \\
		B\cap t^NA\cap t^jB &= \emptyset.
	\end{align*}
	\begin{proof}
		We only prove the first statement, as the second is proved similarly.
		We proceed by induction on~$j$.
		For $j=1$, we have $t^NA\cap tB=\emptyset$.
		Assume for all~$j\in \{1,\ldots,N-2\}$ that $A\cap t^NA\cap t^jB=\emptyset$.
		Let~$b\in B$.
		We have to show that $t^{N-1}b\not\in A\cap t^NA$.
		We have $tb\in tB\subset A\sqcup B$.
		If $tb\in A$, then $t^{N-1}b\in t^{N-2}A$ and hence $t^{N-1}b\not\in A\cap t^NA$.
		If $tb\in B$, then $t^{N-1}b\in t^{N-2}B$ and hence by induction $t^{N-1}b\not\in A\cap t^NA$.
	\end{proof}
\end{lem}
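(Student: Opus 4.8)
The plan is to reduce both statements to the single disjointness $t^N A \cap t^j B = \emptyset$ for $j \in \{1,\dots,N-1\}$. Once this is available, the first assertion follows from the inclusion $A \cap t^N A \cap t^j B \subseteq t^N A \cap t^j B$ and the second from $B \cap t^N A \cap t^j B \subseteq t^N A \cap t^j B$. (As a consistency check one may also note that $t^N A \subseteq t^N A \sqcup tB = A \sqcup B$, so $t^N A \cap t^j B$ already lies in $A \sqcup B$, and the two displayed equalities are in fact together equivalent to $t^N A \cap t^j B = \emptyset$; this observation will not be needed in the argument.)

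To prove $t^N A \cap t^j B = \emptyset$, I would use that the generator $t$ acts on $(X,\mu)$ by a measure-preserving automorphism and apply $t^{-j}$, which transforms this set into $t^{N-j}A \cap B$. For $j \in \{1,\dots,N-1\}$ the exponent $N-j$ again lies in $\{1,\dots,N-1\}$, so $t^{N-j}A$ is one of the levels $tA,\dots,t^{N-1}A$ of the Rokhlin tower $X = A \sqcup tA \sqcup \dots \sqcup t^{N-1}A \sqcup B$ fixed in Setup~\ref{setup:integers}. By construction these levels are disjoint from~$B$, hence $t^{N-j}A \cap B = \emptyset$ and therefore $t^N A \cap t^j B = \emptyset$.

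If one prefers to argue without transporting sets by $t^{-j}$, the same disjointness can instead be obtained by induction on~$j$: the base case $j=1$ is exactly $t^N A \cap tB = \emptyset$, which is read off from the identity $A \sqcup B = t^N A \sqcup tB$ recorded in Setup~\ref{setup:integers}; for the step from~$j$ to~$j+1$ (with $j+1 \le N-1$) one writes a hypothetical common point as $t^{j+1}b$ with $b \in B$, uses $tb \in tB \subseteq A \sqcup B$ to split into the cases $tb \in A$ (then $t^{j+1}b \in t^j A$, which is disjoint from both $A$ and $B$ by the tower partition, as $1 \le j \le N-2$) and $tb \in B$ (then $t^{j+1}b \in t^j B$, so the triple intersection with $A \cap t^N A$ or with $B \cap t^N A$ is empty by the inductive hypothesis). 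Either route is short, and I do not expect a genuine obstacle; the only point demanding care is the index bookkeeping, namely ensuring that the shifted exponents remain in the range $\{1,\dots,N-1\}$ in which the Rokhlin tower guarantees disjointness from~$B$ and from~$A$.
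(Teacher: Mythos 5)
Your proposal is correct, and your primary route is genuinely different from (and shorter than) the paper's. The paper proves the two displayed equalities by induction on~$j$: the base case $j=1$ is read off from $A \sqcup B = t^NA \sqcup tB$, and the inductive step writes a point of $t^jB$ as $t^{j}b$ with $b \in B$, uses $tb \in tB \subseteq A \sqcup B$, and splits into the cases $tb \in A$ and $tb \in B$ — exactly the argument you sketch as your fallback in the last paragraph. Your main argument instead establishes the stronger statement $t^NA \cap t^jB = \emptyset$ in one step: applying the measure-preserving bijection $t^{-j}$ turns this set into $t^{N-j}A \cap B$, and since $N-j \in \{1,\dots,N-1\}$ the set $t^{N-j}A$ is a level of the Rokhlin tower $X = A \sqcup tA \sqcup \cdots \sqcup t^{N-1}A \sqcup B$ and hence disjoint from~$B$ (all of this up to null sets, consistently with the paper's almost-everywhere conventions). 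Your preliminary observation that, because $t^NA \subseteq t^NA \sqcup tB = A \sqcup B$, the two equalities of the lemma are jointly equivalent to $t^NA \cap t^jB = \emptyset$ is also correct, so nothing is lost by proving the stronger claim. The translation argument buys brevity and avoids the case analysis entirely; the paper's induction buys nothing extra here beyond staying closer to the identity $A \sqcup B = t^NA \sqcup tB$ that is used repeatedly in the surrounding computations.
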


\begin{cor}\label{cor:integers:medim:mevol:zero}
	Let~$\alpha$ be a standard action of~$\IZ$.
	For every~$n\in \IN$, we have $$\medim^Z_n(\alpha)=0 \qand \mevol_n(\alpha)=0.$$
	\begin{proof}
	By Lemma~\ref{lem:integers_field}, we may assume that~$Z=\IZ$.
	  Since the group~$\Gamma\coloneqq \IZ$ is of type~F, there exists a finite free $\IZ\Gamma$-resolution~$C_*$ of the trivial $\IZ\Gamma$-module~$\IZ$.
          The $\LinftyX * \Gamma$-resolution~$(D_*,\eta)$ from Proposition~\ref{prop:integers resolution}
          satisfies
          \[ \dim(D_0)=\dim(D_1)=\mu(A\cup B)\le 1/N+\delta
          \qand \|\partial_1\|\le 2.
          \]
		By the fundamental lemma of homological algebra, there exists an $\LinftyX\ast \Gamma$-chain map $f_*\colon \LinftyX\otimes_\IZ C_*\to D_*$ extending~$\id_{\LinftyX}$.
		(Since both $\LinftyX\otimes_\IZ C_*$ and~$D_*$ are projective resolutions, the map~$f_*$ is in fact a chain homotopy equivalence.)
		Then the composition
		\[
			C_*\to \LinftyX\otimes_\IZ C_*\xrightarrow{f_*} D_*
		\]
		is an $\alpha$-embedding.
	\end{proof}
\end{cor}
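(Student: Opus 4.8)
The plan is to produce, for arbitrarily small tolerance, a concrete $\alpha$-embedding whose degree-$0$ and degree-$1$ terms have small dimension and whose first boundary map has operator norm at most~$2$. First I would reduce to $Z = \IZ$: by Lemma~\ref{lem:integers_field} one has $\medim^Z_n(\alpha) \le \medim^\IZ_n(\alpha)$ for a finite field~$Z$, and the statement about $\mevol$ only concerns $Z = \IZ$. Fixing $N \in \IN$ and $\delta \in \IR_{>0}$, I would invoke the Rokhlin tower of Setup~\ref{setup:integers} and the explicit two-term resolution of Proposition~\ref{prop:integers resolution}: this gives an $\LinftyX * \Gamma$-resolution $0 \to D_1 \xrightarrow{\partial_1} D_0 \xrightarrow{\eta} \LinftyX \to 0$ with $D_0 = D_1 = \gen A \oplus \gen B$, hence $\dim(D_0) = \dim(D_1) = \mu(A) + \mu(B) \le 1/N + \delta$; from the formulas $\partial_1(\chi_A) = \chi_A(t^0 - t^1)x$ and $\partial_1(\chi_B) = \chi_B(t^0 - t^1)x$, the explicit norm description (Proposition~\ref{prop:opnormalt}), and the support-disjointness in Lemma~\ref{lem:disj:sets:rokhlin} (which ensures that at most two of the relevant group-ring terms overlap), one reads off $\|\partial_1\| \le 2$.

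Next I would convert this resolution of $\LinftyX$ into an $\alpha$-embedding. Since $\Gamma = \IZ$ is of type~F, there is a finite free $\IZ\Gamma$-resolution $C_*$ of the trivial module~$\IZ$. Using that $\LinftyX * \Gamma$ is flat over~$\IZ\Gamma$ (Proposition~\ref{prop:flat}), the complex $\widehat C_* \coloneqq \LinftyX \otimes_\IZ C_* \cong (\LinftyX * \Gamma) \otimes_{\IZ\Gamma} C_*$ is a free $\LinftyX * \Gamma$-resolution of~$\LinftyX$, so the fundamental lemma of homological algebra yields an $\LinftyX * \Gamma$-chain map $f_* \colon \widehat C_* \to D_*$ extending $\id_{\LinftyX}$. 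Precomposing with the canonical $\IZ\Gamma$-inclusion $C_* \hookrightarrow \widehat C_*$ gives a $\IZ\Gamma$-chain map $C_* \to D_*$ extending the inclusion $\IZ \hookrightarrow \LinftyX$, i.e.\ an $\alpha$-embedding in the sense of Definition~\ref{def:alpha-emb}; since $D_*$ is an honest resolution, nothing has to be checked in degrees above~$n+1$ (the chain map extends by zero there, the relevant identities following from exactness of $D_*$).

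Feeding this $\alpha$-embedding into the definitions gives $\medim^\IZ_n(\alpha) \le \dim(D_n)$ and $\mevol_n(\alpha) \le \lognorm(\partial^D_{n+1})$. For $n \ge 2$ we have $D_n = 0$; for $n \ge 1$ we have $D_{n+1} = 0$, so $\partial^D_{n+1}$ is a map out of the zero module and $\lognorm(\partial^D_{n+1}) = 0$ by the dimension estimate of Proposition~\ref{prop:lognorm}; for $n \in \{0,1\}$ we get $\dim(D_n) \le 1/N + \delta$, and additionally $\lognorm(\partial^D_1) \le \dim(D_1) \cdot \logp \|\partial^D_1\| \le (1/N + \delta)\log 2$ when $n = 0$, again by the dimension estimate of Proposition~\ref{prop:lognorm}. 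Since $N$ and $\delta$ are arbitrary and the $\alpha$-embedding may be chosen to depend on them, letting $N \to \infty$ and $\delta \to 0$ forces the infima defining $\medim^\IZ_n(\alpha)$ and $\mevol_n(\alpha)$ to vanish. (Alternatively, the degrees $n \ne 1$ are already covered by Corollary~\ref{cor:deg0infiniteme} and Lemma~\ref{lem:cd}, since $\IZ$ admits a free resolution of length~$1$.)

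I do not expect a genuine obstacle: all the real work is packaged in Proposition~\ref{prop:integers resolution} and its explicit $\LinftyX$-linear chain contraction built from the Rokhlin tower. The only mildly delicate points are the bookkeeping of which degrees the two-term resolution controls and the verification that $\|\partial_1\| \le 2$ — and even the latter is inessential, since it only enters through $\dim(D_1) \cdot \logp \|\partial_1\| \to 0$ as $N \to \infty$.
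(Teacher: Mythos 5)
Your proposal is correct and follows essentially the same route as the paper: reduce to $Z=\IZ$ via Lemma~\ref{lem:integers_field}, take the explicit Rokhlin-tower resolution of Proposition~\ref{prop:integers resolution} with $\dim(D_0)=\dim(D_1)\le 1/N+\delta$ and $\|\partial_1\|\le 2$, and upgrade it to an $\alpha$-embedding via flatness and the fundamental lemma of homological algebra. The only cosmetic quibble is that the bound $\|\partial_1\|\le 2$ really comes from the disjointness $A\cap B=\emptyset$ and $t^NA\cap tB=\emptyset$ rather than from Lemma~\ref{lem:disj:sets:rokhlin}, but this does not affect the argument.
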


We describe an explicit $\alpha$-embedding for a standard action of~$\IZ$.

\begin{ex}
\label{ex:integers}
	For $\Gamma\coloneqq \IZ = \spann{t}$, we consider the usual $\IZ\Gamma$-resolution~$C_*$
	\[
		0\to \IZ\Gamma\to \IZ\Gamma\to \IZ\to 0
	\]
	and the induced $\LinftyX\ast \Gamma$-resolution $\LinftyX\otimes_\IZ C_*$
	\[
		0\to \LinftyX\otimes_\IZ \IZ\Gamma\xrightarrow{\partial^C_1} \LinftyX\otimes_\IZ \IZ\Gamma\xrightarrow{\zeta} \LinftyX\otimes_\IZ \IZ\to 0,
	\]
	where
	\begin{align*}
		\zeta(t^0) &= \chi_X; \\
		\partial^C_1(t^0) &= t^0-t^1.
	\end{align*}
	In the situation of Setup~\ref{setup:integers}, let $(D_*,\eta)$ be the $\LinftyX\ast \Gamma$-resolution from Proposition~\ref{prop:integers resolution}.
	We exhibit chain maps $f_*\colon \LinftyX\otimes_\IZ C_*\to D_*$ and $r_*\colon D_*\to \LinftyX\otimes_\IZ C_*$ and a chain homotopy $h_*\colon \LinftyX\otimes_\IZ C_*\to \LinftyX\otimes_\IZ C_{*+1}$ between $r_*\circ f_*$ and $\id_{\LinftyX\otimes_\IZ C_*}$:
	\[\begin{tikzcd}
		0\ar{r} & \spann{A}\oplus \spann{B}\ar{r}{\partial^D_1}\ar[bend left]{d}{r_1} & \spann{A}\oplus \spann{B}\ar{r}{\eta}\ar[bend left]{d}{r_0} & \LinftyX\ar{r}\ar{d}{\cong} & 0 \\
		0\ar{r} & \LinftyX\otimes_\IZ \IZ\Gamma\ar{r}{\partial^C_1}\ar{u}{f_1} & \LinftyX\otimes_\IZ \IZ\Gamma\ar{r}{\zeta}\ar{u}{f_0}\ar[bend left]{l}{h_0} & \LinftyX\otimes_\IZ \IZ\ar{r}\ar{u}\ar[bend left]{l}{h_{-1}} & 0
	\end{tikzcd}\]
	The $\LinftyX\ast \Gamma$-chain maps~$f_*$ and $r_*$ are given on generators by
	\begin{align*}
		f_0(t^0) &= x \quad \text{ (where $x$ is defined as in Setup~\ref{setup:integers})}; \\
		f_1(t_0) &= \chi_A+\chi_B; \\
		r_0(\chi_A) &= \chi_A\otimes t^0; \\
		r_0(\chi_B) &= \chi_B\otimes t^0; \\
		r_1(\chi_A) &= \chi_A \widetilde{x}; \\
		r_1(\chi_B) &= \chi_B \widetilde{x},
	\end{align*}
	where $\widetilde{x}\in \LinftyX\otimes_\IZ \IZ\Gamma$ is defined as
	\[
		\widetilde{x}\coloneqq \sum_{j=0}^{N-1} \chi_{t^NA}\otimes t^j + \chi_{tB}\otimes t^0.
	\]
	Indeed, $f_*$ and $r_*$ are chain maps, the only non-obvious identity being the following:
	\begin{align*}
		\partial^C_1(r_1(\chi_A)) &= \partial^C_1(\chi_A\widetilde{x}) \\
		&= \chi_A\Bigl(\sum_{j=0}^{N-1}\chi^{t^NA}\otimes (t^j-t^{j+1})+\chi_{tB}\otimes (t^0-t^1)\Bigr) \\
		&= \chi_A(\chi_{t^NA}\otimes t^0-\chi_{t^NA}\otimes t^N+\chi_{tB}\otimes t^0-\chi_{tB}\otimes t^1) \\
		&= \chi_A(\chi_A\otimes t^0 + \chi_B\otimes t^0-\chi_{t^NA}\otimes t^N - \chi_{tB}\otimes t^1) \\
		&= \chi_Ar_0(\chi_A+\chi_B-t^N\chi_A-t\chi_B) \\
		&= r_0(\chi_A(t^0-t^1)x) \\
		&= r_0(\partial_1^D(\chi_A))
	\end{align*}
	and similarly $\partial^C_1(r_1(\chi_B))=r_0(\partial^D_1(\chi_B))$.
	The $\LinftyX\ast \Gamma$-chain homotopy~$h_*$ is given by~$h_{-1}=0$ and
	\[
		h_0(t^0) = -\sum_{j=0}^{N-1}\sum_{k=0}^{j-1} \chi_{t^jA}\otimes t^k.
	\]
	We have
	\begin{align*}
		\partial^C_1(h_0(t^0)) 
		&= -\sum_{j=0}^{N-1}\sum_{k=0}^{j-1}\chi_{t^jA}\otimes (t^k-t^{k+1}) \\
		&= -\sum_{j=0}^{N-1}\chi_{t^jA}\otimes(t^0-t^j) \\
		&= \sum_{j=0}^{N-1}t^j\chi_A\otimes t^0 - \sum_{j=0}^{N-1}\chi_{t^jA}\otimes t^0 \\
		&= \sum_{j=0}^{N-1}t^j\chi_A\otimes t^0 + \chi_B\otimes t^0 - t^0 \\
		&= r_0(x)-t^0 \\
		&= r_0(f_0(t_0))-t^0
	\end{align*}
	and
	\begin{align*}
		h_0(\partial^C_1(t^0)) 
		&= h_0(t^0-t^1) \\
		&= -\sum_{j=0}^{N-1}\sum_{k=0}^{j-1}\chi_{t^jA}\otimes t^k + t^1\sum_{j=0}^{N-1}\sum_{k=0}^{j-1}\chi_{t^jA}\otimes t^k \\
		&= -\sum_{j=1}^{N-1}\chi_{tjA}\otimes t^0 + \sum_{k=1}^{N-1}\chi_{t^NA}\otimes t^k \\
		&= \widetilde{x}-t^0 \\
		&= r_1(f_1(t^0))-t^0.
	\end{align*}
	Finally, we note that the operator-norms of the above maps satisfy the following estimates
	\begin{align*}
		\|f_0\|, \|f_1\|, \|r_0\| &\le1; \\
		 \|r_1\| &\le N; \\
		 \|h_0\| &\le N^2.
	\end{align*}
\end{ex}

\subsection{Amenable groups}
  
We prove that standard actions~$\alpha$ of infinite amenable groups have medim and mevol equal to zero in all degrees.
We do so by constructing $\alpha$-embeddings with arbitrarily small dimension and lognorm, to which we refer as ``cheap'' $\alpha$-embeddings.
We first develop some general preparations.
Given a matrix~$\Lambda$ over~$\IZ\Gamma$ and a marked projective module~$D_1$, we construct a marked projective module~$D_2$ and a map $D_2\to D_1$ given by right multiplication with~$\Lambda$ such that $\dim(D_2)$ is controlled by~$\dim(D_1)$.

\begin{rem}\label{rem:matrix bound}
	Let $\Lambda=(\lambda_{ij})_{(i,j)\in I\times J}$ be a matrix with entries in~$\IZ\Gamma$.
	We set
	\[
		\kappa(\Lambda)\coloneqq \max_{i,j} |\lambda_{ij}|_1.
	\]
	Then $|\lambda_{ij}|_1\ge \max\{|\lambda_{ij}|_\infty, \#\supp_\Gamma(\lambda_{ij})\}$.
	Let $f\colon \bigoplus_{i\in I}\spann{A_i}\to \bigoplus_{j\in J}\spann{B_j}$ be a map between marked projective modules given by right multiplication with the matrix~$\Lambda$.
	Then by Lemma~\ref{lem:normN1est} we have
	\[
		\|f\|\le \Ntmax(f)\cdot \|f\|_\infty\le \kappa(\Lambda)^2\cdot \#J.
	\]
	The upper bound~$\kappa(\Lambda)^2\cdot \#J$ for~$\|f\|$ is very coarse but depends only on the matrix~$\Lambda$ and not on the sets~$(A_i)_i$ and~$(B_j)_j$.
\end{rem}

\begin{lem}\label{lem:supp1 extension}
	Let~$\Gamma$ be a group and let $\alpha\colon \Gamma\actson (X,\mu)$ be a standard action.
	Let~$I$ and~$J$ be finite sets, let~$\Lambda=(\lambda_{ij})_{(i.j)\in I\times J}$ be a matrix with entries in~$\IZ\Gamma$, and let~$(B_j)_{j\in J}$ be a family of measurable subsets of~$X$.
	Then there exists a family~$(A_i)_{i\in I}$ of measurable subsets of~$X$ with	
	\[
		\mu(A_i)\le \kappa(\Lambda)\cdot \sum_{j\in J} \mu(B_j)
	\]
	and an $\LinftyX\ast\Gamma$-linear map $f\colon \bigoplus_{i\in I}\spann{A_i}\to \bigoplus_{j\in J}\spann{B_j}$ given by right multiplication with~$\Lambda$ satisfying
	\[
		\|f\|\le \kappa(\Lambda)^2\cdot \# J.
	\]
	\begin{proof}
		For~$i\in I$, we consider the element 
		\[
			y_i\coloneqq \sum_{j\in J} \lambda_{ij}\cdot \chi_{B_j}e_j \in \bigoplus_{j\in J}\spann{B_j}.
		\]
		By construction, the subset $A_i\coloneqq \supp_1(y_i)\subset X$ satisfies $\mu(A_i)\le \kappa(\Lambda)\cdot \sum_{j\in J}\mu(B_j)$.
		The map $f\colon \bigoplus_{i\in I}\spann{A_i}\to \bigoplus_{j\in J}\spann{B_j}$ defined by $f(\chi_{A_i}e_i)=y_i$ is a well-defined $\LinftyX\ast\Gamma$-linear map and is given by right multiplication with~$\Lambda$. 
		The map~$f$ satisfies~$\|f\|\le \kappa(\Lambda)^2\cdot \#J$ by Remark~\ref{rem:matrix bound}.
	\end{proof}
\end{lem}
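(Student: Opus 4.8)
The plan is to use the obvious candidate. For each $i\in I$ set
\[
  y_i \coloneqq \sum_{j\in J} \lambda_{ij}\cdot \chi_{B_j}\cdot e_j
  \in \bigoplus_{j\in J}\spann{B_j},
\]
let $A_i\coloneqq \supp_1(y_i)\subset X$, and define $f$ to be the $\LinftyX\ast\Gamma$-homomorphism determined by $f(\chi_{A_i}\cdot e_i)=y_i$. First I would verify well-definedness: since $\supp_1(y_i)\subset A_i$ holds by construction, Remark~\ref{rem:def:homo:marked:proj} yields a well-defined $R$-homomorphism $\spann{A_i}\to\bigoplus_{j\in J}\spann{B_j}$ for each $i$, and these assemble into $f$. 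Evaluating on the generators and using $\lambda_{ij}\cdot\chi_{B_j}=\chi_{A_i}\cdot\lambda_{ij}\cdot\chi_{B_j}$ (which follows from $\supp_1(\lambda_{ij}\chi_{B_j})\subset A_i$ via Remark~\ref{rem:support:est}) shows that $f$ is right multiplication by the matrix $\Lambda$.

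For the dimension estimate I would expand $\supp_1(y_i)=\bigcup_{j\in J}\supp_1(\lambda_{ij}\cdot\chi_{B_j})$ and bound each piece by Lemma~\ref{lem:supp1:est}~(i), so that
\[
  \mu(A_i)\le \sum_{j\in J}\mu\bigl(\supp_1(\lambda_{ij}\cdot\chi_{B_j})\bigr)
  \le \sum_{j\in J}\Nbasic(\lambda_{ij})\cdot\mu(B_j).
\]
Because $\lambda_{ij}\in\IZ\Gamma$ and scaling a single group element by a nonzero integer leaves its $1$-support unchanged, we have $\Nbasic(\lambda_{ij})\le\#\supp_\Gamma(\lambda_{ij})\le|\lambda_{ij}|_1\le\kappa(\Lambda)$ (as recorded in Remark~\ref{rem:matrix bound}), which gives $\mu(A_i)\le\kappa(\Lambda)\sum_{j\in J}\mu(B_j)$, as required.

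For the operator norm, the only point that is not entirely routine is that the bound must be uniform in the sets $B_j$ (hence in the $A_i$), so one cannot read $\|f\|$ off the $\ell^1$-norms of $f$ on the canonical generators (see the warning in Remark~\ref{rem:finnorm}). Instead I would apply the coarse estimate $\|f\|\le\Ntmax(f)\cdot\|f\|_\infty$ of Lemma~\ref{lem:normN1est}; since $\|f\|_\infty\le\max_{i,j}|\lambda_{ij}|_\infty\le\kappa(\Lambda)$ and $\Ntmax(f)=\max_i\Ntbasic(y_i)\le\#J\cdot\kappa(\Lambda)$ by the multiplicity bound $\Ntbasic(\lambda_{ij}\cdot\chi_{B_j})\le\#\supp_\Gamma(\lambda_{ij})\le\kappa(\Lambda)$, one obtains $\|f\|\le\kappa(\Lambda)^2\cdot\#J$. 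All of this is already packaged in Remark~\ref{rem:matrix bound}, so in the final write-up this step is simply a citation of that remark; thus the lemma carries no genuine difficulty beyond organising these observations.
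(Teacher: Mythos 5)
Your proposal is correct and coincides with the paper's proof: the same choice $y_i=\sum_j\lambda_{ij}\chi_{B_j}e_j$, $A_i=\supp_1(y_i)$, and the same appeal to the coarse norm bound $\|f\|\le\Ntmax(f)\cdot\|f\|_\infty$ packaged in Remark~\ref{rem:matrix bound}. The extra details you supply (well-definedness via Remark~\ref{rem:def:homo:marked:proj} and the support estimate via Lemma~\ref{lem:supp1:est}) are exactly what the paper leaves implicit.
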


We denote by~$C_{*\ge 1}$ a chain complex that is concentrated in degrees~$\ge 1$.

\begin{lem}\label{lem:supp1 chain extension}
	Let $C_{*\ge 1}$ be a free $\IZ\Gamma$-chain complex with $C_k\cong \bigoplus_{I_k}\IZ\Gamma$ and $\partial^C_k$ given by (right multiplication with) a matrix~$\Lambda_k$.
	Let~$n\in \IN$ and suppose that $I_k$ is finite for all~$k\le n$.
	Let $(B_{1,j})_{j\in I_1}$ be a family of measurable subsets of~$X$.
	Then there exists a $\LinftyX\ast\Gamma$-chain complex~$D_{*\ge 1}$ with $D_1 = \bigoplus_{j\in I_1}\spann{B_{1,j}}$ satisfying for all~$k\le n$
	\[
		\dim(D_k)\le \dim(D_1)\cdot \prod_{m=2}^k \kappa(\Lambda_{m})\cdot \#I_{m}
	\]
	and
	\[
		\|\partial^D_k\|\le \kappa(\Lambda_k)^2\cdot \#I_{k-1}
	\]
	and there exists an $\LinftyX\ast\Gamma$-chain map~$f_*\colon \LinftyX\otimes_\IZ C_*\to D_*$ given by~$f_k(e_i)=\chi_{B_{k,i}}e_i$.
	\begin{proof}
		We construct~$D_*$, $\partial^D_*$, and~$f_*$ inductively.
		Set~$D_1\coloneqq \bigoplus_{j\in I_1} \spann{B_{1,j}}$ and $f_1(e_j)=\chi_{B_{1,j}}e_j$.
		Lemma~\ref{lem:supp1 extension} yields a module~$D_2=\bigoplus_{i\in I_2}\spann{B_{2,i}}$ with $\dim(D_2)\le \dim(D_1)\cdot  \kappa(\Lambda_2)\cdot \#I_2$
		and a map $\partial^D_2\colon D_2\to D_1$ with $\|\partial^D_2\|\le \kappa(\Lambda_2)^2\cdot \#I_1$.
		Setting~$f_2(e_i)=\chi_{B_{2,i}}e_i$, we have $f_1\circ \partial^C_2=\partial^D_2\circ f_2$.
		We apply Lemma~\ref{lem:supp1 extension} inductively to obtain~$D_n$, $\partial^D_n$, and~$f_n$ with the desired properties.
		For~$k\ge n+1$ and~$i\in I_k$, we simply set~$B_{k,i}\coloneqq X$ and~$D_k\coloneqq \bigoplus_{i\in I_k}\spann{B_{k,i}}$.
		For~$\Lambda_k=(\lambda_{k,ij})$, we set $\partial^D_k(\chi_{B_{k,i}}e_i)=\sum_{j\in I_{k-1}}\lambda_{k,ij}\cdot \chi_{B_{k-1,j}}e_j$. 
		We have thus constructed a commutative diagram
                \[
                \begin{tikzcd}
			\cdots\ar{r}
			& \bigoplus_{I_{n+1}}\spann{X}\ar{r}{\partial^D_{n+1}}
			& \bigoplus_{i\in I_n}\spann{B_{n,i}}\ar{r}{\partial^D_n} 
			& \cdots\ar{r}{\partial^D_2}
			& \bigoplus_{j\in I_1}\spann{B_{1,j}} \\
			\cdots\ar{r}
			& \LinftyX\otimes_\IZ C_{n+1}\ar{r}{\partial^C_{n+1}}\ar{u}{f_{n+1}}
			& \LinftyX\otimes_\IZ C_n\ar{r}{\partial^C_n}\ar{u}{f_n}
			& \cdots\ar{r}{\partial^C_2}
			& \LinftyX\otimes_\IZ C_1\ar{u}{f_1}
		\end{tikzcd}
                \]
		Note that $(D_*,\partial^D_*)$ is indeed a chain complex; for every~$k\ge 2$ we have $\partial^D_{k-1}\circ \partial^D_k=0$ because~$f_k$ is surjective and $\partial^C_{k-1}\circ \partial^C_k=0$.
	\end{proof}
\end{lem}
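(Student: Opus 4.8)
The plan is to build $D_*$ one degree at a time, feeding each boundary matrix $\Lambda_k$ of $C_*$ into the single-step extension lemma (Lemma~\ref{lem:supp1 extension}), which already delivers precisely the size and norm bounds we need for one boundary map. First I would set $D_1 \coloneqq \bigoplus_{j \in I_1} \spann{B_{1,j}}$ and $f_1(e_j) \coloneqq \chi_{B_{1,j}} e_j$, so that $f_1$ is surjective onto $D_1$. Then, inductively for $k = 2, \dots, n$, having already produced the family $(B_{k-1,j})_{j \in I_{k-1}}$, I would apply Lemma~\ref{lem:supp1 extension} to $\Lambda_k$ and this family. This yields measurable sets $(B_{k,i})_{i \in I_k}$ with $\mu(B_{k,i}) \le \kappa(\Lambda_k) \cdot \sum_{j \in I_{k-1}} \mu(B_{k-1,j}) = \kappa(\Lambda_k) \cdot \dim(D_{k-1})$, together with a map $\partial^D_k \colon D_k \coloneqq \bigoplus_{i \in I_k} \spann{B_{k,i}} \to D_{k-1}$ given by right multiplication with $\Lambda_k$ and with $\|\partial^D_k\| \le \kappa(\Lambda_k)^2 \cdot \#I_{k-1}$. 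Summing over $i \in I_k$ gives $\dim(D_k) \le \#I_k \cdot \kappa(\Lambda_k) \cdot \dim(D_{k-1})$, and unrolling this recursion down to $D_1$ yields the claimed product estimate. Taking $f_k(e_i) \coloneqq \chi_{B_{k,i}} e_i$, the square $f_{k-1} \circ \partial^C_k = \partial^D_k \circ f_k$ commutes, because on a generator $e_i$ both sides equal $\sum_{j \in I_{k-1}} \lambda_{k,ij}\, \chi_{B_{k-1,j}} e_j$, using that $B_{k,i}$ is defined as the $1$-support of this very element, so multiplication by $\chi_{B_{k,i}}$ acts as the identity on it.

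For the degrees $k > n$, where no estimate is required, I would simply set $B_{k,i} \coloneqq X$ for all $i \in I_k$, so that $D_k = \bigoplus_{i \in I_k} \spann{X} \cong \LinftyX \otimes_\IZ C_k$ (possibly of infinite rank), take $\partial^D_k$ to be right multiplication with $\Lambda_k$, and let $f_k(e_i) \coloneqq e_i$. This extends the commutative ladder between $\LinftyX \otimes_\IZ C_*$ and $D_*$ to all degrees.

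The one genuine point to check is that $D_*$ really is a chain complex, i.e.\ $\partial^D_{k-1} \circ \partial^D_k = 0$ for every $k \ge 2$. Here the trick is that each $f_k$ is surjective (it sends $e_i$ to the generator $\chi_{B_{k,i}} e_i$ of $\spann{B_{k,i}}$): precomposing $\partial^D_{k-1} \circ \partial^D_k$ with $f_k$ and applying the chain-map identity twice rewrites it as $f_{k-2} \circ \partial^C_{k-1} \circ \partial^C_k = 0$, and surjectivity of $f_k$ then forces $\partial^D_{k-1} \circ \partial^D_k = 0$. I expect this threading of surjectivity through the ladder to be the only subtlety; everything else is routine propagation of the $\kappa(\Lambda_k)$-bounds supplied by Lemma~\ref{lem:supp1 extension}.
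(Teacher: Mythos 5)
Your proposal is correct and follows essentially the same route as the paper: the same inductive application of Lemma~\ref{lem:supp1 extension} degree by degree, the same choice $B_{k,i}\coloneqq X$ for $k>n$, and the same argument that $\partial^D_{k-1}\circ\partial^D_k=0$ follows from surjectivity of $f_k$ together with $\partial^C_{k-1}\circ\partial^C_k=0$. The only difference is that you spell out a couple of details (why the squares commute via the $1$-support, and the explicit rewriting through $f_{k-2}$) that the paper leaves implicit.
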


The point of Lemma~\ref{lem:supp1 chain extension} is that to construct a cheap $\alpha$-embedding~$f_*\colon C_*\to D_*$, it suffices to construct~$D_{*\le 1}$ with $\dim(D_1)$ arbitrarily small and~$f_{*\le 1}$ with~$f_1$ being the obvious projection. 
For amenable groups this can be achieved using the following strong version of the Rokhlin lemma.

\begin{thm}[Rokhlin lemma, {\cite[Theorem~3.6]{CJKMSTD}}]
\label{thm:Rokhlin}
        Let~$\Gamma$ be a countable amenable group,
        let $\alpha\colon \Gamma \actson (X,\mu)$ be a standard action, 
	let~$F\subset \Gamma$ be a finite set, and
	let~$\delta\in \IR_{>0}$.
	Then there exists a $\mu$-conull $\Gamma$-invariant Borel set~$X'\subset X$,
	a finite set~$J$,
	a family~$(A_j)_{j\in J}$ of Borel subsets of~$X'$, and
	a family~$(T_j)_{j\in J}$ of $(F,\delta)$-invariant non-empty finite subsets of~$\Gamma$
	such that $(T_j\cdot x)_{j\in J,x\in A_j}$ partitions~$X'$.
\end{thm}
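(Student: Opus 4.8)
The statement to prove is the strong Rokhlin lemma (Theorem~\ref{thm:Rokhlin}), cited from \cite[Theorem~3.6]{CJKMSTD}. Since the paper explicitly attributes this to the literature, the honest approach is to state that the result is quoted verbatim from \cite{CJKMSTD} and indicate why the hypotheses match: $\Gamma$ is a countable amenable group and $\alpha$ is an essentially free \pmp{} action on a standard Borel probability space (hence conull-equivariantly a free Borel action after discarding a null set), which is exactly the input required for the Ornstein--Weiss / Conley--Jackson--Kerr--Marks--Seward--Tucker-Drob type tiling theorem. So the ``proof'' is really a pointer plus a sentence reconciling notation (their ``$(F,\delta)$-invariant'' is the usual F{\o}lner condition $\#(F T_j \symmdiff T_j) < \delta \cdot \# T_j$, or $\#(F T_j) \le (1+\delta)\#T_j$, depending on the source's convention).

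\textbf{If a self-contained argument is wanted}, here is the route I would take. First I would pass to a free Borel action: by essential freeness there is a conull $\Gamma$-invariant Borel set $X_0 \subset X$ on which $\Gamma$ acts freely, so the orbit equivalence relation $\Rrel|_{X_0}$ is a free Borel $\Gamma$-action in the Borel sense. Second, I would invoke the Ornstein--Weiss quasi-tiling machinery in its Borel-dynamical incarnation: for amenable $\Gamma$, a finite set $F \subset \Gamma$, and $\delta > 0$, one can choose finitely many F{\o}lner sets $T_1, \dots, T_m$ that are $(F,\delta)$-invariant and that ``$\epsilon$-quasi-tile'' $\Gamma$, and then — this is the genuinely hard input — upgrade the quasi-tiling to an \emph{exact} measurable tiling of (a conull subset of) $X$. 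The exactness upgrade is precisely the content of \cite[Theorem~3.6]{CJKMSTD} (building on Ornstein--Weiss and on work of Downarowicz--Huczek--Zhang in the topological setting); it is not elementary, and I would not attempt to reprove it. Third, given the exact tiling, I would set $J$ to index the distinct shapes $T_j$ appearing, let $A_j$ be the Borel set of ``tile centres'' of shape $T_j$, discard the (null) set of points not covered to get the conull invariant $X'$, and observe that $(T_j \cdot x)_{j \in J,\, x \in A_j}$ partitions $X'$ by construction, with each $T_j$ being $(F,\delta)$-invariant.

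\textbf{Main obstacle.} The whole difficulty is concentrated in the passage from quasi-tilings (which Ornstein--Weiss give essentially by a greedy/Zorn argument and are comparatively soft) to \emph{exact} tilings with uniformly F{\o}lner shapes; this is a deep and relatively recent theorem and is exactly why the paper cites it rather than proving it. In the context of this paper, therefore, the right move is simply:

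\begin{proof}[Proof of Theorem~\ref{thm:Rokhlin}]
	This is~\cite[Theorem~3.6]{CJKMSTD}, applied to the essentially free \pmp{} action~$\alpha$ (after restricting to a conull $\Gamma$-invariant Borel subset of~$X$ on which the action is free in the Borel sense). Here $(F,\delta)$-invariance of a finite set~$T \subset \Gamma$ means that~$\#(F \cdot T \symmdiff T) < \delta \cdot \# T$.
\end{proof}

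I would include this short proof in the paper and rely on the deformation machinery of Sections~\ref{sec:strictification}--\ref{sec:deformation} together with Lemma~\ref{lem:supp1 chain extension} to extract the cheap $\alpha$-embeddings for amenable groups from the tiling it provides.
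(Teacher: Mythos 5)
The paper gives no proof of this statement at all — it is quoted directly from the cited reference \cite[Theorem~3.6]{CJKMSTD} — and your proposal correctly identifies this and does the same, with an accurate account of where the genuine difficulty (upgrading Ornstein--Weiss quasi-tilings to exact measurable tilings) lives. Your treatment matches the paper's.
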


Here $T_j$ being \emph{$(F,\delta)$-invariant} means that 
\[
	\frac{\#(F\cdot T_j\triangle T_j)}{\# T_j}\le \delta.
\]

\begin{rem}\label{rem:Rokhlin fund domain}
	If $\Gamma$ is infinite and $F$ contains a generating set, we have $1/\#T_j \le \delta$ for all~$j\in J$ and hence
	\[
		\mu\Bigl(\bigcup_{j\in J} A_j\Bigr)\le \sum_{j\in J}\mu(A_j)
		=\sum_{j\in J} \frac{1}{\#T_j}\cdot  \mu(T_j\cdot A_j)
		\le \delta\cdot \sum_{j\in J}\mu(T_j\cdot A_j)
		= \delta.
	\]
\end{rem}

\begin{thm}
\label{thm:amenable}
	Let~$n\in \IN$ and let~$\Gamma$ be an infinite amenable group of type~$\FP_{n+1}$.
	Let~$\alpha$ be a standard $\Gamma$-action. 
	Then there exists~$K\in \IR_{>0}$ such that for every~$\varepsilon\in \IR_{>0}$, there exists an $\alpha$-embedding $C_*\to D_*$ such that for all~$r\in \{0,\ldots,n+1\}$, we have $\dim (D_r)<\varepsilon$ and $\|\partial^D_r\|\le K$.
	
	In particular, we have:
	\begin{align*}
    \fa{r \in \{0,\dots, n+1\}}
    &\medim^Z_r(\alpha)=0;
    \\
    \fa{r\in \{0,\dots, n\}}
    &\mevol_r(\alpha)=0.
  \end{align*}
	\begin{proof}
		By Lemma~\ref{lem:integers_field}, we may assume that~$Z=\IZ$.
		Let~$S$ be a finite generating set of~$\Gamma$.
		There exists a free $\IZ\Gamma$-resolution~$(C_*,\zeta)$ of~$\IZ$ of the form
		\[
			\cdots\to 
			\bigoplus_{I_2}\IZ\Gamma\xrightarrow{\partial^C_2}
			\bigoplus_{I_1}\IZ\Gamma\xrightarrow{\partial^C_1}
			\IZ\Gamma\xrightarrow{\zeta}
			\IZ\to 0
		\]
		with $I_1=S$, $\partial^C_1(e_s)=1_\Gamma-s$, and $I_k$ finite for~$k\le n+1$.
		We construct a chain map $\LinftyX\otimes_\IZ C_*\to D_*$ from which we then obtain a  ``cheap'' $\alpha$-embedding $C_*\to D_*$ by composition with the canonical map $C_*\to \LinftyX\otimes_\IZ C_*$.
		
		\textbf{Step~1:} $*\le 1$.
		This step uses the amenability of~$\Gamma$ and the Rokhlin lemma.
		Let $F\coloneqq S\cup S^{-1}\subset \Gamma$ and let~$\delta\in \IR_{>0}$.
		Theorem~\ref{thm:Rokhlin} yields a finite set~$J$, a family~$(A_j)_{j\in J}$ of measurable subsets of~$X$, and a family~$(T_j)_{j\in J}$ of finite $(F,\delta)$-invariant finite subsets of~$\Gamma$ such that $(T_j\cdot x)_{j\in J,x\in A_j}$ is a partition of a $\mu$-conull subset of~$X$.
		
		The set~$B_0\coloneqq \bigcup_{j\in J}A_j\subset X$ satisfies $\mu(B_0)< \delta$ by Remark~\ref{rem:Rokhlin fund domain}.
		Hence $D_0\coloneqq \spann{B_0}$ satisfies $\dim(D_0)\le \delta$.
		We consider the element
		\[
			x\coloneqq \sum_{j\in J}\sum_{t\in T_j} t\chi_{A_j} \in \spann{B_0}.
		\]
		For~$s\in S$, let $B_{1,s}\coloneqq \supp_1((1_\Gamma-s)x)\subset X$.
		We claim that $\mu(B_{1,s})\le 2\delta$.
		Indeed, let~$T_{j,s}\coloneqq s\cdot T_j\triangle T_j \subset \Gamma$ and observe that
		\begin{align*}
			\#T_{j,s} 
			&= \#(sT_j\setminus T_j) + \#(T_j\setminus sT_j) \\
			&= \#(sT_j\setminus T_j) + \#(s^{-1}T_j\setminus T_j) \\
			&\le 2\#(FT_j\setminus T_j) \\
			&\le 2\delta \#T_j.
		\end{align*}
		Since $B_{1,s}\subset \bigcup_{j\in J}\bigcup_{t\in T_{j,s}} tA_j$, we have
		\[
			\mu(B_{1,s}) 
			\le \sum_{j\in J}\sum_{t\in T_{j,s}} \mu(tA_j) 
			= \sum_{j\in J} \#T_{j,s}\cdot \mu(A_j) 
			\le 2\delta\cdot \sum_{j\in J} \#T_j\cdot \mu(A_j) = 2\delta.
		\]
		Hence $D_1\coloneqq \bigoplus_{s\in S} \spann{B_{1,s}}$ satisfies $\dim(D_1)\le 2\delta\cdot \#S$.
		
		Define the $\LinftyX\ast \Gamma$-linear map $\partial^D_1\colon D_1\to D_0$ by $\partial^D_1(\chi_{B_{1,s}})=(1_\Gamma-s)x$, which satisfies $\|\partial^D_1\|\le 2$. 
		
		Then the following diagram commutes
		\[\begin{tikzcd}
			\bigoplus_{s\in S} \spann{B_{1,s}}\ar{r}{\partial^D_1} 
			& \spann{B_0}\ar{r}{\eta}
			& \LinftyX \\
			\LinftyX\otimes_\IZ \bigoplus_S \IZ\Gamma\ar{u}{f_1}\ar{r}{\partial^C_1}
			& \LinftyX\otimes_\IZ \IZ\Gamma\ar{u}{f_0}\ar{r}{\zeta}
			& \LinftyX\ar{u}{\id} 
		\end{tikzcd}\]
		where $f_0(1_\Gamma)=x$ and $f_1(e_s)=\chi_{B_{1,s}}e_s$.
		We have $\eta(x)=\chi_X$ by construction, and $\eta\circ \partial^D_1=0$ because $f_1$ is surjective and $\zeta\circ \partial^C_1=0$.
		
		\textbf{Step~2:} $*\ge 2$.
		For~$k\ge 2$, suppose~$\partial^C_k$ is given by (right multiplication with) the matrix~$\Lambda_k$.
		Then Lemma~\ref{lem:supp1 chain extension} yields a chain complex~$D_{*\ge 1}$ satisfying
		\[
			\dim(D_n)\le \dim(D_1)\cdot \prod_{m=2}^n \kappa(\Lambda_{m})\cdot \#I_{m} \le 2\delta\cdot \#S\cdot  \prod_{{m}=2}^n \kappa(\Lambda_{m})\cdot \#I_{m}
		\]
		and
		\[
			\|\partial^D_{n+1}\|\le \kappa(\Lambda_{n+1})\cdot \#I_n
		\]
		and a chain map $f_{*\ge 1}\colon \LinftyX\otimes_\IZ C_*\to D_*$.
		Note that~$D_{*\ge 0}$ is indeed a chain complex; we have $\partial^D_1\circ \partial^D_2=0$ because $f_2$ is surjective and $\partial^C_1\circ \partial^C_2=0$.
	\end{proof}
\end{thm}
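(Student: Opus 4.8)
The plan is to reduce to the case $Z=\IZ$ via Lemma~\ref{lem:integers_field} and then build, for every $\varepsilon>0$, an $\alpha$-embedding $C_*\to D_*$ with all chain modules of dimension $<\varepsilon$ and boundary operators of norm bounded by a constant $K$ independent of $\varepsilon$; the conclusions on $\medim$ and $\mevol$ then follow by taking the infimum over such embeddings, using the dimension estimate $\lognorm(\partial^D_{r+1})\le\dim(D_{r+1})\cdot\logp\|\partial^D_{r+1}\|$ from Proposition~\ref{prop:lognorm}~(i). So the real content is the construction of ``cheap'' $\alpha$-embeddings in all degrees up to $n+1$.

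First I would fix a finite generating set $S$ of $\Gamma$ and a free $\IZ\Gamma$-resolution $(C_*,\zeta)$ of $\IZ$ of finite type in degrees $\le n+1$, chosen in the standard shape with $C_1=\bigoplus_{s\in S}\IZ\Gamma$ and $\partial^C_1(e_s)=1_\Gamma-s$; this is available since $\Gamma$ is of type $\FP_{n+1}$. The construction then splits into two steps. In degrees $\le 1$ the amenability of $\Gamma$ enters through the strong Rokhlin lemma (Theorem~\ref{thm:Rokhlin}): applied to $F\coloneqq S\cup S^{-1}$ and a small parameter $\delta$, it produces a finite index set $J$, measurable ``base'' sets $(A_j)_{j\in J}$ and $(F,\delta)$-invariant finite tiles $(T_j)_{j\in J}$ tiling a conull subset of $X$. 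I would set $B_0\coloneqq\bigcup_j A_j$, which by Remark~\ref{rem:Rokhlin fund domain} has $\mu(B_0)<\delta$, put $D_0\coloneqq\gen{B_0}$, and define the approximate lift $x\coloneqq\sum_{j\in J}\sum_{t\in T_j}t\chi_{A_j}\in D_0$, which satisfies $\eta(x)=\chi_X$ for the obvious augmentation $\eta$. For each $s\in S$ the set $B_{1,s}\coloneqq\supp_1\bigl((1_\Gamma-s)x\bigr)$ has measure $\le 2\delta$ because it is covered by $\bigcup_j\bigcup_{t\in sT_j\triangle T_j}tA_j$ and $\#(sT_j\triangle T_j)\le 2\delta\#T_j$; so $D_1\coloneqq\bigoplus_{s\in S}\gen{B_{1,s}}$ has $\dim(D_1)\le 2\delta\,\#S$, and $\partial^D_1(\chi_{B_{1,s}})\coloneqq(1_\Gamma-s)x$ gives $\|\partial^D_1\|\le 2$ together with $\eta\circ\partial^D_1=0$ (the latter since $f_1$ will be surjective and $\zeta\circ\partial^C_1=0$). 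The chain map is then forced: $f_0(1_\Gamma)=x$ and $f_1(e_s)=\chi_{B_{1,s}}e_s$.

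For degrees $\ge 2$ I would invoke Lemma~\ref{lem:supp1 chain extension}: feeding in the finite-type boundary matrices $\Lambda_2,\dots,\Lambda_{n+1}$ of $C_*$ and the already-constructed module $D_1$, it extends $D_1$ to a marked projective chain complex $D_{*\ge 1}$ together with a chain map $f_{*\ge 1}$, with the quantitative bounds $\dim(D_k)\le\dim(D_1)\cdot\prod_{m=2}^k\kappa(\Lambda_m)\#I_m$ and $\|\partial^D_k\|\le\kappa(\Lambda_k)^2\#I_{k-1}$ for $k\le n$, and $\|\partial^D_{n+1}\|\le\kappa(\Lambda_{n+1})\#I_n$; crucially these norm bounds depend only on the fixed resolution, not on $\delta$. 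Composing $C_*\to\linf{\alpha,\IZ}\otimes_\IZ C_*\xrightarrow{f_*}D_*$ yields the desired $\alpha$-embedding. Setting $K\coloneqq\max\{2,\kappa(\Lambda_2)^2\#I_1,\dots,\kappa(\Lambda_{n+1})^2\#I_n\}$ and choosing $\delta$ small enough that $2\delta\,\#S\cdot\prod_{m=2}^{n+1}\kappa(\Lambda_m)\#I_m<\varepsilon$ makes every $\dim(D_r)<\varepsilon$. The vanishing of $\medim^\IZ_r$ for $r\le n+1$ is then immediate; for $\mevol_r$ with $r\le n$ we bound $\mevol_r(\alpha)\le\lognorm(\partial^D_{r+1})\le\dim(D_{r+1})\cdot\logp K<\varepsilon\logp K$ and let $\varepsilon\to 0$. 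Combined with Lemma~\ref{lem:cd} (which already handles $r$ above the cohomological dimension, when that is finite), but more importantly working unconditionally via the above, this gives vanishing in all the stated degrees. I expect the main obstacle to be purely bookkeeping: making sure the Rokhlin tiles genuinely control $\supp_1\bigl((1_\Gamma-s)x\bigr)$ — i.e. the symmetric-difference estimate — and that the constants produced by Lemma~\ref{lem:supp1 chain extension} are correctly tracked so that $K$ is visibly independent of $\delta$; there is no conceptual difficulty once the Rokhlin lemma is in hand.
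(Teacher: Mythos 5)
Your proposal is correct and follows essentially the same route as the paper's proof: reduction to $Z=\IZ$, the standard resolution with $\partial^C_1(e_s)=1_\Gamma-s$, the strong Rokhlin lemma to build $D_0$, $x$, and the sets $B_{1,s}=\supp_1((1_\Gamma-s)x)$ with the symmetric-difference estimate $\#(sT_j\triangle T_j)\le 2\delta\#T_j$, and then Lemma~\ref{lem:supp1 chain extension} to extend to degrees $\ge 2$ with $\delta$-independent norm bounds. The only cosmetic difference is that you quote the norm bound $\kappa(\Lambda_k)^2\cdot\#I_{k-1}$ exactly as in Lemma~\ref{lem:supp1 chain extension}, which is consistent with that lemma and makes no difference since all that matters is independence of $\delta$.
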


\begin{rem}
Since all standard actions of countable infinite amenable groups are orbit equivalent~\cite[Theorem~6]{ornsteinweiss}, if Theorem~\ref{thm:wbOE:intro} could be extended to all orbit equivalences (instead of only weak bounded orbit equivalences), then Theorem~\ref{thm:amenable} would be a direct consequence of Corollary~\ref{cor:integers:medim:mevol:zero}.
\end{rem}

\begin{rem}
	Let~$\Gamma$ be an infinite amenable group. By Example~\ref{ex:wc-amenable}, all \pmp\ actions
	of~$\Gamma$ are weakly equivalent (see Definition~\ref{def:wc}). Thus, Theorem~\ref{thm:wc} yields
	that it suffices to show the vanishing of~$\medim$ and~$\mevol$ for a single standard action 
	of~$\Gamma$. 
\end{rem}

\section{Amalgamated products}
\label{sec:amalg}

\subsection{Amalgamated products}\label{subsec:amalg:prod}

We prove inheritance properties of~$\medim$ and $\mevol$ for actions of amalgamated products.
Let~$Z$ be the integers (with the usual norm) or a finite field (with the trivial norm).

Recall that the \emph{mapping cone}~$\Cone(\varphi)_*$ of a chain map $\varphi_*\colon D_*\to E_*$ is the chain complex with chain modules \[\Cone(\varphi)_n=D_{n-1}\oplus E_n\] and differentials $\partial_n\colon \Cone(\varphi)_n\to \Cone(\varphi)_{n-1}$ given by 
\[
	\partial_n(x,y)=\bigl(-\partial^{D}_{n-1}(x), \partial^{E}_n(y)+\varphi_{n-1}(x)\bigr).
\]

\begin{lem}
\label{lem:cone}
	Let~$R$ be the ring~$\LinftyX\ast \Gamma$.
	Let $\varphi_*\colon D_*\to E_*$ be a map of marked projective $R$-chain complexes.
	For all~$n\in \IZ$, the following hold:
	\begin{enumerate}[label=\enum]
		\item\label{item:cone dim} $\dim_R (\Cone(\varphi)_n)= \dim_R(D_{n-1})+\dim_R(E_n)$;
		\item\label{item:cone lognorm} $\lognorm (\partial^{\Cone(\varphi)}_n)\le \dim_R(D_{n-1})\cdot\log_+(\|\partial^{D}_{n-1}\|+\|\varphi_{n-1}\|) + \lognorm(\partial^{E}_n)$.
	\end{enumerate}
	\begin{proof}
		(i) This is clear by the definition of~$\Cone(f)_n$.
		
		(ii) Using properties of~$\lognorm$ (Proposition~\ref{prop:lognorm}), we have
		\begin{align*}
			\lognorm(\partial^{\Cone(\varphi)}_n)
			&\le \lognorm(\partial^{\Cone(\varphi)}_n|_{D_{n-1}})+\lognorm(\partial^{\Cone(\varphi)}_n|_{E_n}) \\
			&\le \lognorm\bigl(D_{n-1}\to D_{n-2}\oplus E_{n-1}, x\mapsto (-\partial^{D}_{n-1}(x),\varphi_{n-1}(x))\bigr) \\
			&\quad +  \lognorm\bigl(E_n\to D_{n-2}\oplus E_{n-1},y\mapsto (0,\partial^{E}_n(y))\bigr) \\
			& \le \dim_R(D_{n-1})\cdot \log_+(\|\partial^{D}_{n-1}\|+\|\varphi_{n-1}\|) + \lognorm(\partial^{E}_{n})
		\end{align*}
		as claimed.
	\end{proof}
\end{lem}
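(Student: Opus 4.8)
The plan is to treat the two parts separately, in both cases by unwinding the definition of the mapping cone and invoking the formal properties of $\dim_R$ and $\lognorm$ from Proposition~\ref{prop:lognorm}. For part~\ref{item:cone dim}, I would simply note that by construction $\Cone(\varphi)_n = D_{n-1} \oplus E_n$ as marked projective $R$-modules: the marked presentation of $\Cone(\varphi)_n$ is the disjoint concatenation of those of $D_{n-1}$ and $E_n$. Since $\dim_R$ is defined as the sum of the measures $\mu(A_i)$ over the index set of a marked presentation, it is additive over such direct sums, which gives the equality at once.

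For part~\ref{item:cone lognorm}, the starting point is the canonical marked decomposition $\Cone(\varphi)_n \cong_R D_{n-1} \oplus E_n$, to which I would apply subadditivity of $\lognorm$ (Proposition~\ref{prop:lognorm}~(ii)) to get
\[
\lognorm\bigl(\partial^{\Cone(\varphi)}_n\bigr) \le \lognorm\bigl(\partial^{\Cone(\varphi)}_n|_{D_{n-1}}\bigr) + \lognorm\bigl(\partial^{\Cone(\varphi)}_n|_{E_n}\bigr).
\]
On the $E_n$-summand, the restriction is $y \mapsto (0, \partial^E_n(y))$, i.e.\ $\partial^E_n \colon E_n \to E_{n-1}$ followed by the marked inclusion $E_{n-1} \hookrightarrow D_{n-2} \oplus E_{n-1} = \Cone(\varphi)_{n-1}$; by the marked-inclusion invariance $\lognorm(j \circ f) = \lognorm(f)$ (Proposition~\ref{prop:lognorm}~(iii)) this term equals $\lognorm(\partial^E_n)$. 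On the $D_{n-1}$-summand, the restriction is $x \mapsto (-\partial^D_{n-1}(x), \varphi_{n-1}(x))$ into $D_{n-2} \oplus E_{n-1}$; here I would use the dimension estimate $\lognorm(f) \le \dim_R(M) \cdot \logp\|f\|$ (Proposition~\ref{prop:lognorm}~(i)), so it remains only to bound the operator norm of this map. Since the $\ell^1$-norm on a direct sum of marked modules is the sum of the $\ell^1$-norms on the summands, $\|(-\partial^D_{n-1}(x), \varphi_{n-1}(x))\|_1 = \|\partial^D_{n-1}(x)\|_1 + \|\varphi_{n-1}(x)\|_1 \le (\|\partial^D_{n-1}\| + \|\varphi_{n-1}\|) \cdot \|x\|_1$, so the norm is at most $\|\partial^D_{n-1}\| + \|\varphi_{n-1}\|$. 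Combining the two contributions yields the claimed inequality.

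There is essentially no serious obstacle: the whole argument is bookkeeping with the definition of the cone together with three formal properties of $\lognorm$ — the dimension estimate, subadditivity over marked decompositions, and invariance under marked inclusions — plus the elementary additivity of the $\ell^1$-norm and of $\dim_R$ over direct sums of marked modules. The only point that warrants a moment's care is to check that the two summand maps really are the restrictions of $\partial^{\Cone(\varphi)}_n$ to the two marked summands of $\Cone(\varphi)_n$ (with target the full module $\Cone(\varphi)_{n-1}$), and that the $E_n$-part genuinely factors through a marked inclusion so that Proposition~\ref{prop:lognorm}~(iii) applies directly; once the signs in the cone differential are written out, this is transparent.
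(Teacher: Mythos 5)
Your proposal is correct and follows essentially the same route as the paper: additivity of $\dim_R$ over the marked decomposition for part~(i), and for part~(ii) subadditivity of $\lognorm$ over $\Cone(\varphi)_n \cong_R D_{n-1}\oplus E_n$, the dimension estimate with the operator-norm bound $\|\partial^D_{n-1}\|+\|\varphi_{n-1}\|$ on the $D_{n-1}$-summand, and the marked-inclusion invariance on the $E_n$-summand. Your extra remarks (the $\ell^1$-additivity justifying the norm bound, and the factorisation through a marked inclusion) merely make explicit what the paper leaves implicit.
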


An amalgamated product is a pushout of groups along injective structure maps. 

\begin{prop}
\label{prop:amalg}
	Let $\Gamma\cong \Gamma_1\ast_{\Gamma_0}\Gamma_2$ be an amalgamated product, where~$\Gamma_i$ is of type~$\sfFP_\infty$ for~$i\in \{0,1,2\}$. 
	Let $\alpha\colon \Gamma\actson (X,\mu)$ be a standard action.
	For~$i\in \{0,1,2\}$, we write $\alpha_i\coloneqq \alpha|_{\Gamma_i}\colon \Gamma_i\actson (X,\mu)$ for the restricted action.
	For all~$n\in \IN$, the following hold:
	\begin{enumerate}[label=\enum]
		\item $\medim^Z_n(\alpha)\le \medim^Z_n(\alpha_1)+\medim^Z_n(\alpha_2)+\medim^Z_{n-1}(\alpha_0)$;
		\item If $\Gamma_0\cong \{1\}$ and~$n\ge 1$, then $\mevol_n(\alpha)\le \mevol_n(\alpha_1)+ \mevol_n(\alpha_2)$;
		\item If $\Gamma_0\cong \IZ$, then $\mevol_n(\alpha)\le \mevol_n(\alpha_1)+\mevol_n(\alpha_2)$.
	\end{enumerate}
	\begin{proof}
		For~$i\in \{0,1,2\}$, let $f(i)_*\colon C(i)_*\to D(i)_*$ be an $\alpha_i$-embedding.
		It follows from the short exact sequence of $Z\Gamma$-modules
		\[
			0\to Z[\Gamma/\Gamma_0]\to Z[\Gamma/\Gamma_1]\oplus Z[\Gamma/\Gamma_2]\to Z\to 0
		\]
		that there exists a $Z\Gamma$-chain map 
		\[
			g_*\colon \ind_{\IZ\Gamma_0}^{\IZ\Gamma}C(0)_*\to \ind_{\IZ\Gamma_1}^{\IZ\Gamma}C(1)_*\oplus \ind_{\IZ\Gamma_2}^{\IZ\Gamma}C(2)_*
		\]
		such that~$C_*\coloneqq \Cone(g)_*$ is a free $Z\Gamma$-resolution of~$Z$.
		Consider the following diagram of $\LinftyX\ast\Gamma$-chain complexes:
		\[\begin{tikzcd}
			\ind_{L^\infty(\alpha_0)\ast\Gamma_0}^{\LinftyX\ast\Gamma}D(0)_*\ar[dashed]{r}{\varphi_*}\ar[bend left]{d}{r(0)_*}
			& \ind_{L^\infty(\alpha_1)\ast\Gamma_1}^{\LinftyX\ast\Gamma}D(1)_*\oplus \ind_{L^\infty(\alpha_2)\ast\Gamma_2}^{\LinftyX\ast\Gamma}D(2)_*\ar{r}
			& D_*
			\\\ind_{Z\Gamma_0}^{\LinftyX\ast\Gamma}C(0)_*\ar{r}{\ind g_*}\ar{u}{\ind f(0)_*}
			& \ind_{Z\Gamma_1}^{\LinftyX\ast\Gamma}C(1)_*\oplus \ind_{Z\Gamma_2}^{\LinftyX\ast\Gamma}C(2)_*\ar{r}\ar{u}{\ind f(1)_*\oplus\ind f(2)_*}
			& \ind_{Z\Gamma}^{\LinftyX\ast\Gamma}C_*\ar[dashed]{u}
		\end{tikzcd}\]
		Here
		\begin{itemize}
			\item $r(0)_*$ is a homotopy left-inverse of~$\ind f(0)_*$;
			\item $\varphi_*\coloneqq \ind f(1)_*\oplus \ind f(2)_*\circ \ind g_* \circ r(0)_*$;
			\item $D_*\coloneqq \Cone(\varphi)_*$;
			\item the right vertical map is induced by functoriality of the mapping cone (involving a homotopy making the left square commutative). 
		\end{itemize}
		Then the composition
		\[
			C_*\to \ind_{Z\Gamma}^{\LinftyX\ast \Gamma}C_*\to D_* 
		\]
		is an $\alpha$-embedding. 
		Since 
		\[
			\dim_{\LinftyX\ast \Gamma}\bigl(\ind_{L^\infty(\alpha_i)\ast \Gamma_i}^{\LinftyX\ast \Gamma} D(i)_n\bigr)=\dim_{L^\infty(\alpha_i)\ast \Gamma_i}(D(i)_n),
		\]
		part~(i) follows from Lemma~\ref{lem:cone}~\ref{item:cone dim}.
		
		For part~(ii), assume that~$\Gamma_0$ is the trivial group and~$n\ge 1$.
		For~$i\in \{1,2\}$, we fix $\alpha_i$-embeddings $f(i)_*\colon C(i)_*\to D(i)_*$.
		Let~$C(0)_*\to D(0)_*$ be the obvious $\alpha_0$-embedding concentrated in degrees~$\le 0$.
		For brevity, we write~$\ind$ instead of~$\ind_{L^\infty(\alpha_i)\ast \Gamma_i}^{\LinftyX\ast \Gamma}$.
		Then, since $\dim (D_n)=0$, Lemma~\ref{lem:cone}~\ref{item:cone lognorm} yields
		\begin{align*}
			\lognorm(\partial^{D}_{n+1})
			&\le \lognorm(\ind \partial^{D(1)}_{n+1}\oplus \ind \partial^{D(2)}_{n+1})
			\\
			&\le \lognorm(\ind \partial^{D(1)}_{n+1})+ \lognorm(\ind \partial^{D(2)}_{n+1})
			\\
			&\le \lognorm(\partial^{D(1)}_{n+1})+ \lognorm(\partial^{D(2)}_{n+1}).
		\end{align*}
		Here the last step uses Lemma~\ref{lem:lognorm_ind_res}~\ref{item:lognorm_ind}.
		
		For part~(iii), assume that~$\Gamma_0=\IZ$.
		For~$i\in \{1,2\}$, we fix $\alpha_i$-embeddings $f(i)_*\colon C(i)_*\to D(i)_*$.
		Let $C(0)_*$ be the usual free $\IZ\Gamma_0$-resolution of~$\IZ$ and let~$K\in \IN$ be large.
		By Example~\ref{ex:integers}, there exists an $\alpha_0$-embedding $C(0)_*\to D(0)_*$ satisfying $\dim_{L^\infty(\alpha_0)\ast \Gamma_0}(D(0)_j)<1/K$, $\|\partial^{D(0)}_j\|\le K$, and $\|r(0)_j\|\le K$.
		Then Lemma~\ref{lem:cone}~\ref{item:cone lognorm} yields
		\begin{align*}
			\lognorm(\partial^{D}_{n+1})
			&\le 1/K\cdot \log_+\bigl(K+(\|\ind f(1)_n\|+\|\ind f(2)_n\|)\cdot \|\ind g_n\|\cdot K\bigr)
			\\
			&\quad + \lognorm(\ind \partial^{D(1)}_{n+1}\oplus \ind\partial^{D(2)}_{n+1})
			\\
			&\le 1/K\cdot \log_+(LK) + \lognorm(\ind\partial^{D(1)}_{n+1})+\lognorm(\ind\partial^{D(2)}_{n+1})
			\\
			&\le 1/K\cdot \log_+(LK)+\lognorm(\partial^{D(1)}_{n+1})+\lognorm(\partial^{D(2)}_{n+1}).
		\end{align*}
		Here $L\coloneqq 1+(\|f(1)_n\|+\|f(2)_n\|)\cdot \|g_n\|$ is independent of~$K$ and~$D(0)_*$ and the last step uses Lemma~\ref{lem:lognorm_ind_res}~\ref{item:lognorm_ind}.
		Part~(iii) follows by sending $K\to \infty$.
	\end{proof}
\end{prop}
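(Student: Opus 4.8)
The plan is to produce an $\alpha$-embedding for $\Gamma$ as a mapping cone assembled from $\alpha_i$-embeddings of $\Gamma_0,\Gamma_1,\Gamma_2$, mirroring the Mayer--Vietoris short exact sequence
\[
0 \to Z[\Gamma/\Gamma_0] \to Z[\Gamma/\Gamma_1] \oplus Z[\Gamma/\Gamma_2] \to Z \to 0
\]
of the amalgamated product. First I would fix, for each $i \in \{0,1,2\}$, a free $Z\Gamma_i$-resolution $C(i)_*$ of $Z$ that is finitely generated in every degree (available since $\Gamma_i$ is of type $\sfFP_\infty$) together with an $\alpha_i$-embedding $f(i)_* \colon C(i)_* \to D(i)_*$ over $L^\infty(\alpha_i)*\Gamma_i$. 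Inducing along $Z\Gamma_i \hookrightarrow Z\Gamma$ turns $\ind_{Z\Gamma_i}^{Z\Gamma} C(i)_*$ into a free $Z\Gamma$-resolution of $Z[\Gamma/\Gamma_i]$, so by the fundamental lemma of homological algebra the inclusion in the sequence above lifts to a $Z\Gamma$-chain map $g_* \colon \ind_{Z\Gamma_0}^{Z\Gamma} C(0)_* \to \ind_{Z\Gamma_1}^{Z\Gamma} C(1)_* \oplus \ind_{Z\Gamma_2}^{Z\Gamma} C(2)_*$; a short diagram chase with the long exact sequence of the cone then shows that $C_* \coloneqq \Cone(g)_*$ is a free $Z\Gamma$-resolution of $Z$, finitely generated in low degrees (in particular witnessing that $\Gamma$ is of type $\sfFP_{n+1}$).

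Next I would transport everything to the dynamical side. Since $L^\infty(\alpha_i)$ and $L^\infty(\alpha)$ are the same ring $L^\infty(X,\mu,Z)$, the crossed product $L^\infty(\alpha_i)*\Gamma_i$ is a subring of $R \coloneqq L^\infty(\alpha)*\Gamma$, and I can induce the $D(i)_*$ up to $R$. As $L^\infty(\alpha)*\Gamma$ is flat over $Z\Gamma$ by Proposition~\ref{prop:flat}, the induced complex $\ind_{Z\Gamma_0}^{R} C(0)_*$ is a resolution, so $\ind f(0)_*$ admits a homotopy left inverse $r(0)_*$. Setting
\[
\varphi_* \coloneqq \bigl(\ind f(1)_* \oplus \ind f(2)_*\bigr) \circ \ind g_* \circ r(0)_*, \qquad D_* \coloneqq \Cone(\varphi)_*,
\]
and using functoriality of the mapping cone (together with a homotopy making the relevant square commute), I obtain a $Z\Gamma$-chain map $C_* \to \ind_{Z\Gamma}^{R} C_* \to D_*$ extending $Z \hookrightarrow L^\infty(\alpha)$, i.e.\ an $\alpha$-embedding. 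Here $D_n = \ind D(0)_{n-1} \oplus \ind D(1)_n \oplus \ind D(2)_n$ by Lemma~\ref{lem:cone}~(i), and since inducing a marked projective module leaves the underlying marked subsets of $X$ unchanged, $\dim_{R}(\ind D(i)_n) = \dim_{L^\infty(\alpha_i)*\Gamma_i}(D(i)_n)$. Summing and taking the infimum over the three families of embeddings yields part~(i) from the definition of $\medim$.

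For parts (ii) and (iii) I would estimate $\lognorm(\partial^{D}_{n+1})$ with Lemma~\ref{lem:cone}~(ii), subadditivity of $\lognorm$ over marked decompositions of the domain (Proposition~\ref{prop:lognorm}~(ii)), and monotonicity under induction (Lemma~\ref{lem:lognorm_ind_res}~(i)). If $\Gamma_0 \cong \{1\}$, take $D(0)_*$ concentrated in degrees $\le 0$; then for $n \ge 1$ the summand $\ind D(0)_n$ of $D_{n+1}$ vanishes, so $\partial^{D}_{n+1} = \ind \partial^{D(1)}_{n+1} \oplus \ind \partial^{D(2)}_{n+1}$ and the three cited facts give $\lognorm(\partial^{D}_{n+1}) \le \lognorm(\partial^{D(1)}_{n+1}) + \lognorm(\partial^{D(2)}_{n+1})$, hence (ii). If $\Gamma_0 \cong \IZ$, I would instead plug in the cheap $\alpha_0$-embeddings of Example~\ref{ex:integers}: for large $K$ one with $\dim(D(0)_j) < 1/K$, $\|\partial^{D(0)}_j\| \le K$, and $\|r(0)_j\| \le K$. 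The extra cone summand $\ind D(0)_n$ of $D_{n+1}$ then contributes at most $\dim_R(\ind D(0)_n) \cdot \log_+\bigl(\|\ind\partial^{D(0)}_n\| + \|\varphi_n\|\bigr)$ by the dimension estimate for $\lognorm$, and this is $< (1/K)\log_+(L K)$ with $L$ independent of $K$: one bounds $\|\varphi_n\|$ by the product of the operator norms of its three factors, and $\|\ind g_n\|$ by the $\ell^1$-norms of the entries of the fixed matrix representing $g_n$ as in Remark~\ref{rem:matrix bound}. Letting $K \to \infty$ annihilates this term, which combined with the subadditivity and induction bounds on the remaining two summands gives $\mevol_n(\alpha) \le \lognorm(\partial^{D(1)}_{n+1}) + \lognorm(\partial^{D(2)}_{n+1})$; taking infima yields (iii).

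The main obstacle is the bookkeeping in the middle step: writing out the mapping-cone differential explicitly, identifying precisely which summand survives in each degree under the two hypotheses on $\Gamma_0$, and --- crucially for (iii) --- checking that $\|\varphi_n\|$, and in particular $\|\ind g_n\|$, is bounded by a constant depending only on the fixed homological data and not on the accuracy parameter $K$, so that the error term genuinely vanishes in the limit. The rest is a routine application of the already-established properties of $\dim$, $\lognorm$, and the induction functor.
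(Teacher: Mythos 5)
Your proposal is correct and follows essentially the same route as the paper's proof: the same Mayer--Vietoris mapping-cone construction of the $\alpha$-embedding, the same use of Lemma~\ref{lem:cone} for the dimension and $\lognorm$ estimates, and the same cheap $\alpha_0$-embeddings from Example~\ref{ex:integers} with the $(1/K)\log_+(LK)\to 0$ error term in part~(iii). The point you flag as the main obstacle --- that $\|\varphi_n\|$, and in particular $\|\ind g_n\|$, is bounded independently of $K$ --- is exactly how the paper closes the argument.
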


Proposition~\ref{prop:amalg} generalises to fundamental groups of finite graphs of groups (with infinite cyclic or trivial edge groups).

\subsection{Free groups}\label{subsec:freegroups}

We compute~$\medim$ and~$\mevol$ for standard actions of free groups. 

\begin{prop}\label{prop:freegroups}
  Let $d\in \N_{>0}$, let $F_d$ be a free group of rank~$d$,
  and let~$\alpha$ be a standard
  action of~$F_d$. Then:
  \begin{enumerate}[label=\enum]
  \item For all~$n \in \N$, we have~$\mevol_n (\alpha) = 0$;
  \item For all~$n \in \N \setminus \{1\}$, we have~$\medim_n^Z (\alpha) = 0$;
  \item We have~$\medim^\Z_1 (\alpha) \geq d-1$;
  \item If $\alpha$ is the profinite completion with respect to a directed system~$(\Gamma_i)_{i\in I}$ of finite index normal subgroups of~$F_d$ with $\bigcap_{i\in I}\Gamma_i=1$,
    then
    \[ \medim^Z_1 (\alpha) = d - 1.
    \]
  \end{enumerate}
\end{prop}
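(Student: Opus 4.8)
The statement to prove is Proposition~\ref{prop:freegroups}, a computation of $\medim$ and $\mevol$ for standard actions of free groups, culminating in the exact value $\medim^Z_1(\alpha) = d-1$ in the profinite case. I would organise the argument in four stages, matching parts (i)--(iv), and most pieces should follow from results already assembled in the excerpt.

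\emph{Parts (i) and (ii): vanishing.} Since $F_d$ has a finite free $\Z F_d$-resolution of $\Z$ of length~$1$, namely $0 \to (\Z F_d)^d \xrightarrow{\partial_1} \Z F_d \to \Z \to 0$ with $\partial_1(e_j) = 1 - x_j$, Lemma~\ref{lem:cd} immediately gives $\medim^Z_r(\alpha) = 0$ for all $r > 1$ and $\mevol_r(\alpha) = 0$ for all $r \geq 1$. For $r = 0$, Corollary~\ref{cor:deg0infiniteme} gives $\medim^Z_0(\alpha) = \mevol_0(\alpha) = 0$ when $d \geq 1$ (so $F_d$ is infinite). It remains to handle $\mevol_1(\alpha)$. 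Here I would apply the amalgamated-product estimate: $F_d \cong F_{d-1} *_{\{1\}} \Z$ (or induct, writing $F_d$ as an iterated free product of copies of $\Z$), and use Proposition~\ref{prop:amalg}(ii) to get $\mevol_1(F_d \actson \alpha) \leq \mevol_1(\alpha_1) + \mevol_1(\alpha_2)$ where one factor is $F_{d-1}$ and the other is $\Z$; then Corollary~\ref{cor:integers:medim:mevol:zero} gives $\mevol_1 = 0$ for the $\Z$-factor, and induction closes the loop with base case $F_1 = \Z$. This proves (i) and (ii).

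\emph{Part (iii): the lower bound $\medim^\Z_1(\alpha) \geq d-1$.} This is the genuine content. By Lemma~\ref{lem:integers_field} (applied contrapositively, or by handling $Z = \Z$ directly) it suffices to treat $Z = \Z$; in fact the lower bound statement is precisely over $\Z$. The natural route is via $L^2$-Betti numbers: Theorem~\ref{thm:l2upper} (= Theorem~\ref{thm:l2upperintro}) gives $\medim^\Z_1(\alpha) \geq \ltb 1 {F_d}$, and it is classical that $\ltb 1 {F_d} = d - 1$ (while $\ltb 0 {F_d} = 0$ since $F_d$ is infinite). This is clean and requires no new work beyond citing the known value of the first $L^2$-Betti number of a free group. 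I should double-check the direction of the inequality in Theorem~\ref{thm:l2upper}: it states $\ltb n \Gamma \leq \medim^\Z_n(\alpha)$, which is exactly what I need.

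\emph{Part (iv): the exact value in the profinite case.} Given part (iii), I only need the matching upper bound $\medim^Z_1(\Gamma \actson \widehat\Gamma_*) \leq d-1$ for $\Gamma = F_d$ and $\alpha = (\Gamma \actson \widehat\Gamma_*)$. For this I would construct an explicit $\alpha$-embedding. Induce the standard resolution $C_*$ above along $\Z\Gamma \hookrightarrow R = \linf{\alpha,Z}*\Gamma$ to get $D_* \coloneqq \Ind{\Z\Gamma}{R} C_*$; this is a marked projective resolution of $\linf{\alpha,Z}$ with $D_1 = \bigoplus_{j=1}^d \gen{X}$, so $\dim(D_1) = d$, which is not yet good enough. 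The trick is that over $R$ one can \emph{split off a copy of the augmentation}: the element $\sum_j (\text{something})$ coming from the relation, together with the surjectivity of $\eta$ witnessed by $1 \in \linf{\alpha,Z}$, lets one find a marked summand of rank (dimension) $1$ of $D_1$ on which $\partial_1$ together with a contraction realises the redundancy, so that $D_1$ can be replaced by a marked projective module of dimension $d-1 + \epsilon$ for every $\epsilon$. Concretely, I expect to mimic the degree-$0$ construction in Proposition~\ref{prop:deg0infinite}: because $\Gamma = F_d$ is infinite, for every $\epsilon>0$ there is a measurable $A$ with $\mu(A)$ small and $\mu(X \setminus F\cdot A)$ small for a suitable finite $F$; using this one builds an $\alpha$-embedding $C_* \to D_*$ with $\dim(D_1) < d-1 + \epsilon$. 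Alternatively --- and this may be cleaner --- one applies the amalgamated-product bound Proposition~\ref{prop:amalg}(i) to $F_d = F_{d-1} *_{\{1\}} \Z$ to get $\medim^Z_1(\alpha) \leq \medim^Z_1(\alpha_{F_{d-1}}) + \medim^Z_1(\alpha_\Z) + \medim^Z_0(\alpha_{\{1\}})$; by Corollary~\ref{cor:integers:medim:mevol:zero} the $\Z$-term vanishes and the trivial-group term in degree $0$ contributes $1$ (it is the dimension of a single free summand augmenting onto $\linf{}{}$, which cannot be made smaller than... actually for the trivial group $\medim_0$ need not vanish), so by induction $\medim^Z_1(\alpha_{F_d}) \leq (d-2) + 0 + 1 = d-1$. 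I would need to verify carefully what $\medim^Z_0$ of a standard action of the trivial group is --- this is the subtle bookkeeping point. Combining with part (iii), $\medim^Z_1(\Gamma\actson\widehat\Gamma_*) = d-1$, and by Lemma~\ref{lem:integers_field} the value is the same over any finite field since the lower bound $d-1$ over $\Z$ forces equality once we know the finite-field value is $\leq$ the $\Z$-value and $\geq d-1$ (the latter would need an $\F_p$-version of the $L^2$-estimate, or we argue via $\widehat b_1(\Gamma,\Gamma_*;\F_p)$ and Theorem~\ref{thm:dynupper} together with the known rank gradient $d-1$ of $F_d$).

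\emph{Main obstacle.} The delicate point is part~(iv): getting the upper bound $\medim_1 \leq d-1$ rather than $d$ requires genuinely using that one of the $d$ relators (or, in the amalgam picture, the edge group) is "free" and can be traded against the augmentation, and this needs the infiniteness of $\Gamma$ to produce a small fundamental-domain-like set --- exactly the mechanism of Proposition~\ref{prop:deg0infinite}. Bookkeeping the base case of the induction (the value of $\medim^Z_0$ for the trivial group, which is genuinely $1$, not $0$) and making sure the finite-field statement matches the $\Z$ statement are the places where I expect to have to be most careful.
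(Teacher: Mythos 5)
Your proposal is correct. Parts (i)--(iii) coincide with the paper's argument: (i) and (ii) via Corollary~\ref{cor:deg0infiniteme} and Lemma~\ref{lem:cd} --- note that Lemma~\ref{lem:cd} applied to the length-one resolution already gives $\mevol_r(\alpha)=0$ for all $r\ge 1$, so your extra amalgam step for $\mevol_1$ is redundant (though harmless) --- and (iii) via Theorem~\ref{thm:l2upper} together with $\ltb 1{F_d}=d-1$. For the lower bound in (iv) you also land on the paper's route: $\widehat b_1(F_d,\Gamma_*;Z)=d-1$ combined with Theorem~\ref{thm:dynupperproofsec}, which is indeed what forces the restriction to the profinite action, since over a finite field the $L^2$-Betti number estimate is unavailable.

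Where you genuinely diverge is the upper bound $\medim^Z_1(\alpha)\le d-1$ in (iv). The paper uses Lemma~\ref{lem:finite_index}: each $\Gamma_i$ is free of rank $1+[F_d:\Gamma_i](d-1)$, so its length-one free $Z\Gamma_i$-resolution yields $\medim^Z_1(\alpha)\le \tfrac{1}{[F_d:\Gamma_i]}+d-1$, and one lets $i\to\infty$. You instead invoke Proposition~\ref{prop:amalg}(i) for $F_d\cong F_{d-1}*_{\{1\}}\Z$ and induct on $d$; this works, and the bookkeeping point you flag resolves the way you suspect: $\medim^Z_0$ of the restricted action of the trivial group equals $1$ (the argument in the proof of Proposition~\ref{prop:deg0infinite} gives $\dim D_0\ge 1/\#\Gamma=1$, and $D_0=\gen{X}$ realises it), so each of the $d-1$ free-product steps contributes exactly $1$, with base case $\medim^Z_1(\alpha|_\Z)=0$ from Corollary~\ref{cor:integers:medim:mevol:zero}. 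Your route has the mild advantage of proving the upper bound for \emph{every} standard action of~$F_d$, not only the profinite one (consistent with the cost estimate, Theorem~\ref{thm:cost}, and fixed price~$d$ of free groups), whereas the paper's route is shorter but tied to the profinite completion. Your alternative sketch of an explicit $\alpha$-embedding with $\dim(D_1)<d-1+\epsilon$ is too vague to stand on its own, but it is not needed given the amalgam argument.
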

\begin{proof}
	Parts~(i) and~(ii) follow from Corollary~\ref{cor:deg0infiniteme} and Lemma~\ref{lem:cd}.
	Alternatively, one can apply Proposition~\ref{prop:amalg} by viewing~$F_d$ as a free product of copies of~$\IZ$.

 (iii)
  By Theorem~\ref{thm:l2upper}, we have
  \[
    \medim^\Z_1 (\alpha)
    \geq \ltb 1{F_d}
    = d - 1.
  \]
  
(iv)
  Let $(\Gamma_i)_{i \in I}$ be directed system
  with~$\alpha\colon F_d \actson \widehat \Gamma_*$.
  A direct computation shows that
  \[ \widehat b_1(F_d,\Gamma_*;Z) = d-1.
  \]
  Therefore, by the dynamical upper bound (Theorem~\ref{thm:dynupperproofsec})
  we have 
  \[ d - 1 = \widehat b_1(F_d,\Gamma_*;Z) \leq \medim^Z_1 (\alpha).  
  \]
  Conversely, we obtain~$\medim^Z_1 (\alpha) \leq d-1$ from Lemma~\ref{lem:finite_index}.
  Indeed, since~$\Gamma_i$ is a free group of rank $1+[F_d:\Gamma_i](d-1)$, Lemma~\ref{lem:finite_index}
  yields
  \[
  	\medim^Z_1(\alpha)\le \frac{1}{[F_d:\Gamma_i]}+d-1.
  \]
  Since~$[F_d:\Gamma_i]\to \infty$ as~$i\to \infty$, the claim follows.
\end{proof}

\subsection{Surface groups}
\label{sec:surface}

We compute~$\medim$ and~$\mevol$ for standard actions of surface groups.

\begin{prop}
\label{prop:surface_groups}
	Let~$\Sigma_g$ be a closed orientable surface of genus~$g$.
	Let~$\alpha$ be a standard action of~$\pi_1(\Sigma_g)$.
	Then:
	\begin{enumerate}[label=\enum]
		\item For all~$n\in \IN$, we have $\mevol_n(\alpha)=0$;
		\item For all~$n\in \IN\setminus \{1\}$, we have $\medim^Z_n(\alpha)=0$;
		\item We have $\medim^\IZ_1(\alpha)\ge 2g-2$;
		\item If $\alpha$ is the profinite completion with respect to a directed system~$(\Gamma_i)_{i\in I}$ of finite index normal subgroup of~$\pi_1(\Sigma_g)$ with $\bigcap_{i\in I}\Gamma_i=1$, then
		\[
			\medim^Z_1(\alpha)=2g-2.
		\]
	\end{enumerate}
	\begin{proof}
		Parts~(i) and~(ii) in degree~$0$ follow from Corollary~\ref{cor:deg0infiniteme} since~$\pi_1(\Sigma_g)$ is infinite.
		In positive degrees, the claims follow from Proposition~\ref{prop:amalg} and Proposition~\ref{prop:freegroups} by viewing~$\pi_1(\Sigma_g)$ as an iterated amalgamated product of free groups over infinite cyclic subgroups.
		
		(iii) By Theorem~\ref{thm:l2upper}, we have
		\[
			\medim^\IZ_1(\alpha)\ge b^{(2)}_1(\pi_1(\Sigma_g))=2g-2.
		\]
		
		(iv) Let $(\Gamma_i)_{i \in I}$ be a directed system
  with~$\alpha\colon \pi_1(\Sigma_g) \actson \widehat \Gamma_*$.
  A direct computation shows that
  \[ \widehat b_1(\pi_1(\Sigma_g),\Gamma_*;Z) = 2g-2.
  \]
  Therefore, by the dynamical upper bound (Theorem~\ref{thm:dynupperproofsec})
  we have 
  \[ 2g-2 = \widehat b_1(\pi_1(\Sigma_g),\Gamma_*;Z) \leq \medim^Z_1 (\alpha).  
  \]
  Conversely, we obtain~$\medim^Z_1 (\alpha) \leq 2g-2$ from Lemma~\ref{lem:finite_index}.
  Indeed, since~$\Gamma_i$ is the fundamental group of a surface of genus $1+[\pi_1(\Sigma_g):\Gamma_i](g-1)$, Lemma~\ref{lem:finite_index}
  yields
  \[
  	\medim^Z_1(\alpha)\le \frac{2}{[\pi_1(\Sigma_g):\Gamma_i]}+2g-2.
  \]
  Since~$[\pi_1(\Sigma_g):\Gamma_i]\to \infty$ as~$i\to \infty$, the claim follows.
	\end{proof}
\end{prop}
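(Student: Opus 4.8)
The plan is to assemble the statement from the computation for free groups (Proposition~\ref{prop:freegroups}), the amalgamation estimates (Proposition~\ref{prop:amalg}), the $L^2$-Betti number bound (Theorem~\ref{thm:l2upper}), the dynamical upper bound (Theorem~\ref{thm:dynupperproofsec}), and the finite-index estimate (Lemma~\ref{lem:finite_index}). First I would dispose of the genus~$1$ case: there $\pi_1(\Sigma_1)\cong\IZ^2$ is infinite amenable, so Theorem~\ref{thm:amenable} gives $\medim^Z_n(\alpha)=\mevol_n(\alpha)=0$ for all~$n$, and since $2g-2=0$ all four assertions hold. For $g\ge 2$, cutting $\Sigma_g$ along a separating simple closed curve exhibits $\pi_1(\Sigma_g)\cong F_2\ast_\IZ F_{2(g-1)}$, where the amalgamated $\IZ$ is generated by that curve and embeds into each free factor with infinite order, so the structure maps are injective; the three groups $\IZ$, $F_2$, $F_{2(g-1)}$ are of type~$\sfFP_\infty$, hence Proposition~\ref{prop:amalg} applies. (Iterating instead along $g-1$ curves, as in the statement, would work equally well.)

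For parts~(i) and~(ii) in degree~$0$, Corollary~\ref{cor:deg0infiniteme} applies since $\pi_1(\Sigma_g)$ is infinite. For $n\ge 1$, write $\alpha_0,\alpha_1,\alpha_2$ for the restrictions of $\alpha$ to the $\IZ$-, $F_2$-, and $F_{2(g-1)}$-factors. Proposition~\ref{prop:amalg}(iii) (applicable as $\Gamma_0\cong\IZ$) gives $\mevol_n(\alpha)\le\mevol_n(\alpha_1)+\mevol_n(\alpha_2)=0$ by Proposition~\ref{prop:freegroups}(i); this is~(i). For~(ii) with $n\ge 2$, Proposition~\ref{prop:amalg}(i) gives $\medim^Z_n(\alpha)\le\medim^Z_n(\alpha_1)+\medim^Z_n(\alpha_2)+\medim^Z_{n-1}(\alpha_0)$; the first two terms vanish by Proposition~\ref{prop:freegroups}(ii) (here $n\neq 1$), and the last vanishes because $\alpha_0$ is a standard action of the amenable group~$\IZ$ (Corollary~\ref{cor:integers:medim:mevol:zero}). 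Note that degree~$1$ is deliberately not treated here—Proposition~\ref{prop:amalg}(i) would only bound $\medim^Z_1(\alpha)$ by quantities not known to vanish—and is handled in~(iv).

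For~(iii), Theorem~\ref{thm:l2upper} gives $\medim^\IZ_1(\alpha)\ge\ltb 1{\pi_1(\Sigma_g)}$, and since $\Sigma_g$ is a closed aspherical surface with infinite fundamental group one has $\ltb 0{\pi_1(\Sigma_g)}=\ltb 2{\pi_1(\Sigma_g)}=0$ (infiniteness and $L^2$-Poincaré duality), whence $\ltb 1{\pi_1(\Sigma_g)}=-\chi(\Sigma_g)=2g-2$. For~(iv), the lower bound combines Theorem~\ref{thm:dynupperproofsec} with a direct computation: each $\Gamma_i$ is the fundamental group of the corresponding finite cover of $\Sigma_g$, again a closed surface, of genus $g_i=1+[\Gamma:\Gamma_i](g-1)$ by multiplicativity of the Euler characteristic, so $\rk_Z H_1(\Gamma_i;Z)=2g_i$ (the homology is torsion-free for $Z=\IZ$ or a finite field). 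Since $\pi_1(\Sigma_g)$ is infinite and $\bigcap_{i}\Gamma_i=1$, we have $[\Gamma:\Gamma_i]\to\infty$, so
\[
  \widehat b_1(\pi_1(\Sigma_g),\Gamma_*;Z)=\limsup_{i\in I}\frac{2g_i}{[\Gamma:\Gamma_i]}=\limsup_{i\in I}\Bigl(\frac{2}{[\Gamma:\Gamma_i]}+2g-2\Bigr)=2g-2,
\]
and Theorem~\ref{thm:dynupperproofsec} yields $2g-2\le\medim^Z_1(\alpha)$. For the reverse inequality I would apply Lemma~\ref{lem:finite_index} with the finite-index subgroup $\Gamma_i\cong\pi_1(\Sigma_{g_i})$, exactly as in the proof of Proposition~\ref{prop:freegroups}(iv): the standard genus-$g_i$ surface presentation yields a free $Z\Gamma_i$-resolution of $Z$ whose degree-$1$ term has rank $2g_i$, so Lemma~\ref{lem:finite_index} gives $\medim^Z_1(\alpha)\le\frac{2}{[\pi_1(\Sigma_g):\Gamma_i]}+2g-2$, and letting $[\pi_1(\Sigma_g):\Gamma_i]\to\infty$ completes the proof.

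The only point requiring genuine care—rather than a real obstacle—is the bookkeeping in~(iv): identifying $\Gamma_i$ as a surface group via covering-space theory, reading off its genus from Euler-characteristic multiplicativity, and checking that the Betti-gradient lower bound and the finite-index upper bound meet exactly at $2g-2$. Everything else is a mechanical application of the amalgamation inequalities once the splitting $\pi_1(\Sigma_g)\cong F_2\ast_\IZ F_{2(g-1)}$ is in place, together with the free-group case.
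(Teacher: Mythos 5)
Your proof is correct and follows essentially the same route as the paper: amalgamated-product estimates plus the free-group computation for (i)–(ii), the $L^2$-Betti number bound for (iii), and the Betti-gradient lower bound combined with Lemma~\ref{lem:finite_index} for (iv). Your treatment is in fact slightly more careful in two spots the paper leaves implicit — separating off $g=1$ (where the splitting $F_2\ast_\IZ F_{2(g-1)}$ degenerates) via amenability, and citing Corollary~\ref{cor:integers:medim:mevol:zero} for the $\medim^Z_{n-1}(\alpha_0)$ term when $n=2$, which Proposition~\ref{prop:freegroups} alone does not cover.
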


\section{Products with an amenable factor}
\label{sec:amenable_factor}

We prove that product actions groups with an amenable factor have~$\medim$ and~$\mevol$ equal to zero.
Let~$Z$ be the integers (with the usual norm) or a finite field (with the trivial norm).

\begin{prop}
	Let~$n\in \IN$.
	Let~$\Gamma_1$ be an infinite amenable group of type~$\sfFP_{n+1}$ and let~$\Gamma_2$ be a group of type~$\sfFP_{n+1}$.
	For~$i\in \{1,2\}$, let~$\alpha_i\colon \Gamma_i\actson (X_i,\mu_i)$ be a standard action.
	We denote by $\alpha_1\times \alpha_2\colon \Gamma_1\times \Gamma_2\actson (X_1\times X_2,\mu_1\otimes \mu_2)$ the product action.
	Then 
	\begin{align*}
    \fa{r \in \{0,\dots, n+1\}}
    &\medim^Z_r(\alpha_1\times \alpha_2)=0;
    \\
    \fa{r\in \{0,\dots, n\}}
    &\mevol_r(\alpha_1\times \alpha_2)=0.
  \end{align*}
  	\begin{proof}
		By Lemma~\ref{lem:integers_field}, we may assume that~$Z=\IZ$.
		For~$i\in \{1,2\}$, we denote $R_i\coloneqq L^\infty(\alpha_i)\ast \Gamma_i$.
		Since~$\Gamma_i$ is of type~$\sfFP_{n+1}$, there exists a free $\IZ\Gamma_i$-resolution~$C(i)_*$ of~$\IZ$ such that the $\IZ\Gamma_i$-module~$C(i)_r$ is finitely generated for all~$r\le n+1$.
		Then we have an $\alpha_2$-embedding $C(2)_*\to D(2)_*\coloneqq \ind_{\IZ\Gamma_2}^{R_2} C(2)_*$ with $\dim_{R_2} (D(2)_r)=\rk_{\IZ\Gamma_2}(C(2)_r)<\infty$.
		Since~$\Gamma_1$ is amenable, by Theorem~\ref{thm:amenable} there exists~$K\in \IR_{>0}$ such that for all~$\varepsilon\in \IR_{>0}$ there exists an $\alpha_1$-embedding $C(1)_*\to D(1)_*$ such that for all~$r\in \{0,\ldots,n+1\}$, we have $\dim_{R_1} (D(1)_r)<\varepsilon$ and $\|\partial^{D(1)}_r\|\le K$.
		Consider the ring extension
		\[
			R_1\otimes_{\IZ} R_2 = (L^\infty(\alpha_1)\ast \Gamma_1)\otimes_{\IZ} (L^\infty(\alpha_2)\ast \Gamma_1)\to L^\infty(\alpha_1\times \alpha_2)\ast (\Gamma_1\times \Gamma_2)\eqqcolon R.
		\]
		Then the composition
		\[
			C(1)_*\otimes_{\IZ} C(2)_*\to D(1)_*\otimes_\IZ D(2)_*\to \ind_{R_1\otimes_{\IZ} R_2}^R \bigl(D(1)_*\otimes_{\IZ} D(2)_*\bigr)
		\]
		is an $(\alpha_1\times \alpha_2)$-embedding.
		Indeed, given measurable subsets~$A_i\subset X_i$ for~$i\in \{1,2\}$, we have
		\[
			\ind_{R_1\otimes_{\IZ} R_2}^R\bigl(\spann{A_1}\otimes_{\IZ} \spann{A_2}\bigr) \cong \spann{A_1\times A_2}.
		\]
		Thus, for every~$r\in \{0,\ldots,n+1\}$, we have
		\begin{align*}
			\dim_{R}\bigl((\ind_{R_1\otimes_{\IZ}R_2}^R D(1)_*\otimes_{\IZ} D(2)_*)_r\bigr)
			&= 
			\sum_{p+q=r} \dim_{R_1}(D(1)_p)\cdot \dim_{R_2}(D(2)_q)
			\\
			&\le \varepsilon\cdot \sum_{q=0}^r \dim_{R_2}(D(2)_q).
		\end{align*}
		We can choose~$\varepsilon$ arbitrarily small and conclude $\medim^\IZ_r(\alpha_1\times \alpha_2)=0$.
		
		The boundary map of the chain complex $D(1)_*\otimes D(2)_*$ is given by
		\[
			\partial_r\colon \bigoplus_{p+q=r} D(1)_p\otimes_{\IZ} D(2)_q\to \bigoplus_{p+q=r-1} D(1)_p\otimes_{\IZ} D(2)_q,
		\]
		\[
			\partial_r = \bigoplus_{p+q=r} (\partial^{D(1)}_p\otimes \id_{D(2)_q}) \oplus (-1)^p (\id_{D(1)_p}\otimes \partial^{D(2)}_q).
		\]
		Then we have
		\begin{align*}
			&\lognorm(\ind_{R_1\otimes_\IZ R_2}^R \partial_{r+1})
			\\
			&\le \sum_{p+q=r+1} \dim_R(\ind_{R_1\otimes_\IZ R_2}^R D(1)_p\otimes_{\IZ} D(2)_q)
			\\
			&\quad \quad \quad \cdot  \log_+\bigl(\|\ind_{R_1\otimes_\IZ R_2}^R \partial^{D(1)}_p\otimes \id_{D(2)_*}\|+ \|\ind_{R_1\otimes_\IZ R_2}^R \id_{D(1)_*}\otimes \partial^{D(2)}_q\|\bigr)
			\\
			&\le \sum_{p+q=r+1} \dim_{R_1}(D(1)_p) \cdot \dim_{R_2}(D(2)_q)\cdot \log_+(\|\partial^{D(1)}_p\|+\|\partial^{D(2)}_q\|)
			\\
			&\le \varepsilon\cdot \sum_{q=1}^{r+1} \dim_{R_2}(D(2)_q)\cdot \log_+(K+\|\partial^{D(2)}_q\|).
		\end{align*}
		Again, we can choose~$\varepsilon$ arbitrarily small and conclude $\mevol_r(\alpha_1\times \alpha_2)=0$.
	\end{proof}
\end{prop}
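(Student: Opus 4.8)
The plan is to build a single ``cheap'' embedding for $\alpha_1\times\alpha_2$ out of a cheap embedding for the amenable factor $\Gamma_1$ and a fixed induced resolution for the second factor $\Gamma_2$, then tensor the two together. First I would reduce to $Z=\IZ$ via Lemma~\ref{lem:integers_field}, since passing to a finite field only decreases $\medim$. Writing $R_i\coloneqq L^\infty(\alpha_i)\ast\Gamma_i$, the hypothesis that $\Gamma_2$ is of type $\FP_{n+1}$ gives a free $\IZ\Gamma_2$-resolution $C(2)_*$ of $\IZ$ that is finitely generated in degrees $\le n+1$, and inducing it yields an $\alpha_2$-embedding $C(2)_*\to D(2)_*\coloneqq\Ind{\IZ\Gamma_2}{R_2}C(2)_*$ with $\dim_{R_2}(D(2)_r)=\rk_{\IZ\Gamma_2}(C(2)_r)<\infty$, a quantity that stays fixed throughout the argument. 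On the amenable side, Theorem~\ref{thm:amenable} provides a constant $K\in\IR_{>0}$ such that for every $\varepsilon>0$ there is an $\alpha_1$-embedding $C(1)_*\to D(1)_*$ with $\dim_{R_1}(D(1)_r)<\varepsilon$ and $\|\partial^{D(1)}_r\|\le K$ for all $r\le n+1$.

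Next I would tensor over $\IZ$: the complex $C(1)_*\otimes_\IZ C(2)_*$ is again a free resolution of $\IZ$, now over $\IZ[\Gamma_1\times\Gamma_2]$ (the tensor product of two free $\IZ$-resolutions of $\IZ$ resolves $\IZ$), while $D(1)_*\otimes_\IZ D(2)_*$ is naturally a marked projective chain complex over $R_1\otimes_\IZ R_2$ once one records the identification $\gen{A_1}\otimes_\IZ\gen{A_2}\cong\gen{A_1\times A_2}$. Inducing along the canonical ring homomorphism $R_1\otimes_\IZ R_2\to R\coloneqq L^\infty(\alpha_1\times\alpha_2)\ast(\Gamma_1\times\Gamma_2)$, and using that this induction carries $\gen{A_1}\otimes_\IZ\gen{A_2}$ to $\gen{A_1\times A_2}$ over $R$, the composite $C(1)_*\otimes_\IZ C(2)_*\to\Ind{R_1\otimes_\IZ R_2}{R}\bigl(D(1)_*\otimes_\IZ D(2)_*\bigr)$ is an $(\alpha_1\times\alpha_2)$-embedding extending the inclusion of constants. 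The dimension in degree $r$ is $\sum_{p+q=r}\dim_{R_1}(D(1)_p)\cdot\dim_{R_2}(D(2)_q)\le\varepsilon\cdot\sum_{q=0}^{r}\dim_{R_2}(D(2)_q)$, and since the right-hand sum is independent of $\varepsilon$, letting $\varepsilon\to0$ gives $\medim^\IZ_r(\alpha_1\times\alpha_2)=0$ for all $r\le n+1$.

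For $\mevol$ I would run the same embedding and estimate the $\lognorm$ of its $(r{+}1)$-st boundary, which on $D(1)_*\otimes_\IZ D(2)_*$ is the usual tensor differential, a direct sum of components of the form $\partial^{D(1)}\otimes\id$ and $\pm\id\otimes\partial^{D(2)}$. Applying the dimension estimate together with subadditivity of $\lognorm$ over marked decompositions of the domain (Proposition~\ref{prop:lognorm}), and the monotonicity of $\lognorm$, $\dim$, and operator norm under induction (Lemma~\ref{lem:lognorm_ind_res}(i) and its analogues), I would bound $\lognorm$ of the induced differential by $\sum_{p+q=r+1}\dim_{R_1}(D(1)_p)\cdot\dim_{R_2}(D(2)_q)\cdot\logp\bigl(\|\partial^{D(1)}_p\|+\|\partial^{D(2)}_q\|\bigr)\le\varepsilon\cdot\sum_{q=1}^{r+1}\dim_{R_2}(D(2)_q)\cdot\logp\bigl(K+\|\partial^{D(2)}_q\|\bigr)$; the sum is again $\varepsilon$-independent, so $\varepsilon\to0$ yields $\mevol_r(\alpha_1\times\alpha_2)=0$ for $r\le n$. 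The main obstacle is bookkeeping rather than substance: one must check carefully that tensoring and then inducing preserves the marked projective chain complex structure (with marked presentations $\gen{A_1\times A_2}$), that the resulting chain map genuinely extends $\id$ on the augmentation to $L^\infty(\alpha_1\times\alpha_2)$, and that the norm and $\lognorm$ inequalities survive the ring extension $R_1\otimes_\IZ R_2\to R$ — all elementary but requiring attention to the normalisations.
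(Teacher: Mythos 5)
Your proposal is correct and follows essentially the same route as the paper's proof: reduce to $Z=\IZ$, tensor a cheap $\alpha_1$-embedding from Theorem~\ref{thm:amenable} with the induced resolution $D(2)_*$ for $\Gamma_2$, induce along $R_1\otimes_\IZ R_2\to R$ using $\gen{A_1}\otimes_\IZ\gen{A_2}\cong\gen{A_1\times A_2}$, and let $\varepsilon\to 0$ in the resulting dimension and $\lognorm$ estimates. The bookkeeping points you flag are exactly the ones the paper handles, so no gap remains.
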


\section{Finite index subgroups}\label{sec:finindex}

\subsection{Induction and restriction}

We prove proportionality results for~$\medim$ and~$\mevol$ of standard actions of finite index subgroups.
  
Let~$\Lambda$ be a finite index subgroup of~$\Gamma$.
If $\alpha\colon \Gamma\actson (X,\mu)$ is a standard $\Gamma$-action, then $\alpha|_\Lambda\colon \Lambda\actson (X,\mu)$ is a standard $\Lambda$-action.
The ring inclusion $\LinftyXLambda\ast \Lambda\to \LinftyX\ast \Gamma$ induces a restriction functor $\res^{\LinftyX\ast \Gamma}_{\LinftyXLambda\ast \Lambda}$ on module categories.
  
  If $\beta\colon \Lambda\actson (Y,\nu)$ is a standard $\Lambda$-action, then $\ind_\Lambda^\Gamma \beta\colon \Gamma\actson (\Gamma\times_\Lambda Y, \ind_\Lambda^\Gamma \nu)$ is a standard $\Gamma$-action.
  Note that we rescale $\ind_\Lambda^\Gamma \nu$ so that it is a probability measure.   

\begin{prop}
\label{prop:finite_index}
  Let $n \in \N$, let $\Gamma$ be a group of type~$\sfFP_{n+1}$,
  and let $\Lambda \subset \Gamma$ be a subgroup of finite index.
  \begin{enumerate}[label=\enum]
  \item
    If $\alpha\colon \Gamma\actson (X,\mu)$ is a standard $\Gamma$-action,
    then
    \begin{align*}
    \medim^Z_n (\alpha|_\Lambda)
    &\le [\Gamma:\Lambda]\cdot \medim^Z_n(\alpha)
    \\
    \mevol_n (\alpha|_\Lambda)
    &\le [\Gamma:\Lambda]\cdot \mevol_n(\alpha).
    \end{align*}
  \item\label{item:finite_index_ind}
    If $\beta\colon \Lambda\actson (Y,\nu)$ is a standard $\Lambda$-action,
    then
    \begin{align*}
    \medim^Z_n (\ind_\Lambda^\Gamma \beta)
    & = \frac1{[\Gamma:\Lambda]} \cdot \medim^Z_n (\beta)
    \\
    \mevol_n (\ind_\Lambda^\Gamma \beta)
    & = \frac1{[\Gamma:\Lambda]} \cdot \mevol_n (\beta).
    \end{align*}
  \end{enumerate}
\end{prop}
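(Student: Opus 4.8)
The plan is to build explicit $\alpha$-embeddings on one side out of $\alpha$-embeddings on the other, using the induction and restriction functors on the crossed product rings, and then track dimensions, operator norms and $\lognorm$ through these functors. The two parts are dual to each other: part~(i) passes from an $\alpha$-embedding for $\Gamma$ to one for $\Lambda$ by restriction, and part~(ii) passes between $\Lambda$- and $\Gamma$-embeddings by induction. The key ring-theoretic inputs are that $\LinftyX \ast \Gamma$ is free of rank $[\Gamma:\Lambda]$ over $\LinftyXLambda\ast \Lambda$, that $\ind_\Lambda^\Gamma \beta$ has underlying crossed product ring $L^\infty(\ind_\Lambda^\Gamma\beta)\ast\Gamma$ which is Morita-compatible with $L^\infty(\beta)\ast\Lambda$ via the obvious restriction/induction of measurable sets, and the $\lognorm$ estimates of Lemma~\ref{lem:lognorm_ind_res}.

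For part~(i): given an $\alpha$-embedding $f_*\colon C_*\to D_*$ over $Z\Gamma$ (with $C_*$ a free $Z\Gamma$-resolution of $Z$, finitely generated up to degree $n+1$), I would restrict everything to $\Lambda$. The complex $\res^{Z\Gamma}_{Z\Lambda}C_*$ is a free $Z\Lambda$-resolution of $Z$, still finitely generated up to degree $n+1$ (since $[\Gamma:\Lambda]<\infty$), and $\res^{\LinftyX\ast\Gamma}_{\LinftyXLambda\ast\Lambda}D_*$ is a marked projective augmented $\LinftyXLambda\ast\Lambda$-chain complex augmenting to $\linf{\alpha|_\Lambda,Z}=\LinftyXLambda$ (the underlying $L^\infty$-space is unchanged). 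The restricted chain map $\res f_*$ gives an $\alpha|_\Lambda$-embedding. Now one needs: $\dim_{\LinftyXLambda\ast\Lambda}(\res D_n)\le [\Gamma:\Lambda]\cdot\dim_{\LinftyX\ast\Gamma}(D_n)$ — this is exactly the dimension estimate already used in Lemma~\ref{lem:lognorm_ind_res}~\ref{item:lognorm_res} — and $\lognorm(\res\partial^D_{n+1})\le [\Gamma:\Lambda]\cdot\lognorm(\partial^D_{n+1})$, which is precisely Lemma~\ref{lem:lognorm_ind_res}~\ref{item:lognorm_res}. Taking infima over $\alpha$-embeddings yields both inequalities of part~(i).

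For part~(ii): here I expect equalities, so I would prove ``$\le$'' and ``$\ge$'' separately. For ``$\le$'', start from a $\beta$-embedding $g_*\colon C'_*\to E_*$ over $Z\Lambda$ and apply $\ind_{Z\Lambda}^{Z\Gamma}$ to get a free $Z\Gamma$-resolution $\ind C'_*$ of $\ind_{Z\Lambda}^{Z\Gamma}Z = Z[\Gamma/\Lambda]$; this is not a resolution of $Z$, so I would instead use a free $Z\Gamma$-resolution $C_*$ of $Z$ together with a $Z\Gamma$-chain map $C_*\to \ind C'_*$ (lifting $Z\hookrightarrow Z[\Gamma/\Lambda]$ is not available, but $Z[\Gamma/\Lambda]\to Z$ is, so one lifts $C'_*$ appropriately; cleanest is to use that $\ind_\Lambda^\Gamma\beta$-embeddings can be built from $\beta$-embeddings because $L^\infty(\ind_\Lambda^\Gamma\beta)\ast\Gamma\cong \ind_{L^\infty(\beta)\ast\Lambda}^{L^\infty(\beta)\ast\Lambda[\Gamma/\Lambda]}$-type construction). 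Concretely: $\ind_\Lambda^\Gamma$ sends the marked module $\gen A$ over $L^\infty(\beta)\ast\Lambda$ to $\gen{\Gamma\times_\Lambda A}$ over $L^\infty(\ind_\Lambda^\Gamma\beta)\ast\Gamma$, multiplying dimension by $1/[\Gamma:\Lambda]$ (because the induced measure is renormalised), and by Lemma~\ref{lem:lognorm_ind_res}~\ref{item:lognorm_ind}, $\lognorm$ does not increase; combined with a standard co-induction/Shapiro argument to see $\ind_{Z\Lambda}^{Z\Gamma}(C'_*)$ together with $\ind_{L^\infty(\beta)\ast\Lambda}^{L^\infty(\ind\beta)\ast\Gamma}(E_*)$ forms an $\ind_\Lambda^\Gamma\beta$-embedding of a free $Z\Gamma$-resolution of $Z$, one gets $\medim^Z_n(\ind_\Lambda^\Gamma\beta)\le \frac1{[\Gamma:\Lambda]}\medim^Z_n(\beta)$ and similarly for $\mevol$. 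For ``$\ge$'', I would apply part~(i) to the $\Gamma$-action $\ind_\Lambda^\Gamma\beta$ and the subgroup $\Lambda$, using that $(\ind_\Lambda^\Gamma\beta)|_\Lambda$ contains $\beta$ as a retract/factor (it is $\beta$ together with $[\Gamma:\Lambda]-1$ translates), together with monotonicity under weak containment (Theorem~\ref{thm:wcintro}) or the finite-index dimension bookkeeping, to get $\medim^Z_n(\beta)\le [\Gamma:\Lambda]\cdot\medim^Z_n(\ind_\Lambda^\Gamma\beta)$.

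The main obstacle will be part~(ii): getting a genuine $\Gamma$-resolution of $Z$ (rather than of $Z[\Gamma/\Lambda]$) out of a $\Lambda$-resolution, and correctly identifying the crossed product ring of the induced action $L^\infty(\ind_\Lambda^\Gamma\beta)\ast\Gamma$ with the induced ring so that dimensions rescale by exactly $1/[\Gamma:\Lambda]$ and $\lognorm$ transports cleanly. This is where the normalisation of the induced measure must be handled with care, and where the ``$\ge$'' direction genuinely needs the restriction estimate of part~(i) applied back to $\ind_\Lambda^\Gamma\beta$. Once the ring identifications and the behaviour of $\dim$ and $\lognorm$ under $\ind$ and $\res$ are pinned down — all of which is essentially contained in Lemma~\ref{lem:lognorm_ind_res} and Remark~\ref{rem:dimcomp}-style computations — both statements follow by taking infima over embeddings.
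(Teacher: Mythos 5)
Your part~(i) is exactly the paper's proof: restrict the $\alpha$-embedding along $Z\Lambda\subset Z\Gamma$ and $\LinftyXLambda\ast\Lambda\subset\LinftyX\ast\Gamma$ and invoke Lemma~\ref{lem:lognorm_ind_res}~\ref{item:lognorm_res} for the dimension and $\lognorm$ estimates. Nothing to add there.

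Part~(ii) is where you and the paper part ways, and where your argument has a genuine gap. The paper's proof is one line: the inclusion $Y\hookrightarrow\Gamma\times_\Lambda Y$ exhibits $\beta$ and $\ind_\Lambda^\Gamma\beta$ as weakly bounded orbit equivalent of index $1/[\Gamma:\Lambda]$ (the set $Y$ is $\ind_\Lambda^\Gamma\beta$-cofinite since finitely many $\Gamma$-translates cover $\Gamma\times_\Lambda Y$, and it has normalised measure $1/[\Gamma:\Lambda]$), so Theorem~\ref{thm:wbOE} gives both equalities at once. This is precisely what the truncated crossed product ring machinery of Section~\ref{sec:wbOE} was built for. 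Your ``$\le$'' direction is salvageable but underspecified: as you note, $\ind_{Z\Lambda}^{Z\Gamma}C'_*$ resolves $Z[\Gamma/\Lambda]$ rather than $Z$, and the fix (precomposing with a $Z\Gamma$-chain map $C_*\to\ind_{Z\Lambda}^{Z\Gamma}C'_*$ lifting the transfer-type map $Z\to Z[\Gamma/\Lambda]$, $1\mapsto\sum_{\gamma\Lambda}e_{\gamma\Lambda}$) is exactly the construction carried out in Lemma~\ref{lem:finite_index}; you gesture at it but do not pin it down.

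The real problem is your ``$\ge$'' direction. Applying part~(i) to $\ind_\Lambda^\Gamma\beta$ and $\Lambda$ bounds $\medim^Z_n\bigl((\ind_\Lambda^\Gamma\beta)|_\Lambda\bigr)$, which is an invariant of a $\Lambda$-action on the \emph{whole} space $\Gamma\times_\Lambda Y$, not of $\beta$. The subset $Y$ is $\Lambda$-invariant of measure $1/[\Gamma:\Lambda]$ and carries $\beta$, but neither of the tools you invoke bridges this: weak containment fails because $\beta\prec(\ind_\Lambda^\Gamma\beta)|_\Lambda$ would require matching correlations $\nu(\lambda B_i\cap B_j)$ on the nose, and any candidate sets inside $Y\subset\Gamma\times_\Lambda Y$ have their measures scaled by $1/[\Gamma:\Lambda]$; and the disintegration estimate (Proposition~\ref{prop:disint}) only asserts that \emph{some} component of the invariant decomposition has controlled $\medim$, with no way to force that component to be the $Y$-piece rather than one of the other pieces (which, since $\Lambda$ need not be normal, are not even conjugates of $\beta$ in general but induced actions from intersections of conjugates). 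Passing from an action to its restriction to an invariant set of positive, non-full measure is exactly what requires the full idempotent / truncation formalism of Propositions~\ref{prop:truncemb} and~\ref{prop:wbOEemb}, i.e., Theorem~\ref{thm:wbOE}. Without that input your ``$\ge$'' inequality does not close.
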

\begin{proof}
  (i) Let $C_*\to D_*$ be an $\alpha$-embedding.
  Then $\res^{Z\Gamma}_{Z\Lambda}C_*\to \res^{\LinftyX\ast \Gamma}_{\LinftyXLambda\ast \Lambda} D_*$ is a $\alpha|_\Lambda$-embedding with
  \begin{align*}
  	\dim_{\LinftyXLambda\ast \Lambda}\bigl(\res^{\LinftyX\ast \Gamma}_{\LinftyXLambda\ast \Lambda}D_n\bigr)
	&\le [\Gamma:\Lambda]\cdot \dim_{\LinftyX\ast \Gamma}(D_n)
	\\
	\lognorm\bigl(\res^{\LinftyX\ast \Gamma}_{\LinftyXLambda\ast \Lambda} \partial^D_n\bigr)
	&\le [\Gamma:\Lambda]\cdot \lognorm(\partial^D_n)
  \end{align*}
  by Lemma~\ref{lem:lognorm_ind_res}~\ref{item:lognorm_res}.
  
  (ii) Since~$\ind_\Lambda^\Gamma\beta$ is weakly bounded orbit equivalent to~$\beta$ (Definition~\ref{def:wbOE}), this follows from Theorem~\ref{thm:wbOE}.
\end{proof}

\begin{lem}
\label{lem:finite_index}
	Let~$n\in \IN$, let~$\Gamma$ be a residually finite group of type~$\sfFP_{n+1}$, and let~$\Gamma_*=(\Gamma_i)_{i\in I}$ be a directed system of finite index normal subgroups of~$\Gamma$ with $\bigcap_{i\in I} \Gamma_i=1$.
	Let~$\alpha\colon \Gamma\actson (\widehat{\Gamma}_*,\mu)$ be the profinite completion of~$\Gamma$ with respect to~$\Gamma_*$.
	Fix~$i\in I$.
	Let~$C_*$ be a free $Z\Gamma_i$-resolution of~$Z$.
	Let~$D_*$ be a free $Z\Gamma_i$-chain complex augmented over~$Z$ and let~$f_*\colon C_*\to D_*$ be a $Z\Gamma_i$-chain map.
	Then we have
	\begin{align*}
		\fa{r\in \{0,\ldots,n+1\}}
		\medim^Z_r(\alpha)
		&\le \frac{1}{[\Gamma:\Gamma_i]}\cdot \rk_{Z\Gamma_i}(D_r)
		\\
		\fa{r\in \{0,\ldots,n\}}
		\mevol_{r}(\alpha)
		&\le \frac{1}{[\Gamma:\Gamma_i]}\cdot \rk_{\IZ\Gamma_i}(D_{r+1})\cdot \log_+\|\partial^{D}_{r+1}\|.
	\end{align*}
	\begin{proof}
		The free $Z\Gamma_i$-chain complex~$D_*$ has chain modules of the form~$D_r\cong \bigoplus_{I_r}Z\Gamma_i$ for some index set~$I_r$ and differentials~$\partial^D_r$. 
		We define an associated marked projective chain complex~$\widehat{D}_*$ over~$R\coloneqq \LinftyX\ast \Gamma$ as follows.
		Consider the map $p_i\colon \widehat{\Gamma}_*\to \Gamma/\Gamma_i$ and set $A\coloneqq p_i^{-1}(1_\Gamma\Gamma_i)$.
		The subset~$A$ of~$\widehat{\Gamma}_*$ is $\Gamma_i$-invariant and has measure $\mu(A)=1/[\Gamma:\Gamma_i]$.
		 We define the $R$-chain modules $\widehat{D}_r\coloneqq \bigoplus_{I_r}\spann{A}_{R}$ and differentials~$\partial^{\widehat{D}}_r$ given by the same matrix (with entries in~$Z\Gamma_i$) as~$\partial^D_r$.
		 Then~$\widehat{D}_*$ is a marked projective $R$-chain complex augmented over~$L^\infty(\alpha)$ with
		 \begin{align*}
		 	\dim_R(\widehat{D}_r)
			&= \mu(A)\cdot \# I_r
			= \frac{1}{[\Gamma:\Gamma_i]}\cdot \rk_{Z\Gamma_i}(D_r)
			\\
			\lognorm(\partial^{\widehat{D}}_{r+1})
			&\le \dim_R(D_{r+1})\cdot \log_+\|\partial^{\widehat{D}}_{r+1}\|
			\le \frac{1}{[\Gamma:\Gamma_i]}\cdot \rk_{\IZ\Gamma_i}(D_{r+1})\cdot \log_+\|\partial^D_{r+1}\|.
		 \end{align*}
		 The composition of $Z\Gamma_i$-chain maps $C_*\to D_*\to \widehat{D}_*$ extends to a $Z\Gamma$-chain map $Z\Gamma\otimes_{Z\Gamma_i}C_*\to \widehat{D}_*$.
		 Let~$C'_*$ be a free $Z\Gamma$-resolution of~$Z$.
		 Then we obtain an $\alpha$-embedding as the composition
		 \[\begin{tikzcd}
		 	C'_*\ar{r}\ar[two heads]{d}
			& Z\Gamma\otimes_{Z\Gamma_i}C_*\ar{r}\ar[two heads]{d}
			& \widehat{D}_*\ar[two heads]{d}
			\\
			Z\ar{r}
			& Z[\Gamma/\Gamma_i]\ar{r}
			& L^\infty(\alpha)
		 \end{tikzcd}\]
		 where the $Z\Gamma$-chain map $C'_*\to Z\Gamma\otimes_{Z\Gamma_i}C_*$ is induced by the $Z\Gamma$-map $Z\to Z[\Gamma/\Gamma_i]$ that sends~$1\in Z$ to the (finite) sum of all $Z$-basis elements of~$Z[\Gamma/\Gamma_i]$.
	\end{proof}
\end{lem}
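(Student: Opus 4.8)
The plan is to manufacture, out of the purely algebraic datum $f_*\colon C_*\to D_*$ over $Z\Gamma_i$, a dynamical $\alpha$-embedding (Definition~\ref{def:alpha-emb}) whose target complex $\widehat D_*$ has dimension $\frac{1}{[\Gamma:\Gamma_i]}\cdot\rk_{Z\Gamma_i}(D_r)$ in degree $r$, and then to read off the two asserted estimates directly from the definitions of $\medim$ and $\mevol$ together with the dimension bound for $\lognorm$ (Proposition~\ref{prop:lognorm}~(i)).

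Concretely, I would first fix the cylinder set $A\coloneqq p_i^{-1}(1\cdot\Gamma_i)\subset\widehat\Gamma_*$, where $p_i\colon\widehat\Gamma_*\to\Gamma/\Gamma_i$ is the canonical projection. Since the Haar measure pushes forward to the uniform probability measure on $\Gamma/\Gamma_i$ one has $\mu(A)=1/[\Gamma:\Gamma_i]$, and $A$ is $\Gamma_i$-invariant because left translation by an element of $\Gamma_i$ fixes the coset $1\cdot\Gamma_i$. Writing $D_r\cong\bigoplus_{I_r}Z\Gamma_i$ with $I_r$ finite in the relevant degrees (using that $\Gamma_i$, a finite index subgroup of a group of type $\sfFP_{n+1}$, is again of type $\sfFP_{n+1}$) and letting $\Lambda_r$ be the matrix over $Z\Gamma_i$ describing $\partial^D_r$ by right multiplication, I would set $\widehat D_r\coloneqq\bigoplus_{I_r}\gen A_R$ over $R\coloneqq L^\infty(\alpha)*\Gamma$, with boundary operators again given by right multiplication by $\Lambda_r$. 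This is legitimate: for $\gamma\in\Gamma_i$ we have $\gamma A=A$, hence $(\chi_A,\gamma)=(\chi_{\gamma A},\gamma)$, so each entry of $\Lambda_r$ reread in $R$ defines an $R$-homomorphism between the marked modules $\widehat D_r$ and $\widehat D_{r-1}$ by Remark~\ref{rem:def:homo:marked:proj}, and the chain complex relations persist since they hold over $Z\Gamma_i$. The augmentation $\epsilon^D\colon D_0\to Z$ converts to the $R$-map $\widehat\eta\colon\widehat D_0\to L^\infty(\alpha)$ sending $\chi_A\cdot e_j\mapsto\epsilon^D(e_j)\,\chi_A$ (via the $L^\infty$-module structure of Remark~\ref{rem:linfX-ZR}); it is surjective because, for a transversal $T$ of $\Gamma/\Gamma_i$, the sets $(\gamma A)_{\gamma\in T}$ partition $X$, so a suitable $R$-combination of the generators maps to $\chi_X=1$, and then $R$-linearity gives all of $L^\infty(\alpha)$ in the image. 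By construction $\dim_R(\widehat D_r)=\#I_r\cdot\mu(A)=\frac{1}{[\Gamma:\Gamma_i]}\rk_{Z\Gamma_i}(D_r)$, and $\|\partial^{\widehat D}_{r+1}\|\le\|\partial^D_{r+1}\|$ by the explicit description of the operator norm (Proposition~\ref{prop:opnormalt}): translates $\gamma A,\gamma'A$ are disjoint for $\gamma,\gamma'$ in distinct $\Gamma_i$-cosets, so the maxima in Proposition~\ref{prop:opnormalt} range only over coefficient sums inside a single coset and are dominated by the column $\ell^1$-sums of $\Lambda_{r+1}$.

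It then remains to assemble the embedding. The composite $C_*\to D_*\to\widehat D_*$ is a $Z\Gamma_i$-chain map, and by induction it extends to a $Z\Gamma$-chain map $Z\Gamma\otimes_{Z\Gamma_i}C_*\to\widehat D_*$. Since $Z\Gamma$ is free, hence flat, over $Z\Gamma_i$, the complex $Z\Gamma\otimes_{Z\Gamma_i}C_*$ is a $Z\Gamma$-resolution of $Z[\Gamma/\Gamma_i]$; choosing a free $Z\Gamma$-resolution $C'_*$ of $Z$ and applying the fundamental lemma of homological algebra, I would lift the ``trace'' map $Z\to Z[\Gamma/\Gamma_i]$, $1\mapsto\sum_{c\in\Gamma/\Gamma_i}[c]$, to a $Z\Gamma$-chain map $C'_*\to Z\Gamma\otimes_{Z\Gamma_i}C_*$. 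The total composite $C'_*\to\widehat D_*$, together with $\widehat\eta$, forms an $\alpha$-embedding: it is compatible with the constant-functions inclusion $Z\hookrightarrow L^\infty(\alpha)$ because the trace map, followed by $Z[\Gamma/\Gamma_i]\to L^\infty(\alpha)$, $[\gamma\Gamma_i]\mapsto\chi_{\gamma A}$, sends $1$ to $\sum_c\chi_{\gamma_c A}=\chi_X=1$. Hence, by the definitions of $\medim$ and $\mevol$ and Proposition~\ref{prop:lognorm}~(i),
\begin{align*}
  \medim^Z_r(\alpha)&\le\dim_R(\widehat D_r)=\frac{1}{[\Gamma:\Gamma_i]}\rk_{Z\Gamma_i}(D_r),\\
  \mevol_r(\alpha)&\le\lognorm(\partial^{\widehat D}_{r+1})\le\dim_R(\widehat D_{r+1})\cdot\log_+\|\partial^{\widehat D}_{r+1}\|\le\frac{1}{[\Gamma:\Gamma_i]}\rk_{Z\Gamma_i}(D_{r+1})\cdot\log_+\|\partial^D_{r+1}\|,
\end{align*}
which are exactly the claimed bounds.

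The main obstacle I anticipate is the bookkeeping in the middle step: one must check carefully that reinterpreting a $Z\Gamma_i$-matrix over the crossed product ring through the single cylinder set $A$ genuinely produces marked $R$-homomorphisms and a surjective augmentation onto all of $L^\infty(\alpha)$, and — for the $\mevol$ bound — that this operation does not enlarge the operator norm, which is precisely where the explicit formula of Proposition~\ref{prop:opnormalt} and the disjointness of the cosets' preimages are used. Everything else is formal, relying only on flatness of $Z\Gamma$ over $Z\Gamma_i$, the fundamental lemma of homological algebra, and the definitions.
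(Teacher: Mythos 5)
Your proposal is correct and follows essentially the same route as the paper: fix the cylinder set $A=p_i^{-1}(1\Gamma_i)$, replace each $Z\Gamma_i$-summand of $D_*$ by $\gen{A}_R$ to get the marked projective complex $\widehat D_*$ with $\dim_R(\widehat D_r)=\rk_{Z\Gamma_i}(D_r)/[\Gamma:\Gamma_i]$, extend $C_*\to D_*\to\widehat D_*$ to $Z\Gamma\otimes_{Z\Gamma_i}C_*\to\widehat D_*$, and precompose with a lift of the trace map $Z\to Z[\Gamma/\Gamma_i]$ from a free $Z\Gamma$-resolution $C'_*$. The extra details you supply (the $\Gamma_i$-invariance of $A$ making the matrix entries well-defined $R$-homomorphisms, surjectivity of the augmentation via the coset partition, and the norm comparison via Proposition~\ref{prop:opnormalt}) are all consistent with, and slightly more explicit than, the paper's argument.
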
	
	
	\begin{rem}
		We outline an alternative proof of Lemma~\ref{lem:finite_index} using results from Part~\ref{part:dyn}.
		However, it involves actions that are not essentially free to which our setup could be extended.
		
		In the situation of Lemma~\ref{lem:finite_index},
		we denote by $\alpha_i\colon \Gamma\actson \Gamma/\Gamma_i$ the translation action.
		Since~$\alpha_i$ is weakly contained in~$\alpha$ (Example~\ref{ex:wc-profin}), by monotonicity of~$\medim$ and~$\mevol$ (Theorem~\ref{thm:wc}) we have
		\begin{align*}
			\medim^Z_r(\alpha)
			&\le \medim^Z_r(\alpha_i)
			\\
			\mevol_{r}(\alpha)
			&\le \mevol_{r}(\alpha_i).
		\end{align*}
		We denote by $\beta_i\colon \Gamma_i\actson \pt$ the trivial action.
		Then $\alpha_i\cong \ind_{\Gamma_i}^\Gamma \beta_i$ and by Proposition~\ref{prop:finite_index}~\ref{item:finite_index_ind} we have
		\begin{align*}
			\medim^Z_r(\alpha_i)
			&= \frac{1}{[\Gamma:\Gamma_i]}\cdot \medim^Z_r(\beta_i)
			\\
			\mevol_{r}(\alpha_i)
			&= \frac{1}{[\Gamma:\Gamma_i]}\cdot \mevol_{r}(\beta_i).
		\end{align*}
	Under the ring isomorphism $Z\Gamma_i\cong L^\infty(\beta_i)\ast \Gamma_i$, the $Z\Gamma_i$-chain map~$f_*\colon C_*\to D_*$ is a $\beta_i$-embedding and hence
	\begin{align*}
		\medim^Z_r(\beta_i)
		&\le \rk_{Z\Gamma_i}(D_r)
		\\
		\mevol_{r}(\beta_i)
		&\le \lognorm(\partial^{D}_{r+1})\le \rk_{\IZ\Gamma_i}(D_{r+1})\cdot \log_+\|\partial^{D}_{r+1}\|.
	\end{align*}
	Combining the above inequalities proves Lemma~\ref{lem:finite_index}.
	\end{rem}

\subsection{The cheap rebuilding property}
\label{sec:CRP}

We prove that the profinite completion action of groups satisfying (an equivariant version of) the algebraic cheap rebuilding property has~$\medim$ and~$\mevol$ equal to zero.
The algebraic cheap rebuilding property was introduced by the authors~\cite{LLMSU} modeled after the (geometric) cheap rebuilding property of Abert--Bergeron--Fraczyk--Gaboriau~\cite{ABFG21}.

\begin{defn}[{\cite[Definition~4.18]{LLMSU}}]
\label{defn:CRP}
	Let~$\Gamma$ be a group and let~$n\in \IN$.
	We say that~$\Gamma$ satisfies the \emph{algebraic cheap $n$-rebuilding property} if there exist a free $\IZ\Gamma$-resolution~$C_*$ of~$\IZ$ with $C_j$ finitely generated for all~$j\le n$ and~$\kappa\in \IR_{\ge 1}$ such that for all~$T\in \IR_{\ge 1}$ and every residual chain~$\Lambda_*$ in~$\Gamma$ there exists~$i_0\in \IN$ such that for all~$i\ge i_0$, the $\IZ$-chain complex of $\Lambda_i$-coinvariants~$(C_*)_{\Lambda_i}$ is a $\IZ$-chain homotopy retract of a free $\IZ$-chain complex~$C'_*$ via $\IZ$-chain maps $f_*\colon (C_*)_{\Lambda_i}\to C'_*$ and $g_*\colon C'_*\to (C_*)_{\Lambda_i}$ and $\IZ$-chain homotopy~$H_*\colon g_*\circ f_*\simeq \id_{(C_*)_{\Lambda_i}}$ satisfying the following for all~$j\le n$:
	\begin{align*}
		\rk_\IZ(C'_j)
		&\le \frac{\kappa}{T}\cdot \rk_\IZ\bigl((C_j)_{\Lambda_i}\bigr)
		\\
		\|\partial^{C'}_j\|,\|f_j\|, \|g_j\|, \|H_j\|
		&\le \exp(\kappa)\cdot T^\kappa.
	\end{align*}
\end{defn}

The key facts about the algebraic cheap rebuilding property are that it implies the vanishing of torsion homology growth~\cite[Lemma~4.22]{LLMSU}, admits a bootstrapping theorem~\cite[Proposition~4.23]{LLMSU}, and is satisfied by the group of integers~$\IZ$~\cite[Examples~4.21]{LLMSU}.

In order to establish a relationship to the vanishing of~$\medim$ and~$\mevol$, we define an equivariant version of the algebraic cheap rebuilding property.
The difference is that in Defintion~\ref{defn:equivariant_CRP} we require the existence of a $\IZ\Lambda_i$-chain homotopy retraction of~$\res^{\IZ\Gamma}_{\IZ\Lambda_i} C_*$, while in Definition~\ref{defn:CRP} we require only the existence of a (non-equivariant) $\IZ$-chain homotopy retraction of the $\Lambda_i$-coinvariants~$(C_*)_{\Lambda_i}$.
	
\begin{defn}
\label{defn:equivariant_CRP}	
	Let~$\Gamma$ be a group and let~$n\in \IN$.
	We say that~$\Gamma$ satisfies the \emph{algebraic cheap equivariant $n$-rebuilding property} ($\sfCERP_n$ for short) if there exists a free $\IZ\Gamma$-resolution~$C_*$ of~$\IZ$ with $C_j$ finitely generated for all~$j\le n$ and~$\kappa\in \IR_{\ge 1}$ such that for all~$T\in \IR_{\ge 1}$ and every residual chain~$\Lambda_*$ in~$\Gamma$ there exists~$i_0\in \IN$ such that for all~$i\ge i_0$, the $\IZ\Lambda_i$-chain complex $\res^{\IZ\Gamma}_{\IZ\Lambda_i}C_i$ is a $\IZ\Lambda_i$-chain homotopy retract of a free $\IZ\Lambda_i$-chain complex~$E_*$ via $\IZ\Lambda_i$-chain maps $f_*\colon \res^{\IZ\Gamma}_{\IZ\Lambda_i}C_*\to E_*$ and $g_*\colon E_*\to \res^{\IZ\Gamma}_{\IZ\Lambda_i}C_*$ and a $\IZ\Lambda_i$-chain homotopy $H_*\colon g_*\circ f_*\simeq \id_{\res^{\IZ\Gamma}_{\IZ\Lambda_i}C_*}$ satisfying the following for all~$j\le n$:
	\begin{align*}
		\rk_{\IZ\Lambda_i}(E_j)
		&\le \frac{\kappa}{T}\cdot \rk_{\IZ\Lambda_i}(\res^{\IZ\Gamma}_{\IZ\Lambda_i}C_*)
		\\
		\|\partial^{E}_j\|, \|f_j\|, \|g_j\|, \|H_j\|
		&\le \exp(\kappa)\cdot T^\kappa
	\end{align*}
\end{defn}

	Clearly, the algebraic cheap equivariant rebuilding property implies the non-equivariant one.
	We do not know if the converse holds.

\begin{rem}
	The algebraic cheap equivariant rebuilding property is a bootstrappable property of residually finite groups in the sense of~\cite[Definition~3.4]{LLMSU}.
	The proof is analogous to the non-equivariant case~\cite[Proposition~4.23~(i)]{LLMSU}.
	Hence this property admits a bootstrapping theorem~\cite[Theorem~3.6]{LLMSU}.
	Since the group of integers~$\IZ$ satisfies~$\sfCERP_n$ for all~$n\in \IN$, see~\cite[Example~4.21]{LLMSU}, repeated applications of the bootstrapping theorem show that many groups satisfy~$\sfCERP_n$ for suitable~$n$.
	For example, infinite elementary amenable groups of type~$\sfFP_\infty$ satisfy~$\sfCERP_n$ for all~$n$ and the special linear group~$\mathrm{SL}_d(\IZ)$ satisfies~$\sfCERP_{d-2}$ for~$d\ge 3$.
\end{rem}

\begin{thm}
	Let~$n\in \IN$ and let~$\Gamma$ be a group satisfying~$\sfCERP_{n+1}$.
	Let~$\Lambda_*$ be a residual chain in~$\Gamma$ and let~$\alpha\colon \Gamma\actson (X,\mu)$ be the profinite completion of~$\Gamma$ with respect to~$\Lambda_*$.
	Then
	\begin{align*}
		\fa{r\in \{0,\ldots,n+1\}}
		&\medim^Z_r(\alpha)=0
		\\
		\fa{r\in \{0,\ldots,n\}}
		&\mevol_{r}(\alpha)=0.
	\end{align*}
	\begin{proof}
		By Lemma~\ref{lem:integers_field}, we may assume that~$Z=\IZ$.
		We use the notation $C_*,\kappa,T,i_0$ from Defintion~\ref{defn:equivariant_CRP}.
		For~$i\ge i_0$, we apply Lemma~\ref{lem:finite_index} to the $\IZ\Lambda_i$-chain map $f_*\colon \res^{\IZ\Gamma}_{\IZ\Lambda_i}C_*\to E_*$, where the free $\IZ\Lambda_i$-chain complex~$E_*$ satisfies
		\begin{align*}
			\rk_{\IZ\Lambda_i}(E_r)
			&\le \frac{\kappa}{T}\cdot [\Gamma:\Lambda_i]\cdot \rk_{\IZ\Gamma}(C_r)
			\\
			\|\partial^{E}_r\|
			&\le \exp(\kappa)\cdot T^\kappa.
		\end{align*}
		Hence Lemma~\ref{lem:finite_index} yields
		\begin{align*}
			\medim^{\IZ}_r(\alpha)
			&\le \frac{\kappa}{T}\cdot \rk_{\IZ\Gamma}(C_r)
			\\
			\mevol_{r}(\alpha)
			&\le \frac{\kappa^2(1+\log(T))}{T}\cdot \rk_{\IZ\Gamma}(C_{r+1})
		\end{align*}
	Sending $T\to \infty$ finishes the proof.
	\end{proof}
\end{thm}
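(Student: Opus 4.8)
The plan is to deduce the vanishing of $\medim$ and $\mevol$ for the profinite completion action $\alpha\colon\Gamma\actson(X,\mu)$ from the algebraic cheap equivariant rebuilding property by feeding the $\IZ\Lambda_i$-chain homotopy retraction supplied by $\sfCERP_{n+1}$ into Lemma~\ref{lem:finite_index}. First I would reduce to $Z=\IZ$ using Lemma~\ref{lem:integers_field}. Then, fixing the data $C_*$, $\kappa$, $T$, and a residual chain $\Lambda_*=(\Lambda_i)_{i}$ from Definition~\ref{defn:equivariant_CRP}, I would pick $i\ge i_0$ and consider the $\IZ\Lambda_i$-chain map $f_*\colon\res^{\IZ\Gamma}_{\IZ\Lambda_i}C_*\to E_*$ exhibiting $\res^{\IZ\Gamma}_{\IZ\Lambda_i}C_*$ as a chain homotopy retract of the free $\IZ\Lambda_i$-chain complex $E_*$. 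Here $\res^{\IZ\Gamma}_{\IZ\Lambda_i}C_*$ is a free $\IZ\Lambda_i$-resolution of $\IZ$ (restriction of a free resolution is free, since $\IZ\Gamma$ is free over $\IZ\Lambda_i$), so $f_*$ is exactly the kind of data Lemma~\ref{lem:finite_index} takes as input: a $\IZ\Lambda_i$-chain map from a free $\IZ\Lambda_i$-resolution of $\IZ$ to a free $\IZ\Lambda_i$-chain complex augmented over $\IZ$.

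Next I would apply Lemma~\ref{lem:finite_index} with the subgroup $\Lambda_i$ in place of $\Gamma_i$ (the residual chain $\Lambda_*$ playing the role of $\Gamma_*$, and $\alpha$ the corresponding profinite completion action, as in the hypotheses). This gives, for all $r\in\{0,\dots,n+1\}$,
\[
\medim^{\IZ}_r(\alpha)\le \frac{1}{[\Gamma:\Lambda_i]}\cdot\rk_{\IZ\Lambda_i}(E_r)
\]
and, for $r\in\{0,\dots,n\}$,
\[
\mevol_r(\alpha)\le \frac{1}{[\Gamma:\Lambda_i]}\cdot\rk_{\IZ\Lambda_i}(E_{r+1})\cdot\logp\|\partial^E_{r+1}\|.
\]
Now I insert the two $\sfCERP$ bounds. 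Since $\rk_{\IZ\Lambda_i}\bigl(\res^{\IZ\Gamma}_{\IZ\Lambda_i}C_r\bigr)=[\Gamma:\Lambda_i]\cdot\rk_{\IZ\Gamma}(C_r)$, the rank estimate $\rk_{\IZ\Lambda_i}(E_r)\le(\kappa/T)\cdot\rk_{\IZ\Lambda_i}(\res^{\IZ\Gamma}_{\IZ\Lambda_i}C_r)$ gives $\rk_{\IZ\Lambda_i}(E_r)\le(\kappa/T)\cdot[\Gamma:\Lambda_i]\cdot\rk_{\IZ\Gamma}(C_r)$; the norm estimate gives $\|\partial^E_{r+1}\|\le\exp(\kappa)\cdot T^\kappa$, whence $\logp\|\partial^E_{r+1}\|\le\kappa+\kappa\log T\le\kappa(1+\log T)$ (for $T\ge 1$). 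Substituting, the factors $[\Gamma:\Lambda_i]$ cancel and one obtains bounds independent of $i$:
\[
\medim^{\IZ}_r(\alpha)\le \frac{\kappa}{T}\cdot\rk_{\IZ\Gamma}(C_r),
\qquad
\mevol_r(\alpha)\le \frac{\kappa^2(1+\log T)}{T}\cdot\rk_{\IZ\Gamma}(C_{r+1}).
\]

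Finally, these inequalities hold for every $T\in\IR_{\ge1}$, and both right-hand sides tend to $0$ as $T\to\infty$ (the finitely-generated ranks $\rk_{\IZ\Gamma}(C_r)$ are fixed constants, and $(1+\log T)/T\to 0$). Hence $\medim^{\IZ}_r(\alpha)=0$ for $r\in\{0,\dots,n+1\}$ and $\mevol_r(\alpha)=0$ for $r\in\{0,\dots,n\}$, which together with the reduction via Lemma~\ref{lem:integers_field} completes the proof. I do not anticipate a serious obstacle: the only point requiring a little care is checking that the output of $\sfCERP_{n+1}$ — a $\IZ\Lambda_i$-equivariant retraction of $\res^{\IZ\Gamma}_{\IZ\Lambda_i}C_*$, rather than merely a $\IZ$-chain retraction of coinvariants — is precisely what lets Lemma~\ref{lem:finite_index} be applied (this is exactly why the equivariant version $\sfCERP$ is needed here, and not the plain algebraic cheap rebuilding property), and that the index range $r\le n+1$ for $\medim$ versus $r\le n$ for $\mevol$ matches the degrees in which $C_*$ is required to be finitely generated and the conclusions of Lemma~\ref{lem:finite_index} hold.
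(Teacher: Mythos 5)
Your proposal is correct and follows essentially the same route as the paper: reduce to $Z=\IZ$ via Lemma~\ref{lem:integers_field}, feed the $\IZ\Lambda_i$-equivariant retraction data from $\sfCERP_{n+1}$ into Lemma~\ref{lem:finite_index}, cancel the index $[\Gamma:\Lambda_i]$, and let $T\to\infty$. The extra details you supply (that $\res^{\IZ\Gamma}_{\IZ\Lambda_i}C_*$ is a free $\IZ\Lambda_i$-resolution, and the estimate $\logp\|\partial^E_{r+1}\|\le\kappa(1+\log T)$) are exactly the steps the paper leaves implicit.
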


\part{Dynamical inheritance properties}
\label{part:dyn}

We establish the following dynamical inheritance and computational
properties of measured embedding dimension and measured embedding
volume: monotonicity under weak containment (Section~\ref{sec:weak_containment}),
a disintegration estimate and reduction to ergodic actions (Section~\ref{sec:disint}),
estimates via the equivalence relation ring (Section~\ref{sec:overZR}), 
invariance under weak bounded orbit equivalence (Section~\ref{sec:wbOE}),
comparison with cost (Section~\ref{sec:cost}),
comparison with integral foliated simplicial volume (Section~\ref{sec:ifsv}).
In particular, this will also allow us to compute further examples. 

\section{Weak containment}
\label{sec:weak_containment}

In this section, we prove monotonicity of measured embedding dimension and measured embedding volume under 
weak containment of actions (Theorem~\ref{thm:wc}). After recalling the definition of weak containment, we 
introduce an upper bound on the norm and a way of translating chain complexes over a crossed product ring
to a different action.

\subsection{Preliminaries on weak containment}

We briefly recall the definition of weak containment and a few examples.
\begin{defn}[weak containment, {{\cite[p.~64]{Kechris-globalaspects}}}]
	\label{def:wc}
	Let~$\Gamma$ be a group and~$\alpha\colon \Gamma\actson (X,\mu)$ and~$\beta\colon \Gamma \actson (Y,\nu)$ 
	be \pmp\ actions of~$\Gamma$ on standard probability spaces. 
	We say that~$\alpha$ is \emph{weakly contained} in~$\beta$ (in symbols~$\alpha\wkcont\beta$)
	if for all~$n\in \N$, measurable sets~$A_1,\dots,A_n\subseteq X$, finite sets $F\subseteq \Gamma$, 
	and~$\epsilon >0$, there are measurable sets~$B_1,\dots,B_n\subseteq Y$ such that
	\indexw{weak containment}\indexnot{<}{$\wkcont$}{weak containment}
	\[
		\forall_{\gamma\in F}\ \forall_{i,j\in \{1,\dots,n\}}\  
		\bigl|\mu(\gamma^\alpha(A_i) \cap A_j) - \nu (\gamma^\beta (B_i) \cap B_j)\bigr| < \epsilon.
	\]
	Here we write~$\gamma^\alpha$ (resp.~$\gamma^\beta$) when~$\gamma \in \Gamma$ is acting on~$(X, \mu)$ via $\alpha$ (resp.\ on~$(Y,\nu)$ via~$\beta$).
\end{defn}

Weak containment is transitive on \pmp\ actions of a given group.

\begin{ex}[]
	Let~$\Gamma$ be a group and~$\alpha\colon \Gamma\actson (X,\mu)$ and~$\beta\colon \Gamma \actson (Y,\nu)$ 
	be \pmp\ actions of~$\Gamma$ on standard probability spaces. 
	It is straightforward to check that~$\alpha\wkcont\alpha\times \beta$,
	where~$\alpha\times \beta \colon  \Gamma\actson(X\times Y, \mu\otimes \nu)$ is the product action.
\end{ex}

\begin{ex}[]
	\label{ex:wc-profin}
	Let~$\Gamma$ be a residually finite group and~$\Lambda_*$ be a residual chain of~$\Gamma$.
	Let~$(X,\mu)$ be the inverse limit of the system~$(\Gamma/\Lambda_i)_{i\in \N}$,
	equipped with the Haar measure. Then, we have an action 
	$\Gamma\actson X$ via left translation. This action is weakly contained in the 
	profinite completion action~$\Gamma\actson\widehat\Gamma$~\cite[Proposition~2.3]{Kechris-space-free-group}. 
\end{ex}

For countably infinite groups, there is a smallest action with respect to weak containment.

\begin{ex}[Bernoulli shift]
	\label{ex:Bernoulli}
	Let~$(X,\mu)$ be a non-trivial probability space (i.e., $\mu$ is not concentrated in one point)
	 and~$\Gamma$ be a countably infinite group.
	The \emph{Bernoulli shift} of~$\Gamma$ on~$X$
	is the action of~$\Gamma$ on~$\prod_\Gamma X$ (endowed with the product measure)
	via shifting of the factors. 
	Ab\'ert and Weiss proved that the Bernoulli shift 
	is weakly contained in every free \pmp\ action of~$\Gamma$ \cite[Theorem~1]{Abert-Weiss-Bernoulli}.
	\indexw{Bernoulli shift}
\end{ex}

\begin{ex}[amenable groups]
	\label{ex:wc-amenable}
	Let~$\Gamma$ be an infinite amenable group. Then all free \pmp\ actions of~$\Gamma$ on standard probability spaces are 
	weakly contained in the Bernoulli shift~\cite[p.~91]{Kechris-globalaspects}. As a consequence in this situation all free \pmp\ actions on standard probability spaces 
	are \emph{weakly equivalent} (Example~\ref{ex:Bernoulli}).
\end{ex}

\begin{defn}[ergodic action]
An action~$\Gamma\actson (X,\mu)$ is \emph{ergodic}\indexw{ergodic} if for every 
	measurable subset~$A\subseteq X$ with~$\Gamma\cdot A = A$, we have
	\[
		\mu(A) = 0 \qor \mu(X\setminus A) = 0.
	\]
\end{defn}

\begin{defn}[EMD*, {{\cite[Definition~4.4, Proposition~4.5]{Kechris-space-free-group}}}]
	\label{def:EMD}	
	An infinite countable residually finite group~$\Gamma$ satisfies~EMD* if every ergodic standard probability
	action of~$\Gamma$ is weakly contained in the profinite completion action~$\Gamma\actson \widehat{\Gamma}$.
	\indexw{EMD*}
	\indexnot{EMD}{EMD*}{Property EMD*}
\end{defn}

For the examples below, note that Tucker-Drob proved that for all groups property EMD* is equivalent to a similarly 
defined property MD \cite[Theorem~1.4]{tuckerdrob-weak}.

\begin{ex}[EMD*]
	\label{ex:EMD}
	The following groups satisfy EMD*:
	\begin{itemize}
		\item countable free groups \cite[Theorem~3.1, Proposition~4.5]{Kechris-space-free-group};
		\item residually finite infinite amenable groups \cite[Proposition~13.2]{Kechris-globalaspects};
		\item free products of non-trivial groups $\Gamma * \Lambda$, where each is either finite or has property EMD*~\cite[Theorem~4.8]{tuckerdrob-weak}
		\item subgroups of groups with property~EMD*~\cite[p.~486]{Kechris-space-free-group};
		\item finite index extensions of groups with property~EMD*~\cite[p.~486]{Kechris-space-free-group};
		\item extensions $1 \to N \to \Gamma \to Q \to 1$ where $N$ is a finitely generated group with property~EMD* and $Q$ is a residually finite amenable group~\cite[Theorem~1.4]{Bowen-TD-coinduction}.
		\end{itemize}
		The previous properties show that also the following geometric families of groups have EMD*:
		\begin{itemize}
		\item fundamental groups of connected closed surfaces~\cite[Theorem~1.4]{Bowen-TD-coinduction};
		\item fundamental groups of connected compact hyperbolic $3$-manifolds with emp\-ty or toroidal boundary~\cite[Proposition~5.2]{FLMQ} (see also~\cite[Corollary~3.11]{FLPS}).
	\end{itemize}
	More examples can be found in the survey by Burton and Kechris \cite[pp.~2698f]{Burton-Kechris-weak-containment}.
\end{ex}

Many dynamical invariants are monotone under weak containment (including, e.g., 
cost~\cite[Corollary~10.14]{Kechris-globalaspects} and integral foliated simplicial volume~\cite[Theorem~1.5]{FLPS},
see Sections~\ref{sec:cost} and~\ref{sec:ifsv} for the definitions). In Theorem~\ref{thm:wc}, we will prove monotonicity
of measured embedding dimension and measured embedding volume under weak containment. 
In particular, for groups satisfying~EMD*, we can bound 
$\medim$ and~$\mevol$ of the profinite completion by the corresponding invariant of 
any action (Corollary~\ref{cor:EMDreduction}).

Instead of working directly with the definition, we will often employ the 
following characterisation of weak containment using weak neighbourhoods.

\begin{defn}[weak neighbourhoods, {{\cite[Section~1(B)]{Kechris-globalaspects}}}]
	\label{def:wk-nbhds}
	Let $\Gamma$ be a group and $(X,\mu)$ be a standard probability space.
	The \emph{weak topology} on the space of \pmp\ actions~$\Gamma\actson (X,\mu)$
	is defined by the following basic open neighbourhoods:
	Let $\alpha\colon  \Gamma \actson X$, $F\subseteq \Gamma$ be finite, $n\in \N$, $A_1, \dots, A_n\subseteq X$ be measurable, and~$\varepsilon \in \R_{>0}$. Then, \indexw{weak topology}
	\[
		\bigl\{\beta\colon  \Gamma\actson X \bigm| \forall_{\gamma\in F} \ \forall_{i \in \{1,\dots, n\}}\ 
		\mu \bigl( (\gamma^\alpha A_i) \symmdiff (\gamma^\beta A_i)\bigr) < \varepsilon\bigr\}
	\] 
	is open in the weak topology.
\end{defn}

\begin{prop}[{{\cite[Proposition~10.1]{Kechris-globalaspects}}}]
	\label{prop:wc-char}
	Let~$\alpha\colon  \Gamma \curvearrowright (X,\mu)$ and~$\beta\colon \Gamma\curvearrowright (Y,\nu)$
	 be \pmp\ actions. 
	Then, $\alpha$ is weakly contained in~$\beta$
	if and only if in every weak neighbourhood~$U$ of~$\alpha$, there is~$\beta'\in U$
	such that~$\beta'$ is isomorphic to~$\beta$ as actions on standard probability spaces, i.e., 
	there is an isomorphism~$\varphi\colon (X,\mu) \to (Y,\nu)$ of measure spaces such
	 that~$\varphi(\gamma^\alpha x) = \gamma^\beta \varphi(x)$ for all~$x\in X$ and~$\gamma\in \Gamma$. 
\end{prop}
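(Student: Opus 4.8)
The plan is to prove the two implications separately; the direction ``every weak neighbourhood of~$\alpha$ meets the isomorphism class of~$\beta$ $\Longrightarrow$ $\alpha\wkcont\beta$'' is elementary, so I would dispatch it first. Given data $A_1,\dots,A_n\subseteq X$, a finite set $F\subseteq\Gamma$ and $\epsilon>0$ as in Definition~\ref{def:wc}, I would apply the hypothesis to the basic weak neighbourhood~$U$ of~$\alpha$ determined by~$F$, the sets $A_1,\dots,A_n$ and~$\epsilon$ (Definition~\ref{def:wk-nbhds}); this yields $\beta'\in U$ together with a measure isomorphism $\varphi\colon(X,\mu)\to(Y,\nu)$ intertwining~$\beta'$ and~$\beta$. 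Setting $B_i\coloneqq\varphi(A_i)$, equivariance and measure-preservation of~$\varphi$ give $\nu(\gamma^\beta B_i\cap B_j)=\mu(\gamma^{\beta'}A_i\cap A_j)$, while $\beta'\in U$ gives $|\mu(\gamma^{\beta'}A_i\cap A_j)-\mu(\gamma^\alpha A_i\cap A_j)|\le\mu(\gamma^{\beta'}A_i\symmdiff\gamma^\alpha A_i)<\epsilon$; together these are exactly the inequalities defining $\alpha\wkcont\beta$.

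For the converse I would fix a basic weak neighbourhood of~$\alpha$ and, after replacing the distinguishing sets by the finite partition they generate (shrinking~$\epsilon$) and enlarging~$F$, assume it has the form $U=\{\beta'\colon\mu(\gamma^\alpha A_i\symmdiff\gamma^{\beta'}A_i)<\epsilon\ \forall\,\gamma\in F,\ i<n\}$ with $\mathcal P=\{A_i\}_{i<n}$ a finite partition of~$X$ and $F=F^{-1}\ni 1$ finite. The target is a partition $\mathcal Q=\{B_i\}_{i<n}$ of~$Y$ whose \emph{joint $F$-type} approximates that of~$\mathcal P$, i.e.\ $|\mu(\bigcap_{\gamma\in F}\gamma^\alpha A_{f(\gamma)})-\nu(\bigcap_{\gamma\in F}\gamma^\beta B_{f(\gamma)})|<\delta$ for all $f\colon F\to\{0,\dots,n-1\}$, with precision $\delta=\delta(n,|F|,\epsilon)$ fixed at the end. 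Granting~$\mathcal Q$, I would choose a measure isomorphism $\theta\colon(X,\mu)\to(Y,\nu)$ (both atomless standard probability spaces) matching the joint refinements $\bigvee_{\gamma\in F}\gamma^\alpha\mathcal P$ and $\bigvee_{\gamma\in F}\gamma^\beta\mathcal Q$ cell by cell, which is possible exactly because corresponding cells have measures within~$\delta$. Putting $\beta'\coloneqq\theta^{-1}\beta\theta$, writing each $\gamma^\alpha A_i$ (resp.\ $\gamma^\beta B_i$) as the union of the cells indexed by $\{f\colon f(\gamma)=i\}$, and bounding symmetric differences termwise shows $\mu(\gamma^\alpha A_i\symmdiff\gamma^{\beta'}A_i)$ is at most a constant depending on $n$ and $|F|$ times~$\delta$; choosing $\delta$ small enough therefore puts $\beta'$ in~$U$, and $\beta'\cong\beta$ by construction.

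The step I expect to be the main obstacle is producing~$\mathcal Q$ from $\alpha\wkcont\beta$, since Definition~\ref{def:wc} only controls the \emph{pairwise} overlaps $\mu(\gamma^\alpha A_i\cap A_j)$, whereas the isomorphism above genuinely needs the full joint distribution of the refinement. The device I would use is to feed the joint cells themselves into the pairwise condition: apply Definition~\ref{def:wc} to the finite family $\{A_i\}_{i<n}\cup\{R_f\}_f$, where $R_f\coloneqq\bigcap_{\gamma\in F}\gamma^\alpha A_{f(\gamma)}$, and to the finite set $F\cup FF^{-1}\cup\{1\}$ with a tiny accuracy $\delta_0$; this yields sets $\{B_i\}_{i<n}\cup\{T_f\}_f$ in~$Y$. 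Because $\mathcal P$ is a partition, $R_f\subseteq\gamma^\alpha A_{f(\gamma)}$ for every $\gamma\in F$, so the relations $\nu(\gamma^\beta B_i\cap T_f)\approx\mu(\gamma^\alpha A_i\cap R_f)$ force $\nu(\gamma^\beta B_i\cap T_f)<\delta_0$ whenever $i\neq f(\gamma)$; after replacing $\{B_i\}$ and $\{T_f\}$ by honest partitions $\{B_i'\}$, $\{T_f'\}$ of~$Y$ (at the cost of errors of order $n\delta_0$), this means each $T_f'$ lies, up to measure of order $n|F|\delta_0$, inside $S_f'\coloneqq\bigcap_{\gamma\in F}\gamma^\beta B_{f(\gamma)}'$. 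Since $\{T_f'\}$ partitions~$Y$ with $\nu(T_f')\approx\mu(R_f)$ (from the $\gamma=1$ relation for the pair $R_f,R_f$), a measure count over all~$f$ forces $|\nu(S_f')-\mu(R_f)|$ to be of order $n^{|F|}\delta_0$, so $\mathcal Q\coloneqq\{B_i'\}$ has the required joint $F$-type once $\delta_0$ is chosen small in terms of $n,|F|,\delta$. The case where the probability spaces have atoms is handled by the same argument keeping track of the atoms, but since the spaces arising in this paper are atomless I would not dwell on it.
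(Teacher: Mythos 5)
The paper does not prove this proposition at all: it is quoted verbatim from Kechris's \emph{Global aspects of ergodic group actions} (Proposition~10.1 there), so there is no in-paper argument to compare against. Your proposal is, in substance, a correct self-contained proof and follows the same route as the standard one. The easy direction is handled correctly (the inequality $|\mu(C\cap A_j)-\mu(D\cap A_j)|\le\mu(C\symmdiff D)$ together with equivariance and measure-preservation of $\varphi$ is exactly what is needed). For the hard direction you correctly identify the genuine difficulty — Definition~\ref{def:wc} only controls pairwise intersections, while building an isomorphic copy inside a weak neighbourhood requires control of the joint distribution of $\bigvee_{\gamma\in F}\gamma^\alpha\mathcal P$ — and your device of feeding the cells $R_f$ of the joint refinement back into the pairwise condition, then using that $\{T'_f\}$ almost partitions $Y$, each $T'_f$ almost sits inside $S'_f$, and the $S'_f$ exactly partition $Y$, is precisely the standard way to upgrade pairwise control to joint control; the measure count $\nu(S'_f)\ge\nu(T'_f)-\eta$ plus $\sum_f\nu(S'_f)=\sum_f\nu(T'_f)=1$ does force $|\nu(S'_f)-\mu(R_f)|=O(n^{|F|}\eta)$ as you claim. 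Two minor points of precision: the disjointification errors are of order $n^{2|F|}\delta_0$ rather than $n\delta_0$ (harmless, since $\delta_0$ is chosen last); and the isomorphism $\theta$ cannot match corresponding cells \emph{exactly} when their measures only agree up to $\delta$ — you must match subsets of equal measure $\min(\mu(C_f),\nu(D_f))$ and dump the leftovers (of total measure $O(n^{|F|}\delta)$) arbitrarily, which your subsequent ``constant times $\delta$'' error accounting already absorbs. Neither point is a gap.
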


\subsection{An upper bound on the norm}

In the following, we will often use the following upper bound to the norm.

Let~$\alpha\colon \Gamma \actson (X,\mu)$ be a \pmp\ action. 
Since multiple actions will be involved,
we stress the action in the notation (we write, e.g., $L^\infty(X)\ast_\alpha \Gamma\coloneqq \LinftyX\ast \Gamma$, $\spann{A}_\alpha$, $\dim_\alpha$, $\rk_\alpha$, $\lognorm_\alpha$).
We restrict to the special case that
$R = \Linfalpha\coloneqq \linf X *_\alpha \Gamma$.

\begin{defn}[]
	\label{def:Q}
	Let~$f\colon M\to N$ be an~$\Linfalpha$-homomorphism between marked projective~$\Linfalpha$-modules. 
	Let~$P$ be a reduced presentation of~$f$ as specified in Setup~\ref{setup:opnormalt}. 
	We define 
	\[
		Q(f,P) \coloneqq \sum_{(i,j,k,\gamma)\in I\times J\times K\times F} |a_{i,j,k,\gamma}| 
	\]
	and 
	\[
		Q(f) \coloneqq \min_P Q(f,P),
	\]
	where the minimum is taken over all possible reduced presentations of~$f$. Note that this is indeed a minimum,
	as norms of elements in~$Z$ lie in~$\N$. 
	If~$z\in \Linfalpha$, define~$Q(z) \coloneqq Q(f_z)$, where $f_z\colon \Linfalpha \to \Linfalpha$
	is the map given by right multiplication with~$z$.
	
	If~$\eta\colon\gen A_\alpha \to \LinftyXwc$ is an $\Linfalpha$-linear map, we define
	\[
		Q(\eta) \coloneqq Q\bigl(\iota(\eta(\chi_A,1))\bigr),
	\]
	where~$\iota\colon \LinftyXwc \hookrightarrow \Linfalpha$ is the canonical inclusion into the summand indexed 
	by~$1\in \Gamma$.
	Finally, if~$\eta\colon M = \bigoplus_{i\in I} \gen{A_i}_\alpha \to \LinftyXwc$ is an $\Linfalpha$-linear map,
	we define
	\[
		Q(\eta) \coloneqq \sum_{i\in I} Q\bigl(\eta|_{\gen{A_i}_\alpha}\bigr).
	\]
\end{defn}

\begin{rem}
	\label{rem:normleQ}
	Proposition~\ref{prop:opnormalt} shows that for every $R$-homomorphism~$f$ between marked projective $R$-modules,
	we have $\|f\| \le Q(f)$. 
	Straightforward calculations also show that~$Q$ is an upper bound to the~$\infty$-norm
	$\|\cdot\|_\infty$ (Definition~\ref{defn:infty-norm}), to~$\Nsum$, and to~$\Ntmax$ (Definition~\ref{def:size}).
\end{rem}

\begin{lem}[] 
	\label{lem:Q-basic} We record a few basic properties of this quantity.
	\begin{enumerate}[label=\enum]
		\item Let~$f\colon M\to N$ be an~$\Linfalpha$-homomorphism between marked projective $\Linfalpha$-modules
		and~$M = M_1\oplus M_2$ be a marked decomposition. Then,
		\[
			Q(f) \le Q(f|_{M_1}) + Q(f|_{M_2}).
		\]
		\item Let $f\colon M\to N$ and~$g\colon N\to P$ be~$\Linfalpha$-homomorphisms between marked projective~$\Linfalpha$-modules. Then,
		\[
			Q(g\circ f) \le Q(g)\cdot Q(f).
		\]
		\item Let~$f_1\colon M\to N_1$ and $f_2\colon M \to N_2$ be~$\Linfalpha$-homomorphisms between marked projective~
		$\Linfalpha$-modules. Then, $(f_1,f_2)\colon  M\to N_1\oplus N_2$ satisfies
		\[
			Q((f_1,f_2)) \le Q(f_1)+Q(f_2).
		\]
	\end{enumerate}
\end{lem}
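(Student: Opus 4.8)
The plan is to verify each of the three inequalities by reducing to the explicit combinatorial description of $Q$ coming from reduced presentations (Setup~\ref{setup:opnormalt}), since all three are purely bookkeeping statements about sums of absolute values of the coefficients $a_{i,j,k,\gamma}\in Z$. For part~(i), I would start with a reduced presentation $P_1$ of $f|_{M_1}$ realising $Q(f|_{M_1})$ and a reduced presentation $P_2$ of $f|_{M_2}$ realising $Q(f|_{M_2})$, then combine them: after passing to a common family of pairwise disjoint sets $(U_k)_{k\in K}$ refining the ones appearing in $P_1$ and $P_2$ (and a common finite $F\subseteq\Gamma$), the union of the two presentations is a reduced presentation of $f$ on $M=M_1\oplus M_2$, and its $Q$-value is exactly $Q(f|_{M_1},P_1)+Q(f|_{M_2},P_2)$. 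Taking the minimum over all reduced presentations of $f$ on the left then gives $Q(f)\le Q(f|_{M_1})+Q(f|_{M_2})$. The only mild subtlety is checking that refining the $(U_k)$ preserves reducedness and does not change the coefficient sums, which is immediate from the disjointness conditions.

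For part~(iii), the argument is essentially the same as part~(i) but applied in the target rather than the domain: given reduced presentations $P_1$ of $f_1$ and $P_2$ of $f_2$ realising $Q(f_1)$ and $Q(f_2)$, pass to a common refining family $(U_k)$ and common $F$, and observe that the ``stacked'' matrix describing $(f_1,f_2)\colon M\to N_1\oplus N_2$ is then in reduced form with coefficient set the disjoint union of those of $P_1$ and $P_2$; hence $Q((f_1,f_2))\le Q(f_1,P_1)+Q(f_2,P_2)$. For part~(ii), I would take reduced presentations $P_f=(U_k)_{k\in K}$, $F_f$ of $f$ and $P_g=(V_\ell)_{\ell\in K'}$, $F_g$ of $g$; the composite $g\circ f$ is given by right multiplication with the product matrix, whose $(i,m)$-entry is $\sum_{j} z^{f}_{i,j} z^{g}_{j,m}$. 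Expanding this product of elements of the crossed product ring, each term $a^f_{i,j,k,\gamma}(\chi_{\gamma U_k},\gamma)\cdot a^g_{j,m,\ell,\gamma'}(\chi_{\gamma'V_\ell},\gamma')$ equals $a^f_{i,j,k,\gamma}a^g_{j,m,\ell,\gamma'}(\chi_{\gamma U_k\cap\gamma\gamma' V_\ell},\gamma\gamma')$; after refining to a common pairwise disjoint family of sets that subdivides all the $\gamma U_k$ and $\gamma\gamma'V_\ell$ and collecting terms with the same group element $\gamma\gamma'$, one obtains a reduced presentation of $g\circ f$ whose total coefficient sum is bounded by $\bigl(\sum |a^f_{i,j,k,\gamma}|\bigr)\cdot\bigl(\sum|a^g_{j,m,\ell,\gamma'}|\bigr)=Q(f,P_f)\cdot Q(g,P_g)$ using the triangle inequality $|x+y|\le|x|+|y|$ and submultiplicativity $|xy|\le|x||y|$ of the norm on $Z$. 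Minimising over presentations of $f$ and $g$ gives $Q(g\circ f)\le Q(g)\cdot Q(f)$.

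The main obstacle, such as it is, lies in part~(ii): one must be careful that after multiplying out and regrouping, the resulting presentation of $g\circ f$ really is \emph{reduced} in the sense of Setup~\ref{setup:opnormalt} (i.e.\ the building blocks $\gamma U_k$ are contained in the relevant $A_i$ and the new ``$U$''-sets are contained in the relevant $B_m$), so that the combinatorial formula for $Q$ applies to it. This is handled exactly as in the proof that reduced presentations always exist (Setup~\ref{setup:opnormalt}) together with the computation in Lemma~\ref{lem:UL-compu}: after forming the common refinement one simply restricts each building block to the appropriate intersection, which only shrinks sets and hence preserves reducedness, and can only decrease or leave unchanged the coefficient sums (cancellations are allowed by the triangle inequality). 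Once this is in place, all three statements follow by routine bookkeeping, and I would present them briefly in that order.
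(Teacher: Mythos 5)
The decisive step in all three of your arguments is the assertion that passing to a common pairwise disjoint refinement of the families $(U_k)$ of two reduced presentations ``does not change the coefficient sums''. This is false, and it is exactly where the proof breaks. If a set $U_k$ of a reduced presentation is subdivided into $m$ pieces $V_1,\dots,V_m$, the term $a_{i,j,k,\gamma}\cdot(\chi_{\gamma U_k},\gamma)$ becomes $\sum_{l=1}^{m}a_{i,j,k,\gamma}\cdot(\chi_{\gamma V_l},\gamma)$, so its contribution to $Q(\,\cdot\,,P)$ jumps from $|a_{i,j,k,\gamma}|$ to $m\cdot|a_{i,j,k,\gamma}|$ (and, the $U_k$ being pairwise disjoint, the coefficients on the pieces are forced, so this cannot be avoided). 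Since Setup~\ref{setup:opnormalt} requires a \emph{single} disjoint family to serve all matrix entries and all $\gamma\in F$ simultaneously, the optimal families for $f|_{M_1}$ and $f|_{M_2}$ (respectively for $f_1,f_2$, respectively for $f,g$) genuinely subdivide one another in general, and the combined presentation you construct has $Q$-value strictly larger than the bound you claim.

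This is not a patchable technicality: the shared-partition constraint makes the inequality of part~(i) itself fail. Let $\Gamma=\Z=\gen{t}$ act on the circle by an irrational rotation, $Z=\Z$, let $L_1,L_2$ be disjoint arcs of positive measure with $(L_1\cup L_2)\cap t(L_1\cup L_2)=\emptyset$, and let $f\colon\gen{X}\to\gen{X}$ be right multiplication by $(\chi_{L_1\cup L_2},1)+(\chi_{tL_1},t)+2\cdot(\chi_{tL_2},t)$. Any reduced presentation must use a disjoint family whose members meeting $L_1\cup L_2$ lie entirely in $L_1$ or in $L_2$ (so that the $t$-component is constant on each) while covering $L_1\cup L_2$ (for the $1$-component); hence $Q(f)=(1+1)+(1+2)=5$. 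Now pick $C\supseteq L_1\cup L_2$ with $C\cap t(L_1\cup L_2)=\emptyset$ and take the marked decomposition $\gen{X}=\gen{C}\oplus\gen{X\setminus C}$. Then $f|_{\gen{C}}$ is right multiplication by $(\chi_{L_1\cup L_2},1)$ and $f|_{\gen{X\setminus C}}$ by $(\chi_{tL_1},t)+2(\chi_{tL_2},t)$, so $Q(f|_{\gen{C}})+Q(f|_{\gen{X\setminus C}})=1+3=4<5$: the restriction decouples the two group elements and each restriction gets away with a coarser partition than $f$ itself can use. An analogous example (with $N_1,N_2$ in place of the two group elements) refutes part~(iii), and part~(ii) runs into the same refinement problem when the expanded product terms are reassembled over one family. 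So the statement cannot be proved along these lines; as far as I can see it needs either a modified definition of $Q$ (e.g.\ allowing each matrix entry and each $\gamma$ its own partition, or replacing the inner sum over $k$ by a maximum) or a weaker conclusion.
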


\begin{proof}
This follows from straightforward computations.
\end{proof}

\subsection{Translating actions}
\label{subsec:transl}

Let~$\alpha$ and~$\beta$ be \pmp\ actions of a group~$\Gamma$ on 
a standard probability space~$(X,\mu)$.
In this section, we consider modules and maps defined over~$\Linfalpha$, and we 
produce ``corresponding''
$\Linfbeta$-modules and $\Linfbeta$-maps. 

We define the translation of modules and homomorphisms as follows.

\begin{defn}[translation of modules]
	Let~$M = \bigoplus_{i\in I} \gen{A_i}_\alpha$ be a marked projective $\Linfalpha$-module. 
	We define the \emph{translation of~$M$ to~$\beta$} as the marked projective $\Linfbeta$-module~$M_\beta$ 
	via
	\indexw{translation}
	\indexnot{Mb}{$M_\beta$}{translation of a module}
	\[
		M_\beta \coloneqq \bigoplus_{i\in I} \gen{A_i}_\beta.
	\]
\end{defn}

\begin{rem}[]
	\label{rem:dim-transl}
	Since the definition of dimension (Definition~\ref{def:marked-proj-mod}) does not depend on the action, 
	we have that~$\dim_\beta (M_\beta) = \dim_\alpha (M)$.
\end{rem}

We can also translate maps to the action~$\beta$. 

\begin{defn}[]
	Let~$f\colon \Linfalpha^m \to \Linfalpha^n$ be a linear map between marked free $\Linfalpha$-modules.
	Recall from Setup~\ref{setup:opnormalt} that~$f$ is given by right multiplication with a matrix~$z$
	over~$\Linfalpha$, where 
	\[
		z_{i,j} = \sum_{(k,\gamma)\in K\times F} a_{i,j,k,\gamma} \cdot (\chi_{\gamma^\alpha U_k}, \gamma),
	\]
	and $(U_k)_{k\in K}$ is a finite family of disjoint subsets of~$X$, the set $F\subseteq \Gamma$
	is finite, and~$a_{i,j,k,\gamma}\in Z$.
	We define the \emph{translation of~$f$ to~$\beta$} to be the~$\Linfbeta$-linear 
	map $f_\beta\colon \Linfbeta^m \to \Linfbeta^n$ 
	defined by right multiplication with the matrix $z_{\beta} = ( (z_\beta)_{i,j})_{i\in I,j\in J}$
	that is defined by 
	\begin{align*}
		(z_\beta)_{i,j} &= \sum_{(k,\gamma)\in K\times F} a_{i,j,k,\gamma} \cdot (\chi_{\gamma^\beta U_k}, \gamma).
	\end{align*}
	It is straightforward to show that~$f_\beta$ is well-defined
	and does not depend on the chosen presentation of~$z$ in Setup~\ref{setup:opnormalt}.
	
	More generally, let~$f\colon M = \bigoplus_{i\in I} \gen{A_i}_\alpha \to N = \bigoplus_{j\in J} \gen{B_j}_\alpha$ be a 	linear map between
	 marked projective~$\Linfalpha$-modules. We define its \emph{translation to~$\beta$} by 
	 \[
	 		f_\beta \coloneqq  \pi_{N_\beta} \circ(\iota_N\circ f\circ \pi_M)_\beta \circ \iota_{M_\beta}
	 			\colon M_\beta \to N_\beta,
	 \] 
	 where~$\iota_N\colon N \to \Linfalpha^{\# J}$ and~$\iota_{M_\beta}\colon M_\beta \to \Linfbeta^{\# I}$ denote
	 the canonical marked inclusions and~$\pi_{N_\beta}\colon \Linfbeta^{\# J} \to N_\beta$ 
	 and~$\pi_M\colon \Linfalpha^{\# I} \to M$ denote the canonical marked projections.
\end{defn} 

\begin{rem}[]
	The action~$\alpha$ is replaced by~$\beta$ in three places:
	\begin{enumerate}
		\item In the generation of modules: We generated an~$\Linfbeta$-module instead of one over~$\Linfalpha$;
		\item In the multiplication of the matrix: We multiply over~$\Linfbeta$;
		\item In the coefficients: We multiply with~$\chi_{\gamma^\beta U_k}$, where~$\gamma$ now acts via~$\beta$
		on~$U_k$. Previously, we considered the action via~$\alpha$.
	\end{enumerate}
\end{rem}

\begin{rem}[]
	\label{rem:Q-transl}
	From Definition~\ref{def:Q}, it follows that~$Q$ is translation-invariant, i.e., $Q(f_\beta) \le Q(f)$.
\end{rem}

For complexes obtained from~$Z\Gamma$-chain complexes by tensoring, there is an easy description of the
translation.

\begin{lem}[]
	\label{lem:transl-C}
	Let~$C_* \onto Z$ be an augmented free~$Z\Gamma$-chain complex. Then, there is a canonical isomorphism of 
	$\Linfbeta$-chain complexes
	\[
		\bigl(\Linfalpha\otimes_{Z\Gamma}C_*\bigr)_\beta \cong \Linfbeta\otimes_{Z\Gamma} C_*.
	\]
	If~$C_0 = Z\Gamma$ and the augmentation map~$\eta\colon Z\Gamma\to Z$ is given by sending all~$\gamma\in \Gamma$
	to~$1\in Z$, then we can define an augmentation map~$\Linfalpha\otimes_{Z\Gamma}C_0 \to \LinftyXwc$
	and the previous isomorphism can be extended by the identity on~$\LinftyYwc$ in degree~$-1$.
\end{lem}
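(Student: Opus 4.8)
\textbf{Proof plan for Lemma~\ref{lem:transl-C}.}
The plan is to unwind the definition of the translation functor $(\args)_\beta$ on the concrete chain complex $\Linfalpha \otimes_{Z\Gamma} C_*$ and observe that, degree by degree, it produces exactly $\Linfbeta \otimes_{Z\Gamma} C_*$ together with a differential described by the same matrix over $Z\Gamma$. First I would fix, in each degree $r$, a $Z\Gamma$-basis of $C_r$; then $\Linfalpha \otimes_{Z\Gamma} C_r$ is canonically the marked free $\Linfalpha$-module $\Linfalpha^{(B_r)}$ on that basis, with all marking sets equal to $X$ (so $\gen{X}_\alpha = \Linfalpha$). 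By definition, its translation is the marked free $\Linfbeta$-module on the same index set, i.e.\ $\Linfbeta^{(B_r)} \cong \Linfbeta \otimes_{Z\Gamma} C_r$; this gives the degreewise isomorphism. The point is that these identifications are independent of the chosen $Z\Gamma$-bases because they are induced by the $Z\Gamma$-linear structure, which is common to both sides.

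Next I would check compatibility with the differentials. The boundary map $\partial_r^C \colon C_r \to C_{r-1}$ is given, with respect to the chosen bases, by a matrix $(\lambda_{ij})$ with entries $\lambda_{ij} = \sum_{\gamma} a_{ij\gamma}\,\gamma \in Z\Gamma$. Under the identification $\Linfalpha \otimes_{Z\Gamma} C_r \cong \Linfalpha^{(B_r)}$, the induced differential $\id \otimes \partial_r^C$ is right multiplication by the matrix with entries $\sum_\gamma a_{ij\gamma}\,(\chi_X,\gamma) = \sum_\gamma a_{ij\gamma}\,(\chi_{\gamma^\alpha X},\gamma)$ (using $\gamma^\alpha X = X$). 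Applying the translation functor replaces each $(\chi_{\gamma^\alpha X},\gamma)$ by $(\chi_{\gamma^\beta X},\gamma) = (\chi_X,\gamma)$, so the translated differential is right multiplication by the matrix with entries $\sum_\gamma a_{ij\gamma}\,(\chi_X,\gamma)$ over $\Linfbeta$, which is precisely $\id_{\Linfbeta} \otimes \partial_r^C$. Since translation is functorial on compositions of marked homomorphisms (as recorded after Definition~\ref{def:Q} via Lemma~\ref{lem:Q-basic}, and more basically because it is defined matrix-entrywise), these degreewise isomorphisms assemble into an isomorphism of $\Linfbeta$-chain complexes.

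For the augmentation: when $C_0 = Z\Gamma$ with $\eta\colon Z\Gamma \to Z$, $\gamma \mapsto 1$, the induced map $\Linfalpha \otimes_{Z\Gamma} C_0 = \Linfalpha \to \Linfalpha \otimes_{Z\Gamma} Z = \LinftyX$ (as an $\Linfalpha$-module via Remark~\ref{rem:linfX-ZR}) sends the canonical generator $(\chi_X,1)$ to $\chi_X = 1 \in \LinftyX$; concretely it is the map $\varepsilon$ of Remark~\ref{rem:linfX-ZR} restricted appropriately, and is given by the ``same data'' over both actions. Translating this augmented complex and comparing the degree $-1$ terms $\LinftyX$ and $\LinftyYwc$, the translation in degree $-1$ is just the identity on the abstract module $Z$ coefficients, so the chain isomorphism extends by $\id$ there. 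I do not expect a genuine obstacle here; the only mildly delicate point is bookkeeping the canonical marked inclusions and projections in the definition of $f_\beta$ for modules that are not literally free, but since all chain modules in $\Linfalpha \otimes_{Z\Gamma} C_*$ are (marked) free with marking $X$, those inclusions/projections are identities and the definition collapses to the naive matrix translation. Hence the verification is essentially formal.
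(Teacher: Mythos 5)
Your proposal is correct and follows essentially the same route as the paper: reduce to the rank-one free case (equivalently, work matrix-entrywise over a chosen $Z\Gamma$-basis), note that the induced differential over $\Linfalpha$ is right multiplication by $\sum_\gamma a_\gamma\,(\chi_{\gamma^\alpha X},\gamma)$ whose translation is $\sum_\gamma a_\gamma\,(\chi_{\gamma^\beta X},\gamma)$, and conclude from $\gamma^\alpha X = X = \gamma^\beta X$ that this coincides with $\id_{\Linfbeta}\otimes\partial$. The treatment of the augmentation likewise matches the paper's explicit definition of $\eta$ on $\Linfalpha\otimes_{Z\Gamma}C_0\cong\Linfalpha$, so no changes are needed.
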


\begin{proof}
	The tensor product and the translation are compatible with direct sums, so we can work componentwise.
	Without loss of generality, we assume~$C_j \cong_{Z\Gamma} Z\Gamma$. Then, we have isomorphisms
	of~$\Linfbeta$-modules
	\begin{align*}
		\bigl(\Linfalpha\otimes_{Z\Gamma} C_j\bigr)_\beta &\cong \bigl(\Linfalpha\bigr)_\beta
		= \bigl( \gen{X}_\alpha \bigr)_\beta 
		= \gen X_\beta
		\cong \Linfbeta\otimes_{Z\Gamma} C_j.
	\end{align*}
	For the boundary maps, because all of the above isomorphisms are compatible with direct sums,
	we can suppose that~$\partial_{j+1}\colon  Z\Gamma \to Z\Gamma$ is given by 
	right multiplication with 
	$
		\sum_{\gamma\in \Gamma} a_\gamma \cdot \gamma.
	$
	Then, $\Linfalpha\otimes_{Z\Gamma}\partial_{j+1}$ is given by right multiplication with
	\[
		\sum_{\gamma\in \Gamma} a_\gamma \cdot (\chi_X, \gamma) = 
		\sum_{\gamma\in \Gamma} a_\gamma \cdot (\chi_{\gamma^{\highlight \alpha} X}, \gamma).
	\]
	Thus, its translation $\bigl(\Linfalpha\otimes_{Z\Gamma}\partial_{j+1}\bigr)_\beta$ to~$\beta$ is given by
	\[
	\sum_{\gamma\in \Gamma} a_\gamma \cdot (\chi_{\gamma^{\highlight \beta} X}, \gamma).
	\]
	Because~$\gamma^\beta X = \gamma^\alpha X$, this agrees with~$\Linfbeta\otimes_{Z\Gamma}\partial_{j+1}$. 
	For the extension to degree~$-1$, we define~$\eta\colon  \Linfalpha\otimes_{Z\Gamma}C_0 \cong \Linfalpha \to \LinftyXwc$
	as the~$\LinftyXwc$-linear extension of~$\eta((\lambda,\gamma)) \coloneqq \gamma\lambda$ for all~$\lambda\in \LinftyXwc$ and~$\gamma\in \Gamma$.
\end{proof}

Compositions behave well under translation in the following sense:

\begin{lem}[composition estimate]
	\label{lem:comp-estimate}
	Let~$f\colon M\to N$ and~$g\colon N\to P$ be linear maps over~$\Linfalpha$ and~$\delta\in\R_{>0}$.
	 Then, there exists a weak 
	neighbourhood~$U$ of~$\alpha$ such that for all~$\beta \in U$, we have
	\[
		(g\circ f)_\beta =_\delta g_\beta \circ f_\beta.
	\]
\end{lem}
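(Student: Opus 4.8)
The plan is to reduce to rank‑one free modules and then compare the two $\Linfbeta$‑homomorphisms entry by entry at the level of matrices over the crossed product ring. Almost equality, composition, translation of modules and of homomorphisms, and passing to matrix entries are all compatible with marked decompositions; moreover, translation commutes with the (action‑independent) canonical marked inclusions and projections and is additive. Hence, writing $M=\bigoplus_{i\in I}M_i$, $N=\bigoplus_{j\in J}N_j$, $P=\bigoplus_{l\in L}P_l$, the $(i,l)$‑component of $(g\circ f)_\beta-g_\beta\circ f_\beta$ is a finite sum over $j$ of differences of the form $((g_{j,l}\circ f_{i,j}))_\beta-(g_{j,l})_\beta\circ(f_{i,j})_\beta$ with $f_{i,j},g_{j,l}$ of rank one. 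So, using Lemma~\ref{lem:alm-eq-split} and Lemma~\ref{lem:almosteqinherit}~\ref{itm:almeq-sum} (and rescaling $\delta$ by the number of these contributions), it suffices to prove the statement when $M=N=P=\Linfalpha$.

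In the rank‑one case, fix presentations of $f$ and $g$ as in Setup~\ref{setup:opnormalt}: say $f$ is right multiplication by $z=\sum_{(k,\gamma)\in K\times F}a_{k,\gamma}\cdot(\chi_{\gamma^\alpha U_k},\gamma)$ and $g$ is right multiplication by $w=\sum_{(l,\delta)\in L\times F'}b_{l,\delta}\cdot(\chi_{\delta^\alpha V_l},\delta)$. Using the crossed product multiplication one computes $z\cdot w=\sum a_{k,\gamma}b_{l,\delta}\cdot(\chi_{(\gamma\delta)^\alpha(\delta^{-\alpha}U_k\cap V_l)},\gamma\delta)$, which is again of the shape required in Setup~\ref{setup:opnormalt}, now with (fixed) atoms $\delta^{-\alpha}U_k\cap V_l$; therefore its translation is $(z\cdot w)_\beta=\sum a_{k,\gamma}b_{l,\delta}\cdot(\chi_{(\gamma\delta)^\beta(\delta^{-\alpha}U_k\cap V_l)},\gamma\delta)$. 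On the other hand, $z_\beta\cdot w_\beta=\sum a_{k,\gamma}b_{l,\delta}\cdot(\chi_{(\gamma\delta)^\beta(\delta^{-\beta}U_k\cap V_l)},\gamma\delta)$. Thus $(g\circ f)_\beta-g_\beta\circ f_\beta$ is a $Z$‑linear combination of elements $(\chi_{(\gamma\delta)^\beta(\delta^{-\alpha}U_k\cap V_l)}-\chi_{(\gamma\delta)^\beta(\delta^{-\beta}U_k\cap V_l)},\gamma\delta)$. Since $(\gamma\delta)^\beta$ and $\delta^{-\beta}$ are measure preserving and the sets $\delta^{-\alpha}U_k\cap V_l$ and $\delta^{-\beta}U_k\cap V_l$ differ only inside $\delta^{-\alpha}U_k\symmdiff\delta^{-\beta}U_k$, subadditivity of $\supp_1$ yields
\[
\size_1\bigl((g\circ f)_\beta-g_\beta\circ f_\beta\bigr)\le\sum_{(k,\gamma)\in K\times F}\ \sum_{(l,\delta)\in L\times F'}\mu\bigl(\delta^{-\alpha}U_k\symmdiff\delta^{-\beta}U_k\bigr).
\]

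Now take the weak neighbourhood $U$ of $\alpha$ determined by the finite family of sets $(U_k)_{k\in K}$, the finite set of group elements $\{\delta^{-1}\mid\delta\in F'\}$, and accuracy $\varepsilon\coloneqq\delta/(\#K\cdot\#F\cdot\#L\cdot\#F'+1)$. For every \pmp\ action $\beta\in U$ we have $\mu(\delta^{-\alpha}U_k\symmdiff\delta^{-\beta}U_k)<\varepsilon$ for all relevant $k,\delta$, so the right‑hand side above is $<\delta$; by Lemma~\ref{lem:almosteqchar} this gives $(g\circ f)_\beta=_\delta g_\beta\circ f_\beta$. The one point requiring care is the identification of the translation of $g\circ f$ with the explicit formula for $(z\cdot w)_\beta$: the atoms $\delta^{-\alpha}U_k\cap V_l$ of the composite presentation keep their $\alpha$‑shape while only the outer group element $\gamma\delta$ has its action changed under translation. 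This is exactly the presentation‑independence of translation recorded after its definition, and it is what makes the difference between $(g\circ f)_\beta$ and $g_\beta\circ f_\beta$ a small perturbation rather than zero.
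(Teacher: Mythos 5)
Your proof is correct and follows essentially the same route as the paper's: reduce to single rank-one summands (the paper via Lemma~\ref{lem:alm-eq-split} and additivity of almost equality, you via matrix entries plus rescaling of~$\delta$), compute the product $z\cdot w$ in the crossed product ring, observe that $(g\circ f)_\beta$ and $g_\beta\circ f_\beta$ differ only in whether the inner group element acts via~$\alpha$ or~$\beta$ on the atoms, and choose the weak neighbourhood accordingly. The only cosmetic difference is that you keep the full finite sums and invoke presentation-independence of translation for the composite, whereas the paper first reduces to one summand of each factor so that this issue does not arise; both are fine.
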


\begin{proof}
	By Lemma~\ref{lem:alm-eq-split}, Lemma~\ref{lem:almosteqinherit}~\ref{itm:almeq-sum}, and the 
	definition of the translation via the free case,
	 we can assume that~$M=N=P=\Linfalpha$.
	We fix presentations as in Setup~\ref{setup:opnormalt}, 
	i.e., $f$ and~$g$ are given by multiplication with
	 elements~$z_f,z_g\in\Linfalpha$, respectively.
	Since sums behave well with almost equality (Lemma~\ref{lem:almosteqinherit}~\ref{itm:almeq-sum}), we can assume without loss of generality 
	that
	\[
		z_f = a \cdot (\chi_{\gamma^\alpha W},\gamma) \qand z_g = b \cdot (\chi_{\lambda^\alpha V},\lambda),
	\]
	where~$a,b\in Z$, $\gamma,\lambda\in \Gamma$ and~$W,V \subseteq X$ are measurable subsets.
	We define~$U$ to be the weak neighbourhood of~$\alpha$
	defined by setting $F =\{\lambda^{-1}\}, {n = 2}$, ${A_1=W}, {A_2= V}$,
	and~$\epsilon = \delta$ (in the notation used in Definition~\ref{def:wk-nbhds}).
	
	Then,~$g\circ f$ is given by right multiplication with 
	\begin{align*}
		z_f\cdot z_g &= ab \cdot (\chi_{\gamma^\alpha W\cap \gamma^\alpha\lambda^\alpha V}, \gamma\lambda)\\
		&= ab \cdot (\chi_{(\gamma\lambda)^\alpha ((\lambda^{-1})^\alpha W\cap V)}, \gamma\lambda).
	\end{align*}
	Thus, for all~$\beta \in U$, the translation~$(g\circ f)_\beta$ is given by right multiplication with
	\begin{equation}
		\label{eq:comp-est-1}
		ab \cdot (\chi_{(\gamma\lambda)^\beta ((\lambda^{-1})^{\highlight\alpha} W\cap V)}, \gamma\lambda).
	\end{equation}
	Similarly,~$g_\beta\circ f_\beta$ is given by right multiplication with 
	\begin{equation}
		\label{eq:comp-est-2}
		ab \cdot (\chi_{(\gamma\lambda)^\beta ((\lambda^{-1})^{\highlight\beta} W\cap V)}, \gamma\lambda).
	\end{equation}
	Note that the expressions in Equation~\eqref{eq:comp-est-1} and Equation~\eqref{eq:comp-est-2} differ only by an
	~$\alpha$ resp.\ $\beta$ in the exponent of~$\lambda^{-1}$. 
	 Thus, by Example~\ref{ex:alm-eq}, $(g\circ f)_\beta$ and~$g_\beta\circ f_\beta$ are almost equal with 
	error at most
	\begin{align*}
		& \mu \bigl(
		(\gamma\lambda)^\beta ((\lambda^{-1})^\alpha W\cap V)
		\symmdiff
		(\gamma\lambda)^\beta ((\lambda^{-1})^\beta W\cap V)
		\bigr)\\
		&= \mu \bigl(
		((\lambda^{-1})^\alpha W\cap V)
		\symmdiff
		((\lambda^{-1})^\beta W\cap V)
		\bigr)
		\le \delta,
	\end{align*}
	where the last inequality is given by the choice of the weak neighbourhood~$U$.
\end{proof}

\begin{cor}[translation of chain complexes]
	\label{cor:transl-complex}
	Let~$n\in \N$, $\delta\in \R_{>0}$, and~$(D_*,\eta)$ be a marked projective $\Linfalpha$-chain complex
	with an augmentation map~$\eta\colon  D_0 \to \LinftyXwc$.
	Then, there exists a weak neighbourhood~$U$ of~$\alpha$ such that for all~$\beta\in U$,  the translated
	sequence~$((D_*)_\beta, \eta_\beta)$ is a
	marked projective \almostcc{\delta}{n} over~$\Linfbeta$.
\end{cor}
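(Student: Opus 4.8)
The plan is to show that each of the finitely many defining conditions for a marked projective \almostcc{\delta}{n}\ — namely the approximate chain-complex equations $\partial^{(D)_\beta}_r \circ \partial^{(D)_\beta}_{r+1} =_\delta 0$ for $r \in \{0,\dots,n\}$ and the $\delta$-surjectivity of $\eta_\beta$ — can each be arranged on a weak neighbourhood of $\alpha$, and then intersect the finitely many neighbourhoods obtained. The key technical input is Lemma~\ref{lem:comp-estimate} (and its proof via Example~\ref{ex:alm-eq}), which says that translation of compositions is approximately multiplicative on a small enough weak neighbourhood; the point is to feed it the specific compositions $\partial_r \circ \partial_{r+1}$ and the augmentation relation.

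First I would handle the chain-complex equations. Fix $r \in \{0,\dots,n\}$. Since $(D_*,\eta)$ is an honest chain complex over $\Linfalpha$, we have $\partial^D_r \circ \partial^D_{r+1} = 0$, hence a fortiori $(\partial^D_r \circ \partial^D_{r+1})_\beta = 0$ for every $\beta$ (translation of the zero map is zero). By Lemma~\ref{lem:comp-estimate} applied to $f = \partial^D_{r+1}$ and $g = \partial^D_r$ and to the parameter $\delta/2$, there is a weak neighbourhood $U_r$ of $\alpha$ such that for all $\beta \in U_r$,
\[
  (\partial^D_r \circ \partial^D_{r+1})_\beta =_{\delta/2} \partial^{D}_{r,\beta} \circ \partial^{D}_{r+1,\beta}.
\]
Combining with $(\partial^D_r \circ \partial^D_{r+1})_\beta = 0$ and Lemma~\ref{lem:almosteqinherit}~\ref{itm:composition:additivity} (or directly Lemma~\ref{lem:almosteqchar}), we obtain $\partial^{D}_{r,\beta} \circ \partial^{D}_{r+1,\beta} =_\delta 0$ on $U_r$. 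Note that the same argument works for the bottom of the complex, where $\partial^D_0 = \eta$, giving $\eta_\beta \circ \partial^{D}_{1,\beta} =_\delta 0$ on $U_0$; the target of $\eta$ is $\LinftyXwc$, and Lemma~\ref{lem:comp-estimate} and its underlying estimates apply verbatim to maps into $\LinftyX$ as remarked after Example~\ref{ex:alm-eq}.

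Next I would treat $\delta$-surjectivity of $\eta_\beta$. Since $\eta \colon D_0 \to \LinftyXwc$ is surjective, pick $z \in D_0$ with $\eta(z) = 1$. Writing $z = \sum_i \lambda_i \cdot \chi_{A_i} \cdot e_i$ with $\lambda_i \in \Linfalpha$, one builds $z_\beta \coloneqq \sum_i (\lambda_i)_\beta \cdot \chi_{A_i} \cdot e_i \in (D_0)_\beta$ (formally, $z$ is the value at $\chi_X$ of the map $\Linfalpha \to D_0$, $1 \mapsto z$, and $z_\beta$ is the value at $\chi_X$ of its translation). Applying Lemma~\ref{lem:comp-estimate} to the composition $\eta \circ (\text{this map})$ — whose translation-to-$\beta$ is $0$-close to $\eta_\beta$ composed with the translated map, hence sends $\chi_X$ to something $=_\delta$ to $(\eta(z))_\beta = 1_\beta = 1$ — yields a weak neighbourhood $U_{-1}$ of $\alpha$ on which $\eta_\beta(z_\beta) =_\delta 1$, i.e., $\eta_\beta$ is $\delta$-surjective. (Alternatively, invoke Example~\ref{ex:alm-eq} directly on each coordinate of $z$.) Finally I set $U \coloneqq \bigcap_{r=-1}^{n} U_r$, a weak neighbourhood of $\alpha$ since the intersection is finite, and on $U$ all defining conditions hold simultaneously, so $((D_*)_\beta, \eta_\beta)$ is a marked projective \almostcc{\delta}{n}\ over $\Linfbeta$.

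The main obstacle, and the only place requiring genuine care, is the bookkeeping of how $\delta$ is split across the two-step estimates (approximate multiplicativity of translation plus additivity of almost-equality under composition with zero) and the verification that Lemma~\ref{lem:comp-estimate} as stated — proved for maps between free modules of the form $\Linfalpha$, and extended to marked projective modules via marked inclusions/projections and Lemma~\ref{lem:alm-eq-split} — indeed covers the augmentation map landing in $\LinftyX$; this is routine given the remark following Example~\ref{ex:alm-eq} that the almost-equality machinery applies to maps into $\LinftyX$, but it is the step one must not skip. Everything else is a finite intersection of neighbourhoods and direct appeals to already-established lemmas.
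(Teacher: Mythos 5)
Your proposal is correct and follows essentially the same route as the paper's proof: apply Lemma~\ref{lem:comp-estimate} to each composition $\partial_r\circ\partial_{r+1}$ (whose translation is zero since the complex is strict over~$\Linfalpha$), handle $\delta$-surjectivity by translating a preimage~$z$ of~$1$ under~$\eta$, and intersect the finitely many resulting weak neighbourhoods. The extra bookkeeping you supply (the $\delta/2$ split and the remark that the almost-equality machinery applies to maps into~$\LinftyX$) is consistent with, and slightly more explicit than, the paper's two-sentence argument.
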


\begin{proof}
	We apply Lemma~\ref{lem:comp-estimate} multiple times. Define~$U$ to be the intersection of the neighbourhoods where
	the estimate holds for~$\partial_r\circ \partial_{r+1}$ for~$r\in \{0,\dots,n\}$. Moreover, because~$\eta$
	is an augmentation, there exists~$z\in D_0$ with~$\eta(z) = 1$. Thus, in a suitable neighbourhood,
	we have~$\eta_\beta(z_\beta) =_\delta 1_\beta = 1_\alpha = 1$.
\end{proof}

\begin{cor}[translation of chain maps]
	\label{cor:transl-chainmaps}
	Let~$n\in \N$, let $\delta\in \R_{>0}$, and let $f_*\colon  C_*\to D_*$ be an $\Linfalpha$-chain map between marked projective
	chain complexes~$(C_*,\zeta)$ and~$(D_*, \eta)$ extending the identity on~$\LinftyXwc$.
	Then, there exists a weak neighbourhood~$U$ of~$\alpha$ such that for all~$\beta\in U$, 
	the 
	translations of the
	chain complexes~$(C_*)_\beta$ and~$(D_*)_\beta$ are marked 
	projective \almostccs{\delta}{n} over~$\Linfbeta$ and moreover, the map
	$(f_*)_\beta\colon (C_*)_\beta \to (D_*)_\beta$ is a \almostcm{\delta}{n}.
\end{cor}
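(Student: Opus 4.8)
The plan is to combine the already-established Corollary~\ref{cor:transl-complex} (translation of chain complexes) with Lemma~\ref{lem:comp-estimate} (the composition estimate under translation) and Lemma~\ref{lem:alm-eq-split}. We are given an $\Linfalpha$-chain map $f_*\colon C_*\to D_*$ between genuine marked projective chain complexes extending $\id_{\LinftyXwc}$, and we want a single weak neighbourhood $U$ of $\alpha$ on which \emph{all} of the following hold simultaneously for every $\beta\in U$: the translated sequences $(C_*)_\beta$ and $(D_*)_\beta$ are \almostccs{\delta}{n}, and $(f_*)_\beta$ is a \almostcm{\delta}{n}. Since a weak neighbourhood basis is closed under finite intersection and only finitely many degrees $r\in\{0,\dots,n+1\}$ are involved, it suffices to find, for each of the finitely many chain/chain-map relations, a weak neighbourhood on which the corresponding almost-equality holds with error $<\delta$, and then take $U$ to be their intersection.

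First I would apply Corollary~\ref{cor:transl-complex} to $(C_*,\zeta)$ and to $(D_*,\eta)$, each with the parameter $\delta$, obtaining weak neighbourhoods $U_C$ and $U_D$ of $\alpha$ on which $((C_*)_\beta,\zeta_\beta)$ and $((D_*)_\beta,\eta_\beta)$ are marked projective \almostccs{\delta}{n}. Next, for the chain-map condition, I would treat each degree $r$ separately. In degree $0$ the relevant relation is $\eta\circ f_0=\zeta$, a genuine equality over $\Linfalpha$; applying Lemma~\ref{lem:comp-estimate} to the composition $\eta\circ f_0$ gives a neighbourhood $U_0$ on which $(\eta\circ f_0)_\beta=_\delta \eta_\beta\circ (f_0)_\beta$, and since translation is functorial on genuine equalities we have $(\eta\circ f_0)_\beta=(\zeta)_\beta=\zeta_\beta$, so $\eta_\beta\circ(f_0)_\beta=_\delta\zeta_\beta$ on $U_0$ (rescaling $\delta$ by a harmless constant or, more cleanly, running Lemma~\ref{lem:comp-estimate} with parameter $\delta$ from the outset). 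For $r\in\{1,\dots,n+1\}$ the relation is $\partial^D_r\circ f_r=f_{r-1}\circ\partial^C_r$, again a genuine equality; applying Lemma~\ref{lem:comp-estimate} to both composites $\partial^D_r\circ f_r$ and $f_{r-1}\circ\partial^C_r$ yields a neighbourhood $U_r$ on which $(\partial^D_r\circ f_r)_\beta=_{\delta/2}\partial^D_{r,\beta}\circ (f_r)_\beta$ and $(f_{r-1}\circ\partial^C_r)_\beta=_{\delta/2}(f_{r-1})_\beta\circ\partial^C_{r,\beta}$; since the two left-hand sides are translations of equal maps they are equal, so by the triangle inequality for almost equality (Lemma~\ref{lem:almosteqinherit}~\ref{itm:composition:additivity}) we get $\partial^D_{r,\beta}\circ (f_r)_\beta=_\delta (f_{r-1})_\beta\circ\partial^C_{r,\beta}$ on $U_r$.

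Finally I would set $U\coloneqq U_C\cap U_D\cap\bigcap_{r=0}^{n+1}U_r$, which is again a basic weak neighbourhood of $\alpha$ (a finite intersection), and observe that for every $\beta\in U$ all the required almost-equalities hold, so $(C_*)_\beta$, $(D_*)_\beta$ are \almostccs{\delta}{n} and $(f_*)_\beta$ is a \almostcm{\delta}{n} extending $\id_{\LinftyYwc}$ (using Lemma~\ref{lem:transl-C} or the construction of the translation to see that $(f_*)_\beta$ restricts to $\id_{\LinftyYwc}$ in degree $-1$, since $\alpha$ and $\beta$ act on the same underlying space $X$). I do not expect any serious obstacle here: the content is entirely bookkeeping, reducing everything to finitely many instances of Lemma~\ref{lem:comp-estimate} and Corollary~\ref{cor:transl-complex} and intersecting the resulting neighbourhoods. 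The only mild subtlety is keeping the error constants uniform — each application of Lemma~\ref{lem:comp-estimate} may introduce a bounded multiplicative factor (from splitting into rank-one pieces via Lemma~\ref{lem:alm-eq-split}, whose constant $\#I\cdot\#J$ depends only on $C_*,D_*,f_*$ and not on $\beta$), so one rescales the target accuracy by the maximum of these finitely many constants before invoking the lemmas. This rescaling depends only on the fixed data $C_*,D_*,f_*$ and $n$, not on $\delta$, so it is harmless.
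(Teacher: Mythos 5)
Your proposal is correct and follows essentially the same route as the paper's proof: apply Corollary~\ref{cor:transl-complex} to both complexes, apply Lemma~\ref{lem:comp-estimate} with accuracy~$\delta/2$ to the compositions $\eta\circ f_0$, $\partial^D_r\circ f_r$, and $f_{r-1}\circ\partial^C_r$, use that translation preserves genuine equalities, combine via Lemma~\ref{lem:almosteqinherit}~\ref{itm:composition:additivity}, and intersect the finitely many neighbourhoods. Your closing worry about rescaling for multiplicative constants is unnecessary, since Lemma~\ref{lem:comp-estimate} is already stated with the target accuracy~$\delta$ for arbitrary maps between marked projective modules, but this extra caution does no harm.
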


\begin{proof}
	We apply the above Corollary~\ref{cor:transl-complex} to~$C_*$ and~$D_*$ and intersect the resulting neighbourhoods.
	Moreover, we apply Lemma~\ref{lem:comp-estimate} with error term~$\delta \slash 2$ to the compositions
	$\eta \circ f_0$, $\partial_r^D\circ f_r$, and~$f_{r-1}\circ \partial_r^C$ for $r\in \{1,\dots, n+1\}$. We intersect 
	all resulting neighbourhoods of~$\alpha$ to obtain a new neighbourhood~$U$ of~$\alpha$. For all~$\beta\in U$, we have
	\[
		\eta_\beta \circ (f_0)_\beta =_{\delta/2} (\eta\circ f_0)_\beta = \zeta_\beta.
	\]	
	Moreover, for~$r\in \{1,\dots, n+1\}$, we have 
	\begin{align*}
		(\partial_r^D)_\beta \circ (f_r)_\beta 
	&=_{\delta/2} (\partial_r^D \circ f_r)_\beta\\
	&= (f_{r-1}\circ \partial_r^C)_\beta \\
	&=_{\delta/2} (f_{r-1})_\beta \circ (\partial_r^C)_\beta
	\end{align*}
	Thus by Lemma~\ref{lem:almosteqinherit}~\ref{itm:composition:additivity}, we obtain 
	that~$(\partial_r^D)_\beta \circ (f_r)_\beta =_\delta (f_{r-1})_\beta \circ (\partial_r^C)_\beta$.
\end{proof}

We show that the marked rank, $\lognorm$ (Definition~\ref{def:lognorm}),
and the norm change continuously in the action
in the following sense:

\begin{lem}[translation and marked rank]
	\label{lem:transl-mrk}
	Let~$f\colon  M\to N$ be an $\Linfalpha$-homo\-morphism between marked projective $\Linfalpha$-modules and~$\delta\in \R_{>0}$. Then,
	there exists a weak neighbourhood~$U$ of~$\alpha$ such that for all~$\beta\in U$, we have
	$\mrk_\beta (f_\beta) \le \mrk_\alpha (f) + \delta$.
\end{lem}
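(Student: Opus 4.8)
The plan is to reduce to the explicit formula for the marked rank from Lemma~\ref{lem:mrk-expl} and then use the definition of the weak topology to control the relevant measures. Since the marked rank is subadditive over marked decompositions (this is built into the definition, as exploited in the proof of Proposition~\ref{prop:lognormadapt}), I would first reduce to the case where $M = \gen{A}_\alpha$ and $N = \gen{B}_\alpha$ both have rank one; a general $f$ decomposes into finitely many rank-one pieces, and a weak neighbourhood working for each piece simultaneously is obtained by intersecting finitely many weak neighbourhoods (which is again a weak neighbourhood).

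In the rank-one case, I would fix a reduced presentation of $f$ as in Setup~\ref{setup:opnormalt}, so that $f$ is given by right multiplication with $z = \sum_{(k,\gamma) \in K \times F} a_{k,\gamma} \cdot (\chi_{\gamma^\alpha U_k}, \gamma)$ with the $U_k$ pairwise disjoint, $\gamma^\alpha U_k \subseteq A$ and $U_k \subseteq B$ whenever $a_{k,\gamma} \neq 0$. By Lemma~\ref{lem:mrk-expl}, $\mrk_\alpha(f) = \mu\bigl(\bigcup_{(k,\gamma) : a_{k,\gamma} \neq 0} \gamma^{-1,\alpha} A \cap U_k\bigr)$, and likewise the translated map $f_\beta$ is given by right multiplication with $z_\beta = \sum a_{k,\gamma} \cdot (\chi_{\gamma^\beta U_k}, \gamma)$, whence $\mrk_\beta(f_\beta) = \mu\bigl(\bigcup_{(k,\gamma) : a_{k,\gamma} \neq 0} \gamma^{-1,\beta} A \cap U_k\bigr)$ — assuming the translated presentation is still reduced, which one has to check: the conditions $\gamma^\beta U_k \subseteq A$ and $U_k \subseteq B$ need not persist, but one can enlarge $A$ and $B$ by null amounts, or rather work with $\gamma^{-1,\beta} A \cap U_k$ directly without insisting on reducedness, since Lemma~\ref{lem:mrk-expl} really only needs the $1$-support containment. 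The key estimate is then $\mu(\gamma^{-1,\beta} A \cap U_k) \leq \mu(\gamma^{-1,\alpha} A \cap U_k) + \mu(\gamma^{-1,\alpha} A \symmdiff \gamma^{-1,\beta} A)$, and summing over the finitely many relevant pairs $(k,\gamma)$ gives $\mrk_\beta(f_\beta) \leq \mrk_\alpha(f) + \sum_{(k,\gamma)} \mu(\gamma^{-1,\alpha} A \symmdiff \gamma^{-1,\beta} A)$.

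Therefore I would define $U$ to be the weak neighbourhood of $\alpha$ obtained in Definition~\ref{def:wk-nbhds} by taking the finite set $F' \coloneqq \{\gamma^{-1} \mid (k,\gamma) \in K \times F,\ a_{k,\gamma} \neq 0\}$, the single measurable set $A$, and $\varepsilon \coloneqq \delta / (\#K \cdot \#F + 1)$. Then for every $\beta \in U$ we have $\mu(\gamma^{-1,\alpha} A \symmdiff \gamma^{-1,\beta} A) < \varepsilon$ for each relevant $\gamma$, so the sum above is strictly less than $\delta$, giving $\mrk_\beta(f_\beta) < \mrk_\alpha(f) + \delta$ in the rank-one case. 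Assembling the rank-one estimates over a marked decomposition of a general $f$ into rank-one pieces and intersecting the corresponding neighbourhoods yields the general statement.

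The main obstacle I anticipate is the bookkeeping around reducedness of the translated presentation: the explicit marked-rank formula in Lemma~\ref{lem:mrk-expl} is stated for a presentation satisfying the reducedness conditions of Setup~\ref{setup:opnormalt}, and while $\gamma^\alpha U_k \subseteq A$ translates to $\gamma^\beta U_k \subseteq A$ up to a null set failure (since $\mu(\gamma^\alpha U_k \symmdiff \gamma^\beta U_k)$ can be made small but not zero), one cannot directly conclude $\gamma^\beta U_k \subseteq A$. The clean fix is to observe that Lemma~\ref{lem:mrk-expl}'s proof only uses that $\supp_1(f_\beta(\chi_{A}\cdot e)) \subseteq \bigcup \gamma^{-1,\beta} A \cap U_k$ together with the recognition lemma (Lemma~\ref{lem:recog-summands}) for the lower bound; alternatively, shrink each $U_k$ to $U_k \cap \gamma^{-1,\beta} A$, which only decreases the marked rank and costs nothing. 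Either way the argument goes through, but this is the point that requires care rather than a one-line citation.
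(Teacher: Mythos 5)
Your route is the same as the paper's: apply the explicit formula of Lemma~\ref{lem:mrk-expl}, bound $\mrk_\beta(f_\beta)-\mrk_\alpha(f)$ by a finite sum of measures of symmetric differences $\mu\bigl((\gamma^{-1})^\alpha A_i\symmdiff(\gamma^{-1})^\beta A_i\bigr)$ via the union bound, and choose the weak neighbourhood with the $A_i$ as test sets and error term $\delta$ divided by the number of index tuples. Two remarks. First, the preliminary reduction to rank one in the \emph{domain} does not quite deliver the stated inequality: the marked rank is exactly additive over target summands but only \emph{sub}additive over domain summands, so assembling the rank-one bounds gives $\mrk_\beta(f_\beta)\le\sum_{i,j}\mrk_\alpha(f_{ij})+\delta$, and $\sum_{i,j}\mrk_\alpha(f_{ij})$ can strictly exceed $\mrk_\alpha(f)=\sum_j\mu\bigl(\bigcup_{i,k,\gamma}(\gamma^{-1})^\alpha A_i\cap U_k\bigr)$ because the union over $i$ sits inside a single measure. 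The repair is immediate: run your ``key estimate'' (which is just the union bound) with all indices $i,j,k,\gamma$ present rather than after splitting off the $i$'s, which is precisely what the paper does.

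Second, your worry about reducedness of the translated presentation is legitimate but resolves more cheaply than either of your proposed fixes: for the lemma one only needs the \emph{upper} bound $\mrk_\beta(f_\beta)\le\sum_j\mu\bigl(\bigcup_{i,k,\gamma}(\gamma^{-1})^\beta A_i\cap U_k\bigr)$, i.e.\ the containment half of Lemma~\ref{lem:mrk-expl}, which follows from the support computation alone (note also that $f_\beta$ is defined through the canonical projections onto $N_\beta$, so its image automatically lands in the displayed marked summand); the conditions $U_k\subseteq B_j$ are action-independent and persist, and the possible failure of $\gamma^\beta U_k\subseteq A_i$ only shrinks the relevant intersections. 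With these two adjustments your argument coincides with the paper's proof.
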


\begin{proof}
	By Lemma~\ref{lem:mrk-expl}, we have  (in the notation of Setup~\ref{setup:opnormalt})
	\[
		\mrk_\alpha (f) = \sum_{j\in J} \mu \biggl(
			\bigcup_{\substack{(i, k,\gamma)\in I\times K\times F,\\ a_{i,j,k,\gamma} \neq 0}}
			((\gamma^{-1})^\alpha A_i \cap U_k)
		\biggr).\\
	\]
	Thus, for every action~$\beta$, we have 
	\begin{align*}
		\mrk_\beta (f_\beta) - \mrk_\alpha (f) &\le \sum_{\substack{(i,j, k,\gamma)\in I\times J\times K\times F,\\ a_{i,j,k,\gamma} \neq 0}} \mu \bigl(((\gamma^{-1})^\alpha A_i \cap U_k) \symmdiff 
			((\gamma^{-1})^\beta A_i \cap U_k)
		\bigr)\\
		&\le \sum_{(i,j, k,\gamma)\in I\times J\times K\times F} \mu \bigl(((\gamma^{-1})^\alpha A_i \cap U_k) \symmdiff 
			((\gamma^{-1})^\beta A_i \cap U_k)
		\bigr)\\
		&\le\delta,
	\end{align*}
	where the last inequality holds in a suitable weak neighbourhood~$U$ that is defined as in Definition~\ref{def:wk-nbhds}
	with error term~$\epsilon \coloneqq \delta \slash \big(\# I\cdot \# J\cdot \# K\cdot \# F\big)$ (or $\epsilon \coloneqq 1$
	if the denominator is zero) and the~$(A_i)_{i\in I}$ as the
	test sets.
\end{proof}

\begin{lem}[translation and norm]
	\label{lem:transl-norm}
	Let~$f\colon  M\to N$ be an~$\Linfalpha$-ho\-mo\-mor\-phism between marked projective 
	$\Linfalpha$-modules and~$\delta\in \R_{>0}$. Then,
	there exists a weak neighbourhood~$U$ of~$\alpha$ such that for all~$\beta\in U$, the following holds:
	There is an~$\Linfbeta$-homomorphism~$f'_\beta \colon  M_\beta \to N_\beta$ such that 
	\[
		f_\beta =_{\delta, Q(f)} f'_\beta \qand \|f'_\beta\|\le \|f\|
		\qand f'_\beta(M_\beta) \subseteq f_\beta(M_\beta).
	\]
\end{lem}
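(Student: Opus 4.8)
The plan is to build $f'_\beta$ from $f_\beta$ by deleting the (small) marked summand of the domain on which the translated map acquires an operator norm larger than $\|f\|$. Since $\|\cdot\|$ is an $\ell^1$-norm over the summands of the domain, I would treat each summand $\gen{A_i}_\alpha$ of $M$ separately and reassemble at the end, so it suffices to handle $M=\gen A_\alpha$. Fix a reduced presentation of $f$ as in Setup~\ref{setup:opnormalt}, with pairwise disjoint measurable sets $(U_k)_{k\in K}$, a finite set $F\subseteq\Gamma$, target summands $(\gen{B_j})_{j\in J}$, and coefficients $a_{j,k,\gamma}\in Z$; then $f_\beta$ is given by the same data with $\gamma^\alpha$ replaced by $\gamma^\beta$. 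For $L\subseteq K\times F$ set
\[
	V_\beta(L)\coloneqq A\cap\bigcap_{(k,\gamma)\in L}\gamma^\beta U_k\cap\bigcap_{(k,\gamma)\notin L}\bigl(X\setminus\gamma^\beta U_k\bigr).
\]
Because translation preserves disjointness of the $(U_k)_k$, a direct $\ell^1$-computation (analogous to, but slightly more general than, the one in the proof of Proposition~\ref{prop:opnormalt}, together with Lemma~\ref{lem:norm-expl-base}) shows that the $V_\beta(L)$ partition $A$, that the induced decomposition $\gen A_\beta=\bigoplus_L\gen{V_\beta(L)}_\beta$ is a marked decomposition, and that $\|f_\beta|_{\gen{V_\beta(L)}_\beta}\|=\sum_{j\in J,\,(k,\gamma)\in L}|a_{j,k,\gamma}|$ whenever $\mu(V_\beta(L))>0$.

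Call $L$ \emph{good} if $\sum_{j\in J,\,(k,\gamma)\in L}|a_{j,k,\gamma}|\le\|f\|$. A non-good $L$ must satisfy $\mu\bigl(\bigcap_{(k,\gamma)\in L}\gamma^\alpha U_k\bigr)=0$, for otherwise Proposition~\ref{prop:opnormalt} applied to the reduced presentation of $f$ over $\alpha$ would give $\|f\|\ge\sum_{j,(k,\gamma)\in L}|a_{j,k,\gamma}|$, a contradiction. Hence, for non-good $L$,
\[
	\mu\Bigl(\bigcap_{(k,\gamma)\in L}\gamma^\beta U_k\Bigr)\le\sum_{(k,\gamma)\in L}\mu\bigl(\gamma^\beta U_k\symmdiff\gamma^\alpha U_k\bigr).
\]
I would then let $U$ be the basic weak neighbourhood of $\alpha$ from Definition~\ref{def:wk-nbhds} with test sets $(U_k)_{k\in K}$, finite set $F$, and error $\varepsilon$ so small that $(\#I)\cdot 2^{\#(K\times F)}\cdot\#(K\times F)\cdot\varepsilon<\delta$ (here $\#I$ is the number of summands of the original $M$). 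For $\beta\in U$, put $A'\coloneqq\bigsqcup_{L\text{ good}}V_\beta(L)$; then $\gen{A'}_\beta$ is a marked summand of $\gen A_\beta$, one has $\|f_\beta|_{\gen{A'}_\beta}\|\le\|f\|$ since only good $L$ contribute, and $\mu(A\setminus A')=\sum_{L\text{ not good}}\mu(V_\beta(L))\le 2^{\#(K\times F)}\cdot\#(K\times F)\cdot\varepsilon$ by the last display.

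Reassembling over the summands of $M$, I obtain a marked decomposition $M_\beta\cong M_\beta^{(0)}\oplus M_\beta^{(1)}$ with $\dim(M_\beta^{(1)})<\delta$ and $\|f_\beta|_{M_\beta^{(0)}}\|\le\|f\|$, and I set $f'_\beta|_{M_\beta^{(0)}}\coloneqq f_\beta|_{M_\beta^{(0)}}$ and $f'_\beta|_{M_\beta^{(1)}}\coloneqq 0$. Then $\|f'_\beta\|=\|f_\beta|_{M_\beta^{(0)}}\|\le\|f\|$ and $f'_\beta(M_\beta)=f_\beta(M_\beta^{(0)})\subseteq f_\beta(M_\beta)$. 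Moreover $f_\beta$ and $f'_\beta$ agree on $M_\beta^{(0)}$, while $(f_\beta-f'_\beta)|_{M_\beta^{(1)}}=f_\beta|_{M_\beta^{(1)}}$ has norm at most $\|f_\beta\|\le Q(f_\beta)\le Q(f)$ by Remark~\ref{rem:normleQ} and Remark~\ref{rem:Q-transl}; combined with $\dim(M_\beta^{(1)})<\delta$ this gives $f_\beta=_{\delta,Q(f)}f'_\beta$, as required. The one delicate point is precisely the norm computation on the pieces $\gen{V_\beta(L)}_\beta$ invoked in the first paragraph: after translation the presentation of $f_\beta$ need not be reduced (the sets $\gamma^\beta U_k$ may fail to be contained in $A$), so Proposition~\ref{prop:opnormalt} and Lemma~\ref{lem:UL-compu} cannot be quoted verbatim for $f_\beta$, and the elementary $\ell^1$-estimate has to be redone on each building block, using the disjointness of the $U_k$ to ensure that for each group element $\gamma$ at most one index $k$ contributes.
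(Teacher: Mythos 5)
Your proposal is correct and follows essentially the same strategy as the paper's proof: both reduce to a single summand $\gen A$, choose a weak neighbourhood controlling $\mu(\gamma^\alpha U_k\symmdiff\gamma^\beta U_k)$, cut out a marked summand of small dimension on which the explicit operator-norm description (Proposition~\ref{prop:opnormalt}) forces the norm to stay below $\|f\|$, define $f'_\beta$ by zeroing the complementary summand, and bound the error norm by $Q(f)$ via Remark~\ref{rem:normleQ} and Remark~\ref{rem:Q-transl}. The only (cosmetic) difference is which small set is removed: the paper deletes $A\cap\bigcup_{(k,\gamma)}(\gamma^\alpha U_k\symmdiff\gamma^\beta U_k)$, so that the surviving $\beta$-atoms are contained in the corresponding $\alpha$-atoms and the norm formula transfers directly, whereas you delete exactly the atoms $V_\beta(L)$ whose coefficient sum exceeds $\|f\|$ and observe that their $\alpha$-counterparts are null.
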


\begin{proof}
	By an analogue of Lemma~\ref{lem:alm-eq-split}, 
	we can assume that~$M=\gen A_\alpha$. We will thus drop~$i\in I$ from the notation.
	Let~$P$ be a presentation representing~$f$ as in Setup~\ref{setup:opnormalt}
	such that~$Q(f,P) = Q(f)$. 
	Fix the notation of Setup~~\ref{setup:opnormalt}.
	Pick a weak neighbourhood~$U$ of~$\alpha$ where for all~$\gamma\in F, k\in K$, and~$\beta \in U$, we have
	\[
		\mu \bigl(
			\gamma^\alpha U_k \symmdiff \gamma^\beta U_k
		\bigr) \le \frac{\delta}{\# K\cdot \# F}.
	\] 
	Let~$\beta \in U$. We define
	\[
		A'' \coloneqq \bigcup_{(k,\gamma)\in K\times F} \gamma^\alpha U_k \symmdiff \gamma^\beta U_k
	\]
	and~$A'\coloneqq A\setminus A''$. We set~$M'\coloneqq \gen{A'}_\beta$
	and~$M''\coloneqq \gen{A''\cap A}_\beta$. Hence, we have an isomorphism of 
	$\Linfbeta$-modules $M_\beta \cong M' \oplus M''$.
	By construction, we have that $\dim_\beta (M'') \le \mu(A'') \le \delta$.
	We define~$f'_\beta \coloneqq f_\beta \circ \iota_{A'}\circ \pi_{A'}$, 
	where~$\pi_{A'}$ is the projection
	onto the marked summand~$M'$ and~$\iota_{A'}\colon M' \to M$ is the canonical marked inclusion. In particular, 
	\[f'_\beta|_{M'} = f_\beta|_{M'} \qand f'_\beta|_{M''} = 0\]
	and $f'_\beta(M_\beta) \subseteq f_\beta(M_\beta)$.
	Moreover, note that for~$L\subseteq K\times F$, we have
	\begin{equation*}
	\label{eq:cap-symmdiff}
		A'\cap \bigcap_{(k,\gamma)\in L} \gamma^{\highlight\beta} U_k \subseteq \bigcap_{(k,\gamma)\in L} \gamma^{\highlight\alpha} U_k.
	\end{equation*}
	 Thus, by the explicit description of the operator norm 
	(Proposition~\ref{prop:opnormalt}), we have 
	\begin{align*}
		\|f'_\beta\| 
		 &= \max\Bigl\{
			\sum_{j\in J, (k,\gamma)\in L} |a_{j,k,\gamma}| \Bigm| L\subseteq K\times F \text{ with }
			\mu\Bigl( A'\cap \bigcap_{(k,\gamma)\in L} \gamma^{\beta} U_k \Bigr) > 0
		\Bigr\}
		\\
		&\le \max\Bigl\{
			\sum_{j\in J, (k,\gamma)\in L} |a_{j,k,\gamma}| \Bigm| L\subseteq K\times F \text{ with }
			\mu\Bigl( \bigcap_{(k,\gamma)\in L} \gamma^{\alpha} U_k \Bigr) > 0
		\Bigr\}
		\\
		&= \|f\|. &
	\end{align*}
	It remains to estimate~$\|f_\beta|_{M''}\|$: we have
	\begin{align*}
		\|f_\beta|_{M''}\|
		&\le \|f_\beta\|\\
		&\le Q(f_\beta) & (\text{Remark~\ref{rem:normleQ}})\\
		&\le Q(f). & (\text{Remark~\ref{rem:Q-transl}})&\qedhere
	\end{align*}
\end{proof}

We use these two estimates to show that also $\lognorm$ is continuous in the action.

\begin{lem}[translation and lognorm]
	\label{lem:transl-lognorm}
	Let~$f\colon  M\to N$ be an~$\Linfalpha$-ho\-mo\-mor\-phism 
	between marked projective $\Linfalpha$-modules and~$\epsilon\in \R_{>0}$. Then,
	there exists a weak neighbourhood~$U$ of~$\alpha$ such that for all~$\beta\in U$, we have
	\[\lognorm_\beta (f_\beta) \le \lognorm_\alpha (f) + \epsilon.\]
\end{lem}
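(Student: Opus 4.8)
The goal is to show that, up to an arbitrarily small perturbation of the action within a weak neighbourhood, the logarithmic norm of a translated homomorphism does not exceed that of the original. The plan is to reduce to the level of a single marked decomposition of the domain, then control each summand via the three translation-estimates already established: Lemma~\ref{lem:transl-mrk} for the marked rank, Lemma~\ref{lem:transl-norm} for the operator norm (together with its almost-equality correction), and the compatibility of $\lognorm$ with almost equality from Proposition~\ref{prop:lognorm}~\ref{itm:lognorm-almeq}.

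First I would fix $\eta > 0$ and pick, by definition of $\lognorm_\alpha(f)$ as an infimum over marked decompositions, a finite marked decomposition $M \cong_\alpha \bigoplus_{i \in I} M_i$ with $\lognorm'_\alpha(f, (M_i)_{i \in I}) \le \lognorm_\alpha(f) + \eta/2$. The same family of sets $(B_i)_{i \in I}$ cutting out the decomposition of $M$ also cuts out a marked decomposition $M_\beta \cong_\beta \bigoplus_{i \in I} (M_i)_\beta$ for every action $\beta$, since marked decompositions are defined purely in terms of the carrier sets (Definition~\ref{def:marked-decomp}) and $\dim$ is action-independent (Remark~\ref{rem:dim-transl}). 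Moreover the translation of $f|_{M_i}$ is the restriction $(f_\beta)|_{(M_i)_\beta}$ of the translation, so it suffices to bound each $\lognorm_\beta\bigl((f_\beta)|_{(M_i)_\beta}\bigr)$ in terms of $\lognorm'_\alpha(f|_{M_i})$. Thus, replacing $f$ by each $f|_{M_i}$ in turn and then summing over the finitely many $i$ (intersecting the finitely many resulting weak neighbourhoods), I reduce to proving: for every $\delta > 0$ there is a weak neighbourhood $U$ of $\alpha$ with $\lognorm_\beta(f_\beta) \le \lognorm'_\alpha(f) + \delta$ for all $\beta \in U$, where now $\lognorm'_\alpha(f) = \logp\|f\| \cdot \min\{\dim M, \mrk_\alpha(f)\}$ refers to the trivial decomposition.

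For this reduced statement, I would apply Lemma~\ref{lem:transl-norm}: for a suitable $U$ and each $\beta \in U$ there is $f'_\beta \colon M_\beta \to N_\beta$ with $f_\beta =_{\delta', Q(f)} f'_\beta$, $\|f'_\beta\| \le \|f\|$, and $f'_\beta(M_\beta) \subseteq f_\beta(M_\beta)$; here $\delta'$ is a small parameter to be fixed at the end. Then by Proposition~\ref{prop:lognorm}~\ref{itm:lognorm-almeq},
\[
  \lognorm_\beta(f_\beta) \le \lognorm_\beta(f'_\beta) + \delta' \cdot \logp Q(f).
\]
Now I estimate $\lognorm_\beta(f'_\beta)$ using the trivial decomposition of $M_\beta$, so $\lognorm_\beta(f'_\beta) \le \logp\|f'_\beta\| \cdot \min\{\dim M_\beta, \mrk_\beta(f'_\beta)\}$. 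Since $\dim M_\beta = \dim M$ and $\|f'_\beta\| \le \|f\|$, the only remaining point is the marked rank. Here I would shrink $U$ further so that Lemma~\ref{lem:transl-mrk} gives $\mrk_\beta(f_\beta) \le \mrk_\alpha(f) + \delta''$, and observe that because $f'_\beta(M_\beta) \subseteq f_\beta(M_\beta)$, any marked summand of $N_\beta$ containing $f_\beta(M_\beta)$ also contains $f'_\beta(M_\beta)$, hence $\mrk_\beta(f'_\beta) \le \mrk_\beta(f_\beta) \le \mrk_\alpha(f) + \delta''$. Combining, $\lognorm_\beta(f_\beta) \le \logp\|f\| \cdot (\min\{\dim M, \mrk_\alpha(f)\} + \delta'') + \delta' \cdot \logp Q(f) \le \lognorm'_\alpha(f) + \delta'' \logp\|f\| + \delta' \logp Q(f)$. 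Since $\|f\|$ and $Q(f)$ are fixed quantities depending only on $f$, choosing $\delta'$ and $\delta''$ small enough (and taking the corresponding intersection of weak neighbourhoods) makes this at most $\lognorm'_\alpha(f) + \delta$, as required.

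**Main obstacle.** The bookkeeping is routine; the one genuinely delicate point is already packaged into Lemma~\ref{lem:transl-norm}, namely that translation does not blow up the operator norm — which in turn relies on the explicit description of the norm (Proposition~\ref{prop:opnormalt}) and the fact that, in a weak neighbourhood, the sets $\gamma^\beta U_k$ only fail to satisfy the same positivity-of-intersection constraints as $\gamma^\alpha U_k$ on a set of small measure, which one can excise. So the only thing to be careful about in this proof is the order of quantifiers: one must first reduce to the (finitely many, fixed) summands $M_i$, then choose all the auxiliary accuracy parameters ($\delta', \delta''$) as functions of the fixed invariants $\|f|_{M_i}\|$, $Q(f|_{M_i})$, $\mrk_\alpha(f|_{M_i})$ and the target error $\eta$, and only then take the finite intersection of the weak neighbourhoods supplied by Lemmas~\ref{lem:transl-mrk} and~\ref{lem:transl-norm}.
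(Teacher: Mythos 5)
Your proposal is correct and follows essentially the same route as the paper's proof: reduce to a single summand of a near-optimal marked decomposition, then combine Lemma~\ref{lem:transl-norm} (giving $f'_\beta$ with controlled norm and $f'_\beta(M_\beta)\subseteq f_\beta(M_\beta)$), Lemma~\ref{lem:transl-mrk} for the marked rank, and Proposition~\ref{prop:lognorm}~\ref{itm:lognorm-almeq} for the almost-equality correction. The only difference is cosmetic bookkeeping (two accuracy parameters $\delta',\delta''$ instead of the paper's single $\delta$).
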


\begin{proof}
	We set~$\delta \coloneqq \epsilon \slash \big(4\cdot \logp Q(f)\big) > 0$ (or~$\delta \coloneqq 1$ if~$\logp Q(f) = 0$).
	By definition of~$\lognorm_\alpha$, there exists a marked decomposition~$(M_i)_{i\in I}$
	of~$M$ over $\Linfalpha$
	with
	\begin{equation}
	\label{eq:lognorm'}
	\sum_{i\in I} \lognorm_\alpha'(f|_{M_i} \colon  M_i \to N) 
	= \lognorm_\alpha' \bigl(
		f, (M_i)_{i\in I}
	\bigr)
	< \lognorm_\alpha (f) + \frac{\epsilon}{2}.
	\end{equation}
	To simplify notation, we will assume that~$M = M_i$ consists of a single summand 
	and estimate the change of~$\lognorm_\alpha'(f|_{M_i})$ under translation. (The general case will then follow 
	by dividing the allowed error by~$\# I$, which stays constant during this proof.)
	
	By Lemma~\ref{lem:transl-mrk}, there exists a weak neighbourhood~$U_1$ of~$\alpha$,
	 such that for all \mbox{$\beta\in U_1$},
	we have~$\mrk_\beta (f_\beta) \le \mrk_\alpha (f) +\delta$. Moreover, by Lemma~\ref{lem:transl-norm}, there exists a 
	weak neighbourhood~$U_2$ of~$\alpha$, such that for all~$\beta\in U_2$, 
	there exists~$f'_\beta \colon  M_\beta \to N_\beta$
	with 
	\[f_\beta =_{\delta, Q(f)} f'_\beta \qand \|f'_\beta\| \le \|f\| \qand
	f'_\beta(M_\beta) \subseteq f_\beta(M_\beta).\]
	We define~$U\coloneqq U_1\cap U_2$. Then, for all~$\beta\in U$, we have
	\begin{align*}
		& \lognorm_\beta (f_\beta)\\ 
		&\le \lognorm_\beta (f'_\beta) + \delta\cdot \logp Q(f) 
		&(\text{Lemma~\ref{prop:lognorm}~\ref{itm:lognorm-almeq}})\\
		&\le \lognorm_\beta' (f'_\beta) + \delta\cdot \logp Q(f)
		&(\text{Def.\ of }\lognorm)\\
		&= \min \bigl\{\dim_\beta M_\beta, \mrk_\beta (f'_\beta)\bigr\} \cdot \logp \|f'_\beta\|+ \delta\cdot \logp Q(f)\\
		&= \min \bigl\{\dim_\alpha M, \mrk_\beta (f'_\beta)\bigr\} \cdot \logp \|f'_\beta\|+ \delta\cdot \logp Q(f)
			&(\text{Remark~\ref{rem:dim-transl}})\\	
		&\le \min \bigl\{\dim_\alpha M, \mrk_\beta (f_\beta)\bigr\} \cdot \logp \|f'_\beta\|	+ \delta\cdot \logp Q(f)
			&(f'_\beta(M_\beta) \subseteq f_\beta(M_\beta))\\
		&\le \min \bigl\{\dim_\alpha M, \mrk_\alpha f +\delta \bigr\} \cdot \logp \|f'_\beta\| + \delta\cdot \logp Q(f)
			&(\beta \in U_1)\\
		&\le \min \bigl\{\dim_\alpha M, \mrk_\alpha f +\delta \bigr\} \cdot \logp \|f\| + \delta\cdot \logp Q(f)
			&(\|f'_\beta\|\le\|f\|)\\
		&\le \min \bigl\{\dim_\alpha M, \mrk_\alpha f\bigr\} \cdot \logp \|f\| + 2\delta\cdot \logp Q(f)
			&(\text{Remark~\ref{rem:normleQ}})\\
		&\le \lognorm_\alpha'(f) + \frac{\epsilon}{2}
		&(\textrm{Def.\ of }\delta)\\
		&\le \lognorm_\alpha (f) + \epsilon. & (\text{Equation~\eqref{eq:lognorm'}}) &\qedhere
	\end{align*}
\end{proof}

\subsection{Strictification and translation}
\label{subsec:strict-transl}

We revisit the results for strictification of chain complexes and chain maps from Section~\ref{sec:strictification}.
We establish more abstract upper bounds and investigate how these change under translation. 

As before, the two main results of this section are the following:
\begin{itemize}
	\item Every almost chain complex is ``close'' to a strict chain complex (Theorem~\ref{thm:strict-cplx}).
	\item Every almost chain map is ``close'' to a strict chain map (Theorem~\ref{thm:strict-map}).
\end{itemize}

Let~$\alpha\colon  \Gamma \actson (X,\mu)$ be an essentially free \pmp\ action 
on a standard probability space. We consider modules over the crossed product ring 
$\Linfalpha\coloneqq \Linf X *_\alpha \Gamma$. 

We bound the complexity of the input data by the following notion:

\begin{defn}[translation-invariant constant]
\label{def:transl-inv-const}
	Let~$(X,\mu)$ be a standard probability space, $\Gamma$ be a group and $n\in \N$. 
	A \emph{translation-invariant constant} is a family of maps~$\kappa = (\kappa_\delta)_{\delta\in \R_{>0}}$ that,
	given $\delta\in\R_{>0}$, a \pmp\ action~$\alpha\colon \Gamma\actson (X,\mu)$, and an (augmented) 
	marked projective \almostcc{\delta}{n}~$(D_*,\eta)$
	over~$\Linfalpha$, assigns a positive real number~$\kappa_\delta(D_*, \eta)$ such that the following holds:
	
	 Let~$\delta\in \R_{>0}$, $\alpha\colon \Gamma\actson X$  and~$(D_*, \eta)$ be a marked projective 
	\almostcc{\delta/2}{n} over~$\Linfalpha$. Then, there exists a neighbourhood~$U$ of~$\alpha$
	(see Definition~\ref{def:wk-nbhds})	
	such that for every~$\beta\in U$, we have that~$((D_*)_\beta,\eta_\beta)$ is a \almostcc{\delta}{n} and
	\[
		\kappa_\delta((D_*)_\beta,\eta_\beta) \le \kappa_{\delta/2}(D_*, \eta) +1.
	\]
	We often write~$\kappa_\delta(D_*)$ instead of~$\kappa_\delta(D_*,\eta)$.
	 \indexw{translation-invariant constant}
	\indexnot{kd}{$(\kappa_\delta)_\delta$}{translation-invariant constant}
\end{defn}

In a similar fashion, we can define translation-invariant constants of almost chain maps 
	between almost chain complexes.
	
\begin{defn}[translation-invariant constant of chain complexes]
	Let~$(X,\mu)$ be a standard probability space, $\Gamma$ be a group and~$n\in \N$. 
	A \emph{translation-invariant constant} is a family of maps~$(\kappa_\delta)_{\delta\in \R_{>0}}$
	that, given $\delta\in \R_{>0}$, a \pmp\ action~$\alpha\colon\Gamma\actson (X,\mu)$,
	and a \almostcm{\delta}{n}~$f_*\colon C_*\to D_*$ between augmented marked projective \almostccs{\delta}{n} over~$\Linfalpha$, assigns a positive real number~$\kappa_\delta(f_*)$ such that the following holds:
	
	Let~$\delta\in \R_{>0}$, $\alpha\colon \Gamma\actson X$  and 
	$f_*\colon C_*\to D_*$ be a \almostcm{\delta/2}{n} between marked projective
	\almostccs{\delta/2}{n} over~$\Linfalpha$. Then, there exists a neighbourhood~$U$ of~$\alpha$
	such that for every~$\beta\in U$, we have that~$(f_*)_\beta\colon (C_*)_\beta \to (D_*)_\beta$ is 
	a \almostcm{\delta}{n} between \almostccs{\delta}{n} over~$\Linfbeta$ and
	\[
		\kappa_\delta((f_*)_\beta) \le \kappa_{\delta/2}(f_*) +1.
	\]
\end{defn}

\begin{thm}[]
	\label{thm:strict-cplx}
	Let~$(X,\mu)$ be a standard probability space,~$\Gamma$ be a group, and $n\in \N$. Then,
	there exist monotone increasing functions~$K, p\colon \R_{>0} \to \R_{>0}$  and
	a translation-invariant constant~$\kappa = (\kappa_\delta)_{\delta\in \R_{>0}}$ such that 
	for every \pmp\ action~$\alpha\colon  \Gamma\actson (X,\mu)$, $\delta\in \R_{>0}$, 
	and every marked projective \almostcc{\delta}{n}~$(D_*,\eta)$ over~$\Linfalpha$, 
	there exists a marked projective (strict) $\Linfalpha$-chain complex~$(\widehat D_*, \widehat\eta)$ (up to 
	degree~$n+1$) such that
	\[
		\dgh {K(\kappa_\delta(D_*))} {\widehat D_*, D_*, n} \leq K(\kappa_\delta(D_*)) \cdot \delta.
	\]
	Moreover,~$\widehat D_*$ can be chosen such that the following hold:
	\begin{enumerate}[label=\enum]	
	\item For each~$j\in \{0,\dots,n\}$, the module $D_j$ is a submodule of~$\widehat D_j$ 
	and the inclusion map~$D_*\hookrightarrow \widehat D_*$ is a \almostcm{(K(\kappa_\delta(D_*))\cdot \delta)}{n}.
	\item We have~$\kappa_\delta(\widehat D_*) \le p(\kappa_\delta(D_*))$.
	\end{enumerate}\indexw{strictification!of an almost chain complex}
\end{thm}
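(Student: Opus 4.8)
The plan is to re-run the proof of Theorem~\ref{thm:strictifycomplex}, keeping track of the constants and replacing the ad hoc bound $\overline\kappa_n$ of Definition~\ref{def:crazyconsts} by a variant built from the translation-stable quantity $Q$ of Definition~\ref{def:Q} rather than from the operator norm. This substitution is forced: over a crossed product ring the operator norm of a morphism can jump upward under an arbitrarily small perturbation of the action (two sets $\gamma^\alpha U_k$ and $\lambda^\alpha U_l$ that are $\mu$-a.e.\ disjoint for $\alpha$ need not be for a nearby action, cf.\ the explicit formula in Proposition~\ref{prop:opnormalt}), whereas $Q(f_\beta)\le Q(f)$ by Remark~\ref{rem:Q-transl}, and $Q$ dominates the operator norm, $\|\cdot\|_\infty$, $\Nsum$ and $\Ntmax$ by Remark~\ref{rem:normleQ}. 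Concretely, for an augmented marked projective \almostcc{\delta}{n} $(D_*,\eta)$ over $\Linfalpha$ I would set
\begin{align*}
	\kappa_\delta(D_*,\eta) \coloneqq \max\bigl\{&\rk(D_1),\dots,\rk(D_{n+1}),\ Q(\eta),\ Q(\partial^D_1),\dots,Q(\partial^D_{n+1})\bigr\} \\
	&\qquad + \inf\bigl\{\, Q(f_z) \bigm| z\in D_0,\ \eta(z)=_\delta 1 \,\bigr\},
\end{align*}
where $f_z\colon\Linfalpha\to D_0$ is the homomorphism $1\mapsto z$ and $Q(f_z)$ is the evident extension of $Q$ to homomorphisms of marked projective modules. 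Since $Q(f_z)$ dominates $\Nbasic(z)$, $\Ntbasic(z)$ and $|z|_\infty$, one has $\overline\kappa_n(D_*)\le\kappa_\delta(D_*,\eta)+1$, so Theorem~\ref{thm:strictifycomplex}, applied with threshold $\kappa\coloneqq\kappa_\delta(D_*,\eta)+1$, already yields a strict marked projective chain complex $(\widehat D_*,\widehat\eta)$ with $D_*$ a subcomplex, inclusion a \almostcm{(K_0\delta)}{n}, and $\dgh{K_0}{\widehat D_*,D_*,n}\le K_0\cdot\delta$, where the constant $K_0$ depends only on $n$ and on $\kappa_\delta(D_*,\eta)$. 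Defining $K$ to be a monotone majorant of $t\mapsto K_0(t+1)$ and using monotonicity of $d_{\GH}$ in its superscript gives the first assertion and part~(i).

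For part~(ii) I would track the $Q$-calculus of Lemma~\ref{lem:Q-basic} through the inductive construction in the proof of Theorem~\ref{thm:strictifycomplex}. At each degree the construction replaces $D_r$ by $\widehat D_r=D_r\oplus E_r$ with $\rk(E_r)\le\rk(D_{r+1})$, and builds $\widetilde\partial_{r+1}=(\partial_{r+1},-\pi)$ with $Q(\pi)\le\rk(D_{r+1})$ and $\widehat\partial_r|_{E_r}$ equal to a restriction of $\widetilde\partial_r\circ\partial_{r+1}$; by subadditivity and submultiplicativity of $Q$ (Lemma~\ref{lem:Q-basic}) all the new ranks and all $Q(\widehat\partial_r)$, $Q(\widehat\eta)$ are bounded by an explicit monotone function of $\kappa_\delta(D_*)$. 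The preliminary reduction to a strictly surjective augmentation (Lemma~\ref{lem:almostsurjective}) enlarges $D_0$ by a single generator and enlarges the relevant constants by at most $|1-\eta(z)|_\infty+1\le\bigl(\overline\kappa_n(D_*)\bigr)^3+2$ (Lemma~\ref{lem:etaz}, Remark~\ref{rem:almostsurjectivecomments}), and the resulting witness $\widehat z$ has $\widehat\eta(\widehat z)=1$ and $Q(f_{\widehat z})\le Q(f_z)+\const{\kappa_\delta(D_*)}$. Collecting these bounds produces the monotone $p$ with $\kappa_\delta(\widehat D_*,\widehat\eta)\le p(\kappa_\delta(D_*,\eta))$.

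It remains to check that $\kappa=(\kappa_\delta)_\delta$ is translation invariant in the sense of Definition~\ref{def:transl-inv-const}. Given $\alpha$ and a \almostcc{\delta/2}{n} $(D_*,\eta)$ over $\Linfalpha$, the almost-complex version of Corollary~\ref{cor:transl-complex} — obtained by translating the complex together with the marked decompositions witnessing $\partial_r\circ\partial_{r+1}=_{\delta/2}0$, using Lemma~\ref{lem:comp-estimate} and $\dim_\beta(M_\beta)=\dim_\alpha(M)$ (Remark~\ref{rem:dim-transl}) — provides a weak neighbourhood $U$ of $\alpha$ on which $((D_*)_\beta,\eta_\beta)$ is a \almostcc{\delta}{n}. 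On $U$ the ranks are unchanged, $Q(\partial^D_{r,\beta})\le Q(\partial^D_r)$ and $Q(\eta_\beta)\le Q(\eta)$ by Remark~\ref{rem:Q-transl}, and after further shrinking $U$ a witness $z$ for $\kappa_{\delta/2}(D_*,\eta)$ satisfies $\eta_\beta(z_\beta)=_\delta1$ (composition estimate again) with $Q(f_{z_\beta})\le Q(f_z)$; hence $\kappa_\delta((D_*)_\beta,\eta_\beta)\le\kappa_{\delta/2}(D_*,\eta)\le\kappa_{\delta/2}(D_*,\eta)+1$.

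I expect the main difficulty to be the bookkeeping in part~(ii): one must carry the $Q$-estimates and the degree-$0$ reduction through the entire inductive strictification while ensuring that not one of the quantities entering $\kappa_\delta$ secretly involves the operator norm, which would wreck translation invariance. Once $\kappa_\delta$ is set up from $Q$ only, the remaining steps are a structured unwinding of Theorem~\ref{thm:strictifycomplex}, Lemma~\ref{lem:Q-basic}, Corollary~\ref{cor:transl-complex} and Remark~\ref{rem:Q-transl}.
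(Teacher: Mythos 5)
Your proposal is correct and follows essentially the same route as the paper's proof: the paper likewise defines $\kappa_\delta$ as a combination of the ranks, the $Q$-values of $\eta,\partial^D_1,\dots,\partial^D_{n+1}$, and $\inf_z Q(z)$ over almost-augmentation witnesses (with a $+1$ playing the role of your additive slack), invokes Theorem~\ref{thm:strictifycomplex} after noting $\overline\kappa_n(D_*)<\kappa_\delta(D_*)$ via Remark~\ref{rem:normleQ}, tracks $Q$ through Lemma~\ref{lem:almostsurjective} and the inductive step using Lemma~\ref{lem:Q-basic} to produce $p$, and verifies translation invariance via Remark~\ref{rem:Q-transl} and the composition estimate. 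The only differences are cosmetic bookkeeping choices.
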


Before giving the proof, we recall from Definition~\ref{def:crazyconsts} that 
\begin{align*}
    \kappa_n(D_*)
    & \coloneqq \max \bigl\{\|\eta\|, \|\partial_1^D\|, \dots, \|\partial_{n+1}^D\|\bigr\}
    \\
    \numax_n(D_*)
    & \coloneqq \max \bigl\{\|\eta\|_\infty, \Nmax(\partial_1^D), \dots, \Nmax(\partial_{n+1}^D) \bigr\}
    \\
    \nusum_n(D_*)
    & \coloneqq \max \bigl\{\|\eta\|_\infty, \Nsum(\partial_1^D), \dots,  \Nsum(\partial_{n+1}^D) \bigr\}
    .
  \end{align*}
 Moreover, if $\kappa \in \R_{>0}$, then we say that $\overline \kappa_n(D_*) < \kappa$
  if 
  \[
  \max \bigl\{
  \rk (D_1), \dots, \rk (D_{n+1}),
    \kappa_n(D_*),
    \numax_n(D_*)
    \bigr\}
    < \kappa
  \]
  and there exists a~$z \in D_0$ with
  \[ \eta(z)=_\delta 1
  , \quad
  \Nbasic(z) < \kappa
  , \quad
  \Ntbasic(z) < \kappa
  , \quad
  |z|_\infty < \kappa.
  \]

\begin{proof}
	We employ Theorem~\ref{thm:strictifycomplex} 
	to define~$K$: For the fixed~$n\in\N$ and~$x\in \R_{>0}$, we define~$K(x)$ 
	to be one of the~$K\in \R_{>0}$, for which Theorem~\ref{thm:strict-cplx}
	 holds (with~$\kappa\coloneqq x$). Without loss of
	generality, we can assume that the function~$K$ is monotone increasing.
	We define~$\kappa$ as follows: Let~$\alpha\colon \Gamma\actson (X,\mu)$ be a \pmp\ action,
	$\delta\in \R_{>0}$, and~$(D_*,\eta)$ be 
	a marked projective \almostcc{\delta}{n} over~$\Linfalpha$. We define
	\[
		\kappa_\delta' \coloneqq \inf_z Q(z),
	\]
	where~$z$ ranges over all~$z\in D_0$ with~$\eta(z) =_\delta 1$.
	We then define
	\[
		\kappa_\delta(D_*, \eta) \coloneqq \max \bigl\{
			\kappa_\delta', Q(\eta), Q(\partial_1^D), \dots, Q(\partial_{n+1}^D),
			\rk (D_1), \dots, \rk (D_{n+1})
		\bigr\} +1.
	\]
	Note that~$Q$ and~$\rk$ are invariant under translation (see Remark~\ref{rem:Q-transl}). For 
	the \mbox{(almost-)}invariance
	of~$\kappa'$, note that if $z\in D_0$ such that~$\eta(z) =_{\delta/2} 1$, then the composition estimate
	(Lemma~\ref{lem:comp-estimate}) shows that in a suitable neighbourhood~$U$, we have for all~$\beta\in U$
	that~$\eta_\beta(z_\beta) =_\delta 1$. Moreover, $Q(z_\beta) \le Q(z)$. Thus, we can find a neighbourhood as in the translation-invariance
	condition. Note that because~$\kappa'$ is defined as an infimum, we add the ``$+1$'' in the definition of 
	a translation-invariant constant to obtain an open neighbourhood.
	Thus, $\kappa$ defines a translation-invariant constant.
	
	We show that the functions~$K$ and~$\kappa$ satisfy the desired conditions:
	Let~$\alpha\colon \Gamma\actson (X,\mu)$, $\delta\in \R_{>0}$, 
	and~$(D_*,\eta)$ be a marked projective \almostcc{\delta}{n}. 
	We write \mbox{$\kappa_D \coloneqq \kappa_\delta(D_*)$} and~$K_D\coloneqq K(\kappa_\delta(D_*))$. We have
	$\overline\kappa_n(D_*) <\kappa_D$, because~$\Nsum(\cdot)$, $\Ntmax(\cdot)$, $|\cdot|_\infty$,
	 and~$\|\cdot\|$ are bounded from above by~$Q(\cdot)$
	(see Remark~\ref{rem:normleQ}). Thus, Theorem~\ref{thm:strict-cplx} yields
	 a marked projective~$\Linfalpha$-chain complex~$(\widehat D_*, \widehat \eta)$ (up to degree~$n+1$)
	with 
	\[
		\dgh {K_D} {\widehat D_*, D_*, n} \le K_D \cdot \delta.
	\]
	Moreover, Theorem~\ref{thm:strict-cplx}
	states that the inclusion map~$D_*\hookrightarrow \widehat D_*$ is a \almostcm{(K_D\cdot \delta)}{n}.
	For the second statement, we have to dive into the details of the proof of 
	Theorem~\ref{thm:strictifycomplex}
	and proceed by induction over the degrees.
	The proof of Lemma~\ref{lem:almostsurjective} yields that~$\kappa_\delta'(\widehat D_*) \le
	 \kappa_\delta'(D_*)$. Furthermore,
	that proof shows that~$\widehat D_0 = D_0 \oplus \gen{B}_\alpha$ for some~$B\subseteq X$ and thus,
	\[
		\rk (\widehat D_0) = \rk (D_0) +1.
	\]
	Lemma~\ref{lem:Q-basic} then shows that
	\begin{align*}
		Q(\widehat \eta) &\le Q(\eta) + Q\bigl(\widehat\eta|_{\langle B\rangle}\bigr)\\
		&= Q(\eta) + Q(1-\eta(z)) & (\text{proof of Lemma~\ref{lem:almostsurjective}})\\
		&\le Q(\eta) + (1 + Q(\eta) \cdot Q(z)) & (\text{Lemma~\ref{lem:Q-basic}})\\
		&\le p_0(\kappa_\delta(D_*))
	\end{align*}
	for the function~$p_0\colon x\mapsto 1+x + x^2$. 
	
	For the inductive step, assume that $\widehat D_{r-1}$ and~$\partial_r^{\widehat D}$ 
	have been constructed and satisfy the 
	theorem with the function~$p_{r-1}$.
	The proof of Lemma~\ref{lem:almostsurjective} constructs~$\widehat D_r$ as~$D_r\oplus E_r$, 
	where $\rk (E_r) \le \rk (D_r)$, 
	and $\dim_\alpha (E_r)$ is bounded by a function in~$\kappa_\delta(\widehat D_*)$.
	Moreover, Lemma~\ref{lem:Q-basic} yields that
	\begin{align*}
		Q(\widehat{\partial_r}) &\le Q(\widetilde \partial_r) + \rk E_r \cdot Q(\widetilde \partial_r) \cdot Q(\partial_{r+1})\\
		&\le Q(\widetilde \partial_r) \cdot (1 +\rk D_r \cdot Q(\partial_{r+1}))\\
		&\le (Q(\partial_{r+1}) + \rk E_r) \cdot (1+ \rk D_r \cdot Q(\partial_{r+1}))\\
		&\le (Q(\partial_{r+1}) + \rk D_r) \cdot (1+ \rk D_r \cdot Q(\partial_{r+1})),
	\end{align*}
	which is bounded from above by a function in~$\kappa_\delta(\widehat D_*)$.
	Moreover, in degree~$n+1$, we have
	\begin{align*}
		Q(\widehat {\partial}_{n+1}) &= Q(\widetilde{\partial}_{n+1})\\
		&\le Q(\partial_{n+1}) + \rk (E_n)\\
		&\le Q(\partial_{n+1}) + \rk (D_n),
	\end{align*}	 
	which is also bounded from above by $2\cdot\kappa_\delta(\widehat D_*)$.
	Finally, by construction, we have $\rk (\widehat{D}_r) \le 2\cdot \rk (D_r)$.
	Altogether, we obtain that
	\[
		\kappa_\delta(\widehat D_*) \le p(\kappa_\delta(D_*)),
	\]
	where~$p$ is defined to be the maximum of all the upper bounds encountered so far.	
	Note that we can assume~$p$ to be monotone increasing, otherwise we set
	\[
		p'(x) \coloneqq \max_{y\le x} p(y). \qedhere
	\]
\end{proof}

We can also strictify chain maps between (strict) chain complexes.

\begin{thm}[]
	\label{thm:strict-map}
	Let~$(X,\mu)$ be a standard probability space, let~$\Gamma$ be a group, and let~$n\in \N$.
	Then, there exists a monotone increasing function~$K\colon  \R_{>0} \to \R_{>0}$ and 
	a translation-invariant constant~$\kappa$ such that for all~$\alpha\colon \Gamma\actson X$,
	$\delta\in \R_{>0}$,
	and for every \almostcm{\delta}{n}~$f_*\colon (C_*,\zeta)\to (\widehat D_*, \widehat{\eta})$ between 
	marked projective (strict) $\Linfalpha$-chain complexes, extending the identity on 
	~$\LinftyXwc$, there exists a marked projective strict $\Linfalpha$-chain complex~$\widetilde D_*$
	with 
	\[
		\dgh {K(\kappa_\delta(f_*))} {\widetilde D_*, \widehat D_*, n} < K(\kappa_\delta(f_*))\cdot \delta
	\] 
	that admits a chain map~$\widetilde f_*\colon  C_* \to \widetilde D_*$ extending the identity on~$\LinftyXwc$.
	\indexw{strictification!of an almost chain map}
\end{thm}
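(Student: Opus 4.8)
The plan is to mirror the proof of Theorem~\ref{thm:strict-cplx}: I would take the non-uniform strictification result for chain maps, Theorem~\ref{thm:strictifychmap}, as a black box and repackage the complexity parameter it requires into a translation-invariant constant built from the quantity~$Q$ (Definition~\ref{def:Q}) and from marked ranks, both of which are stable under translation. Concretely, for the fixed~$n$ and any~$x\in\R_{>0}$ I would let~$K(x)$ be one of the constants~$K$ for which Theorem~\ref{thm:strictifychmap} holds with parameter~$\kappa\coloneqq x$, and then replace~$K$ by~$x\mapsto\max_{y\le x}K(y)$ to make it monotone increasing. Note that, in contrast with the strictification of almost chain complexes (Theorem~\ref{thm:strictifycomplex}), here no~$\overline\kappa_n$-type hypothesis occurs, so no auxiliary element of the degree-$0$ module has to be incorporated into the constant; this makes the construction slightly cleaner than in the complex case.

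Second, I would define the translation-invariant constant. Given~$\delta\in\R_{>0}$, a \pmp\ action~$\alpha$, and a \almostcm{\delta}{n}~$f_*\colon(C_*,\zeta)\to(D_*,\eta)$ between augmented marked projective \almostccs{\delta}{n} over~$\Linfalpha$, put
\[
  \kappa_\delta(f_*)\coloneqq\max\bigl\{Q(\zeta),Q(\partial^C_1),\dots,Q(\partial^C_{n+1}),Q(\eta),Q(\partial^D_1),\dots,Q(\partial^D_{n+1}),Q(f_0),\dots,Q(f_{n+1})\bigr\}+1.
\]
By Remark~\ref{rem:normleQ}, $Q$ dominates the operator norm, the $\infty$-norm, $\Nsum$, and~$\Ntmax$, so~$\kappa_\delta(f_*)$ simultaneously exceeds~$\kappa_n(C_*)$, $\nusum_n(C_*)$, $\kappa_n(D_*)$, and~$\kappa_n(f_*)$. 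When, as in the statement of Theorem~\ref{thm:strict-map}, $C_*$ and~$D_*$ are in fact strict chain complexes, I would then apply Theorem~\ref{thm:strictifychmap} with parameter~$\kappa\coloneqq\kappa_\delta(f_*)$ to~$f_*$ and the accuracy~$\delta$. This yields a marked projective strict $\Linfalpha$-chain complex~$\widetilde D_*$ and a chain map~$\widetilde f_*\colon C_*\to\widetilde D_*$ extending~$\id_{\LinftyXwc}$ with~$\dgh{K(\kappa_\delta(f_*))}{\widetilde D_*,\widehat D_*,n}<K(\kappa_\delta(f_*))\cdot\delta$ (and, in addition, $\dgh{K(\kappa_\delta(f_*))}{\widetilde f_r,f_r}<K(\kappa_\delta(f_*))\cdot\delta$ for all~$r$, which is not needed here). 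This establishes the asserted conclusion.

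Third, I would verify that~$\kappa=(\kappa_\delta)_\delta$ is genuinely a translation-invariant constant. Starting from a \almostcm{\delta/2}{n}~$f_*\colon C_*\to D_*$ between marked projective \almostccs{\delta/2}{n} over~$\Linfalpha$, Corollary~\ref{cor:transl-chainmaps} produces a weak neighbourhood~$U$ of~$\alpha$ such that for every~$\beta\in U$ the translated sequences~$(C_*)_\beta$ and~$(D_*)_\beta$ are marked projective \almostccs{\delta}{n} over~$\Linfbeta$ and~$(f_*)_\beta$ is a \almostcm{\delta}{n}; the $\delta$-surjectivity of the translated augmentations follows from the composition estimate (Lemma~\ref{lem:comp-estimate}) applied to an element witnessing $\delta/2$-surjectivity. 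Since~$Q(g_\beta)\le Q(g)$ (Remark~\ref{rem:Q-transl}) and marked ranks are unchanged by translation, every $Q$-term occurring in~$\kappa_\delta((f_*)_\beta)$ is at most the corresponding term for~$f_*$, hence~$\kappa_\delta((f_*)_\beta)\le\kappa_{\delta/2}(f_*)\le\kappa_{\delta/2}(f_*)+1$; the slack ``$+1$'' also reconciles the definition with the requirement that the witnessing neighbourhood be open.

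The step I expect to be the main obstacle is this last verification: one must be careful that passing to a nearby action degrades the almost-chain-map and almost-complex accuracies by no more than the built-in factor of two and that none of the packaged constants grows. These are precisely the phenomena that Corollary~\ref{cor:transl-chainmaps}, Lemma~\ref{lem:comp-estimate}, and the translation-invariance of~$Q$ and of marked ranks are designed to control, so once those are in place the remaining work is a direct citation of Theorem~\ref{thm:strictifychmap}.
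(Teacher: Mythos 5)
Your proposal is correct and follows essentially the same route as the paper: the paper's proof likewise defines $\kappa_\delta(f_*)$ as the maximum of the $Q$-values of $\zeta$, $\widehat\eta$, the boundary operators of both complexes, and the components of $f_*$, plus $1$, invokes the translation-invariance of $Q$ (Remark~\ref{rem:Q-transl}) to conclude that $\kappa$ is a translation-invariant constant, checks via Remark~\ref{rem:normleQ} that the hypotheses $\max\{\kappa_n(C_*),\nusum_n(C_*)\}<\kappa_\delta(f_*)$, $\kappa_n(\widehat D_*)<\kappa_\delta(f_*)$, and $\kappa_n(f_*)\le\kappa_\delta(f_*)$ hold, and then cites Theorem~\ref{thm:strictifychmap}. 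Your inclusion of $Q(f_{n+1})$ in the maximum (the paper stops at $Q(f_n)$) is, if anything, the more careful choice, since $\kappa_n(f_*)$ involves $\|f_{n+1}\|$.
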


\begin{proof}
	Given~$\alpha\colon \Gamma\actson (X,\mu)$, $\delta\in \R_{>0}$, 
	and a \almostcm{\delta}{n}~$f\colon C_*\to \widehat D_*$, we define
	\begin{align*}
		\kappa_\delta(f_*)\coloneqq \max\bigl\{
			Q(\zeta), Q(\partial_1^C), \dots, Q(\partial_{n+1}^C),
			Q(\widehat\eta), Q(\partial_1^{\widehat D}), \dots, Q(\partial_{n+1}^{\widehat D}),&\\
			Q(f_0), \dots, Q(f_n)
		&\bigr\}+1.
	\end{align*}
	Because~$Q$ is translation-invariant (Remark~\ref{rem:Q-transl}), $\kappa$ is a translation-invariant constant.
	We employ Theorem~\ref{thm:strict-cplx} to define the map~$K\colon \R_{>0} \to \R_{>0}$, which 
	can be assumed to be monotone.
	
	Because the norm of~$f$ is bounded by~$Q$ (Remark~\ref{rem:normleQ}),  we 
	have the following estimates~$\max\{\kappa_n(C_*), \nusum_n(C_*)\} < \kappa_\delta(f_*)$
	and~$\kappa_n(\widehat D_*) < \kappa_\delta(f_*)$. Moreover,~$\kappa_n(f_*)\le \kappa_\delta(f_*)$. Thus, 
	Theorem~\ref{thm:strictifychmap} yields a marked projective~$\Linfalpha$-chain complex~$(\widetilde D_*, \widetilde \eta)$
	with
	\[
		\dgh {K(\kappa_\delta(f_*))} {\widetilde D_*, \widehat D_*, n} < K(\kappa_\delta(f_*))\cdot \delta
	\]
	that admits a chain map~$\widetilde f_*\colon  C_* \to \widetilde D_*$ extending the identity on~$\LinftyXwc$ such that 
	\[
	\dgh {K(\kappa_\delta(f_*))} {\widetilde f_r, f_r} < K(\kappa_\delta(f_*))\cdot \delta
	\]
	for all $r \in \{0, \ldots, n+1\}$.
\end{proof}

\subsection{Proof of monotonicity}
\label{subsec:wc}

The main goal of this section is to prove the following monotonicity result
for measured embedding dimension and volume under weak containment of actions.

\begin{thm}[weak containment]\label{thm:wc}
	\indexw{weak containment}
  Let $n\in \N$, let $\Gamma$ be a group of type~$\FP_{n+1}$, and let~$\alpha\colon \Gamma\actson (X,\mu)$, 
  $\beta\colon \Gamma\actson (Y,\nu)$ be free \pmp\ actions of~$\Gamma$ on standard probability spaces.
  Let~$\alpha \prec \beta$. Let~$Z$ denote the integers (equipped with the standard norm) 
  or a finite field (equipped with the trivial norm).
  Then, we have
  \begin{align*}
     \medim_n^Z (\beta)
     & \leq \medim_n^Z(\alpha),
     \\
     \mevol_n (\beta)
     & \leq \mevol_n(\alpha).
  \end{align*}  
\end{thm}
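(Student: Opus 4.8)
The plan is to fix an $\alpha$-embedding $f_* \colon C_* \to D_*$ over the crossed product ring $\Linf X *_\alpha \Gamma$ that is nearly optimal for $\medim_n^Z(\alpha)$ (resp.\ $\mevol_n(\alpha)$), and to produce from it an $\beta$-embedding over $\Linf Y *_\beta \Gamma$ whose dimension in degree $n$ (resp.\ whose $\lognorm$ of $\partial_{n+1}$) differs from the original by an arbitrarily small amount. The mechanism is the ``translation'' machinery of Section~\ref{subsec:transl}: using the characterisation of weak containment via weak neighbourhoods (Proposition~\ref{prop:wc-char}), it suffices to work with an action $\beta'$ isomorphic to $\beta$ that lies in a prescribed weak neighbourhood $U$ of $\alpha$, and then translate the entire embedding from $\alpha$ to $\beta'$. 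Since all the relevant invariants (dimension, $\lognorm$, and the ring-theoretic finiteness conditions) are isomorphism-invariant, replacing $\beta'$ by $\beta$ at the end costs nothing.

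First I would choose, for a given $\varepsilon > 0$, an $\alpha$-embedding $f_* \colon C_* \to D_*$ with $\dim_\alpha(D_n) < \medim_n^Z(\alpha) + \varepsilon$ (resp.\ $\lognorm_\alpha(\partial_{n+1}^D) < \mevol_n(\alpha) + \varepsilon$), where $C_*$ is a fixed free $Z\Gamma$-resolution of $Z$ of finite rank in degrees $\le n+1$. By Lemma~\ref{lem:transl-C}, the induced complex $\widehat C_* = \Linf X *_\alpha \Gamma \otimes_{Z\Gamma} C_*$ translates canonically, compatibly with the augmentation. Now pick $\delta > 0$ small (to be determined), and invoke Corollary~\ref{cor:transl-chainmaps}: there is a weak neighbourhood $U_1$ of $\alpha$ such that for every $\beta' \in U_1$, the translations $(\widehat C_*)_{\beta'}$, $(D_*)_{\beta'}$ are marked projective $\delta$-almost chain complexes over $\Linf Y *_{\beta'} \Gamma$ and $(f_*)_{\beta'}$ is a $\delta$-almost chain map. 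Shrinking to a further neighbourhood $U_2 \subseteq U_1$ using Lemma~\ref{lem:transl-lognorm} (for the $\mevol$ statement) and Remark~\ref{rem:dim-transl} (for the $\medim$ statement, which is exact), we ensure $\dim_{\beta'}((D_*)_{\beta'})_n = \dim_\alpha(D_n)$ and $\lognorm_{\beta'}((\partial_{n+1}^D)_{\beta'}) \le \lognorm_\alpha(\partial_{n+1}^D) + \varepsilon$. By Proposition~\ref{prop:wc-char} we may pick $\beta' \in U_2$ isomorphic to $\beta$.

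Next I would strictify. Applying the strictification results for almost chain complexes and almost chain maps (Theorem~\ref{thm:strict-cplx} and Theorem~\ref{thm:strict-map}, whose translation-invariant constants are exactly engineered for this situation), we first replace $(D_*)_{\beta'}$ by a genuine marked projective $\Linf Y *_{\beta'} \Gamma$-chain complex $\widehat D_*$ with $\dgh{K}{\widehat D_*, (D_*)_{\beta'}, n} < K\delta$, then strictify the almost chain map $(f_*)_{\beta'} \colon (\widehat C_*)_{\beta'} \to \widehat D_*$ (after first noting $(\widehat C_*)_{\beta'} = \Linf Y *_{\beta'} \Gamma \otimes_{Z\Gamma} C_*$ is already a resolution, since $\Linf Y *_{\beta'} \Gamma$ is flat over $Z\Gamma$ by Proposition~\ref{prop:flat}) to get a strict target $\widetilde D_*$ and strict chain map $\widetilde f_* \colon (\widehat C_*)_{\beta'} \to \widetilde D_*$ with Gromov--Hausdorff control. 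Restricting $\widetilde f_*$ to the $Z\Gamma$-subcomplex $C_*$ yields a genuine $\beta'$-embedding. By Proposition~\ref{prop:dGH}~\ref{itm:dGH-dim} the dimension in degree $n$ changes by less than $K\delta$, and by Proposition~\ref{prop:lognorm}~\ref{itm:lognorm-GH} the $\lognorm$ of the top differential changes by at most $K\delta \cdot \logp K$; choosing $\delta$ small enough (depending only on $K$, which depends only on the translation-invariant constants of the fixed data $f_*, D_*$, and on $n$, not on $\delta$) makes these contributions at most $\varepsilon$. Transporting along the isomorphism $\beta' \cong \beta$ gives a $\beta$-embedding witnessing $\medim_n^Z(\beta) \le \medim_n^Z(\alpha) + O(\varepsilon)$ (resp.\ the $\mevol$ inequality); letting $\varepsilon \to 0$ finishes the proof.

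The main obstacle is the bookkeeping in the second and third steps: one must verify that the constant $K$ emerging from strictification genuinely depends only on translation-invariant quantities of the \emph{original} $\alpha$-data and not on the action $\beta'$ or on $\delta$, so that the final choice of $\delta$ (and hence of the weak neighbourhood) can be made \emph{after} $K$ is fixed. This is precisely what the notion of translation-invariant constant in Definition~\ref{def:transl-inv-const} and Theorems~\ref{thm:strict-cplx}--\ref{thm:strict-map} are built to guarantee, so the argument is really an exercise in chaining those statements in the right order; the subtlety is purely in the order of quantifiers ($K$ first, then $\delta$, then the weak neighbourhood, then $\beta'$), together with keeping track that degree $0$ — where surjectivity of the augmentation must be restored — is handled by the $\delta$-surjectivity clause already folded into the almost chain complex formalism and its strictification.
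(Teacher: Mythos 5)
Your proposal is correct and follows essentially the same route as the paper's proof (Lemma~\ref{lem:wc-tech} combined with Proposition~\ref{prop:wc-char}): choose a near-optimal $\alpha$-embedding, translate it to a nearby action $\beta'\cong\beta$ using the weak-neighbourhood characterisation, strictify first the target complex and then the chain map via the translation-invariant-constant versions of the strictification theorems, and control the dimension and $\lognorm$ through the Gromov--Hausdorff estimates. You also correctly identify the one genuinely delicate point, namely that the strictification constant $K$ must be pinned down from the translation-invariant data of the original $\alpha$-embedding before $\delta$ and the weak neighbourhood are chosen.
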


\begin{rem}[]
	The proof of this theorem consists of several steps. We give a roadmap to the proof
	outlining the main ideas: 
	\begin{enumerate}
		\item We fix an~$\alpha$-embedding from~$C_*^\alpha$ to an augmented~$\Linfalpha$-chain complex~$D_*$
		 with~$\dim_\alpha (D_n)$ 
		 ``close'' to~$\medim_n^Z (\alpha)$ 
		 (resp.\ $\lognorm_\alpha (\partial_{n+1}^D)$ ``close'' to $\mevol_n(\alpha)$).
		\item Because~$\alpha$ is weakly contained in~$\beta$, the action~$\beta$ is (isomorphic to) an action
		$\beta'\colon \Gamma\actson X$
		``close'' to~$\alpha$ in the weak topology (see Proposition~\ref{prop:wc-char}).
		\item We can translate~$D_*$ from~$\alpha$ to~$\beta'$ (see Section~\ref{subsec:transl})
		and obtain~$(D_*)_{\beta'}$.
		However, in general, $(D_*)_{\beta'}$ will no longer be a chain complex, but only an almost chain complex 
		over~$\Linfbetap$
		and the $\alpha$-embedding~$C_*^\alpha \to D_*$ translates to an almost chain map~$C_*^{\beta'} \to
		 (D_*)_{\beta'}$ over~$\Linfbetap$. The error depends on the previous distances.
		\item We can strictify~$(D_*)_{\beta'}$ to obtain a (strict)
		$\Linfbetap$-chain complex~$\widehat D_*$. We obtain an almost $\beta'$-embedding to this complex.
		\item We can also strictify the almost $\beta'$-embedding to obtain a strict 
		$\Linfbetap$-chain map~$C_*^{\beta'} \to \widetilde D_*$ to a 
		(different) strict chain complex~$\widetilde D_*$.
		\item If~$\beta'$ is ``close'' enough to~$\alpha$, the strictified chain complex~$\widetilde D_*$ is
		``close'' to~$D_*$, thus $\dim_{\beta'} (\widetilde D_n)$
		is ``close'' to $\dim_\alpha (D_n)$ 
		(resp.\ $\lognorm_\beta (\partial_{n+1}^{\widetilde D}$) is ``close'' to $\lognorm_\alpha (\partial_{n+1}^{D})$).
		 \item We can make the error arbitrarily small, thus proving the claim.
	\end{enumerate}
	The main difficulty is making the notions of closeness precise and controlling the distances. These
	distances
	 depend on one another, thus some work needs to be done to obtain global control on the errors.
\end{rem}

The key approximation is contained in the following lemma.

\begin{lem}[]
	\label{lem:wc-tech}
	Let~$n\in \N$, let~$\Gamma$ be a group of type~$\FP_{n+1}$, 
	and let~$\alpha\colon \Gamma \actson (X,\mu)$ be a free probability measure preserving standard action.
	Let~$f_*\colon C_*^\alpha \to D_*$ be an~$\alpha$-embedding and~$\epsilon > 0$. Then, 
	there exists a weak neighbourhood~$U$ of~$\alpha$ such that for all~$\beta\in U$,  
	there exists a~$\beta$-embedding~$C_*^\beta \to \widetilde D_* $ satisfying
	\[
		\dim_\beta (\widetilde D_n) \le \dim_\alpha (D_n) + \epsilon \qand
		\lognorm_\beta (\partial_{n+1}^{\widetilde D}) \le \lognorm_\alpha (\partial_{n+1}^D) + \epsilon.
	\]
\end{lem}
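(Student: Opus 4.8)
The plan is to assemble the tools from Sections~\ref{subsec:transl} and~\ref{subsec:strict-transl} along the roadmap sketched above, carefully threading the error parameters so that a \emph{single} choice of weak neighbourhood works. First I would record the translation-invariant constants: apply Theorem~\ref{thm:strict-cplx} to obtain a monotone function~$K_1$, a monotone function~$p$, and a translation-invariant constant~$\kappa^{(1)}$ for marked projective almost chain complexes; and apply Theorem~\ref{thm:strict-map} to obtain a monotone function~$K_2$ and a translation-invariant constant~$\kappa^{(2)}$ for almost chain maps. Let $(\widehat C_*,\widehat\zeta)\coloneqq \Linfalpha\otimes_{Z\Gamma}C_*^\alpha$ with the augmentation from Lemma~\ref{lem:transl-C}, and let $F_*\colon \widehat C_*\to D_*$ be the $\Linfalpha$-chain map induced by the $\alpha$-embedding $f_*$. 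The point of passing to $\widehat C_*$ is that $F_*$ is then a \emph{strict} chain map between marked projective chain complexes, and by Lemma~\ref{lem:transl-C} its translation to any $\beta$ has underlying translated complex $(\widehat C_*)_\beta\cong \Linfbeta\otimes_{Z\Gamma}C_*^\alpha$, which is again a resolution (and still receives the canonical $Z\Gamma$-chain map from $C_*^\beta$).

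Next I would fix the target error. Set, say, $\delta\coloneqq \delta(\epsilon)$ small enough (to be determined below in terms of the monotone bounds produced by $K_1,K_2,p$ evaluated at the constants associated to the fixed data $F_*,D_*$), and use translation-invariance to choose a weak neighbourhood $U_0$ of~$\alpha$ so that for all $\beta\in U_0$ the translated complex $(D_*)_\beta$ is a marked projective \almostcc{\delta}{n} over~$\Linfbeta$ (Corollary~\ref{cor:transl-complex}), the translated map $(F_*)_\beta\colon (\widehat C_*)_\beta\to (D_*)_\beta$ is a \almostcm{\delta}{n} extending $\id_{\LinftyYwc}$ (Corollary~\ref{cor:transl-chainmaps}), and the translation-invariant constants obey $\kappa^{(1)}_\delta((D_*)_\beta)\le \kappa^{(1)}_{\delta/2}(D_*)+1\eqqcolon \kappa_D$ and $\kappa^{(2)}_\delta((F_*)_\beta)\le \kappa^{(2)}_{\delta/2}(F_*)+1\eqqcolon \kappa_F$. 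Now for $\beta\in U_0$: first apply Theorem~\ref{thm:strict-cplx} to $(D_*)_\beta$ to get a strict marked projective $\Linfbeta$-chain complex $\widehat D_*$ with $\dgh{K_1(\kappa_D)}{\widehat D_*,(D_*)_\beta,n}<K_1(\kappa_D)\cdot\delta$, together with a \almostcm{(K_1(\kappa_D)\delta)}{n} inclusion $(D_*)_\beta\hookrightarrow \widehat D_*$; composing with $(F_*)_\beta$ and using Lemma~\ref{lem:comp-almost-chain-maps} gives a \almostcm{(\const{\kappa_D,\kappa_F}\cdot\delta)}{n} map $(\widehat C_*)_\beta\to \widehat D_*$ between strict chain complexes. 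Then apply Theorem~\ref{thm:strict-map} to this almost chain map to obtain a strict marked projective $\Linfbeta$-chain complex $\widetilde D_*$ with $\dgh{K_2(\kappa')}{\widetilde D_*,\widehat D_*,n}<K_2(\kappa')\cdot\const{\kappa_D,\kappa_F}\cdot\delta$ and a strict $\Linfbeta$-chain map $\widetilde F_*\colon (\widehat C_*)_\beta\to \widetilde D_*$ extending $\id_{\LinftyYwc}$. Precomposing $\widetilde F_*$ with the canonical $Z\Gamma$-chain map $C_*^\beta\to (\widehat C_*)_\beta$ (and restricting to the $Z\Gamma$-subcomplex) yields a $\beta$-embedding $C_*^\beta\to\widetilde D_*$.

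It remains to bound the invariants of $\widetilde D_*$. By the triangle inequality for the Gromov--Hausdorff distance between sequences (Proposition~\ref{prop:dghchaincomplex}) applied to $\widetilde D_*$, $\widehat D_*$, $(D_*)_\beta$, there are constants $K_3=K_3(\kappa_D,\kappa_F)$ and $\delta_3=\const{\kappa_D,\kappa_F}\cdot\delta$ with $\dgh{K_3}{\widetilde D_*,(D_*)_\beta,n}<\delta_3$. Now use the dimension-comparison part of Proposition~\ref{prop:dGH}~\ref{itm:dGH-dim} (more precisely, its degreewise instance inside Proposition~\ref{prop:dghchaincomplex}): for a witnessing pair of marked inclusions $\varphi_n\colon \widetilde D_n\hookrightarrow P_n$, $\varphi'_n\colon (D_n)_\beta\hookrightarrow P_n$ we get $|\dim_\beta \widetilde D_n-\dim_\beta(D_n)_\beta|\le \dim(\varphi_n(\widetilde D_n)\msymmdiff\varphi'_n((D_n)_\beta))<\delta_3$, and $\dim_\beta(D_n)_\beta=\dim_\alpha D_n$ by Remark~\ref{rem:dim-transl}; hence $\dim_\beta\widetilde D_n<\dim_\alpha D_n+\delta_3$. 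For $\lognorm$, I would first use Lemma~\ref{lem:transl-lognorm} to pick a further weak neighbourhood $U_1\subset U_0$ of $\alpha$ on which $\lognorm_\beta((\partial_{n+1}^D)_\beta)\le \lognorm_\alpha(\partial_{n+1}^D)+\epsilon/3$, and then bound the passage $(\partial_{n+1}^D)_\beta\rightsquigarrow \partial_{n+1}^{\widetilde D}$ by the Gromov--Hausdorff estimate for $\lognorm$ (Proposition~\ref{prop:lognorm}~\ref{itm:lognorm-GH}), which gives $\lognorm_\beta(\partial_{n+1}^{\widetilde D})\le \lognorm_\beta((\partial_{n+1}^D)_\beta)+\delta_3\cdot\logp K_3$. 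Finally choose $\delta=\delta(\epsilon)$ small enough that $\delta_3<\epsilon$ and $\delta_3\cdot\logp K_3<\epsilon/3$ and $K_3\cdot\delta<\epsilon$ (all three quantities are controlled by monotone functions of the \emph{fixed} constants $\kappa^{(1)}_{\delta/2}(D_*)$, $\kappa^{(2)}_{\delta/2}(F_*)$, which depend only on $F_*$ and $D_*$, not on $\beta$); shrinking along the way keeps the relevant translation-invariant bounds valid. Setting $U\coloneqq U_1$ completes the argument. The main obstacle, and the place I would spend the most care, is exactly this circular dependence of parameters: the strictification constants $\kappa_D,\kappa_F$ are only guaranteed to stay bounded \emph{after} the neighbourhood has been chosen, while the required smallness of $\delta$ is dictated by those same constants---one must observe that $\kappa^{(1)}_{\delta/2}(D_*)$ and $\kappa^{(2)}_{\delta/2}(F_*)$ are finite for the fixed input data (and may be taken uniform for all small $\delta$, since $Q$ and $\rk$ appearing in their definition do not involve $\delta$, except for the infimum over almost-augmenting elements which only improves as $\delta$ decreases), so one can legitimately fix $\delta$ first in terms of these finite quantities and then select $U$.
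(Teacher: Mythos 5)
Your proposal is correct and follows essentially the same route as the paper's proof: translate the embedding to a nearby action, strictify first the target complex (Theorem~\ref{thm:strict-cplx}) and then the composed almost chain map (Theorem~\ref{thm:strict-map}), and conclude via the triangle inequality for the Gromov--Hausdorff distance together with Remark~\ref{rem:dim-transl}, Lemma~\ref{lem:transl-lognorm}, and Proposition~\ref{prop:lognorm}. Your resolution of the parameter circularity (the translation-invariant constants depend only on the fixed data $f_*$, $D_*$, so $\delta$ can be fixed before the neighbourhood) is exactly the point the paper's proof relies on as well.
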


\begin{proof}	
	We fix a free~$Z\Gamma$-resolution~$C_*\onto Z$ of the trivial~$Z\Gamma$-module~$Z$ with finitely
	generated~$Z\Gamma$-modules in degrees~$\le n+1$.
	We can additionally 
	assume that~$C_0 = Z\Gamma$ and that the augmentation map~$\eta\colon  Z\Gamma\to Z$
	is given by mapping all~$\gamma\in \Gamma$ to~$1\in Z$.
	As in the definition of~$\medim_n^Z$ and $\mevol_n$ (Definition~\ref{def:alpha-emb}),
	 we set~$C_*^\alpha \coloneqq \Linfalpha\otimes_{Z\Gamma} C_*$.
	Let~$f_*\colon C_*^\alpha \to D_*$ be the given~$\alpha$-embedding, i.e., 
	an $\Linfalpha$-chain map extending the identity on~$\LinftyXwc$.
	By Lemma~\ref{lem:transl-lognorm}, we can pick a neighbourhood~$U$ such that for all $\beta \in U$, we have
	\[
		\lognorm_\beta ((\partial_{n+1}^D)_\beta) \le \lognorm_\alpha (\partial_{n+1}^D) + \frac{\epsilon}{2}.
	\]
	We fix a translation-invariant constant~$\kappa$, and monotone increasing maps~$K$ and~$p$ 
	as in Theorem~\ref{thm:strict-cplx}. Moreover, we fix a translation-invariant constant
	and monotone increasing constant as in Theorem~\ref{thm:strict-map}.
	By  taking the maximum, we can also
	denote the latter by~$\kappa$ resp.\ $K$.
	We restrict the neighbourhood~$U$ to a potentially smaller neighbourhood that additionally satisfies
	the translation-invariance condition in Definition~\ref{def:transl-inv-const}
	(for $\delta \coloneqq \epsilon \slash 2$). We define
	\[
	Q(f,n) \coloneqq \max \bigl\{ Q(f_0), \dots Q(f_n)\bigr\}
	\] and
	\[M = \Bigl( K\bigl(\max \bigl\{
			\kappa_\delta(f_*), 2\cdot p(\kappa_\delta(D_*))
		\bigr\}\bigr) \cdot Q(f,n) + 1\Bigl) \cdot K(\kappa_{\delta/2}(D_*)+1).\]
	and choose~$\delta\in \R_{>0}$ such that
	\[
		M\cdot \delta \le \epsilon \qand
		M\cdot \logp (M\cdot \delta) \le \frac{\epsilon}{2} \qand \delta\le \epsilon.
	\]
	By Corollary~\ref{cor:transl-chainmaps}, we can restrict~$U$ to a 
	neighbourhood of~$\alpha$ such that for all~$\beta\in U$  the translated chain complex~$(D_*)_\beta$ is 
	a \almostcc{\delta}{n} and~$(f_*)_\beta\colon  (C_*^\alpha)_\beta \to (D_*)_\beta$
	is a \almostcm{\delta}{n}.
	By Lemma~\ref{lem:transl-C}, we have~$(C_*^\alpha)_\beta \cong C_*^\beta$.	
	
	Let~$\beta\in U$. We first strictify~$(D_*)_\beta$. By Theorem~\ref{thm:strict-cplx}, we obtain a strict marked 
	projective
	$\Linfbeta$-chain complex~$\widehat{D}_*$ such that the inclusion~$i_*\colon (D_*)_\beta \hookrightarrow \widehat D_*$
	is a \almostcm{(K(\kappa_\delta((D_*)_\beta))\cdot \delta)}{n} and
	\[
		\dgh {K(\kappa_\delta((D_*)_\beta))}{\widehat{D}_*, (D_*)_\beta, n}\le
		 K\bigl(\kappa_\delta((D_*)_\beta)\bigr)\cdot \delta.
	\]  
	By translation-invariance and because of our choice of~$U$, 
	we have \[\kappa_{\delta}((D_*)_\beta) \le \kappa_{\delta/2}(D_*)+1.\] 
	Because~$K$ is monotone increasing, we can directly
	use~$K(\kappa_{\delta/2}(D_*)+1)$ as an upper bound in the following. 
	Then, by Lemma~\ref{lem:comp-almost-chain-maps} and 
	Remark~\ref{rem:normleQ}, we have that the 
	composition $i_*\circ (f_*)_\beta\colon  C_*^\beta \to (\widehat D_*)_\beta$
	is a \almostcm{(Q(f_\beta,n)\cdot K(\kappa_{\delta/2}(D_*)+1)\cdot \delta + \delta)}{n}. Note that~$Q(f_\beta,n) 
	\le Q(f,n)$.
	
	Now, we can strictify the chain map: By Theorem~\ref{thm:strict-map}, 
	there exists a strict marked projective~$\Linfbeta$-chain complex~$\widetilde D_*$ admitting a 
	(strict) $\Linfbeta$-chain map $C_*^\beta \to \widetilde D_*$ extending the identity on~$\LinftyXwc$ with
	\[
		\dgh {K(\kappa_\delta(i_*\circ (f_*)_\beta))}{\widetilde D_*, \widehat D_*, n} \le K\bigl(\kappa_\delta(i_*\circ (f_*)_\beta)\bigr)\cdot
		Q(f,n)\cdot K(\kappa_{\delta/2}(D_*)+1)\cdot \delta.
	\]
	 
	Thus, $C_*^\beta \to \widetilde D_*$ is a~$\beta$-embedding.
	From the explicit descriptions of the transla\-tion-invariant constants 
	(see the proofs of Theorem~\ref{thm:strict-cplx} and Theorem~\ref{thm:strict-map}), 
	we obtain that
	\[
		\kappa_\delta(i_*\circ (f_*)_\beta) \le \max \bigl\{
			\kappa_\delta(f_*), 2\cdot \kappa_\delta(\widehat D_*)
		\bigr\} \le \max \bigl\{
			\kappa_\delta(f_*), 2\cdot p(\kappa_\delta(D_*))
		\bigr\}.
	\]
	Because the Gromov-Hausdorff distance satisfies the triangle inequality
	(Proposition~\ref{prop:dghchaincomplex}), we obtain
	\[
		\dgh {M}{\widetilde D_*, (D_*)_\beta,n} \le M\cdot \delta
	\]
	with \[M = \Bigl( K\bigl(\max \bigl\{
			\kappa_\delta(f_*), 2\cdot p(\kappa_\delta(D_*))
		\bigr\}\bigr) \cdot Q(f,n) + 1\Bigl) \cdot K(\kappa_{\delta/2}(D_*)+1),\]
	as defined at the beginning of this proof.	 
	For the dimension, we thus obtain that
	\begin{align*}
		\dim_\beta (\widetilde D_n) &\le \dim_\beta ((D_n)_\beta) + M\cdot \delta 
							&(\text{Proposition~\ref{prop:lognorm}~\ref{itm:lognorm-GH}})\\
		&= \dim_\alpha (D_n) + M\cdot \delta & (\text{Remark~\ref{rem:dim-transl}})\\
		&\le \dim_\alpha (D_n) +\epsilon. & (\text{choice of }\delta)
	\end{align*}
	For the lognorm, we have
	\begin{align*}
		\lognorm_\beta (\partial_{n+1}^{\widetilde D})
		&\le \lognorm_\beta ((\partial_{n+1}^D)_\beta) + M\cdot \logp (M\cdot \delta) &
		(\text{Proposition~\ref{prop:lognorm}~\ref{itm:lognorm-GH}})\\
		&\le \lognorm_\beta ((\partial_{n+1}^D)_\beta) + \frac{\epsilon}{2}
		&(\text{choice of }\delta)\\
		&\le \lognorm_\alpha (\partial_{n+1}^D) + \epsilon.
		&(\text{choice of }U) & &\qedhere
	\end{align*}
\end{proof}

We can now prove the theorem that measured embedding dimension and volume
 are monotone under weak embeddings.

\begin{proof}[Proof of Theorem~\ref{thm:wc}]	
	We show how to deduce the statement for~$\mevol_n$. The proof for~$\medim_n^Z$ works similarly by replacing
	every occurrence of ``$\lognorm_*( \partial_{n+1})$'' by ``$\dim_* (D_n)$''.
	
	Without loss of generality, we assume that~$\mevol_n(\alpha) < \infty$.
	Let~$\epsilon \in \R_{>0}$. By definition of~$\mevol_n$, there is an~$\alpha$-embedding~$C_*^\alpha \to D_*$
	with \[\lognorm_\alpha (\partial_{n+1}^{D}) \le \mevol_n (\alpha) +\epsilon.\]
	Because~$\alpha\prec\beta$, in every weak neighbourhood~$U$ of~$\alpha$,
	there is~$\beta'\in U$ such that~$\beta' \cong \beta$ (Proposition~\ref{prop:wc-char}).
	Thus, Lemma~\ref{lem:wc-tech} yields a weak neighbourhood~$U$ of~$\alpha$ 
	and~$\beta'\in U$ with~$\beta'\cong\beta$ such that there is a~$\beta'$-embedding~$C_*^{\beta'}\to \widetilde D_*$ 
	with 
	\[
		\lognorm_{\beta'} (\partial_{n+1}^{\widetilde D}) \le \lognorm_\alpha (\partial_{n+1}^D) +\epsilon.
	\]
	As~$\beta' \cong \beta$, this defines a~$\beta$-embedding~$C_*^\beta \to \widetilde D_*$. Thus,
	\begin{align*}
		\mevol_n (\beta) &\le \lognorm_\beta (\partial_{n+1}^{\widetilde D})\\
		&\le \lognorm_\alpha (\partial_{n+1}^D) +\epsilon\\
		&\le \mevol_n (\alpha) +2\cdot\epsilon.
	\end{align*}
	Taking the limit~$\epsilon \to 0$ yields the claim.
\end{proof}

\section{The disintegration estimate}\label{sec:disint}

We show a basic disintegration estimate for measured embedding
dimension and measured embedding volume
(Proposition~\ref{prop:disint}).  This is useful in the
context of orbit equivalence and property~$\EMD^*$. As usual $Z$ denotes~$\IZ$ or a finite field.

\begin{defn}[disintegration]
  Let $\alpha \colon \Gamma \actson (X,\mu)$ be a standard action. We
  write $\Prob(\alpha)$ for the set of all probability measures on the
  measurable space~$X$ that are invariant under the measurable action
  underlying~$\alpha$.  A \emph{disintegration of~$\alpha$} is a
  map~$m \colon X \to \Prob(\alpha)$ with the following properties:
  \begin{itemize}
  \item For every measurable subset~$A \subset X$, the evaluation
    map
    \begin{align*}
      X & \to [0,1]
      \\
      x & \to m_x(A)
    \end{align*}
    is measurable and $\mu(A) = \int_X m_x(A) \;d\mu(x)$.
  \item For all~$x \in X$ and all~$\gamma \in \Gamma$, we
    have~$m_{\gamma \cdot x} = m_x$.
  \item For all~$\nu \in \Prob(\alpha)$, the preimage~$X_\nu \coloneqq
    m^{-1}(\{\nu\})$ is measurable and $\nu(X_\nu) \in \{0,1\}$.
  \end{itemize}
  Such a disintegration of~$\alpha$ is an \emph{ergodic decomposition
    of~$\alpha$} if~$m_x$ is ergodic for all~$x \in X$ (with respect
  to the measurable action underlying~$\alpha$).
\end{defn}

Ergodic decompositions always exist~\cite[Section~4]{varadarajan}.
If $m$ is a disintegration of a standard action $\alpha \colon \Gamma \actson (X,\mu)$,
then for $\mu$-almost every~$x \in X$, the underlying action
of~$\Gamma$ on~$X$ is essentially free with respect to~$m_x$~\cite[Remark~3.7]{loehsartori}. We then
also write~$(\alpha,m_x)$ for the induced standard action~$\Gamma
\actson (X,m_x)$.

For convenience, we introduce the following dual of the abbreviation
``almost every'': Given a probability space~$(X,\mu)$ and a
property~$P \colon X \to \text{\textsf{Bool}}$ (where \textsf{Bool}
denotes the Booleans), we say that \emph{there $\mu$-exists an~$x \in
  X$ with property~$P$} if there exists a measurable subset~$A \subset
X$ with the property that~$\mu(A) > 0$ and that $P(x)$ holds for
every~$x \in A$.

\begin{prop}\label{prop:disint}
  Let $n \in \N$, let $\Gamma$ be a group of type~$\FP_{n+1}$, 
  let $\alpha \colon \Gamma \actson (X,\mu)$ be a standard
  $\Gamma$-action, and let $m \colon X \to \Prob(\alpha)$
  be a disintegration of~$\alpha$. Then:
  \begin{enumerate}[label=\enum]
  \item For every~$\varepsilon \in \R_{>0}$, there
    $\mu$-exists an~$x\in X$ with
    \[ \medim_n^Z (\alpha,m_x) \leq \medim_n^Z (\alpha) + \varepsilon.
    \]
  \item For every~$\varepsilon \in \R_{>0}$, there
    $\mu$-exists an~$x \in X$ with
    \[ \mevol_n (\alpha,m_x) \leq \mevol_n (\alpha) + \varepsilon.
    \]
  \end{enumerate}
\end{prop}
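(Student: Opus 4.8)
The statement is a disintegration estimate: the measured embedding dimension and volume of a standard action $\alpha$ dominate, up to arbitrarily small error on a set of positive measure, the corresponding invariant of $\mu$-almost every fibre $(\alpha, m_x)$ of a disintegration. The plan is to fix an $\alpha$-embedding $f_* \colon C_* \to D_*$ that nearly realises $\medim_n^Z(\alpha)$ (resp.\ $\mevol_n(\alpha)$), restrict all the data ---modules, boundary operators, the chain map--- from $R \coloneqq \linf{\alpha,Z} * \Gamma$ to the crossed product ring $R_x \coloneqq \linf{(\alpha,m_x),Z} * \Gamma$ over the fibre, and argue that for $\mu$-almost every $x$ this restriction is again an $\alpha$-embedding over $R_x$ whose dimension (resp.\ lognorm) is at most $\dim_R(D_n) + \varepsilon$ (resp.\ $\lognorm(\partial^D_{n+1}) + \varepsilon$). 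The key point is that the measurable-set data underlying a marked projective module lives on the measurable space $X$ independently of the measure, so the same carrier sets $A_i$ and the same matrix coefficients define a marked projective $R_x$-module $D_n(x)$ and $R_x$-homomorphisms, and $\dim_{R_x} D_n(x) = \sum_i m_x(A_i)$. Since $\mu(A_i) = \int_X m_x(A_i)\,d\mu(x)$, averaging gives $\int_X \sum_i m_x(A_i)\,d\mu(x) = \dim_R(D_n)$, so by Markov/Chebyshev there must $\mu$-exist an $x$ where $\sum_i m_x(A_i) < \dim_R(D_n) + \varepsilon$.

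\textbf{Steps.} First I would record that for almost every $x$ the underlying action is essentially free with respect to $m_x$ (cited from \cite{loehsartori}), so $(\alpha, m_x)$ is genuinely a standard action and the crossed product $R_x$ and equivalence relation ring are defined; this holds off a $\mu$-null set $N_0$. Second, fix a free $Z\Gamma$-resolution $C_*$ of $Z$ of finite rank in degrees $\le n+1$ and an $\alpha$-embedding $f_* \colon C_* \to D_*$ with $\dim_R(D_n) \le \medim_n^Z(\alpha) + \varepsilon/2$ (resp.\ $\lognorm(\partial^D_{n+1}) \le \mevol_n(\alpha) + \varepsilon/2$). Third, define the ``fibre restriction'': for each measurable carrier set $A \subset X$ set $\gen{A}_{R_x} \coloneqq R_x \cdot \chi_A$, extend to marked projective modules by direct sums, and let $D_*(x)$, $\partial^{D}_*(x)$, $f_*(x)$ be given by the same defining matrices over $Z\Rrel$ (which make sense over $R_x \subset Z\Rrel_{(\alpha,m_x)}$ because the supports and $\ell^1$/multiplicity conditions are measure-independent combinatorial constraints on the orbit relation --- here one uses that $\Rrel$ is the same set-theoretic relation, only the measure $\nu$ changes). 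Fourth, check that $(D_*(x), \eta(x))$ is a marked projective $R_x$-chain complex and that $f_*(x)$ is an $R_x$-chain map extending the inclusion $Z \to \linf{(\alpha,m_x),Z}$: the chain-complex and chain-map identities are identities of matrices over $Z\Rrel$, hence hold for every $x$; surjectivity of the augmentation $\eta(x)$ holds because $\eta$ came with a witness $z \in D_0$ with $\eta(z) = 1$, and the identity $\eta(x)(z) = 1$ is again a pointwise identity. So for every $x \notin N_0$ we obtain a genuine $(\alpha,m_x)$-embedding, whence $\medim_n^Z(\alpha, m_x) \le \sum_i m_x(A_i)$ where $D_n = \bigoplus_i \gen{A_i}$ (resp.\ $\mevol_n(\alpha, m_x) \le \lognorm_{R_x}(\partial^{D}_{n+1}(x))$). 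Fifth, for the dimension estimate, integrate: $\int_X \sum_i m_x(A_i)\,d\mu(x) = \sum_i \mu(A_i) = \dim_R(D_n) \le \medim_n^Z(\alpha) + \varepsilon/2$, so the set $\{x : \sum_i m_x(A_i) \ge \medim_n^Z(\alpha) + \varepsilon\}$ has measure $< 1$, indeed its complement has positive measure; combined with discarding $N_0$ this gives a positive-measure set of $x$ with $\medim_n^Z(\alpha, m_x) \le \medim_n^Z(\alpha) + \varepsilon$, which is exactly the $\mu$-existence statement. For the volume estimate, I would instead use that $\lognorm_{R_x}(\partial^D_{n+1}(x))$ is, by the dimension bound of Proposition~\ref{prop:lognorm}~(i), at most $\bigl(\sum_{j} m_x(B_j)\bigr) \cdot \logp\|\partial^D_{n+1}(x)\|$ where $D_n = \bigoplus_j \gen{B_j}$; but a cleaner route is to fix a near-optimal marked decomposition of $D_{n+1}$ realising $\lognorm(\partial^D_{n+1})$ up to $\varepsilon/2$ over $R$ --- this decomposition restricts to a marked decomposition over $R_x$ with the same carrier sets, the operator norms on each piece can only decrease or stay controlled under passing to a subprobability fibre (one must check $\|\partial^D_{n+1}(x)|_{M_k(x)}\| \le \|\partial^D_{n+1}|_{M_k}\|$ using the explicit description in Proposition~\ref{prop:opnormalt}: the maximum there ranges over sublists $L$ with $m_x(\bigcap_{(k,\gamma)\in L} \gamma U_k) > 0$, a subset of those with positive $\mu$-measure), and the marked rank term $\mrk(\partial^D_{n+1}(x)|_{M_k(x)})$ is bounded by $\dim_{R_x}(N_k(x)) = \sum m_x(\cdot)$; then integrate each summand and apply Markov to the finite sum.

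\textbf{Main obstacle.} The delicate point is the norm comparison for the volume half: I need that passing to a fibre $m_x$ does not \emph{increase} the relevant operator norms, and that the marked-rank contribution aggregates correctly under integration. Both require working with the explicit combinatorial description of the operator norm (Proposition~\ref{prop:opnormalt}) and of the marked rank (Lemma~\ref{lem:mrk-expl}), observing that the index sets $L \subset K \times F$ contributing to the norm over $R_x$ are a subfamily of those contributing over $R$ (because $m_x \ll \mu$ on the $\Gamma$-invariant-set level is false in general, but $m_x$ is still dominated by $\mu$ in the sense that $\mu(A) = 0 \Rightarrow m_x(A) = 0$ for $\mu$-a.e.\ $x$ --- this is the content of the first bullet of the disintegration definition combined with $m_{\gamma x} = m_x$, and it is exactly what is needed). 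The remaining subtlety is that Proposition~\ref{prop:lognorm}'s continuity/monotonicity properties are stated for a fixed ambient ring, so I would phrase the argument entirely in terms of $\lognorm'$ applied to a fixed marked decomposition and only at the end take the infimum, avoiding any appeal to comparison across different $R_x$. Once these monotonicity facts are in hand, the averaging argument via Markov's inequality is routine and delivers the claimed $\mu$-existence.
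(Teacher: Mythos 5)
Your proposal is correct and follows essentially the same route as the paper: fix a near-optimal $\alpha$-embedding, restrict all carrier sets and defining matrices to the fibre rings $R_x$, note that $\mu$-null sets are $m_x$-null for $\mu$-a.e.~$x$ so that chain identities, surjectivity of the augmentation, and (via the explicit description of the operator norm) the norm bounds persist fibrewise, and then integrate the dimension terms of a fixed near-optimal marked decomposition and apply a Markov-type argument. The paper packages exactly these observations as Lemma~\ref{lem:disintdimnorm} (integration of dimensions, a.e.~monotonicity of operator norms, and the resulting $\lognorm'$ estimate for a fixed decomposition) before deducing the proposition, so no genuine difference in method remains.
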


To prepare the proof of Proposition~\ref{prop:disint}, we introduce
the following notation: We write~$R \coloneqq \linf{X,\mu,Z} * \Gamma$ and
$R(x) \coloneqq \linf{X,m_x,Z} *\Gamma$ whenever $x \in X$. Implicitly, we
only speak of those~$x \in X$ for which the $\Gamma$-action on~$X$ is
essentially free with respect to~$m_x$; this is satisfied for $\mu$-almost
every~$x \in X$. From marked projective $R$-modules~$M$ and
$R$-linear maps~$f \colon M \to N$, we obtain associated
marked projective $R(x)$-modules~$M(x)$ and $R(x)$-linear
maps~$f(x) \colon M(x) \to N(x)$ for $\mu$-almost every~$x \in X$.
We record basic observations on dimensions and operator norms:

\begin{lem}\label{lem:disintdimnorm}
  Let $\alpha \colon \Gamma \actson (X,\mu)$ be a standard
  $\Gamma$-action and let $m \colon X \to \Prob(\alpha)$ be a
  disintegration of~$\alpha$.  Let $M,N$ be marked projective
  $R$-modules and let $f\colon M \to N$ be an $R$-linear
  map.
  Then:
  \begin{enumerate}[label=\enum]
  \item We have $\dim_R (M) = \int_X \dim_{R(x)} (M(x)) \;d\mu(x)$. 
    In particular, there $\mu$-exists an~$x \in X$ with
    \[ \dim_{R(x)} (M(x)) \leq \dim_R (M).
    \]
  \item
    For $\mu$-almost every~$x \in X$, we have
    \[ \| f(x)\|_{R(x)} \leq \|f\|_R.
    \]
  \item For every~$\varepsilon \in \R_{>0}$,
    there $\mu$-exists an~$x \in X$ with
    \[ \lognorm_{R(x)} (f(x)) \leq \lognorm_{R} (f) + \varepsilon.
    \]
  \end{enumerate}
\end{lem}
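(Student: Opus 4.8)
\emph{Plan.} The strategy is to deduce all three statements from the explicit descriptions of the invariants involved together with the defining identity $\mu(A)=\int_X m_x(A)\,d\mu(x)$ of a disintegration, proving the parts in the order (i), (ii), (iii) and using the earlier parts in the later ones. Throughout I fix marked presentations of $M$ and $N$, so that $M(x)$, $N(x)$ and $f(x)$ are obtained by keeping the same carrier sets and the same representing matrix but working over the smaller crossed product ring $R(x)=\linf{X,m_x,Z}*\Gamma$. The basic measure-theoretic observation is that for any single $\mu$-null set $A$ one has $m_x(A)=0$ for $\mu$-almost every $x$ (since $\int_X m_x(A)\,d\mu(x)=\mu(A)=0$ and $m_x(A)\ge 0$); as every presentation occurring below involves only finitely many carrier sets and finitely many finite intersections of their $\Gamma$-translates, after discarding a $\mu$-null set of $x$ we may assume that all measure-theoretic (in)equalities used below pass from $\mu$ to $m_x$.

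Part~(i) is immediate: writing $M=\bigoplus_{i\in I}\gen{A_i}$ we have
\[
\dim_R(M)=\sum_{i\in I}\mu(A_i)=\sum_{i\in I}\int_X m_x(A_i)\,d\mu(x)=\int_X\dim_{R(x)}(M(x))\,d\mu(x),
\]
and since the integrand is nonnegative this identity forces $\mu\bigl(\{x\mid\dim_{R(x)}(M(x))\le\dim_R(M)\}\bigr)>0$, which is the ``$\mu$-exists'' clause. For part~(ii) I would fix a reduced presentation of $f$ as in Setup~\ref{setup:opnormalt}; for $\mu$-almost every $x$ it is still a reduced presentation over $R(x)$, and Proposition~\ref{prop:opnormalt} expresses both $\|f\|_R$ and $\|f(x)\|_{R(x)}$ as the same maximum of sums $\sum_{j,(k,\gamma)\in L}|a_{i,j,k,\gamma}|$, the only difference being that for $\|f(x)\|_{R(x)}$ the admissible index sets $L\subseteq K\times F$ are those with $m_x\bigl(\bigcap_{(k,\gamma)\in L}\gamma U_k\bigr)>0$; every such $L$ is also admissible for $\|f\|_R$, so $\|f(x)\|_{R(x)}\le\|f\|_R$ for $\mu$-almost every $x$.

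The real content is part~(iii). Given $\varepsilon>0$, I would use Definition~\ref{def:lognorm} to choose a finite marked decomposition $M\cong_R\bigoplus_{i\in I}M_i$ with $\sum_{i\in I}\lognorm'(f|_{M_i})\le\lognorm_R(f)+\varepsilon/2$; for $\mu$-almost every $x$ this translates to a finite marked decomposition $M(x)\cong_{R(x)}\bigoplus_{i\in I}M_i(x)$, and $f|_{M_i}$ translates to $f(x)|_{M_i(x)}$. Using the formula $\lognorm'(g)=\min\{\dim(M'),\rk(g)\}\cdot\logp\|g\|$ for $g\colon M'\to N'$ (valid since $\logp\ge 0$), part~(ii) applied to $f|_{M_i}$, and monotonicity of $\logp$, one has for $\mu$-almost every $x$
\[
\lognorm'\bigl(f(x)|_{M_i(x)}\bigr)\le\min\bigl\{\dim_{R(x)}(M_i(x)),\ \rk_{R(x)}(f(x)|_{M_i(x)})\bigr\}\cdot\logp\|f|_{M_i}\|_R.
\]
Now part~(i) gives $\int_X\dim_{R(x)}(M_i(x))\,d\mu(x)=\dim_R(M_i)$, and the explicit formula for the marked rank over a crossed product ring (Lemma~\ref{lem:mrk-expl}), applied to a fixed presentation of $f|_{M_i}$ over $R$ and to the corresponding one over $R(x)$, yields the exact identity $\int_X\rk_{R(x)}(f(x)|_{M_i(x)})\,d\mu(x)=\rk_R(f|_{M_i})$; since $\int\min\{a,b\}\le\min\{\int a,\int b\}$ and $\logp\|f|_{M_i}\|_R$ does not depend on $x$, integrating the displayed bound gives $\int_X\lognorm'(f(x)|_{M_i(x)})\,d\mu(x)\le\lognorm'(f|_{M_i})$. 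Summing over $i\in I$ and using $\lognorm_{R(x)}(f(x))\le\sum_{i\in I}\lognorm'(f(x)|_{M_i(x)})$, one obtains
\[
\int_X\lognorm_{R(x)}\bigl(f(x)\bigr)\,d\mu(x)\le\sum_{i\in I}\lognorm'\bigl(f|_{M_i}\bigr)\le\lognorm_R(f)+\varepsilon/2,
\]
and, exactly as in (i), this forces $\mu\bigl(\{x\mid\lognorm_{R(x)}(f(x))\le\lognorm_R(f)+\varepsilon\}\bigr)>0$, as required.

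The only genuinely delicate point I anticipate is bookkeeping rather than mathematics: arranging that a fixed reduced presentation of $f$, a fixed near-optimal marked decomposition of $M$, and the induced presentations of the restrictions $f|_{M_i}$ all remain valid over $R(x)$ for $\mu$-almost every $x$, and intersecting the finitely many resulting full-measure conditions correctly. One should also note that Lemma~\ref{lem:mrk-expl} and Proposition~\ref{prop:opnormalt} apply verbatim to both $R$ and $R(x)$ because both are crossed product rings of the form $\linf{\args}*\Gamma$. Beyond these routine verifications, nothing more than the stated explicit formulae and the disintegration identity is needed.
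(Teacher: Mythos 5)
Your proof is correct and follows the paper's strategy in all essentials: part~(i) by the defining identity of a disintegration, part~(ii) via the explicit operator-norm formula of Proposition~\ref{prop:opnormalt} together with the observation that a fixed $\mu$-null set is $m_x$-null for $\mu$-almost every~$x$, and part~(iii) by integrating a near-optimal upper bound for $\lognorm'$ and extracting a positive-measure set of good~$x$. The one genuine deviation is in the rank branch of part~(iii): the paper avoids any compatibility statement between $\rk_R$ and $\rk_{R(x)}$ by first replacing $\rk(f|_{M'_j})$ with $\dim_R(N_j)$ for a near-optimal marked summand $N_j \supset f(M'_j)$ and then applying part~(i) to $N_j$ (using that $f(x)(M'_j(x)) \subset N_j(x)$), whereas you invoke the exact formula of Lemma~\ref{lem:mrk-expl} to obtain the identity $\int_X \rk_{R(x)}(f(x)|_{M_i(x)})\,d\mu(x) = \rk_R(f|_{M_i})$ directly. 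Both routes work; yours yields a slightly stronger (exact) disintegration statement for the marked rank at the cost of checking that the fixed presentation remains valid over $R(x)$ for almost every~$x$, while the paper's substitution is softer and needs only parts~(i) and~(ii). Your integral of $x \mapsto \lognorm_{R(x)}(f(x))$ should, strictly speaking, be read as the integral of the measurable majorant $\sum_i \lognorm'(f(x)|_{M_i(x)})$, exactly as the paper does with its function $\ell(x)$; the ``$\mu$-exists'' conclusion then follows as you state.
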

\begin{proof}
  (i)
  As the dimension is additive with respect to marked decompositions,
  it suffices to consider the case that $M = \gen A _R$ for some
  measurable subset~$A \subset X$. In this case, by definition, we have
  \begin{align*}
    \dim_R (M)
  &
  = \dim_R (\gen A _R)
  = \mu(A)
  = \int_X m_x(A) \;d\mu(x)
  \\
  & 
  = \int_X \dim_{R(x)} \bigl(\gen A_{R(x)}\bigr) \;d\mu(x)
  = \int_X \dim_{R(x)} \bigl(M(x)\bigr) \;d\mu(x).
  \end{align*}
  
  (ii)
  This is a consequence of the explicit description of the operator
  norm (Proposition~\ref{prop:opnormalt}) and the following observation:
  if $U \subset X$ is a measurable subset such that
  there $\mu$-exists an~$x \in X$ with~$m_x(U) > 0$,
  then this already implies that~$\mu(U) > 0$.
    
  Indeed, taking into account that 
  \[\{x \in X \mid m_x(U) > 0 \}
  = \bigcup_{n \in \N} \{x \in X \mid m_x(U) > 1/n \},\]
   we see that there exists a~$\delta \in \R_{>0}$ such that
  there $\mu$-exists an~$x \in X$ with measure~$m_x(U) > \delta$. 
  Let $A \coloneqq \{x \in X \mid m_x(U) > \delta\}$. Then $A$
  is measurable and $\mu(A) > 0$. By construction, we have
  \[ \mu(U)
  = \int_X m_x(U)\;d\mu(x)
  \geq \int_A m_x (U) \;d\mu(x)
  >\delta \cdot \mu(A)
  > 0.
  \] 
  
  (iii)
  As $\lognorm$ is defined as an infimum over all marked
  decompositions and the different branches of~$\lognorm'$,
  it suffices to consider the following situation:
  Let $M = \bigoplus_{i \in I} M_i \oplus \bigoplus_{j \in J} M'_j$
  be a marked decomposition of~$M$, let $N_j$ be marked direct
  summands of~$N$ with~$f(M_j') \subset N_j$ and let
  \[ \ell
  \coloneqq \sum_{i \in I} \dim_R (M_i) \cdot \log_+ \|f|_{M_i}\|_R
  + \sum_{j \in J} \dim_R (N_j) \cdot \log_+ \|f|_{M'_j}\|_R.
  \]
  Similarly, for~$x \in X$, we define~$\ell(x)$ over~$R(x)$.
  Using the second and the first part, we obtain
  \begin{align*}
    \int_X \ell(x) \; d\mu(x)
    &
    \leq 
    \int_X \biggl(\sum_{i \in I} \dim_{R(x)} (M_i(x)) \cdot \log_+ \|f|_{M_i}\|_R
    \\
    &\quad + \sum_{j \in J} \dim_{R(x)} (N_j(x)) \cdot \log_+ \|f|_{M'_j}\|_R\biggr) \; d\mu(x)
    \\
    & = \sum_{i \in I} \dim_R (M_i) \cdot \log_+ \|f|_{M_i}\|_R
    + \sum_{j \in J} \dim_R (N_j) \cdot \log_+ \|f|_{M'_j}\|_R
    \\
    & = \ell.
  \end{align*}
  In particular, there
  $\mu$-exists an~$x \in X$ with~$\ell(x) \leq \ell$;
  therefore,
  \[ \lognorm_{R(x)} (f(x)) \leq \ell(x) \leq \ell.
  \]
  Considering all marked situations as in the definition
  of~$\ell$ (and thus of~$\lognorm_R (f)$) proves the claim.
\end{proof}

\begin{proof}[Proof of Proposition~\ref{prop:disint}]
  Let 
  \[ \begin{tikzcd}
    C_*
    \ar{r}{f_*}
    \ar[two heads]{d}
    &
    D_*
    \ar[two heads]{d}
    \\
    Z
    \ar{r}
    &
    \linf{X,\mu,Z}
    \end{tikzcd}
  \]
  be an $\alpha$-embedding (over~$R$).
  Then, for $\mu$-almost every~$x \in X$, we obtain a corresponding
  $(\alpha,m_x)$-embedding (over~$R(x)$) of the following form:
  \[ \begin{tikzcd}
    C_*
    \ar{r}{f_*(x)}
    \ar[two heads]{d}
    &
    D_*(x)
    \ar[two heads]{d}
    \\
    Z
    \ar{r}
    &
    \linf{X,m_x,Z}
    \end{tikzcd}
  \]
  By Lemma~\ref{lem:disintdimnorm}, there $\mu$-exists
  an~$x\in X$ with~$\dim_{R(x)} (D_n(x)) \leq \dim_R (D_n)$. 
  Analogously, for each~$\varepsilon \in \R_{>0}$, by Lemma~\ref{lem:disintdimnorm},
  there $\mu$-exists an~$x \in X$ that satisfies~$\lognorm_{R(x)} (\partial^{D(x)}_{n+1})
  \leq \lognorm_R (\partial^D_{n+1}) + \varepsilon$. 

  Therefore, considering all $\alpha$-embeddings, we obtain the claimed
  approximative disintegration estimates.
\end{proof}

As straightforward consequences of the disintegration
estimate (Proposition~\ref{prop:disint}), we obtain:

\begin{cor}[ergodic actions suffice]\label{cor:ergsuffices}
  Let $n \in \N$, let $\Gamma$ be a group of type~$\FP_{n+1}$, and let
  $\alpha$ be a standard $\Gamma$-action.
  Moreover, let $\varepsilon \in \R_{>0}$. Then:
  \begin{enumerate}[label=\enum]
  \item
    There exists an ergodic standard
    $\Gamma$-action~$\beta$ with
    \[ \medim_n^Z (\beta) \leq \medim_n^Z (\alpha) + \varepsilon.
    \]
  \item
    There exists an ergodic standard
    $\Gamma$-action~$\beta$ with
    \[ \mevol_n (\beta) \leq \mevol_n (\alpha) + \varepsilon.
    \]
  \end{enumerate}
\end{cor}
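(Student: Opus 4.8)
The plan is to deduce Corollary~\ref{cor:ergsuffices} directly from the disintegration estimate (Proposition~\ref{prop:disint}) applied to an ergodic decomposition of~$\alpha$. The only content beyond Proposition~\ref{prop:disint} is the observation that the fibres~$m_x$ of an ergodic decomposition are themselves ergodic standard $\Gamma$-actions, so that the action produced by the $\mu$-existence statement is of the required type.

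First I would invoke the existence of an ergodic decomposition~$m \colon X \to \Prob(\alpha)$ of~$\alpha$~\cite[Section~4]{varadarajan}. For $\mu$-almost every~$x \in X$, the underlying $\Gamma$-action on~$X$ is essentially free with respect to~$m_x$~\cite[Remark~3.7]{loehsartori}, and by the definition of an ergodic decomposition each~$m_x$ is ergodic; hence $(\alpha, m_x)$ is an ergodic standard $\Gamma$-action. Since $\Gamma$ is of type~$\FP_{n+1}$, Proposition~\ref{prop:disint} applies: for the given~$\varepsilon \in \R_{>0}$ there $\mu$-exists an~$x \in X$ with $\medim_n^Z(\alpha, m_x) \leq \medim_n^Z(\alpha) + \varepsilon$ (for part~(i)) and, separately, there $\mu$-exists an~$x \in X$ with $\mevol_n(\alpha, m_x) \leq \mevol_n(\alpha) + \varepsilon$ (for part~(ii)). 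The phrase ``$\mu$-exists'' in particular guarantees a set of positive measure of such~$x$, which intersects the $\mu$-conull set on which $m_x$ gives an essentially free (hence standard) ergodic action; picking any such~$x$ and setting $\beta \coloneqq (\alpha, m_x)$ yields the claim.

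I do not expect any genuine obstacle here: the corollary is a packaging of Proposition~\ref{prop:disint} together with the standard fact that ergodic decompositions exist and have ergodic, essentially free fibres. The only point requiring minor care is the bookkeeping of the ``$\mu$-exists'' quantifier — one must note that a positive-measure set of good~$x$ meets the conull set of~$x$ for which $m_x$ is essentially free, so that the resulting~$\beta$ is genuinely a standard action and not merely a measurable one. This is immediate. Here is the argument.

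\begin{proof}
  Fix an ergodic decomposition~$m \colon X \to \Prob(\alpha)$ of~$\alpha$, which exists
  by~\cite[Section~4]{varadarajan}. By~\cite[Remark~3.7]{loehsartori}, there is a $\mu$-conull
  measurable subset~$X_0 \subset X$ such that for every~$x \in X_0$ the $\Gamma$-action
  underlying~$\alpha$ is essentially free with respect to~$m_x$; moreover, $m_x$ is ergodic
  for all~$x \in X$ by the definition of an ergodic decomposition. Hence, for every~$x \in X_0$,
  the induced action~$(\alpha, m_x)$ is an ergodic standard $\Gamma$-action.

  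(i)
  Since $\Gamma$ is of type~$\FP_{n+1}$, Proposition~\ref{prop:disint}~(i) provides a measurable
  subset~$A \subset X$ with~$\mu(A) > 0$ such that $\medim_n^Z(\alpha, m_x) \leq \medim_n^Z(\alpha) + \varepsilon$
  for every~$x \in A$. As $\mu(A) > 0$ and $X_0$ is $\mu$-conull, we have $\mu(A \cap X_0) > 0$,
  so we may choose some~$x \in A \cap X_0$. Then $\beta \coloneqq (\alpha, m_x)$ is an ergodic
  standard $\Gamma$-action with~$\medim_n^Z(\beta) \leq \medim_n^Z(\alpha) + \varepsilon$.

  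(ii)
  Analogously, Proposition~\ref{prop:disint}~(ii) provides a measurable subset~$A \subset X$
  with~$\mu(A) > 0$ such that $\mevol_n(\alpha, m_x) \leq \mevol_n(\alpha) + \varepsilon$ for
  every~$x \in A$. Choosing some~$x \in A \cap X_0$ and setting $\beta \coloneqq (\alpha, m_x)$
  yields an ergodic standard $\Gamma$-action with~$\mevol_n(\beta) \leq \mevol_n(\alpha) + \varepsilon$.
\end{proof}
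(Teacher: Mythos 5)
Your proof is correct and follows exactly the paper's argument: apply the disintegration estimate (Proposition~\ref{prop:disint}) to an ergodic decomposition of~$\alpha$. The extra care you take with the ``$\mu$-exists'' quantifier intersecting the conull set of essentially free fibres is a detail the paper leaves implicit, and it is handled correctly.
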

\begin{proof}
  The standard $\Gamma$-action~$\alpha$ admits an ergodic
  decomposition~\cite{varadarajan}. Applying the disintegration
  estimate (Proposition~\ref{prop:disint}) to such an
  ergodic decomposition proves the claim.
\end{proof}

\begin{cor}\label{cor:EMDreduction}
  Let $n \in \N$, let $\Gamma$ be a residually finite group of type~$\FP_{n+1}$
  that satisfies~$\EMD^*$, 
  and let
  $\alpha$ be a standard $\Gamma$-action.
  Then
  \[
  \medim^Z_n (\Gamma \actson \widehat \Gamma) \leq \medim^Z_n (\alpha)
  \qand
  \mevol_n (\Gamma \actson \widehat \Gamma) \leq \mevol_n (\alpha).
  \]
\end{cor}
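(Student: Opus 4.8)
The plan is to combine property~$\EMD^*$ with two results already established in the excerpt: monotonicity of $\medim$ and $\mevol$ under weak containment (Theorem~\ref{thm:wc}) and the reduction to ergodic actions (Corollary~\ref{cor:ergsuffices}, which in turn rests on the disintegration estimate, Proposition~\ref{prop:disint}). The point is that $\EMD^*$ only supplies weak containment in $\Gamma\actson\widehat\Gamma$ for \emph{ergodic} standard actions, so one first has to replace $\alpha$ by a nearby ergodic action.

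I would argue as follows. Fix $\varepsilon\in\R_{>0}$. Since $\Gamma$ is of type~$\FP_{n+1}$, Corollary~\ref{cor:ergsuffices}~(i) provides an ergodic standard $\Gamma$-action~$\beta$ with $\medim_n^Z(\beta)\le\medim_n^Z(\alpha)+\varepsilon$. Because $\Gamma$ satisfies~$\EMD^*$ and $\beta$ is ergodic, Definition~\ref{def:EMD} gives $\beta\prec(\Gamma\actson\widehat\Gamma)$. The profinite completion action $\Gamma\actson\widehat\Gamma$ is essentially free (left translation by a non-trivial element of the residually finite group~$\Gamma$ has no fixed points in $\widehat\Gamma$), hence a standard $\Gamma$-action, so Theorem~\ref{thm:wc} applies and yields
\[
  \medim_n^Z(\Gamma\actson\widehat\Gamma)\ \le\ \medim_n^Z(\beta)\ \le\ \medim_n^Z(\alpha)+\varepsilon.
\]
Letting $\varepsilon\to 0$ gives the first inequality. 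The inequality for $\mevol$ is obtained verbatim, now using Corollary~\ref{cor:ergsuffices}~(ii) to produce an ergodic $\beta$ with $\mevol_n(\beta)\le\mevol_n(\alpha)+\varepsilon$ and then the $\mevol$-part of Theorem~\ref{thm:wc}.

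I do not expect a real obstacle here: the only points requiring care are bookkeeping, namely that the ergodic-decomposition step must precede the appeal to~$\EMD^*$ (since the latter is stated only for ergodic inputs), and the remark that $\Gamma\actson\widehat\Gamma$ is of the type to which Theorem~\ref{thm:wc} and the definitions of $\medim$, $\mevol$ apply. Both are immediate.
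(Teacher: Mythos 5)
Your proof is correct and follows exactly the same route as the paper: Corollary~\ref{cor:ergsuffices} to pass to a nearby ergodic action, $\EMD^*$ to obtain weak containment in the profinite completion, Theorem~\ref{thm:wc} for monotonicity, then $\varepsilon\to 0$. The only (harmless) addition is your explicit check that $\Gamma\actson\widehat\Gamma$ is essentially free, which the paper leaves implicit.
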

\begin{proof}
  We only give the proof for~$\mevol$; the proof for~$\medim$ works in
  the same way.  We write~$\gamma \colon \Gamma \actson \widehat
  \Gamma$ for the profinite completion action.  Let $\varepsilon \in
  \R_{>0}$. By Corollary~\ref{cor:ergsuffices}, there exists an
  ergodic standard $\Gamma$-action~$\beta$ with~$\mevol_n (\beta)
  \leq \mevol_n (\alpha) + \varepsilon$. Because $\Gamma$
  satisfies~$\EMD^*$ and $\beta$ is ergodic, we have~$\beta \prec
  \gamma$.  Therefore, the weak containment estimate
  (Theorem~\ref{thm:wc}) gives
  \[ \mevol_n (\gamma) \leq \mevol_n (\beta) \leq \mevol_n (\alpha) + \varepsilon.
  \]
  Taking~$\varepsilon \to 0$ shows that $\mevol_n (\gamma) \leq \mevol_n (\alpha)$.
\end{proof}

\begin{cor}\label{cor:trivialproduct}
  Let $n \in \N$, let $\Gamma$ be a group of type~$\FP_{n+1}$, let
  $\alpha \colon \Gamma \actson (X,\mu)$ be a standard $\Gamma$-action,
  and let $\beta \colon \Gamma \actson (Y,\nu)$ be the trivial $\Gamma$-action
  on a standard Borel probability space~$(Y,\nu)$. Then, the diagonal
  action~$\alpha \times \beta \colon \Gamma \actson (X\times Y, \mu \otimes \nu)$
  is a standard $\Gamma$-action and
  \[ \medim_n^Z (\alpha \times \beta)
  = \medim_n^Z (\alpha)
  \qand
  \mevol_n (\alpha \times \beta)
  = \mevol_n (\alpha).
  \]
\end{cor}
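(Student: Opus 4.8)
The plan is to establish the two equalities by proving each inequality separately. First I would note that $\alpha \times \beta$ is a standard $\Gamma$-action: since $\beta$ acts trivially, any $(x,y)$ fixed by $\gamma$ satisfies $\gamma \cdot x = x$, so essential freeness of $\alpha \times \beta$ follows from that of $\alpha$, and measure-preservation is clear. The inequalities $\medim_n^Z(\alpha\times\beta) \leq \medim_n^Z(\alpha)$ and $\mevol_n(\alpha\times\beta) \leq \mevol_n(\alpha)$ are then immediate from monotonicity under weak containment (Theorem~\ref{thm:wc}), using the standard fact, recorded among the examples on weak containment, that $\alpha \prec \alpha \times \beta$ (an action is weakly contained in its product with any other action).

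For the reverse inequalities I would invoke the disintegration estimate (Proposition~\ref{prop:disint}) applied to $\alpha \times \beta$, together with the explicit disintegration over the trivial factor $m \colon X \times Y \to \Prob(\alpha \times \beta)$, $m_{(x,y)} \coloneqq \mu \otimes \delta_y$. Verifying the axioms of a disintegration is routine: measurability of $(x,y) \mapsto (\mu\otimes\delta_y)(A) = \mu(A_y)$ (the $y$-slice of $A$) and the identity $\int_{X\times Y} (\mu\otimes\delta_y)(A)\, d(\mu\otimes\nu) = (\mu\otimes\nu)(A)$ are Fubini; the invariance $m_{\gamma\cdot(x,y)} = \mu\otimes\delta_y = m_{(x,y)}$ holds because $\mu$ is $\Gamma$-invariant and $\Gamma$ fixes $Y$ pointwise; and for $\rho = \mu\otimes\delta_{y_0} \in \Prob(\alpha\times\beta)$ the preimage $m^{-1}(\{\rho\}) = X \times \{y_0\}$ satisfies $\rho(X\times\{y_0\}) = 1$, while for $\rho$ not of this form the preimage is empty. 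For every $(x,y)$ the measure $m_{(x,y)}=\mu\otimes\delta_y$ is concentrated on $X \times \{y\}$, and $x' \mapsto (x',y)$ is a measure isomorphism $(X,\mu) \to (X\times Y, m_{(x,y)})$ intertwining $\alpha$ with $\alpha\times\beta$; hence $(\alpha\times\beta, m_{(x,y)}) \cong \alpha$ (in particular the action underlying $\alpha\times\beta$ is essentially free with respect to $m_{(x,y)}$, as required by Proposition~\ref{prop:disint}), and since $\medim_n^Z$ and $\mevol_n$ are isomorphism invariants of the action, $\medim_n^Z(\alpha\times\beta, m_{(x,y)}) = \medim_n^Z(\alpha)$ and $\mevol_n(\alpha\times\beta, m_{(x,y)}) = \mevol_n(\alpha)$.

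It then only remains to combine things: given $\varepsilon > 0$, Proposition~\ref{prop:disint} provides a positive-measure (in particular nonempty) set of points $(x,y)$ with $\medim_n^Z(\alpha\times\beta, m_{(x,y)}) \leq \medim_n^Z(\alpha\times\beta) + \varepsilon$, whence $\medim_n^Z(\alpha) \leq \medim_n^Z(\alpha\times\beta) + \varepsilon$, and similarly for $\mevol$; letting $\varepsilon \to 0$ and combining with the first step yields the claimed equalities. I do not expect a genuine obstacle: this is a formal consequence of results already proved, the only point needing a little care being the verification that $m_{(x,y)} = \mu\otimes\delta_y$ is a disintegration in the precise sense of the definition (notably the last axiom, phrased in terms of $\Prob(\alpha\times\beta)$) and the identification of each fibre action with $\alpha$. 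Alternatively, one could bypass Proposition~\ref{prop:disint} by base-changing a given $\alpha$-embedding along the ring homomorphism $\linf{\alpha,Z}*\Gamma \to \linf{\alpha\times\beta,Z}*\Gamma$ induced by $\chi_A \mapsto \chi_{A\times Y}$, which preserves dimensions, operator norms (via Proposition~\ref{prop:opnormalt}), marked ranks (via Lemma~\ref{lem:mrk-expl}), and hence $\lognorm$ — but the route through weak containment and disintegration is shorter given what has been set up.
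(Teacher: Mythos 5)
Your proposal is correct and follows essentially the same route as the paper: the inequality $\leq$ via weak containment ($\alpha \prec \alpha \times \beta$ and Theorem~\ref{thm:wc}), and the inequality $\geq$ via the disintegration estimate (Proposition~\ref{prop:disint}) applied to the disintegration $m_{(x,y)} = \mu \otimes \delta_y$, which assigns $A \mapsto \mu(A_y)$ exactly as in the paper, together with the identification of each fibre action with $\alpha$. Your extra verification of the disintegration axioms and the alternative base-change remark are fine but not needed beyond what the paper records.
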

\begin{proof}
  We obtain~``$\leq$'' from the weak containment estimate (Theorem~\ref{thm:wc})
  and the fact that~$\alpha \prec \alpha \times \beta$.
  More elementarily, one can also show this directly, by taking the product
  with~$(Y,\nu)$ at every step.

  For the estimate~``$\geq$'', we use the disintegration
  \begin{align*}
    m \colon X \times Y & \to \Prob(\alpha \times \beta)
    \\
    (x,y) & \mapsto
    \bigl( A \mapsto \mu (A_y)\bigr)
  \end{align*}
  of~$\alpha \times \beta$. Here, $A_y \subset X$ denotes the image
  of~$A \cap (X \times \{y\})$ under the canonical bijection~$X \times
  \{y\} \to X$. For $\mu \otimes \nu$-almost every~$(x,y) \in X \times
  Y$, the standard action~$(\alpha \times \beta,m_{(x,y}))$ is
  canonically isomorphic in the measured sense to~$\alpha$ (through
  the canonical projection~$X \times Y \to X$).

  By the disintegration estimate, for every~$\varepsilon \in \R_{>0}$,
  there $\mu \otimes \nu$-exists an~$(x,y) \in X \times Y$ with
  \[ \mevol_n (\alpha)
  = \mevol_n (\alpha \times \beta, m_{(x,y)})
  \leq \mevol_n (\alpha \times \beta)
  + \varepsilon.
  \]
  Taking~$\varepsilon \to 0$ shows that $\mevol_n (\alpha) \leq \mevol_n
  (\alpha \times \beta)$.  The argument for~$\medim_n^Z$ can be
  carried out in the same way.
\end{proof}

More generally, we expect that measured embedding dimension
and measured embedding volume of a disintegrated action is
equal to the integral of the corresponding measured embedding
dimensions/volumes.  

\section{Working over the equivalence relation ring}\label{sec:overZR}

Measured embeddings over the equivalence relation ring can be
approximated in a controlled way by measured embeddings over the
crossed product ring (Corollary~\ref{cor:overZR}).  The compatibility of
marked projective dimensions and logarithmic norms with the
Gromov--Hausdorff distance (Proposition~\ref{prop:dGH},
Proposition~\ref{prop:lognorm}) shows that the measured embedding
dimension and the measured embedding volume can alternatively be
computed via measured embeddings over the equivalence relation ring.

\begin{cor}\label{cor:overZR}
  Let $Z$ denote $\Z$ (with the usual norm) or a finite field (with the trivial norm).
  Let $n \in \N$, let $\Gamma$ be a group of type~$\FP_{n+1}$, and let $\alpha$
  be a standard $\Gamma$-action. Let $C_*$ be a free $Z \Gamma$-resolution
  of~$Z$ that is of finite rank up to degree~$n+1$. Moreover, let $\Rrel$
  denote the orbit relation of~$\alpha$, let $D_*$
  be a marked projective $Z \Rrel$-complex, and let $f_* \colon C_* \to D_*$ be a
  $Z\Gamma$-chain map extending the canonical inclusion~$Z \hookrightarrow \linf {\alpha,Z}$.

  Then there exists a $K \in \R_{>0}$ such that: For every~$\delta \in \R_{>0}$, there
  exists a marked projective $\linf{\alpha,Z}*\Gamma$-chain complex~$\widehat D_*$
  and a $Z\Gamma$-chain map~$\widehat f_* \colon C_* \to \widehat D_*$ extending
  the canonical inclusion~$Z \hookrightarrow \linf{\alpha,Z}$ with
  \begin{align*}
    \dgh K {\Ind {\linf{\alpha,Z} * \Gamma}{Z\Rrel}\widehat D_*, D_*,n}
    & < \delta
  \\
  \fa{r \in \{0,\dots, n+1\}}
  \dgh K {\Ind {Z\Gamma}{Z\Rrel} (\widehat f_r), \Ind {Z\Gamma}{Z\Rrel} (f_r)}
  & < \delta 
  .
  \end{align*}
\end{cor}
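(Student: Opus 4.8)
The plan is to reduce the statement to the already-proven deformation theorem for chain maps over a crossed product ring (Theorem~\ref{thm:adaptedchmap}) together with the density of the subalgebra of cylinder sets. The key structural observation is that although $Z\Rrel$ need not be flat over $Z\Gamma$ and need not agree with $\linf{\alpha,Z}$ after base change, the \emph{marked} objects and morphisms over $Z\Rrel$ that actually occur (namely those coming from the marked projective complex $D_*$ and the chain map $f_*$) involve only finitely many measurable sets and finitely many group elements. First I would observe that one may enlarge the subalgebra: choose any $\mu$-dense $\Gamma$-invariant subalgebra $S$ of the measurable sets of $X$ (for the application, cylinder sets) and note that the deformation machinery in Section~\ref{sec:deformation} works verbatim over the subring $\linf{\alpha,Z}*\Gamma$, since Setup~\ref{setup:adapt} only requires $R\subset Z\Rrel$ to contain $\linf{\alpha,Z}*\Gamma$.

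The main step is to produce, from the given $Z\Rrel$-data, a corresponding marked projective \emph{sequence} over $\linf{\alpha,Z}*\Gamma$ that is Gromov--Hausdorff close to it. Concretely, $D_*$ is a chain complex of marked projective $Z\Rrel$-modules $D_r=\bigoplus_{i}\gen{A_i}_{Z\Rrel}$; replacing each $\gen{A_i}_{Z\Rrel}$ by $\gen{A_i}_{\linf{\alpha,Z}*\Gamma}$ gives modules $D_*'$ of the same dimension, and under $\Ind{\linf{\alpha,Z}*\Gamma}{Z\Rrel}$ we recover exactly $D_*$ (as in Remark~\ref{rem:fromZGtoZR}). The boundary maps $\partial_r^D$ and the components of $f_*$ are given by matrices over $Z\Rrel$; since $\linf{\alpha,Z}*\Gamma$ is $L^1$-dense in $Z\Rrel$ (this is the key input, used already in Lemma~\ref{lem:adaptbasicR}), each such matrix entry $\lambda\in Z\Rrel$ can be approximated in $L^1$, hence in the almost-equality sense $=_\delta$, by an element $\widehat\lambda\in\linf{\alpha,Z}*\Gamma$ with $\Nbasic(\widehat\lambda)\le\Nbasic(\lambda)$, $\Ntbasic(\widehat\lambda)\le\Ntbasic(\lambda)$, $|\widehat\lambda|_\infty\le|\lambda|_\infty$ (Lemma~\ref{lem:adaptbasicR} applies since its proof only uses density, not the ambient ring). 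Assembling these approximations coordinatewise yields a marked projective $\linf{\alpha,Z}*\Gamma$-\emph{almost} chain complex $\widetilde D_*$ with an almost chain map $\widetilde f_*$ from $\Ind{Z\Gamma}{\linf{\alpha,Z}*\Gamma}C_*$, with $\dgh{K}{\Ind{\linf{\alpha,Z}*\Gamma}{Z\Rrel}\widetilde D_*,D_*,n}$ and $\dgh{K}{\Ind{Z\Gamma}{Z\Rrel}\widetilde f_r,\Ind{Z\Gamma}{Z\Rrel}f_r}$ controlled by the approximation parameter, using that $\Ind{}{}$ is compatible with the Gromov--Hausdorff distance (the induction functor preserves marked decompositions, dimensions, and operator norms, as in Lemma~\ref{lem:lognorm_ind_res}\ref{item:lognorm_ind} and Remark~\ref{rem:dimcomp}-style bookkeeping). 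Here the norm/size bounds $\kappa$ depend only on the original $Z\Rrel$-data and not on $\delta$, which is what makes the constant $K$ uniform.

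Once we have an almost chain map over $\linf{\alpha,Z}*\Gamma$ close to the original $Z\Rrel$-chain map, the finish is exactly the strictification step: apply Theorem~\ref{thm:strictifycomplex} to turn $\widetilde D_*$ into a genuine marked projective $\linf{\alpha,Z}*\Gamma$-chain complex $\widehat D_*$ close to $\widetilde D_*$, and Theorem~\ref{thm:strictifychmap} to turn $\widetilde f_*$ into a genuine chain map $\widehat f_*\colon C_*\to\widehat D_*$ (restricting the induced $R$-chain map to the $Z\Gamma$-subcomplex $C_*$, exactly as in the proof of Theorem~\ref{thm:adaptedembgen}), still extending $Z\hookrightarrow\linf{\alpha,Z}$. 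Combining the Gromov--Hausdorff estimates via the triangle inequality (Proposition~\ref{prop:dghchaincomplex}, Proposition~\ref{prop:dGH}) and unifying the constants, then rescaling $\delta$ beforehand, gives the claim. In fact, since the statement is purely about approximating a given $Z\Rrel$-chain map by a $\linf{\alpha,Z}*\Gamma$-chain map after induction, it essentially \emph{is} Theorem~\ref{thm:adaptedchmap} applied with $R=Z\Rrel$ and the subalgebra $S$ of all measurable sets, reading the target complex $D_*$ over $Z\Rrel$ and the source as the induced complex $\Ind{Z\Gamma}{Z\Rrel}C_*$; the only thing to check is that the ``$S$-adapted'' output can be taken to factor through $\linf{\alpha,Z}*\Gamma=L*\Gamma$, which is immediate from Definition~\ref{defn:adapted} (an element adapted to the full algebra of measurable sets lies in $L*\Gamma$, and the pre-induction construction of Section~\ref{sec:passing} then identifies it over the crossed product ring).

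The main obstacle I anticipate is the bookkeeping that the Gromov--Hausdorff distances on the \emph{induced} complexes over $Z\Rrel$ are controlled by the distances on the complexes over $\linf{\alpha,Z}*\Gamma$ --- one must verify that $\Ind{\linf{\alpha,Z}*\Gamma}{Z\Rrel}$ carries marked inclusions to marked inclusions, does not increase dimensions of marked symmetric differences, and does not increase operator norms on the exceptional summands. This is where the failure of flatness of $Z\Rrel$ over $Z\Gamma$ could have bitten, but it does not, because we only ever induce marked projective modules (direct sums of $\gen{A_i}$), and on these the base change $R_i\otimes_{R}\args$ from $\linf{\alpha,Z}*\Gamma$ to $Z\Rrel$ is visibly exact and norm-compatible; the subtlety is purely notational rather than homological. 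Everything else is a routine assembly of results already in hand.
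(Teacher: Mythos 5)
Your proposal is correct and, after the lengthy unfolding of the deformation machinery, arrives at exactly the paper's proof: the corollary is the special case of Theorem~\ref{thm:adaptedembgen} with ambient ring $R=Z\Rrel$ and $S$ the algebra of all measurable subsets, so that ``$S$-adapted'' means ``defined over $\linf{\alpha,Z}*\Gamma$''. The induction/flatness bookkeeping you worry about at the end is moot in this formulation, since the whole approximation is carried out inside $Z\Rrel$ from the start and the adapted output is already a complex over the crossed product ring.
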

\begin{proof}
  This is the special case of Theorem~\ref{thm:adaptedembgen}, where the
  subalgebra is the ring~$Z\Rrel$ and where
  the subalgebra~$S$ is the algebra of all measurable subsets.
\end{proof}

\begin{prop}[small resolutions over the equivalence relation ring]\label{prop:smallres}
  Let $Z$ denote~$\Z$ (with the standard norm) or a finite field
  (with the trivial norm). 
  Let $\Rrel$ be a measured standard equivalence relation on
  a standard Borel probability space~$(X,\mu)$, let $n \in \N$,
  and let $D_*$ be a marked projective $Z\Rrel$-resolution
  of~$\linf {\alpha,Z}$ (up to degree~$n+1$). Then:
  If $\Gamma$ is a countable group of type~$\FP_{n+1}$ and if $\alpha$
  is standard probability action of~$\Gamma$ on~$(X,\mu)$ that
  induces~$\Rrel$, then
  \begin{align*}
    \medim^Z_n(\alpha)
    & \leq \dim_{Z\Rrel} (D_n)
    \\
    \mevol_n(\alpha)
    & \leq \lognorm (\partial^{D}_{n+1})
    \quad\text{if $Z = \Z$}.
  \end{align*}
\end{prop}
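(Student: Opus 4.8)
The plan is to deduce Proposition~\ref{prop:smallres} from the approximation result for embeddings over the equivalence relation ring (Corollary~\ref{cor:overZR}) together with the compatibility of $\dim$ and $\lognorm$ with the Gromov--Hausdorff distance (Proposition~\ref{prop:dGH}~\ref{itm:dGH-dim}, Proposition~\ref{prop:lognorm}~\ref{itm:lognorm-GH}). The starting point: since $\Gamma$ is of type~$\FP_{n+1}$, fix a free $Z\Gamma$-resolution $C_*$ of~$Z$ that is finitely generated in degrees~$\le n+1$. Given the marked projective $Z\Rrel$-resolution $D_*$ of~$\linf{\alpha,Z}$ (up to degree~$n+1$), the fundamental lemma of homological algebra produces a $Z\Gamma$-chain map $f_*\colon C_*\to D_*$ (up to degree~$n+1$) extending the canonical inclusion $Z\hookrightarrow\linf{\alpha,Z}$: indeed, $D_*$ is a resolution of $\linf{\alpha,Z}$, and $\ind_{Z\Gamma}^{Z\Rrel}C_*\to Z\Rrel\otimes_{Z\Gamma}Z\cong$ \dots well, more simply, one just lifts the inclusion $Z\to\linf{\alpha,Z}=H_0(D_*)$ degree by degree using acyclicity of $D_*$ in positive degrees. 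So the hypotheses of Corollary~\ref{cor:overZR} are satisfied.

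First I would invoke Corollary~\ref{cor:overZR} to obtain a constant $K\in\R_{>0}$ such that for every $\delta\in\R_{>0}$ there is a marked projective $\linf{\alpha,Z}*\Gamma$-chain complex $\widehat D_*$ together with a $Z\Gamma$-chain map $\widehat f_*\colon C_*\to\widehat D_*$ extending $Z\hookrightarrow\linf{\alpha,Z}$, such that
\[
\dgh K {\Ind{\linf{\alpha,Z}*\Gamma}{Z\Rrel}\widehat D_*,\,D_*,\,n}<\delta.
\]
By construction, $(\widehat D_*,\widehat\eta)$ together with $\widehat f_*$ is an $\alpha$-embedding up to degree~$n$ in the sense of Definition~\ref{def:alpha-emb} (the augmentation of $\widehat D_*$ is $\linf{\alpha,Z}$, which is what the induction sends the $Z\Rrel$-augmentation to). Hence $\widehat D_*\in\Aug_n(\alpha)$, and so
\[
\medim_n^Z(\alpha)\le\dim_{\linf{\alpha,Z}*\Gamma}(\widehat D_n),\qquad
\mevol_n(\alpha)\le\lognorm(\partial^{\widehat D}_{n+1})\quad(\text{if }Z=\Z).
\]

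Next I would compare the right-hand sides with the corresponding quantities for $D_*$ using the Gromov--Hausdorff estimate. Since induction of marked projective modules preserves dimensions (the marked structure and hence $\mu(A_i)$ is unchanged when one replaces $\linf{\alpha,Z}*\Gamma$-generators by $Z\Rrel$-generators, cf.\ the compatibility used throughout Section~\ref{sec:overZR} and Remark~\ref{rem:fromZGtoZR}), we have $\dim_{\linf{\alpha,Z}*\Gamma}(\widehat D_n)=\dim_{Z\Rrel}(\Ind{}{}\widehat D_n)$, and Proposition~\ref{prop:dGH}~\ref{itm:dGH-dim} (more precisely its chain-complex analogue contained in Definition~\ref{defn:dgh chain}, which forces $|\dim(\varphi_n(\Ind\widehat D_n))-\dim(\varphi'_n(D_n))|<\delta$) gives
\[
\dim_{Z\Rrel}(\Ind{}{}\widehat D_n)\le\dim_{Z\Rrel}(D_n)+\delta.
\]
Similarly, $\lognorm$ is invariant under induction of morphisms of marked projective modules (it depends only on the marked structure, the marked rank, and the operator norm, all unchanged), so $\lognorm(\partial^{\widehat D}_{n+1})=\lognorm(\Ind{}{}\partial^{\widehat D}_{n+1})$, and Proposition~\ref{prop:lognorm}~\ref{itm:lognorm-GH} applied to the degree-$(n+1)$ face of the Gromov--Hausdorff witness yields
\[
\lognorm(\Ind{}{}\partial^{\widehat D}_{n+1})\le\lognorm(\partial^D_{n+1})+\delta\cdot\logp K.
\]
Combining, $\medim_n^Z(\alpha)\le\dim_{Z\Rrel}(D_n)+\delta$ and $\mevol_n(\alpha)\le\lognorm(\partial^D_{n+1})+\delta\cdot\logp K$ for every $\delta>0$; letting $\delta\to 0$ finishes the proof.

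The main obstacle I anticipate is bookkeeping rather than a genuine difficulty: one must be careful that the Gromov--Hausdorff distance for \emph{sequences} (Definition~\ref{defn:dgh chain}), as produced by Corollary~\ref{cor:overZR}, really does control both $\dim$ in degree~$n$ and $\lognorm$ of the boundary in degree~$n+1$ simultaneously via a single common ambient complex $P_*$ — this is built into the definition, but the dimension bound needs the marked symmetric difference estimate $\dim(\varphi_n(\widehat D_n)\msymmdiff\varphi'_n(D_n))<\delta$, and the lognorm bound needs $F_{n+1}=_{\delta,K}F'_{n+1}$ together with the marked-inclusion invariance of $\lognorm$ (Proposition~\ref{prop:lognorm}~(iii)). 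One should also confirm that inducing from $\linf{\alpha,Z}*\Gamma$ up to $Z\Rrel$ genuinely leaves $\dim$ and $\lognorm$ invariant; this is the analogue, for the pair $(\linf{\alpha,Z}*\Gamma, Z\Rrel)$, of Remark~\ref{rem:dimcomp}/Remark~\ref{rem:normcomp}, and follows because a marked presentation over $\linf{\alpha,Z}*\Gamma$ and its image over $Z\Rrel$ involve the very same measurable sets $A_i$ (hence the same dimension) and the operator norm is computed via the same $\ell^1$-norms. Once these compatibilities are recorded, the estimate is immediate.
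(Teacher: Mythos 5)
Your proposal is correct and follows essentially the same route as the paper: fix a finite-type free $Z\Gamma$-resolution, produce a $Z\Gamma$-chain map $C_*\to D_*$ by the fundamental lemma of homological algebra (the paper induces $C_*$ up to $Z\Rrel$ first, you restrict scalars on $D_*$ instead — the same standard argument), and then conclude via Corollary~\ref{cor:overZR} together with the Gromov--Hausdorff compatibility of $\dim$ and $\lognorm$. Your write-up merely makes explicit the $\delta$-bookkeeping and the induction-invariance of $\dim$ and $\lognorm$ that the paper delegates to Propositions~\ref{prop:dGH} and~\ref{prop:lognorm}, so no further changes are needed.
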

\begin{proof}
  Let $C_*$ be a free $Z\Gamma$-resolution of~$Z$ of finite type (up
  to degree~$n+1$).  By Corollary~\ref{cor:overZR} and
  Propositions~\ref{prop:dGH}/\ref{prop:lognorm}, it suffices to
  find a $Z\Gamma$-chain map~$C_* \to D_*$ extending the
  canonical inclusion~$Z \hookrightarrow \linf {\alpha,Z}$.

  Let $\widehat C_* \coloneqq \Ind {Z\Gamma}{Z\Rrel} (C_*)$. Then $\widehat
  C_*$ is a marked projective $Z \Rrel$-complex (augmented over~$\linf
  {\alpha,Z}$, up to degree~$n+1$). Because $D_*$ is a
  $Z\Rrel$-resolution of~$\linf {\alpha,Z}$, by the fundamental lemma
  of homological algebra, there exists a $Z\Rrel$-chain map~$\widehat
  f_* \colon \widehat C_* \to D_*$ extending~$\id_{\linf
    {\alpha,Z}}$. Hence, the composition of~$\widehat f_*$ with the
  canonical chain map~$C_* \to \widehat C_*$ induced by the
  inclusion~$Z\Gamma \hookrightarrow Z\Rrel$ has the desired
  properties.
\end{proof}

Unfortunately, it is not clear whether/how these considerations lead
to a meaningful version of measured embedding dimension/volume that is
invariant under orbit equivalence. For instance, it is not clear in
how many cases $Z\Rrel$-resolutions (satisfying the implicit
finiteness conditions in marked projectivity) as in
Proposition~\ref{prop:smallres} actually exist. The case of weakly
bounded orbit equivalence is accessible and treated in
Section~\ref{sec:wbOE}.

\section{Weak bounded orbit equivalence}\label{sec:wbOE}

Because $L^2$-Betti numbers are compatible with orbit
equivalence~\cite{gaboriaul2}, the homology growth
over~$\Q$ shares the same property for residually finite
groups of finite type (via the approximation
theorem~\cite{Lueck94_approx}).
It is an open problem to determine how (vanishing of)
homology gradients over finite fields or torsion homology
growth behaves under orbit equivalence. As a step towards
this problem, we show that measured embedding dimension
and measured embedding volume are compatible with weak
bounded orbit equivalences. In particular, these invariants
provide upper bounds for homology growth over finite finite fields
and for torsion homology growth that are compatible with
weak bounded orbit equivalences.

\begin{setup}
  In this section, 
  let $Z$ denote $\Z$ with the standard norm or a finite
  field with the trivial norm.
\end{setup}

\begin{thm}[weak bounded orbit equivalence and $\medim$, $\mevol$]\label{thm:wbOE}
  Let $n \in \N$, 
  let $\Gamma$ and $\Lambda$ be groups of type~$\FP_{n+1}$, and let
  $\alpha$ and $\beta$ be standard actions of $\Gamma$ and $\Lambda$,
  respectively, that are weakly bounded orbit equivalent of index~$c$.
  Then, we have
  \begin{align*}
    \medim^Z_n (\alpha)
    & = c \cdot \medim^Z_n (\beta), \\
    \mevol_n (\alpha)
    & = c \cdot \mevol_n (\beta).
  \end{align*}
\end{thm}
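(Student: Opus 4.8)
The plan is to reduce the statement about weak bounded orbit equivalence to two pieces: (1) an invariance of measured embedding dimension and volume under (honest, index-$1$) bounded orbit equivalence, which should come from comparing the orbit equivalence relation rings; and (2) the behaviour of $\medim$ and $\mevol$ under the ``blow-up/thickening'' operations that relate a standard action to a restriction to a positive-measure complete section (the ``weak'' in weak bounded orbit equivalence). First I would recall the definition of weak bounded orbit equivalence of index~$c$: there are complete sections $X' \subset X$ and $Y' \subset Y$, with $\mu(X')$ and $\nu(Y')$ related by the factor $c$, together with a bounded orbit equivalence between the restricted relations $\Rrel_\alpha|_{X'}$ and $\Rrel_\beta|_{Y'}$ — i.e.\ an isomorphism of the restricted equivalence relation spaces that distorts the ``orbit-graph metric'' only by a bounded amount. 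The key algebraic consequence of \emph{bounded} (as opposed to arbitrary) orbit equivalence is that the induced isomorphism $Z\Rrel_\alpha|_{X'} \cong Z\Rrel_\beta|_{Y'}$ of equivalence relation rings is compatible with the finiteness conditions built into marked projective modules (the $\Nbasic$, $\Ntbasic$ bounds) and with the $\ell^1$-norms up to a uniform constant; this is exactly what makes $\dim_{Z\Rrel}$ and $\lognorm$ transport across. This is the analogue, in our setting, of Gaboriau's argument for $L^2$-Betti numbers under orbit equivalence, but now one must track norms, which is why boundedness is essential.

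The concrete steps I would carry out are: \textbf{Step 1.} Show that if $\alpha$ is a standard $\Gamma$-action and $X' \subset X$ is a complete section (meeting almost every orbit), then the restricted equivalence relation $\Rrel_\alpha|_{X'}$ carries an induced standard action structure (after rescaling the measure by $1/\mu(X')$), and that $\medim^Z_n$ and $\mevol_n$ of this restricted system equal $\frac{1}{\mu(X')}$ times those of $\alpha$. The inequality ``$\le$'' here should follow from Proposition~\ref{prop:smallres}: given an $\alpha$-embedding $C_* \to D_*$ over $\linf{\alpha,Z}*\Gamma$, one pushes it into $Z\Rrel_\alpha$ (Corollary~\ref{cor:overZR}), restricts everything to $X'$ — which multiplies dimensions and marked ranks by $\mu(X')/\mu(X) = \mu(X')$, leaves $\ell^1$-operator norms unchanged, hence scales $\lognorm$ by $\mu(X')$ — and then applies Proposition~\ref{prop:smallres} to the restricted relation (which is realised by \emph{some} group action of finite type, using that $\Gamma$ is $\FP_{n+1}$ and a Zimmer/groupoid argument, or by invoking that $\medim,\mevol$ only depend on the orbit relation ring up to the finiteness/norm data). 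The reverse inequality ``$\ge$'' is the more delicate direction: given a small resolution over $Z\Rrel_\alpha|_{X'}$, one must induce it back up over $X$ and control the resulting dimension/lognorm; here the ``bounded'' hypothesis is what keeps $\Nbasic,\Ntbasic$ finite and the norm controlled, via Lemma~\ref{lem:normN1est} and Lemma~\ref{lem:l1prod}. \textbf{Step 2.} Show that an honest (index-$1$) bounded orbit equivalence $\Rrel_\alpha \cong \Rrel_\beta$ induces a ring isomorphism $Z\Rrel_\alpha \cong Z\Rrel_\beta$ that preserves $\linf{\args,Z}$, carries marked projective modules to marked projective modules, and distorts $\dim$ by a factor $1$ and $\lognorm$ by a bounded factor — and then, by symmetry of the bound, \emph{exactly} preserves both (one gets ``$\le$'' in both directions, forcing equality). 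This uses Proposition~\ref{prop:smallres} in both directions, exactly as in the proof of Corollary~\ref{cor:EMDreduction}-style two-sided bounds. \textbf{Step 3.} Combine: a weak bounded orbit equivalence of index $c$ between $\alpha$ and $\beta$ means $\Rrel_\alpha|_{X'}$ and $\Rrel_\beta|_{Y'}$ are boundedly orbit equivalent with $\mu(X')/\nu(Y') = c$ (up to the normalisation conventions). By Step~1, $\medim^Z_n(\Rrel_\alpha|_{X'}) = \frac{1}{\mu(X')}\medim^Z_n(\alpha)$ and similarly for $Y'$ and $\beta$; by Step~2 the two restricted quantities are equal; eliminating them gives $\medim^Z_n(\alpha) = \frac{\mu(X')}{\nu(Y')}\,\medim^Z_n(\beta) = c\cdot\medim^Z_n(\beta)$, and identically for $\mevol_n$.

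The main obstacle I expect is Step~1's ``$\ge$'' direction together with the finiteness issue in applying Proposition~\ref{prop:smallres} to a restricted equivalence relation: Proposition~\ref{prop:smallres} requires the relation to be \emph{induced by a group of type $\FP_{n+1}$}, and a priori $\Rrel_\alpha|_{X'}$ need not be the orbit relation of a nice group action. I would handle this either by (a) observing that $\medim$ and $\mevol$, as defined via $\alpha$-embeddings, can be rephrased purely in terms of the equivalence relation ring together with the data of a marked projective resolution of $\linf{\args,Z}$ restricted from a fixed $Z\Gamma$-resolution $C_*$ — so that restriction to $X'$ makes sense directly on embeddings without needing a new group — or (b) invoking that any restriction of a treeable-or-$\FP$-type relation to a complete section is again realised by a group of the same finiteness type (a standard fact in measured group theory, e.g.\ via the ``finite-index'' phenomenon for complete sections of bounded index). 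Option (a) is cleaner and avoids the realisability question entirely: one works with the explicit $\alpha$-embedding, transports it via the bounded orbit equivalence map at the level of $Z\Rrel$-chain complexes, and tracks the four quantities $\dim$, $\rk$, $\Nbasic$, $\|\cdot\|$ through the transport, using that bounded orbit equivalence distorts the orbit-graph metric (hence the supports and the $\Nbasic,\Ntbasic$ of all matrix entries) by a uniform constant $M$; the operator norm is then controlled by $M\cdot\|\cdot\|_\infty$ via Lemma~\ref{lem:normN1est}, and $\lognorm$ by the dimension estimates of Proposition~\ref{prop:lognorm}. Pinning down the exact normalisation constants so that the bounded distortion factor cancels and one lands on the clean equality (not just an inequality up to a constant) — exploiting that the orbit equivalence and its inverse are \emph{both} bounded — is the technical heart of the argument.
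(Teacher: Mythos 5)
Your three-step skeleton (restrict to the cofinite sections, compare the restricted systems via the orbit equivalence map, rescale by the measures) is exactly the decomposition the paper uses, and your scaling constants are right. But the execution has two genuine gaps. First, you route everything through the equivalence relation ring $Z\Rrel_\alpha|_{X'}$ and Proposition~\ref{prop:smallres}. That proposition requires a marked projective $Z\Rrel$-\emph{resolution} of $\linf{\alpha,Z}$ and only yields a one-sided upper bound; the restriction of the target of an $\alpha$-embedding is not a resolution, $Z\Rrel$ need not be flat over $Z\Gamma$, and the paper explicitly flags that this route does not produce an orbit-equivalence-invariant quantity. The paper sidesteps $Z\Rrel$ entirely: it truncates the \emph{crossed product ring} by the full idempotent $p=(\chi_A,1)\in R=\linf{\alpha,Z}*\Gamma$ associated with the cofinite set $A$. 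Since $p$ is full, $R$ and $pRp$ are Morita equivalent via the exact functors $pR\otimes_R\args$ and $Rp\otimes_{pRp}\args$, so measured embeddings transport \emph{exactly} in both directions with dimensions and $\lognorm$ scaling by $\mu(A)$ (Propositions~\ref{prop:truncmarkedproj} and~\ref{prop:truncemb}); no realisability of the restricted relation by a group is ever needed.

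Second, your Step~2 logic for $\mevol$ does not close: if the comparison distorts operator norms by a uniform multiplicative constant $M$ in each direction, then each direction only gives $\lognorm(f)\leq\lognorm(f')+\dim\cdot\log_+M$, and two such inequalities do not force equality. The paper gets exact preservation because the boundedness hypothesis (the finite sets $E(\lambda)$, $F(\gamma)$) produces an honest \emph{ring isomorphism} $\varphi^*\colon qSq\to pRp$ between the truncated crossed product rings, given by $q(g,\lambda)q\mapsto\sum_{\gamma\in E(\lambda)}p(g\circ\varphi|_{A_{\gamma,\lambda}},\gamma)p$; under this isomorphism $\gen{\widetilde A}_{pRp}$ corresponds to $\gen{\varphi(\widetilde A)}_{qSq}$, and the condition $\frac1{\mu(A)}\varphi_*\mu|_A=\frac1{\nu(B)}\nu|_B$ makes dimensions, $\ell^1$-operator norms, and hence $\lognorm$ match on the nose (Proposition~\ref{prop:wbOEemb}). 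So the ``technical heart'' is not cancelling a distortion constant but recognising that, after truncation, there is nothing to distort. With these two corrections your outline becomes the paper's proof.
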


The theorem will be derived from a corresponding statement
on measured embeddings over truncated crossed product rings.
At the moment, the case of general orbit equivalence is out
of reach, because the equivalence relation rings do not
exhibit the same level of exactness and finiteness properties
as the crossed product rings.

As a preparation for the proof, we discuss dimensions and norms over
truncated crossed product rings (Section~\ref{subsec:truncring}) as
well as truncated version of measured embeddings, measured embedding
dimension, and measured embedding volume (Section~\ref{subsec:truncemb}).

\subsection{Truncated crossed product rings}\label{subsec:truncring}

Let $R$ be a unital ring. If $p \in R$ is idempotent, then $pRp$ is a
unital ring. The idempotent~$p$ is called \emph{full} if the
multiplication homomorphism~$Rp \otimes_{pRp} pR \to R$ is surjective.

If $p \in R$ is a full idempotent, then $R$ and $pRp$ are Morita
equivalent through the functors~$pR \otimes_R \args$ and $Rp
\otimes_{pRp} \args$. In particular, we can translate freely
between projective resolutions over~$R$ and projective resolutions
over~$pRp$.

For convenience, we prove the following well-known fact that we need repeatedly. 

\begin{lem}\label{lem:fullidempotent}
If $p$ is a full idempotent of~$R$, then the multiplication homomorphism~$Rp \otimes_{pRp} pR \to R$ is bijective. 
\end{lem}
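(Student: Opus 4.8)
The plan is to write down an explicit two-sided inverse of the multiplication homomorphism $\mu\colon Rp\otimes_{pRp}pR\to R$. Since $p$ is full, $\mu$ is surjective, so $1\in R$ lies in its image; hence one may fix finitely many elements $a_1,\dots,a_m,b_1,\dots,b_m\in R$ with $1=\sum_{j=1}^m a_jpb_j$, where $a_jp\in Rp$ and $pb_j\in pR$. Using this decomposition of~$1$, I would define
\[
  \sigma\colon R\to Rp\otimes_{pRp}pR,\qquad \sigma(r)\coloneqq \sum_{j=1}^m ra_jp\otimes pb_j,
\]
which is additive and left $R$-linear.

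First I would check $\mu\circ\sigma=\id_R$: this is immediate, as $\mu(\sigma(r))=\sum_j ra_jpb_j=r\cdot 1=r$.

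The only step that requires a little care is $\sigma\circ\mu=\id$. Writing a general element of the tensor product as $x=\sum_i x_ip\otimes py_i$ with $x_ip\in Rp$ and $py_i\in pR$, one has $\mu(x)=\sum_i x_ipy_i$ and
\[
  \sigma(\mu(x))=\sum_{i,j} x_ipy_ia_jp\otimes pb_j.
\]
The key observation is that $py_ia_jp\in pRp$, so it may be moved across the tensor product over~$pRp$; doing so and then resumming over~$j$ via $\sum_j a_jpb_j=1$ recovers $\sum_i x_ip\otimes py_i=x$. Therefore $\sigma$ is a two-sided inverse of~$\mu$, and $\mu$ is bijective.

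The main (and essentially only) obstacle is bookkeeping: at each manipulation one must keep track of whether the inserted element lies in $Rp$, in $pR$, or in $pRp$, so that the rewriting across the tensor factor is legitimate. No deeper input is needed — in particular, $\sigma$ need not be independent of the chosen expression for~$1$, since all that matters is that $\mu$ admits the two-sided inverse~$\sigma$.
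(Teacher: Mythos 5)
Your proposal is correct and coincides with the paper's own argument: both define the left $R$-linear map $r\mapsto\sum_j ra_jp\otimes pb_j$ from a fixed expression $1=\sum_j a_jpb_j$ and verify it is a two-sided inverse by sliding the $pRp$-element $py_ia_jp$ across the tensor sign. No differences worth noting.
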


\begin{proof}
By definition, the multiplication homomorphism $m$ is surjective. Consider 
a pre-image $\sum_{i\in I} r_ip\otimes ps_i$ of $1\in R$. That is, $\sum_{i\in I} r_ips_i=1$. There is a homomorphism $f\colon R\to Rp\otimes_{pRp}pR$ of left $R$-modules that maps $1$ to $\sum_{i\in I}r_ip\otimes ps_i$. We claim that $f$ is the inverse of $m$. 
It is obvious that $m\circ f=\id$. The other composition $f\circ m=\id$ follows from: 
\begin{align*}
f(m(xp\otimes py)) &= f(xpy)\\
                   &= xpy\cdot\sum_{i\in I}r_ip\otimes ps_i \\
                   &= \sum_{i\in I}xp(pyr_ip)\otimes ps_i \\
                   &= \sum_{i\in I} xp\otimes pyr_ips_i\\
                   &= xp\otimes py\bigl(\sum_{i\in I}r_ips_i\bigr)\\
                   &= xp\otimes py. \qedhere                   
\end{align*}	
\end{proof}

In the context of measured embeddings, we need more refined information:
We need to preserve marked projectivity (instead of projectivity)
and control over the dimensions and norms of maps.

\begin{defn}
  Let $\alpha \colon \Gamma \actson (X,\mu)$ be a standard action.
  A subset~$A \subset X$ is \emph{$\alpha$-cofinite} if it is measurable
  and if there exists a finite set~$F \subset \Gamma$ with~$F \cdot A = X$
  (up to $\mu$-measure zero).
\end{defn}

Clearly, cofinite sets in this sense have non-zero measure. 

\begin{rem}[existence of small cofinite subsets]\label{rem:cofinitesmall}
  Let $\alpha \colon \Gamma \actson (X,\mu)$ be a standard action of
  an infinite group~$\Gamma$ and let $\varepsilon \in \R_{>0}$.  Then,
  there exists an $\alpha$-cofinite subset~$A \subset X$ with~$\mu(A)
  < \varepsilon$: Indeed, there exists a measurable subset~$A' \subset
  X$ with~$\Gamma \cdot A' = X$ and~$\mu(A') <
  \varepsilon/2$~\cite[Proposition~1]{Levitt95}. We choose a large
  enough finite subset~$F \subset \Gamma$ with $\mu(X \setminus F
  \cdot A') < \varepsilon/2$. Then $A \coloneqq A' \cup (X \setminus F \cdot
  A')$ is $\alpha$-cofinite and~$\mu(A) < \varepsilon$.
\end{rem}

\begin{defn}[marked projectives, dimensions, $\lognorm$ over truncated crossed product rings]
  Let $\alpha \colon \Gamma \actson (X,\mu)$ be a standard action and
  let $A \subset X$ be $\alpha$-cofinite. We write~$R \coloneqq \linf {\alpha, 
  Z} * \Gamma$ and $p \coloneqq (\chi_A,1) \in R$.
  \begin{itemize}
  \item If $B \subset A$ is a measurable subset, we write
    \[  \gen B _{pRp} \coloneqq pRp \cdot \chi_B.
    \]
  \item A \emph{marked projective $pRp$-module} is a triple~$(M,
    (B_i)_{i\in I}, \varphi)$, consisting of a $pRp$-module~$M$, a
    finite family~$(B_i)_{i \in I}$ of measurable subsets of~$A$, and
    a $pRp$-isomorphism~$\varphi \colon M \to \bigoplus_{i \in I}
    \gen{B_i}_{pRp}$.
  \item The \emph{dimension} of a marked projective~$pRp$-module~$(M, (B_i)_{i \in I}, \varphi)$
    is given by
    \[ \dim_{pRp} (M) \coloneqq \sum_{i \in I} \frac{\mu(B_i)}{\mu(A)} \in \R_{\geq 0}.
    \]
  \item If $f \colon M \to N$ is a $pRp$-linear map between marked
    projective $pRp$-modules, then we define~$\lognorm_{pRp} (f)$
    analogously to the non-truncated case.
  \end{itemize}
  As in the case of marked projective modules in the non-truncated
  case, we will usually leave~$\varphi$ implicit.
\end{defn}
  
\begin{prop}[truncation and marked projectivity]\label{prop:truncmarkedproj}
  Let $\alpha \colon \Gamma \actson (X,\mu)$ be a standard action and
  let $A \subset X$ be $\alpha$-cofinite. We write~$R \coloneqq \linf {\alpha, 
  Z} * \Gamma$ and $p \coloneqq (\chi_A,1) \in R$. Then:
  \begin{enumerate}[label=\enum]
  \item The idempotent~$p$ is full in~$R$.
  \item If $M$ is a marked projective $R$-module,
    then $pM$ inherits a decomposition as a marked projective $pRp$-module
    with 
    \[ \dim_{pRp} (pM) = \frac1{\mu(A)} \cdot \dim_R (M).
    \]
  \item
    If $f \colon M \to N$ is an $R$-linear map between marked
    projective $R$-modules, then (with respect to a marked decomposition
    as in the previous item)
    \[ \lognorm_{pRp} (pf) \leq \frac1{\mu(A)} \cdot \lognorm_R (f).
    \]
  \item If $M$ is a marked projective $pRp$-module,
    then $Rp \otimes_{pRp} M$ inherits a canonical decomposition
    as a marked projective $R$-module
      with
      \[ \dim_R (Rp\otimes_{pRp} M)
      = \mu(A) \cdot \dim_{pRp} (M).
      \]
  \item
    If $f \colon M \to N$ is a $pRp$-linear map between marked
    projective $pRp$-modules, then (with respect to the marked decomposition
    as in the previous item)
    \[ \lognorm_R (\id \otimes_{pRp} f) \leq \mu(A) \cdot \lognorm_{pRp} (f).
    \]
  \end{enumerate}
\end{prop}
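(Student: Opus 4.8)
The plan is to prove each of the five items essentially by reduction to the rank-one case and direct calculation, using the fullness of $p$ (item (i)) as the technical backbone for items (ii)--(v).

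First I would prove (i). The set $A$ is $\alpha$-cofinite, so there is a finite $F \subset \Gamma$ with $F \cdot A = X$ up to measure zero; refining, we may take a measurable partition $X = \bigsqcup_{\gamma \in F} \gamma \cdot A_\gamma$ with $A_\gamma \subset A$. Then $1_R = \sum_{\gamma \in F} (\chi_{\gamma A_\gamma}, 1) = \sum_{\gamma \in F} (\chi_{\gamma A_\gamma}, \gamma) \cdot (\chi_{A_\gamma}, \gamma^{-1})$, and for each $\gamma$ the first factor lies in $Rp$ (since $(\chi_{\gamma A_\gamma}, \gamma) = (\chi_{\gamma A_\gamma},\gamma)\cdot(\chi_A,1)$ as $A_\gamma \subset A$) while the second lies in $pR$. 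Hence $1_R$ is in the image of the multiplication map $Rp \otimes_{pRp} pR \to R$, so $p$ is full; bijectivity then follows from Lemma~\ref{lem:fullidempotent}.

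Next, for (ii) and (iv), by additivity of dimension over marked decompositions it suffices to treat $M = \gen{B}_R = R \cdot \chi_B$ with $B \subset X$ measurable (for (ii)) or $M = \gen{B}_{pRp}$ with $B \subset A$ (for (iv)). For (ii), one checks that $pM = p \cdot R \cdot \chi_B = pRp \cdot (\chi_{A \cap B}) \oplus \cdots$; more precisely, using the partition of $X$ above, $p \cdot R \cdot \chi_B$ decomposes as a direct sum $\bigoplus_{\gamma \in F} \gen{A \cap \gamma(A_\gamma \cap B)}_{pRp}$ of marked projective $pRp$-modules (after a routine bookkeeping of which translates of $B$ meet $A$), whose total dimension is $\frac{1}{\mu(A)}\sum_\gamma \mu(A \cap \gamma(A_\gamma\cap B))$; but $\sum_\gamma \mu(\gamma(A_\gamma \cap B)) = \mu(B)$ and $A \cap \gamma(A_\gamma \cap B) = \gamma(A_\gamma \cap B)$ since those sets already lie in $A$, giving $\dim_{pRp}(pM) = \mu(B)/\mu(A) = \frac{1}{\mu(A)}\dim_R(M)$. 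For (iv), $Rp \otimes_{pRp} \gen{B}_{pRp} \cong Rp \cdot \chi_B = R \cdot \chi_B = \gen{B}_R$ as marked projective $R$-modules (using $B \subset A$ so $\chi_B = \chi_B \chi_A$), and $\dim_R(\gen{B}_R) = \mu(B) = \mu(A) \cdot (\mu(B)/\mu(A)) = \mu(A)\dim_{pRp}(M)$.

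Finally, items (iii) and (v) follow from (ii), (iv), and the properties of $\lognorm$ in Proposition~\ref{prop:lognorm}: a marked decomposition of $M$ over $R$ (resp.\ $pRp$) induces one of $pM$ (resp.\ $Rp\otimes_{pRp}M$), the marked rank scales by the factors computed in (ii)/(iv) (by the same rank-one argument, since marked summands correspond under $p\cdot(-)$ and $Rp\otimes_{pRp}(-)$), and the operator norm does not increase: for $\lognorm_{pRp}(pf)$ one checks $\|pf\|_{pRp} \le \|f\|_R$ because $\|p\cdot z\|_1 \le \|z\|_1$ and the $\ell^1$-norm on $pM$ restricts the $\ell^1$-norm on $M$ up to the normalising factor $1/\mu(A)$ — which is already absorbed into the dimension scaling — so that on each summand $\dim \cdot \logp\|\cdot\|$ scales by exactly $1/\mu(A)$ (resp.\ $\mu(A)$); taking the infimum over marked decompositions yields the stated inequalities. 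I expect the main obstacle to be the careful verification in (ii) that $pM$ genuinely inherits a \emph{marked} projective $pRp$-structure (not just an abstract projective one) with the claimed sets as markers — this requires the explicit partition of $X$ into translates of subsets of $A$ and a somewhat fiddly identification of $p \cdot R \cdot \chi_B$ with a direct sum of modules of the form $\gen{\cdot}_{pRp}$; once the rank-one case is pinned down, everything else is formal.
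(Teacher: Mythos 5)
Your overall strategy coincides with the paper's: exhibit $1$ as a sum of products passing through $p$ via a partition of $X$ into translates of subsets of $A$ (for (i)), reduce (ii) and (iv) to the rank-one case, and deduce (iii) and (v) from the dimension/rank scaling together with the fact that the operator norms do not increase. Items (i), (iv), and (v) are fine as written and match the paper's argument.

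However, the rank-one computation in (ii) contains a genuine error. Starting from the partition $X=\bigsqcup_{\gamma\in F}\gamma\cdot A_\gamma$ with $A_\gamma\subset A$, the correct decomposition of $B$ is
\[
B=\bigsqcup_{\gamma\in F}\bigl(\gamma A_\gamma\cap B\bigr)=\bigsqcup_{\gamma\in F}\gamma\cdot\bigl(A_\gamma\cap\gamma^{-1}\cdot B\bigr),
\]
and each summand $p\gen{\gamma(A_\gamma\cap\gamma^{-1}B)}_R$ is identified with $\gen{A_\gamma\cap\gamma^{-1}B}_{pRp}$ by right multiplication with the unit $(1,\gamma)$; the marker sets $A_\gamma\cap\gamma^{-1}B$ genuinely lie in~$A$ and their measures sum to $\sum_\gamma\mu(\gamma A_\gamma\cap B)=\mu(B)$. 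Your sets $\gamma(A_\gamma\cap B)$ are pieces of $\gamma\cdot B$, not of $B$: their union over $\gamma$ is $\bigsqcup_\gamma(\gamma A_\gamma\cap\gamma B)$, which is not $B$; they lie in $\gamma A_\gamma$ rather than in $A$; and the assertion that ``$A\cap\gamma(A_\gamma\cap B)=\gamma(A_\gamma\cap B)$ since those sets already lie in $A$'' is false. With your marker sets the total dimension is $\frac{1}{\mu(A)}\sum_\gamma\mu(A\cap\gamma A_\gamma\cap\gamma B)$, which need not equal $\mu(B)/\mu(A)$ --- for instance it vanishes when $B$ is disjoint from $A$, even though $pR\cdot\chi_B\neq 0$ in that case. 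The fix is exactly the translated-back decomposition above; once (ii) is repaired, your derivations of (iii) and (v) go through as in the paper.
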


The marked projective structure on~$pM$ depends on certain
choices, but these will not be problematic in our situations.

\begin{proof}
  Because $A$ is $\alpha$-cofinite there exists a finite set~$F
  \subset \Gamma$ with~$F \cdot A =X$ (up to measure~$0$). Through
  inductive removal, we obtain a family~$(B_\gamma)_{\gamma \in F}$ of
  measurable subsets of~$A$ with (up to measure~$0$)
  \[ X = \bigsqcup_{\gamma \in F} \gamma \cdot B_\gamma.
  \]

  (i)
  With this decomposition, we can write the unit in~$R$ as
  \[
  1 = \sum_{\gamma \in F} (\chi_{\gamma \cdot B_\gamma},1)
  = \sum_{\gamma \in F} (1,\gamma) \cdot (\chi_{B_\gamma},1) \cdot (1,\gamma^{-1})
  = \sum_{\gamma \in F} (1,\gamma) \cdot p \cdot (\chi_{B_\gamma},1) \cdot (1,\gamma^{-1}),
  \]
  which shows that the idempotent~$p$ is full in~$R$.

  (ii)
  We only need to consider the case that $M$ has rank~$1$. Thus, let $M = \gen B _R$.
  Then (up to measure~$0$)
  \[ B = \bigsqcup_{\gamma \in F} \gamma \cdot B_\gamma \cap B
  = \bigsqcup_{\gamma \in F} \gamma \cdot (B_\gamma \cap \gamma^{-1} \cdot B),
  \]
  and we obtain
  \begin{align*}
    p \gen B _R
    & \cong_{pRp}
    \bigoplus_{\gamma \in F} p \cdot R \cdot (1,\gamma) \cdot (\chi_{B_\gamma \cap \gamma^{-1} \cdot B},1) \cdot (1,\gamma^{-1})
    \\
    & \cong_{pRp}
    \bigoplus_{\gamma \in F} p \cdot R \cdot (1,\gamma) \cdot (\chi_{B_\gamma \cap \gamma^{-1} \cdot B},1)
    & \text{($(1,\gamma^{-1})$ is a unit in~$R$)}
    \\
    & \cong_{pRp}
    \bigoplus_{\gamma \in F} p \cdot R \cdot \chi_{B_\gamma \cap \gamma^{-1} \cdot B}
    & \text{($(1,\gamma)$ is a unit in~$R$)}
    \\
    & = 
    \bigoplus_{\gamma \in F} p \cdot R \cdot p \cdot \chi_{B \gamma \cap \gamma^{-1} \cdot B}
    & \text{($B \gamma \cap \gamma^{-1} \cdot B \subset A$)}
    \\
    & =
    \bigoplus_{\gamma \in F} \gen{ B_\gamma \cap \gamma^{-1} \cdot B}_{pRp}.
  \end{align*}
  This provides a marked projective decomposition of the $pRp$-module~$p \gen B_R$.
  Moreover, with respect to this marked projective decomposition, we have
  \begin{align*}
    \dim_{pRp} (p \gen B_R)
    & = \sum_{\gamma \in F} \frac1{\mu(A)} \cdot \mu(B_\gamma \cap \gamma^{-1} \cdot B)
    = \frac1{\mu(A)} \cdot \sum_{\gamma \in F} \mu(\gamma \cdot B_\gamma \cap B)
    \\
    & = \frac1{\mu(A)} \cdot \mu \Bigl( \bigcup_{\gamma \in F} \gamma \cdot B_\gamma \cap B\Bigr)
    = \frac1{\mu(A)} \cdot \mu(B).
  \end{align*}

  (iii)
  This follows from the dimension estimate in the previous part
  and the definition of the logarithmic norms.
  
  (iv)
  We only need to consider the case that $M$ has rank~$1$. Thus, let $M = \gen B _{pRp}$
  with $B \subset A$ measurable.
  Because $p$ is an idempotent in~$R$ and $(\chi_B, 1) \in pRp$, the canonical $R$-homomorphism
  \[ Rp \otimes_{pRp} \gen B _{pRp} \to \gen B _R
  \]
  is an isomorphism.
  In particular, this gives a canonical marked projective
  structure on~$Rp \otimes_{pRp} M$ and
  \[ \dim_R (Rp \otimes_{pRp} M) = \mu(A) \cdot \dim_{pRp} (M).
  \]

  (v)
  This follows from the dimension estimate in the previous part
  and the definition of the logarithmic norms.
\end{proof}

\subsection{Measured embeddings over truncated crossed product rings}\label{subsec:truncemb}

Let $R$ be a unital ring, let $P \subset R$ be a set of idempotents.
A \emph{marked projective $(R,P)$-module} is an $R$-module~$M$, together
with a decomposition
\[ M \cong_R \bigoplus_{i \in I} R p_i,
\]
where $(p_i)_{i \in I}$ is a finite family in~$P$.

\begin{defn}[embedding]\label{def:genemb}
  Let $R$ be a unital ring and let $P \subset R$ be a set of idempotents.
  Let $L$ be an $R$-module. An \emph{$(R,P)$-embedding (up to degree~$n$)} of~$L$ is a
  triple~$((C_*, \zeta), f_*, (D_*,\eta))$, consisting of
  \begin{itemize}
  \item a free $R$-resolution~$(C_*,\zeta)$ of~$L$,
  \item an $(R,P)$-marked projective augmented $R$-chain complex~$(D_*,\eta)$
    augmenting~$L$,
  \item and an $R$-chain map~$f \colon C_* \to D_*$ up to degree~$n+1$ extending~$\id_L \colon L \to L$.
  \end{itemize}
  \[\begin{tikzcd}
  C_*
  \ar{r}{f_*}
  \ar{d}{\zeta}
  & D_*
  \ar{d}{\eta}
  \\
  L
  \ar[equal]{r}
  & L
  \end{tikzcd}
  \]
\end{defn}

\begin{rem}
  We call these constellations embeddings because they
  are similar to embeddings in a derived sense:
  In Definition~\ref{def:genemb}, 
  if $((C_*,\zeta), f_*, (D_*,\eta))$
  is an $(R,P)$-embedding of the $R$-module~$L$, 
  then the fundamental lemma of homological algebra
  provides us with an $R$-chain map~$g_* \colon D_* \to C_*$
  extending~$\id_L$ that satisfies $g_* \circ f_* \simeq_R \id$.
\end{rem}
  
\begin{rem}[change of domain resolution]\label{rem:embeddingindepres}
  In the situation of Definition~\ref{def:genemb}, if $((C_*,\zeta), f_*,
  (D_*,\eta))$ is an $(R,P)$-embedding of~$L$ and $(C'_*, \zeta')$ is
  a free $R$-resolution of~$L$, then there also
  exists an $(R,P)$-embedding~$((C'_*,\zeta'), f'_*, (D_*,\eta))$ with
  the same target complex~$D_*$ (by the fundamental lemma of
  homological algebra). However, the quantitative properties of the
  maps~$f'_*$ and $f_*$ might be different; in the following
  discussion, we will only take quantitative aspects of the target
  complex into account and therefore the choice of the domain
  resolution of~$L$ will be immaterial.
  The same argument applies if $C'_*$ consists of (finitely generated)
  projective $R$-modules instead of (finitely generated) free $R$-modules.
\end{rem}

\begin{defn}[measured embeddings, truncated case]\label{def:embtrunc}
  Let $\alpha \colon \Gamma \actson (X,\mu)$ be a standard action,
  let $R \coloneqq \linf{\alpha,Z} * \Gamma$, 
  let $A \subset X$ be $\alpha$-cofinite,
  and let $p \coloneqq (\chi_A,1) \in R$.
  We write~$P_A \coloneqq \{ (\chi_B,1) \mid \text{$B \subset A$ measurable} \}$.
  \begin{itemize}
  \item
    An \emph{$(\alpha,A,Z)$-measured embedding (up to degree~$n$)} is a $(pRp, P_A)$-embedding up to degree~$n$
    of~$p\linf{\alpha,Z}$ in the sense of Definition~\ref{def:genemb}.
  \item
    We write~$\Aug_n(\alpha,A,Z)$ for the class of all augmented
    target complexes arising in $(\alpha,A,Z)$-measured embeddings up to degree~$n$.
  \end{itemize}
\end{defn}

In the situation of Definition~\ref{def:embtrunc}, it might be helpful
to understand that the $pRp$-module~$p\linf{\alpha,Z}$ is canonically
isomorphic to~$\linf{A,Z}$. The $pRp$-structure on~$\linf{A,Z}$ is given
by
\[ p (f,\gamma) p \cdot (\chi_B,1)
= \chi_{A \cap \gamma \cdot B} \cdot f
\]
for all~$f \in \linf {\alpha,Z}$, $\gamma \in \Gamma$, and all measurable
subsets~$B \subset A$ (Remark~\ref{rem:linfX-ZR}).

\begin{defn}[$\medim$ and $\mevol$, truncated case]
  Let $\alpha \colon \Gamma \actson (X,\mu)$ be a standard action,
  let $R \coloneqq \linf{\alpha,Z} * \Gamma$, 
  let $A \subset X$ be $\alpha$-cofinite,
  and let $p \coloneqq (\chi_A,1) \in R$. Let $n \in \N$
  and let $\Gamma$ be of type~$\FP_{n+1}$. We then
  set
  \begin{align*}
    \medim^Z_n(\alpha,A)
    & \coloneqq \inf_{D_* \in \Aug_n(\alpha,A,Z)} \dim_{pRp} (D_n)
    \\
    \mevol_n(\alpha,A)
    & \coloneqq \inf_{D_* \in \Aug_n(\alpha,A,\Z)} \lognorm_{pRp} (\partial^{D}_{n+1})
    \quad \text{if $Z = \Z$}.
  \end{align*}
\end{defn}

\begin{rem}[starting from resolutions over the group ring]
  Let $n \in \N$, let $\Gamma$ be of type $\FP_\infty$, and let $(C_*,\zeta)$
  be a free $Z \Gamma$-resolution of the trivial $Z\Gamma$-module~$Z$.  
  Let $\alpha \colon \Gamma \actson (X,\mu)$ be a standard
  action and let $R \coloneqq \linf {\alpha,Z} * \Gamma$. 
  \begin{enumerate}
  \item Then $(R \otimes_{Z\Gamma} C_*, \id \otimes_Z \zeta)$ is a
    free $R$-resolution of the $R$-module~$\linf {\alpha,Z} \cong_R
    R\otimes_{Z \Gamma} Z$ (with the canonical
    action; Remark~\ref{rem:linfX-ZR}),
    because $R$ is flat over~$Z \Gamma$ (Proposition~\ref{prop:flat}).
  \item If $A \subset X$ is $\alpha$-cofinite and $p \coloneqq (\chi_A, 1)
    \in R$ is the associated full idempotent, then $(pR
    \otimes_{Z\Gamma} C_*, \id \otimes_{Z\Gamma}Z)$ is a projective
    $pRp$-resolution of the $pRp$-module~$pR\otimes_{Z \Gamma} Z \cong_{pRp}
    p\linf{\alpha,Z}$ (with the canonical action).
  \end{enumerate}
  Therefore, we may start from resolutions over the group ring~$Z\Gamma$
  when computing or estimating measured embedding dimensions/volumes
  (Remark~\ref{rem:embeddingindepres}).

  In particular, for $A = X$, the notions of embeddings and measured
  embedding dimension/volume coincide with the ones from
  Section~\ref{subsec:introsetup}.
  
  A similar remark applies when $\Gamma$ is of type $\FP_{n+1}$. 
\end{rem}

\subsection{Weak bounded orbit equivalence}

We recall the notion of weak bounded orbit equivalence, which
is a more restrictive type of stable orbit equivalence.

\begin{defn}[weak bounded orbit equivalence]\label{def:wbOE}
  Standard actions $\alpha \colon \Gamma \actson (X,\mu)$
  and $\beta \colon \Lambda \actson (Y,\nu)$ are
  \emph{weakly bounded orbit equivalent of index~$c \in \R_{>0}$}
  if there exists an $\alpha$-cofinite set~$A \subset X$,
  a $\beta$-cofinite set~$B \subset Y$, and a measurable
  isomorphism~$\varphi \colon A \to B$ with the following properties:
  \begin{itemize}
  \item We have
    \[ \frac1{\mu(A)} \cdot \varphi_* \mu|_A
    = \frac1{\nu(B)} \cdot \nu|_B
    \qand
    c = \frac{\mu(A)}{\nu(B)}.
    \]
  \item For every~$\gamma \in \Gamma$, there is a finite
    set~$F(\gamma)\subset \Lambda$ such that for $\mu$-almost
    every~$x \in A \cap \gamma^{-1} \cdot A$,
    we have~$\varphi(\gamma \cdot x) \in F(\gamma)\cdot \varphi(x)$.
  \item For every~$\lambda \in \Lambda$, there is a finite
    set~$E(\lambda) \subset \Gamma$ such that for $\nu$-almost
    every~$y \in B \cap \lambda^{-1} \cdot B$,
    we have~$\varphi^{-1}(\lambda \cdot y) \in E(\lambda) \cdot \varphi^{-1}(y)$.
  \end{itemize}
\end{defn}

\begin{ex}[uniform lattices]\label{ex:uniform:lattices:wbOE}
  Let $G$ be a locally compact second countable Hausdorff topological
  group and let $\Gamma, \Lambda \subset G$ be uniform lattices
  in~$G$. Then the standard actions~$\Gamma \actson G/\Lambda$ and
  $\Lambda \actson G/\Gamma$ (with respect to the normalised Haar
  measure) are weakly boundedly orbit equivalent~\cite[Example~2.31]{sauer2002l2}.
  For instance, fundamental groups of closed Riemannian manifolds
  are uniform lattices in the isometry group of the Riemannian universal
  covering~\cite[Example~2.31 and Theorem~2.36]{sauer2002l2}.
\end{ex}

\begin{ex}[amenable groups]\label{ex:ame:}
  Let $\Gamma$ and $\Lambda$ be infinite finitely generated amenable groups.
  Then $\Gamma$ and $\Lambda$ admit weakly boundedly orbit equivalent
  standard actions if and only if $\Gamma$ and $\Lambda$ are
  quasi-isometric~\cite[Lemma~2.25 and Theorem~2.38]{sauer2002l2}.

 On the other hand, Gaboriau describes examples of pairs of groups $\Gamma\times F_n$ 
 and $\Gamma\times F_m$ for $n\ne m$ that are non-amenable and quasi-isometric but 
 not weakly orbit equivalent~\cite[Section~2.3]{gaboriau-survey}. 
\end{ex}

\begin{ex}[weak bounded orbit equivalence vs.~weak orbit equivalence]
  Let $\Gamma$ be a cocompact lattice in $\mathrm{SL}_n(\IR)$. Then $\Gamma$ and $\mathrm{SL}_n(\IZ)$ are measure equivalent, hence weakly orbit equivalent~\cite[Theorem~2.33]{sauer2002l2}. However, $\Gamma$ and $\mathrm{SL}_n(\IZ)$ are not quasi-isometric and therefore not weakly boundedly orbit equivalent~\cite[Lemma~2.25]{sauer2002l2}. This can be deduced from the invariance of the cohomological dimension under quasi-isometry~\cite[Corollary~1.2]{sauer-homological}. 
\end{ex}

\begin{rem}[invariants under weak (bounded) orbit equivalence]
The following group invariants are invariants -- or invariants up to scaling by the index -- under weak bounded orbit equivalence: 
\begin{itemize}
\item the Novikov--Shubin invariants among groups of type $\FP_\infty$~\cite{sauer-homological};  
\item the $L^2$-torsion among groups of type $\mathrm{F}$ and actions that satisfy the measure-theoretic determinant conjecture~\cite{lueck+sauer+wegner}; 
\item the integral foliated simplicial volume among fundamental groups of closed aspherical manifolds~\cite{loeh_pagliantini}.  
\end{itemize}

The $L^2$-torsion and the integral foliated simplicial volume might be invariants of weak orbit equivalence for all we know. The Novikov--Shubin invariants are definitely not as the example of $\IZ$ and $\IZ^2$ shows. 
Finally, $L^2$-Betti numbers are invariants up to scaling by the index of weak orbit equivalence by Gaboriau's theorem~\cite{gaboriaul2}.  
\end{rem}

\subsection{Proof of Theorem~\ref{thm:wbOE}}

The proof of Theorem~\ref{thm:wbOE} consists of two components:
\begin{enumerate}
\item We relate measured embeddings over the full crossed product
  ring to measured embeddings over truncated crossed product rings
  (Proposition~\ref{prop:truncemb}).
\item We use the given weak bounded orbit equivalence to compare
  measured embeddings over the corresponding truncated crossed product
  rings of the two standard actions
  (Proposition~\ref{prop:wbOEemb}).
\end{enumerate}

\begin{prop}\label{prop:truncemb}
  Let $\alpha \colon \Gamma \actson (X,\mu)$ be a standard action
  and let $A \subset X$ be $\alpha$-cofinite. Let $n \in \N$ and let $\Gamma$
  be of type~$\FP_{n+1}$. Then:
  \begin{align*}
    \medim^Z_n (\alpha)
    & = \mu(A) \cdot \medim^Z_n (\alpha,A)
    \\
    \mevol_n (\alpha)
    & = \mu(A) \cdot \mevol_n (\alpha,A)
    \quad \text{if $Z = \Z$.}
  \end{align*}
  More precisely (where $R \coloneqq \linf{\alpha,Z} * \Gamma$ and $p \coloneqq (\chi_A,1)$):
  \begin{enumerate}[label=\enum]
  \item If $(D_*,\eta) \in \Aug_n(\alpha,Z)$, then $(pD_*,p\eta) \in \Aug_n(\alpha,A,Z)$.
  \item If $(D_*,\eta) \in \Aug_n(\alpha,A,Z)$, then $(Rp \otimes_{pRp} D_*, \id \otimes_{pRp} \eta)
    \in \Aug_n(\alpha,Z)$.
  \end{enumerate}
\end{prop}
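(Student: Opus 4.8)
The plan is to prove the ``more precisely'' parts (i) and (ii), since the equalities for $\medim$ and $\mevol$ follow immediately by combining them with the dimension and $\lognorm$ compatibilities from Proposition~\ref{prop:truncmarkedproj} and taking infima over the respective classes of augmented target complexes. First I would set up notation: write $R \coloneqq \linf{\alpha,Z} * \Gamma$ and $p \coloneqq (\chi_A,1)$, and recall that $p$ is a full idempotent (Proposition~\ref{prop:truncmarkedproj}~(i)), so that $pR\otimes_R \args$ and $Rp\otimes_{pRp}\args$ are mutually inverse Morita equivalences between $R$-Mod and $pRp$-Mod (using Lemma~\ref{lem:fullidempotent}). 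The two functors involved here are $M \mapsto pM$ (which is naturally isomorphic to $pR\otimes_R M$) and $M \mapsto Rp\otimes_{pRp} M$. Under the first one, $\linf{\alpha,Z}$ goes to $p\linf{\alpha,Z}$; under the second, $p\linf{\alpha,Z}$ goes back (up to canonical isomorphism) to $\linf{\alpha,Z}$.

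For part (i): start from $(D_*,\eta)\in\Aug_n(\alpha,Z)$, which by definition fits into an $\alpha$-embedding $f_*\colon C_* \to D_*$ where $(C_*,\zeta)$ is a free $R$-resolution of $\linf{\alpha,Z}$ (one may take $C_* = R\otimes_{Z\Gamma} C_*^{grp}$ for a finite-type free $Z\Gamma$-resolution of $Z$, using flatness from Proposition~\ref{prop:flat}). Apply the exact functor $p(\args)$: since $p$ is idempotent, $p(\args)$ is exact, so $pC_*$ is a projective $pRp$-resolution of $p\linf{\alpha,Z}$, the augmented complex $(pD_*,p\eta)$ inherits a marked projective $pRp$-structure by Proposition~\ref{prop:truncmarkedproj}~(ii) (the idempotents land in $P_A$ because $p\gen{B}_R$ decomposes into $\gen{\cdot}_{pRp}$-summands indexed by subsets of $A$), and $pf_*$ is a chain map extending $\id_{p\linf{\alpha,Z}}$. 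By Remark~\ref{rem:embeddingindepres} one may replace $pC_*$ by any free $pRp$-resolution of $p\linf{\alpha,Z}$ without affecting membership in $\Aug_n(\alpha,A,Z)$; hence $(pD_*,p\eta)\in\Aug_n(\alpha,A,Z)$. Part (ii) is the mirror image: start from $(D_*,\eta)\in\Aug_n(\alpha,A,Z)$, i.e.\ a $(pRp,P_A)$-embedding $f_*\colon C_* \to D_*$ of $p\linf{\alpha,Z}$, apply the exact functor $Rp\otimes_{pRp}\args$ (exact because $Rp$ is projective, hence flat, over $pRp$ by Morita equivalence), observe $Rp\otimes_{pRp} p\linf{\alpha,Z} \cong_R \linf{\alpha,Z}$, use Proposition~\ref{prop:truncmarkedproj}~(iv) to get the marked projective $R$-structure on $Rp\otimes_{pRp} D_*$ (with $R$-idempotents of the required form), and again invoke Remark~\ref{rem:embeddingindepres} to swap the domain resolution for a genuine free $R$-resolution of $\linf{\alpha,Z}$. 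This yields $(Rp\otimes_{pRp}D_*, \id\otimes\eta)\in\Aug_n(\alpha,Z)$.

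To assemble the two equalities, combine (i) with Proposition~\ref{prop:truncmarkedproj}~(ii),(iii) to get, for every $(D_*,\eta)\in\Aug_n(\alpha,Z)$,
\[
\dim_{pRp}(pD_n) = \tfrac1{\mu(A)}\dim_R(D_n), \qquad \lognorm_{pRp}(p\partial^D_{n+1}) \le \tfrac1{\mu(A)}\lognorm_R(\partial^D_{n+1}),
\]
so taking infima gives $\medim^Z_n(\alpha,A)\le \tfrac1{\mu(A)}\medim^Z_n(\alpha)$ and $\mevol_n(\alpha,A)\le\tfrac1{\mu(A)}\mevol_n(\alpha)$. Combining (ii) with Proposition~\ref{prop:truncmarkedproj}~(iv),(v) gives the reverse inequalities $\medim^Z_n(\alpha)\le\mu(A)\medim^Z_n(\alpha,A)$ and $\mevol_n(\alpha)\le\mu(A)\mevol_n(\alpha,A)$. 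Hence equality throughout.

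The main obstacle I anticipate is bookkeeping around the marked projective structures: Proposition~\ref{prop:truncmarkedproj}~(ii) only gives \emph{some} marked $pRp$-decomposition of $pM$ (depending on a choice of finite $F\subset\Gamma$ with $F\cdot A = X$ and a measurable partition $X = \bigsqcup_{\gamma\in F}\gamma B_\gamma$), and one must check that this choice can be made uniformly across all degrees of $D_*$ and is compatible with the differentials (i.e.\ that $p\partial^D_*$ is again a map of marked projective $pRp$-modules in the new structure, so that $\lognorm_{pRp}$ is defined). This is not hard but requires care; it is essentially the observation that a single partition $(B_\gamma)_{\gamma\in F}$ of $X$ works simultaneously for every marked summand $\gen{B}_R$ appearing anywhere in $D_*$, so the decomposition of $D_*$ into marked $pRp$-summands is functorial enough for the argument. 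The exactness of $Rp\otimes_{pRp}\args$ and of $p(\args)$, the isomorphism $Rp\otimes_{pRp} p\linf{\alpha,Z}\cong_R\linf{\alpha,Z}$, and the finiteness/freeness adjustments via Remark~\ref{rem:embeddingindepres} are all routine.
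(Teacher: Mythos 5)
Your proposal is correct and follows essentially the same route as the paper: both parts (i) and (ii) are proved by applying the exact functors $pR\otimes_R\args$ and $Rp\otimes_{pRp}\args$ to a witnessing embedding, invoking Lemma~\ref{lem:fullidempotent}, Proposition~\ref{prop:truncmarkedproj}, and Remark~\ref{rem:embeddingindepres} exactly as the paper does, and the equalities then follow from the dimension and $\lognorm$ scaling. Your closing remark about the marked decomposition of $pM$ depending on the choice of partition $(B_\gamma)_{\gamma\in F}$ is a valid observation that the paper dismisses with a one-line remark; your fix (a single partition serving all marked summands simultaneously) is the right one.
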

\begin{proof}
  In view of the definition of $\medim$ and $\mevol$ in terms of
  measured embeddings and the compatibility of $pR \otimes_R \args$
  and $Rp \otimes_{pRp}\args$ with dimensions and $\lognorm$
  (Proposition~\ref{prop:truncmarkedproj}), it suffices to show the two
  claims on measured embeddings.

  We abbreviate~$L \coloneqq \linf {\alpha,Z}$.
  
  (i)  Let $(D_*,\eta) \in \Aug_n(\alpha,Z)$. We choose a free
  $R$-resolution~$(C_*,\zeta)$ of~$L$ of finite type. Then there
  exists an $R$-chain map~$f_* \colon C_* \to D_*$ extending~$\id_L$
  (Remark~\ref{rem:embeddingindepres}).

  Applying the functor~$pR \otimes_R \args$, we obtain a $pRp$-chain
  map~$pf_* \colon pC_* \to pD_*$ extending~$\id_{pL}$.  The complexes
  $pC_*$ and $pD_*$ consist of marked projective $pRp$-modules
  (Proposition~\ref{prop:truncmarkedproj}). Moreover, $pR
  \otimes_R \args$ is exact, because $pR$ is projective (as $p$ is
  idempotent); hence, $(pC_*,p\zeta)$ is a projective $pRp$-resolution
  of~$pL$ and $(pD_*,p\eta)$ augments to~$pL$.

  Therefore, $pf_*$ witnesses that there also exists an
  $(\alpha,A,Z)$-measured embedding with target~$(pD_*,p\eta)$
  (Remark~\ref{rem:embeddingindepres}); i.e., $(pD_*,p\eta) \in
  \Aug_n(\alpha,A,Z)$.
  
  (ii) Conversely, let $(D_*,\eta) \in \Aug_n(\alpha,A,Z)$. Let
  $(C_*,\zeta)$ be a free $R$-resolution of~$L$. Because $pR$ is
  projective over~$R$ (whence flat), the induced
  complex~$(pC_*,p\zeta)$ is a projective $pRp$-resolution of~$pL$.
  Hence, there exists a $pRp$-chain map~$f_* \colon pC_* \to D_*$
  extending~$\id_{pL}$.

  Applying the functor~$Rp \otimes_{pRp} \args$ and the canonical
  natural isomorphism~$Rp \otimes_{pRp} pR \otimes_R \args \cong \id$
  from Lemma~\ref{lem:fullidempotent}, we thus obtain an $R$-chain
  map~$\id \otimes_{pRp} f_* \colon C_* \cong_R Rp \otimes_{pRp} pC_* \to Rp \otimes_{pRp} D_*$
  extending the identity on~$L \cong_R Rp\otimes_{pRp} pL$.

  Moreover, $Rp \otimes_{pRp} D_*$ consists of marked projective
  $pRp$-modules (Proposition~\ref{prop:truncmarkedproj}) and $(Rp
  \otimes_{pRp} D_*, \id \otimes_{pRp} \eta)$ augments to~$L \cong_R
  Rp\otimes_{pRp} pL$ (by right-exactness of $Rp \otimes_{pRp}
  \args$). 

  Therefore, $\id \otimes_{pRp} f_*$ witnesses that there also exists
  an $(\alpha,Z)$-measured embedding with target~$(Rp \otimes_{pRp} D_*, \id \otimes_{pRp} \eta)$
  (Remark~\ref{rem:embeddingindepres});
  i.e., $(Rp \otimes_{pRp} D_*, \id \otimes_{pRp} \eta)$
   lies in~$\Aug_n(\alpha,Z)$.
\end{proof}

\begin{prop}\label{prop:wbOEemb}
  Let $n \in \N$ and let $\Gamma$, $\Lambda$ be groups
  of type~$\FP_{n+1}$. Suppose that there exist
  standard actions~$\alpha \colon \Gamma \actson (X,\mu)$
  and $\beta \colon \Lambda \actson (Y,\nu)$ that
  admit a weak bounded orbit equivalence~$\varphi \colon A \to B$.
  Then:
  \begin{align*}
    \medim^Z_n (\alpha, A)
    & = \medim^Z_n(\beta,B)
    \\
    \mevol_n (\alpha, A)
    & = \mevol_n(\beta,B)
    \quad \text{if $Z = \Z$.}
  \end{align*}
  More precisely: The maps~$\varphi$ and $\psi \coloneqq \varphi^{-1}$ induce
  mutually inverse isomorphisms
  \[ \varphi^* \colon
  (\chi_B,1) \cdot \bigl(\linf{\beta,Z} * \Lambda\bigr) \cdot (\chi_B,1)
  \leftrightarrow
  (\chi_A,1) \cdot \bigl(\linf{\alpha,Z} * \Gamma\bigr) \cdot (\chi_A,1)
  :\! \psi^*
  \]
  of unital rings, which allow to pull back and push forward
  module structures, such that:
  \begin{enumerate}[label=\enum]
  \item
    \label{it:wbOEembphi}
    If $(D_*,\eta) \in \Aug_n(\alpha,A,Z)$, then
    $(\varphi^*D_*, \varphi^*\eta) \in \Aug_n(\beta,B,Z)$.
  \item
    \label{it:wbOEembpsi}
    If $(D_*,\eta) \in \Aug_n(\beta,B,Z)$, then
    $(\psi^*D_*,\psi^*\eta) \in \Aug_n(\alpha,A,Z)$.
  \end{enumerate}
\end{prop}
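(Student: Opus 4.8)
The plan is to prove Proposition~\ref{prop:wbOEemb} by first constructing the ring isomorphisms $\varphi^*$ and $\psi^*$ between the two truncated crossed product rings, and then showing that these isomorphisms match up the relevant classes of measured embeddings. Write $R_\Gamma \coloneqq \linf{\alpha,Z} * \Gamma$, $p \coloneqq (\chi_A,1) \in R_\Gamma$, and symmetrically $R_\Lambda \coloneqq \linf{\beta,Z}*\Lambda$, $q \coloneqq (\chi_B,1) \in R_\Lambda$. The key point is that both $p R_\Gamma p$ and $q R_\Lambda q$ can be identified with the \emph{orbit relation ring of the restricted/truncated equivalence relation}: concretely, $p R_\Gamma p$ embeds into $Z\Rrel_\alpha$ via the inclusion of Section~\ref{subsec:rings}, and its image consists precisely of those elements of $Z\Rrel_\alpha$ supported on $A \times A$ within the orbit relation (using that $A$ is $\alpha$-cofinite, one checks this image is closed under the convolution product and equals $p R_\Gamma p$). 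Since $\varphi \colon A \to B$ is a weak bounded orbit equivalence, it identifies $\Rrel_\alpha|_{A\times A}$ with $\Rrel_\beta|_{B\times B}$ as measured equivalence relations (the finiteness conditions $F(\gamma)$, $E(\lambda)$ in Definition~\ref{def:wbOE} guarantee that the $N_1$- and $N_2$-bounds are preserved, so convolution-finite functions go to convolution-finite functions), and the measure-scaling condition $\frac1{\mu(A)}\varphi_*\mu|_A = \frac1{\nu(B)}\nu|_B$ ensures that $\varphi$ is measure-preserving after the normalisation built into the truncated dimension. Thus $\varphi$ induces a unital ring isomorphism $\varphi^* \colon q R_\Lambda q \to p R_\Gamma p$, with inverse $\psi^*$ induced by $\psi = \varphi^{-1}$.

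Next I would check that $\varphi^*$ is compatible with the marked projective structures and with dimensions and $\lognorm$. A marked projective $(q R_\Lambda q, P_B)$-module is a finite direct sum of $\gen{C_j}_{qR_\Lambda q}$ with $C_j \subset B$ measurable; applying $\varphi^{-1}$ componentwise sends this to $\bigoplus_j \gen{\varphi^{-1}(C_j)}_{pR_\Gamma p}$, and the normalisation in $\dim_{pRp}$ (dividing by $\mu(A)$, resp.\ $\nu(B)$) together with the measure-scaling condition gives $\dim_{pR_\Gamma p}(\psi^* D_j) = \dim_{qR_\Lambda q}(D_j)$. The same bookkeeping shows the operator norm $\|\cdot\|$ is preserved (via the explicit description in Proposition~\ref{prop:opnormalt}, which only involves counting coefficients over subsets of positive measure — a property invariant under the measure-class isomorphism $\varphi$), hence the marked rank and $\lognorm$ are preserved as well. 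This is the routine but somewhat lengthy part. It then follows formally that $\varphi^*$ carries free resolutions to free resolutions, $(q R_\Lambda q, P_B)$-marked projective augmented chain complexes to $(p R_\Gamma p, P_A)$-marked projective augmented chain complexes, and chain maps to chain maps; since $\varphi^*(q\linf{\beta,Z}) \cong p\linf{\alpha,Z}$ as modules (both being $\linf{A,Z} \cong \linf{B,Z}$ under $\varphi$), a $(\beta,B,Z)$-measured embedding is carried to an $(\alpha,A,Z)$-measured embedding, giving parts~\ref{it:wbOEembphi} and \ref{it:wbOEembpsi}. Taking infima over all such embeddings yields $\medim^Z_n(\alpha,A) = \medim^Z_n(\beta,B)$ and, for $Z = \Z$, $\mevol_n(\alpha,A) = \mevol_n(\beta,B)$.

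Combining Proposition~\ref{prop:wbOEemb} with Proposition~\ref{prop:truncemb} then proves Theorem~\ref{thm:wbOE}: we have $\medim^Z_n(\alpha) = \mu(A)\cdot\medim^Z_n(\alpha,A) = \mu(A)\cdot\medim^Z_n(\beta,B) = \mu(A)\cdot \frac1{\nu(B)}\cdot \bigl(\nu(B)\cdot\medim^Z_n(\beta,B)\bigr) = \frac{\mu(A)}{\nu(B)}\cdot\medim^Z_n(\beta) = c\cdot\medim^Z_n(\beta)$, and analogously for $\mevol$.

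The main obstacle I anticipate is verifying cleanly that $p R_\Gamma p$ really is the ``truncated orbit relation ring on $A$'' and that a weak bounded orbit equivalence induces an isomorphism of these rings \emph{respecting the operator norm}. The finiteness data $F(\gamma), E(\lambda)$ in the definition of weak bounded orbit equivalence is exactly what is needed to keep the quantities $N_1, N_2$ (hence membership in the convolution-finite ring, and hence norm bounds) under control, but making this precise requires carefully tracking how $\varphi$ transports the crossed-product presentation of an element of $pR_\Gamma p$ into a crossed-product presentation over $R_\Lambda$ — the ``change of groups'' is not a bijection on the level of group elements, only a bounded-to-one correspondence, so one must argue at the level of the orbit relation rather than the crossed product. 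Once that identification is in place, everything else is formal transport of structure plus the dimension/norm bookkeeping already developed in Sections~\ref{sec:basics}--\ref{sec:lognorm}.
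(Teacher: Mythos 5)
Your overall route is the paper's route: pass to the restricted equivalence relation on $A\times A$, use $\varphi$ to identify it with the restriction to $B\times B$, transport module structures, and check that dimensions and $\lognorm$ are preserved by the normalisations built into the truncated invariants; the final deduction of Theorem~\ref{thm:wbOE} from Proposition~\ref{prop:truncemb} is also exactly as in the paper. However, there is one genuine error at the heart of your construction of $\varphi^*$: the claim that the image of $pR_\Gamma p$ in $Z\Rrel_\alpha$ \emph{equals} the set of all elements of the equivalence relation ring supported on $A\times A$. This is false, and cofiniteness of $A$ does not help: already for $A=X$ the crossed product ring $\linf{\alpha,Z}*\Gamma$ is a proper subring of $Z\Rrel_\alpha$ whenever $\Gamma$ is infinite, because an element of $Z\Rrel_\alpha$ may involve infinitely many group elements (e.g.\ $\lambda(\gamma x,x)=\chi_{X_n}(x)$ for $\gamma=n$ on a countable partition $X=\bigsqcup_n X_n$ has $N_1=N_2=1$ but lies outside the crossed product ring). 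Consequently, an isomorphism of the truncated equivalence relation rings does not automatically restrict to an isomorphism $qSq\leftrightarrow pRp$ of the crossed product subrings, which is what the statement requires.

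Relatedly, you assign the wrong role to the finiteness data $E(\lambda)$, $F(\gamma)$: preservation of $N_1$ and $N_2$ under an isomorphism of equivalence relations is automatic (it is just a relabelling of rows and columns), so that is not what these sets are for. They are needed precisely to repair the gap above: they guarantee that the image of a crossed product element $q\cdot(g,\lambda)\cdot q$ under the relation-ring isomorphism is supported on \emph{finitely many} $\Gamma$-translates, namely
\[
\varphi^*\bigl(q\cdot(g,\lambda)\cdot q\bigr)
=\sum_{\gamma\in E(\lambda)} p\cdot\bigl(g\circ\varphi|_{A_{\gamma,\lambda}},\gamma\bigr)\cdot p,
\qquad
A_{\gamma,\lambda}=\{x\in A\cap\gamma^{-1}A\mid \varphi(\gamma x)=\lambda\varphi(x)\},
\]
and hence lands in $pR_\Gamma p$ rather than merely in $Z\calR_{\alpha,A}$; the symmetric statement for $\psi^*$ uses $F(\gamma)$. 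This is exactly how the paper proceeds (writing down this formula and checking multiplicativity by embedding both sides into the restricted equivalence relation rings). Once the construction of $\varphi^*$ is corrected in this way, the remainder of your argument — transport of marked projective structures, the dimension identity $\mu(\widetilde A)/\mu(A)=\nu(\varphi(\widetilde A))/\nu(B)$, preservation of $\lognorm$, and the identification $\varphi^*p\linf{\alpha,Z}\cong q\linf{\beta,Z}$ — goes through and matches the paper.
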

\begin{proof}
  We abbreviate~$R \coloneqq \linf{\alpha,Z} * \Gamma$, $p\coloneqq (\chi_A,1) \in R$,
  $S \coloneqq \linf{\beta,Z}*\Lambda$, and $q \coloneqq (\chi_B,1)$.
  For~$\lambda \in \Lambda$, let $E(\lambda) \subset \Gamma$ be
  a finite set for~$\varphi$ as in Definition~\ref{def:wbOE}.
  Then \begin{align*}
    \varphi^* \colon 
    qSq & \to pRp
    \\
    q \cdot (g,\lambda) \cdot q
    & \mapsto \sum_{\gamma \in E(\lambda)}
    p \cdot (g \circ \varphi|_{A_{\gamma,\lambda}},\gamma) \cdot p
  \end{align*}
  with~$A_{\gamma,\lambda} \coloneqq \{x \in A \cap \gamma^{-1}\cdot A \mid \varphi(\gamma \cdot x)
  = \lambda \cdot \varphi(x)\}$ 
  gives a well-defined, unital ring homomorphism. 
      The corresponding
  map~$\psi^*$ for~$\psi$ witnesses that $\varphi^*$ is an isomorphism.
  To see this one passes to the (restricted) equivalence relation rings. Let $R_{\alpha, A}\coloneqq \{ (\gamma\cdot x,x)\mid x\in A, \gamma\cdot x\in A, \gamma\in\Gamma\}$ be the 
  equivalence relation ring of the orbit equivalence relation of~$\alpha$ restricted to $A\times A$. Similarly, we define $\calR_{\beta, B}$. 
  Clearly, $\varphi$ induces a measure preserving isomorphism $\calR_{\alpha, A}\cong \calR_{\beta, B}$ of equivalence relations and thus a ring isomorphism $Z\calR_{\beta,B}\xrightarrow{\cong}Z\calR_{\alpha, A}$, which restricts to $\varphi^\ast$ in the following way. Consider the following commutative square of ring homomorphisms. 
  \[\begin{tikzcd}
  qSq\ar[d, hook]\ar[r, "\varphi^\ast"] & pRp\ar[d, hook]\\
  Z\calR_{\beta, B}\ar[r, "\cong"]& Z\calR_{\alpha, A}
  \end{tikzcd}
    \]
  The left vertical map maps $q(g,\lambda)q=(qg(\lambda\cdot q), \lambda)$ to the function $f\colon \calR_{\beta, B}\to Z$ such that 
  \[ f(\lambda'\cdot y, y)=\begin{cases}
                              q(\lambda\cdot y)g(\lambda\cdot y)q(y)& \text{ if $\lambda'=\lambda$;}\\
                              0                                     & \text{ otherwise.}
  \end{cases}
    \]
  The left vertical map is a ring homomorphism (cf.~Subsection~\ref{subsec:rings}). The right vertical map is defined similarly. We verify that the diagram 
  commutes. The element~$f$ is mapped under the lower horizontal map (induced by~$\varphi$) to $f'\in Z\calR_{\alpha, A}$ with 
  \[ f'(\gamma\cdot x, x)=\begin{cases}
                              q(\lambda\cdot \varphi(x))g(\lambda\cdot \varphi(x))q(\varphi(x))& \text{ if $\varphi(\gamma\cdot x)=\lambda\cdot\varphi(x)$;}\\
                              0                                     & \text{ otherwise.}
  \end{cases}
    \]
  Hence $f'$ can be expressed as a sum of functions $f'_\gamma$ ranging over $\gamma\in E(\lambda)$ such that $f'_\gamma$ is supported on $A_{\gamma, \lambda}$. The function $f'_\gamma$ is the image of the $\gamma$-summand in the formula for $\varphi^\ast(q\cdot (g,\lambda)\cdot q)$. 
  So the square commutes. 

  Pulling back the module structure along~$\varphi^*$ defines an exact 
  functor (even an equivalence) from the category of $pRp$-modules to
  the category of $qSq$-modules. By construction,
  $\varphi^* \gen{A}_{pRp} \cong_{qSq} \gen{B}_{qSq}$; 
  therefore, for all measurable subsets~$\widetilde A \subset A$, we obtain
  \[ \varphi^* \gen{\widetilde A}_{pRp}
  \cong_{qSq} \gen{\varphi(\widetilde A)}_{qSq}
  \]
  and
  \[
  \dim_{pRp} (\gen{\widetilde A}_{pRp})
  = \frac{\mu(\widetilde A)}{\mu(A)}
  = \frac{\nu(\varphi(\widetilde A))}{\nu(B)}
  = \dim_{qSq} (\gen{\varphi(\widetilde A)}_{qSq}).
  \]
  Hence, this functor canonically turns marked projective
  $pRp$-modules into marked projective $qSq$-modules with
  the same dimensions; similarly, the logarithmic norm
  of homomorphisms between marked projective modules is preserved.
  Furthermore, we have $\varphi^* p\linf{\alpha,Z} \cong_{qSq} q\linf{\beta,Z}$.

  Analogous statements hold for~$\psi^*$.
  
  This shows the claims~\ref{it:wbOEembphi} and~\ref{it:wbOEembpsi}.
  In view of the compatibility with the dimensions and the logarithmic
  norms, also the statements on measured embedding dimension and
  measured embedding volume follow.
\end{proof}

\begin{proof}[Proof of Theorem~\ref{thm:wbOE}]
  Let $\alpha \colon \Gamma \actson (X,\mu)$ and $\beta \colon \Lambda \actson
  (Y,\nu)$ be the given actions and let $A \subset X$ and $B \subset Y$
  be cofinite sets for which there exists a weak bounded orbit
  equivalence~$\varphi \colon A \to B$. Furthermore, let $n \in \N$.
  We then obtain
  \begin{align*}
    \mevol_n (\alpha)
    & = \mu(A) \cdot \mevol_n(\alpha, A)
    & \text{(Proposition~\ref{prop:truncemb})}
    \\
    & = \mu(A) \cdot \mevol_n(\beta,B)
    & \text{(Proposition~\ref{prop:wbOEemb})}
    \\
    & = \frac{\mu(A)}{\nu(B)} \cdot \nu(B) \cdot \mevol_n(\beta,B)
    \\
    & =\frac{\mu(A)}{\nu(B)} \cdot \mevol_n (\beta)
    & \text{(Proposition~\ref{prop:truncemb})}.
  \end{align*}
  By definition, the first quotient is the index of~$\varphi$.

  The proof for $\medim^Z_n$ works in the same way.
\end{proof}

\subsection{Example: Hyperbolic $3$-manifolds}
\label{sec:ex_hyperbolic}

Let $M$ be a hyperbolic $3$-manifold of finite volume, let $\Gamma \coloneqq
\pi_1(M)$, and let $\Gamma_*$ be a residual chain in~$\Gamma$.
Conjecturally, it is expected that $\widehat t_1(\Gamma,\Gamma_*) =
\vol(M) / 6 \pi$ holds~\cite{bergeron_venkatesh},
where $\vol(M)$
denotes the hyperbolic volume of~$M$. L\^e~\cite[Theorem~1.1]{le_hyperbolic}
proved that indeed
\[ \widehat t_1(\Gamma,\Gamma_*) \leq \frac{\vol(M)}{6 \cdot \pi}
\]
holds (even more generally in the context of homology torsion growth
of orientable, irreducible, compact $3$-manifolds with empty or
toroidal boundary).

In the following, we reproduce the existence of a volume-linear upper
bound for torsion homology growth of closed hyperbolic $3$-manifolds
via the dynamical approach. We follow the strategy for the dynamical
computation of stable integral simplicial volume of
$3$-manifolds~\cite{loeh_pagliantini,FLMQ}.

\begin{thm}\label{thm:mevolhyp3}
  Let $M$ and $N$ be oriented closed connected hyperbolic $3$-manifolds
  and let $\Gamma \coloneqq \pi_1(M)$, $\Lambda \coloneqq \pi_1(N)$. Then
  \begin{align*}
      \frac{\mevol_1 (\Gamma \actson \widehat \Gamma)}{\vol (M)}
      & = \frac{\mevol_1 (\Lambda \actson \widehat \Lambda)}{\vol (N)}
      \\
      \frac{\medim^Z_1 (\Gamma \actson \widehat \Gamma)}{\vol (M)}
      & = \frac{\medim^Z_1 (\Lambda \actson \widehat \Lambda)}{\vol (N)}.
  \end{align*}
\end{thm}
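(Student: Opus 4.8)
The plan is to realize $\Gamma = \pi_1(M)$ and $\Lambda = \pi_1(N)$ as cocompact lattices in the fixed locally compact second countable group $G \coloneqq \Isom(\hyp^3)$ (for oriented $M,N$ one may take $G = \mathrm{PSL}_2(\IC)$), to transport standard actions between $\Gamma$ and $\Lambda$ via the induction functor associated with the cocompact coupling $G$ (which is a weak bounded orbit equivalence of index equal to the covolume ratio), and then to exploit that both groups satisfy $\EMD^*$, so that the profinite completion action is extremal. The proportionality will drop out from a two-sided comparison.

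First I would fix the geometric data. Normalising the Haar measure on $G$ so that $\mathrm{covol}_G(\Gamma') = \vol(M')$ for every torsion-free cocompact lattice $\Gamma' = \pi_1(M')$, we get $c \coloneqq \mathrm{covol}_G(\Gamma)/\mathrm{covol}_G(\Lambda) = \vol(M)/\vol(N)$ (this needs no rigidity theorem, only that $M = \Gamma\backslash\hyp^3$ isometrically). The groups $\Gamma,\Lambda$ are torsion-free, residually finite, and of type $\FP_\infty$. Next I would introduce, for any standard $\Lambda$-action $\beta\colon \Lambda\actson(Y,\nu)$, the induced standard $\Gamma$-action $\Ind\Lambda\Gamma\beta$: it is the $\Gamma$-space $\Lambda\backslash(G\times Y)$, where $\Gamma$ acts on the $G$-factor by left translation, $\Lambda$ acts diagonally (by right translation on $G$ and via $\beta$ on $Y$), and a measurable $G$-fundamental domain carries the normalised finite invariant measure. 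Since $\Gamma$ acts freely on $G$, $\beta$ is essentially free, and $\Gamma$ is torsion-free, $\Ind\Lambda\Gamma\beta$ is again an essentially free measure preserving action on a standard probability space. The crucial point — a direct generalisation of Example~\ref{ex:uniform:lattices:wbOE}, which is exactly the case $Y = \pt$ (so $\Ind\Lambda\Gamma(\pt) = (\Gamma\actson G/\Lambda)$) — is that, because $\Gamma$ and $\Lambda$ are cocompact, the orbit cocycle relating $\Ind\Lambda\Gamma\beta$ and $\beta$ is bounded; hence $\Ind\Lambda\Gamma\beta$ and $\beta$ are weakly bounded orbit equivalent of index $c$ in the sense of Definition~\ref{def:wbOE}. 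Symmetrically, $\Ind\Gamma\Lambda\alpha$ is weakly bounded orbit equivalent to $\alpha$ of index $c^{-1}$.

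With these tools the proof is a short chain of estimates. Since $\Ind\Lambda\Gamma(\Lambda\actson\widehat\Lambda)$ is a standard $\Gamma$-action and $\Gamma$ satisfies $\EMD^*$, Corollary~\ref{cor:EMDreduction} gives $\mevol_1(\Gamma\actson\widehat\Gamma) \le \mevol_1\bigl(\Ind\Lambda\Gamma(\Lambda\actson\widehat\Lambda)\bigr)$, and Theorem~\ref{thm:wbOE} identifies the right-hand side with $c\cdot\mevol_1(\Lambda\actson\widehat\Lambda)$. Running the same argument with the roles of $\Gamma$ and $\Lambda$ exchanged yields $\mevol_1(\Lambda\actson\widehat\Lambda) \le c^{-1}\cdot\mevol_1(\Gamma\actson\widehat\Gamma)$, i.e.\ $c\cdot\mevol_1(\Lambda\actson\widehat\Lambda) \le \mevol_1(\Gamma\actson\widehat\Gamma)$. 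Combining the two inequalities gives $\mevol_1(\Gamma\actson\widehat\Gamma) = c\cdot\mevol_1(\Lambda\actson\widehat\Lambda)$, and dividing by $\vol(M) = c\cdot\vol(N)$ gives the asserted equality for $\mevol_1$. Replacing $\mevol_n$ by $\medim^Z_n$ throughout (both are covered by Corollary~\ref{cor:EMDreduction} and Theorem~\ref{thm:wbOE}) handles $\medim^Z_1$.

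I expect the main obstacle to be the second paragraph: constructing $\Ind\Lambda\Gamma\beta$ precisely, checking that it is an essentially free standard action, and — most delicately — verifying that the resulting stable orbit equivalence with $\beta$ is genuinely a \emph{weak bounded} orbit equivalence of index exactly $c$ (boundedness of the cocycle is where cocompactness of the lattices is used). This is classical measured group theory for cocompact lattices, but it must be spelled out beyond the bare statement of Example~\ref{ex:uniform:lattices:wbOE}; the remainder of the argument is a purely formal combination of results already proved in the paper.
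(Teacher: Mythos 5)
Your overall strategy (realise both groups as cocompact lattices in $G=\Isom^+(\hyp^3)$, use $\EMD^*$ via Corollary~\ref{cor:EMDreduction} to dominate the profinite action by an induced action, apply Theorem~\ref{thm:wbOE}, and close with a symmetric argument) is the same as the paper's, but the pivotal claim in your second paragraph is false: the induced action $\Ind{\Lambda}{\Gamma}\beta=\Gamma\actson\Lambda\backslash(G\times Y)$ is \emph{not} weakly bounded orbit equivalent to $\beta$. The two actions that the coupling $\Omega=G\times Y$ relates are its two quotients $\Gamma\actson\Omega/\Lambda$ and $\Lambda\actson\Omega/\Gamma$, and the latter is the \emph{diagonal} action of $\Lambda$ on $(\Gamma\backslash G)\times Y$, i.e.\ $\beta_\Lambda\times\beta$ with $\beta_\Lambda\colon\Lambda\actson G/\Gamma$ --- not $\beta$ itself. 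You can see the failure already in your own base case $Y=\pt$: Example~\ref{ex:uniform:lattices:wbOE} pairs $\Gamma\actson G/\Lambda$ with $\Lambda\actson G/\Gamma$, whereas your claim would pair $\Gamma\actson G/\Lambda$ with the trivial $\Lambda$-action on a point, which is absurd (one orbit relation is essentially free with infinite classes, the other is trivial). Only in the finite-index situation $G=\Gamma$ does $\Gamma\backslash G$ collapse to a point, so that $\Ind{\Lambda}{\Gamma}\beta$ really is WBOE to $\beta$ (Proposition~\ref{prop:finite_index}); that special case does not generalise.

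As a result your chain of inequalities does not close: Theorem~\ref{thm:wbOE} gives $\mevol_1\bigl(\Ind{\Lambda}{\Gamma}\alpha_\Lambda\bigr)=c\cdot\mevol_1(\alpha_\Lambda\times\beta_\Lambda)$ (with $\alpha_\Lambda$ the profinite completion action of $\Lambda$), and you still must compare $\mevol_1(\alpha_\Lambda\times\beta_\Lambda)$ with $\mevol_1(\alpha_\Lambda)$. This is precisely the paper's additional step (Lemma~\ref{lem:productaction}): since $\alpha_\Lambda\prec\alpha_\Lambda\times\beta_\Lambda$, monotonicity under weak containment (Theorem~\ref{thm:wc}) yields $\mevol_1(\alpha_\Lambda\times\beta_\Lambda)\le\mevol_1(\alpha_\Lambda)$, and the inequality points in the right direction for the two-sided argument to close. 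You would also need to justify carefully that the quotient actions of the coupling $\widehat\Lambda\times G$ are the stated product actions and that the resulting stable orbit equivalence is bounded of index $c$ (the paper invokes Furman's lemma for this). With these corrections your argument coincides with the paper's proof; the same remarks apply verbatim to $\medim^Z_1$.
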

\begin{proof}
  We denote the profinite completion standard actions
  by $\alpha_\Gamma \colon \Gamma \actson \widehat \Gamma$
  and $\alpha_\Lambda \colon \Lambda \actson \widehat \Lambda$,
  respectively. 
  Let $\overline \alpha_\Lambda \colon \Gamma \actson \widehat \Lambda$
  denote the trivial action of~$\Gamma$ on~$\widehat \Lambda$, 
  which is not essentially free. 

  We compare $\Gamma$ and~$\Lambda$ through their actions
  on hyperbolic $3$-space: 
  Let $\beta_\Gamma \colon \Gamma \actson G/\Lambda$
  and $\beta_\Lambda \colon \Lambda \actson G/\Gamma$
  be the canonical standard actions on~$G \coloneqq \Isom^+(\hyp^3)$
  associated with the
  hyperbolic $3$-manifolds $M$ and~$N$. These actions are
  mixing~\cite[Theorem~III.2.1]{bekkamayer} 
  and 
  weakly bounded orbit equivalent with index~$\vol(M)/\vol(N)$ (Example~\ref{ex:uniform:lattices:wbOE}).
  
  As the group~$\Gamma$ satisfies~$\EMD^*$~(Example~\ref{ex:EMD})
  we obtain
  \begin{align*}
    \mevol_1 (\alpha_\Gamma)
    & \leq \mevol_1 (\overline\alpha_\Lambda \times \beta_\Gamma)
    & \text{(Corollary~\ref{cor:EMDreduction})}
  \end{align*}
  for the diagonal action~$\overline \alpha_\Lambda \times
  \beta_\Gamma$ of~$\Gamma$ on~$\widehat \Lambda \times G/\Lambda$. 
  The standard actions~$\overline\alpha_\Lambda \times \beta_\Gamma$
  and $\alpha_\Lambda \times \beta_\Lambda$ are weakly bounded orbit
  equivalent with index~$\vol(M)/\vol(N)$. 
  This follows from that fact that the index of the measure coupling $\widehat\Lambda\times G$  
  that gives rise to this weak bounded orbit equivalence has index $\vol(M)/\vol(N)$ and from~\cite[Lemma~3.2]{furman}. 
  Therefore, Theorem~\ref{thm:wbOE} shows that
  \begin{align*}
    \mevol_1 (\alpha_\Gamma)
    & \leq \mevol_1 (\overline \alpha_\Lambda \times \beta_\Gamma)
    \\
    & \leq \frac{\vol(M)}{\vol(N)}
    \cdot \mevol_1 (\alpha_\Lambda \times \beta_\Lambda)
    & \text{(Theorem~\ref{thm:wbOE})}
    \\
    & \leq 
    \frac{\vol(M)}{\vol(N)}
    \cdot \mevol_1 (\alpha_\Lambda).
    & \text{(Lemma~\ref{lem:productaction})}
  \end{align*}
  Symmetrically, we obtain~$\mevol_1 (\alpha_\Lambda) \leq
  \vol(N)/\vol(M) \cdot \mevol_1(\alpha_\Gamma)$.
  The argument for~$\medim^Z_1$ works in the same way.
\end{proof}

\begin{lem}\label{lem:productaction}
  Let $\Gamma$ be a countable group, let $\alpha \colon \Gamma \actson
  (X,\mu)$ be a standard action and let $\beta \colon \Gamma \actson
  (Y,\nu)$ be a probability measure preserving action (not necessarily
  essentially free) on a standard Borel probability space. We
  write~$\alpha \times \beta \colon \Gamma \actson (X \times Y, \mu
  \otimes \nu)$ for the associated diagonal standard action.
  Let $n \in \N$ and let $\Gamma$ be of type~$\FP_{n+1}$.
  Then
  \begin{align*}
    \medim^Z_n (\alpha \times \beta)
    & \leq \medim^Z_n (\alpha)
    \\
    \mevol_n (\alpha \times \beta)
    & \leq \mevol_n (\alpha).
  \end{align*}
\end{lem}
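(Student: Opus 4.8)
The plan is to push any $\alpha$-embedding forward to an $(\alpha\times\beta)$-embedding along the base change induced by the coordinate projection $\mathrm{pr}_X\colon X\times Y\to X$, and to check that this operation changes neither the dimension of the top module nor the logarithmic norm of the top boundary map (and in fact leaves both invariant). Throughout, write $R\coloneqq L^\infty(\alpha,Z)*\Gamma$ and $\widetilde R\coloneqq L^\infty(\alpha\times\beta,Z)*\Gamma$.

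\emph{Setting up the base change.} Since $\mathrm{pr}_X$ is $\Gamma$-equivariant and $\nu$ is a probability measure, pullback of functions is a $\Gamma$-equivariant, $\ell^1$-isometric ring homomorphism $L^\infty(\alpha,Z)\hookrightarrow L^\infty(\alpha\times\beta,Z)$, $\chi_A\mapsto\chi_{A\times Y}$, which extends to a ring homomorphism $\iota\colon R\to\widetilde R$, $(\chi_A,\gamma)\mapsto(\chi_{A\times Y},\gamma)$, restricting to the identity on $Z\Gamma$. Base change along $\iota$ turns a marked projective $R$-module $M=\bigoplus_i\langle A_i\rangle$ into the marked projective $\widetilde R$-module $\iota_*M\coloneqq\widetilde R\otimes_R M\cong\bigoplus_i\langle A_i\times Y\rangle$ with $\dim(\iota_*M)=\dim(M)$ (because $(\mu\otimes\nu)(A_i\times Y)=\mu(A_i)$), and an $R$-homomorphism $f$ into the $\widetilde R$-homomorphism $\iota_*f\coloneqq\id_{\widetilde R}\otimes f$. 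Moreover, since $\iota$ intertwines the $R$-module structures on $L^\infty(\alpha,Z)$ and $L^\infty(\alpha\times\beta,Z)$, the assignment $s\otimes h\mapsto s\cdot\iota(h)$ is a well-defined $\widetilde R$-homomorphism $\widetilde R\otimes_R L^\infty(\alpha,Z)\to L^\infty(\alpha\times\beta,Z)$, and it is surjective because $1$ generates $L^\infty(\alpha\times\beta,Z)$ over $\widetilde R$.

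\emph{Base change does not increase $\lognorm$.} The argument mirrors Lemma~\ref{lem:lognorm_ind_res}~\ref{item:lognorm_ind}. Dimension is preserved by the previous paragraph. For the operator norm, I would use the explicit description of Proposition~\ref{prop:opnormalt}: a reduced presentation of an $R$-homomorphism $f$ with coefficient sets $(U_k)_k$ in $X$ becomes a reduced presentation of $\iota_*f$ with coefficient sets $(U_k\times Y)_k$ in $X\times Y$ and the same coefficients $a_{i,j,k,\gamma}$, and since $\gamma(U_k\times Y)=(\gamma U_k)\times Y$ one has $(\mu\otimes\nu)\bigl(\bigcap_{(k,\gamma)\in L}\gamma(U_k\times Y)\bigr)=\mu\bigl(\bigcap_{(k,\gamma)\in L}\gamma U_k\bigr)$; hence the maxima defining $\|f\|$ and $\|\iota_*f\|$ agree, so $\|\iota_*f\|=\|f\|$. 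For the marked rank, any marked direct summand $N'\subseteq N$ with $f(M)\subseteq N'$ yields a marked direct summand $\iota_*N'\subseteq\iota_*N$ of the same dimension with $(\iota_*f)(\iota_*M)\subseteq\iota_*N'$, so $\rk(\iota_*f)\le\rk(f)$. Finally, every marked decomposition of $M$ base changes to a marked decomposition of $\iota_*M$ with identical summand dimensions. Plugging these three facts into Definition~\ref{def:lognorm} gives $\lognorm(\iota_*f)\le\lognorm(f)$.

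\emph{Assembling the embedding.} Given an $\alpha$-embedding $f_*\colon C_*\to D_*$ with augmentation $\eta\colon D_0\to L^\infty(\alpha,Z)$, set $\widetilde D_*\coloneqq\widetilde R\otimes_R D_*$ with boundary maps $\id_{\widetilde R}\otimes\partial^D_*$, and let $\widetilde\eta$ be the composite $\widetilde D_0\xrightarrow{\id\otimes\eta}\widetilde R\otimes_R L^\infty(\alpha,Z)\twoheadrightarrow L^\infty(\alpha\times\beta,Z)$, which is surjective by right-exactness of base change. The maps $\widetilde f_r\coloneqq(d\mapsto 1\otimes d)\circ f_r\colon C_r\to\widetilde D_r$ form a $Z\Gamma$-chain map (up to degree $n+1$) with $\widetilde\eta\circ\widetilde f_0$ the constant-function inclusion $Z\to L^\infty(\alpha\times\beta,Z)$, so $(C_*,\widetilde f_*,\widetilde D_*,\widetilde\eta)$ is an $(\alpha\times\beta)$-embedding; note $\alpha\times\beta$ is again a standard action because $\alpha$ is essentially free. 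Using $\dim(\widetilde D_n)=\dim(D_n)$ and $\lognorm(\id\otimes\partial^D_{n+1})\le\lognorm(\partial^D_{n+1})$ from the previous step and taking the infimum over all $\alpha$-embeddings yields $\medim^Z_n(\alpha\times\beta)\le\medim^Z_n(\alpha)$ and $\mevol_n(\alpha\times\beta)\le\mevol_n(\alpha)$. The argument is essentially bookkeeping; the only places that need a little care are checking that the base-changed augmentation again targets $L^\infty(\alpha\times\beta,Z)$ (the surjectivity of $\widetilde R\otimes_R L^\infty(\alpha,Z)\to L^\infty(\alpha\times\beta,Z)$) and invoking the explicit norm and marked-rank descriptions to see that $\iota_*$ does not inflate $\lognorm$.
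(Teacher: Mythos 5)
Your proof is correct, but it takes a different route from the paper. The paper's proof of Lemma~\ref{lem:productaction} is a one-liner: the projection $X\times Y\to X$ shows $\alpha\prec\alpha\times\beta$, and the estimates follow from monotonicity under weak containment (Theorem~\ref{thm:wc}); the paper explicitly remarks that "alternatively, one could also prove these estimates by straightforward direct constructions of measured embeddings," which is exactly what you carry out. Your base-change construction along $\iota\colon L^\infty(\alpha,Z)*\Gamma\to L^\infty(\alpha\times\beta,Z)*\Gamma$ is sound: the identification $\widetilde R\otimes_R\langle A\rangle\cong\langle A\times Y\rangle$ preserves dimension, the reduced presentations of Setup~\ref{setup:opnormalt} transport verbatim with $\gamma(U_k\times Y)=(\gamma U_k)\times Y$ so Proposition~\ref{prop:opnormalt} gives $\|\iota_*f\|=\|f\|$, marked ranks and marked decompositions push forward, and the augmentation is handled correctly via the surjection $\widetilde R\otimes_R L^\infty(\alpha,Z)\twoheadrightarrow L^\infty(\alpha\times\beta,Z)$ (whose well-definedness you rightly check against the module structure of Remark~\ref{rem:linfX-ZR}). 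The trade-off: the paper's argument is short but leans on the entire weak-containment machinery of Section~\ref{sec:weak_containment} (translation, strictification, Gromov--Hausdorff approximation), whereas your argument is elementary, self-contained, and yields the slightly sharper information that the target complex can be chosen with \emph{equal} chain-module dimensions and non-increased $\lognorm$, exactly in the spirit of Lemma~\ref{lem:lognorm_ind_res}~\ref{item:lognorm_ind}.
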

\begin{proof}
  We have~$\alpha \prec \alpha \times \beta$ and thus the
  estimates are a consequence of monotonicity under weak containment
  (Theorem~\ref{thm:wc}).
  Alternatively, one could also prove these estimates by straightforward
  direct constructions of measured embeddings. 
\end{proof}

\begin{cor}\label{cor:torsionhyp3}
  There exists a constant~$K \in \R_{>0}$ with the following property:
  For all oriented closed connected hyperbolic $3$-manifolds~$M$,
  the fundamental group~$\Gamma \coloneqq \pi_1(M)$ satisfies
  \[ \widehat t_1(\Gamma) \leq K \cdot \vol(M).
  \]
\end{cor}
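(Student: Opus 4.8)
The plan is to feed the proportionality result for the measured embedding volume (Theorem~\ref{thm:mevolhyp3}) into the dynamical upper bound (Theorem~\ref{thm:dynupperproofsec}). Fix once and for all one oriented closed connected hyperbolic $3$-manifold~$M_0$ and set $\Gamma_0 \coloneqq \pi_1(M_0)$, with profinite completion action $\alpha_0 \colon \Gamma_0 \actson \widehat\Gamma_0$. Since $M_0$ is a closed aspherical $3$-manifold, $\Gamma_0$ is of type~$\mathrm{F}$; in particular there is a finite free $\IZ\Gamma_0$-resolution~$C_*$ of the trivial module~$\IZ$. Inducing it up to $R \coloneqq \linf{\alpha_0,\IZ} * \Gamma_0$ as in Remark~\ref{rem:fromZGtoZR} yields a marked projective $R$-chain complex $D_* \coloneqq \ind_{\IZ\Gamma_0}^{R} C_*$ augmenting over~$\linf{\alpha_0,\IZ}$, and the canonical $\IZ\Gamma_0$-chain map $C_* \to D_*$ is an $\alpha_0$-embedding. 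Hence, by definition of~$\mevol$ and Proposition~\ref{prop:lognorm}~(i),
\[
	\mevol_1(\Gamma_0 \actson \widehat\Gamma_0)
	\leq \lognorm\bigl(\partial^{D}_{2}\bigr)
	\leq \dim(D_2) \cdot \logp \bigl\|\partial^{D}_{2}\bigr\|
	< \infty ,
\]
because $D_2$ has finite rank (as $C_2$ does) and homomorphisms of marked projective $R$-modules have finite norm (Remark~\ref{rem:finnorm}). Thus $K \coloneqq \mevol_1(\Gamma_0 \actson \widehat\Gamma_0)/\vol(M_0)$ is a well-defined nonnegative real number; we do not need $\mevol_1$ to be nonzero, only finite.

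Now let $M$ be an arbitrary oriented closed connected hyperbolic $3$-manifold and put $\Gamma \coloneqq \pi_1(M)$. By Theorem~\ref{thm:mevolhyp3},
\[
	\frac{\mevol_1(\Gamma \actson \widehat\Gamma)}{\vol(M)}
	= \frac{\mevol_1(\Gamma_0 \actson \widehat\Gamma_0)}{\vol(M_0)}
	= K ,
\]
so $\mevol_1(\Gamma \actson \widehat\Gamma) = K \cdot \vol(M)$. The group $\Gamma$ is a lattice in $\Isom^+(\hyp^3)$, hence residually finite (being finitely generated linear) and of type~$\mathrm{F}$, so in particular of type~$\FP_2$; moreover the translation action on its profinite completion is free. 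Applying Theorem~\ref{thm:dynupperproofsec} with $n = 1$ and with the system~$\Gamma_*$ of all finite index normal subgroups of~$\Gamma$ (so that $\widehat\Gamma_* = \widehat\Gamma$) gives
\[
	\widehat t_1(\Gamma,\Gamma_*) \leq \mevol_1(\Gamma \actson \widehat\Gamma) = K \cdot \vol(M) .
\]
For an arbitrary residual chain~$\Gamma_*'$ in~$\Gamma$ the same conclusion holds: $\Gamma \actson \widehat\Gamma_*'$ is weakly contained in $\Gamma \actson \widehat\Gamma$ (Example~\ref{ex:wc-profin}), both actions being free \pmp\ actions, so monotonicity of the measured embedding volume under weak containment (Theorem~\ref{thm:wc}) yields $\mevol_1(\Gamma \actson \widehat\Gamma_*') \leq \mevol_1(\Gamma \actson \widehat\Gamma) = K \cdot \vol(M)$, and then Theorem~\ref{thm:dynupperproofsec} again gives $\widehat t_1(\Gamma,\Gamma_*') \leq K \cdot \vol(M)$. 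Since the right-hand side does not depend on the chosen system, this is the desired uniform bound.

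I do not expect any genuine obstacle here: the hard analytic content is already contained in Theorem~\ref{thm:dynupperproofsec} and Theorem~\ref{thm:mevolhyp3}. The only things requiring a remark are (a) the finiteness of $\mevol_1$ for the fixed base manifold, which is handled by inducing a finite free resolution as above, and (b) the standard structural facts about hyperbolic $3$-manifold groups — residual finiteness, type~$\mathrm{F}$, and freeness of the profinite (resp.\ pro-chain) translation action — which are exactly what make Theorem~\ref{thm:dynupperproofsec} and Theorem~\ref{thm:wc} applicable. It may be worth adding one sentence comparing the resulting constant with L\^e's explicit bound $\vol(M)/(6\pi)$, but that comparison is not needed for the statement.
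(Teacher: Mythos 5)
Your main argument is correct and is exactly the paper's (one-line) proof with the details filled in: fix a base manifold, check that its measured embedding volume is finite by inducing a finite free resolution, and then combine the proportionality of Theorem~\ref{thm:mevolhyp3} with the dynamical upper bound of Theorem~\ref{thm:dynupperproofsec}. That part is fine, and the finiteness remark is a worthwhile addition that the paper leaves implicit.

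The paragraph extending the bound to an arbitrary residual chain~$\Gamma'_*$, however, applies Theorem~\ref{thm:wc} in the wrong direction. That theorem states that $\alpha \prec \beta$ implies $\mevol_n(\beta) \le \mevol_n(\alpha)$: the weakly \emph{larger} action has the \emph{smaller} invariant, since it can approximately simulate any embedding available over the smaller one. From $\Gamma\actson\widehat\Gamma'_* \prec \Gamma\actson\widehat\Gamma$ (Example~\ref{ex:wc-profin}) you therefore get $\mevol_1(\Gamma\actson\widehat\Gamma) \le \mevol_1(\Gamma\actson\widehat\Gamma'_*)$, the opposite of what you assert, and this is useless for bounding $\widehat t_1(\Gamma,\Gamma'_*) \le \mevol_1(\Gamma\actson\widehat\Gamma'_*)$ from above. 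Note that Corollary~\ref{cor:EMDreduction} and the proof of Theorem~\ref{thm:mevolhyp3} only control the \emph{full} profinite completion, precisely because $\EMD^*$ only provides weak containments \emph{into}~$\widehat\Gamma$. This does not damage the corollary as intended --- $\widehat t_1(\Gamma)$ is taken with respect to the system of all finite index normal subgroups, for which your main argument already suffices --- but the residual-chain claim should either be dropped or be given a different justification.
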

\begin{proof}
  This is a direct consequence of Theorem~\ref{thm:mevolhyp3} and the
  dynamical upper bound for logarithmic torsion growth
  (Theorem~\ref{thm:dynupperproofsec}).
\end{proof}

In order to obtain the constant~$\vol/6\pi$, one would need a single
calculation of the measured embedding volume for some closed
hyperbolic $3$-manifold. Comparison with the simplicial volume
estimate gives constant~$6 \cdot \log 3/v_3$
(Example~\ref{exa:ifsvhyp3}), which is not optimal.

\section{The cost estimate}\label{sec:cost}

The measured embedding dimension in degree~$1$ is compatible with
cost, a dynamical version of the rank of groups~\cite{gaboriaucost,
  kechrismiller}. Because of Lemma~\ref{lem:integers_field} we state 
  the result only over the integers.

\begin{thm}\label{thm:cost}
  Let $\Gamma$ be an infinite group of type~$\FP_2$ and let $\alpha$ be
  a standard action of~$\Gamma$. Then
  \[ \medim^\Z_1 (\alpha) \leq \cost (\alpha) - 1.
  \]
\end{thm}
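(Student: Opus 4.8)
The strategy is to turn a graphing (in the sense of cost) of the action~$\alpha$ into an explicit $\alpha$-embedding in degree~$1$, mimicking the classical fact that a presentation of a group with $k$ generators yields a partial free resolution $\bigoplus_{k}\Z\Gamma\to\Z\Gamma\to\Z\to 0$. More precisely, let $\Phi = (\phi_e)_{e\in E}$ be a graphing of the orbit relation $\Rrel_\alpha$ consisting of partial isomorphisms $\phi_e\colon A_e\to B_e$ (with $A_e,B_e\subset X$ measurable), which generates $\Rrel_\alpha$ and has cost $\sum_{e\in E}\mu(A_e)$ arbitrarily close to $\cost(\alpha)$. Each $\phi_e$ is, up to a measurable partition of $A_e$ into countably many pieces, given by translation by group elements; but since we only need approximate control (an infimum), I would first reduce to the case where each $\phi_e$ is a \emph{finite} union of partial translations (truncating the countable decomposition so that the leftover has small measure and then absorbing it into an extra small graphing element, using that $\Gamma$ is infinite so arbitrarily small cofinite/partial sets exist by \cite[Proposition~1]{Levitt95}). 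So we may assume each $\phi_e$ is represented by an element of $\Linf\alpha * \Gamma$.

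\emph{Building the embedding.} Fix a free $\Z\Gamma$-resolution $C_*\to\Z$ with $C_0=\Z\Gamma$ and $C_1=\bigoplus_{s\in S}\Z\Gamma\cdot e_s$ for a finite generating set $S$ with $\partial_1^C(e_s)=1-s$, as in Proposition~\ref{prop:deg0infinite} and Section~\ref{sec:ame:have:CEP}. First I would run the degree-$0$ construction: since $\Gamma$ is infinite, Proposition~\ref{prop:deg0infinite} gives, for any $\varepsilon>0$, an augmented $D_0 = \gen{A}\oplus\gen{B}$ with $\dim D_0<\varepsilon$, an augmentation $\eta$, and $x\in D_0$ with $\eta(x)=1$. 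Now I need to produce $D_1$ and $\partial_1^D\colon D_1\to D_0$ together with a chain map $f_1\colon \Linf\alpha\otimes_{\Z\Gamma} C_1\to D_1$. The point is that the graphing $\Phi$, after restriction to $A\cup B$ (the support of the degree-$0$ module), still generates the restricted equivalence relation — here I use that enlarging/shrinking the fundamental domain changes cost in a controlled way, and that $\Rrel_\alpha$ restricted to a set of full saturation is generated by the restricted graphing. For each $\phi_e$ I set $D_1^{(e)} := \gen{A_e'}$ for a suitable measurable $A_e'\subset A\cup B$ with $\mu(A_e')\le\mu(A_e)+\text{(small)}$, and define the component of $\partial_1^D$ on $D_1^{(e)}$ to be right multiplication by $(\text{the }\Linf\alpha*\Gamma\text{-element of }\phi_e)\cdot x - x$ restricted to $A_e'$; then $D_1 := \bigoplus_{e\in E} D_1^{(e)}$ has $\dim D_1 \le \cost(\alpha)+\varepsilon$. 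The chain map $f_1$ sends $e_s\mapsto$ the element of $D_1$ expressing the generator $s$ as a word in the graphing (using Remark~\ref{rem:def:homo:marked:proj} to build the $\Linf\alpha*\Gamma$-homomorphism from its support), which exists because $\Phi$ generates; $f_0$ is given by $x$. That $\eta\circ\partial_1^D=0$ follows as in Proposition~\ref{prop:integers resolution} from $\eta(x)=1$ and $\eta$-linearity. That $\partial_1^C\mapsto\partial_1^D$ under $f_*$ is exactly the statement that $f_1(1-s)=(s-1)x$, which holds by construction of $f_1$. This yields an $\alpha$-embedding up to degree~$1$ with $\dim D_1\le\cost(\alpha)+\varepsilon$; since the index set $E$ is countable but the embedding needs $D_1$ of finite rank, I would truncate $E$ to a finite subset (again absorbing the tail into the small module, or invoking that finite subgraphings approximate the cost). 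Letting $\varepsilon\to 0$ and taking the infimum over graphings gives $\medim^\Z_1(\alpha)\le\cost(\alpha)$.

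\emph{The $-1$.} To improve the bound to $\cost(\alpha)-1$, I would exploit that the augmentation $D_0$ already "sees" one copy of the action: the degree-$0$ module has dimension $\varepsilon$, not $1$, whereas in the group case $C_0=\Z\Gamma$ has rank $1$. Concretely, in the standard identity $\cost = $ (number of edges) $-$ (number of vertices) $+\,1$ for a treeing, one vertex is "free"; dynamically this corresponds to the fact that a generating graphing of cost $c$ can be taken with one distinguished element of large measure that can be used to collapse the degree-$0$ module. The cleanest route: use a graphing whose first element $\phi_{e_0}$ has $A_{e_0}$ of measure $1-\varepsilon$ (this always exists, e.g. pick a generating set $S=\{s_1,\dots,s_k\}$ of $\Gamma$, let $\phi_{e_0}$ be translation by $s_1$ on a set of measure $1-\varepsilon$, and add $k-1$ more full translations plus corrections; the cost of \emph{this} graphing is then close to $(1-\varepsilon)+(k-1)+\dots$, but the infimum is still $\cost(\alpha)$), and then this large element lets one build $D_0$ of dimension $\varepsilon$ while the "missing" mass $1-\varepsilon$ of $\phi_{e_0}$ is not counted in $D_1$ — so $\dim D_1$ ends up being the graphing cost \emph{minus} $(1-\varepsilon)$, i.e. $\le\cost(\alpha)-1+2\varepsilon$.

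\emph{Main obstacle.} The hard part is not the homological algebra (that is essentially the degree-$1$ case of the constructions already in Sections~\ref{sec:deg0}--\ref{sec:ame:have:CEP}) but the careful bookkeeping that extracts the "$-1$": one must align the normalization of cost (where a single vertex is free) with the normalization of $\medim$ (where the degree-$0$ module is forced to have full augmentation but can be taken of tiny dimension), and verify that the large graphing element can simultaneously (i) realize the factor $x$ with $\eta(x)=1$, and (ii) have its complementary mass excluded from $D_1$, all while keeping $D_1$ of finite rank and the chain-map equation $f_1\circ\partial_1^C=\partial_1^D\circ f_1$ exactly satisfied. A secondary technical nuisance is the reduction from arbitrary (countable, infinitely-supported) graphing elements to finitely-supported ones in $\Linf\alpha*\Gamma$, which requires the "shrink and absorb the tail" trick using infiniteness of $\Gamma$ — but this is standard and parallel to the truncation step in Lemma~\ref{lem:adaptbasicR}.
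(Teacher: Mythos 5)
Your overall strategy -- turn a graphing into the degree-$1$ part of a target complex, keep the degree-$0$ module tiny, and take the infimum over graphings -- is the same family of argument as the paper's, and the degree-$0$ and chain-map mechanics you describe are essentially right. But there is a genuine gap at the heart of the ``$-1$'' step, and it is not just bookkeeping. Both the claim that the graphing ``after restriction to $A\cup B$ still generates the restricted equivalence relation'' and your mechanism for subtracting $1$ are unproved, and the naive versions are false: if $\Phi$ generates $\Rrel_\alpha$ on $X$, the literal restrictions $\phi_e|_{A_e\cap(A\cup B)\cap\phi_e^{-1}(A\cup B)}$ do \emph{not} generate $\Rrel_\alpha|_{A\cup B}$ in general, since two points of $A\cup B$ may only be joined by a $\Phi$-path passing through $X\setminus(A\cup B)$. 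What you actually need is Gaboriau's compression (induction) formula, $\cost(\Rrel_\alpha|_A,\mu|_A)=\cost(\alpha)-\mu(X\setminus A)$ for $A$ meeting a.e.\ orbit: this is a real theorem (proved by contracting, for a.e.\ point of $X\setminus A$, a chosen $\Phi$-path into $A$), and it is exactly where the ``$-1$'' comes from. Your replacement -- a graphing with one distinguished element of measure $1-\varepsilon$ whose ``complementary mass is not counted in $D_1$'' -- does not establish this: the explicit graphing you write down has cost about $\#S-\varepsilon$, not $\cost(\alpha)$, and no argument is given that discarding the mass of one large element leaves a system that still generates the restricted relation. This is precisely how the paper proceeds: it first proves $\medim^\Z_1(\alpha,A)\le\cost(\Rrel_\alpha|_A,\tfrac1{\mu(A)}\mu|_A)$ over the truncated ring $pRp$ with $p=(\chi_A,1)$ (Proposition~\ref{prop:costtrunc}), converts this back via the Morita rescaling $\medim^\Z_1(\alpha)=\mu(A)\cdot\medim^\Z_1(\alpha,A)$ (Proposition~\ref{prop:truncemb}), and only then invokes the compression formula (\cite[Theorem~21.1]{kechrismiller}) to produce the $-1$. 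Without citing or reproving that formula, your argument only yields $\medim^\Z_1(\alpha)\le\cost(\alpha)$.

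A secondary, fixable gap: defining $f_1$ by ``expressing the generator $s$ as a word in the graphing'' does not directly give an element of a finite-rank marked projective module, because the word length needed to reach $s\cdot x$ from $x$ is unbounded in $x$, while elements of $\linf{\alpha,\Z}*\Gamma$ have finite support over $\Gamma$. The paper repairs this by adding a correction summand $E_1=\bigoplus_k\gen{C_k}$ of arbitrarily small total dimension, where $C_k$ is the set of points not reachable in boundedly many steps, mapped by the ``shortcut'' boundary $(\chi_{C_k},1)\bigl((1,1)-(1,g_k)\bigr)$; this also restores exactness in degree $1$ so that the fundamental lemma of homological algebra produces the chain map (and handles degree $2$, which your proposal omits but which is harmless). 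You gesture at ``absorbing the tail'' but would need to make this correction explicit.
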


In combination with Theorem~\ref{thm:l2upper}, we obtain the
sandwich
\[ \ltb 1 \Gamma \leq \medim^\Z_1 (\alpha) \leq \cost (\alpha) -1.
\]
Gaboriau asked whether the two outer terms are equal for infinite groups~\cite[p.~129]{gaboriaul2}; it is thus natural to raise the following question:

\begin{question}
  Let $\Gamma$ be an infinite group of type~$\FP_2$ and let $\alpha$
  be a standard action of~$\Gamma$. Do we always have
  $\medim^\Z_1 (\alpha) = \cost (\alpha) -1$\;?
\end{question}

The basic idea to prove the cost estimate (Theorem~\ref{thm:cost}) is
to construct the low degrees of a resolution over~$\linf{\alpha,\Z} *
\Gamma$ from graphings of the orbit relation of~$\alpha$. The
fundamental theorem of homological algebra then provides
$\alpha$-embeddings with such a target.  In order to achieve the
additive correction term~$-1$, we do this in the slightly more general
case of restricted actions (Proposition~\ref{prop:costtrunc}). The
scaling properties of cost and measured embedding dimension then give
the claimed upper bound from Theorem~\ref{thm:cost}.
In principle, this method works for all countable groups (not
only those of type~$\FP_2$), but our setting is not optimised for that
level of generality.

\begin{prop}\label{prop:costtrunc}
  Let $\Gamma$ be an infinite group of type~$\FP_2$, let $\alpha
  \colon \Gamma \actson (X,\mu)$ be a standard action of~$\Gamma$, and
  let $A \subset X$ be $\alpha$-cofinite. Then
  \[ \medim^\Z_1 (\alpha,A)
  \leq \cost \Bigl(\Rrel_\alpha |_A, \frac1{\mu(A)} \cdot \mu|_A\Bigr).
  \]
  Here, $\Rrel_\alpha|_A \coloneqq (A \times A) \cap \{(x,\gamma \cdot x)
  \mid x \in X, \gamma \in \Gamma\}$ denotes the restriction of the
  orbit relation of~$\alpha$ to~$A$.
\end{prop}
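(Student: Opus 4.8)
The plan is to construct, from a graphing of $\Rrel_\alpha |_A$ realising the cost, the low-degree part of a free resolution of $\linf{A,\Z}$ over the equivalence relation ring of $\Rrel_\alpha |_A$, and then to feed this into a truncated form of Proposition~\ref{prop:smallres}. Write $\Rrel_A \coloneqq \Rrel_\alpha |_A$, equip $A$ with the probability measure $\mu_A \coloneqq \frac1{\mu(A)}\cdot\mu|_A$, and set $S \coloneqq \Z\Rrel_A$, the equivalence relation ring of the standard equivalence relation $(\Rrel_A,\mu_A)$. If $\cost(\Rrel_A,\mu_A)=\infty$ there is nothing to prove, so fix $\delta\in\R_{>0}$; since the cost is the infimum over \emph{finite} graphings of $\Rrel_A$ of $\sum_j \mu_A(\operatorname{dom}\phi_j)$ (standard; see \cite{gaboriaucost,kechrismiller}), I would choose a finite graphing $\Psi=(\phi_1,\dots,\phi_N)$ of $\Rrel_A$ with $\sum_j\mu_A(\operatorname{dom}\phi_j) < \cost(\Rrel_A,\mu_A)+\delta$. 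For each $j$ let $u_j\in S$ be the partial isometry supported on $\{(x,\phi_j(x)):x\in\operatorname{dom}\phi_j\}$, so that $\supp_1(u_j)=\operatorname{dom}\phi_j$ and the augmentation $\varepsilon\colon S\to\linf{A,\Z}$ of Remark~\ref{rem:linfX-ZR} sends $u_j$ to $\chi_{\operatorname{dom}\phi_j}$. Then I set $D_0\coloneqq S$ (a rank-one marked projective $S$-module) with augmentation $\eta\coloneqq\varepsilon$, and $D_1\coloneqq\bigoplus_{j=1}^N S\cdot\chi_{\operatorname{dom}\phi_j}$ with $\partial_1(\chi_{\operatorname{dom}\phi_j}\cdot e_j)\coloneqq u_j-\chi_{\operatorname{dom}\phi_j}$; this is a well-defined morphism of marked projective $S$-modules, $\eta\circ\partial_1=0$, and $\dim_S(D_1)=\sum_j\mu_A(\operatorname{dom}\phi_j)$.

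The first key step is to show that $D_1\xrightarrow{\partial_1}D_0\xrightarrow{\eta}\linf{A,\Z}\to 0$ is exact at $D_0$, which holds precisely because $\Psi$ generates $\Rrel_A$ up to null sets. As in the group case, one checks that the augmentation ideal $\ker\eta\subset S$ is generated as an $S$-module by the elements $u_j-\chi_{\operatorname{dom}\phi_j}$: every element of $\ker\eta$ is an $S$-combination of ``atomic'' relations $\chi_{\{(x,\gamma\cdot x):x\in U\}}-\chi_U$, and connecting $x$ to $\gamma\cdot x$ along a $\Psi$-path yields a telescoping expression for such an atom in terms of the $u_j$. Working over $S=\Z\Rrel_A$ rather than over a crossed product ring is essential here: the coefficients appearing in the telescoping are partial isometries, hence have uniformly bounded fibres, so that the (a priori infinite) combinations produced still lie in $S$.

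The second key step is to extend $D_1\to D_0$ to a marked projective $S$-resolution of $\linf{A,\Z}$ up to degree~$2$, i.e.\ to find a \emph{finite}-rank marked projective $S$-module $D_2$ with $\im\partial_2=\ker\partial_1$. Here the hypothesis that $\Gamma$ is of type $\FP_2$ enters: it ensures that $\Rrel_A$ carries the corresponding finiteness, so that the syzygy module $\ker\partial_1$ is finitely generated over $S$ (compare $D_{\le 1}$ with a finite-type partial resolution of $\linf{A,\Z}$ via the generalised Schanuel lemma), and one then maps a finite-rank free $S$-module onto it. This produces a marked projective $S$-resolution $D_*$ of $\linf{A,\Z}$ (up to degree~$2$) with $\dim_S(D_1)<\cost(\Rrel_A,\mu_A)+\delta$. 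It remains to transfer this to the crossed-product side by a truncated version of Proposition~\ref{prop:smallres}: as $A$ is $\alpha$-cofinite, $\chi_A$ is a full idempotent of $\Z\Rrel_\alpha$ and $S=\chi_A\cdot\Z\Rrel_\alpha\cdot\chi_A$, so the associated Morita functor $\Z\Rrel_\alpha\chi_A\otimes_S(\args)$ is exact, carries marked projective $S$-modules to marked projective $\Z\Rrel_\alpha$-modules with dimensions multiplied by $\mu(A)$, and carries $\linf{A,\Z}$ to $\linf{\alpha,\Z}$. Applying it to $D_*$ and invoking Proposition~\ref{prop:smallres} yields $\medim^\Z_1(\alpha)\le\mu(A)\cdot\dim_S(D_1)$; the scaling identity $\medim^\Z_1(\alpha)=\mu(A)\cdot\medim^\Z_1(\alpha,A)$ of Proposition~\ref{prop:truncemb} then gives $\medim^\Z_1(\alpha,A)\le\dim_S(D_1)<\cost(\Rrel_A,\mu_A)+\delta$, and letting $\delta\to0$ completes the argument.

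I expect the main obstacle to be the first key step: verifying that a generating graphing of $\Rrel_A$ yields a generating set of the augmentation ideal of $\Z\Rrel_A$ (equivalently, exactness at $D_0$), which is a measured, groupoid-theoretic analogue of the classical fact about group presentations and which relies on passing to the equivalence relation ring in order to accommodate the infinite telescoping combinations. The finiteness input behind $D_2$, where $\FP_2$ is used, is the other delicate point; by contrast the reduction to finite graphings and the Morita/scaling bridge to Proposition~\ref{prop:smallres} are routine.
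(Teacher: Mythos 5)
Your overall strategy (graphing $\to$ low-degree resolution $\to$ a Proposition~\ref{prop:smallres}-type estimate $\to$ Morita/scaling) parallels the paper's, but the step you yourself flag as the main obstacle --- exactness at $D_0$ --- is a genuine gap, and your justification for it is incorrect. Expressing an atomic relation $v-\chi_{\operatorname{dom} v}$ in terms of the finitely many generators $u_j-\chi_{\operatorname{dom}\phi_j}$ by telescoping along $\Psi$-words produces, as the total coefficient of a fixed generator, a countable sum of partial isometries; although each summand has fibres of size at most~$1$, the summands whose support meets $\{x\}\times A$ are indexed by the occurrences of $\phi_j^{\pm1}$ in the word $w(x)$ joining $x$ to $v(x)$, and since $|w(x)|$ is in general unbounded in $x$, the total coefficient violates the defining condition $\sup_{x}\#\{y\mid\lambda(x,y)\neq0\}<\infty$ of $\Z\Rrel_A$. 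So the required coefficient does not lie in $S$, and $\ker\eta=\im\partial_1$ is not established (and should be expected to fail). This is precisely why the paper inserts the correction summand $E_1=\bigoplus_{k}\gen{C_k}$: the sets $C_k$, of total measure $<\varepsilon$, are where the $k$-th group element is not reachable by a word of length $\leq n_k$, and off these sets the telescoping is finite, so exactness holds for $P_1\oplus E_1$ at the price of an extra $\varepsilon$ of dimension. You need the same (or an equivalent) device; note that it also renders your separate appeal to ``cost is the infimum over finite graphings'' unnecessary, since the correction term absorbs the tail of a countable graphing anyway.

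The second gap is the finite-rank $D_2$ over $S=\Z\Rrel_A$. A Schanuel-type comparison needs a finite-type partial projective $S$-resolution of $\linf{A,\Z}$ to compare against, but the paper explicitly warns that $\Z\Rrel$ need not be flat over $\Z\Gamma$, so inducing a finite-type $\Z\Gamma$-resolution up to $\Z\Rrel_A$ need not remain exact, and $\FP_2$ of $\Gamma$ does not transfer to the equivalence relation ring in any evident way. The paper sidesteps both issues by working over the truncated crossed product ring $pRp$ with $p=(\chi_A,1)$, which is Morita equivalent to $R=\linf{\alpha,\Z}*\Gamma$, and $R$ \emph{is} flat over $\Z\Gamma$ (Proposition~\ref{prop:flat}): one extends $D_{\leq1}$ to a resolution by possibly infinitely generated free modules --- which always exists and requires no finiteness of syzygies --- and then applies the fundamental lemma to a chain map $pC_*\to D_*$ out of a finite-type source; since $pC_1$ and $pC_2$ are finitely generated, their images meet only finitely many marked summands, yielding the finite-rank marked projective target. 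I would therefore abandon the detour through $\Z\Rrel_A$ and Proposition~\ref{prop:smallres} in favour of this argument over $pRp$, followed by Proposition~\ref{prop:truncemb} exactly as you propose for the final scaling.
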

\begin{proof}
  We follow the proof of~$\ltb 1 \Gamma \leq \cost (\alpha) -1$ via
  resolutions~\cite[Chapter~4.3.2]{loeh2020ergodic}.
  Let $R \coloneqq \linf{\alpha,\Z} *\Gamma$ and let $p \coloneqq (\chi_A,1) \in R$.
  Building on the analogy of graphings of equivalence relations as
  dynamical versions of generating sets of groups, we construct the
  low degrees of resolutions of~$p \linf{\alpha,\Z}$ from graphings
  of~$\Rrel_\alpha |_A$: Let $\Phi$ be a graphing
  of~$\Rrel_\alpha|_A$; without loss of generality we may assume that
  $\Phi$ is given by a family~$\Phi = (\varphi_i \coloneqq \gamma_i \cdot - \colon A_i \to 
  B_i)_{i \in I}$ of translation maps, where $I$ is countable and where for each~$i \in
  I$, we have~$\gamma_i \in \Gamma$ and measurable subsets~$A_i
  \subset A$ with~$B_i \coloneqq \gamma_i \cdot A_i \subset A$.

  We define
  \begin{align*}
    D_0 & \coloneqq pR
    \qand
    P_1 \coloneqq \bigoplus_{i \in I} \gen{A_i} _{pRp}   
  \end{align*}
  as well as
  \begin{align*}
    \partial_0^D \colon D_0 & \to p \linf{\alpha,\Z}
    \\
    p \cdot (f,\gamma) & \mapsto p \cdot f
    \\
    \partial_1^P \colon P_1 & \to D_0
    \\
    \chi_{A_i} \cdot e_i
    & \mapsto (\chi_{A_i}, 1) \cdot \bigl((1,1) - (1,\gamma_i)\bigr).
  \end{align*}
  By construction, $\partial_0^P$ is surjective and $\partial_0^D
  \circ \partial_1^P = 0$. However, in general, we may not have
  that~$\im \partial_1^P$ reaches all of~$\ker
  \partial_0^D$. Therefore, we introduce the following correction
  term:
  
  Let $\varepsilon \in \R_{>0}$ and let $(g_k)_{k \in \N}$ be an
  enumeration of~$\Gamma$. For~$k, n \in \N$, we set
  \[ A(k,n)
  \coloneqq \bigl\{
  x \in A
  \bigm|
  \exi{i_1,\dots, i_n \in I}
  \exi{\varepsilon_1, \dots, \varepsilon_n \in \{-1,1\}}
  g_k \cdot x = \varphi_{i_n}^{\varepsilon_n} \circ \dots \circ \varphi_{i_1}^{\varepsilon_1}(x)
  \bigr\}.
  \]
  Each $A(k,n)$ is a measurable subset of~$A$. Because $\Phi$ is a
  graphing of~$\Rrel_\alpha|_A$, we obtain for all~$k \in\N$ that
  $\bigcup_{n \in \N} A(k,n) = A$. Hence, for each~$k \in \N$, there
  is an~$n_k \in \N$ such that
  \[ C_k \coloneqq A \setminus \bigcup_{n=0}^{n_k} A(k,n) 
  \]
  satisfies~$\mu(C_k) \leq 1/2^{k+1} \cdot \varepsilon \cdot \mu(A)$. 
  We then set
  \[ E_1 \coloneqq \bigoplus_{k \in \N} \gen{C_k}_{pRp}
  \]
  and
  \begin{align*}
    \partial_1^E \colon E_1 & \to D_0
    \\
    \chi_{C_k} \cdot e_k
    & \mapsto (\chi_{C_k},1) \cdot \bigl((1,1) - (1,g_k)\bigr).
  \end{align*}
  Finally, we define
  \begin{align*}
    D_1 & \coloneqq P_1 \oplus E_1
    \qand
    \partial_1^D \coloneqq \partial_1^P \oplus \partial_1^E \colon D_1 \to D_0.
  \end{align*}
  By construction~$\partial_0^D \circ \partial_1^D = 0$.
  Moreover, the correction term ensures that~$\ker\partial_0^D = \im \partial_1^D$;
  indeed the inclusion $\ker\partial_0^D \subseteq \im \partial_1^D$  can be shown by a straightforward adaptation of the
  case~$A = X$~\cite[Lemma~4.3.11]{loeh2020ergodic} via an inductive
  argument.

  The modules~$D_0$ and $D_1$ are projective. We may extend the
  low-degree sequence
  \[ \begin{tikzcd}
    D_1
    \ar{r}{\partial_1^D}
    &
    D_0
    \ar[two heads]{r}{\partial_0^D}
    &
    p\linf{\alpha,\Z}
    \end{tikzcd}
  \]
  to a $pRp$-resolution~$D_*$ of~$p
  \linf{\alpha,\Z}$ that consists of free $pRp$-modules in
  degrees greater than or equal to~$2$.  Let $C_*$ be a free $R$-resolution
  of~$\linf{\alpha,\Z}$ that is of finite type (in degrees~$\leq 2$).
  By the fundamental theorem of homological algebra, there exists a
  $pRp$-chain map~$f_* \colon pC_* \to D_*$ extending the identity
  on~$p\linf{\alpha,\Z}$.  Because $pC_*$ is finitely generated
  over~$pRp$ in degrees~$\leq 2$, the images of~$f_1$ and~$f_2$ touch
  only finitely many of the marked summands in~$D_1$ and~$D_2$,
  respectively. Therefore, we can find an $(\alpha,A,Z)$-embedding to
  a marked projective target complex~$\widehat D_*$ with
  \begin{align*}
    \dim_{pRp} (\widehat D_1)
    & \leq \dim_{pRp} (D_1)
    = \sum_{i \in I} \frac1{\mu(A)}\cdot \mu|_A(A_i)
    + \sum_{k \in \N} \frac1{\mu(A)}\cdot \mu|_A(C_k)
    \\
    & 
    \leq \sum_{i \in I} \frac1{\mu(A)} \cdot \mu(A_i)
    + \sum_{k \in \N} \frac1{\mu(A)} \cdot \frac1{2^{k+1}} \cdot \varepsilon \cdot \mu(A)
    \\
    & = \cost \Bigl(\Phi, \frac1{\mu(A)} \cdot \mu|_A\Bigr)
    + \varepsilon.
  \end{align*}
  Taking~$\varepsilon \to 0$ and then taking the infimum over all graphings~$\Phi$
  of~$\Rrel_\alpha|_A$ shows that
  \[ \medim^\Z_1 (\alpha,A)
  \leq \cost \Bigl(\Rrel_\alpha |_A, \frac1{\mu(A)} \cdot \mu|_A\Bigr),
  \]
  as claimed.
\end{proof}

\begin{proof}[Proof of Theorem~\ref{thm:cost}]
  Let $\varepsilon \in \R_{>0}$. Because $\Gamma$ is infinite,
  there exists an $\alpha$-cofinite subset~$A \subset X$
  with~$\mu(A) < \varepsilon$ (Remark~\ref{rem:cofinitesmall}).
  Combining the cost estimate for~$\Rrel_\alpha|_A$ and the scaling
  properties of cost~\cite[Theorem~21.1]{kechrismiller} and~$\medim_1^\Z$
  (Proposition~\ref{prop:truncemb}), we obtain
  \begin{align*}
    \medim^\Z_1 (\alpha)
    & = \mu(A) \cdot \medim^\Z_1(\alpha, A)
    & \text{(Proposition~\ref{prop:truncemb})}
    \\
    & \leq \mu(A) \cdot \cost \Bigl(\Rrel_\alpha|_A, \frac1{\mu(A)} \cdot \mu|_A\Bigr)
    & \text{(Proposition~\ref{prop:costtrunc})}
    \\
    & = \cost (\Rrel_\alpha|_A, \mu|_A)
    \\
    & = \cost (\alpha) - \mu(X \setminus A)
    & \text{(scaling of cost)}
    \\
    & = \cost (\alpha) - 1  + \mu(A)
    \\
    & \leq \cost (\alpha) -1 + \varepsilon.
  \end{align*}
  Taking~$\varepsilon \to 0$ gives the claimed estimate.
\end{proof}

In the proof of Theorem~\ref{thm:cost}, we do not obtain any control
on~$\partial_2^D$ and thus no upper estimate for~$\mevol_1$ in terms
of cost. This is compatible with the expectation that no such upper
bound for~$\mevol_1$ (and whence for~$\widehat t_1$) should exist, as
predicted by the conjectures on logarithmic torsion homology growth
(in degree~$1$) of closed hyperbolic $3$-manifolds~\cite{bergeron_venkatesh, Lueck13, le_hyperbolic} and the
computation of cost of their fundamental groups~\cite[Theorem~8.5]{Agolrankgradients}.

\begin{ex}
If $\Gamma$ is a lattice in a higher rank semisimple real Lie group, or a lattice in a product of at least two automorphism groups of trees, then $\Gamma$ has fixed price~$1$~\cite[Theorem~D]{FMW}. Hence, Theorem~\ref{thm:cost} shows that $\medim_1^Z(\alpha) = 0$ for every standard action~$\alpha$ of~$\Gamma$. Examples of such groups include, e.g., Burger--Mozes groups~\cite[Corollary~1.1]{FMW}.
\end{ex}

\section{The simplicial volume estimate}\label{sec:ifsv}

The stable integral simplicial volume of closed manifolds gives upper
bounds on logarithmic torsion homology growth and Betti number
growth~\cite{sauer:volume:homology:growth}. Dynamically, the stable
integral simplicial volume can be expressed as integral foliated
simplicial volume of the profinite completion~\cite{loeh_pagliantini, loeh2020ergodic} 
and integral foliated simplicial volume provides upper
bounds on the $L^2$-Betti numbers~\cite{mschmidt} and
cost~\cite{loeh_cost}. The following estimates of measured embedding
dimension and measured embedding volume complement these connections: 

\begin{thm}\label{thm:simvolestimate}
  Let $M$ be an oriented closed connected aspherical
  $n$-manifold with fundamental group~$\Gamma$, let $\alpha$
  be a standard $\Gamma$-action, and let $k \in \{0,\dots, n\}$.
  Then
  \begin{align*}
    \medim^\Z_k (\alpha)
    & \leq {{n+1} \choose {k+1}} \cdot \ifsv M^\alpha
    \\
    \mevol_k (\alpha)
    & \leq \log(k+2) \cdot {{n+1} \choose {k+1}} \cdot \ifsv M^\alpha.
  \end{align*}
\end{thm}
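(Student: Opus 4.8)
The plan is to construct an explicit $\alpha$-embedding whose target complex comes from a ``parametrised'' or foliated fundamental cycle of~$M$, so that its chain modules and its $(k{+}1)$-st boundary operator have dimension and logarithmic norm controlled by the $\ell^1$-norm of the cycle; taking the infimum over all parametrised fundamental cycles then yields the two claimed inequalities. First I would recall the definition of $\ifsv M^\alpha$: since $M$ is aspherical with $\pi_1(M) = \Gamma$, we have $H_n(M;\linf{\alpha,\Z}) \cong H_n(\Gamma;\linf{\alpha,\Z})$, and $\ifsv M^\alpha$ is the infimum of $\|c\|_1$ over $\linf{\alpha,\Z}\Gamma$-fundamental cycles~$c$ in the bar resolution (or equivalently in $\linf{\alpha,\Z}\otimes_{\Z\Gamma} C_*^{\mathrm{bar}}$). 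Fix such a cycle $c = \sum_{j} f_j \otimes \sigma_j$ of $\ell^1$-norm close to $\ifsv M^\alpha$, where each $\sigma_j$ is an $n$-simplex in the bar complex and $f_j \in \linf{\alpha,\Z}$ with $\sum_j |f_j|_1 \le \ifsv M^\alpha + \varepsilon$.

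The key construction is the standard one turning a fundamental cycle into a chain map: dualising / capping with $c$ gives, degree by degree for $0 \le k \le n$, a $\Z\Gamma$-chain map (up to degree $n$) from a free resolution $C_*$ of~$\Z$ into a complex $D_*$ of $\linf{\alpha,\Z}\Gamma$-modules, where $D_k$ is built from the simplices appearing in $c$ by taking codimension-$(k{+}1)$ faces — concretely $D_k$ is a direct sum over the faces, with each summand a marked projective module $\gen{A}$ whose set $A$ has measure governed by the $\ell^1$-mass of the corresponding coefficient $f_j$. The combinatorial factor $\binom{n+1}{k+1}$ enters because an $n$-simplex has $\binom{n+1}{k+1}$ faces of dimension~$k$. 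After choosing $C_*$ of finite type up to degree $n+1$ (possible since $\Gamma$ is of type $\FP_{n+1}$ by asphericity and closedness of~$M$) and applying the fundamental lemma of homological algebra to lift $\id_{\linf{\alpha,\Z}}$, I obtain a genuine $\alpha$-embedding $f_* \colon C_* \to D_*$. By construction $\dim_R(D_k) \le \binom{n+1}{k+1}\cdot(\ifsv M^\alpha + \varepsilon)$, which gives the $\medim$ bound after letting $\varepsilon \to 0$.

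For the $\mevol$ estimate I would analyse $\partial^D_{k+1}\colon D_{k+1}\to D_k$. This map is, on each marked summand, essentially an alternating sum of at most $k+2$ face inclusions, each of which is (up to translation by a group element, i.e.\ an isometry of the $\ell^1$-norm) a marked inclusion followed by restriction to a measurable subset; hence its operator norm is at most $k+2$ on each summand. Using $\lognorm(\partial^D_{k+1}) \le \sum_i \lognorm'(\partial^D_{k+1}|_{D_{k+1,i}})$ (subadditivity over the marked decomposition into summands, Proposition~\ref{prop:lognorm}(ii)) and the dimension estimate $\lognorm'(f|_{M_i}) \le \dim(M_i)\cdot \logp\|f|_{M_i}\|$ together with $\dim(D_{k+1}) \le \binom{n+1}{k+2}\cdot(\ifsv M^\alpha+\varepsilon)$, one would be tempted to get a bound with $\binom{n+1}{k+2}\log(k+3)$. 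To instead land the sharper $\log(k+2)\binom{n+1}{k+1}$ I would organise the marked decomposition by the $k$-faces of the target rather than the $(k{+}1)$-simplices of the source: reindex so that $\lognorm$ is estimated using the marked rank branch $\rk(\partial^D_{k+1}|_{M_i})\cdot\logp\|\cdot\|$, where $\rk$ is bounded by the dimension of the relevant $k$-dimensional summand in $D_k$, of which there are $\binom{n+1}{k+1}$, and each boundary-face contribution carries norm $\le k+2$. This bookkeeping — matching the combinatorial factor to $D_k$ rather than $D_{k+1}$ and the norm to the number of faces of a $(k{+}1)$-simplex — is the step I expect to be the main obstacle; it is the place where one must be careful about which resolution $C_*$ is used (I would use the simplicial chain complex of the universal cover of a fixed triangulation, so that the combinatorics are transparent) and about the fact that $\lognorm$ depends on the marked structure but is controlled by the Gromov--Hausdorff machinery of Section~\ref{sec:GH} if minor deformations are needed. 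Once the norm-per-summand bound $\le k+2$ and the summand count $\binom{n+1}{k+1}$ are pinned down, the inequality $\mevol_k(\alpha)\le \log(k+2)\binom{n+1}{k+1}\ifsv M^\alpha$ follows by taking the infimum over fundamental cycles and $\varepsilon\to 0$.
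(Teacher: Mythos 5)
Your overall strategy coincides with the paper's: build the target complex $D_*$ from the $k$-faces of the simplices occurring in a parametrised fundamental cycle $c=\sum_j a_j\otimes\sigma_j$, with marked summands $\gen{\supp(a_j)}$, so that $\dim(D_k)\le\binom{n+1}{k+1}\cdot|c|_1$, and then take the infimum over $c$. The $\mevol$ part, which you flag as the main obstacle, is in fact immediate: Proposition~\ref{prop:lognorm}(i) already gives \emph{both} dimension estimates $\lognorm(f)\le\dim(M)\cdot\logp\|f\|$ and $\lognorm(f)\le\dim(N)\cdot\logp\|f\|$, so one simply applies the target-dimension branch to $\partial^D_{k+1}$ together with $\|\partial^D_{k+1}\|\le k+2$ to get $\log(k+2)\cdot\binom{n+1}{k+1}\cdot|c|_1$; no reindexing or Gromov--Hausdorff deformation is needed.

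The genuine gap is in the construction of the chain map $C_*\to D_*$. You propose to obtain it by ``applying the fundamental lemma of homological algebra to lift $\id_{\linf{\alpha,\Z}}$'', but the fundamental lemma requires the target to be a resolution, and $D_*$ is not acyclic -- this is precisely why an $\alpha$-embedding is a nontrivial datum rather than something that always exists. The map must instead be produced by equivariant Poincar\'e duality: one composes $C_*\to C_*(\widetilde M;\Z)$ (using asphericity), a $\Z\Gamma$-chain homotopy inverse $g_*$ of $\args\cap c_\Z$ for an \emph{integral} fundamental cycle $c_\Z$, and the cap product $\args\cap c\colon \Hom_{\Z\Gamma}(C_{n-*}(\widetilde M;\Z),\Z\Gamma)\to D_*$. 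One must then verify that this composite extends the inclusion $\Z\hookrightarrow\linf{\alpha,\Z}$ and that the augmentation $\eta\colon D_0\to\linf{\alpha,\Z}$ is surjective; this uses that $c$ and $c_\Z$ represent the same class in $H_n(M;\linf{\alpha,\Z})$, i.e.\ the defining property of a parametrised fundamental cycle. Your proposal gestures at ``dualising/capping'' but does not supply this chain of maps or the extension/surjectivity check, and the step it does supply (the fundamental lemma) fails. A secondary issue: $\ifsv M^\alpha$ is defined via singular chains of $\widetilde M$, and your substitution of the bar resolution ``equivalently'' would require a separate $\ell^1$-norm comparison that you do not justify; working with singular simplices avoids this entirely.
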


Here, $\ifsv M^\alpha$ is the \emph{$\alpha$-parametrised (integral foliated)
  simplicial volume},
i.e.,
\begin{align*}
  \ifsv M ^\alpha
  \coloneqq \inf
  \bigl\{ |c|_1 \bigm|
  \;
  & c \in \linf{\alpha,\Z} \otimes_{\Z \Gamma} C_n(\widetilde M;\Z)
  \\
  & \text{is an $\alpha$-parametrised fundamental cycle of~$M$}
  \bigr\}.
\end{align*}
We refer to the literature for further details on this
definition~\cite{mschmidt,loeh_pagliantini}.

\begin{rem}
  More generally, one can also define corresponding simplicial volumes
  with finite field coefficients (with respect to the trivial norm on
  the coefficients)~\cite[Section~4.6]{loeh_cost}. The arguments below
  work verbatim in that setting and thus give estimates of the
  measured embedding dimension over finite fields in terms of the
  corresponding parametrised simplicial volume with the same
  coefficients.
\end{rem}
  
\subsection{Proof of Theorem~\ref{thm:simvolestimate}}

For the proof of Theorem~\ref{thm:simvolestimate}, we construct
$\alpha$-embeddings to marked projective chain complexes defined
out of $\alpha$-parametrised fundamental cycles. For the construction
of such $\alpha$-embeddings, the equivariant chain-level version
of Poincar\'e duality is essential. 

\begin{rem}[Poincar\'e duality]\label{rem:PD}
  Let $M$ be an oriented closed connected $n$-manifold with
  fundamental group~$\Gamma$, let $Z$ be $\Z$ (with the usual norm)
  or a finite field (with the trivial norm),
  and let $\alpha$ be a standard $\Gamma$-action. Let $L = Z$ (as
  trivial $Z\Gamma$-module) or $L = \linf{\alpha,Z}$ and let $R \coloneqq L *
  \Gamma$ (i.e., $R= Z\Gamma$ or $R = \linf{\alpha,Z} * \Gamma$,
  respectively). On~$R$, we consider the involution~$\overline{\args}$
  induced by the inversion map on~$\Gamma$.  Let $c = \sum_{j=1}^m a_j
  \otimes \sigma_j$ be a cycle in~$L \otimes_{Z \Gamma} C_*(\widetilde
  M;Z)$ with~$a_j \in L$ and $\sigma_j \in \map(\Delta^n,\widetilde
  M)$.
  Then the cap-product
  \begin{align*}
    \args\cap c
    \colon \Hom_{Z\Gamma}\bigl( C_{n-*}(\widetilde M;Z), Z \Gamma\bigr)
    & \to
    R \otimes_{Z\Gamma} C_*(\widetilde M;Z)
    \\
    f
    & \mapsto
    \sum_{j=1}^m \overline{a_j \cdot f(\sigma_j\rfloor_{n-*})} \cdot {}_{*}\lfloor \sigma_j 
  \end{align*}
  is a well-defined $Z\Gamma$-chain map with the following additional
  properties~\cite[Chapter~5.6]{lueckmacko_surgerytheory}\cite[Section~3.3]{braunsauer_macroscopic}:
  \begin{itemize}
  \item If $[c] = [c']$ in~$H_n(L \otimes_{Z \Gamma} C_*(\widetilde M;Z))$,
    then $\args \cap c \simeq_{Z \Gamma} \args \cap c'$.
  \item If $L = Z$ and $c$ represents a $Z$-fundamental cycle of~$M$
    (under the canonical chain isomorphism~$Z \otimes_{Z\Gamma}
    C_*(\widetilde M;Z) \cong_Z C_*(M;Z)$), then $\args \cap c$ is a
    $Z\Gamma$-chain homotopy equivalence.
  \end{itemize}
\end{rem}

\begin{proof}[Proof of Theorem~\ref{thm:simvolestimate}]
  Let $c \in C_n(M; \linf {\alpha})$ be an $\alpha$-parametrised
  fundamental cycle of~$M$, say~$c = \sum_{j=1}^m a_j \otimes \sigma_j$
  with~$a_j \in \linf {\alpha}$ and $\sigma_j \in \map(\Delta^n, \widetilde M)$;
  without loss of generality, we may assume that the $\sigma_j$ all belong
  to different $\Gamma$-orbits so that $|c|_1 = \sum_{j=1}^m |a_j|_1$.
  By definition of the integral foliated simplicial volume and
  $\medim$/$\mevol$, it suffices to construct an $\alpha$-embedding
  to a complex~$D_*$, whose ``size'' is controlled well enough in
  terms of~$|c|_1$. We abbreviate~$R \coloneqq \linf{\alpha} * \Gamma$.
  
  \emph{Construction of the target complex.}
  For~$j \in \{1,\dots, m\}$, we write~$A_j \coloneqq \supp (a_j)$. Let $k \in \N$.
  Let $S_k(\sigma_j)$ denote the set of all~$k$-faces of~$\sigma_j$. We
  define the marked projective $R$-module
  \[ D_k \coloneqq \bigoplus_{\tau \in \bigcup_{j=1}^m S_k(\sigma_j)} \gen{A_j} \cdot \tau
  \]
  and, for~$k \in \N_{>0}$, we set
  \begin{align*}
    \partial^D_k \colon D_k
    & \to D_{k-1}
    \\
    \chi_{A_j} \cdot \tau
    & \mapsto \sum_{r=0}^k (-1)^r \cdot \chi_{A_j} \cdot \partial_r \tau
    \quad\text{for~$\tau \in S_k(\sigma_j)$}.
  \end{align*}
  Moreover, we define
  \begin{align*}
    \eta \colon D_0 & \to \linf{\alpha}
    \\
    \chi_{A_j} \cdot \tau & \mapsto \chi_{A_j}.
  \end{align*}
  Viewing the marked generators of~$D_*$ as actual singular simplices
  on~$\widetilde M$ produces a canonical $\Z \Gamma$-chain map~$s_* \colon D_*
  \to R \otimes_{\Z \Gamma} C_*(\widetilde
  M;\Z)$, which extends~$\id_{\linf{\alpha}}$ with respect to~$\eta$ and the
    canonical augmentation~$R \otimes_{\Z\Gamma}
    C_0(\widetilde M;\Z) \to \linf{\alpha}$.
  We will see below that $\eta \colon D_0 \to \linf{\alpha}$ indeed
  is surjective.
  
  By construction, for each~$k \in \N$, we have
  \begin{align*}
    \dim (D_k)
    &
    \leq \sum_{j=1}^m {{n+1} \choose {k+1}} \cdot \mu(A_j)
    \leq {{n+1} \choose {k+1}} \cdot |c|_1,
    \\
    \lognorm (\partial_{k+1}^D)
    &
    \leq \logp \| \partial_{k+1}^D\| \cdot \dim (D_k)
    \leq \log (k+2) \cdot {{n+1} \choose {k+1}} \cdot |c|_1.
  \end{align*}
  
  \emph{Construction of the chain map.}  Let $C_*$ be a free
  $\Z\Gamma$-resolution of~$\Z$ (that is of finite rank up to
  degree~$n+1$). Because $M$ is aspherical, there exists a $\Z
  \Gamma$-chain map~$f_* \colon C_* \to C_*(\widetilde M;\Z)$
  extending~$\id_\Z$.
  Let $E_* \coloneqq \Hom_{\Z\Gamma}(C_{n-*}(\widetilde M;\Z), \Z \Gamma)$.  
  By equivariant Poincar\'e duality,
  there is a $\Z\Gamma$-chain homotopy inverse~$g_* \colon C_*(\widetilde M;\Z)
  \to E_*$
  of the map~$\args\cap c_{\Z}$ induced by an integral
  fundamental cycle~$c_{\Z} \in \Z \otimes_{\Z\Gamma} C_n(\widetilde M ;\Z)$
  (Remark~\ref{rem:PD}). 
  Finally, the cap-product map
  \begin{align*}
    h_* \coloneqq \args \cap c
  \colon E_* 
  & \to D_*
  \\
  f
  & \mapsto \sum_{j=1}^m
  \overline{a_j \cdot f(\sigma_j\rfloor_{n-*})} \cdot {}_{*}\lfloor\sigma_j 
  \end{align*}
  is well-defined and 
  a $\Z \Gamma$-chain map. Indeed, by Remark~\ref{rem:PD}, this holds
  for the target~$R \otimes_{\Z \Gamma} C_*(\widetilde M;\Z)$;
  by construction of~$D_*$, this map factors over~$s_*$.
  
  We will now explain why $h_* \circ g_* \circ f_* \colon C_* \to D_*$
  is an $\alpha$-embedding, i.e., that this composition extends
  the inclusion~$\Z \to \linf {\alpha}$ as constant functions
  and that $\eta \colon D_0 \to \linf \alpha$ is surjective: 
  We consider the following diagram of $\Z \Gamma$-chain maps:
  \[
  \begin{tikzcd}
    C_*(\widetilde M;Z)
    \ar[equal]{d}
    \ar{r}{g_*}
    &
    E_*
    \ar[equal]{d}
    \ar{r}{\args\cap c}
    &
    D_*
    \ar{r}{s_*}
    &
    R \otimes_{\Z \Gamma} C_*(\widetilde M;\Z)
    \\
    C_*(\widetilde M;Z)
    \ar{r}[swap]{g_*}
    &
    E_*
    \ar{rr}[swap]{\args\cap c_\Z}
    &
    &
    \Z\Gamma \otimes_{\Z \Gamma} C_*(\widetilde M;\Z)
    \ar{u}[swap]{\text{canonical map}}
  \end{tikzcd}
  \]
  The right hand square commutes up to $\Z\Gamma$-chain homotopy
  because $c$ and $c_\Z$ are cycles in~$\linf {\alpha} \otimes_{\Z
    \Gamma} C_*(\widetilde M;\Z)$ that, by definition of
  $\alpha$-parametrised fundamental cycles, represent the same class in
  homology (Remark~\ref{rem:PD}). The lower composition is $\Z\Gamma$-chain
  homotopic to the identity (by choice of~$g_*$).
  Taking~$H_0$ of this diagram thus results in the following
  commutative diagram of $\Z$-modules:
  \[
  \begin{tikzcd}
    \Z
    \ar[equal]{d}
    \ar{r}{\cong}
    &
    H_0(\widetilde M;\Z)
    \ar[equal]{d}
    \ar{rr}{H_0(h_* \circ g_*)}
    &
    &
    H_0(D_*)
    \ar{rr}{H_0(s_*)}
    &
    &
    H_0(M;\linf{\alpha})
    \ar{r}{\cong}
    &
    \linf{\alpha}
    \\
    \Z
    \ar{r}[swap]{\cong}
    &
    H_0(\widetilde M;\Z)
    \ar[equal]{rrrr}
    &
    &
    &
    &
    H_0(M;\Z)
    \ar{r}[swap]{\cong}
    \ar{u}[swap]{\text{canonical map}}
    &
    \Z
    \ar{u}
  \end{tikzcd}
  \]
  In particular, we see that $H_0(s_*)$ is surjective (because $H_0(s_*)$ is
  $\linf{\alpha}$-linear and $1$ lies in the image). By construction
  of~$\eta$, this shows that also $\eta$ is surjective and that $h_* \circ g_*$
  extends the canonical inclusion~$\Z \to \linf{\alpha}$.
  
  Hence, $h_* \circ g_* \circ f_* \colon C_* \to D_*$ 
  is an $\alpha$-embedding and we obtain
  \begin{align*}
    \medim^\Z_k (\alpha)
    &
    \leq \dim (D_k)
    \leq {{n+1}\choose {k+1} } \cdot |c|_1
    \\
    \mevol_k (\alpha)
    &
    \leq \lognorm (\partial^D_{k+1})
    \leq \log (k+2) \cdot {{n+1}\choose {k+1}} \cdot |c|_1.
  \end{align*}
  Taking the infimum over all $\alpha$-parametrised fundamental
  cycles~$c$ of~$M$ gives the claimed estimates.
\end{proof}

\subsection{Examples}
\label{sec:ex_simvol}

We combine the simplicial volume estimate
(Theorem~\ref{thm:simvolestimate}) with known computations of integral
foliated simplicial volume. While the resulting upper bounds for
(torsion) homology growth are not new, they can now be combined with
other inheritance results for measured embedding dimension/volume to
obtain new results. 

\begin{ex}\label{exa:ifsvhyp3}
  Let $\Gamma$ be the fundamental group of an oriented closed connected aspherical $3$-manifold~$M$.
  Then, we have
\begin{align*}
\widehat t_1(\Gamma,\Gamma_*)
    & \leq \mevol_1 (\Gamma \actson \widehat \Gamma_*)
    & \text{(Theorem~\ref{thm:dynupper})}
    \\
    & \leq 6 \cdot \log (3) \cdot \ifsv M ^{\Gamma \actson \widehat \Gamma_*}
    & \text{(Theorem~\ref{thm:simvolestimate})}
    \\
    & = 6 \cdot \log (3) \cdot \frac{\textup{hypvol}(M)}{v_3},
    & \text{\cite[Theorem~1.7]{FLMQ}} 
    \end{align*}
    where $\textup{hypvol}$ denotes the total volume of the hyperbolic pieces in the JSJ decomposition of $M$.
    
  In the case of hyperbolic $3$-manifolds, this is a coarser version of the estimate obtained in
  Theorem~\ref{thm:mevolhyp3}.  It should be noted that the estimates
  for~$\mevol_1$ in the proof of Theorem~\ref{thm:mevolhyp3} and the
  computation of~$\ifsv M ^{\Gamma \actson \widehat \Gamma_*}$ are
  based on the same, dynamical, principles (weak bounded orbit equivalence
  and approximation).
\end{ex}

\begin{ex}\label{ex:riem:ifsv}
  More generally, one obtains also upper bounds for the measured
  embedding dimension and measured embedding volume in terms of the
  Riemannian volume and a bound on the volume of small
  balls~\cite[Theorem~1.5]{braunsauer_macroscopic}: Let $V_1 \in
  \R_{>0}$ and $n \in \N_{>0}$. Then, there exists a
  constant~$\const{}(n,V_1) \in \R_{>0}$ with the following property:
  For every aspherical oriented closed connected Riemannian $n$-manifold
  that satisfies~$\vol_{\widetilde M} (B) \leq V_1$ for all balls~$B \subset \widetilde M$
  of radius at most~$1$ 
  and every standard $\pi_1(M)$-space~$\alpha$, we have for
  all~$k \in \{0,\dots,n\}$: 
  \begin{align*}
    \mevol_k (\alpha)
    & \leq \log(k+2) \cdot {{n+1} \choose {k+1}} \cdot \ifsv M ^\alpha
    & \text{(Theorem~\ref{thm:simvolestimate})}
    \\
    & \leq \log(k+2) \cdot {{n+1} \choose {k+1}}
    \cdot \const{}(n,V_1)
    \cdot \vol(M).
    & \text{\cite[Theorem~1.5]{braunsauer_macroscopic}}
  \end{align*}
  For~$\medim_k^\Z (\alpha)$, an analogous version holds.
\end{ex}

\begin{ex}
\label{ex:amenable_cover}
  Let $M$ be an oriented closed connected aspherical $n$-manifold
  (with~$n>0$) with fundamental group~$\Gamma$. Suppose that there
  exists an open cover of~$M$ by amenable subsets of multiplicity at
  most~$n$. Then, for all standard $\Gamma$-spaces~$\alpha$ and
  all~$k \in \N$, we have
  \[ \medim_k^\Z (\alpha)
  = 0
  \qand
  \mevol_k (\alpha)
  = 0,
  \]
  because $\ifsv M ^\alpha = 0$~\cite{loehmoraschinisauer}.  This type
  of vanishing arises in many geometric
  situations, e.g., manifolds with amenable fundamental group, graph $3$-manifolds, smooth manifolds that admit smooth circle actions without fixed points and manifolds admitting an $F$-structure~\cite[Section~1.1]{loehmoraschinisauer}\cite{Sauer09}.
\end{ex}

\noindent
\textbf{Acknowledgements.}
K.L., C.L., and M.U.\ were supported by the CRC~1085 ``Higher Invariants'' (Universit\"at Regensburg, funded by the DFG). The material on monotonicity under weak containment is part of M.U.'s 
PhD project.

M.M.\ was supported by the ERC ``Definable Algebraic Topology'' DAT -- Grant Agreement number~101077154. 

M.M.\ was funded by the European Union -- NextGenerationEU under the National Recovery and Resilience Plan (PNRR) -- Mission 4 Education and research -- Component 2 From research to business -- Investment~1.1 Notice Prin~2022 -- DD~N.~104 del 2/2/2022, from title ``Geometry and topology of manifolds'', proposal code 2022NMPLT8 -- CUP~J53D23003820001.

R.S.~was funded by the Deutsche Forschungsgemeinschaft (DFG, German Research Foundation) -- project number~338540207.

\clearpage
\bibliographystyle{amsalphaabbrv}
\bibliography{bib}

\setlength{\parindent}{0cm}

\end{document}